\title{The invariant measures of some infinite interval exchange maps}
\author{W. Patrick Hooper}
\thanks{Supported by N.S.F. Postdoctoral Fellowship DMS-0803013, N.S.F. grant DMS-1101233 and
a PSC-CUNY Award (funded by The Professional Staﬀ Congress and The City University of New York).}
\address{Department of Mathematics, City College of New York}
\email{whooper@ccny.cuny.edu}
\date{\today}
\newtheorem{theorem}{Theorem}
\newtheorem{proposition}[theorem]{Proposition}
\newtheorem{lemma}[theorem]{Lemma}
\newtheorem{corollary}[theorem]{Corollary}
\newtheorem{claim}[theorem]{Claim}
\theoremstyle{definition}
\newtheorem{definition}[theorem]{Definition}
\theoremstyle{definition}
\newtheorem{remark}[theorem]{Remark}
\theoremstyle{definition}
\newtheorem{example}[theorem]{Example}
\def\N{\mathbb{N}}%
\def\P{\mathbb{P}}%
\def\Q{\mathbb{Q}}%
\def\R{\mathbb{R}}%
\def\Z{\mathbb{Z}}%
\def\RP{\mathbb{RP}}%
\def\GL{\textit{GL}}
\def\SL{\textit{SL}}
\def\SO{\textit{SO}}
\renewcommand{\hom}[1]{\ensuremath{{\llbracket #1 \rrbracket}}}%
\def\ker{\textit{ker}}%
\newcommand{\nullset}{\emptyset}
\def\H{\mathbb H}%
\def\isomS1{\textit{Isom}_+(S^1)}
\def\rt3{\sqrt{3}}
\def\sgn{\mathrm{sgn}} 
\def\v{{\bf v}}
\def\w{{\bf w}}
\numberwithin{equation}{section}
\numberwithin{theorem}{section}
\def\@strippedMR{}
\def\@scanforMR#1#2#3\endscan{%
  \ifx#1M\ifx#2R\def\@strippedMR{#3}%
  \else\def\@strippedMR{#1#2#3}%
  \fi\fi}
\renewcommand\MR[1]{\relax\ifhmode\unskip\spacefactor3000 \space\fi
  \@scanforMR#1\endscan
  MR\MRhref{\@strippedMR}{\@strippedMR}}
\newcommand{\rhoa}[1]{\rho_\lambda^{#1}}
\def\Alpha{{\mathcal A}}
\def\Beta{{\mathcal B}} 
\def\E{{\mathcal E}} 
\def\F{{\mathcal F}} 
\def\G{{\mathcal G}} 
\def\H{{\mathbb H}} 
\def\V{{\mathcal V}} 
\def\I{{\mathbf I}} 
\def\sC{{\mathcal C}} 
\def\sD{{\mathcal D}} 
\def\North{{\mathscr N}}
\def\East{{\mathscr E}}
\def\Aff{{\textit A\hspace{-1pt}f\hspace{-3pt}f}}
\def\sm{{\smallsetminus}}
\def\flow{F_{\bm \theta}}
\def\ret{T_{\bm \theta}}%
\def\M{{\mathcal M}}
\def\Vert{{V}}
\def\Edge{{E}}
\def\f{{\mathbf f}}
\def\g{{\mathbf g}}
\def\k{{\mathbf k}}
\def\u{{\mathbf u}}
\def\w{{\mathbf w}}
\def\x{{\mathbf x}}
\def\y{{\mathbf y}}
\def\Chi{{\mathcal X}}
\def\Surv{{\mathcal S}} 
\def\Rn{{\mathcal R}} 
\def\Act{{\Upsilon}}
\def\A{{\mathbf A}}
\def\Ho{{\mathbf H}}
\def\K{{\mathbf K}}
\def\Vo{{\mathbf V}}
\def\Q{{\mathcal Q}} 
\def\hol{{\mathit hol}}
\def\cyl{{\mathit cyl}}
\def\Shrink{{\textit Shr}} 
\def\Exp{{\textit Exp}} 
\def\cl{{\textit cl}} 
\def\Zem{{\mathcal Z}}
\def\Fmap{{\mathbf F}}
\def\vv{\mathrm{v}}
\def\vx{\mathrm{x}}
\def\vy{\mathrm{y}}
\def\vw{\mathrm{w}}
\def\va{\mathrm{a}}
\def\vb{\mathrm{b}}
\def\vc{\mathrm{c}}
\def\vd{\mathrm{d}}
\def\val{\mathit{val}} 
\def\e{{\mathbf e}} 
\def\a{{\mathbf a}} 
\def\b{{\mathbf b}} 
\def\z{{\mathbf z}} 
\def\h{{\mathbf h}} 
\def\0{{\mathbf 0}} 
\def\sa{{\Sigma}} 
\def\SP{{\mathit {S\hspace{-2pt}P}}} 
\def\supp{{\mathit {supp}}}
\def\suppbar{{\overline{\mathit {supp}}}}
\def\Circ{{{\mathbb S}^1}} 
\newcommand{\transpose}[1]{{\vphantom{#1}}^{t}{#1}}
\def\GH{{\mathcal H}}
\def\1{{\mathbf 1}}
\def\Grp{{\mathbb G}}
\def\DM{{\mathbf D}}
\def\RM{{\mathbf R}}
\def\PM{{\mathbf P}}
\def\vo{{\mathrm o}}
\def\K{{\mathbf K}}
\def\z{{\lambda}}
\def\zz{{\mathbf z}}
\def\one{{\bm 1}}
\def\Nu{{\mathcal N}}
\newcommand{\red}[1]{\textcolor{red}{#1}}
\def\Coh{{H^1}}
\def\btheta{{\bm \theta}}
\def\imod#1{\allowbreak\mkern10mu({\operator@font mod}\,\,#1)}
\newif\ifdraft\drafttrue
\newcommand{\name}[1]{\label{#1}{\ifdraft{\textcolor{blue}{\{\textrm{#1}\}}}\else\ignorespaces\fi}}
\newcommand{\pagelink}[1]{\hyperlink{page.\getpagerefnumber{#1}}{\getpagerefnumber{#1}}}
\begin{document}

\begin{abstract}
We classify the locally finite ergodic invariant measures of certain infinite interval exchange transformations (IETs). These transformations naturally arise from return maps of the straight-line flow on certain translation surfaces, and the study of the invariant measures for these IETs is equivalent to the study of invariant measures for the straight-line flow in some direction on these translation surfaces. For the surfaces and directions for which our methods apply, we can characterize the locally finite ergodic invariant measures of the straight-line flow in a set of directions of Hausdorff dimension larger than 1/2. We promote this characterization to a classification in some cases.
For instance, when the surfaces admit a cocompact action by a nilpotent group, we prove
each ergodic invariant measure for the straight-line flow is a Maharam measure, and we describe precisely which Maharam measures arise. When the surfaces under consideration are finite area, the straight-line flows in the directions we understand are uniquely ergodic. Our methods apply to translation surfaces admitting multi-twists in a pair of cylinder decompositions in non-parallel directions.
\end{abstract}

\maketitle


\section{Introduction}
An {\em interval exchange transformation (IET)} is a bijective piecewise isometry from the interval $[0,1]$ to itself which is orientation preserving and has only finitely many discontinuities. 
These maps are natural generalizations of rotations, and interesting because they are simple systems of low symbolic complexity but nonetheless many phenomena that appear in these systems are not yet fully understood. Perhaps the most well-studied problem in the subject is the classification of the ergodic invariant measures. For instance, there are many results which guarantee unique ergodicity. See for instance \cite{V87}, \cite[Theorem 8.2]{V}, \cite{M92}, \cite[Theorem 1.1]{CE07}, and \cite[Theorem 4]{Trev14}.
Relevant surveys of the subject include \cite{MT}, \cite{Zorich06} and \cite{Y10}.

For our purposes, an {\em infinite interval exchange transformation} is an orientation preserving bijective piecewise isometry of an
interval in $\R$ of possibly infinite length with a countably infinite collection of discontinuities. 
Here many related questions become interesting. Is Lebesgue measure ergodic? Is there an invariant probability measure? If so, is it unique? 
An {\em interval of continuity} is an interval on which the
map is continuous. We call a measure {\em locally finite} if it is finite on the intervals of continuity of the map. 
What are the locally finite ergodic invariant measures? Questions of this type were first answered in \cite[Theorem 1.4]{ANSS02}, where the locally finite ergodic invariant measures of certain skew products were classified. 

This article contributes to the subject of infinite interval exchange transformations in several ways. Foremost, we introduce methods to characterize (and in many cases classify) the locally finite ergodic invariant measures of some infinite IETs. 
Our approach draws inspiration from the intimate connection between finite interval exchange maps and Teichm\"uller theory, and the inherent connection to translation surfaces. (Section \ref{sect:trans} defines translation surface.) 
This point of view reveals that many infinite interval exchange maps are unexpectedly interesting. Finally, we develop an interpretation of certain infinite interval exchange maps as ``deterministic random walks,'' and our results draw connections between these systems and  corresponding random walks. Our strongest measure classification results are corollaries to our measure characterization theorem, which utilizes the theory of random walks (on graphs and discrete groups) to promote a characterization to a classification.

\section{Organization and overview of this article}
\name{sect:organization}
The main result of this paper is a characterization of the invariant measures of a class of infinite IETs. This class arises from return maps of the straight-line flow on a class of infinite translation surfaces
produced using a construction of Thurston. Section \ref{sect:background} introduces translation surfaces,
the straight-line flow, affine automorphism groups, and Thurston's construction.
The main idea of this paper is to use the affine automorphism group of such surfaces to renormalize the straight-line flow.

In section \ref{sect:restatement}, we state the main results of this paper. Our results hold for straight-line flows
in ``renormalizable directions'' and the section begins by defining these directions. Subsection \ref{ss:orbit} contains the first of our two main results: an orbit equivalence result for straight-line flows on surfaces produced using Thurston's construction. In subsection \ref{ss:emc}, we describe the second of our main results. We describe a characterization of locally finite ergodic invariant measures for these straight-line flows. In the applicable cases, all ergodic measures arise by pulling back Lebesgue measure under orbit equivalences described by the first main result.

In section \ref{sect:hist}, we provide an abbreviated history of the subject of infinite IETs and infinite translation surfaces.

The remaining sections of the paper are devoted to the proof of our two main results. The beginning of section \ref{sect:outline} provides a birds eye view of the proof of these results. We fill in the details in the subsections of section \ref{sect:outline}, where we describe a sequence of results which build up to the proofs of these main results. The logic of the proofs of two main results is entirely contained in this section, but we state many results in this section which require substantial further work to prove. 
The proofs of these auxiliary results
are contained in the remaining sections, \S \ref{sect:symmetry}-\ref{sect:no valance one}. In addition, three of the appendices
survey relevant aspects of mathematical theories related to the proof:
\begin{itemize}
\item Appendix \ref{sect:coding} surveys the use of coding to understand the invariant measures of IETs. This well developed topic is described in the context of this work, and in particular discusses IETs coming from infinite translation surfaces.
\item Appendix \ref{sect:farkas} discusses a known generalization of the duality theorem due to Farkas.
\item Appendix \ref{sect:boundary} discusses the Martin boundary of a graph as it relates to the positive eigenfunctions of the adjacency operator.
\end{itemize}
The topics in these appendices play a role in our proofs, but we only contextualize known results.
In order to fully understand the proof of our main results, the reader should read 
Section \ref{sect:outline} and should refer to sections \ref{sect:symmetry}-\ref{sect:no valance one}
and appendices \ref{sect:coding}-\ref{sect:boundary} as necessary. 

To further aid the reader,
\begin{itemize}
\item At the end of the paper (pages \pagelink{sect:notations}-\pagelink{sect:notations end}), we provide a list of notations introduced.
\end{itemize}

This article includes five additional appendices which are used to provide context for our main results
and to describe some relevant special cases. In these special cases, we are often able to promote our measure characterization results to measure classification results
which give a complete description of the ergodic measures.
\begin{itemize}
\item Appendix \ref{sect:cylinder decompositions} provides a more geometric view of from Thurston's construction. Briefly, any surface admitting distinct decompositions into cylinders which support an affine multi-twist can be viewed as arising from Thurston's construction.
\item Appendix \ref{sect:skew rotations} describes some infinite IETs which appear. We stress skew product transformations and define Maharam measures.
\item Appendix \ref{sect:hyperbolic} describes some results which hold for surfaces built from Thurston's construction using a hyperbolic graph. In some cases, we can obtain ergodic measure classifications which seem unlike others that have appeared in the study of infinite IETs.
\item Appendix \ref{app:nilpotent} describes results which pertain to surfaces with nilpotent symmetry groups. We show that when our main results apply in this setting, the ergodic measures are always Maharam measures. This gives a generalization of a measure classification result of Aaronson-Nakada-Sarig-Solomyak \cite[Theorem 1.4]{ANSS02}.
\item Appendix \ref{sect:unique ergodicity} gives unique ergodicity results for both infinite IETs and straight-line flows that are studied in this paper.
\end{itemize} 
\section{Background on translation surfaces}
\name{sect:background}

\subsection{Translation surfaces}
\name{sect:trans}

Let $\Sigma$ be a connected topological surface. A {\em translation atlas on $\Sigma$} is an atlas of local homeomorphisms from open sets of $\Sigma$ to the plane so that the transition functions are translations. Such an atlas specifies local
coordinates to the plane which are canonical up to translation. Formally, a {\em translation surface} is a connected topological surface together with a maximal translation atlas. 

We will now give a more utilitarian viewpoint. 
A translation surface $S$ \label{not:S} can be formed from a disjoint collection of convex polygons $\{P_i \subset \R^2\}_{i \in \Lambda}$ with edges glued in pairs by translations. Let $\Vert \subset S$ denote the collection of (equivalence classes of) vertices of polygons in $S$. We consider the points of $\Vert$ to be singularities. Formally,
we only have a translation structure on $S \smallsetminus \Vert$, but we will abuse our definition by calling $S$ a translation surface and we will work with the points in $\Vert$. For example, a closed translation surface can be formed by identifying edges of a finite collection of polygons. In this case, the points of $\Vert$ are cone singularities, whose cone angle is an integral multiple of $2 \pi$. In this paper, our surfaces will be built from countably many polygons, so we will see more exotic singularities but their structure will not concern us. (See \cite{BV13} for an analysis some of the singularities that can appear.)

\subsection{The straight-line flow}
\name{sect:straight-line flow}
Let $\Circ$ \label{not:Circ} denote the collection of unit vectors in $\R^2$. Given a translation surface $S$, it is commonplace to study the family $\{ F_{\bm \theta}^t:S \to S\}_{\bm \theta \in \Circ}$ \label{not:F}
of {\em straight-line flows}, which are parametrized by a unit vector (a {\em direction}) $\bm \theta \in \Circ \subset \R^2$. \label{not:btheta} In local coordinates, these flows are given by 
\begin{equation}
\label{eq:straight_line_flow}
F_{\bm \theta}^t(x,y)=(x,y)+t \bm \theta.
\end{equation}
A primary goal of this article will be to understand the invariant measures of $F^t_{\bm \theta}$. This flow also gives rise to infinite IETs. The union of the boundaries of the polygons making up $S$ form a section for the flow, and the return map to this section (equipped with the Lebesgue transverse measure to the foliation in $\bm \theta$ direction) is conjugate to an infinite IET.

\subsection{The affine automorphism group}
\name{sect:veech}
An {\em affine automorphism} of a translation surface $S$
is a homeomorphism $\phi:S \to S$ so that in local coordinates near every non-singular point, there are constants $a$, $b$, $c$, $d$, $t_1$ and $t_2$ so that
$$\phi(x,y)=(ax+by+t_1,cx+dy+t_2).$$
Because $S$ is connected and a translation surface, the 
values of $a$, $b$, $c$ and $d$ are independent of the coordinate chart. We say that the matrix
$$D(\phi)=\left[\begin{array}{rr}
a & b \\ c & d\end{array}\right] \in \GL(2,\R)$$ 
\label{not:D}
is the {\em derivative} of $\phi$.
The collection $\Aff(S) \label{not:Aff}$ of all affine automorphisms form a group,
called the {\em affine automorphism group} of $S$. The collection of derivatives of affine automorphisms forms a group called the {\em Veech group} of $S$.

\subsection{Thurston's construction}
\name{sect:graphs}
We will now describe a variant of a construction due to Thurston
which produced the first examples of pseudo-Anosov homeomorphisms. See \cite[\S 6]{T88},which was long preceded by an earlier preprint. Following Veech, we note that the closed translation surfaces admitting two non-commuting parabolic affine automorphisms can be characterized in terms of eigenvectors of graphs \cite[\S 9]{V}. We will make use of some simplifying ideas introduced by McMullen \cite[\S 4]{McM06}
in the closed surface case. (The article \cite[\S 3]{HooperGrid}
carefully describes this construction in the closed surface case.) Here, we extend these ideas to infinite graphs.

We begin by describing some graph theoretic terminology.
Throughout this paper, $\G$ \label{not:G} is an infinite, connected, bipartite, ribbon graph with bounded valance. These terms are defined below.
\begin{enumerate}
\item ({\em Infinite}) The vertex set $\V$ \label{not:V} is countably infinite.
\item ({\em Connected}) For every $\vv, \vw \in \V$, \label{not:v} there is a sequence of vertices $\vv=\vv_0, \vv_1, \ldots, \vv_k=\vw$
so that every $\overline{\vv_i \vv_{i+1}}$ lies in the edge set $\E$ \label{not:E}.
\item ({\em Bipartite}) \name{item4} The vertex set $\V$ decomposes into a disjoint union of two sets, $\V=\Alpha \cup \Beta$, and the edge set $\E$ consists only of edges of the form $\overline{\va \vb}$ with $\va \label{not:a} \in \Alpha$ \label{not:Alpha} and $\vb \in \Beta$. Thus we have natural maps
$\alpha:\E \to \Alpha$ and $\beta:\E \to \Beta$ given by the maps
$$\label{not:alpha} \alpha:\overline{\va \vb} \mapsto \va \quad \textrm{and} \quad \beta:\overline{\va \vb} \mapsto \vb.$$ 
\item ({\em Bounded valance}) The sets $\alpha^{-1}(\va)$ and $\beta^{-1}(\vb)$ are finite sets whose sizes are bounded from above.
\item ({\em Ribbon structure}) For every $\vv \in \V$, the ribbon graph structure specifies a cyclic permutation
$p_\vv$ \label{not:Pv} of the edges that contain $\vv$ as an endpoint.
\end{enumerate}

We use $\R^\V \label{not:RV}$ to denote the collection of all functions from $\V$ to $\R$. 

\begin{definition}
\name{def:adj}
The {\em adjacency operator} is the operator
$\A:\R^\V \to \R^\V$ defined by 
\begin{equation}
\label{not:f}
\A(\f)(\vv)=\sum_{\vw \sim \vv} \f(\vw),
\end{equation}
where the sum is taken over edges $\overline{\vw \vv}$ with $\vv$ as one endpoint. An {\em eigenfunction of $\A$} is a function $\w \label{not:w} \in \R^\V$ which satisfies $\A(\w)=\lambda \w$ for some \label{not:lambda} $\lambda \in \R$.
\end{definition}

\begin{definition}[East and North edge permutations]
Given the above structure on $\G$, we define bijections \label{not:East}
$\East, \North :\E \to \E$. These are given by
$$\East(\overline{\va \vb})=p_\va(\overline{\va \vb})
\quad \textrm{and} \quad
\North(\overline{\va \vb})=p_\vb(\overline{\va \vb}).$$
Since the permutations $p_\vv$ are cyclic, these maps satisfy 
\begin{equation}
\name{eq:ribbon condition}
\{\East^k(e)\}_{k \in \N}=\alpha^{-1}\big(\alpha(e)\big) \quad \textrm{and} \quad \{\North^k(e)\}_{k \in \N}=\beta^{-1}\big(\beta(e)\big)
\quad 
\text{for each $e \in \E$.}
\end{equation}
\end{definition}

\begin{definition}[The surface $S(\G,\w)$]
\name{def:S}
\label{not:S2}
Let $\G$ be a graph as above. Let $\w \in \R^\V$ be a positive eigenfunction of $\A$. Using the associated data, we will construct a translation surface
$S(\G,\w)$. This surface will be a union of rectangles $R_e$ with $e \in \E$, with each $R_e$ given by
$$R_e \label{not:R}=[0,\w \circ \beta(e)] \times [0, \w \circ \alpha(e)].$$
We glue the rectangles so that the right (east) side of $R_e$ is glued isometrically to the left side of $R_{\East(e)}$, and the top (north) side of $R_e$ is glued isometrically to the
bottom of $R_{\North(e)}$. (This explains the notation for the bijections $\East, \North :\E \to \E$.)
\end{definition}

Note that the surface $S(\G,\w)$ admits horizontal and vertical cylinder decompositions which intersect in the given rectangles. For each $\va \in \Alpha$ and each $\vb \in \Beta$, we have the cylinders:
\begin{equation}
\name{eq:cyl}
\cyl_\va=\bigcup_{e \in \alpha^{-1}(\va)} R_e
\quad \textrm{and} \quad
\cyl_\vb=\bigcup_{e \in \beta^{-1}(\vb)} R_e.
\end{equation}
Each cylinder $\cyl_\va$ is horizontal and each cylinder $\cyl_\vb$ is vertical. 
The {\em modulus \label{def:modulus}} of a cylinder is the ratio $\frac{\textit{width}}{\textit{circumference}}$. 
Therefore, the condition that all horizontal and vertical cylinders have equal moduli is equivalent to saying that our function $\w \in \R^\V$ is an eigenfunction of the adjacency operator. (The modulus is given by $1/\lambda$, where $\lambda$ is the eigenvalue.)
In particular, by remarks of Veech \cite[\S 9]{V} this guarantees the existence of two non-commuting parabolic automorphisms of our surface. The eigenvalue of a positive eigenfunction of an infinite connected graph satisfies $\lambda \geq 2$. 
For these values of $\lambda$, the two parabolics generate a free subgroup of $\SL(2, \R)$. Throughout this paper, we will denote the free group with two generators by
$G=\langle h, v\rangle$, \label{not:Gfree} with the choice of generators names representing {\em horizontal} and {\em vertical}.

\begin{definition}[Group representations to $\SL(2,\R)$]
\name{def:rho}
For each $\lambda >0$, let $\rho_\lambda \label{not:rho} :G \to \SL(2, \Z)$ denote the representation generated by
\begin{equation}
\rhoa{h}=\left[\begin{array}{rr} 1 & \lambda \\ 0 & 1 \end{array}\right] 
\quad \textrm{and} \quad
\rhoa{v}=\left[\begin{array}{rr} 1 & 0 \\ \lambda & 1 \end{array}\right].
\end{equation}
This representation is faithful so long as $\lambda \geq 2$.
\end{definition}

We have the following from work of Thurston and Veech \cite[\S 6]{T88} cite[\S 9]{V}.

\begin{proposition}[Automorphisms of $S(\G,\w)$]
\name{prop:eigenfunction}
Suppose $\w$ is a positive eigenfunction for the adjacency operator with eigenvalue $\lambda>0$. Then, there is an endomorphism from the free group on
two generators into the affine automorphism group, 
$\Phi \label{not:Phi} :G \to \Aff\big(S(\G,\w)\big)$, so that $D(\Phi^g)=\rhoa{g}$ for all $g \in G \label{not:g}$. Moreover, we can take
$\Phi^h$ (and respectively, $\Phi^v$) to preserve all horizontal (resp. vertical)
cylinders in the horizontal (resp. vertical) cylinder decomposition
and to act as a single Dehn twists on each preserved cylinder.
\end{proposition}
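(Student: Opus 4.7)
The plan is to construct $\Phi^h$ and $\Phi^v$ directly as affine automorphisms built out of single Dehn twists on the horizontal and vertical cylinders, respectively, and then invoke the universal property of the free group $G=\langle h,v\rangle$ to obtain $\Phi$.

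First I would compute the modulus of each cylinder to see that a single Dehn twist has the correct derivative. For a horizontal cylinder $\cyl_\va$, the height is $\w(\va)$ (all rectangles $R_e$ with $e\in\alpha^{-1}(\va)$ have this height), while the circumference equals
\[
\sum_{e\in\alpha^{-1}(\va)} \w\circ\beta(e) \;=\; \A(\w)(\va)\;=\;\lambda\w(\va),
\]
so $\cyl_\va$ has modulus $1/\lambda$. The same argument gives each vertical cylinder $\cyl_\vb$ modulus $1/\lambda$. Next, on each $\cyl_\va$ define $\Phi^h|_{\cyl_\va}$ to be a single right Dehn twist: pick the parameterization coming from Definition \ref{def:S} in which $\cyl_\va$ is $\R/(\lambda\w(\va)\Z)\times[0,\w(\va)]$, and let the twist be $(x,y)\mapsto(x+\lambda y,y)$. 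This map fixes both boundary components pointwise and has derivative $\rho_\lambda^h$ uniformly.

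Second, I would glue these per-cylinder maps together. The horizontal cylinders $\{\cyl_\va\}_{\va\in\Alpha}$ cover $S(\G,\w)$ and only overlap on their horizontal boundaries, which are exactly the union of the horizontal edges of the rectangles $R_e$; these in turn are the bottoms/tops identified by the $\North$ permutation. Since each single Dehn twist defined above is the identity on $\partial\cyl_\va$, the maps agree on overlaps and piece together to a homeomorphism $\Phi^h$ of $S(\G,\w)$. Because the derivative is globally $\rho_\lambda^h$ in the interior of every rectangle, $\Phi^h\in\Aff\big(S(\G,\w)\big)$ with $D(\Phi^h)=\rho_\lambda^h$. An entirely symmetric construction (using the $\East$ identifications on vertical edges) yields $\Phi^v\in\Aff\big(S(\G,\w)\big)$ with $D(\Phi^v)=\rho_\lambda^v$, preserving each vertical cylinder and acting on each as a single Dehn twist.

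Finally, since $G$ is the free group on $\{h,v\}$, the two affine automorphisms $\Phi^h$ and $\Phi^v$ extend uniquely to a homomorphism $\Phi:G\to\Aff\big(S(\G,\w)\big)$, and the identity $D(\Phi^g)=\rho_\lambda^g$ then holds for all $g\in G$ because $D$ is itself a homomorphism and it holds on the generators.

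The step I expect to be the only real subtlety is the gluing: I need to confirm that "single Dehn twist" can be chosen coherently on infinitely many cylinders so that the global map is a well-defined homeomorphism (not merely a bijection). This reduces to checking the obvious fact that the per-cylinder twists are identity on $\partial\cyl_\va$, together with the observation that the horizontal (resp.\ vertical) decomposition is locally finite by the bounded valance hypothesis, so the pieced map is continuous. Everything else, including $D(\Phi^h)=\rho_\lambda^h$ and the Dehn-twist description, is forced by the modulus computation.
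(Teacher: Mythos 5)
Your proof is correct and is exactly the standard Thurston--Veech multi-twist construction; the paper does not reproduce it but simply cites \cite[\S 6]{T88} and \cite[\S 9]{V} for this proposition, and what those sources do is what you have written out. The modulus computation is right (height $\w(\va)$, circumference $\A(\w)(\va)=\lambda\w(\va)$, so modulus $1/\lambda$), the per-cylinder shear $(x,y)\mapsto(x+\lambda y,y)$ does fix both boundary circles of $\cyl_\va$ pointwise since the top shift is exactly one circumference, and the gluing and free-group universal-property steps are the right way to finish. One small terminological nit: the open horizontal cylinders are pairwise disjoint rather than overlapping, so the pasting happens along the shared boundary horizontal edges, and the relevant finiteness is that any compact set meets only finitely many rectangles $R_e$ — but that is exactly what you observe, and bounded valance is what guarantees it near the singular points $V$.
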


The main idea of this paper is to use the subgroup $\Phi^G \subset \Aff\big(S(\G,\w)\big)$
to renormalize straight-line flows on these surfaces.

\begin{remark}[Dihedral group action]
\name{rem:dihedral}
The dihedral group of order eight generated by 
$$\left[\begin{array}{rr} 0 & -1 \\ 1 & 0 \end{array}\right] 
\quad \textrm{and} \quad
\left[\begin{array}{rr} -1 & 0 \\ 0 & 1 \end{array}\right]$$
acts on the collection of surfaces obtainable from Thurston's construction. If the matrix $A$ lies in this group, the surface $A \big(S(\G, \w)\big)=S(\G', \w)$ with $\G'$ arising from $\G$ by changing the bipartite and ribbon graph structures on $\G$ in a way that depends on $A$. Note that this matrix group also acts on directions, and on the group $G$ through its action on the representation $\rhoa{G}$ by conjugation.
We will use this dihedral group action to reduce the number of cases we need to consider in several proofs in this paper.
\end{remark}

\section{Main results}
\name{sect:restatement}
In this section, we describe our main results: Theorems \ref{thm:topological conjugacy} and \ref{thm:ergodic2}.
We will also introduce several ideas needed to state these results.

\subsection{Renormalizable directions}
\label{ss:renormalizable directions}
Fix a real constant $\lambda \geq 2$. We will define what it means for a direction $\bm \theta \in \Circ$ to be {\em $\lambda$-renormalizable}. 

The constant $\lambda$ determines a representation $\rho_\lambda:G \to \SL(2,\R)$, where $G=\langle h,v \rangle$ is the free group on two generators. See Definition \ref{def:rho}. The group $\SL(2,\R)$ acts on the real projective line, $\R \P^1 \label{not:RP1}=\big(\R^2 \smallsetminus \{\0\}\big) /\R$ in the standard (linear) way. Recall that the {\em limit set} for the action
of a subgroup of $\SL(2,\R)$ on $\R \P^1$ is the set of accumulation points of an orbit. (Because $\rho_\lambda^G$ is a non-elementary Fuchsian group, the limit set is independent of the choice of the orbit.)

We say a direction $\bm \theta$ is {\em $\lambda$-renormalizable} if the following two statements are satisfied:
\begin{enumerate}
\item The projectivization of $\bm \theta$ lies
in the limit set of $\rho_\lambda^G$.
\item $\bm \theta$ is not an eigendirection of any matrix $\rho_\lambda^g$
where $g$ is conjugate in $G$ to an element of the set $\{h,v,v^{-1}h\}.$
\end{enumerate}
We use $\Rn_\lambda \label{not:Rn} \subset \Circ$ to denote the set of all $\lambda$-renormalizable directions.

\begin{remark}[The size of the set of $\lambda$-renormalizable directions]
\name{rem:renormalizable directions}
We note that since $\rho_\lambda^G$ is a non-elementary
subgroup of $\SL(2,\R)$, the limit set of $\rho_\lambda^G$
is always uncountable, and statement (2) of our definition
removes only countably many direction from this set.
When $\lambda=2$, $\Rn_\lambda$ is $\Circ$ with the vectors of rational slope removed. When $\lambda>2$, then the limit set of $\rho_\lambda^G$ is a Cantor set. The Hausdorff dimension of the limit set varies continuously in $\lambda \geq 2$ and is strictly monotone decreasing. At $\lambda=2$, the dimension is $1$, the dimension has a limiting value of $\frac{1}{2}$ as $\lambda \to +\infty$.
These results on Hausdorff dimension are due to Sato \cite[\S 2]{Sato96}.
\end{remark}

We will now explain how to relate $\lambda$-renormalizable directions as we vary $\lambda$. Consider the {\em Cayley graph} of the free group $G=\langle h, v\rangle$. This is the graph where elements of $G$ are the vertices, and two elements $g_1,g_2 \in G$ are joined by an edge if $g_2 g_1^{-1}$ lies in the symmetric generating set $\{h,v,h^{-1},v^{-1}\}.$ In particular, we have the notion of a geodesic ray in $G$, which we characterize now:

\begin{proposition}
\name{prop:ray}
A sequence  $\langle g_0, g_1, g_2, \ldots \rangle \name{not:ray}$ is a geodesic ray in $G$ if and only if it satisfies the following two statements:
\begin{enumerate}
\item[1.] $g_{n+1} g_{n}^{-1} \in \{h,v,h^{-1}, v^{-1}\}$ for all $n \geq 0$.
\item[2.] $g_{n+2} \neq g_{n}$ for all $n \geq 0$.
\end{enumerate}
\end{proposition}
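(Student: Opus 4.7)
The plan is to translate the graph-theoretic geodesic condition into the reduced-word normal form for the free group $G=\langle h,v\rangle$. Setting $s_i := g_i g_{i-1}^{-1}$ for $i\geq 1$, condition (1) is precisely the statement that every $s_i$ lies in the symmetric generating set $S=\{h,v,h^{-1},v^{-1}\}$, and a telescoping product gives $g_n = s_n s_{n-1}\cdots s_1 g_0$. Since $g_{n+2}=s_{n+2}s_{n+1}g_n$, the equation $g_{n+2}=g_n$ is equivalent to $s_{n+2}=s_{n+1}^{-1}$. Thus condition (2) is equivalent to the statement that the word $w_n := s_n s_{n-1}\cdots s_1$ contains no adjacent inverse pair, i.e.\ is reduced in the free group $G$, for every $n$.

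With this setup in hand, the backward direction follows immediately from the normal form theorem for free groups. The Cayley graph distance is right-invariant, so $d_G(g_0,g_n)=d_G(e,w_n)$, and the word norm of any element of $G$ equals the length of its unique reduced expression. Hence conditions (1) and (2) force $d_G(g_0,g_n)=|w_n|=n$. The same argument applied to the product $s_j s_{j-1}\cdots s_{i+1}$ (which inherits the no-cancellation property as a sub-word of the reduced $w_j$) yields $d_G(g_i,g_j)=j-i$ for all $0\leq i\leq j$, establishing the geodesic ray property.

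For the forward direction, assume $\langle g_0,g_1,\ldots\rangle$ is a geodesic ray. Condition (1) is automatic from the adjacency of consecutive terms. Condition (2) holds because $g_{n+2}=g_n$ would imply $d_G(g_0,g_{n+2})\leq n$, contradicting the geodesic requirement that $d_G(g_0,g_{n+2})=n+2$.

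The main obstacle---such as it is---lies in the opening translation step: correctly identifying condition (2) with the absence of cancellation in the product $w_n$. Once that observation is in place, the proposition reduces to the standard identification of the Cayley graph of a free group with the tree in which distances are computed by reduced word length.
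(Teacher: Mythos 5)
Your proof is correct, and it takes essentially the same route as the paper—which simply cites that the Cayley graph of $G$ is the $4$-valent tree—just spelled out in full via the reduced-word normal form. The translation of condition (2) into the no-cancellation statement $s_{n+2} \neq s_{n+1}^{-1}$, and the observation that subwords of reduced words are reduced, are exactly the facts implicit in the paper's appeal to the tree structure.
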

This follows from the fact that the Cayley graph of $G$ is  homeomorphic to the 4-valent tree.
We use geodesic rays to relate the $\lambda$-renormalizable directions as we vary $\lambda$.
\begin{lemma}
\name{lem:compatible directions}
Let $\lambda_1 \geq 2$ and let ${\bm \theta}_1 \in \Rn_{\lambda_1}$. Then: 
\begin{enumerate}
\item There is a unique geodesic ray
$\langle g_0, g_1, \ldots \rangle$ with $g_0=e$ so
that the length $\|\rho_{\lambda_1}^{g_i}(\bm \theta_1)\|$ decreases strictly monotonically as $i \to \infty$.
We call $\langle g_0, g_1, \ldots \rangle$ the {\em $\lambda_1$-shrinking sequence} of ${\bm \theta}_1$.
\item For any $\lambda_2 \geq 2$, there is a unique
pair of antipodal vectors $\pm {\bm \theta}_2 \in \Rn_{\lambda_2}$ 
so that the $\lambda_1$-shrinking sequence of ${\bm \theta}_1$ coincides with the $\lambda_2$-shrinking sequence of either of the vectors $\pm {\bm \theta}_2$.
\end{enumerate}
\end{lemma}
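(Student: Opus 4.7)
The plan is to build the $\lambda_1$-shrinking sequence of $\bm\theta_1$ inductively, and then to recover the $\bm\theta_2$ of part (2) by a nested-intersection argument in $\R\P^1$.

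The preliminary computation is direct: for each $\lambda \geq 2$ and each $x \in \{h, v, h^{-1}, v^{-1}\}$, expanding $\|\rho_\lambda^{x}\bm\theta\|^{2} - \|\bm\theta\|^{2}$ and factoring shows that $\rho_\lambda^{x}$ strictly shrinks the Euclidean norm of a nonzero $\bm\theta = (a, b)$ exactly when the slope $b/a$ lies in one of the four pairwise-disjoint open intervals $I_{h}^{\lambda} = (-2/\lambda, 0)$, $I_{h^{-1}}^{\lambda} = (0, 2/\lambda)$, $I_{v}^{\lambda} = (-\infty, -\lambda/2)$, $I_{v^{-1}}^{\lambda} = (\lambda/2, +\infty)$. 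The complement of their union in $\R\P^{1}$ is $\{0, \infty\} \cup [-\lambda/2, -2/\lambda] \cup [2/\lambda, \lambda/2]$. I claim this complement is disjoint from $\Rn_\lambda$: the points $0$ and $\infty$ are excluded as eigendirections of $h$ and $v$; the two closed middle intervals lie inside the domain of discontinuity of $\rho_\lambda^{G}$ acting on $\R\P^{1}$, because the fundamental complementary interval to the limit set is bounded by the two eigendirections $s_{\pm} = (-\lambda \pm \sqrt{\lambda^{2} - 4})/2$ of $\rho_\lambda^{v^{-1}h}$, and the elementary inequalities $s_{-} \leq -\lambda/2 < -2/\lambda \leq s_{+}$ (equalities only at $\lambda = 2$) place $[-\lambda/2, -2/\lambda]$ inside this interval while $[2/\lambda, \lambda/2]$ sits inside its $\rho_\lambda^{h}$-image; the endpoints of all complementary intervals lie in the $\rho_\lambda^{G}$-orbit of $\{s_{+}, s_{-}\}$, hence are likewise excluded from $\Rn_\lambda$.

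Part (1) then follows by induction. At step $i$ the slope of $\bm\theta_i := \rho_{\lambda_1}^{g_i}\bm\theta_1 \in \Rn_{\lambda_1}$ (the $G$-invariance of $\Rn_{\lambda_1}$ being immediate from the $G$-invariance of the limit set and of the excluded eigendirection set) lies in exactly one shrinking interval, which determines a unique next letter $x_{i+1}$ and hence $g_{i+1} = x_{i+1} g_i$; the non-backtracking condition $x_{i+1} \neq x_i^{-1}$ is automatic because $\rho_{\lambda_1}^{x_i^{-1}}$ sends $\bm\theta_i$ back to $\bm\theta_{i-1}$, which has strictly greater norm. For part (2), fix $\lambda_{2} \geq 2$, set $x_k = g_k g_{k-1}^{-1}$, and define the nested open arcs $J_{i}^{\lambda_{2}} = \bigcap_{k=1}^{i} \rho_{\lambda_{2}}^{g_{k-1}^{-1}}(I_{x_k}^{\lambda_{2}})$, consisting of slopes whose first $i$ norm comparisons under $\rho_{\lambda_2}^{g_k}$ strictly decrease. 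A short calculation identifies $\rho_{\lambda_{2}}^{x_{k}}(I_{x_{k}}^{\lambda_{2}})$ as the complement in $\R\P^{1}$ of the closure of $I_{x_{k}^{-1}}^{\lambda_{2}}$, and this complement meets each of the other three shrinking intervals, so the non-backtracking constraint $x_{k+1} \neq x_{k}^{-1}$ keeps each $J_{i}^{\lambda_{2}}$ non-empty.

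To complete part (2), I would prove $\mathrm{diam}(J_{i}^{\lambda_{2}}) \to 0$ in the spherical metric on $\R\P^{1}$, which forces $\bigcap_{i} \overline{J_{i}^{\lambda_{2}}}$ to be a single projective point and so provides the unique pair $\pm \bm\theta_{2}$. Membership $\bm\theta_{2} \in \Rn_{\lambda_{2}}$ then follows: the projectivization is the forward limit of $\rho_{\lambda_{2}}^{g_{i}^{-1}}(z_{0})$ on $\partial \H^{2}$ as $(g_i)$ escapes to infinity in the Cayley graph of $G$, so it lies in the limit set; and $[\bm\theta_{2}]$ cannot be an eigendirection of a conjugate of $h$, $v$, or $v^{-1}h$, since such periodicity in the $\lambda_2$-shrinking sequence (which coincides with $(g_i)$) would be inherited by $\bm\theta_{1}$ and violate $\bm\theta_1 \in \Rn_{\lambda_1}$. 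The main obstacle is the diameter-contraction estimate itself, especially near $\lambda_{2} = 2$ where the parabolic generators contract only weakly; my plan is to exploit that the non-backtracking ray $(g_{i})$, having been forbidden (by the renormalizability of $\bm\theta_{1}$) from converging to any $v^{-1}h$-eigendirection or parabolic fixed point, contains uniformly-contracting sub-patterns of bounded length infinitely often, with contraction rates bounded in terms of $\lambda_{2}$ alone.
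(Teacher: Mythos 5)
Your part~(1) is sound and is essentially what the paper does: disjointness of the four shrinking slope intervals forces a unique shrinking generator at each step, and your preliminary slope calculation reproduces the content of the paper's Propositions~\ref{prop:shrink_exp} and~\ref{prop:limit_set2}. The point to flag is in part~(2), where you take a genuinely different route and the self-identified obstacle is real but avoidable. The diameter-contraction estimate for the nested arcs $J_i^{\lambda_2}$ is not needed, and trying to prove it near $\lambda_2 = 2$ is needlessly painful. If $\bm\theta,\bm\theta'$ are non-parallel and both lie in $\overline{J_i^{\lambda_2}}$ for every $i$, then $\|\rho_{\lambda_2}^{g_i}(\bm\theta)\| \le 1$ and $\|\rho_{\lambda_2}^{g_i}(\bm\theta')\| \le 1$ for all $i$, and the set of $M \in \SL(2,\R)$ satisfying both inequalities is compact; discreteness and faithfulness of $\rho_{\lambda_2}$ (which hold for every $\lambda_2 \ge 2$) then force $\langle \rho_{\lambda_2}^{g_i}\rangle$ to take only finitely many values, contradicting that $\langle g_i\rangle$ is a geodesic ray. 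This is the paper's Correspondence Theorem argument, and it settles uniqueness and collapses $\bigcap_i \overline{J_i^{\lambda_2}}$ to a single projective point without any metric estimate.

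The genuine remaining gap is in existence, and it survives even after the above substitution. You now have a single point $[\bm\theta_\infty]\in\bigcap_i\overline{J_i^{\lambda_2}}$, but for $\langle g_i\rangle$ to actually be the $\lambda_2$-shrinking sequence of $\bm\theta_\infty$ you need $[\bm\theta_\infty]$ to lie in the \emph{open} intersection $\bigcap_i J_i^{\lambda_2}$, and then you need $\bm\theta_\infty\in\Rn_{\lambda_2}$. A priori the nested closed arcs could shrink to a common boundary point, in which case no direction has $\langle g_i\rangle$ as its $\lambda_2$-shrinking sequence. Your ``inherited periodicity'' remark gestures at the right obstruction but does not engage its real content: if $[\bm\theta_\infty]\in\partial\overline{\Shrink}_{\lambda_2}(g_i)$ for all large $i$, one must show, step by step, that the slope of $\rho_{\lambda_2}^{g_{i-1}}(\bm\theta_\infty)$ is forced to equal one of a small explicit list of boundary slopes at every subsequent step, with a forced choice of next letter, and that this drives $\langle g_i\rangle$ into tail-equivalence with one of the six forbidden rays, contradicting $\bm\theta_1\in\Rn_{\lambda_1}$. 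The eigendirection framing alone does not do this, since for $\lambda_2 > 2$ the relevant boundary slopes $\pm 2/\lambda_2$ and $\pm\lambda_2/2$ are not eigendirections of conjugates of $h$, $v$, or $v^{-1}h$. This is precisely the case analysis carried out in the paper's Theorem~\ref{thm:characterization}, and your sketch cannot be completed without something equivalent to it.
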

Renormalizable directions are the topic of \S \ref{sect:symmetry}, and we prove this lemma at the end of this section. Statement (2) has the consequence that the
$\lambda$-shrinking sequences that arise from $\lambda$-renormalizable directions are independent of the choice of $\lambda \geq 2$. So, we call a geodesic ray $\langle g_0, g_1, \ldots \rangle$ with $g_0=e$ a {\em renormalizing sequence} if it is the $\lambda$-shrinking sequence for some $\lambda$-renormalizable direction. We write $\pm \bm \theta(\langle g_n \rangle, \lambda) \label{not:theta2}$ when
we determine an antipodal pair in this way.

\subsection{Orbit equivalence}
\name{ss:orbit}
Definition \ref{def:rho} produced a surface based on an infinite graph $\G$
and a positive eigenfunction $\w$. It is worth observing that fixing a graph satisfying our conditions, there are uncountably many positive eigenfunctions.
(See \cite[Theorem 6.3]{MW89} or our treatment in Appendix \ref{sect:boundary}.)
We will describe a result which shows how surfaces
determined by the same graph but differing eigenfunctions have similar dynamical
properties.

If $S$ is a translation surface and $\bm \theta \in \Circ$, then we can consider the foliation $\F_{\bm \theta}$ of the surface $S$ by orbits of the straight-line flow $\flow^t$ of equation \ref{eq:straight_line_flow}.

\begin{theorem}[Orbit Equivalence]
\name{thm:topological conjugacy}
Let $\G$ be an infinite, connected, bipartite, ribbon graph with bounded valance as in Section \ref{sect:graphs}.
Suppose $\w_1, \w_2 \in \R^\V$ are positive functions satisfying $\A(\w_i)=\lambda_i \w_i$ for $i=1,2$. Let $\langle g_n \rangle$ be a renormalizing sequence, and let $\pm \bm \theta_i=\pm \bm \theta(\langle g_n \rangle, \lambda_i)$ be associated pairs of antipodal $\lambda_i$-renormalizable directions
(as in statement (2) of Lemma \ref{lem:compatible directions}).
Then, there is a homeomorphism $\phi:S(\G, \w_1) \to S(\G, \w_2)$ such that
$\phi(\F_{\bm \theta_1})=\F_{\bm \theta_2}$. Moreover, $\phi$ can be taken to preserve the decomposition of the surfaces
into labeled rectangles as in Definition \ref{def:S},
$$S(\G, \w_1)=\bigcup_{e \in \E} R_e^1 \quad \textrm{and} \quad S(\G, \w_2)=\bigcup_{e \in \E} R_e^2,$$
and so that the restricted maps $\phi|_{R_e^1}:R_e^1 \to R_e^2$ sends the bottom (resp. top, left, right)
edge of $R_e^1$ to the bottom (resp. top, left, right) edge of $R_e^2$ for all $e \in \E$. In this case, the restriction of $\phi$ to a map
from $\bigcup_{e \in \E} \partial R_e^1$ to $\bigcup_{e \in \E} \partial R_e^2$ is uniquely determined.
\end{theorem}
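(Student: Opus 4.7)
The plan is to construct $\phi$ in two stages: first define it on the boundary skeleton $K_i = \bigcup_{e \in \E} \partial R_e^i \subset S(\G,\w_i)$ as a topological conjugacy between the first-return maps of the two straight-line flows, and then extend to the interior of each rectangle by a leaf-wise reparametrization.

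Let $T_i \colon K_i \to K_i$ denote the first return map of $F_{\bm \theta_i}^t$ to the skeleton. Since every leaf entering a rectangle of $S(\G,\w_i)$ exits through another side, $T_i$ is an infinite interval exchange whose continuity intervals on each edge of $\partial R_e^i$ correspond to the possible exit sides of $R_e^i$. I would produce $\phi|_{K_1}$ as the uniform limit of partial bijections $\phi_n \colon K_1 \to K_2$ defined on a refining sequence of partitions $\mathcal P_n^i$ of $K_i$. The intended partition $\mathcal P_n^i$ is the intersection with $K_i$ of the common refinement of the rectangle decomposition $\{R_e^i\}$ and its pullback under the affine automorphism $\Phi_i^{g_n}$ from Proposition~\ref{prop:eigenfunction}; equivalently, $\mathcal P_n^i$ separates points whose $T_i$-orbits diverge after sufficiently few iterates. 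Because $\Phi_i^{g_n}$ is a word in the Dehn twists $\Phi_i^h$ and $\Phi_i^v$, and each such twist permutes rectangles within a single cylinder according to combinatorial data intrinsic to $\G$ (encoded by the bijections $\East$ and $\North$), the combinatorial type of $\mathcal P_n^i$ — its atoms together with their rectangle labels and the incidence pattern relating $\mathcal P_n^i$ to $\mathcal P_{n+1}^i$ — depends only on $\G$ and on the renormalizing sequence $\langle g_n \rangle$, not on $\lambda_i$ or $\w_i$. This produces a canonical labeled bijection between atoms of $\mathcal P_n^1$ and atoms of $\mathcal P_n^2$; I define $\phi_n$ by mapping each matched pair of atoms affinely, respecting labels and orientations.

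The mesh of $\mathcal P_n^i$ shrinks to zero: this is the content of $\bm\theta_i$ being $\lambda_i$-renormalizable, since $\|\rho_{\lambda_i}^{g_n}(\bm\theta_i)\| \to 0$ forces $\Phi_i^{g_n}$ to compress $K_i$ into increasingly short arcs transverse to $\bm\theta_i$. Consequently the maps $\phi_n$ converge uniformly to a homeomorphism $\phi|_{K_1}\colon K_1 \to K_2$ conjugating $T_1$ to $T_2$, and uniqueness of this boundary map follows because any map respecting labels, orientations, and the two foliations must send atoms of $\mathcal P_n^1$ to atoms of $\mathcal P_n^2$ for every $n$. To extend $\phi$ to interiors, for $p \in R_e^1 \smallsetminus K_1$ let $q \in \partial R_e^1$ be the entry point of the $\bm\theta_1$-leaf through $p$, write $p = F_{\bm\theta_1}^t(q)$, and let $L_1(q)$ and $L_2\bigl(\phi(q)\bigr)$ be the total lengths of the respective leaf segments in $R_e^1$ and $R_e^2$; I set
$$\phi(p) = F_{\bm\theta_2}^{s}\bigl(\phi(q)\bigr), \qquad s = t \cdot L_2\bigl(\phi(q)\bigr)/L_1(q).$$
This maps each $\F_{\bm\theta_1}$-leaf segment in $R_e^1$ homeomorphically onto the corresponding $\F_{\bm\theta_2}$-leaf segment in $R_e^2$; continuity within each rectangle is immediate, and continuity across rectangle boundaries follows from $\phi \circ T_1 = T_2 \circ \phi$ on $K_1$.

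The main obstacle is establishing the combinatorial invariance claim for $\mathcal P_n^i$: that the refinements agree as labeled combinatorial objects on the two surfaces. This reduces to an inductive analysis of how each single renormalization step $g_{n+1} g_n^{-1} \in \{h, v, h^{-1}, v^{-1}\}$ subdivides and permutes atoms. Because each generator acts as a single Dehn twist on all cylinders of the corresponding decomposition, the subdivision pattern is governed entirely by the incidence structure of $\G$ through $\East$ and $\North$; Lemma~\ref{lem:compatible directions} ensures that $\langle g_n \rangle$ encodes this combinatorial process identically on $S(\G, \w_1)$ and $S(\G, \w_2)$.
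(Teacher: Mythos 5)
Your overall plan---construct $\phi$ first on the skeleton $K_i = \bigcup_{e}\partial R_e^i$ by matching a sequence of refining combinatorial partitions, then extend leaf-by-leaf to interiors---is a reasonable symbolic-dynamics route, and the combinatorial invariance claim is genuinely the correct underlying mechanism (the paper encodes it in Propositions~\ref{prop:same_sign} and~\ref{prop:survivor_in_plane}). However, there is a real gap at the step where you assert that the mesh of $\mathcal{P}_n^i$ shrinks to zero ``since $\|\rho_{\lambda_i}^{g_n}(\bm\theta_i)\|\to 0$ forces $\Phi_i^{g_n}$ to compress $K_i$ into increasingly short arcs transverse to $\bm\theta_i$.'' This does not follow from the decay of $\|\rho_{\lambda_i}^{g_n}(\bm\theta_i)\|$ alone. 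The mesh of $\mathcal{P}_n^i$ shrinking to zero on $K_i$ is essentially equivalent to the coding map for the first-return IET being injective, i.e., that no two distinct trajectories visit the identical bi-infinite sequence of rectangles. That in turn requires that the straight-line flow in direction $\bm\theta_i$ is conservative and has no periodic trajectories (see Proposition~\ref{prop:coding}). If either of those failed one would find an interval $I\subset K_i$ all of whose points share a common code, so $I$ would be contained in a single atom of $\mathcal{P}_n^i$ for all $n$, the mesh would not vanish, and the limit of the $\phi_n$ would collapse $I$ to a point (or fail to be surjective onto a strip on the other side). Conservativity and aperiodicity are Theorems~\ref{thm:pr} and~\ref{thm:no saddles} (with Corollary~\ref{cor:aperiodicity}), and these are substantive inputs proved in Sections~\ref{sect:geometry surfaces} and~\ref{sect:recurrence}; your argument needs to invoke them explicitly, and as written it hides the entire difficulty inside an unjustified ``forces.''

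For comparison, the paper avoids trying to prove mesh decay directly. It constructs the candidate pullback measure $\mu_2'$ on $S(\G,\w_1)$ via the survivors formalism (Proposition~\ref{prop:survivor_in_plane} shows $P_{\w_2}(\bm\theta_2)$ is a $\bm\theta_1$-survivor, so $\mu_2'=\Psi_{\bm\theta_1}^{-1}\circ\Xi\bigl(P_{\w_2}(\bm\theta_2)\bigr)$ exists by Theorem~\ref{thm:operator_theorem}), defines $\phi_1$ on rectangle boundaries by integrating $\mu_2'$ and rescaling as in equations~\ref{eq:bottom} and~\ref{eq:not_bottom}, and then uses conservativity of the flow on $S(\G,\w_2)$ exactly once, to rule out atoms of $\mu_2'$ (which would produce an omitted strip). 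Injectivity of $\phi_1$ is then obtained by running the construction in reverse and composing, and uniqueness of $\phi|_{\partial R_e^1}$ follows from injectivity of $\Psi_{\bm\theta_1}$ (Lemma~\ref{lem:injective}). The advantage of this measure-theoretic route is precisely that the reliance on conservativity is localized to a single clean contradiction, whereas in your symbolic route conservativity is needed globally to justify the mesh decay before the limit map even exists. If you want to keep your approach, the fix is to replace the unsupported ``forces'' step by an explicit appeal to Theorems~\ref{thm:pr} and~\ref{thm:no saddles} (plus Proposition~\ref{prop:coding}), and to verify that the same conclusion yields uniform convergence on compacts of the $\phi_n$ together with their inverses.
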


This theorem is proved in \S \ref{sect:outline}, with the final step in the proof appearing in subsection \ref{sect:top_conj}.

\subsection{Extremal positive eigenfunctions}
\name{ss:eigenfunction}
Before discussing the ergodic measures, we need to further analyze the positive eigenfunctions of the adjacency operator of an infinite connected graph $\G$ with bounded valance.
For $\lambda>0$, consider the set
$$E_\lambda=\{\textrm{non-negative $\f \in \R^\V$ satisfying $\A(\f)=\lambda \f$}\} \label{not:E lambda 1}.$$
The set $E_\lambda$ is a closed convex cone in the
topology of pointwise convergence. Furthermore, every $\f \in E_\lambda$ is positive except for the zero function $\0 \in \R^\V$.
We call a positive eigenfunction $\f \in E_\lambda$ {\em extremal} if $\f=\f_1+\f_2$ for $\f_1,\f_2 \in E_\lambda$ implies $\f_1=c\f$ for some real number $c$ with $0 \leq c \leq 1$.  

\subsection{Ergodic measure characterization}
\name{ss:emc}
Let $S$ be a translation surface and $\bm \theta \in \Circ$. 
Such a choice of direction determines a foliation $\F_{\bm \theta} \label{not:foliation}$ of $S$ by orbits of the straight line flow in direction $\bm \theta$. This foliation is {\em singular} in the sense that some leaves
hit singularities. There is a standard method of constructing a non-singular leaf space from such a foliation; we split all singular leaves into two leaves. These leaves are then joined up to make continuous leaves: one split leaf moves 
leftward around each singularity it hit and the other leaf moves rightward around each the singularity. We use $\hat \F_\btheta \label{not:split foliation}$
to denote this non-singular leaf space, which we call the {\em split leaf space}. In our setting, $\hat \F_\btheta$ is a lamination. See Appendix \ref{sect:coding}
for a more rigorous description of this construction.

A {\em locally finite $\hat \F_{\bm \theta}$-transverse measure} is one which assigns finite measure to every compact transversal to the leaf space. An example of such a measure is the {\em Lebesgue transverse measure} on $S$. If $\gamma:[0,1] \to S$ is a differentiable transversal path, the Lebesgue $\hat \F_{\bm \theta}$-transverse measure satisfies
$$\gamma \mapsto \int_t |\gamma'(t) \wedge \bm \theta|~dt,$$
where $\wedge$ denotes the usual wedge product between vectors in the plane;
\begin{equation}
\name{eq:wedge}
\wedge:\R^2 \times \R^2 \to \R 
\quad \textrm{defined by} \quad
(a,b) \wedge (c, d)=ad-bc.
\end{equation}
We allow our transverse measures to be atomic. An atomic measure could be supported on a single leaf for instance. We will be only be considering locally finite transverse measures, so a leaf supporting an atomic measure can not accumulate in the surface.

\begin{theorem}[Ergodic measure characterization]
\name{thm:ergodic2}
Assume $\G$, $\w_1$, and $\bm \theta_1$ are as in Theorem \ref{thm:topological conjugacy}.
Additionally assume $\G$ has no vertices of valance one. Then, the locally finite ergodic $\hat \F_{\bm \theta_1}$-transverse measures on $S=S(\G,\w_1)$ are 
precisely those measures which arise from 
pulling back the Lebesgue $\hat \F_{\bm \theta_2}$-transverse measure on $S(\G, \w_2)$ under the homeomorphisms $\phi$ given in Theorem \ref{thm:topological conjugacy}, where $\w_2$ is an extremal positive eigenfunctions of $\A$.
\end{theorem}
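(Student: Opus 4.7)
The strategy is to parametrize locally finite $\hat{\F}_{\bm\theta_1}$-transverse measures on $S(\G,\w_1)$ by non-negative eigenfunctions of the adjacency operator $\A$, and identify ergodicity with extremality in this cone. Let $\Nu$ denote the cone of all locally finite $\hat{\F}_{\bm\theta_1}$-transverse measures on $S(\G,\w_1)$. For each $\lambda\ge 2$ and each positive $\w\in E_\lambda$, Theorem~\ref{thm:topological conjugacy} yields a homeomorphism $\phi_\w\colon S(\G,\w_1)\to S(\G,\w)$ carrying $\hat{\F}_{\bm\theta_1}$ to $\hat{\F}_{\bm\theta(\langle g_n\rangle,\lambda)}$; the pullback $\mu_\w=\phi_\w^*\mu_{\mathrm{Leb}}$ belongs to $\Nu$. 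The theorem then reduces to the claim that the $\mu_\w$ with $\w$ extremal are exactly the ergodic members of $\Nu$.

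The main construction is a linear map $\Psi\colon\Nu\to\R^\V$ built from the cylinder structure of $S(\G,\w_1)$. After choosing a uniform normalization, $\Psi(\mu)(\vv)$ is a fixed multiple of the $\mu$-transverse mass of a simple closed curve wrapping $\cyl_\vv$ once, taken horizontally when $\vv\in\Alpha$ and vertically when $\vv\in\Beta$; flow-invariance of $\mu$ makes this value independent of the chosen closed curve. For $\va\in\Alpha$, the horizontal wraparound of $\cyl_\va$ decomposes into the bottom edges of the rectangles $R_e^1$ with $e\in\alpha^{-1}(\va)$, and flow-invariance identifies the $\mu$-mass of each such bottom edge with the contribution from $\Psi(\mu)(\beta(e))$ (up to a common factor), yielding the eigenvalue equation $\A\Psi(\mu)=\lambda_\mu\Psi(\mu)$. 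A direct geometric check shows $\Psi(\mu_\w)$ is a positive multiple of $\w$; since a connected infinite graph of bounded valance admits no positive eigenfunctions with eigenvalue below $2$, the image of $\Psi$ is precisely $\bigcup_{\lambda\ge 2}E_\lambda$.

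The crux of the argument is injectivity of $\Psi$. I would combine the coding of $\hat{\F}_{\bm\theta_1}$ developed in Appendix~\ref{sect:coding} with the renormalizing action of $\Phi^G$ from Proposition~\ref{prop:eigenfunction}. Along the shrinking sequence $\langle g_n\rangle$ of $\bm\theta_1$, the inverse renormalizations $\Phi^{g_n^{-1}}$ produce finer and finer partitions of any transversal into images of the original rectangles; the $\mu$-mass of each refined piece is a linear combination of the entries of $\Psi(\mu)$ obtained by iterating one linear renormalization step, so $\Psi(\mu)$ determines $\mu$. The no-valance-one hypothesis is essential here: a valance-one vertex would allow a rectangle to ``hang off'' the cylinder structure and carry transverse mass invisible to $\Psi$, breaking injectivity. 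Once $\Psi$ is a cone isomorphism, the standard identification of ergodic invariant measures for a $\sigma$-finite measurable action with the extremal rays of the cone of invariant measures completes the proof: ergodicity of $\mu$ corresponds to extremality of $\Psi(\mu)$, and since $\Psi(\mu_\w)$ is proportional to $\w$, ergodicity of $\mu_\w$ corresponds to extremality of $\w$.

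The main obstacle is the injectivity of $\Psi$, which requires three ingredients cooperating: the coding framework of Appendix~\ref{sect:coding}, the shrinking-sequence structure of $\lambda$-renormalizable directions from \S\ref{sect:symmetry}, and the no-valance-one hypothesis on $\G$. Assembling these into a single reconstruction argument --- showing that no ``hidden'' degree of freedom in a locally finite transverse measure escapes the finite cylinder data recorded by $\Psi$ --- is the main technical challenge.
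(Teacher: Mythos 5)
Your proposal rests on a claim that is simply false, and this breaks the argument at its center. You assert that for any locally finite $\hat\F_{\bm\theta_1}$-transverse invariant measure $\mu$, the function $\Psi(\mu)$ defined by cylinder core-curve masses satisfies $\A\Psi(\mu)=\lambda_\mu\Psi(\mu)$; the claimed mechanism is that flow-invariance forces the $\mu$-mass of the bottom edge of $R_e$ to be a fixed multiple of $\Psi(\mu)(\beta(e))$. But these two transversals (a bottom edge of a single rectangle, and a core curve of an entire vertical cylinder) cross different collections of leaves, and flow-invariance of $\mu$ says nothing about how their masses compare. Concretely, once one knows (via Theorems~\ref{thm:operator_theorem} and \ref{thm:surviving_functions}) that $\Psi_{\bm\theta_1}(\mu)=\Xi(\f)$ for some $\f\in\Surv_{\bm\theta_1}$, the bottom-edge mass is $|\f(\beta(e))|$ while the core-curve mass is $|\A(\f)(\beta(e))|$, so the ratio is $|\f(\vb)|/|\A(\f)(\vb)|$ which depends on $\vb$ whenever $\f$ is not an eigenfunction of $\A$. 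And the cone $\Surv_{\bm\theta_1}$ genuinely contains survivors that are not eigenfunctions of $\A$ or even of $\A^2$: a nontrivial convex combination of two ergodic pullback measures associated to eigenfunctions with distinct eigenvalues gives such a $\f$. So the image of your $\Psi$ is not $\bigcup_{\lambda\ge 2}E_\lambda$, and the whole parametrization collapses.

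What the paper actually proves, and what your sketch omits, is that the cone $\Surv_{\bm\theta_1}\subset\R^\V$ encodes all measures (Corollary~\ref{cor:symbolic}), and that $\A^2$ restricts to a \emph{bijection} of this cone onto itself (Theorem~\ref{thm:bijection}) --- a statement requiring the critical decay and adjacency sign properties, which is exactly where the no-valance-one hypothesis enters (Theorem~\ref{thm:properties}). Only after establishing this bijection does one deduce that an \emph{extremal} survivor $\f$ must satisfy $\A^2(\f)=\lambda_2^2\f$ (Theorem~\ref{main_thm:properties}); note that it is $\A^2$, not $\A$, and only for extremal $\f$. One then needs the Martin-boundary machinery of \S\ref{sect:martin boundary intro} (Corollary~\ref{cor:measure_pair} through Corollary~\ref{cor:bijection2}) to convert ``extremal survivor satisfying $\A^2\f=\lambda_2^2\f$'' into ``extremal positive eigenfunction of $\A$.'' Finally, the converse direction --- that each extremal $\w_2$ actually yields an \emph{ergodic} pullback measure --- is not a formal consequence of extremality in the cone; the paper proves it by an ergodic-decomposition argument tied to the spectral behavior of $\A^{2k}$ (Lemma~\ref{lem:ergodicity_implication}). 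Your appeal to ``the standard identification'' covers only the forward direction. Lastly, your heuristic for the no-valance-one hypothesis (rectangles ``hanging off'' the cylinder structure) does not reflect its actual role: the paper conjectures that hypothesis is removable, and injectivity of the relevant embedding (Lemma~\ref{lem:injective}) holds without it; the hypothesis is used only to verify the combinatorial decay and sign conditions.
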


Section \ref{sect:outline} culminates in a proof of this theorem.

\begin{remark}[Valance one] The author conjectures that the condition that $\G$ has no vertices of valance one is unnecessary here. It is  needed to verify some combinatorial conditions
(Definitions \ref{def:subseqence_decay}, \ref{def:critical_decay} and \ref{def:adjacency sign}), which we believe are still true in the valance one case, but which we could not prove.
Indeed, the theorem holds with the valance one condition removed if these conditions can be proved to hold
in the valance one case. See Theorem \ref{thm:general_ergodic_measure_classification}.
\end{remark}

\section{Historical remarks}
\label{sect:hist}

Infinite translation surfaces were first studied in the context of billiards, because when a polygon has angles which are irrational multiples of $\pi$, the Zemljakov-Katok unfolding construction produces an infinite translation surface \cite{ZK}. See for instance \cite{VGS92}.
Recently however, there has been interest in infinite translation surfaces which arise from other constructions. In \cite{Higl1} and \cite{Bowman13}, infinite translation surfaces have been studied 
which arise from geometric limits. Additionally, there are now many papers concerned with geometric questions about infinite covers of translation surfaces.
See \cite{HS09}, \cite{HW10}, and \cite{Schmoll11} in addition to papers on the popular Ehrenfest wind-tree model (see \S \ref{sect:Ehrenfest} of the appendix). Other recent work on infinite translation surfaces has included \cite{CGL} and \cite{PSV11}. 

Dynamicists have long been interested in skew products, and skew rotations have been studied since the papers \cite{S78} and \cite{CK76}. The greatest influence on this paper was Theorem 1.4 of Aaronson, Nakada, Sarig and Solomyak \cite{ANSS02} (which we restate as Theorem \ref{thm:ANSS}).
Other papers in this area include \cite{C09}, \cite{CK11}, \cite{CG12}.
While interval exchange transformations have been studied in close connection with translation surfaces since at least the 1970s,
only recently have skew rotations been studied using ideas from the theory of translation surfaces. (Perhaps \cite{HHW10} was the first such example.) 
Now it appears quite natural to use translation surfaces to study dynamical questions about skew products created using $\Z$-valued cocycles over an interval exchange.

Recently the ergodic theory of straight line flows on infinite covers of translation surfaces has been rapidly developing. In addition to this paper, work of Hubert and Weiss shows that if a $\Z$-cover of a translation surface has a Veech group which is a lattice and the surface contains a strip, then the straight-line flow is ergodic in almost every direction \cite{HW10}. 
Since the first draft of this paper appeared, it has become apparent from work of Fr{\c{a}}czek and Ulcigrai that typically we should not expect many ergodic directions for the straight-line flow on a $\Z$-cover of a translation surface. For instance, it is shown in \cite{FU14} that for any (unbranched) $\Z$-cover of a genus two translation surface the straight-line flow is not ergodic in almost every direction, even though many of these surfaces have recurrent straight-line flows in almost every direction. 

The philosophy of the proofs in this paper come from Teichm\"uller theory. In particular,
we follow the spirit of a criterion of Masur \cite{M92} which guarantees unique ergodicity of an interval exchange transformation involving finitely many intervals. Masur's criterion uses the Teichm\"uller flow on moduli space to demonstrate unique ergodicity. 
Rather than using the Teichm\"uller flow directly, we make use of the inherent symmetries (affine automorphisms) of surfaces produced using Thurston's construction to
renormalize the space of invariant measures. 
This idea goes back to work of Veech \cite{V}.
This iterative process was inspired by Smillie and Ulcigrai's work on the regular octagon \cite{SU10}, which produced
detailed information about the trajectories of the straight-line flow. Our techniques give weaker information about the behavior of trajectories than explicit coding, but the information we extract is sufficient for classifying invariant measures.

\section{Proofs of main results}
\name{sect:outline}

The Orbit Equivalence Theorem and Ergodic Measure Characterization Theorem (Theorems \ref{thm:topological conjugacy} and \ref{thm:ergodic2}) are the main results of this paper. We prove these results in this section, though we rely on work in later sections to flesh out many of the details.

We will now give an overview of the proofs of the main results. As in the statements of these results, $\G$ will be an infinite, connected, bipartite, ribbon graph with bounded valance. We let $\w$ be a positive eigenfunction
of the adjacency operator $\A:\R^\V \to \R^\V$ with eigenvalue $\lambda$. Our surface $S=S(\G,\w)$ is built as in Definition \ref{def:S}. We let $\btheta=\bm \theta(\langle g_n \rangle, \lambda)$ be a $\lambda$-renormalizable direction.
\begin{itemize}
\item In \S \ref{sect:reinterpret}, we explain that there
is a linear embedding of the space of locally finite $\hat \F_{\bm \theta}$-transverse measures into a cohomological space, $H^1$. We derive a necessary and sufficient criterion for an $m \in H^1$ to arise from a transverse measure in terms of pairings of $m$ with homology classes of saddle connections.
\item In \S \ref{sect:affine action}, we consider the affine action of the sequence of group elements $\langle g_n \in G \rangle$. This paper uses these group elements as renormalization operators. These group elements act on $H^1$. We give a necessary and sufficient criterion for $m \in H^1$ to arise from a transverse measure in terms of the images of $m$ under the action of 
$\langle g_n \rangle$.
\item In \S \ref{sect:operators}, we observe that there is a natural linear embedding of $\R^\V$ into $H^1$. The action of $G$ on $H^1$ leaves invariant
this image of $\R^\V$, and the induced action of the generators on $\R^\V$ is quite simple (see equations \ref{eq:Ho} and \ref{eq:Vo}). We describe
necessary and sufficient conditions for the image of $\f \in \R^\V$ in $H^1$
to come from a transverse measure.
\item In \S \ref{sect:top_conj}, we prove 
the Orbit Equivalence Theorem (Theorem \ref{thm:topological conjugacy}). 
Suppose $\w_2$ is another eigenfunction of $\A$ with 
eigenvalue $\lambda_2$ and consider the direction $\btheta_2=\bm \theta(\langle g_n \rangle, \lambda_2)$ on the surface $S_2=S(\G,\w_2)$. Since $S$ and $S_2$ are built in the same combinatorial way, there is a natural isotopy class of homeomorphisms between them. We use this to pull the cohomology class associated to the $\hat \F_{\btheta_2}$-Lebesgue transverse measure back to $S$. We use our understanding of transverse measures to observe that this pullback cohomology class came from such a measure. By integrating this measure, we improve our identification between these surfaces to a canonical homeomorphism $S \to S_2$ which carries $\F_{\bm \theta}$ to $\F_{\btheta_2}$. This is the desired orbit equivalence.
\item In \S \ref{sect:surviving}, we introduce a hypothesis called the  subsequence decay property. We show that under this hypothesis all cohomology classes arising from transverse measures lie in the image of $\R^\V$ inside $H^1$.
Going forward, we will consider the cone of transverse measures as linearly embedded in $\R^\V$.
\item In  \S \ref{sect:outline_adjacency}, we introduce the action of the adjacency operator, $\A$. We make some more hypotheses: the critical decay property and the adjacency sign property. We note these properties as well as the subsequence decay property hold for surfaces built from graphs with no vertices of valance one. Under these hypotheses, we show that $\A^2$ restricts to a bijection preserving the cone of transverse invariant measures. Furthermore, we show that if $\f \in \R^\V$ arises from an ergodic transverse measure, then there is a $\lambda_2>0$ so that $\A^2(\f)=\lambda_2^2 \f$. 
\item In \S \ref{sect:martin boundary intro}, we use Martin boundary theory to finish the proof of the Ergodic Measure Characterization Theorem (Theorem \ref{thm:ergodic2}). The prior section left us to consider precisely which functions $\f$ satisfying $\A^2(\f)=\lambda_2^2 \f$ arise from ergodic transverse measures. It turns out that for each $\lambda_2$, the set of $\f$ arising from transverse measures and satisfying $\A^2(\f)=\lambda_2^2 \f$ is the image under a linear map of the set of positive solutions to $\A(\g)=\lambda \g$. Via the Poisson-Martin representation theorem, the extreme points of this latter space are understood in terms of the minimal Martin boundary.
\end{itemize}

\subsection{Reinterpretation of invariant measures}
\label{sect:reinterpret}

Let $S$ be an infinite translation surface constructed as a union of polygons as in Section \ref{sect:trans}, and let $V \label{not:singularities}$ be the collection of all points in $S$ which are (identified) vertices of those polygons. Let
$\bm \theta \in \Circ$ be a direction and $\hat \F_{\bm \theta}$ be the leaf space of orbits of the straight line flow in direction $\bm \theta$, 
with singular leaves split. Let $\M_{\bm \theta} \label{not:measures}$ denote the space of all locally finite transverse measures for $\hat \F_{\bm \theta}$.

Let $H_1(S, V, \R) \label{not:homology}$ denote the real homology classes of closed curves in $S/V$ (i.e., $S$ with the points in $V$ collapsed to a single point). Here, we only allow such curves to visit finitely many polygons in the decomposition of $S$ into polygons.
We will always use $\Coh \label{not:cohomology}$ to indicate the the dual space to $H_1(S, V, \R)$. That is,
$\Coh$ is the collection of all linear maps from $H_1(S, V, \R) \to \R$. We choose this notation because $\Coh$ will be the only cohomological space we consider.

A transverse measure $\mu \in \M_{\bm \theta}$ determines a linear map $\Psi_{\bm \theta}(\mu):H_1 (S, V, \Z) \to \R \label{not:Psi}$.
This map is defined to be $\hom{x} \mapsto \mu(x)$ if $x$ is a curve in $S$ joining a point in $V$ to a point in $V$ and everywhere crossing 
the leaf space $\hat \F_{\bm \theta}$ with positive algebraic sign. (To be precise, if the leaves of $\hat \F_{\bm \theta}$ are upward pointing,
then the transversals moving rightward across the leaf space cross with positive algebraic sign.)
This map can be uniquely extended to a linear map $\Psi_{\bm \theta}(\mu):H_1(S, V, \Z) \to \R$,
since the map is determined on a basis. Note that $\Psi_{\bm \theta}(\mu)$ is a cohomology class in $H^1$. So, 
$\Psi_\btheta$ defines a linear map $\M_{\bm \theta} \to \Coh.$

Recall $\flow^t$ denoted the flow on a translation surface in direction $\bm \theta$. 
We call the flow $\flow^t:S \to S$ {\em conservative} if given any subset $A \subset S$ of positive measure and any $T>0$, for Lebesgue a.e. $x \in S$ there is a $t>T$ for which $\flow^t(x) \in A$. We have the following.

\begin{lemma}
\label{lem:injective}
Let $S$ be a translation surface with non-empty singular set $V$.
The map $\Psi_{\bm \theta}:\M_{\bm \theta} \to \Coh$ is injective if the straight-line flow 
$\flow^t$ has no periodic trajectories and is conservative.
\end{lemma}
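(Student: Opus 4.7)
I plan to show injectivity by showing that if $\mu_1, \mu_2 \in \M_{\bm \theta}$ induce the same cohomology class, then the signed difference $\nu := \mu_1 - \mu_2$ assigns zero to every compact transversal $\gamma$, and hence $\mu_1 = \mu_2$. The hypothesis $\Psi_{\bm \theta}(\nu) = 0$ rephrases as: for every path $c$ in $S$ with endpoints in $V$, the algebraic transverse $\nu$-measure of $c$ (leaf segments contribute zero, transverse crossings contribute algebraically with sign) vanishes. So my task is to realize $\nu(\gamma)$ as such a pairing, up to a small controllable error.

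Given $\gamma$ with endpoints $p, q$, I complete $\gamma$ to a $V$-to-$V$ path as follows. Conservativity of $\flow^t$ together with $V \neq \emptyset$ gives, via Poincar\'e recurrence, that for Lebesgue-a.e.\ $x \in S$ and every neighborhood $U$ of $V$, the forward orbit of $x$ enters $U$ at arbitrarily large times. Assume first that $p, q$ lie in this full-measure recurrence set. For $\epsilon > 0$, I fix flow times $t_p, t_q > 0$ with $\flow^{t_p}(p) \in B_\epsilon(v_p)$ and $\flow^{t_q}(q) \in B_\epsilon(v_q)$ for some $v_p, v_q \in V$. Let $c_\epsilon$ be the concatenation: a short transverse arc $\alpha_p$ from $v_p$ to $\flow^{t_p}(p)$, the leaf segment backwards to $p$, the transversal $\gamma$, the leaf segment forwards to $\flow^{t_q}(q)$, and a short transverse arc $\alpha_q$ to $v_q$. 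Then $[c_\epsilon] \in H_1(S, V, \Z)$, and pairing with $\nu$ yields $0 = \nu(\alpha_p) + \nu(\gamma) + \nu(\alpha_q)$. Sending $\epsilon \to 0$ with the arcs chosen to avoid the countably many atomic leaves of $\nu$, the two auxiliary contributions vanish by outer regularity of the locally finite measure, giving $\nu(\gamma) = 0$.

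For general $\gamma$ whose endpoints may be exceptional, I would approximate: the recurrence set has full 2D Lebesgue measure in $S$, and the atomic leaves of the locally finite $\mu_1, \mu_2$ form a countable set, so I can find $p', q'$ arbitrarily close to $p, q$ lying in the recurrence set and off atomic leaves. The transversal $\gamma'$ from $p'$ to $q'$ satisfies $\nu(\gamma') \to \nu(\gamma)$ as $p' \to p$, $q' \to q$ along this good set, and the previous step gives $\nu(\gamma') = 0$.

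The main obstacle I expect is controlling this approximation: a transverse measure can have atoms on individual leaves, so perturbing an endpoint across an atomic leaf changes $\nu(\gamma)$ discontinuously by the atomic weight. I would navigate around this using the countability of atomic leaves together with a Fubini-type argument ruling out bad perturbation directions. The no-periodic-trajectories hypothesis plays an auxiliary but essential role: it excludes orbits that recur only to themselves (as along periodic trajectories) rather than spreading out to reach arbitrary neighborhoods of $V$, which is implicit in the step where specific endpoints of $\gamma$ are pushed into the recurrent set and then flowed toward singularities.
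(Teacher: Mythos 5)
Your plan takes a genuinely different route from the paper's. The paper argues through the symbolic coding of the return map to the horizontal edges (Appendix \ref{sect:coding}): Proposition \ref{prop:coding} shows the coding map is injective under the stated hypotheses, and Lemma \ref{lem:Psi} observes that every cylinder set of the coding is the set of trajectories crossing some fixed saddle connection $\sigma$, with $\mu(\sigma)=|\Psi_{\bm\theta}(\mu)(\hom{\sigma})|$; Carath\'eodory's extension theorem then recovers $\mu$ from these values. Because saddle connections already run from $V$ to $V$, the paper never needs to flow the endpoints of an arbitrary transversal toward $V$.

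The gap in your approach is the claim that conservativity ``gives, via Poincar\'e recurrence, that for Lebesgue-a.e.\ $x\in S$ and every neighborhood $U$ of $V$, the forward orbit of $x$ enters $U$.'' Conservativity in the form the paper actually uses (see the proof of Lemma \ref{lem:recurrence_criterion}, which is phrased in terms of wandering sets) gives only that a.e.\ point of a positive-measure set \emph{returns} to that set; it does not give visits to an arbitrary fixed set. To rule out a positive-measure set of orbits that stay $\epsilon$-far from $V$ for all time, you must exhibit an immersed bi-infinite strip of such orbits, observe that an embedded strip is incompatible with conservativity, and observe that a self-intersecting strip forces a periodic orbit by developing the strip into the plane --- this is precisely the dichotomy carried out inside the proof of Proposition \ref{prop:coding}. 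The ``no periodic trajectories'' hypothesis is load-bearing here, not auxiliary, and must be deployed at this step; as written, you have silently assumed the hardest content of Proposition \ref{prop:coding} without proving it.

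A smaller imprecision: the vanishing of $\nu(\alpha_p)$ and $\nu(\alpha_q)$ as the arcs shrink is not ``outer regularity'' but continuity from above for the locally finite measure $\nu$, applied to the nested sets of leaves of $\hat\F_{\bm\theta}$ crossed by the shrinking arcs, whose intersection is empty because no leaf passes through $v_p$ after splitting. You should also verify the arcs can be taken inside a single prong of the singularity, so that they do not cross the separatrix leaves of $v_p$.
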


This statement is proved in Appendix \ref{sect:homology}. 

We can apply this result because we have the following three results
for the translation surface $S=S(\G,\w)$ constructed as in Definition \ref{def:S},
with $\w \in \R^\V$ a positive eigenfunction for the adjacency operator with eigenvalue $\lambda$.

A {\em saddle connection } is an oriented geodesic segment $\sigma \subset S \label{not:sigma}$ which visits only finitely many polygons making up $S$ and intersects $V$ precisely at its endpoints.
The {\em holonomy} of a saddle connection, $\hol(\sigma) \in \R^2 \label{not:holonomy}$, is the vector difference between the end and starting points of a developed image of $\sigma$ into the plane. 

\begin{theorem}[No saddle connections]
\label{thm:no saddles}
No saddle connection of $S(\G,\w)$ has a
holonomy vector which points in a $\lambda$-renormalizable direction.
\end{theorem}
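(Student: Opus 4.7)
I would argue by contradiction using the affine automorphisms of $S(\G,\w)$ to manufacture a sequence of saddle connections whose holonomies shrink to zero. Suppose $\sigma$ is a saddle connection with $\hol(\sigma)=c\bm\theta$ for some $c>0$ and some $\bm\theta\in\Rn_\lambda$. By Lemma \ref{lem:compatible directions}(1), let $\langle g_n\rangle$ be the $\lambda$-shrinking sequence for $\bm\theta$, so that $\|\rho_\lambda^{g_n}(\bm\theta)\|$ is strictly monotonically decreasing to $0$. Proposition \ref{prop:eigenfunction} gives affine automorphisms $\Phi^{g_n}\colon S\to S$ with $D(\Phi^{g_n})=\rho_\lambda^{g_n}$. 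Since affine automorphisms send saddle connections to saddle connections and act on holonomy vectors via their derivatives, $\sigma_n:=\Phi^{g_n}(\sigma)$ is a saddle connection of length $|\sigma_n|=c\|\rho_\lambda^{g_n}(\bm\theta)\|\to 0$.

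To derive a contradiction, I would extract combinatorial/geometric information about $\sigma_n$ in $S$. Each $\sigma_n$ starts at a vertex $p_n=\Phi^{g_n}(p)\in V$ and is a straight segment of length $|\sigma_n|$ in direction $\rho_\lambda^{g_n}(\bm\theta)$. Condition (2) in the definition of $\Rn_\lambda$ prevents $\bm\theta$ from being an eigendirection of any $\rho_\lambda^g$ with $g$ conjugate to $h$, $v$, or $v^{-1}h$; in particular $\bm\theta$ is neither horizontal nor vertical, and moreover the normalized vectors $\rho_\lambda^{g_n}(\bm\theta)/\|\rho_\lambda^{g_n}(\bm\theta)\|$ do not converge to the fixed direction of $\rho_\lambda^{g_{n+1}g_n^{-1}}$ prematurely (i.e.\ the geodesic ray $\langle g_n\rangle$ alternates in a nontrivial way). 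Using this, for each large $n$ both horizontal and vertical components of $\rho_\lambda^{g_n}(\bm\theta)$ remain nonzero while their \emph{sum of absolute values} tends to $0$. A saddle connection with such a holonomy starting at the corner $p_n$ of a rectangle $R_{e_n}$ must either (i) terminate at an adjacent corner of $R_{e_n}$, forcing $|\sigma_n|\ge\min\bigl(\w\circ\alpha(e_n),\w\circ\beta(e_n)\bigr)$, or (ii) exit $R_{e_n}$ through a side, forcing $|\sigma_n|$ to be at least a definite (direction-dependent) fraction of that side length.

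The main obstacle is producing a lower bound that is \emph{uniform in $n$}, since the rectangle $R_{e_n}$ containing the starting vertex changes with $n$ and $\w$ need not be bounded below. I would overcome this by tracking the $\Phi^{g_n}$-orbit of $p$ carefully: because $\Phi^h$ (resp.\ $\Phi^v$) preserves horizontal (resp.\ vertical) cylinders as a Dehn multi-twist, the vertex $p_n$ lies in a cylinder containing $p$ whenever the last letter of $g_n$ is $h^{\pm1}$ (resp.\ $v^{\pm1}$). Thus $p_n$ ranges over a \emph{finite} set of cylinders—namely those containing $p$—whose widths are positive. This produces the uniform positive lower bound on the lengths of side-lengths $\w(\alpha(e_n)),\w(\beta(e_n))$ involved in the estimates above, completing the contradiction with $|\sigma_n|\to 0$ and proving the theorem.
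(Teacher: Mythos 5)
Your high-level strategy matches the paper's: use the shrinking sequence $\langle g_n\rangle$ and the affine automorphisms $\Phi^{g_n}$ to produce saddle connections $\sigma_n=\Phi^{g_n}(\sigma)$ whose holonomies $c\,\rho_\lambda^{g_n}(\btheta)$ shrink to zero, and then contradict a uniform positive lower bound on their lengths. That outline is correct. But the mechanism you propose for the lower bound does not work, and this is exactly where the content of the paper's proof lies.

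First, a small confusion that is worth clearing up: the automorphisms $\Phi^h$ and $\Phi^v$ are single Dehn twists in horizontal (resp.\ vertical) cylinders, and a Dehn twist in a cylinder fixes the boundary of that cylinder pointwise. Since every vertex in $V$ lies on the boundary of a horizontal cylinder (and of a vertical cylinder), the whole vertex set is fixed pointwise by $\Phi^h$ and $\Phi^v$, hence by every $\Phi^{g}$. So $p_n=p$ identically; there is no nontrivial orbit of $p$ to track, and the statement that ``$p_n$ lies in a cylinder containing $p$ whenever the last letter of $g_n$ is $h^{\pm 1}$'' is vacuous.

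The real gap is in the step where you claim a uniform lower bound on $\min(\w\circ\alpha(e_n),\w\circ\beta(e_n))$ by asserting that the relevant rectangles form a finite family. Even with $p$ fixed, this requires that only finitely many rectangles of $S(\G,\w)$ have $p$ as a corner. That is false in general for these surfaces: going around a corner once applies the permutation $\North\East\North^{-1}\East^{-1}$ to $\E$, and this can have infinite orbits even in the simplest bounded-valance examples (for $\G=\G_\Z$ one computes that the commutator orbit of an edge is infinite). In that situation $p$ is a wild singularity with infinitely many adjacent rectangles, and if $\w$ decays along the corresponding spiral then those rectangles have side lengths tending to zero. Your local bound near $p$ gives nothing uniform. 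The paper's definition of translation surface explicitly allows such ``exotic'' singularities, so this case cannot be set aside.

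The paper circumvents the problem by working globally rather than locally at $p$. It introduces the support $\supp(\sigma)$ (the cylinders whose cores $\sigma$ crosses) and the extended support $\suppbar(\sigma)$ (the union of the support with all spokes meeting it), and proves the key Lemma \ref{lemma:support lemma}: $\suppbar(\Phi^{g_1}(\sigma))\subset\suppbar(\sigma)$ when $g_1$ is the first step of the shrinking sequence. That lemma is where the geometry enters — its proof uses Proposition \ref{prop:limit_set2} (that a $\lambda$-renormalizable direction in $\Shrink_\lambda(h^{-1})$ has slope strictly between $0$ and $(\lambda-\sqrt{\lambda^2-4})/2$) together with the spoke-detection criterion Proposition \ref{prop:detecting_spokes}. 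Iterating gives $\suppbar(\Phi^{g_i}(\sigma))\subset\suppbar(\sigma)$ for all $i$, and since $\suppbar(\sigma)$ is a \emph{fixed finite} set of vertices, every $\Phi^{g_i}(\sigma)$ must cross some cylinder indexed by $\suppbar(\sigma)$, giving $\|\hol(\Phi^{g_i}(\sigma))\|\geq\min\{\w(\vv):\vv\in\suppbar(\sigma)\}>0$ uniformly. That uniform bound is exactly what your proposal tries to produce locally but cannot. If you want to fill the gap in your argument you should replace the local picture at $p$ with a control on which cylinders $\sigma_n$ can meet — which is precisely the content of the extended-support lemma.
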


This theorem is proved in section \ref{sect:geometry surfaces}.
Note that in any translation surface, a periodic trajectory  of the straight-line flow lies in a maximal Euclidean cylinder whose core curves are parallel to the trajectory. The obstruction to further enlarging such a cylinder is the presence of vertices in the boundary, so such a cylinder is bounded by a finite number of saddle connections. So as a consequence of the above theorem, we see:

\begin{corollary}[Aperiodicity]
\label{cor:aperiodicity}
The surface $S(\G,\w)$ admits no periodic straight-line trajectories in $\lambda$-renormalizable directions.
\end{corollary}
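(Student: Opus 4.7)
The plan is a direct application of Theorem \ref{thm:no saddles} by contradiction. Suppose there were a periodic straight-line trajectory $\gamma$ of $\flow^t$ in some $\lambda$-renormalizable direction $\bm \theta$ on $S = S(\G,\w)$. I would then extract from the geometry around $\gamma$ a saddle connection whose holonomy is parallel to $\bm \theta$, which contradicts the assumed theorem.

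First, I would invoke the standard cylinder construction: any periodic orbit of $\flow^t$ on a translation surface is contained in a maximal embedded flat Euclidean cylinder $C \subset S$ whose core curves are closed straight-line trajectories in direction $\bm \theta$, all of the same length $\ell = \mathrm{length}(\gamma)$. In particular each boundary component of $C$ is a closed straight-line curve of length $\ell$ in direction $\bm \theta$, and hence is compact in $S$.

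Next I would argue, using maximality, that each boundary circle of $C$ must meet the singular set $V$: otherwise, around any non-singular boundary point the local translation chart would allow one to widen $C$ transversally, contradicting maximality. Picking any $p \in V$ on a boundary component, I would follow this component starting at $p$ in direction $\bm \theta$ until it next meets $V$; by compactness of the boundary and the fact that the flow is a local isometry crossing only finitely many of the rectangles $R_e$ in a segment of bounded length, this yields a geodesic segment $\sigma$ with both endpoints in $V$, visiting only finitely many polygons making up $S$. Thus $\sigma$ is a saddle connection with $\hol(\sigma) = t\, \bm \theta$ for some $t > 0$, so $\hol(\sigma)$ points in a $\lambda$-renormalizable direction, contradicting Theorem \ref{thm:no saddles}.

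Essentially all of the content is packed into Theorem \ref{thm:no saddles}; the rest of the argument is the standard observation that periodic leaves of a translation-surface foliation are always bounded by saddle connections in the same direction. Accordingly, I expect no real obstacle in this corollary beyond carefully formulating the maximal-cylinder step in the infinite-area setting, where one must check that the boundary circle is compact (which follows from $\ell < \infty$) and that a segment of finite length in direction $\bm \theta$ meets only finitely many rectangles $R_e$ (which follows from the bounded-valance hypothesis on $\G$ and the local structure of $S(\G,\w)$).
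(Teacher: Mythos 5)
Your argument is correct and is essentially the same as the paper's: a periodic trajectory lies in a maximal cylinder, whose boundary must contain singular points, giving saddle connections in the direction $\bm\theta$ and contradicting Theorem \ref{thm:no saddles}. The only difference is that you spell out the infinite-area bookkeeping (compactness of the boundary circle, finitely many rectangles crossed), which the paper leaves implicit.
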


Finally, in section \ref{sect:recurrence}, we prove the following:

\begin{theorem}[Conservativity]
\label{thm:pr}
The straight-line flow on $S(\G,\w)$ in a $\lambda$-renormalizable direction
is conservative.
\end{theorem}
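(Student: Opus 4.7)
The plan is to prove conservativity by contradiction, using the affine automorphism group to renormalize a hypothetical wandering set. Suppose $\flow^t$ fails to be conservative in direction $\btheta$. Then the Hopf decomposition for locally finite measure-preserving flows produces a Borel set $W \subset S$ of positive, finite Lebesgue measure with $\mu\big(\flow^t(W) \cap W\big) = 0$ for every $|t| > T_0$, for some constant $T_0 > 0$. By regularity of Lebesgue measure, I would further localize and assume $W$ is contained in the interior of a single rectangle $R_{e_0}$ from the decomposition of $S = S(\G,\w)$.

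The renormalization step uses the affine automorphisms $\phi_n = \Phi^{g_n}$ attached to the $\lambda$-shrinking sequence $\langle g_n \rangle$ of $\btheta$ (Lemma \ref{lem:compatible directions}). Each $\phi_n$ is Lebesgue-measure preserving, and its derivative $M_n = \rho_\lambda^{g_n}$ intertwines the straight-line flows via $\phi_n \circ \flow^t = F^t_{M_n \btheta} \circ \phi_n$. Setting $W_n := \phi_n(W)$ and $\btheta_n' := M_n\btheta/\|M_n\btheta\|$, the image $W_n$ is a wandering set for $F^t_{\btheta_n'}$ of unchanged Lebesgue measure, with wandering time $T_0\,\|M_n \btheta\|$, which tends to zero as $n \to \infty$ since $\langle g_n\rangle$ is the shrinking sequence. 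Thus a single hypothetical wandering set produces an infinite sequence of equally massive wandering sets whose flow escape times are arbitrarily small.

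To close the argument, I would combine three ingredients: (a) the no-saddle-connection theorem (Theorem \ref{thm:no saddles}) and the consequent aperiodicity (Corollary \ref{cor:aperiodicity}), which rule out trapped or periodic orbits in direction $\btheta$ and its renormalized counterparts; (b) the cylinder decompositions, so that every trajectory decomposes into sojourns through a sequence of finite-area horizontal and vertical cylinders on which Poincar\'e recurrence applies within each cylinder; and (c) the bounded-valance and eigenfunction structure of $\G$, which should control how rapidly the renormalized sets $\phi_n(W)$ can disperse across cylinders. The heuristic is that $\phi_n(R_{e_0})$ becomes a long, thin region wrapping many cylinders, so that a positive fraction of $W_n$ lies in a single cylinder $\cyl_\va$ (or $\cyl_\vb$); the finite circumference of that cylinder, together with the vanishing wandering time, should force overlaps of orbits inside one cylinder. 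The hardest part, I expect, will be making this quantitative: I will need either to build an explicit measurable return tower over a fixed section whose union exhausts $S$, or to translate the itinerary of a typical orbit through $\G$ into a recurrent deterministic-random-walk statement, so that the bookkeeping of how the wandering mass $\mu(W)$ spreads among the cylinders at renormalization step $n$ yields the required contradiction.
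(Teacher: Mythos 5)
Your renormalization instinct is correct: both you and the paper use the shrinking sequence $\langle g_n \rangle$ and the affine automorphisms $\Phi^{g_n}$ to push a single geometric object across the orbit of $\btheta$. But the closing argument, by your own admission a heuristic, has a genuine gap. Pushing a hypothetical wandering set $W$ forward to $W_n = \Phi^{g_n}(W)$ does yield a wandering set for $F^t_{\btheta_n'}$ whose ``wandering time'' scales by $\|\rho_\lambda^{g_n}(\btheta)\| \to 0$, but small wandering time is not in itself contradictory --- e.g.\ in an infinite strip with vertical flow the whole surface is wandering and the wandering time can be made as small as you like. Something surface-specific must be added, and your ingredients (a)--(c) don't combine to supply it. In particular: applying Dehn twists to $R_{e_0}$ spreads $\Phi^{g_n}(R_{e_0})$ across \emph{more and more} cylinders as $n$ grows, so the fraction of $W_n$ lying in any one cylinder goes to \emph{zero}, not stays positive; and Poincar\'e recurrence ``within a cylinder'' does not apply because orbits of $F^t_{\btheta_n'}$ exit the cylinder --- that is exactly what makes the problem hard.

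The paper takes a structurally different route. It first proves a boundary criterion (Lemma~\ref{lem:recurrence_criterion}): if every bounded $K \subset S$ can be enclosed in a bounded $U$ with arbitrarily small transverse-measure boundary $\mu(\partial U)$, then the flow is conservative --- because the mass of a hypothetical wandering $X$ would have to escape through $\partial U$, which is too small. It then renormalizes not the wandering set, but nested \emph{bounded unions of cylinders}: $U_n = \Phi^{\gamma_n^{-1}}(X_n)$ where $X_n$ is a finite union of cylinders and $\gamma_n$ is a subsequence of $\langle g_n \rangle$. The remaining work is quantitative: Proposition~\ref{prop:boundary_growth} bounds the Euclidean boundary length $\ell_n$ of $\partial X_n$ (exponential rate $\frac{\lambda+\sqrt{\lambda^2-4}}{2}$ when $\lambda>2$, linear when $\lambda=2$), and Lemmas~\ref{lem:theta_decay2}/\ref{lem:theta_decay} show that $\|\rho_\lambda^{\gamma_n}(\btheta)\|$ decays strictly faster along a subsequence, so $\mu(\partial U_n) \le \ell_n\|\rho_\lambda^{\gamma_n}(\btheta)\| \to 0$. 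Your proposal does not contain an analogue of this competing-growth estimate, and without one the renormalization has no quantitative teeth. If you want to repair your version, you would need to replace items (b)--(c) with a concrete inequality comparing how $W_n$ spreads across cylinders against how its wandering time contracts --- which is essentially rediscovering the boundary estimate.
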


So, by the lemma above, we have the following:
\begin{corollary}
If $\btheta$ is a $\lambda$ renormalizable direction on
the surface $S(\G,\w)$, then the map $\Psi_{\bm \theta}:\M_{\bm \theta} \to \Coh$ is injective.
\end{corollary}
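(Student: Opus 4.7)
The plan is to invoke Lemma \ref{lem:injective} directly, after verifying its three hypotheses for the surface $S = S(\G,\w)$ and the direction $\btheta$. The lemma requires that (i) the singular set $V$ is non-empty, (ii) the straight-line flow $\flow^t$ has no periodic trajectories, and (iii) the flow $\flow^t$ is conservative.

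First I would note that $V$ is non-empty because the graph $\G$ is assumed infinite and connected, hence its vertex set $\V$ is non-empty, which guarantees that the rectangle decomposition producing $S(\G,\w)$ has corners to identify. Next, hypothesis (ii) is precisely the content of Corollary \ref{cor:aperiodicity}, which asserts that $S(\G,\w)$ admits no periodic straight-line trajectories in $\lambda$-renormalizable directions. Finally, hypothesis (iii) is exactly Theorem \ref{thm:pr}, the conservativity statement for the straight-line flow on $S(\G,\w)$ in $\lambda$-renormalizable directions.

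With all three hypotheses of Lemma \ref{lem:injective} satisfied, the conclusion that $\Psi_{\bm \theta}:\M_{\bm \theta} \to \Coh$ is injective is immediate. There is essentially no obstacle in the corollary itself, as it is a direct packaging of the preceding three results; the genuine work lies in the proofs of Theorem \ref{thm:no saddles} (from which Corollary \ref{cor:aperiodicity} is deduced) and Theorem \ref{thm:pr}, both of which are deferred to later sections and are permitted as inputs here.
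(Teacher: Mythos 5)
Your proof is correct and matches the paper's approach: the corollary is obtained by combining Lemma \ref{lem:injective} with Corollary \ref{cor:aperiodicity} and Theorem \ref{thm:pr}. Your explicit verification that $V$ is non-empty is a small addition the paper leaves implicit, but it is accurate and harmless.
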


The first idea in the proof of our main results is to work with the convex cone
$\Psi_{\bm \theta}(\M_{\bm \theta})$ rather than working directly with measures. In order to do this, we give a criterion for a cohomology class to come from a measure. 

\begin{lemma}
\label{lem:sign}
Given $m \in \Coh$, we have $m \in \Psi_{\bm \theta}(\M_{\bm \theta})$ if and only if for all saddle connections $\sigma$ with 
$m(\hom{\sigma}) \neq 0$ we have
$$\sgn \big( m(\hom{\sigma})\big)=\sgn \big(\hol(\sigma) \wedge \bm \theta\big).$$
Here, $\sgn:\R \to \{-1,0,1\} \label{not:signum}$ is the {\em signum} function, which assigns to a real number the element of $\{-1,0,1\}$ with the same sign.
\end{lemma}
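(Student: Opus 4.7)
The forward direction follows by unpacking definitions. If $m = \Psi_{\bm\theta}(\mu)$ for some $\mu \in \M_{\bm\theta}$, then for a saddle connection $\sigma$ oriented so that $\hol(\sigma) \wedge \bm\theta > 0$ we have $m(\hom{\sigma}) = \mu(\sigma) \geq 0$, and reversing the orientation of $\sigma$ negates both $m(\hom{\sigma})$ and $\hol(\sigma) \wedge \bm\theta$. Thus whenever $m(\hom{\sigma}) \neq 0$, its sign matches $\sgn(\hol(\sigma) \wedge \bm\theta)$.

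For the backward direction, I would assume the sign condition and construct a measure $\mu \in \M_{\bm\theta}$ with $\Psi_{\bm\theta}(\mu) = m$; by the corollary to Lemma \ref{lem:injective} such a $\mu$ is automatically unique. First I would fix a geodesic triangulation $\T$ of $S$ whose vertices lie in $V$ and whose edges are saddle connections, obtained by triangulating the polygonal presentation of $S$ from Section \ref{sect:trans} (each rectangle $R_e$ of $S(\G,\w)$ can be triangulated by adding a diagonal). By Theorem \ref{thm:no saddles}, every edge $e$ of $\T$ satisfies $\hol(e) \wedge \bm\theta \neq 0$, so the hypothesis supplies $\sgn(m(\hom{e})) = \sgn(\hol(e) \wedge \bm\theta)$ whenever $m(\hom{e}) \neq 0$. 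Given a compact transversal arc $\tau$ to $\hat\F_{\bm\theta}$ oriented so that its tangent $\tau'$ satisfies $\tau' \wedge \bm\theta > 0$, decompose $\tau$ into finitely many sub-arcs $\tau_i \subset \Delta_i \in \T$; for each $\tau_i$ close it up inside $\Delta_i$ along boundary edges to form a loop $\bar\tau_i$ representing a class $[\bar\tau_i] \in H_1(S, V, \R)$ that is a real linear combination of the three edges of $\Delta_i$, with coefficients encoding how far along each edge the endpoints of $\tau_i$ lie. Set $\mu(\tau) = \sum_i m([\bar\tau_i])$.

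The remaining verifications are (i) independence of the decomposition, (ii) local finiteness, (iii) non-negativity on rightward arcs, and (iv) the identity $\Psi_{\bm\theta}(\mu) = m$. Step (i) reduces to checking consistency when $\tau$ crosses a single edge of $\T$, which is a one-line computation once the coefficient formula is settled; (ii) is immediate from the local finiteness of $\T$ and linearity of $m$; (iv) follows by applying $\mu$ to the saddle connections of $\T$ and matching. The main obstacle will be (iii): inside a single Euclidean triangle $\Delta_i$, the coefficients of $[\bar\tau_i]$ on the edges of $\Delta_i$ can be expressed as wedge products of edge holonomies with the endpoint positions and with $\hol(\tau_i)$, and I expect these wedge-product coefficients to combine with the sign hypothesis on each $m(\hom{e})$ precisely to yield a non-negative sum, with the crucial input being that the rightward orientation $\tau_i' \wedge \bm\theta > 0$ dictates the pattern of signs across the three edges of $\Delta_i$.
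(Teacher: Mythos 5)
Your forward direction is correct and matches the paper. For the backward direction you take a genuinely different route (direct geometric construction via a triangulation rather than the paper's use of symbolic coding), but there is a gap that I do not think can be repaired within the framework you set up.

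The gap is that your definition of $\mu(\tau)$ by ``closing up'' sub-arcs $\tau_i$ with real-coefficient fractional edge chains is linear in the positions of the endpoints of $\tau_i$ along the edges of $\Delta_i$, hence continuous in those positions. But locally finite transverse measures to $\hat\F_{\bm\theta}$ are explicitly allowed to have atoms supported on individual leaves (the paper emphasizes this, and the atomic case is essential for the later classification). If a leaf $\ell$ carrying an atom of mass $c>0$ crosses the interior of a triangle $\Delta_i$, then sliding an endpoint of $\tau_i$ along an edge across the crossing point of $\ell$ must change $\mu(\tau_i)$ by $\pm c$ discontinuously; your formula $\mu(\tau_i)=m([\bar\tau_i])$ cannot produce that jump because the coefficients of the edges in $[\bar\tau_i]$ vary linearly. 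Put differently, the fractional-chain interpolation can at best reconstruct the absolutely continuous part of the transverse measure, and the lemma as stated has to produce all of $\M_{\bm\theta}$. (There is also a more elementary imprecision: a loop $\bar\tau_i$ contained in a single triangle, a disk, is null-homologous in $H_1(S,V,\R)$, so ``close it up to a loop'' cannot be what you literally mean; one must be careful to explain in what sense a formal real combination of edges with non-integer coefficients is an object $m$ can be evaluated on.)

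The paper avoids both problems by \emph{not} attempting to define $\mu$ on all transversals at once. Instead it observes (Appendix \ref{sect:coding}, proof of Lemma \ref{lem:Psi}) that the cylinder sets of the symbolic coding $\Omega$ correspond bijectively to strips of trajectories bounded by saddle connections, so it suffices to define $f(\sigma)=|m(\hom{\sigma})|$ on the semi-ring generated by saddle connections. The sign hypothesis is exactly what makes $f$ finitely additive on this semi-ring, and Carath\'eodory's extension theorem then produces a genuine (possibly atomic) shift-invariant measure on $\Omega$, which transfers back to a transverse measure via Proposition \ref{prop:measure_spaces}. If you want to keep a geometric flavor, the minimal fix to your argument is to restrict the construction to transversals that are themselves saddle connections (so that no fractional closing-up is ever needed), verify finite additivity there using the sign hypothesis, and then invoke Carath\'eodory rather than attempting a pointwise definition on arbitrary arcs.
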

The proof of this lemma is in Appendix \ref{sect:homology}.

\begin{remark}[Awkward inequalities]
\label{rem:ineq}
The construct ``$a \neq 0$ implies $\sgn(a)= \sgn(b)$'' is equivalent to the longer statement
``if $b>0$ then $a \geq 0$ and if $b<0$ then $a \leq 0$.'' We will repeatedly use this more compact construct in this paper.
\end{remark}

\subsection{Action of the affine automorphism group}
\label{sect:affine action}

For the remainder of Section \ref{sect:outline}, we will let $S=S(\G, \w)$ be a translation surface as constructed in section \ref{sect:graphs}.
Let $E$ denote the set of horizontal and vertical edges of the rectangles
making up $S$, and $V$ denotes the set of vertices of rectangles. By definition, the edges in $E \label{not:Edges}$ are all saddle connections, since they join a point in $V$ to a point in $V$. 
We orient the horizontal saddle connections in $E$ rightward, and
the vertical saddle connections upward. We have the following.

\begin{proposition}
\label{prop:generate}
The homology classes of the saddle connections in $E$ span $H_1(S,V,\Z)$. 
\end{proposition}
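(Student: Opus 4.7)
The plan is to prove the proposition using cellular approximation in the natural CW structure on $S = S(\G,\w)$: the 0-cells are the points of $V$, the 1-cells are the oriented edges in $E$, and the 2-cells are the rectangles $R_e$ for $e \in \E$. In particular, the 1-skeleton is $V \cup E$. Because $H_1(S, V, \Z)$ is defined in \S\ref{sect:reinterpret} using closed curves in $S/V$ visiting only finitely many rectangles, all arguments can be carried out inside a compact subcomplex $S_F = \bigcup_{j=1}^k R_{e_j}$ containing the given curve.

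Let $\gamma$ be such a curve, representing an arbitrary class $[\gamma] \in H_1(S, V, \Z)$. First I would push $\gamma$ into the 1-skeleton one rectangle at a time. For each $j$, the intersection $\gamma \cap R_{e_j}$ consists of finitely many arcs with endpoints on $\partial R_{e_j} \subset V \cup E$. Since $R_{e_j}$ is a closed disk whose boundary lies entirely in the 1-skeleton, each such arc is homotopic rel endpoints, through the disk, to a path in $\partial R_{e_j}$. Performing these homotopies in each of the finitely many rectangles of $S_F$ produces a curve $\gamma'$ that lies in $V \cup E$ and is freely homotopic to $\gamma$ in $S_F$; in particular $[\gamma'] = [\gamma]$ in $H_1(S, V, \Z)$.

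Finally, $\gamma'$ is a finite concatenation of edges in $E$, each traversed in one of the two orientations. Its class in $H_1(S, V, \Z)$ is therefore a $\Z$-linear combination of the classes $[e]$ for $e \in E$, with coefficients recording the signed number of times each edge is traversed. Hence the classes of the edges in $E$ span $H_1(S, V, \Z)$, as desired.

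The argument is essentially bookkeeping and I do not anticipate a real obstacle. The one point that deserves a brief sanity check is that the in-disk homotopies do not enlarge the set of rectangles visited, so that $\gamma'$ still represents a legitimate element of $H_1(S, V, \Z)$ under the definition in \S\ref{sect:reinterpret}; this is automatic because each homotopy is supported inside an already-visited $R_{e_j}$.
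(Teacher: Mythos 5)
Your proof is correct and is essentially the same argument as the paper's, which states in one line that cutting along the horizontal and vertical saddle connections in $E$ decomposes $S$ into the rectangles $R_e$; you have simply spelled out the resulting CW structure and the cellular-approximation step pushing a curve into the $1$-skeleton.
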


The proof is to simply note that by cutting along the horizontal and vertical saddle connections, we decompose $S$ into rectangles.
In particular, an $m \in \Coh$ is determined by what it does to $E$. 

Let $\phi:S \to S$ be an orientation preserving affine automorphism of $S$. Then,
$\phi$ acts on the space of transverse measures by pushing forward the measure. 
For all $\bm \theta \in \Circ$, $\phi$ induces a bijection 
$$\phi_\ast \label{not:phi ast}:\M_{\bm \theta} \to \M_{D (\phi)(\bm \theta)}:\mu \mapsto \mu \circ \phi^{-1}.$$
Now suppose that $\phi$ preserves the singular set $V$. (It may not, since we allow removable singularities.)
Then, this action on measures is compatible with the pushforward action on $\Coh$.
We abuse notation by also denoting this pushforward by $\phi_\ast:\Coh \to \Coh$.
For $m \in \Coh$, we define $\phi_\ast(m)$ by
\begin{equation}
\label{eq:pushforward}
\big(\phi_\ast(m)\big)(\hom{x})=m \circ \phi^{-1}(\hom{x})
\end{equation}
for all $\hom{x}  \label{not:hom} \in H_1(S, V, \Z)$.
For all $\mu \in \M_{\bm \theta}$ we have
$$\phi_\ast \circ \Psi_{\bm \theta}(\mu)=\Psi_{D (\phi)(\bm \theta)} \circ \phi_\ast(\mu).$$

We now recall Proposition \ref{prop:eigenfunction}. If $\w$ is a positive eigenfunction for $\G$ with eigenvalue $\lambda$, we have an action $\Phi$ of the free group $G=\langle h,v \rangle$ by affine automorphisms of $S(\G,\w)$. 
This group action preserves the vertex set $V$.
The action induced on homology is independent of the choice of such a positive eigenfunction $\w$. The following proposition will explain the action 
on the generating set $E$ of $H_1(S, V, \Z)$. 

For each saddle connection $\sigma \in \Edge$, we use $\hom{\sigma} \in H_1(S,V,\Z)$ to denote its homology class. Recall that vertices $\vv \in \V$ correspond to cylinders $\cyl_\vv$
as in Equation \ref{eq:cyl}. If $\vv \in \Alpha$, this cylinder is horizontal. Otherwise it is vertical. We use $\hom{\cyl_\vv}$ to denote the homology class of a core curve. As with $\hom{\sigma}$ we orient $\hom{\cyl_\vv}$ either rightward or upward. Note that any $\sigma \in E$ traverses exactly one cylinder $\cyl_\vv$. 

\begin{proposition}[Action on homology]
\name{prop:action}
Given any $\sigma \in \Edge$, we have
$$
\Phi^{h^k}(\hom{\sigma})=\begin{cases}
\hom{\sigma} & \textrm{if $\sigma$ is horizontal,} \\
\hom{\sigma}+k \hom{\cyl_\va} & \textrm{if $\sigma$ is vertical and traverses $\cyl_\va$ with $\va \in \Alpha$,}
\end{cases}
$$
$$
\Phi^{v^k}(\hom{\sigma})=\begin{cases}
\hom{\sigma}+k \hom{\cyl_\vb} & \textrm{if $\sigma$ is horizontal and traverses $\cyl_\vb$ with $\vb \in \Beta$,} \\
\hom{\sigma} & \textrm{if $\sigma$ is vertical.} 
\end{cases}
$$
\end{proposition}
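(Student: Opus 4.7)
The plan is to apply Proposition \ref{prop:eigenfunction}, which guarantees that $\Phi^h$ preserves each horizontal cylinder $\cyl_\va$ and restricts to a single Dehn twist there, and that $D(\Phi^h)=\rhoa{h}$. First I would fix coordinates $(x,y)$ on $\cyl_\va$ so that $\cyl_\va = \R/c\Z \times [0,\w(\va)]$. The circumference is $c=\lambda\w(\va)$, since $\sum_{e\in\alpha^{-1}(\va)}\w(\beta(e))=\A(\w)(\va)=\lambda\w(\va)$. In these coordinates the unique orientation-preserving affine map on $\cyl_\va$ with derivative $\rhoa{h}$ that is the identity on the bottom boundary $y=0$ is $(x,y)\mapsto(x+\lambda y,\,y)$, so this must be the restriction $\Phi^h|_{\cyl_\va}$.

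The horizontal case is then immediate: each horizontal $\sigma\in\Edge$ lies either on the bottom $y=0$ of some $\cyl_\va$, where the twist is the identity, or on the top $y=\w(\va)$, where it acts as $x\mapsto x+\lambda\w(\va)\equiv x\pmod{c}$. Hence $\Phi^h$ fixes every horizontal saddle connection in $\Edge$ pointwise, so $\Phi^{h^k}(\hom{\sigma})=\hom{\sigma}$ for all $k$.

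For a vertical $\sigma\in\Edge$ traversing $\cyl_\va$, write $\sigma$ in cylinder coordinates as $t\mapsto(x_0,t)$ for $t\in[0,\w(\va)]$. Then $\Phi^h(\sigma)$ is the arc $t\mapsto(x_0+\lambda t,\,t)$, which stays inside $\cyl_\va$ and has the same endpoints as $\sigma$ (both lie in $V$, since $x_0+\lambda\w(\va)\equiv x_0\pmod{c}$). The loop formed by concatenating $\Phi^h(\sigma)$ with the reverse of $\sigma$ therefore stays inside $\cyl_\va$, and a direct inspection of its $x$-coordinate shows that it wraps once rightward around the core curve; consequently it represents $\hom{\cyl_\va}$ in $H_1(S,V,\Z)$. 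This gives $\Phi^h(\hom{\sigma})=\hom{\sigma}+\hom{\cyl_\va}$, and iterating $k$ times yields the stated formula.

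The $\Phi^{v^k}$ case is entirely symmetric under the exchange of horizontal and vertical and of $\Alpha$ and $\Beta$, using $D(\Phi^v)=\rhoa{v}$ and the analogous description of $\Phi^v|_{\cyl_\vb}$ in cylinder coordinates. The only step that requires genuine care is the sign of the coefficient $k$: one must check that the twist wraps $\Phi^h(\sigma)$ around $\cyl_\va$ in the positive (rightward) direction rather than the negative one. This comes down to $\lambda>0$ together with the chosen orientation convention that $\hom{\cyl_\va}$ points rightward, and is the main (minor) obstacle; once verified, the rest of the proposition is formal.
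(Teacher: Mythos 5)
Your argument is correct and supplies the verification that the paper leaves implicit (the proposition is stated without proof). The key inputs you use — that by Proposition \ref{prop:eigenfunction} the map $\Phi^h$ has derivative $\rhoa{h}$, preserves each horizontal cylinder, and acts on it as a single Dehn twist — force the explicit form $(x,y)\mapsto(x+\lambda y,y)$ in cylinder coordinates, since a single Dehn twist is by definition the identity on both boundary circles, which pins down the free translation constants. From there your computation of the circumference $\lambda\w(\va)$ (via $\A\w=\lambda\w$), the observation that horizontal edges lie on twist-fixed boundaries, and the check that $\Phi^h(\sigma)\cup\sigma^{-1}$ is a cycle winding once rightward around $\cyl_\va$ are all exactly right. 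Your treatment of the sign is also correct: the derivative $\rhoa{h}$ is a positive shear, so the twist is a right-handed one, matching the rightward orientation convention for $\hom{\cyl_\va}$ in the paper. The $\Phi^{v^k}$ case follows symmetrically as you say. This is the natural argument and there is no gap.
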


Recall, a $\lambda$-renormalizable direction $\bm \theta$ has a $\lambda$-shrinking sequence of elements of $G$, $\langle g_n \rangle$. We also call 
$\langle g_n \rangle$ a renormalizing sequence. See \S \ref{ss:renormalizable directions} and, in particular, Lemma \ref{lem:compatible directions}.

\begin{definition}
\label{def:survivor}
We say that an $m \in \Coh$ is a {\em $(\bm \theta,n)$-survivor} if for all saddle connections $\sigma \in \Phi^{g_n^{-1}}(E)$ we have
$m(\hom{\sigma}) \neq 0$ implies
$$\sgn \big( m(\hom{\sigma})\big)=\sgn \big(\hol(\sigma) \wedge \bm \theta\big).$$
\end{definition}
The collection of $(\bm \theta, n)$-survivors $m\in \Coh$ is a closed convex cone by Remark \ref{rem:ineq}.
Note that this definition verifies the conditions of Lemma \ref{lem:sign} on a subset of saddle connections depending on $n$. But, it turns out that checking each of these subsets is sufficient for concluding that $m$ arises as $\Psi(\mu)$ for some $\mu \in \M_{\bm \theta}$. 

\begin{theorem}[Survivors and measures]
\label{thm:operator_theorem}
Let $\bm \theta$ be a $\lambda$-renormalizable direction.
For any $m \in \Coh$, 
$m \in \Psi_{\bm \theta}(\M_{\bm \theta})$ if and only if $m$ is a $(\bm \theta,n)$-survivor for all $n \geq 0$. 
\end{theorem}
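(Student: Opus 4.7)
The direction $(\Rightarrow)$ is immediate from Lemma \ref{lem:sign}: if $m = \Psi_{\bm\theta}(\mu)$ for some $\mu \in \M_{\bm\theta}$, the sign condition $\sgn(m(\hom{\sigma})) = \sgn(\hol(\sigma)\wedge\bm\theta)$ (when the left side is nonzero) holds on every saddle connection $\sigma$. Since $\Phi^{g_n^{-1}}$ is an orientation-preserving affine automorphism fixing $V$, each element of $\Phi^{g_n^{-1}}(E)$ is itself a saddle connection, so $m$ is a $(\bm\theta,n)$-survivor for every $n$.

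For $(\Leftarrow)$, assume $m$ is a $(\bm\theta,n)$-survivor for every $n \geq 0$; by Lemma \ref{lem:sign} it suffices to verify the sign condition on every saddle connection $\sigma$. Theorem \ref{thm:no saddles} tells us $\hol(\sigma)\wedge\bm\theta \neq 0$ for every saddle connection, which will remove all degenerate cases below. Fix such a $\sigma$. For each $n$, the edges $\Phi^{g_n^{-1}}(E)$ form the 1-skeleton of the cell decomposition $\{\Phi^{g_n^{-1}}(R_e)\}_{e\in\E}$ of $S$ into parallelograms, whose $0$-skeleton lies in $V$. Homotoping $\sigma$ through $V$-based paths to a path in this 1-skeleton produces a signed decomposition
\[ \hom{\sigma} \;=\; \sum_i \epsilon_i \hom{\sigma_i} \qquad \text{in } H_1(S,V,\Z),\]
with $\sigma_i \in \Phi^{g_n^{-1}}(E)$ and $\epsilon_i\in\{\pm 1\}$ recording the direction of traversal. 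Consequently $\hol(\sigma) = \sum_i \epsilon_i \hol(\sigma_i)$ and $m(\hom{\sigma}) = \sum_i \epsilon_i m(\hom{\sigma_i})$.

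The crux of the argument is to select, for some $n$ sufficiently large, a decomposition in which every signed edge crosses the $\hat\F_{\bm\theta}$-foliation in the same sense as $\sigma$, namely with $\sgn(\epsilon_i \hol(\sigma_i)\wedge\bm\theta) = \sgn(\hol(\sigma)\wedge\bm\theta)$ for every $i$ (Theorem \ref{thm:no saddles} guarantees no $\hol(\sigma_i)\wedge\bm\theta$ vanishes). Granted such a decomposition, the $(\bm\theta,n)$-survivor hypothesis applied to each $\sigma_i$ forces every nonzero $\epsilon_i m(\hom{\sigma_i})$ to carry the common sign $\sgn(\hol(\sigma)\wedge\bm\theta)$; summing over $i$ either yields $m(\hom{\sigma}) = 0$ or gives $m(\hom{\sigma})$ that sign, which is exactly what Lemma \ref{lem:sign} requires.

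The hard part is producing such a sign-consistent decomposition. Because $\langle g_n\rangle$ is the $\lambda$-shrinking sequence for $\bm\theta$, the linear map $\rho_\lambda^{g_n^{-1}}$ expands along $\bm\theta$ and contracts transversally, so for large $n$ the cells $\Phi^{g_n^{-1}}(R_e)$ become extremely elongated in the $\bm\theta$-direction and one of their two edge families is nearly parallel to $\bm\theta$. The geometric picture is that $\sigma$ should then be homotopable, through $V$-based paths, to a cellular representative that advances monotonically in the transversal direction to $\bm\theta$; every oriented edge of such a representative automatically sits on the correct side of $\bm\theta$. Turning this picture into a rigorous combinatorial construction --- controlling the homotopy uniformly as $\sigma$ varies and, crucially, managing the local behavior near the singular vertices of $V$ where many cells meet and the naive homotopy may reverse transversal sign --- is the technical heart of the proof.
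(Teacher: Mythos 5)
Your outline of the strategy matches the paper's: prove the sign condition of Lemma \ref{lem:sign} for an arbitrary saddle connection $\sigma$ by renormalizing and expressing $\hom{\sigma}$ as a sum of homology classes of rectangle edges whose individual signs are controlled by the survivor hypothesis. But your account stops exactly where the substance begins; the last paragraph, which you flag as ``the technical heart of the proof,'' is not an optional detail to be filled in by uniform estimates and careful bookkeeping around $V$ --- it is the actual content, and the paper closes the gap by a mechanism you have not identified.

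The missing idea is Proposition \ref{prop:quadrant_sequence}: since $\btheta$ is the unique $\v\in\Circ$ with $\rhoa{g_n}(\v)\in\overline\Q_{s_n}$ for all $n$, and $\hol(\sigma)$ is not parallel to $\btheta$ (Theorem \ref{thm:no saddles}), there is a \emph{first} $n$ for which $\rhoa{g_n}(\hol\,\sigma)\notin\Q_{s_n}\cup\Q_{-s_n}$; at that $n$, $\rhoa{g_n}(\hol\,\sigma)$ lies in $\overline\Q_s$ for some fixed $s\neq\pm s_n$. Once you pass to $\sigma'=\Phi^{g_n}(\sigma)$, whose holonomy sits in that closed quadrant, the sign-consistent decomposition is automatic: a geodesic segment with holonomy in $\overline\Q_s$ is homologous to a staircase of horizontal and vertical rectangle edges $\sigma'_i$ each oriented so $\hol(\sigma'_i)\in\partial\Q_s$, and there is no sign cancellation to manage because every $\rhoa{g_n}(\hol\,\sigma_i)$ and $\rhoa{g_n}(\hol\,\sigma)$ both lie in $\overline\Q_s$ while $\rhoa{g_n}(\btheta)\in\Q_{s_n}$, and the wedge product of anything in $\overline\Q_s$ with anything in $\Q_{s_n}$ has constant sign. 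There is no delicate homotopy to control, no uniform elongation estimate, and no trouble at singular vertices: the quadrant dichotomy does all the work. Your ``$\rho_\lambda^{g_n^{-1}}$ expands along $\btheta$'' heuristic points in the right direction but, as written, does not produce a valid choice of $n$ or a proof that the signs agree, which is why your proposal remains an outline rather than a proof.
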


The proof follows from analyzing the action of the sequence $\rhoa{g_n}$ on $\Circ$. We show that if
$m$ is a $(\bm \theta,n)$-survivor for all $n$, then $m$ satisfies Lemma \ref{lem:sign}. Note that Theorem \ref{thm:no saddles} implies we do not need to worry about saddle connections in the direction of $\btheta$.
See the end of Section \ref{sect:quadrants} for the proof.

We can derive an equivalent definition of being a $(\bm \theta,n)$-survivor by acting by $\Phi^{g_n}$
on $H^1$. This action was described in equation \ref{eq:pushforward}, and we denote this $G$ action by $\Phi^{G}_\ast \label{not:Phi ast}:H^1 \to H^1$.  

\begin{proposition}
\label{prop:equiv_def}
The condition that $m$ be a $(\bm \theta,n)$-survivor is equivalent to the statement that for all $\sigma \in \Edge$, we have
$\big(\Phi^{g_n}_\ast(m)\big)(\hom{\sigma}) \neq 0$ implies
$$\sgn \big(\Phi^{g_n}_\ast(m)\big)(\hom{\sigma})=\sgn \big(\hol(\sigma) \wedge \rhoa{g_n}(\bm \theta) \big).$$
\end{proposition}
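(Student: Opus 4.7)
The proof is a change-of-variables computation that transports Definition~\ref{def:survivor}, which is stated on the saddle connections in $\Phi^{g_n^{-1}}(\Edge)$, back to a condition on $\Edge$ via the induced action of $\Phi^{g_n}$ on cohomology. The plan is to parametrize saddle connections $\sigma' \in \Phi^{g_n^{-1}}(\Edge)$ as $\sigma' = \Phi^{g_n^{-1}}(\sigma)$ for $\sigma \in \Edge$, and then to check separately that the two signs transform in the desired way.

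First I would handle the left-hand side. Taking homology classes, $\hom{\sigma'} = \Phi^{g_n^{-1}}_\ast(\hom{\sigma})$, since $\Phi^{g_n^{-1}}$ preserves the singular vertex set $V$ and hence induces a well-defined map on $H_1(S,V,\Z)$. The defining formula \eqref{eq:pushforward} for the pushforward on $\Coh$ gives
\begin{equation*}
\big(\Phi^{g_n}_\ast(m)\big)(\hom{\sigma}) \;=\; m \circ (\Phi^{g_n})^{-1}(\hom{\sigma}) \;=\; m\big(\Phi^{g_n^{-1}}_\ast(\hom{\sigma})\big) \;=\; m(\hom{\sigma'}).
\end{equation*}
Thus $\sgn\big(m(\hom{\sigma'})\big) = \sgn\big(\Phi^{g_n}_\ast(m)(\hom{\sigma})\big)$ and the two non-vanishing hypotheses coincide.

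Next I would handle the right-hand side. Because $\Phi^{g_n^{-1}}$ is an orientation-preserving affine automorphism with derivative $\rhoa{g_n^{-1}} \in \SL(2,\R)$ by Proposition~\ref{prop:eigenfunction}, and because holonomy of a saddle connection transforms by the derivative, we have
\begin{equation*}
\hol(\sigma') \;=\; \rhoa{g_n^{-1}}\big(\hol(\sigma)\big).
\end{equation*}
The wedge product \eqref{eq:wedge} is $\SL(2,\R)$-invariant (since $\det = 1$), so applying $\rhoa{g_n}$ to both arguments gives
\begin{equation*}
\hol(\sigma') \wedge \bm\theta \;=\; \rhoa{g_n^{-1}}\big(\hol(\sigma)\big) \wedge \bm\theta \;=\; \hol(\sigma) \wedge \rhoa{g_n}(\bm\theta).
\end{equation*}
Therefore $\sgn\big(\hol(\sigma') \wedge \bm\theta\big) = \sgn\big(\hol(\sigma)\wedge \rhoa{g_n}(\bm\theta)\big)$.

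Combining the two computations, the quantifier ``for all $\sigma' \in \Phi^{g_n^{-1}}(\Edge)$'' in Definition~\ref{def:survivor} becomes ``for all $\sigma \in \Edge$'' after the substitution $\sigma' = \Phi^{g_n^{-1}}(\sigma)$, and the sign equation of Definition~\ref{def:survivor} is transformed termwise into the sign equation in the statement of the proposition. This yields the equivalence. There is no substantive obstacle here; the proof is entirely formal manipulation, and the only subtlety worth being careful about is keeping the direction of the pushforward/pullback straight so that the derivative $\rhoa{g_n}$ (rather than its inverse) appears on the right-hand side, as stated.
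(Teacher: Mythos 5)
Your proof is correct and is precisely the formal unwinding the paper intends; the paper states the proposition without proof because it is an immediate consequence of the definitions, and your change-of-variables $\sigma' = \Phi^{g_n^{-1}}(\sigma)$ together with the $\SL(2,\R)$-invariance of the wedge product is exactly how that unwinding goes. One minor notational caution: the paper reserves the subscript $\ast$ in $\Phi^g_\ast$ for the induced map on $\Coh$, not on $H_1(S,V,\Z)$, so where you write $\Phi^{g_n^{-1}}_\ast(\hom{\sigma})$ it would be cleaner to write $\Phi^{g_n^{-1}}(\hom{\sigma})$, as the paper does in equation \eqref{eq:pushforward}.
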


\begin{remark}[Renormalization argument]
The interpretation of being a $(\bm \theta,n)$-survivor provided in by Proposition \ref{prop:equiv_def} allows for a renormalization argument in the following sense.
The conditions on being a $(\bm \theta,0)$-survivor are relatively weak, and depend only on the quadrant containing $\bm \theta$. Observe that
$m$ is a $(\bm \theta,n)$-survivor if and only if $\Phi^{g_n}_\ast(m)$ is a $\big(\rhoa{g_n}(\bm \theta),0\big)$-survivor. By Theorem \ref{thm:operator_theorem},
the cohomology classes associated to invariant measures are precisely those for which  $\Phi^{g_n}_\ast(m)$ is a $\big(\rhoa{g_n}(\bm \theta),0\big)$-survivor for all $n$.
In that sense, we are utilizing the action of the shrinking sequence  $\langle g_n \in G \rangle$ on the space of straight-line flows of $S(\G, \w)$ to understand the invariant measures for these flows.
\end{remark}


\subsection{Operators on graphs}
\label{sect:operators}
In addition to the adjacency operator, $\A$, defined in Definition \ref{def:adj},
we will be studying the following linear operators on $\R^\V$:
\begin{equation}
\label{eq:Ho}
\Ho^k(\f)(\vx)= \begin{cases}
\f(\vx) + k \sum_{\vy \sim \vx} \f(\vy) &  \textrm{if $\vx \in \Alpha$,}\\
\f(\vx) & \textrm{if $\vx \in \Beta$.}
\end{cases}
\end{equation}
\begin{equation}
\label{eq:Vo}
\Vo^k(\f)(\vx)= \begin{cases}
\f(\vx) &  \textrm{if $\vx \in \Alpha$,}\\
\f(\vx)+ k \sum_{\vy \sim \vx} \f(\vy) & \textrm{if $\vx \in \Beta$.}
\end{cases}
\end{equation}
We define the group action $\Upsilon \label{not:Upsilon}:G \times \R^\V \to \R^\V$ by extending the definition
$\Act^h=\Ho$ and $\Act^v=\Vo$. 
As long as $\G$ is an infinite connected graph, the group action $\Upsilon$ is a faithful action 
of the free group with two generators. The operators relate to the adjacency operator, $\A$, by the equations
\begin{equation}
\label{eq:interactions}
\A \Ho=\Vo \A \quad \textrm{and} \quad \A \Vo=\Ho \A.
\end{equation}

There is a natural linear embedding $\Xi \label{not:Xi}:\R^\V \to \Coh$. Given $\f \in \R^\V$, and $\hom{x} \in H_1(S,V,\Z)$
we define 
\begin{equation}
\label{eq:Xi}
\Xi(\f)(\hom{x})=\sum_{\vv \in \V} i(\hom{x}, \hom{\cyl_{\vv}}) \f(\vv).
\end{equation}
Here, $i:H_1(S,V,\Z) \times H_1(S \smallsetminus V,\Z) \to \Z \label{not:intersection}$ denotes the usual algebraic intersection number,
and $\hom{\cyl_{\vv}}$ is as defined above Proposition \ref{prop:action}.
This sum is well defined, because $i(\hom{x}, \hom{\cyl_{\vv}}) = 0$ for all but finitely many $\vv \in \V$.

It is not difficult to see that the image $\Xi(\R^\V)$ is invariant under the action of the affine automorphism group. In fact, we have the following.
\begin{proposition}
\label{prop:pullback_action}
For all $g \in G$ we have $\Phi^g_\ast \circ \Xi=\Xi \circ \Act^{g}$.
\end{proposition}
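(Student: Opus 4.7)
The plan is to verify the identity on generators of $G$ and then on generators of $H_1(S, V, \Z)$. Since both $g \mapsto \Phi^g_\ast$ and $g \mapsto \Act^g$ are group homomorphisms and $G$ is freely generated by $h$ and $v$, it suffices to prove the two identities $\Phi^h_\ast \circ \Xi = \Xi \circ \Ho$ and $\Phi^v_\ast \circ \Xi = \Xi \circ \Vo$. These two cases are interchanged by the order-two element of the dihedral group in Remark \ref{rem:dihedral} that swaps the roles of horizontal/vertical and of $\Alpha/\Beta$, so only the first identity needs a direct argument. Both sides are linear in $\f \in \R^\V$ and are elements of $H^1$, so by Proposition \ref{prop:generate} it is enough to check
$$\big(\Phi^h_\ast \Xi(\f)\big)(\hom{\sigma}) = \Xi(\Ho \f)(\hom{\sigma})$$
for every $\sigma \in \Edge$.

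Unpacking the left side using (\ref{eq:pushforward}) gives $\Xi(\f)\big(\Phi^{h^{-1}}(\hom{\sigma})\big)$, and Proposition \ref{prop:action} (with $k = -1$) then splits the computation into two cases. If $\sigma$ is horizontal, then $\Phi^{h^{-1}}(\hom{\sigma}) = \hom{\sigma}$; meanwhile on the right side, the only cylinders contributing to $\Xi(\Ho \f)(\hom{\sigma})$ are the vertical ones through which $\sigma$ passes, and on these $\vb \in \Beta$ the operator $\Ho$ leaves $\f$ fixed by (\ref{eq:Ho}). So both sides reduce to $\Xi(\f)(\hom{\sigma})$. If instead $\sigma$ is vertical and traverses the horizontal cylinder $\cyl_\va$ with $\va \in \Alpha$, then $\Phi^{h^{-1}}(\hom{\sigma}) = \hom{\sigma} - \hom{\cyl_\va}$, so the left side becomes $\Xi(\f)(\hom{\sigma}) - \Xi(\f)(\hom{\cyl_\va})$. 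On the right, only the term $\vv = \va$ in (\ref{eq:Xi}) survives (all horizontal cylinders have trivial intersection with the vertical $\hom{\sigma}$, and so do the vertical cylinders since they are parallel to $\sigma$), yielding $i(\hom{\sigma}, \hom{\cyl_\va}) \cdot \big(\f(\va) + \sum_{\vb \sim \va} \f(\vb)\big)$ by the $\Alpha$ clause of (\ref{eq:Ho}).

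Matching the two sides then reduces to the identity
$$\Xi(\f)(\hom{\cyl_\va}) = -\,i(\hom{\sigma}, \hom{\cyl_\va}) \sum_{\vb \sim \va} \f(\vb).$$
Using (\ref{eq:Xi}), the left side is $\sum_{\vb \in \Beta} i(\hom{\cyl_\va}, \hom{\cyl_\vb}) \f(\vb)$, where only $\vb$ adjacent to $\va$ contribute (since $\cyl_\va \cap \cyl_\vb = R_{\overline{\va\vb}}$ is a single rectangle when $\va \sim \vb$, and is empty otherwise). So the required equality is the pointwise sign relation $i(\hom{\cyl_\va}, \hom{\cyl_\vb}) = -\,i(\hom{\sigma}, \hom{\cyl_\va})$ for every neighbor $\vb$ of $\va$. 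Both intersection numbers are $\pm 1$, computed from the orientations fixed in \S\ref{sect:affine action} (horizontal classes pointing rightward, vertical ones upward) together with the wedge convention (\ref{eq:wedge}); the opposite signs come from swapping the two factors in a transverse intersection. The consistency of this sign with the $+\hom{\cyl_\va}$ appearing in Proposition \ref{prop:action} (which is the formula for a positive Dehn twist along each horizontal cylinder, namely $\hom{x} \mapsto \hom{x} + \sum_\va i(\hom{x}, \hom{\cyl_\va}) \hom{\cyl_\va}$) is what fixes this sign once and for all.

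The main obstacle is the bookkeeping of intersection-number signs, but after the reductions above this is a single orientation check; the real content of the proposition is that the combinatorial operator $\Ho$ on $\R^\V$ is precisely the conjugate, via $\Xi$, of the geometric Dehn-twist action $\Phi^h_\ast$ on $H^1$, which is forced by the local picture of $\Phi^h$ as one Dehn twist per horizontal cylinder.
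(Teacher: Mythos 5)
Your proof is correct and follows essentially the same path as the paper's: reduce to the generator $h$ (the paper handles $h^k$ directly, you handle $h$ and note that both sides are homomorphic in $g$), invoke Proposition \ref{prop:action}, and then regroup and check signs. The one cosmetic difference is that the paper evaluates against a general class $\hom{x}$ and transfers the automorphism onto the cylinder classes $\hom{\cyl_\vv}$ via invariance of the intersection pairing, whereas you evaluate against the basis of saddle connections $\hom{\sigma}$ from Proposition \ref{prop:generate} and apply Proposition \ref{prop:action} to $\hom{\sigma}$ directly; both routes land on the same single orientation check $i(\hom{\cyl_\va},\hom{\cyl_\vb}) = -\,i(\hom{\sigma},\hom{\cyl_\va})$, which you verify correctly.
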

This proposition is proved in Section \ref{sect:operators2}.

Suppose that $\f \in \R^\V$ is a positive function. Then we can think of $\Xi(\f)$ as the element of
$\Coh$ arising from the Lebesgue transverse measure to the foliation in direction of angle $\frac{\pi}{4}$
on the surface $S(\G, \f)$. Similar interpretations arise from considering an $\f \in \R^\V$ where the signs of $\f$ on the subsets of vertices $\Alpha$ and $\Beta$ 
are fixed. This idea is what gives rise to the notions of quadrants, which we are about to introduce.
We will show that $\Psi_{\bm \theta}(\M_{\bm \theta}) \subset \Xi(\R^\V)$. Moreover, 
we will show that the set of transverse measures
to the split leaf space $\hat \F_{\bm \theta}$ (see \S \ref{ss:emc} and Appendix \ref{sect:coding}) all can be interpreted as such Lebesgue transverse measures on $S(\G, \f)$ with $\f$ satisfying {\em survivor} conditions,
as defined below.

\begin{definition}[Survivors in $\R^\V$]
\label{def:function_survivors}
Assume $\bm \theta \in \Circ$ is a $\lambda$-renormalizable direction with $\lambda$-shrinking sequence $\langle g_n \rangle$. 
We say that $\f \in \R^\V$ is a {\em $(\bm \theta, n)$-survivor} if $\Xi(\f)$ is a $(\bm \theta, n)$-survivor.
We say $\f$ is a {\em $\bm \theta$-survivor} if it is a $(\bm \theta, n)$-survivor for all $n \geq 0$. 
We use $\Surv_{\bm \theta} \label{not:survivors} \subset \R^\V$ to denote the set of all $\bm \theta$-survivors.
\end{definition}

We will introduce an equivalent definition which is intrinsic to $\R^\V$. For this, we need a few more definitions.

\begin{definition}[Sign pairs]
\label{def:sign_pairs}
The set of sign pairs is the set of four elements $\SP \label{not:sign pairs}=\{(\pm 1, \pm 1)\}$. We abbreviate these elements by writing
$$++=(1,1) \quad +-=(1, -1) \quad -+=(-1,1) \quad --=(-1,-1). \label{not:sign pairs 2}$$
We will use $\pi_1$ and $\pi_2$ to denote the projection functions. For instance $\pi_1(-+)=-1$ and $\pi_2(-+)=1$. 
\end{definition}

\begin{definition}[Quadrants in $\R^2$]
\label{def:quadrants1}
The four open quadrants in $\R^2$ are naturally in bijection to the elements $s \in \SP$. We define
$$\Q_s \label{not:quadrants 1} =\{(x,y) \in \R^2 ~:~ \textrm{$\sgn~x=\pi_1(s)$ and $\sgn~y=\pi_2(s)$}\}.$$
We use $\cl(\Q_{s}) \label{not:closure}$ to denote the closure $\Q_s$.
\end{definition}

\begin{definition}[Quadrants in $\R^\V$]
\label{def:quadrants2}
The four quadrants in $\R^V$ are 
$$\hat \Q_{s} \label{not:quadrants 2}=\left\{\f \in \R^\V ~:~ \begin{array}{l}
\textrm{$\f(\va)=0$ or $\sgn~\f(\va)=\pi_1(s)$ for all $\va \in \Alpha$.} \\
\textrm{And, $\f(\vb)=0$ or $\sgn~\f(\vb)=\pi_2(s)$ for all $\vb \in \Beta$.}
\end{array}\right\}.$$
\end{definition}

Recall that the orbit under $\rhoa{G}$ of a $\lambda$-renormalizable direction does not include the horizontal or vertical directions.

\begin{definition}[Sign sequence of $\bm \theta$]
\label{def:sign_sequence}
Suppose $\bm \theta$ has $\lambda$-shrinking sequence $\langle g_n\rangle$. We define the corresponding {\em ($\lambda$-) sign sequence} of $\bm \theta$ to be the sequence $\langle s_0, s_1, s_2, \ldots \rangle$ with $s_n \in \SP$ so that
$\rhoa{g_n}(\bm \theta) \in \Q_{s_n}$
for all $i$. 
\end{definition}

The $\lambda$-sign and $\lambda$-shrinking sequences turn out to uniquely determine $\bm \theta$. 

\begin{proposition}[Quadrant sequences]
\label{prop:quadrant_sequence}
Let $\bm \theta \in \Rn_\lambda$ with $\lambda$-shrinking sequence $\langle g_n \rangle$ and sign sequence $\langle s_n \rangle$. The only $\v \in \Circ$ for which
$\rhoa{g_n}(\v) \in \overline{\Q}_{s_n}$ for all $n$ is $\v=\bm \theta$.
\end{proposition}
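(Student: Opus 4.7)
The plan is to show $A = \{\bm \theta\}$, where $A = \bigcap_n A_n$ and $A_n = \Circ \cap \bigcap_{i=0}^n \rhoa{g_i^{-1}}(\cl(\Q_{s_i}))$ restricted to the arc component containing $\bm \theta$ (the sign $s_0$ selecting this component over its antipode). Each $A_n$ is a closed arc of $\Circ$, nested as $A_n \supseteq A_{n+1}$, and the $\lambda$-renormalizability of $\bm \theta$---$\bm \theta$ is not an eigendirection of any conjugate of $h$ or $v$---implies that $\rhoa{g_i}(\bm \theta)$ never lies on a coordinate axis, placing $\bm \theta$ in the interior of every $A_n$. The goal is to show $|A_n| \to 0$.

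The main tool is the angular derivative of the $\SL(2,\R)$ action on $\Circ$: for $g \in \SL(2,\R)$, the map $\phi_g:\u \mapsto g\u/\|g\u\|$ has derivative $1/\|g\u\|^2$, thanks to $\det g = 1$. Writing $\rhoa{g_n} = U_n\,\mathrm{diag}(\sigma_n,\sigma_n^{-1})\,V_n^T$ in SVD form and measuring angles $\beta$ from the contracting input direction $V_{2,n}$, one finds
$$\|\rhoa{g_n}(\u(\alpha))\|^2 = \sigma_n^{-2}\cos^2\beta + \sigma_n^2\sin^2\beta,$$
so the substitution $u = \sigma_n^2\tan\beta$ converts $\int d\alpha/\|\rhoa{g_n}(\u)\|^2$ into $\int du/(1+u^2)$. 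Since $\phi_{g_n}(A_n)\subset\cl(\Q_{s_n})\cap\Circ$ has angular measure at most $\pi/2$, I obtain $\arctan(u_+^{(n)})-\arctan(u_-^{(n)})\leq\pi/2$, where $u_\pm^{(n)}$ are the $u$-coordinates of the endpoints of $A_n$.

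As $n\to\infty$, $\sigma_n\to\infty$ (since $\rho_\lambda$ is faithful for $\lambda\geq 2$ and $\langle g_n\rangle$ is a geodesic in $G$), and $V_{2,n}\to\bm\theta$ because $r_n = \|\rhoa{g_n}(\bm\theta)\|\leq r_0 = 1$ forces $|\sin\beta_n^{\bm\theta}|\leq 1/\sigma_n$, where $\beta_n^{\bm\theta} = \alpha(\bm\theta)-\alpha(V_{2,n})$. Suppose for contradiction that $A$ is nondegenerate. If $\bm\theta$ is interior to $A$, then $A$ contains a two-sided $\epsilon$-neighborhood for some fixed $\epsilon > 0$; the corresponding endpoints $\sigma_n^2\tan(\beta_n^{\bm\theta}\pm\epsilon)$ tend to $\pm\infty$ (using $\beta_n^{\bm\theta}\to 0$ against fixed $\epsilon$), so $\arctan(u_+^{(n)})-\arctan(u_-^{(n)})\to\pi$, violating the bound $\pi/2$.

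The remaining case is that $\bm\theta$ sits at an endpoint of $A$, say $A = [\bm\theta,\v_R]$ with $\delta := \alpha(\v_R)-\alpha(\bm\theta) > 0$. Applying the bound over $A$ gives $\arctan(\sigma_n^2\tan(\beta_n^{\bm\theta}+\delta))-\arctan(\sigma_n^2\tan\beta_n^{\bm\theta})\leq\pi/2$: the first $\arctan$ tends to $\pi/2$, while the second tends to $\pm\pi/2$ according to the sign of $\beta_n^{\bm\theta}$. The crux of the argument---which I expect to be the principal obstacle---is showing that for a $\lambda$-renormalizable direction $\bm\theta$, the sign of $\beta_n^{\bm\theta}$ is not eventually constant; equivalently, $[\bm\theta]$ is a two-sided limit point of the Fuchsian group $\rhoa{G}$, so $V_{2,n}$ approaches $\bm\theta$ from both sides along subsequences. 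This dynamical assertion is where the exclusion of eigendirections of conjugates of $v^{-1}h$ from the definition of $\lambda$-renormalizable should enter, and will likely use the detailed analysis of Section \ref{sect:symmetry}, simplified by the dihedral symmetry of Remark \ref{rem:dihedral}. Granted this sign-alternation, a subsequence forces $\arctan(\sigma_n^2\tan\beta_n^{\bm\theta})\to -\pi/2$ while the other endpoint contributes $+\pi/2$, pushing the integral to $\pi$ and contradicting the bound, hence $A = \{\bm\theta\}$.
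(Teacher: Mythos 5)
Your approach is genuinely different from the paper's. The paper proves this proposition algebraically in two short steps: writing $\v\wedge\bm\theta = \bm\theta\cdot R(\v)$, invoking the identity $\bm\theta\cdot\w = \rhoa{g_n}(\bm\theta)\cdot\rhoa{\gamma(g_n)}(\w)$ coming from the involution $\gamma$ (inverse-transpose), and then appealing to Theorem~\ref{thm:dot_product}, which tracks the quadrant of $\rhoa{\gamma(g_n)}(\w)$ at critical times via the expanding sign action $\sa$. Your route instead uses the angular derivative $|\phi_g'(\u)| = \|g\u\|^{-2}$, the Cartan/SVD decomposition, and the substitution $u = \sigma_n^2\tan\beta$. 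The interior case of your argument is correct as stated, and is a nice way to exploit $\det = 1$ directly.

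However, the endpoint case has two genuine gaps, only one of which you acknowledge. The acknowledged one is the sign-alternation of $\beta_n^{\bm\theta}$; this is real and is exactly what the paper's machinery of critical times and the sign action is built to handle, so there is no shortcut. The unacknowledged one is more serious: even granting that $\beta_n^{\bm\theta}<0$ infinitely often, your conclusion that $\arctan(\sigma_n^2\tan\beta_n^{\bm\theta})\to -\pi/2$ along such a subsequence is not justified. Indeed, with $\beta = \beta_n^{\bm\theta}$ and $u=\sigma_n^2\tan\beta$,
$$
1+u^2=\frac{\sigma_n^2\,\|\rhoa{g_n}\bm\theta\|^2}{\cos^2\beta},
$$
so $|u|\to\infty$ is \emph{equivalent} to $\sigma_n\|\rhoa{g_n}\bm\theta\|\to\infty$. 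Since $\sigma_n\to\infty$ while $\|\rhoa{g_n}\bm\theta\|\to 0$, this product is indeterminate. In the $\H^2$ picture, $\sigma_n\|\rhoa{g_n}\bm\theta\|$ bounded corresponds to $[\rhoa{g_n}]$ staying within bounded distance of the geodesic ray to $[\bm\theta]$ — precisely what one expects for a conical limit point — and in that regime $u$ stays bounded and your contradiction evaporates: the difference $\arctan(u_+^{(n)})-\arctan(u_-^{(n)})$ then tends to something $\le\pi/2$, consistent with the bound. So the endpoint case would need not only sign-alternation of $\beta_n^{\bm\theta}$ but a quantitative lower bound on $\sigma_n^2|\tan\beta_n^{\bm\theta}|$ along the negative subsequence, and you provide neither. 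This is why the paper does not argue via arc lengths at all and instead passes to the adjoint action $\rhoa{\gamma(g_n)}$, where the wedge pairing $\v\wedge\bm\theta$ is preserved in a form that can be compared directly with the quadrant data.
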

This proposition is proved in Section \ref{sect:proof_operator_theorem}. 

Recall from \S \ref{ss:renormalizable directions}
that a renormalizing sequence $\langle g_n \rangle$
and a $\lambda \geq 2$ determine an antipodal pair of unit vectors $\pm \bm \theta(\langle g_n \rangle, \lambda) \in \Circ$
for which $\langle g_n \rangle$ is the $\lambda$-shrinking sequence. The quadrants containing these vectors do not change as we vary $\lambda$.

\begin{proposition}
\name{prop:quadrants_match}
If $\langle g_n \rangle$ is a renormalizing sequence
and $\lambda, \lambda' \geq 2$, then the antipodal 
pairs $\pm \btheta(\langle g_n \rangle, \lambda)$
and $\pm \btheta(\langle g_n \rangle, \lambda')$
lie in the same pair of quadrants.
\end{proposition}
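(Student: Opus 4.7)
The plan is to show that the pair of antipodal quadrants containing $\btheta(\langle g_n\rangle,\lambda)$ is determined entirely by the first letter $g_1 \in \{h,v,h^{-1},v^{-1}\}$ of the sequence (well-defined since $g_0=e$), and is therefore independent of $\lambda$. Since $\langle g_n\rangle$ is fixed throughout, this will give the proposition at once.

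To carry this out, I write $\btheta := \btheta(\langle g_n\rangle,\lambda) = (x,y)$ and invoke the defining property of the $\lambda$-shrinking sequence at step one: $\|\rho_\lambda^{g_1}(\btheta)\|^2 < \|\btheta\|^2$. Direct expansion of the matrix products from Definition \ref{def:rho} gives
$$\|\rho_\lambda^{h^{\pm 1}}(\btheta)\|^2 - \|\btheta\|^2 \;=\; \lambda y\,(\lambda y \pm 2x), \qquad \|\rho_\lambda^{v^{\pm 1}}(\btheta)\|^2 - \|\btheta\|^2 \;=\; \lambda x\,(\lambda x \pm 2y).$$
Because $\btheta$ is $\lambda$-renormalizable it is not an eigendirection of $\rho_\lambda^h$ or $\rho_\lambda^v$, so neither $x$ nor $y$ vanishes and $\btheta$ lies in one of the four open quadrants $\Q_s$.

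A sign analysis on each of the four cases for $g_1$ then finishes the argument. When $g_1 = h^{-1}$ the shrinking inequality $\lambda y(\lambda y - 2x) < 0$ together with $\lambda > 0$ and $y \neq 0$ forces $\sgn(y) \neq \sgn(\lambda y - 2x)$; the subcase $y>0$ gives $x > \lambda y/2 > 0$, placing $\btheta \in \Q_{++}$, while $y<0$ symmetrically places $\btheta \in \Q_{--}$. Exchanging the coordinate roles shows that $g_1 = v^{-1}$ likewise forces $\btheta \in \Q_{++} \cup \Q_{--}$, and the parallel analysis for $g_1 = h$ or $g_1 = v$ forces $\btheta \in \Q_{+-} \cup \Q_{-+}$. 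In every case the resulting antipodal pair of quadrants depends only on $g_1$, not on $\lambda$.

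There is no real obstacle in this proof; the only item requiring brief attention is that $\btheta$ really lies in an open quadrant rather than on a coordinate axis, which is built into the definition of $\lambda$-renormalizability since the horizontal and vertical directions are precisely the eigendirections of $\rho_\lambda^h$ and $\rho_\lambda^v$.
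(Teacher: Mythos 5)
Your proof is correct and follows essentially the same route as the paper's. The paper's argument notes that $\pm\btheta(\langle g_n\rangle,\lambda)$ lies in $\Shrink_\lambda(g_1)$ and then cites Proposition \ref{prop:shrink_exp} (which gives the explicit slope ranges $0<y/x<2/\lambda$ for $\Shrink_\lambda(h^{-1})$, etc.) to conclude that each of these sets sits inside a single pair of opposite quadrants depending only on $g_1$. You are simply re-deriving the content of Proposition \ref{prop:shrink_exp} from the definition of $\rho_\lambda$ by expanding $\|\rho_\lambda^{g_1}(\btheta)\|^2 - \|\btheta\|^2$ and reading off signs, which is exactly the ``simple computation'' the paper declines to write out. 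Both arguments then conclude by observing that the resulting pair of quadrants is determined by $g_1$ alone, hence independent of $\lambda$.
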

This proposition is proved at the end of \S \ref{ss:shrinking}.
Because of this, given a choice of $\btheta \in \Rn_\lambda$ with $\lambda$-shrinking sequence $\langle g_n \rangle$,
we can let $\btheta(\langle g_n \rangle,\lambda')$
denote the choice from the antipodal pair
$\pm \btheta(\langle g_n \rangle,\lambda')$ which lies in
the same quadrant as $\btheta$.

We will need the following result in the proof of the orbit equivalence (Theorem \ref{thm:topological conjugacy}). 

\begin{proposition}
\label{prop:same_sign}
The $\lambda$-sign sequence of $\bm \theta$ is the same as the $\lambda'$-sign sequence of $\bm \theta(\langle g_n \rangle, \lambda')$.
\end{proposition}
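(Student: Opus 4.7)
The plan is to reduce the proposition to Proposition \ref{prop:quadrants_match} via a shifting argument applied to the tail of the renormalizing sequence, and then resolve the remaining antipodal sign ambiguity by a continuity argument in the parameter $\lambda'$. Fix an index $N \geq 0$; I must show that the $N$-th terms $s_N$ and $s_N'$ of the $\lambda$-sign sequence of $\btheta$ and of the $\lambda'$-sign sequence of $\btheta'$ agree.

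Consider the shifted sequence $\langle h_k \rangle_{k \geq 0}$ given by $h_k := g_{N+k} g_N^{-1}$. Since right-multiplication by $g_N^{-1}$ is an isometry of the Cayley graph of $G$, this is again a geodesic ray with $h_0 = e$. For $\mu \in \{\lambda,\lambda'\}$, write $\btheta_\mu := \btheta(\langle g_n \rangle, \mu)$ with the sign convention fixed just before the proposition (so $\btheta_\lambda = \btheta$ and $\btheta_{\lambda'} = \btheta'$), and set $\bm u_\mu := \rho_\mu^{g_N}(\btheta_\mu)/\|\rho_\mu^{g_N}(\btheta_\mu)\|$. A direct computation yields $\|\rho_\mu^{h_k}(\bm u_\mu)\| = \|\rho_\mu^{g_{N+k}}(\btheta_\mu)\|/\|\rho_\mu^{g_N}(\btheta_\mu)\|$, which is strictly decreasing in $k$; hence $\langle h_k \rangle$ is the $\mu$-shrinking sequence of $\bm u_\mu$, and so $\bm u_\mu \in \{\pm \btheta(\langle h_k \rangle,\mu)\}$. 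Applying Proposition \ref{prop:quadrants_match} to the renormalizing sequence $\langle h_k \rangle$ then shows that $\pm \btheta(\langle h_k \rangle,\lambda)$ and $\pm \btheta(\langle h_k \rangle,\lambda')$ lie in the same antipodal pair of quadrants $\{\Q_{s_N},\Q_{-s_N}\}$; here $\Q_{s_N}$ is identified as the quadrant of $\bm u_\lambda$, which by definition of the $\lambda$-sign sequence of $\btheta$ coincides with the quadrant of $\rho_\lambda^{g_N}(\btheta)$. In particular $\bm u_{\lambda'} \in \Q_{s_N} \cup \Q_{-s_N}$.

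To pin down the correct sign, I would argue by continuity in $\mu \in [2,\infty)$. The direction $\btheta_\mu$ depends continuously on $\mu$: it is a boundary endpoint of the geodesic ray $\langle g_n \rangle$ in the non-elementary Fuchsian group $\rho_\mu^G$ acting on $\R \P^1$, the matrices $\rho_\mu^{g_n}$ depend continuously on $\mu$, and the sign choice (matching the quadrant of $\btheta$) is continuous in $\mu$ because by Proposition \ref{prop:quadrants_match} the direction $\btheta_\mu$ stays inside $\Q_{s_0} \cup \Q_{-s_0}$, while $\mu$-renormalizability keeps it off the coordinate axes. Consequently $\mu \mapsto \bm u_\mu$ is continuous, and $\mu$-renormalizability also ensures $\bm u_\mu$ is never an eigendirection of $\rho_\mu^h$ or $\rho_\mu^v$. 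Hence $\bm u_\mu$ lies in $\Circ$ minus the two coordinate axes, a disjoint union of four open quadrants. Since $[2,\infty)$ is connected, the image of $\mu \mapsto \bm u_\mu$ lies in a single open quadrant; at $\mu = \lambda$ that quadrant is $\Q_{s_N}$, so at $\mu = \lambda'$ we have $\bm u_{\lambda'} \in \Q_{s_N}$, which is exactly the claim $s_N' = s_N$.

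The main obstacle will be verifying the continuity of $\mu \mapsto \btheta_\mu$, including the sign choice. Most likely this is a parameter version of Lemma \ref{lem:compatible directions} that will already have been established in \S \ref{sect:symmetry}, in which case one can simply cite it; otherwise it reduces to the standard fact that attracting/repelling endpoints of hyperbolic isometries of $\H^2$ vary continuously with the isometry, combined with a characterization of $\btheta_\mu$ as such an endpoint.
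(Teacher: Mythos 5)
Your proposal takes a genuinely different route from the paper's and has a real gap at the sign-resolution step. The shift construction is correct as far as it goes: $\langle h_k = g_{N+k}g_N^{-1}\rangle$ is indeed the $\mu$-shrinking sequence of $\bm u_\mu$, so Proposition~\ref{prop:quadrants_match} applied to $\langle h_k\rangle$ does yield that $\bm u_\lambda$ and $\bm u_{\lambda'}$ lie in the same antipodal pair $\{\Q_{s_N}, \Q_{-s_N}\}$. But this only determines $s_N'$ up to sign, and neither of the two ways you propose to resolve the ambiguity is available. There is no parameter-continuity version of Lemma~\ref{lem:compatible directions} in \S\ref{sect:symmetry}; the Correspondence Theorem~\ref{thm:unique shrinking} and Theorem~\ref{thm:characterization} give existence and uniqueness of $\btheta(\langle g_n\rangle,\lambda)$, not its dependence on $\lambda$. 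And $\btheta_\mu$ is in general \emph{not} an endpoint of a hyperbolic isometry: Theorem~\ref{thm:characterization} admits aperiodic geodesic rays, for which $\btheta_\mu$ is fixed by no nontrivial element of $\rho_\mu^G$, so the ``hyperbolic endpoints vary continuously'' fact simply does not apply. Continuity of $\mu \mapsto [\btheta_\mu] \in \R\P^1$ is in fact true (one could prove it via a locally-uniform diameter estimate on the nested projectivized sets ${\overline \Shrink}_\mu(g_i)$), but this is genuine extra work that the paper never undertakes and that your sketch does not supply.

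The paper's proof is a one-line corollary of Proposition~\ref{prop:shrinking_and_sign}, which it cites: that proposition shows that $s_{n+1}$ is pinned down by $s_n$, $g_{n+1}g_n^{-1}$, and $g_{n+2}g_{n+1}^{-1}$ via a finite transition diagram that is manifestly $\lambda$-independent. Since $\btheta$ and $\btheta(\langle g_n\rangle,\lambda')$ share the same shrinking sequence and, by the sign convention fixed after Proposition~\ref{prop:quadrants_match}, the same initial quadrant $s_0$, the induction produces identical sign sequences. In contrast to your reduction, this pins down the exact quadrant at every step rather than only the antipodal pair, so no separate sign-resolution step ever arises.
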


This proposition will be proved at the end of section \ref{sect:quadrants and expansion}.
The sign sequence gives us a more natural definition of being a $(\bm \theta,n)$-survivor.

\begin{proposition}[Equivalent notion of survivors]
\label{prop:equiv_survivors}
Let $\f \in \R^\V$. Then $\f$ is a $(\bm \theta, n)$-survivor if and only if
$\Act^{g_n}(\f) \in \hat \Q_{s_n}$. 
\end{proposition}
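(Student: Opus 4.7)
The plan is to unwind the definitions and reduce the survivor condition on $\f$ to a direct sign check on the values of $\g = \Act^{g_n}(\f)$ at each vertex. By Definition \ref{def:function_survivors}, $\f$ is a $(\bm \theta, n)$-survivor if and only if $\Xi(\f)$ is; by Proposition \ref{prop:equiv_def}, this is equivalent to saying that for all $\sigma \in \Edge$,
\[ \big(\Phi^{g_n}_\ast \Xi(\f)\big)(\hom{\sigma}) \neq 0 \implies \sgn \big(\Phi^{g_n}_\ast \Xi(\f)\big)(\hom{\sigma}) = \sgn \big(\hol(\sigma) \wedge \rho_\lambda^{g_n}(\bm \theta)\big); \]
and by Proposition \ref{prop:pullback_action} the left-hand side equals $\Xi(\g)(\hom{\sigma})$. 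So I need to show this condition, for all $\sigma \in \Edge$, is equivalent to $\g \in \hat \Q_{s_n}$.

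Next I would compute both sides explicitly using our orientation conventions. Writing $\rho_\lambda^{g_n}(\bm \theta) = (x, y) \in \Q_{s_n}$, so $\sgn(x) = \pi_1(s_n)$ and $\sgn(y) = \pi_2(s_n)$. For $\sigma \in \Edge$ horizontal with holonomy $(w, 0)$, $w > 0$, the wedge gives $\sgn(\hol(\sigma) \wedge \rho_\lambda^{g_n}(\bm \theta)) = \sgn(wy) = \pi_2(s_n)$. For $\sigma$ vertical with holonomy $(0,h)$, $h > 0$, it gives $\sgn(-hx) = -\pi_1(s_n)$. For the cohomological side, using \eqref{eq:Xi} and the fact that each edge $\sigma \in \Edge$ is a side of a unique rectangle and therefore traverses exactly one cylinder: a horizontal $\sigma$ traverses a unique vertical cylinder $\cyl_\vb$ ($\vb \in \Beta$), giving $\Xi(\g)(\hom{\sigma}) = i(\hom{\sigma}, \hom{\cyl_\vb}) \g(\vb) = +\g(\vb)$; a vertical $\sigma$ traverses a unique horizontal cylinder $\cyl_\va$ ($\va \in \Alpha$), giving $\Xi(\g)(\hom{\sigma}) = -\g(\va)$, where the signs are computed from the orientations of the saddle connections and core curves.

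Combining these, the survivor condition becomes the pair of statements: $\g(\vb) \neq 0 \Rightarrow \sgn \g(\vb) = \pi_2(s_n)$ for every $\vb \in \Beta$, and $\g(\va) \neq 0 \Rightarrow \sgn \g(\va) = \pi_1(s_n)$ for every $\va \in \Alpha$. To get the quantifier over all $\vv \in \V$, I use that $\G$ is connected (and every cylinder is assembled from at least one rectangle in Definition \ref{def:S}), so every cylinder $\cyl_\vv$ is actually traversed by at least one $\sigma \in \Edge$. Comparing with Definition \ref{def:quadrants2}, this is precisely the assertion $\g \in \hat \Q_{s_n}$.

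The main obstacle is purely bookkeeping: the signs of the algebraic intersection numbers $i(\hom{\sigma}, \hom{\cyl_\vv})$ must be consistent with the orientation conventions on saddle connections in $\Edge$, on the core curves $\hom{\cyl_\vv}$ above Proposition \ref{prop:action}, and on the wedge product of \eqref{eq:wedge}. Once these conventions are fixed as above, the argument is a one-line calculation; nothing deep beyond Propositions \ref{prop:equiv_def} and \ref{prop:pullback_action} is required.
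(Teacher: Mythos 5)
Your proposal is correct and follows essentially the same route as the paper's own proof: both reduce via Proposition \ref{prop:pullback_action} and Proposition \ref{prop:equiv_def} to a direct sign computation comparing $\Xi(\g)$ on horizontal and vertical edge saddle connections with the wedge product against $\rho_\lambda^{g_n}(\bm\theta)$, and then match the result against Definition \ref{def:quadrants2}. The only addition you make is to spell out the observation that every cylinder is traversed by some $\sigma \in E$, which the paper leaves implicit.
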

\begin{proof}
By Proposition \ref{prop:pullback_action},
$\Phi^{g_n}_\ast \circ \Xi(\f)=\Xi \circ \Act^{g_n}(\f)$.
Showing that $\f$ is a $(\bm \theta, n)$-survivor is equivalent to showing that 
$\Xi \circ \Upsilon^{g_n}(\f)$ is a $(\rhoa{g_n}(\bm \theta), 0)$-survivor. 
Therefore, it is enough to consider the case when $n=0$.

Write $\bm \theta=(x,y)$ and set $s_x=\sgn~x$ and $s_y=\sgn~y$. As $\bm \theta$ is $\lambda$-renormalizable,
$s_x, s_y \in \{-1,1\}$. If $\sigma_\vv \in \Edge$ is a vertical saddle connection, then
$\sgn(\hol(\sigma_v) \wedge \bm \theta)= -s_x$. If $\sigma_v \in \Edge$ is horizontal, then
$\sgn(\hol(\sigma_h) \wedge \bm \theta)= s_y$. Assuming $\sigma_v$ crosses the horizontal cylinder $a \in \Alpha$
and $\sigma_h$ crosses the vertical cylinder $b \in \Beta$, we have
$$\Xi(\f)(\sigma_v)=-\f(\va) \quad \textrm{and} \quad \Xi(\f)(\sigma_h)=\f(\vb).$$
Therefore, $\Xi(f)$ is a $(\bm \theta,0)$-survivor if and only if 
$\sgn~\f(\va) \in \{0, s_x\}$ and $\sgn~\f(\vb) \in \{0, s_y\}$ for all
$a \in \Alpha$ and $b \in \Beta$. The conclusion of the proposition follows.
\end{proof}

\subsection{The topological conjugacy}
\label{sect:top_conj}

In this subsection, we will prove Theorem \ref{thm:topological conjugacy}. Throughout this subsection $\w_1, \w_2 \in \R^\V$ will be positive functions satisfying $\A(\w_i)=\lambda_i \w_i$. The sequence $\langle g_n \rangle$ will be a renormalizing sequence, and $\bm \theta_i= \bm \theta(\langle g_n \rangle, \lambda_i) \in \Circ$ will be chosen from the pairs of antipodal $\lambda_i$-renormalizable directions with $\lambda_i$-shrinking sequence $\langle g_n \rangle$ so that $\bm \theta_1$ and $\bm \theta_2$ lie in the same quadrant. We write $\bm \theta_i=(x_i, y_i)$. 

For any positive $\f \in \R^\V$ with $\A(\f)=\lambda \f$, there is a naturally related parameterized plane in $\R^\V$.
\begin{equation}
\label{eq:P}
P_\f:\R^2 \to \R^\V; \quad P_\f(x,y)(\vv)=\begin{cases}
x \f(\vv) & \textrm{if $\vv \in \Alpha$} \\
y \f(\vv) & \textrm{if $\vv \in \Beta$.}
\end{cases}
\end{equation}
This plane is invariant under the action of $\Ho$ and $\Vo$:
\begin{proposition}[Invariant planes]
\label{prop:invt_planes}
Let $\v \in \R^2$. Then, for all $g \in G$, we have $\Act^g \big(P_\f(\v)\big)=P_\f \big(\rho_{\lambda}^g(\v)\big)$. 
\end{proposition}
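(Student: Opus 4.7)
The plan is to verify the identity on the free generators $h$ and $v$ for each $\v \in \R^2$, and then extend to arbitrary $g \in G$ by induction on the word length of $g$, using that $\Act$ is a group action and $\rho_\lambda$ is a group homomorphism. Explicitly, writing $g = g_1 g'$ with $g_1 \in \{h^{\pm 1}, v^{\pm 1}\}$ and $g'$ strictly shorter, the inductive step reads
$$\Act^{g}\bigl(P_\f(\v)\bigr) = \Act^{g_1}\bigl(\Act^{g'}(P_\f(\v))\bigr) = \Act^{g_1}\bigl(P_\f(\rhoa{g'}\v)\bigr) = P_\f\bigl(\rhoa{g_1}\rhoa{g'}\v\bigr) = P_\f(\rhoa{g}\v),$$
where the second equality uses the inductive hypothesis and the third applies the generator case to the vector $\rhoa{g'}\v$. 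The inverses $h^{-1}, v^{-1}$ cause no trouble because a group action is determined by its action on the generators, or equivalently because $\Ho^{-1}$ and $\Vo^{-1}$ are given by the formulas defining $\Ho^k, \Vo^k$ with $k=-1$.

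For the base case at $g = h$, recall $\rhoa{h}(x,y) = (x+\lambda y, y)$, and set $\v=(x,y)$. There are two cases according to the bipartite structure. If $\vx \in \Beta$, then by definition $\Ho$ fixes the value at $\vx$, so $\Ho(P_\f(x,y))(\vx) = y\f(\vx) = P_\f(x+\lambda y, y)(\vx)$. If $\vx \in \Alpha$, then every neighbor $\vy \sim \vx$ lies in $\Beta$ since $\G$ is bipartite, so $P_\f(x,y)(\vy) = y\f(\vy)$. The eigenfunction equation $\A(\f) = \lambda\f$ evaluated at $\vx$ gives $\sum_{\vy \sim \vx} \f(\vy) = \lambda \f(\vx)$, hence
$$\Ho\bigl(P_\f(x,y)\bigr)(\vx) = x\f(\vx) + \sum_{\vy \sim \vx} y\f(\vy) = x\f(\vx) + \lambda y \f(\vx) = (x+\lambda y)\f(\vx) = P_\f(\rhoa{h}\v)(\vx),$$
as required. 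The verification for $g = v$ is the mirror image: $\rhoa{v}(x,y) = (x, \lambda x + y)$, and one swaps the roles of $\Alpha$ and $\Beta$, again using $\A(\f) = \lambda \f$ to produce the factor of $\lambda$ appearing in the matrix $\rhoa{v}$.

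I anticipate no substantial obstacle: the whole content of the proposition is that the interplay between the $\SL(2,\R)$-action on $\R^2$ and the combinatorial action of $G$ on $\R^\V$ is governed precisely by the eigenvalue $\lambda$, and this is exactly what is encoded by the identity $\sum_{\vy \sim \vx} \f(\vy) = \lambda \f(\vx)$. The main care required is bookkeeping with the bipartite decomposition $\V = \Alpha \cup \Beta$ and remembering that $\Ho$ modifies values only over $\Alpha$ while $\Vo$ modifies values only over $\Beta$.
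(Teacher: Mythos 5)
Your proof is correct and matches the paper's intent; the paper simply declares "The proof is a simple calculation" without spelling anything out, and the calculation you provide — checking the generator case using the eigenfunction identity $\sum_{\vy \sim \vx}\f(\vy)=\lambda\f(\vx)$ together with the bipartite structure, then extending over $G$ by the homomorphism property — is exactly that calculation.

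One small remark on the way you dismiss the inverse generators: the phrase "a group action is determined by its action on the generators" is not quite the reason the identity propagates to $h^{-1},v^{-1}$. What you actually want is the observation that if $\Act^g(P_\f(\v))=P_\f(\rhoa{g}\v)$ holds for all $\v\in\R^2$, then replacing $\v$ by $\rhoa{g^{-1}}\w$ and applying $\Act^{g^{-1}}$ to both sides gives the identity for $g^{-1}$. Alternatively, your second justification — that the formulas for $\Ho^k,\Vo^k$ already cover $k=-1$ — is cleaner and entirely sufficient; the same two-line computation with $k=-1$ produces $(x-\lambda y,y)=\rhoa{h^{-1}}(x,y)$ exactly as expected. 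Either phrasing closes the gap, but the first wording as you wrote it could mislead a reader into thinking no argument is needed.
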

The proof is a simple calculation.

We will be considering the parametrized planes $P_{\w_1}$ and $P_{\w_2}$.
For $i=1,2$, let $\mu_i$ be the Lebesgue $\hat \F_{\bm \theta_i}$-transverse measures on $S(\G, \w_i)$. 
These measures are closely connected to the planes constructed above.

\begin{proposition}
For $i=1,2$, we have $\Psi_{\bm \theta_i}(\mu_i)=\Xi\big(P_{\w_i}(\bm \theta_i)\big)$.
\end{proposition}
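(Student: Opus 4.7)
The plan is to prove the equality of two cohomology classes by evaluating both on a spanning set, namely the homology classes $\hom{\sigma}$ of the oriented horizontal and vertical edges $\sigma \in E$. By Proposition \ref{prop:generate} these span $H_1(S, V, \Z)$, so verifying the equality on every such edge suffices. I will treat the two cases (horizontal edge, vertical edge) of a rectangle $R_e$ with $e = \overline{\va \vb}$, using the explicit formulas for all the objects involved. Throughout, write $\btheta_i = (x_i, y_i)$.

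First, I would compute the left-hand side. By the description of $\Psi_{\btheta_i}$ from Section \ref{sect:reinterpret} together with the formula for Lebesgue transverse measure just above \eqref{eq:wedge}, for any edge $\sigma$ in $E$ one has $\Psi_{\btheta_i}(\mu_i)(\hom{\sigma}) = \hol(\sigma) \wedge \btheta_i$: the unsigned Lebesgue mass of $\sigma$ as a transversal is $|\hol(\sigma) \wedge \btheta_i|$, and the algebraic-sign convention in the definition of $\Psi$ restores the wedge sign. For a horizontal edge $\sigma_h$ on the boundary of $R_e$, oriented rightward, $\hol(\sigma_h) = (\w_i(\vb), 0)$, so the LHS equals $\w_i(\vb)\, y_i$. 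For an upward-oriented vertical edge $\sigma_v$ of $R_e$, $\hol(\sigma_v) = (0, \w_i(\va))$, so the LHS equals $-\w_i(\va)\, x_i$.

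Next I would compute the right-hand side from the definition \eqref{eq:Xi} of $\Xi$. The key observation is that the generic horizontal edge $\sigma_h$ of $R_e$ is contained in the horizontal cylinder $\cyl_\va$ (so has intersection number $0$ with its core curve) and is transverse to exactly one vertical cylinder, namely $\cyl_\vb$, meeting its upward-oriented core curve in a single point with sign $+1$. Hence
$$\Xi\bigl(P_{\w_i}(\btheta_i)\bigr)(\hom{\sigma_h}) = i(\hom{\sigma_h}, \hom{\cyl_\vb})\, P_{\w_i}(\btheta_i)(\vb) = 1 \cdot y_i \w_i(\vb),$$
matching the LHS. For the vertical edge $\sigma_v$, the only cylinder contributing is $\cyl_\va$, and the sign of the intersection of an upward-oriented vertical with a rightward-oriented horizontal core is $-1$, giving
$$\Xi\bigl(P_{\w_i}(\btheta_i)\bigr)(\hom{\sigma_v}) = (-1)\, x_i \w_i(\va),$$
again matching. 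Since the two linear functionals agree on the generating set $\{\hom{\sigma} : \sigma \in E\}$, they agree on all of $H_1(S, V, \Z)$.

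The only delicate step is bookkeeping the signs: the sign with which a rightward (resp.\ upward) edge of $E$ crosses $\hat \F_{\btheta_i}$, and the sign with which its class meets the oriented core curves of $\cyl_\va$ and $\cyl_\vb$. Both of these are fixed by the orientation conventions established earlier (edges of $E$ oriented rightward or upward; core curves of $\cyl_\va$ with $\va \in \Alpha$ oriented rightward, of $\cyl_\vb$ with $\vb \in \Beta$ oriented upward; and the sign convention in the definition of $\Psi_\btheta$), and once those are invoked the two computations line up term-by-term. No further analytic input is required beyond the $\wedge$-formula for Lebesgue transverse measure.
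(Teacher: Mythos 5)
Your argument is correct and is essentially the same as the paper's: both sides of the identity are evaluated on the spanning set of horizontal and vertical edges of the rectangles, the left side via the wedge-product formula for the Lebesgue transverse measure (giving $\hol(\sigma) \wedge \btheta_i$) and the right side via the definition of $\Xi$ together with the intersection numbers $i(\hom{\sigma_h}, \hom{\cyl_\vb}) = 1$ and $i(\hom{\sigma_v}, \hom{\cyl_\va}) = -1$. You spell out the invocation of Proposition \ref{prop:generate} and the sign bookkeeping a bit more explicitly, but the computation is term-for-term the one in the paper.
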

\begin{proof}
Let $\va \in \Alpha$ and $\vb \in \Beta$. Let $\sigma_v$ be a vertical saddle connection crossing the horizontal cylinder $\cyl_\va$ and oriented upward. Let
$\sigma_h$ be a rightward oriented horizontal saddle connection crossing the vertical cylinder $\cyl_\vb$.
By definition of $\Psi_{\bm \theta_i}(\mu_i)$ and the Lebesgue transverse measure, we have the following.
$$\Psi_{\bm \theta_i}(\mu_i)(\hom{\sigma_v})=\big(0,\w_i(\va)\big) \wedge \bm \theta_i=-x_i \w_i(\va).$$
$$\Psi_{\bm \theta_i}(\mu_i)(\hom{\sigma_h})=\big(\w_i(\vb),0\big) \wedge \bm \theta_i=y_i \w_i(\vb).$$
Similarly, by definition of $\Xi$ we have the following.
$$\Xi\big(P_{\w_i}(\bm \theta_i)\big)(\hom{\sigma_v})=i(\hom{\sigma_v}, \hom{\cyl_\va}) P_{\w_i}(\bm \theta_i)(\va)=-x_i \w_i(\va).$$
$$\Xi\big(P_{\w_i}(\bm \theta_i)\big)(\hom{\sigma_h})=i(\hom{\sigma_h}, \hom{\cyl_\vb}) P_{\w_i}(\bm \theta_i)(\vb)=y_i \w_i(\vb).$$
\end{proof}

\begin{proposition}
\label{prop:survivor_in_plane}
$P_{\w_2}(\bm \theta_2)$ is a $\bm \theta_1$-survivor. Moreover $\v=\btheta_2$ is the unique $\v \in \Circ$ for which 
$P_{\w_2}(\v)$ is a $\bm \theta_1$-survivor.
\end{proposition}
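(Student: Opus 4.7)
The plan is to reduce everything to the quadrant-containment criterion for survivors from Proposition \ref{prop:equiv_survivors} and then exploit the intertwining between $\Act^g$ on $\R^\V$ and $\rho_\lambda^g$ on $\R^2$ coming from Proposition \ref{prop:invt_planes}. The key observation is that although $\bm \theta_1$ and $\bm \theta_2$ live in different dynamical systems ($\rho_{\lambda_1}^G$ versus $\rho_{\lambda_2}^G$), Proposition \ref{prop:same_sign} guarantees they share a common sign sequence $\langle s_n\rangle$ relative to the renormalizing sequence $\langle g_n\rangle$. So checking the $\bm\theta_1$-survivor condition on $P_{\w_2}(\bm\theta_2)$ becomes a statement purely about the $\lambda_2$-dynamics of $\bm\theta_2$.

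Concretely, first I would apply Proposition \ref{prop:equiv_survivors} to reformulate the goal: $P_{\w_2}(\bm\theta_2)$ is a $\bm\theta_1$-survivor iff $\Act^{g_n}\big(P_{\w_2}(\bm\theta_2)\big) \in \hat\Q_{s_n}$ for every $n\geq 0$. Next I would use Proposition \ref{prop:invt_planes} to rewrite the left-hand side as $P_{\w_2}\big(\rho_{\lambda_2}^{g_n}(\bm\theta_2)\big)$. Since $\w_2$ is everywhere positive, the sign of $P_{\w_2}(\v)(\va)$ for $\va\in\Alpha$ agrees with $\sgn\pi_1(\v)$, and likewise for $\vb\in\Beta$ with $\sgn\pi_2(\v)$. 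Therefore the membership $P_{\w_2}(\v)\in\hat\Q_{s_n}$ is equivalent to $\v\in\cl(\Q_{s_n})$. The required membership $\rho_{\lambda_2}^{g_n}(\bm\theta_2)\in\cl(\Q_{s_n})$ for all $n$ is exactly the defining property of the $\lambda_2$-sign sequence of $\bm\theta_2$, which by Proposition \ref{prop:same_sign} is the shared sequence $\langle s_n\rangle$. This proves the first assertion.

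For uniqueness, I would reverse the same chain. Suppose $\v\in\Circ$ satisfies that $P_{\w_2}(\v)$ is a $\bm\theta_1$-survivor. The argument above shows this forces $\rho_{\lambda_2}^{g_n}(\v)\in\cl(\Q_{s_n})$ for every $n\geq 0$. Since $\bm\theta_2$ has $\lambda_2$-shrinking sequence $\langle g_n\rangle$ and $\lambda_2$-sign sequence $\langle s_n\rangle$, Proposition \ref{prop:quadrant_sequence} applied in the $\lambda_2$-system pins down $\v=\bm\theta_2$ uniquely.

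There is no real obstacle here beyond carefully lining up which $\lambda$ and which sign sequence is being invoked at each step; the content of the proposition is essentially that the plane $P_{\w_2}$ transports the uniqueness statement of Proposition \ref{prop:quadrant_sequence} from $\R^2$ into $\R^\V$, with Proposition \ref{prop:same_sign} providing the bridge that lets us test the $\bm\theta_1$-survivor condition using $\lambda_2$-dynamics.
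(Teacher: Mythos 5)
Your proposal is correct and follows exactly the same route as the paper: reduce to quadrant membership via Proposition \ref{prop:equiv_survivors}, push the action onto the parametrized plane with Proposition \ref{prop:invt_planes}, bridge sign sequences with Proposition \ref{prop:same_sign}, and invoke Proposition \ref{prop:quadrant_sequence} for uniqueness. The only difference is that you spell out the equivalence $P_{\w_2}(\v)\in\hat\Q_{s_n}\iff \v\in\cl(\Q_{s_n})$ (a consequence of positivity of $\w_2$), which the paper compresses into "by definition of $P_{\w_2}$."
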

\begin{proof}
We show that $P_{\w_2}(\bm \theta_2)$ is a $(\bm \theta_1, n)$-survivor for all $n$. Let $s_n$ be the $n$-th entry in the sign sequences of $\bm \theta_1$ and $\bm \theta_2$. (These are the same by Proposition \ref{prop:same_sign}.)
By Proposition \ref{prop:invt_planes}, we have that $\Upsilon^{g_n}\big(P_{\w_2}(\bm \theta_2)\big)=P_{\w_2}\big(\rho_{\lambda_2}^{g_n}(\bm \theta_2)\big)$. We know $\rho_{\lambda_2}^{g_n}(\bm \theta_2) \in \Q_{s_n}$. So by definition of $P_{\w_2}$, we have $P_{\w_2}\big(\rho_{\lambda_2}^{g_n}(\bm \theta_2)\big) \in \hat \Q_{s_n}$ and we may apply Proposition \ref{prop:equiv_survivors}.

Uniqueness of the choice $\bm \theta_1$-survivor follows by combining the above argument with Proposition \ref{prop:quadrant_sequence}.
\end{proof}

Let $\M_{i}$ denote the locally finite  $\hat \F_{\bm \theta_i}$-transverse measures on $S(\G, \w_i)$. From Theorem \ref{thm:operator_theorem} and Definition \ref{def:function_survivors}, we can observe:

\begin{corollary}
\label{cor:pullback}
There is an unique measure $\mu_2' \in  \M_{1}$ with $\Psi_{\bm \theta_1}(\mu_2')=\Xi\big(P_{\w_2}(\bm \theta_2)\big)$.
\end{corollary}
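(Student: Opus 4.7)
The plan is to obtain this corollary as an immediate consequence of the machinery already assembled. By Proposition \ref{prop:survivor_in_plane}, the function $P_{\w_2}(\bm \theta_2)$ is a $\bm \theta_1$-survivor. By Definition \ref{def:function_survivors}, this means $\Xi\big(P_{\w_2}(\bm \theta_2)\big)$ is a $(\bm \theta_1,n)$-survivor for every $n \geq 0$. Theorem \ref{thm:operator_theorem} then applies directly: $\Xi\big(P_{\w_2}(\bm \theta_2)\big) \in \Psi_{\bm \theta_1}(\M_1)$, so there exists a measure $\mu_2' \in \M_1$ with $\Psi_{\bm \theta_1}(\mu_2')=\Xi\big(P_{\w_2}(\bm \theta_2)\big)$.

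For uniqueness, I would invoke injectivity of $\Psi_{\bm \theta_1}$. This is the corollary stated after Theorem \ref{thm:pr}: since $\bm \theta_1$ is $\lambda_1$-renormalizable, the straight-line flow on $S(\G,\w_1)$ in direction $\bm \theta_1$ admits no periodic trajectories (Corollary \ref{cor:aperiodicity}, via Theorem \ref{thm:no saddles}) and is conservative (Theorem \ref{thm:pr}), so Lemma \ref{lem:injective} guarantees that $\Psi_{\bm \theta_1}:\M_1 \to \Coh$ is injective. Hence any two preimages of $\Xi\big(P_{\w_2}(\bm \theta_2)\big)$ must agree, giving uniqueness of $\mu_2'$.

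There is no genuine obstacle here; the only content beyond assembling references is verifying that $\M_1$ in the statement is indeed the same space $\M_{\bm \theta_1}$ on which Theorem \ref{thm:operator_theorem} and the injectivity corollary operate, which is immediate from the definitions. The corollary is really a bookkeeping step that packages Proposition \ref{prop:survivor_in_plane}, Theorem \ref{thm:operator_theorem}, and injectivity into the form needed to construct the orbit equivalence $\phi:S(\G,\w_1)\to S(\G,\w_2)$ in the next stage of the proof of Theorem \ref{thm:topological conjugacy}.
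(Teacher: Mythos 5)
Your proof is correct and follows the same route the paper indicates: Proposition \ref{prop:survivor_in_plane} supplies the survivor, Definition \ref{def:function_survivors} and Theorem \ref{thm:operator_theorem} give existence, and injectivity of $\Psi_{\bm \theta_1}$ (via Lemma \ref{lem:injective} and Theorem \ref{thm:pr}) gives uniqueness. There is nothing to add.
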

This measure $\mu_2'$ is our candidate pull-back measure. It remains to build our homomorphism. We begin by building a continuous map from $\phi_1:S(\G, \w_1) \to S(\G, \w_2)$. For $e \in \E$, let $R_e^i$ be the rectangles of $S(\G, \w_i)$ associated to the edge of the graph $e=\overline{\va \vb}$. We define the map on these rectangles. Recall we may view each $R_e^i$ as the rectangular subset of the plane, $[0,\w_i (\vb)] \times [0, \w_i(\va)]$. (See Definition \ref{def:S}.) 
We define the restriction map 
$\phi_1|_{ \partial R_e^1}: \partial R_e^1 \to \partial R_e^2$ along the edges by integrating the measure $\mu_2'$ and rescaling appropriately. 
For instance along the bottom edge $B \subset R_e^1$, which is identified with the interval $[0, \w_1(\vb)]\times \{0\}$, we use the formula
$$\phi_1|_B(t,0)=\Big(\frac{1}{|y_2|} \mu_2'([0,t] \times \{0\}),0\Big) \in R_e^2.$$
(Recall $y_2$ is the $y$-coordinate of $\bm \theta_2$.)
We must check that this map sends the bottom edge of $R_e^1$ to the bottom edge of $R_e^2$. 
This follows from the fact that $\Psi_{\bm \theta_1}(\mu_2')=\Xi\big(P_{\w_2}(\bm \theta_2)\big)$, because the bottom edge
of $R_e^1$ is a saddle connection, $\sigma_h=[0,\w_1(\vb)] \times \{0\}$. We evaluate the $x$-coordinate of the lower right endpoint as
\begin{equation}
\label{eq:bottom}
\begin{array}{rcl}
\displaystyle \phi_1|_{B}\big(\w_i(\vb),0\big) & = & \displaystyle \frac{1}{|y_2|} \mu_2'(\sigma_h) =
\displaystyle \frac{1}{|y_2|} \big|\Xi\big(P_{\w_2}(\bm \theta_2)\big)(\hom{\sigma_h})\big| \\
& = & \displaystyle \frac{1}{|y_2|} \big|P_{\w_2}(\bm \theta_2)(\vb)\big| = \frac{1}{|y_2|} \big|y_2 \w_2(\vb)\big|=\w_2(\vb).
\end{array}
\end{equation}
This shows that $\phi_1|_{L}$ maps to the lower edge, but it may not be surjective or continuous if $\mu'$ contains atoms. 
For the left, top and right edges ($L$, $T$ and $R$ respectively), we use the following formulas:
\begin{equation}
\label{eq:not_bottom}
\begin{array}{c}
\displaystyle 
\phi_1|_{L}(0,t)=\Big(0,\frac{1}{|x_2|} \mu_2'(\{0\} \times [0,t]) \Big). \\
\quad \phi_1|_{T}\big(t,\w_1(\va)\big)=\Big(\frac{1}{|y_2|} \mu_2'([0,t] \times \{\w_1(\va)\}),\w_2(\va)\Big). \\
\displaystyle 
\quad \phi_1|_{R}\big(\w_1(\vb),t\big)=\Big(\w_2(\vb),\frac{1}{|x_2|} \mu_2'(\{w_1(\vb)\} \times [0,t])\Big).
\end{array}
\end{equation}
Together these define the map $\phi_1|_{\partial R_e^1}: \partial R_e^1 \to \partial R_e^2$.
A similar check for each edge shows that $\phi_1|_{\partial R_e^1}$ sends the respective vertices of $R_e^1$ to the respective vertices of $R_e^2$.

\begin{proposition}
The map $\phi_1|_{\partial R_e^1}$ sends points on the same connected component of a leaf of $\hat \F_{\bm \theta_1}$
inside $R_e^1$ to points on the same connected component of a leaf of $\hat \F_{\bm \theta_2}$
inside $R_e^2$.
\end{proposition}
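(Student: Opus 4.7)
My plan is to use the holonomy invariance of $\mu_2'$. By Corollary \ref{cor:pullback}, $\mu_2'$ is a locally finite $\hat \F_{\bm \theta_1}$-transverse measure, hence invariant under holonomy along leaves of $\F_{\bm \theta_1}$. Because the interior of $R_e^1$ contains no singular points (singularities sit only at the corners), inside $R_e^1$ the split foliation $\hat \F_{\bm \theta_1}$ agrees with $\F_{\bm \theta_1}$ and consists of parallel segments in direction $\bm \theta_1$, each joining two boundary points of $R_e^1$. Using the dihedral symmetry of Remark \ref{rem:dihedral}, I may assume $\bm \theta_1 \in \Q_{++}$. Proposition \ref{prop:same_sign} and the standing assumption (via Proposition \ref{prop:quadrants_match}) that $\bm \theta_1, \bm \theta_2$ lie in a common quadrant then give $\bm \theta_2 \in \Q_{++}$ as well; write $\bm \theta_i = (x_i, y_i)$ with $x_i, y_i > 0$.

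A connected component of a leaf of $\hat \F_{\bm \theta_1}$ inside $R_e^1$ joins one boundary edge to another, giving four cases: (bottom, top), (bottom, right), (left, top), and (left, right). I treat the (bottom, top) case; the others are strictly analogous. Let $p_1 = (a_1, 0)$ and $p_2 = (a_2, \w_1(\va))$ with $a_2 - a_1 = c$ where $c = (x_1/y_1)\w_1(\va)$. The $\F_{\bm \theta_1}$-holonomy inside $R_e^1$ is a piecewise-affine bijection that carries the left edge $\{0\}\times [0,\w_1(\va)]$ onto $[0,c]\times\{\w_1(\va)\}$ and carries $[0, a_1]\times\{0\}$ onto $[c, a_2]\times\{\w_1(\va)\}$, so invariance of $\mu_2'$ yields
\[ \mu_2'\bigl([0,c]\times\{\w_1(\va)\}\bigr) = \mu_2'\bigl(\{0\}\times[0,\w_1(\va)]\bigr) = |x_2|\w_2(\va), \]
together with $\mu_2'\bigl([c, a_2]\times\{\w_1(\va)\}\bigr) = \mu_2'\bigl([0, a_1]\times\{0\}\bigr)$; the rightmost equality in the display comes from applying formula (\ref{eq:not_bottom}) to the full left edge, exactly as was done in (\ref{eq:bottom}) for the bottom. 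Substituting into the integral definitions of $\phi_1$ on the top and bottom edges, the horizontal component of $\phi_1(p_2) - \phi_1(p_1)$ equals $(|x_2|/|y_2|)\w_2(\va) = (x_2/y_2)\w_2(\va)$, whence
\[ \phi_1(p_2) - \phi_1(p_1) = \tfrac{\w_2(\va)}{y_2}\,\bm \theta_2, \]
a positive multiple of $\bm \theta_2$. Since $R_e^2$ is convex and $\bm \theta_2$ points into its interior from $\phi_1(p_1)$, this segment lies in $R_e^2$, placing $\phi_1(p_1)$ and $\phi_1(p_2)$ on a common component of an $\F_{\bm \theta_2}$-leaf inside $R_e^2$, as required.

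The remaining three cases follow the same template with horizontal and vertical edges interchanged where appropriate; for the (left, right) case, for instance, one uses holonomy identifications against the bottom and top edges of $R_e^1$ and invokes the analogue of (\ref{eq:bottom}) for the bottom edge of $R_e^2$. I expect the main obstacle to be only the bookkeeping for these four configurations; the one conceptual point worth stating is that $\mu_2'$ may be atomic, but since the holonomy maps used above are genuine piecewise-affine bijections between transversals (defined everywhere, not merely off a measure-zero set), invariance of $\mu_2'$ passes through atoms without issue.
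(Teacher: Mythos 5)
Your argument is correct and rests on the same mechanism as the paper's: holonomy invariance of the transverse measure $\mu_2'$ plus a direct computation showing $\phi_1(p_2)-\phi_1(p_1)$ is a positive multiple of $\bm\theta_2$. The difference is cosmetic. You treat the (bottom, top) case by decomposing the transversal $[0,a_2]\times\{\w_1(\va)\}$ into the holonomic image of the left edge and the holonomic image of $[0,a_1]\times\{0\}$; the paper treats the (left, top) case by introducing the shared corner $O$ and observing that $\overline{OP}$ and $\overline{OQ}$ cross exactly the same leaves, which collapses the bookkeeping to a single equality $\mu_2'(\overline{OP})=\mu_2'(\overline{OQ})$. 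The paper's triangle trick is slightly cleaner in that it avoids any additive decomposition of a transversal across the split leaf at $c=(x_1/y_1)\w_1(\va)$, so the potential double-counting of an atom never even arises; in your argument one should really read $[0,a_2]=[0^+,c^-]\sqcup[c^+,a_2]$ in the split picture, which is what makes your remark about atoms go through. Also note that your chosen case is nonvacuous only when $c\le \w_1(\vb)$, whereas the paper's (left, top) case is always realized near the top of the left edge; this does not affect correctness, but it is why the paper singles out that case.
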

\begin{proof}
We can assume without loss of generality that $\bm \theta_1$ and $\bm \theta_2$ are in the first quadrant. We must check that if the points $P, Q \in \partial R_e^1$ satisfy $\overrightarrow{PQ} \parallel {\bm \theta}_1$, then
$\overrightarrow{P'Q'} \parallel {\bm \theta}_2$ where 
$P'=\phi_1|_{\partial R_e^1}(P)$ and $Q'=\phi_1|_{\partial R_e^1}(Q)$. We will check this in case $P$ lies on the left edge $L$ and $Q$ lies on the top edge $T$. We leave the remaining cases to the reader. In this case via the identification
of $R_e^1$ with the rectangle $[0,\w_i (\vb)] \times [0, \w_i(\va)]$, there is a $r>0$ so that
$P=\big(0,\w_1(\va)-r y_1\big)$ and $Q=\big(r x_1, \w_1(\va)\big)$. Let $O=\big( 0, \w_1(\va)\big)$. 
We note that the transversals $\overline{OP}$ and $\overline{OQ}$ cross the same collection of leaves.
Let $s=\mu_2'(\overline{OP})$. Then by this observation $s=\mu_2'(\overline{OQ})$.
We compute
$$\begin{array}{rcl}
P' & = & \Big(0,\frac{1}{x_2} \mu_2'(\{0\} \times [0,\w_1(\va)-r y_1]) \Big)=
\Big(0,\frac{1}{x_2} \big(\mu_2'(\{0\} \times [0,\w_1(\va)])-\mu_2'(\overline{OP}\big) \Big) \\
& = & 
\Big(0, \frac{1}{x_2}\big(x_2 \w_2(\va)-s\big)\Big)=\Big(0, \w_2(\va)-\frac{s}{x_2}\Big), \textrm{ and} \\
Q' & = & \Big(\frac{1}{y_2} \mu_2'(\overline{OQ}),\w_2(a)\Big)=\Big(\frac{s}{y_2},\w_2(a)\Big).\end{array}$$
Indeed, $\overrightarrow{P' Q'}=\frac{s}{x_2 y_2} {\bm \theta}_2$, with $\bm \theta_2=(x_2, y_2)$. 
\end{proof}
By the proposition, we can extend definition of $\phi_1|_{\partial R_{e}^1}$ to a map 
$\phi_1|_{R_{e}^1}: R_{e}^1 \to R_{e}^2$. If $\{P, Q\}$ is the boundary of a connected component of a leaf of $\hat \F_{\bm \theta_1}$ inside $R_e^1$, then by the proposition we know $\overline{P' Q'}$ is a connected component of a leaf of $\hat \F_{\bm \theta_2}$ inside $R_e^2$, 
where $P'=\phi_1|_{\partial R_e^1}(P)$ and $Q'=\phi_1|_{\partial R_e^1}(Q)$. Then we can define $\phi_1$ onto $\overline{PQ}$ by defining $\overline{PQ} \to \overline{P'Q'}$ affinely (so it scales distance linearly). Thus we have defined
$\phi_1|_{R_{e}^1}: R_{e}^1 \to R_{e}^2$.

We similarly define $\phi_1|_{R_{e}^1}: R_e^1 \to R_e^2$ for all $e \in \E$. 
These maps agree on the boundaries of rectangles, so we have defined the map
$\phi_1:S(\G, \w_1) \to S(\G, \w_2)$. We must show that $\phi_1$ is continuous. Discontinuities of $\phi_1$ can only arise from atoms of $\mu_2'$. For each atom $A$ of $\mu_2'$ there is a strip in $S(\G, \w_2) \smallsetminus \phi_1\big(S(\G, \w_1)\big)$ whose width is the measure of the atom. Note that each vertex of each rectangles $R_e^2$ is in $\phi_1\big(S(\G, \w_1)\big)$. Therefore for each atom $A$ we have an isometrically embedded strip $\epsilon:\big(0, \mu_2'(\{A\})\big) \times \R \to S(\G, \w_2)$ which sends the vertical foliation of the strip to the foliation $\hat \F_{{\bm \theta}_2}$. But, the vertical flow in the strip is not conservative, and so the existence of this strip contradicts the fact that the straight-line flow in direction $\bm \theta_2$ on $S(\G, \w_2)$ is conservative.
See Theorem \ref{thm:pr}.
Thus $\phi_1:S(\G, \w_1) \to S(\G, \w_2)$ is indeed continuous and surjective.

It remains to show that $\phi_1$ is invertible. We can construct a map $\phi_2:S(\G, \w_2) \to S(\G, \w_1)$ by switching the rolls of $1$ and $2$. The composition $\phi_2 \circ \phi_1:S(\G, \w_1) \to S(\G, \w_1)$ preserves the Lebesgue measures of transversals to $\hat \F_{\bm \theta_1}$, and it preserves all vertices of rectangles $R_e^1$. 
Therefore, the composition must act trivially on the boundaries of rectangles $R_e^1$. Moreover,
the composition acts affinely on connected components of leaves intersected with $R_e^1$, and so must act trivially on each
such connected component of a leaf. Therefore, $\phi_2 \circ \phi_1$ is the identity map.

Finally, we consider the uniqueness statement. Now suppose there were $\mu_2', \mu_2'' \in \M_1$ so that there are homeomorphisms $\phi_1', \phi_1'':S(\G, \w_1) \to S(\G, \w_2)$ 
which send $R_e^1$ to $R_e^2$ and respect the names of the boundary edges of the rectangles (bottom, top, left and right)
and which push the measures $\mu_2', \mu_2''$ forward to the Lebesgue $\hat \F_{\bm \theta_2}$-transverse measure on  $S(\G, \w_2)$.  Then we notice that the cohomology classes $\Psi_{\bm \theta_1}(\mu_2')$ and $\Psi_{\bm \theta_1}(\mu_2'')$
must be the same. Therefore, $\mu_2' = \mu_2''$ by Lemma \ref{lem:injective} and Theorem \ref{thm:pr}. Finally, we see 
that the restrictions of $\phi_1'$ and $\phi_1''$ to the boundaries of rectangles are determined by $\mu_2'$ and $\mu_2''$. (In fact they must be determined as in equations
\ref{eq:bottom} and \ref{eq:not_bottom}.) Thus, the restrictions of $\phi_1'$ and $\phi_1''$  to the boundaries of rectangles must be the same. This concludes the proof of
Theorem \ref{thm:topological conjugacy}.

We have the following corollary to the topological conjugacy theorem, which will be useful later.
It is stated in the context of Theorem \ref{thm:topological conjugacy}.

\begin{corollary}
\label{cor:pullback_measure}
The pullback of the Lebesgue $\hat \F_{\bm \theta_2}$-transverse measure on $S(\G, \w_2)$ 
under the homeomorphism $\phi:S(\G, \w_1) \to S(\G, \w_2)$ (which was denoted $\phi_1$ and constructed above)
is $\Psi_{\bm \theta_1}^{-1} \circ \Xi\big(P_{\w_2}(\bm \theta_2)\big)$.
\end{corollary}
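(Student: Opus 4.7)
The proof is essentially a bookkeeping exercise tying together the construction of $\phi=\phi_1$ with Corollary \ref{cor:pullback}. The key claim is that the pullback of the Lebesgue $\hat \F_{\bm \theta_2}$-transverse measure on $S(\G,\w_2)$ under $\phi$ is precisely the measure $\mu_2'$ provided by Corollary \ref{cor:pullback}. Once this is established, the stated formula follows immediately: by Corollary \ref{cor:pullback} we have $\Psi_{\bm \theta_1}(\mu_2') = \Xi\big(P_{\w_2}(\bm \theta_2)\big)$, and $\Psi_{\bm \theta_1}$ is injective by Lemma \ref{lem:injective} applied together with Corollary \ref{cor:aperiodicity} and Theorem \ref{thm:pr}, so $\mu_2'=\Psi_{\bm \theta_1}^{-1} \circ \Xi\big(P_{\w_2}(\bm \theta_2)\big)$.

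To verify the pullback identity, I would inspect the defining formulas for $\phi_1$ on the boundary of each rectangle $R_e^1$. On the bottom edge $B$, the rule $\phi_1|_B(t,0)=\big(\tfrac{1}{|y_2|}\mu_2'([0,t]\times\{0\}),0\big)$ was built so that the $\phi_1$-pushforward of $\mu_2'|_B$ is the Lebesgue $\hat \F_{\bm \theta_2}$-transverse measure on the bottom edge of $R_e^2$: since $(1,0) \wedge \bm \theta_2 = y_2$, the Lebesgue transverse measure of $[0,s]\times\{0\}$ in $R_e^2$ equals $|y_2|\,s$, which is exactly what the normalization $1/|y_2|$ undoes. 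The analogous identity on the left, top, and right edges is immediate from the formulas in \eqref{eq:not_bottom}. Since $\phi_1$ was then extended inside each $R_e^1$ by sending connected leaf components of $\hat\F_{\bm\theta_1}$ to the corresponding leaf components of $\hat\F_{\bm\theta_2}$ affinely, and since a transverse measure is by definition invariant under sliding a transversal along leaves, the boundary identity propagates to every transversal inside $R_e^1$ and hence globally on $S(\G,\w_1)$.

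The only potential obstacle is continuity of $\phi_1$: a discontinuity would arise exactly from an atom of $\mu_2'$, and atoms would break the equality of $\phi^\ast(\mathrm{Leb})$ and $\mu_2'$ as locally finite transverse measures. But the proof of Theorem \ref{thm:topological conjugacy} has already eliminated atoms using conservativity of the straight-line flow in direction $\bm\theta_2$ (Theorem \ref{thm:pr}): any atom would produce an embedded half-infinite flat strip in $S(\G,\w_2)\smallsetminus \phi_1\big(S(\G,\w_1)\big)$, contradicting conservativity. Thus $\phi^\ast$ of Lebesgue equals $\mu_2'$ on every transversal, completing the argument; no new techniques beyond those already used in the proof of Theorem \ref{thm:topological conjugacy} are required.
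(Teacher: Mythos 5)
Your proof is correct and takes essentially the same approach as the paper: the paper states Corollary \ref{cor:pullback_measure} without an explicit proof, treating it as an immediate consequence of the construction of $\phi_1$ from $\mu_2'$ in the proof of Theorem \ref{thm:topological conjugacy}. Your unpacking — checking the boundary formulas so that, e.g., $\mathrm{Leb}_2\big(\phi_1([0,t]\times\{0\})\big)=|y_2|\cdot\tfrac{1}{|y_2|}\mu_2'([0,t]\times\{0\})=\mu_2'([0,t]\times\{0\})$, propagating across each rectangle by invariance under sliding along leaves, and reusing the no-atom/conservativity argument and the injectivity of $\Psi_{\bm\theta_1}$ — is exactly what is implicit in the paper's construction.
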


\subsection{Surviving functions and measures}
\label{sect:surviving}
We will now continue the discussion from Section \ref{sect:operators}. 
Recall that if $\f$ was a $\btheta$-survivor, then we obtain an invariant measure 
$\Psi_{\bm \theta}^{-1} \circ \Xi(\f)$ by Theorem \ref{thm:operator_theorem} and Definition \ref{def:function_survivors}.
It turns out that under a ``reasonable'' assumption all measures arise in this way.
Recall that $\Surv_{\bm \theta}$ denotes the set of all $\bm \theta$-survivors in $\R^\V$. 

\begin{definition}[Subsequence decay property]
\label{def:subseqence_decay}
We say that $S(\G,\w)$ has the {\em subsequence decay property}, if for any 
$\bm \theta \in \Rn_\lambda$ and any $\f \in \Surv_{\bm \theta}$ there is a subsequence $\langle g_{n_i} \rangle$ of the
$\lambda$-shrinking sequence of $\bm \theta$ for which
$$\lim_{i \to \infty} \Act^{g_{n_i}}(\f)(\vv) = 0 \quad \textrm{for all $\vv \in \V$.}$$
\end{definition}

Recall that by Theorem \ref{thm:operator_theorem}, $m \in \Coh$ is the cohomology class of a 
$\hat \F_{\bm \theta}$-transverse measure if and only if $m$ is a $(\btheta,n)$ survivor for all $n \geq 0$. 

\begin{theorem}
\label{thm:surviving_functions}
Suppose $S(\G,\w)$ has the subsequence decay property, and 
let $\bm \theta \in \Rn_\lambda$. If $m \in \Coh$ is a $(\bm \theta,n)$-survivor
for all integers $n \geq 0$, then $m=\Xi(\f)$ where $\f \in \Surv_{\bm \theta}$.
\end{theorem}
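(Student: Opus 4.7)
The plan is to manufacture $\f \in \R^\V$ from $m$ by cylinder-averaging and to show that $m = \Xi(\f)$. By Proposition \ref{prop:equiv_def}, the hypothesis that $m$ is a $(\btheta, n)$-survivor for every $n \geq 0$ translates to a quadrant condition on $\Phi^{g_n}_\ast(m)$: for each $\sigma \in E$ the value $\Phi^{g_n}_\ast(m)(\hom{\sigma})$ is zero or has the sign determined by $s_n$ (the $n$-th entry of the $\lambda$-sign sequence of $\btheta$). In particular, for each $n$, all vertical edges in a single horizontal cylinder $\cyl_\va$ carry values of a common sign (possibly with some zero) under $\Phi^{g_n}_\ast(m)$, and similarly for horizontal edges in a vertical cylinder $\cyl_\vb$.

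For each $\vv \in \V$ I would define $\f(\vv)$ to be a suitably signed average of the values $m(\hom{\sigma})$ as $\sigma$ ranges over the finitely many saddle connections of $E$ crossing $\cyl_\vv$. By the sign constraints observed above, $\f \in \hat\Q_{s_0}$. Using Proposition \ref{prop:pullback_action} and the fact that on $\Xi(\R^\V)$ the action $\Phi^{g_n}_\ast$ agrees with $\Act^{g_n}$, together with the fact that cylinder-averaging intertwines with these actions in an appropriate sense, one checks $\Act^{g_n}(\f) \in \hat\Q_{s_n}$ for all $n$. Proposition \ref{prop:equiv_survivors} then yields $\f \in \Surv_\btheta$, so the remaining task is to show that $m$ itself is already cylinder-constant, that is, $m = \Xi(\f)$.

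The failure of cylinder-constancy is measured by $m' = m - \Xi(\f)$, which by construction of $\f$ has mean zero on the edges of $E$ in each cylinder. Using the rectangle-boundary relations in $H_1(S, V, \Z)$ (each rectangle $R_e$ gives the relation that the signed sum of its four boundary edges vanishes), the oscillation of $m'$ in a horizontal cylinder can be rewritten as a sum of differences of $m'$-values in transverse vertical cylinders, and iterating this gives, for each $n$, a control on $|m'(\hom{\sigma})|$ in terms of values of a function $\g \in \R^\V$ derived from the oscillations of $m$. I would arrange $\g$ so that it is itself a $\btheta$-survivor, at which point the subsequence decay hypothesis produces a subsequence $\langle g_{n_i}\rangle$ along which $\Act^{g_{n_i}}(\g)(\vv) \to 0$ pointwise; combined with the sign constraints on $\Phi^{g_n}_\ast(m')$ at each level, this forces $m' = 0$. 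The main obstacle is precisely this last transfer step: rigorously extracting from the non-linear sign data of $m' \in \Coh$ a genuine $\btheta$-survivor $\g \in \R^\V$ that dominates the oscillations at every renormalization level. Once $m' = 0$ is established, we have $m = \Xi(\f)$ with $\f \in \Surv_\btheta$, as required.
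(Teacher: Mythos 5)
There is a genuine gap, and you correctly sense where it is: the ``transfer step'' you flag as the main obstacle is exactly the part the paper's argument handles by a different mechanism, and your proposed route around it does not quite work.

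Two specific issues. First, the intertwining you invoke for the cylinder-averaging map is not established and is in fact doubtful. The map the paper proves intertwines with the actions (Proposition \ref{prop:Fmap_conj}) is the circumference-measure map $\Fmap(m)(\vv)=m(\hom{\cyl_\vv})$, not the cross-measure average you define. To see the difficulty, apply $\Phi^h_\ast$ and use Proposition \ref{prop:action}: for $\sigma$ a vertical edge crossing $\cyl_\va$ you get $\Phi^h_\ast(m)(\hom{\sigma})=m(\hom{\sigma})-m(\hom{\cyl_\va})$, so the average changes by $-m(\hom{\cyl_\va})$, whereas $\Ho(\f)(\va)$ changes by $\sum_{\vb \sim \va}\f(\vb)$; these only agree once you already know $m$ is cylinder-constant. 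You cannot assume ``the action $\Phi^{g_n}_\ast$ agrees with $\Act^{g_n}$ on $\Xi(\R^\V)$'' to deduce anything about $m$, because $m\in\Xi(\R^\V)$ is precisely what you are trying to prove. Second, a function built from the oscillations $m'=m-\Xi(\f)$ cannot be a $\btheta$-survivor: being mean-zero on each cylinder, $m'$ takes values of both signs within a cylinder, so there is no consistent quadrant condition to work with, and the subsequence decay hypothesis applies only to elements of $\Surv_\btheta$.

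The paper's proof avoids both problems with two ideas you do not use. The first is that if $\sigma,\sigma'\in E$ cross the same cylinder, then $\hom{\sigma}-\hom{\sigma'}$ has zero algebraic intersection with every $\hom{\cyl_\vv}$ and is therefore fixed by $\Phi^G$; hence $\Phi^{g_n}_\ast(m)(\hom{\sigma}-\hom{\sigma'})=m(\hom{\sigma}-\hom{\sigma'})$ is constant in $n$. The second is to apply the decay hypothesis not to an oscillation function but to $\g=\Fmap(m)$, which Proposition \ref{prop:circumference_survivors} shows is a $R(\btheta)$-survivor. Subsequence decay then gives $\Phi^{g_{n_i}}_\ast(m)(\hom{\cyl_\vv})\to 0$, and since $\hom{\cyl_\vv}$ decomposes into boundary edges $\nu_j$ whose values $\Phi^{g_n}_\ast(m)(\hom{\nu_j})$ all share the same sign (this is the survivor condition), each summand must itself tend to zero. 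Every horizontal and vertical edge lies on a cylinder boundary, so $\Phi^{g_{n_i}}_\ast(m)(\hom{\sigma}-\hom{\sigma'})\to 0$, contradicting the constancy unless $m(\hom{\sigma})=m(\hom{\sigma'})$. To repair your argument along the lines you sketched you would need, at minimum, to replace the averaging map by $\Fmap$ and import the invariance observation about $\hom{\sigma}-\hom{\sigma'}$; at that point you would effectively be reproducing the paper's proof rather than running a genuinely different one.
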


We prove Theorem \ref{thm:surviving_functions} in Section \ref{sect:operators2}. In Section \ref{sect:no valance one}, we will show that
if $\G$ has no nodes of valance one, then $S(\G, \w)$ has the subsequence decay property.
(In fact, we prove that $S(\G, \w)$ has a stronger property. See Definition \ref{def:critical_decay} and Theorem \ref{thm:properties}, below.)

By combining Theorems \ref{thm:surviving_functions} and \ref{thm:operator_theorem}, we have the following.

\begin{corollary}
\label{cor:symbolic}
If $S(\G,\w)$ has the subsequence decay property, then
$\displaystyle \Psi_{\bm \theta}(\M_{\bm \theta})=\Xi(\Surv_{\bm \theta})$.
Moreover, both $\Psi_{\bm \theta}$ and $\Xi$ are injective, so this yields a linear isomorphism between $\M_{\bm \theta}$ and $\Surv_{\bm \theta}$.
\end{corollary}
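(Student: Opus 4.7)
The plan is to read off the statement directly from Theorems \ref{thm:operator_theorem} and \ref{thm:surviving_functions} together with Definition \ref{def:function_survivors}, and then to verify the two injectivity claims using ingredients already present in the excerpt. Since the heavy lifting is done by the two cited theorems, this corollary should amount to bookkeeping rather than new argument, so I expect no serious obstacle — the only non-mechanical step is the injectivity of $\Xi$, which requires inspecting the intersection pairing.

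To establish $\Psi_{\bm\theta}(\M_{\bm\theta})=\Xi(\Surv_{\bm\theta})$ I would prove both inclusions. For $\subseteq$, take $m=\Psi_{\bm\theta}(\mu)$ with $\mu\in\M_{\bm\theta}$. The ``only if'' half of Theorem \ref{thm:operator_theorem} gives that $m$ is a $(\bm\theta,n)$-survivor for every $n\geq 0$. Then invoking the subsequence decay hypothesis, Theorem \ref{thm:surviving_functions} produces an $\f\in\Surv_{\bm\theta}$ with $m=\Xi(\f)$. For $\supseteq$, start from $\f\in\Surv_{\bm\theta}$; by Definition \ref{def:function_survivors}, $\Xi(\f)$ is a $(\bm\theta,n)$-survivor for every $n$, and the ``if'' direction of Theorem \ref{thm:operator_theorem} yields $\Xi(\f)\in\Psi_{\bm\theta}(\M_{\bm\theta})$.

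It then remains to verify the two injectivity statements so that the isomorphism $\Xi^{-1}\circ\Psi_{\bm\theta}:\M_{\bm\theta}\to\Surv_{\bm\theta}$ is well-defined. Injectivity of $\Psi_{\bm\theta}$ has already been recorded as a corollary of Lemma \ref{lem:injective}, using Corollary \ref{cor:aperiodicity} (no periodic straight-line trajectories in $\lambda$-renormalizable directions) together with Theorem \ref{thm:pr} (conservativity of the flow), so I would simply cite it. For injectivity of $\Xi$, I would test $\Xi(\f)$ on homology classes of edges $\sigma\in\E$. Concretely, for any $\vv\in\V$ pick an edge $\sigma\in\E$ lying in $\partial\cyl_{\vv}$ (horizontal if $\vv\in\Beta$, vertical if $\vv\in\Alpha$). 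From Definition \ref{def:S} of $S(\G,\w)$, such a $\sigma$ meets exactly the one core curve $\hom{\cyl_{\vv}}$ (with intersection number $\pm 1$) and is disjoint from or parallel to every other $\hom{\cyl_{\vw}}$, so the defining sum \eqref{eq:Xi} reduces to $\Xi(\f)(\hom{\sigma})=\pm\f(\vv)$. Hence $\Xi(\f)=0$ forces $\f(\vv)=0$ for every $\vv\in\V$.

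Combining these observations, $\Psi_{\bm\theta}$ is a linear bijection onto $\Psi_{\bm\theta}(\M_{\bm\theta})=\Xi(\Surv_{\bm\theta})$ and $\Xi$ is a linear bijection from $\Surv_{\bm\theta}$ onto the same set, so $\Xi^{-1}\circ\Psi_{\bm\theta}$ is the desired linear isomorphism $\M_{\bm\theta}\to\Surv_{\bm\theta}$. The genuine difficulty in the program is not in this corollary itself but rather in verifying the subsequence decay property in cases of interest, which (per the remarks after Theorem \ref{thm:surviving_functions}) is to be handled in Section \ref{sect:no valance one} under the no-valance-one hypothesis.
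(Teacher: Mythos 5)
Your proof is correct and follows the paper's route exactly: the equality $\Psi_{\bm\theta}(\M_{\bm\theta})=\Xi(\Surv_{\bm\theta})$ is read off Theorems \ref{thm:operator_theorem} and \ref{thm:surviving_functions} via Definition \ref{def:function_survivors}, injectivity of $\Psi_{\bm\theta}$ comes from the corollary to Lemma \ref{lem:injective} (via Corollary \ref{cor:aperiodicity} and Theorem \ref{thm:pr}), and injectivity of $\Xi$ is checked by pairing against horizontal and vertical edges. One wording slip to correct: for $\vv\in\Alpha$ the edge you want is a vertical saddle connection \emph{crossing} $\cyl_\vv$ (hence lying in the boundary of a vertical cylinder $\cyl_\vb$), not one ``lying in $\partial\cyl_\vv$'' --- the edges of $\partial\cyl_\vv$ are horizontal, parallel to the core curve, and have zero intersection number with $\hom{\cyl_\vv}$; your parenthetical ``vertical if $\vv\in\Alpha$'' makes clear you meant the transverse edges, so the substance of the argument is unaffected.
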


In other words, we have reduced the problem of classifying locally finite transverse measures to the study
of $\btheta$-survivors in $\R^\V$.

\subsection{The action of the adjacency operator}
\label{sect:outline_adjacency}

In this section, we introduce the adjacency operator $\A$ to our arguments.

Let $\bar \cdot:\R^2 \to \R^2 \label{not:bar1}$ denote the involution $\overline{(x,y)}=(y,x)$. The action of $\bar \cdot$ permutes the quadrants. We define the action of $\bar \cdot$
on pairs of signs so that
$\overline{\Q_s}=\Q_{\bar s}$. 
The action of $\bar \cdot$ conjugates $\rhoa{h}$ to $\rhoa{v}$ and vice versa. 
The induced action on $G$ is given by the group homomorphism $\bar \cdot : G \to G \label{not:bar2}$ defined so that  $\bar h=v$ and $\bar v=h$. It follows that for all $g \in G$ and all $\v \in \R^2$, we have
$$\rhoa{\bar g}(\v)=\overline{\rhoa{g}(\bar \v)}.$$ 
Recall from equation 
\ref{eq:interactions}
that $\A \Ho=\Vo \A$ and $\A \Vo=\Ho \A$. Thus, for $g \in G$, we have
\begin{equation}
\label{eq:conj_by_A}
\A \Act^g=\Act^{\bar g} \A.
\end{equation}

Corollary \ref{cor:symbolic} indicated that in the presence of the subsequence decay property, $\Xi ^{-1} \circ \Psi_{\bm \theta}$ is a bijective correspondence between
$\M_{\bm \theta}$, the space of transverse measures for the foliation in direction $\bm \theta$, and the space $\Surv_{\bm \theta}$ of $\bm \theta$-survivors in $\R^\V$. We have the following.

\begin{proposition}
If $\f \in \R^\V$ is a $\bm \theta$-survivor then $\A(\f)$ is a $\bar {\bm \theta}$-survivor.
\end{proposition}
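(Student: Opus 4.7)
The plan is to translate both the hypothesis and the conclusion into the language of Proposition \ref{prop:equiv_survivors}. Writing $\langle g_n \rangle$ for the $\lambda$-shrinking sequence of $\bm \theta$ and $\langle s_n \rangle$ for its sign sequence, the hypothesis becomes $\Act^{g_n}(\f) \in \hat\Q_{s_n}$ for every $n \geq 0$, and I want to conclude $\Act^{g'_n}\bigl(\A(\f)\bigr) \in \hat\Q_{s'_n}$, where $\langle g'_n \rangle$ and $\langle s'_n \rangle$ are the shrinking and sign sequences of $\bar{\bm \theta}$.

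The first step is to identify $g'_n = \bar g_n$ and $s'_n = \bar s_n$. Both are forced by the identity $\rhoa{\bar g}(\bar\v) = \overline{\rhoa{g}(\v)}$ recorded at the start of this subsection: it yields $\|\rhoa{\bar g_n}(\bar{\bm \theta})\| = \|\rhoa{g_n}(\bm \theta)\|$, so the strict monotone decrease is preserved, and $\rhoa{\bar g_n}(\bar{\bm \theta}) \in \overline{\Q_{s_n}} = \Q_{\bar s_n}$, pinning down the sign sequence. Uniqueness in Lemma \ref{lem:compatible directions} and Definition \ref{def:sign_sequence} then identifies both sequences. Along the way one checks that $\bar{\bm \theta} \in \Rn_\lambda$, using that $\bar\cdot$ preserves the limit set of $\rhoa{G}$ and that the set of eigendirections of $\rhoa{g}$ depends only on $g$ up to conjugacy and inverse.

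Next I would apply equation \ref{eq:conj_by_A} to rewrite $\Act^{\bar g_n}\bigl(\A(\f)\bigr) = \A\bigl(\Act^{g_n}(\f)\bigr)$. It therefore suffices to prove the combinatorial inclusion $\A(\hat\Q_s) \subseteq \hat\Q_{\bar s}$ for every $s \in \SP$ and then apply it with $s = s_n$. This inclusion is a one-line check from bipartiteness: for $\g \in \hat\Q_s$ and $\va \in \Alpha$, every neighbor of $\va$ lies in $\Beta$, so each summand in $\A(\g)(\va) = \sum_{\vw \sim \va} \g(\vw)$ is either $0$ or has sign $\pi_2(s) = \pi_1(\bar s)$; the symmetric argument for $\vb \in \Beta$ gives values of sign $0$ or $\pi_1(s) = \pi_2(\bar s)$, so $\A(\g) \in \hat\Q_{\bar s}$.

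I do not anticipate a real obstacle: once Proposition \ref{prop:equiv_survivors} and the intertwining $\A\Act^g = \Act^{\bar g}\A$ are in hand, the statement reduces to bookkeeping about how $\bar\cdot$ acts on shrinking sequences, sign sequences, and quadrants. The only delicate point is the first step — correctly identifying the shrinking and sign sequences of $\bar{\bm \theta}$ from those of $\bm \theta$.
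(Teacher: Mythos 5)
Your proof is correct and follows the paper's own argument essentially step for step: translate both sides via Proposition \ref{prop:equiv_survivors}, intertwine $\A$ with $\Upsilon^{g_n}$ using equation \ref{eq:conj_by_A}, and observe that $\A$ maps $\hat\Q_{s}$ into $\hat\Q_{\bar s}$. The only difference is that you supply verifications — that the shrinking and sign sequences of $\bar{\bm\theta}$ are $\langle\bar g_n\rangle$ and $\langle\bar s_n\rangle$, and that $\A(\hat\Q_s)\subset\hat\Q_{\bar s}$ — which the paper asserts without proof.
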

\begin{proof}
Let $\langle g_n \rangle$ and $\langle s_n \rangle$ denote the shrinking and sign sequences of $\bm \theta$, respectively. Then
the shrinking sequence of $\bar {\bm \theta}$ is $\langle \bar g_n \rangle$ and the sign sequence is $\langle \bar s_n \rangle$. 
By the notion of survivors given in Proposition \ref{prop:equiv_survivors},
as $\f$ is a $\bm \theta$-survivor, we have $\Act^{g_n}(\f) \in \hat \Q_{s_n}$ for all $n$. 
By equation \ref{eq:conj_by_A}, we know
$$\Act^{\bar g_n} \circ \A (\f)=\A \circ \Act^{g_n}(\f) \in \A(\hat \Q_{s_n}) \subset \hat \Q_{\bar s_n}$$
for all $n$. Hence, $\A (\f)$ is a $\bar {\bm \theta}$-survivor by Proposition \ref{prop:equiv_survivors}.
\end{proof}

We will show that $\A$ is a bijection from the cone of $\bm \theta$-survivors to the cone of $\bar {\bm \theta}$-survivors.
First we will discuss injectivity. Note that $\A$ is not injective when considered on $\R^\V$. We have the following description of the the group action $\Act$ on 
$\ker~\A=\{\k \in \R^\V~:~\A \k={\mathbf 0}\}$. For any subset $U \subset \V$ we define $\pi_U:\R^\V \to \R^\V$ by
\begin{equation}
\label{eq:pi_U}
\pi_{U}(\f)(\vv)=\begin{cases}
\f(\vv) & \textrm{if $\vv \in U$}\\
0 & \textrm{otherwise.}
\end{cases}
\end{equation}

\begin{proposition}[Kernel of $\A$]
\label{prop:kernel}
For all $\f \in \R^\V$ and all $k \in \Z$, we have 
$$\Ho^k(\f)=\f+k \pi_{\Beta} \circ \A(\f)
\quad \textrm{and} \quad \Vo^k(\f)=\f+k \pi_{\Alpha} \circ \A(\f).$$
In particular, 
if $\k \in \ker~\A$ then $\Act^g(\k)=\k$ for all $g \in G$. 
\end{proposition}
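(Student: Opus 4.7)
The plan is to verify both displayed identities by a pointwise computation and then read the kernel assertion off as an immediate corollary.

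For the first identity, I would evaluate both sides at an arbitrary $\vx \in \V$ and split into the cases $\vx \in \Alpha$ and $\vx \in \Beta$. Because $\G$ is bipartite, whenever $\vx \in \Alpha$ every neighbour $\vy \sim \vx$ lies in $\Beta$, so the sum $\sum_{\vy \sim \vx} \f(\vy)$ appearing in (\ref{eq:Ho}) equals $\A(\f)(\vx)$, and the matching term on the right-hand side is produced by the projection applied to $\A(\f)$ at $\vx$. In the remaining case $\vx \in \Beta$, the formula (\ref{eq:Ho}) leaves $\f(\vx)$ unchanged, while the corresponding projection $\pi_{U} \circ \A(\f)$ vanishes at $\vx$, so the two sides again agree. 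Before doing the matching I would check by a one-line induction on $k$ that the formula in (\ref{eq:Ho}) really does describe the $k$-fold iterate of $\Upsilon^h$: the key observation is that one application of $\Ho$ alters only the $\Alpha$-values of $\f$, whereas the increment $\sum_{\vy \sim \vx} \f(\vy)$ for $\vx \in \Alpha$ depends only on the $\Beta$-values, which are fixed by $\Ho$; hence each subsequent application contributes the same term, and the dependence on $k$ is indeed linear. The identity for $\Vo^k$ follows by the symmetric argument with the roles of $\Alpha$ and $\Beta$ exchanged.

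The kernel statement is now immediate. If $\k \in \ker \A$ then $\A(\k) = \0$, so both $\pi_\Alpha \circ \A(\k)$ and $\pi_\Beta \circ \A(\k)$ vanish, and the two identities specialize to $\Ho^k(\k) = \k$ and $\Vo^k(\k) = \k$ for every $k \in \Z$. Writing a general $g \in G$ as a reduced word in $h, v, h^{-1}, v^{-1}$ and using $\Act^h = \Ho$, $\Act^v = \Vo$ (together with the fact that $\Ho^{-1}$ and $\Vo^{-1}$ also fix $\k$, by the $k = -1$ cases of the identities), a short induction on word length gives $\Act^g(\k) = \k$ for every $g \in G$.

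There is no genuine obstacle here: once the bipartite condition is used to decouple the update of $\f|_\Alpha$ under $\Ho$ from the part of $\A(\f)$ that controls the increment, the proposition reduces to a pair of two-case pointwise checks plus the induction confirming that (\ref{eq:Ho}) describes the true iterate. The mildly nontrivial point worth flagging is the verification of that iteration formula, since it is what allows us to handle all $k \in \Z$ uniformly rather than proving the kernel statement only for the generators.
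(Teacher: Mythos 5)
Your approach — a direct pointwise verification using the definitions of $\A$, $\Ho$, $\Vo$ and the bipartite structure, followed by a one-line induction to cover $k\in\Z$ and a word-length induction for the kernel statement — is precisely the ``compare the definitions'' argument the paper alludes to, so on substance you are doing what the paper does.

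However, your pointwise check does not actually verify the identity \emph{as stated}; it verifies a corrected version. Take $\vx\in\Alpha$. From equation \ref{eq:Ho} the left side gains $k\sum_{\vy\sim\vx}\f(\vy)=k\A(\f)(\vx)$, but the right side of the displayed identity, $\f+k\,\pi_{\Beta}\circ\A(\f)$, contributes nothing at $\vx$ because $\pi_{\Beta}\big(\A(\f)\big)(\vx)=0$ for $\vx\in\Alpha$ (see equation \ref{eq:pi_U}). Dually, at $\vx\in\Beta$ the left side is unchanged while the right side as stated adds $k\A(\f)(\vx)$. So the literal statement is inconsistent with your computation: the subscripts $\Alpha$ and $\Beta$ are swapped, and what you in fact prove (and what is used later in the paper, e.g.\ in the proof of Proposition \ref{prop:group_op_measures}) is
$$\Ho^k(\f)=\f+k\,\pi_{\Alpha}\circ\A(\f), \qquad \Vo^k(\f)=\f+k\,\pi_{\Beta}\circ\A(\f).$$
You should flag this discrepancy rather than silently asserting that the ``matching term'' appears; as written, your claim that the projection produces the matching term at $\vx\in\Alpha$ is false for $\pi_\Beta$. (An alternative reading, that the author intended $\A\circ\pi_\Beta$ rather than $\pi_\Beta\circ\A$, also repairs the statement, since bipartiteness gives $\A\circ\pi_{\Beta}=\pi_{\Alpha}\circ\A$; but either way the displayed formula needs correction.) Your remaining steps — the observation that $\Ho$ fixes $\f|_\Beta$ while the increment at $\Alpha$ depends only on $\f|_\Beta$, hence the $k$-dependence is linear, and the reduction of the $\Act^g$-invariance of $\ker\A$ to the generators via word-length induction — are all correct and complete the argument once the typo is corrected.
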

The proof is trivial, and follows from comparing the definitions of $\A$, $\Ho$ and $\Vo$. (See Definition \ref{def:adj} and equations \ref{eq:Ho} and \ref{eq:Vo}, respectively.)

\begin{proposition}[Injectivity of $\A$]
\label{prop:injectivity}
Suppose $S(\G,\w)$ has the subsequence decay property. Then, the restriction of $\A$ to the cone of all $\bm \theta$-survivors is injective.
\end{proposition}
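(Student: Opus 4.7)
The plan is to show that two $\bm\theta$-survivors mapping to the same image under $\A$ must be equal, by exploiting the rigidity of $\ker\A$ under the $G$-action together with the subsequence decay property. So suppose $\f_1, \f_2 \in \Surv_{\bm\theta}$ satisfy $\A(\f_1) = \A(\f_2)$, and set $\k = \f_1 - \f_2 \in \ker\A$. My goal is to prove $\k = 0$.

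First I would note that by Proposition \ref{prop:kernel}, every element of $\ker\A$ is fixed by the entire action $\Upsilon$; in particular, $\Upsilon^{g_n}(\k) = \k$ for every $n$, where $\langle g_n \rangle$ is the $\lambda$-shrinking sequence of $\bm\theta$. So it suffices to show that some subsequence of $\Upsilon^{g_{n_i}}(\k)$ converges pointwise to $\0$.

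The main issue is that the subsequence decay property is applied one function at a time, yet I need a common subsequence witnessing decay of $\f_1$ and $\f_2$. To handle this, I would pass to the sum: since the set $\Surv_{\bm\theta}$ is a convex cone (being the preimage under the linear maps $\Upsilon^{g_n}$ of the closed convex quadrants $\hat\Q_{s_n}$ prescribed by Proposition \ref{prop:equiv_survivors}), the function $\f_1 + \f_2$ is itself a $\bm\theta$-survivor. Applying the subsequence decay property to $\f_1+\f_2$ yields a subsequence $\langle g_{n_i} \rangle$ with $\Upsilon^{g_{n_i}}(\f_1+\f_2)(\vv) \to 0$ for all $\vv \in \V$.

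The key observation is now a sign argument. For each $n_i$, both $\Upsilon^{g_{n_i}}(\f_1)$ and $\Upsilon^{g_{n_i}}(\f_2)$ lie in the same closed quadrant $\hat\Q_{s_{n_i}}$, so at any vertex $\vv$ their values are either both zero or share the sign $\pi_1(s_{n_i})$ or $\pi_2(s_{n_i})$ according as $\vv \in \Alpha$ or $\vv \in \Beta$. In particular there is no cancellation, so
\[
\bigl|\Upsilon^{g_{n_i}}(\f_j)(\vv)\bigr| \le \bigl|\Upsilon^{g_{n_i}}(\f_1+\f_2)(\vv)\bigr| \quad \text{for } j=1,2.
\]
Hence $\Upsilon^{g_{n_i}}(\f_1)(\vv) \to 0$ and $\Upsilon^{g_{n_i}}(\f_2)(\vv) \to 0$ for every $\vv$, and by linearity $\Upsilon^{g_{n_i}}(\k)(\vv) \to 0$. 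Combined with the invariance $\Upsilon^{g_{n_i}}(\k) = \k$ from Proposition \ref{prop:kernel}, this forces $\k(\vv) = 0$ for all $\vv$, so $\f_1 = \f_2$. I expect the only real subtlety to be the sign / non-cancellation step, which is why I use the sum rather than trying to diagonalize two separate subsequences; everything else is a direct application of the results already recorded in this section.
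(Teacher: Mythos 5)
Your proof is correct and follows the same overall strategy as the paper's: reduce to showing $\k = \f_1 - \f_2 \in \ker\A$ vanishes, use the $\Upsilon$-invariance of $\ker\A$ from Proposition \ref{prop:kernel}, and apply the subsequence decay property. The one place you diverge is exactly the spot the paper glosses over: the paper asserts a single subsequence along which \emph{both} $\Act^{g_{n_i}}(\f_1)$ and $\Act^{g_{n_i}}(\f_2)$ decay, but the subsequence decay property as stated in Definition \ref{def:subseqence_decay} only guarantees a subsequence for each survivor individually, and there is no a priori reason those subsequences agree (nor can you simply diagonalize, since the property only applies along the full shrinking sequence). Your repair — apply the property to the survivor $\f_1 + \f_2$ and use the fact that $\Upsilon^{g_n}(\f_1)$ and $\Upsilon^{g_n}(\f_2)$ lie in the same closed quadrant $\hat\Q_{s_n}$ so there is no cancellation, giving $|\Upsilon^{g_{n_i}}(\f_j)(\vv)| \le |\Upsilon^{g_{n_i}}(\f_1+\f_2)(\vv)|$ — is clean, uses only results already recorded (Proposition \ref{prop:equiv_survivors} and the convexity of the survivor cone), and genuinely tightens the argument. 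The rest matches the paper.
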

\begin{proof}
Suppose that $\f_1, \f_2$ are both ${\bm \theta}$-survivors and that $\A(\f_1)=\A(\f_2)$. Then $\f_2=\f_1+\k$ with $\k \in \ker ~ \A$. We will show $\k(\vx)=0$ for all $\vx \in \V$. 
By Proposition \ref{prop:kernel}, for all $g \in G$ we have
$$\k=\Act^g(\k)=\Act^g(\f_2)-\Act^g(\f_1)$$
By the subsequence decay property, we know that there is a subsequence $g_{n_i}$ of the $\lambda$-shrinking sequence for $\bm \theta$ so that
$\lim_{i \to \infty} \Act^{g_i} (\f_j)={\mathbf 0}$ coordinatewise for $j=1,2$. For all $\vx \in \V$,
$$\k(\vx)=\lim_{i \to \infty} \Act^{g_{n_i}}(\k)(\vx)=\lim_{i \to \infty} \big( \Act^{g_{n_i}}(\f_2)- \Act^{g_{n_i}}(\f_1) \big)(\vx)=0.$$
\end{proof}

We will now discuss why $\A$ surjectively maps the $\bm \theta$-survivors onto the space of $\bar {\bm \theta}$-survivors. 
To do this we need to be able to find inverses under the adjacency operator.

We use $\R^\V_c \label{not:RVc}$ to denote the finitely supported functions $\V \to \R$. There is a natural bilinear pairing $\langle, \rangle:\R^\V \times \R^\V_c \to \R$ given by
\begin{equation}
\label{eq:pairing}
\langle \f, \x \rangle=\sum_{\vv \in \V} \x(\vv) \f(\vv).
\end{equation}
This sum is well defined because $\x$ is only non-zero at finitely many $\vv \in \V$. The operators $\A$, $\Ho$ and $\Vo$ restrict to actions on $\R^\V_c$. Furthermore,
\begin{equation}
\label{eq:operator_relations}
\langle \A \f, \x \rangle=\langle \f, \A \x \rangle, \quad
\langle \Ho \f, \x \rangle=\langle \f, \Vo \x \rangle, \quad \textrm{and} \quad
\langle \Vo \f, \x \rangle=\langle \f, \Ho \x \rangle
\end{equation}
for all $\f \in \R^\V$ and all $\x \in \R^\V_c$.
We define the group automorphism $\gamma:G \to G$ by extending the definition on generators
\begin{equation}
\label{eq:gamma}
\gamma(h)=v^{-1} \quad\textrm{and}\quad \gamma(v)=h^{-1}.
\end{equation} Note $\gamma^2$ is trivial. The natural extension of equation \ref{eq:operator_relations} to all $g \in G$ is
\begin{equation}
\label{eq:operator_relations2}
\langle \Act^g \f, \Act^{\gamma(g)} \x \rangle= \langle \f, \x \rangle
\end{equation}

We have the following variation of a theorem of Farkas \cite{Farkas1902}. (See \cite[Corollary 3.46]{AT07}, for a more modern treatment.)

\begin{lemma}[Farkas' theorem for the adjacency operator]
\label{lem:farkas}
Let $\f \in \hat \Q_{++}$. The following two statements are equivalent.
\begin{enumerate}
\item There is a $\g \in \hat \Q_{++}$ with $\A \g=\f$.
\item For all $\x \in \R^\V_c$, if  $\A \x \in \hat \Q_{++}$ then $\langle \f, \x \rangle \geq 0$.
\end{enumerate}
\end{lemma}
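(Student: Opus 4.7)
The plan is to prove this as an instance of an infinite-dimensional Farkas-type duality, whose technical groundwork is deferred to Appendix~\ref{sect:farkas}. The forward direction (1)~$\Rightarrow$~(2) is immediate from the pairing identities in equation~\ref{eq:operator_relations}: if $\g \in \hat\Q_{++}$ satisfies $\A\g=\f$ and $\x \in \R^\V_c$ satisfies $\A\x \in \hat\Q_{++}$, then
$$\langle \f, \x \rangle = \langle \A\g, \x \rangle = \langle \g, \A\x \rangle \geq 0,$$
since the summand at each $\vv \in \V$ is a product of two non-negative numbers and the sum is finite (as $\x$ has finite support).

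For the reverse direction (2)~$\Rightarrow$~(1), the strategy is to solve truncated finite-dimensional problems via the classical Farkas lemma and then pass to a pointwise limit. Given any finite $U \subset \V$, let $N(U)$ denote the union of $U$ and the set of vertices adjacent to some element of $U$, and consider the finite linear system of seeking $\g_U \in \R^{N(U)}_{\geq 0}$ with $(\A\g_U)(\vv)=\f(\vv)$ for all $\vv \in U$. Classical finite Farkas (see, e.g., the form recalled in Appendix~\ref{sect:farkas}) asserts that either such a $\g_U$ exists, or there exists $\x_U \in \R^U$ with $(\A\x_U)(\vw) \geq 0$ for every $\vw \in N(U)$ and with $\langle \f, \x_U\rangle < 0$. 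In the latter alternative, the extension by zero of $\x_U$ lies in $\R^\V_c$ and satisfies $\A\x_U \in \hat\Q_{++}$ (since $\x_U$ is supported in $U$, so $\A\x_U$ vanishes outside $N(U)$), while $\langle \f, \x_U\rangle <0$; this contradicts (2). Hence $\g_U$ exists for every finite $U$.

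To assemble these into a single $\g \in \hat\Q_{++}$ with $\A\g=\f$, I would exhaust $\V$ by finite sets $U_1 \subset U_2 \subset \cdots$. For any fixed $\vv \in \V$ and any $U_i$ large enough that $\vv \in U_i$, non-negativity forces each summand of $(\A\g_{U_i})(\vv)=\f(\vv)$ to satisfy $\g_{U_i}(\vw) \leq \f(\vv)$ for $\vw \sim \vv$. Combined with bounded valance, this gives a uniform upper bound on $\g_{U_i}(\vw)$ depending only on the finitely many values of $\f$ on the star of $\vw$. A Tychonoff/diagonal extraction then yields a subsequence converging pointwise to some $\g \in \R^\V_{\geq 0}$, and bounded valance makes $\A$ continuous in the pointwise topology on such locally bounded families, so $\A\g=\f$ in the limit.

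The main obstacle is verifying that the pointwise limit really produces a global solution, rather than one that only matches $\f$ on the exhaustion. Bounded valance is what saves us: it makes $(\A\g)(\vv)$ a \emph{finite} sum, so pointwise convergence of the $\g_{U_i}$ on the neighbors of $\vv$ implies convergence of $(\A\g_{U_i})(\vv)$ to $(\A\g)(\vv)$, and since $(\A\g_{U_i})(\vv)=\f(\vv)$ for all sufficiently large $i$, we obtain $\A\g=\f$ everywhere. I expect the formal implementation — which is standard once these ingredients are in place — to follow the version of infinite-dimensional Farkas recorded in Appendix~\ref{sect:farkas}, applied to the adjacency operator with its self-pairing property.
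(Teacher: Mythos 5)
Your proof is correct, and it takes a genuinely different route from the paper, so let me compare the two.

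The paper treats the lemma as a special case of the Craven–Koliha generalized Farkas theorem \cite[Theorem~2]{CK77}, which is reproduced in Appendix~\ref{sect:farkas}. The entire technical burden there is to verify that the cone $\A(\hat\Q_{++}) \subset \R^\V$ is weakly-closed (Proposition~\ref{prop:weakly-closed}); once that is done, the abstract theorem delivers the lemma in one line. You instead bypass the abstract statement entirely: you run a truncation scheme over finite sets $U$, apply classical finite-dimensional Farkas to each truncated system $(\A\g_U)|_U = \f|_U$ with $\g_U \geq 0$ supported on $N(U)$, observe that the Farkas certificate $\x_U$ extended by zero would violate hypothesis~(2) (since $\A\x_U$ vanishes off $N(U)$ and hence lies in $\hat\Q_{++}$ while $\langle\f,\x_U\rangle < 0$), and then pass to a pointwise limit along an exhaustion. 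Your limiting step — bounding $\g_{U_i}(\vw) \leq \f(\vv)$ for any $\vv \sim \vw$ lying in $U_i$ and then diagonalizing — is essentially the same Cantor-diagonal compactness argument the paper uses to prove Proposition~\ref{prop:weakly-closed}, so the real analytic content coincides; what differs is the packaging. Your approach is more self-contained and elementary (classical Farkas plus a compactness argument, no duality-theory infrastructure), while the paper's is more modular and emphasizes that the lemma is an instance of a standard dual-pair phenomenon. One small point worth making explicit in your write-up: the observation that $(\A\g)(\vv)$ for $\vv \in U$ depends only on $\g|_{N(U)}$ is what justifies restricting the unknown to $\R^{N(U)}$ in the truncated problem, and the observation that $\A\x_U$ is supported on $N(U)$ when $\x_U$ is supported on $U$ is what makes the extension-by-zero step legal; you gesture at both, and both are correct, but they are the hinges of the reduction and deserve to be stated as such.
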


This lemma is proved in Appendix \ref{sect:farkas}; it follows from a generalized form of Farkas' theorem.
We will use criterion (2) to check for surjectivity. We will simplify this criterion.
First, we introduce a more strict version of the subsequence decay property. In order to define this property,
we need the following useful fact about renormalizable directions.

\begin{lemma}[Critical times]
\label{lem:critical_times}
Suppose $\bm \theta$ is a $\lambda$-renormalizable direction. Then there are infinitely many $n \in \N$ such that 
if $\f \in \R^\V$ is a $(\bm \theta, n)$-survivor, then $\f$ is a $(\bm \theta, m)$-survivor for all $m \leq n$.
\end{lemma}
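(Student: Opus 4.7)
I would translate the survivor condition into a cone containment in $\R^\V$, verify it in a 2-dimensional analog on $\R\P^1$, and lift the result to $\R^\V$ using the block-matrix structure of the operators $\Act^g$.

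By Proposition \ref{prop:equiv_survivors}, $\f$ is a $(\btheta,n)$-survivor iff $\Act^{g_n}(\f) \in \hat\Q_{s_n}$, and the implication in the lemma rephrases as
$$\Act^{g_m g_n^{-1}}\bigl(\hat\Q_{s_n}\bigr) \subseteq \hat\Q_{s_m} \qquad \text{for all } m \le n. \tag{$\ast$}$$
For the 2D analog, set $U_n := \rhoa{g_n^{-1}}(\Q_{s_n}) \subset \R\P^1$; each arc contains $[\btheta]$, and by Proposition \ref{prop:quadrant_sequence} the closures $\overline{U_n}$ intersect in $\{[\btheta]\}$. Hence both $U_n$ and the running intersections $I_n := \bigcap_{m \le n} U_m$ shrink in Hausdorff distance to $\{[\btheta]\}$. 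The 2D form of $(\ast)$ at time $n$ is $U_n \subseteq I_{n-1}$. To produce infinitely many such $n$, I would decompose $\N$ into maximal sign-preserving blocks on which $s_n$ is constant; within such a block, each applied generator $x$ has $\Act^{x^{-1}}$ preserving the sign cone $\hat\Q_{s_n}$, so $U_n \subsetneq U_{n-1}$ strictly throughout the block, and after enough contractions $U_n$ fits inside the open neighborhood $I_{n-1}$ of $[\btheta]$. Since $\btheta$ is renormalizable and avoids the excluded eigendirections, the shrinking sequence has infinitely many sufficiently long sign-preserving blocks.

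The lift from 2D to $\R^\V$ uses the decomposition $\R^\V = \R^\Alpha \oplus \R^\Beta$, under which $\Act^g$ has a $2\times 2$ block-matrix form mirroring $\rhoa{g}$, with the scalar $\lambda$ replaced by the off-diagonal restrictions of $\A$. When a sign pattern of the $2\times 2$ matrix witnesses $\rhoa{g_m g_n^{-1}}(\Q_{s_n}) \subseteq \Q_{s_m}$, the corresponding block entries of $\Act^{g_m g_n^{-1}}$ preserve or anti-preserve coordinate-wise nonnegativity via positivity of $\A$. Mixed-sign polynomial entries such as $1 - \lambda^2$ (which arise in products like $\rhoa{h}\rhoa{v^{-1}}$) correspond to operator expressions like $I - \A|_{\Beta\to\Alpha}\A|_{\Alpha\to\Beta}$, whose action on a nonnegative function is controlled by the estimate that the diagonal contribution at $\va$ dominates $\deg(\va)\cdot \f(\va)$ via length-two back-and-forth walks. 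I expect the main obstacle to be the 2D step: rigorously verifying that each sign-preserving block past its initial transition yields a 2D-critical time requires a careful combinatorial analysis comparing the endpoints of $U_n$ (which are $\rhoa{G}$-images of $[1\!:\!0]$ and $[0\!:\!1]$) with those of the preceding intersection $I_{n-1}$.
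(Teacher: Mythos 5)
Your plan captures the right two-step structure — a $2$D statement about quadrant-cones in $\R\P^1$, then a lift to $\R^\V$ — but both steps fall short of the paper's argument in ways that matter.

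On the $2$D step: the paper does not argue ``after enough contractions $U_n$ fits inside $I_{n-1}$.'' That contraction argument is fragile because $I_{n-1}$ also contracts, and you give no estimate comparing the two rates. Instead, the paper isolates a clean combinatorial criterion: define a \emph{critical time} to be any $n \geq 1$ with $s_{n-1}=s_n$ (Definition \ref{def:critical_time}). Proposition \ref{prop:crit_equiv} shows $s_{n-1}=s_n$ iff $\rhoa{g_{n-1}g_n^{-1}}(\Q_{s_n}) \subset \Q_{s_n}$, which is precisely the hypothesis that lets one apply the expanding sign action backwards from time $n$. Corollary \ref{cor:survivor_implication} then shows the conclusion of the lemma holds at \emph{every} critical time (not just eventually), and Corollary \ref{cor:critical_times_occur} shows there are infinitely many. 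Your ``maximal sign-preserving blocks'' heuristic is pointing in this direction, but the decisive observation is that each critical time already works; you do not need any ``after enough contractions'' qualifier, and trying to supply one introduces a genuine gap.

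On the lift to $\R^\V$: you assert that ``when a sign pattern of the $2\times 2$ matrix witnesses $\rhoa{g_m g_n^{-1}}(\Q_{s_n}) \subseteq \Q_{s_m}$, the corresponding block entries of $\Act^{g_m g_n^{-1}}$ preserve $\ldots$ nonnegativity via positivity of $\A$.'' This is exactly the content of the Quadrant Tracking Proposition \ref{prop:quadrant_tracking}, and it is not automatic from block structure. The paper proves it by writing $\f = \sum_\vv |\f(\vv)|\,\Zem(\hom{\sigma_\vv})$ for well-chosen horizontal/vertical saddle connections, then transferring the action $\Upsilon^{g_i}$ through $\Zem$ to the holonomy map $\hol \circ \Phi^{\gamma(g_i)}$ (Corollary \ref{cor:pullback_action}), which does transform exactly by $\rhoa{\gamma(g_i)}$. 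That is the mechanism that converts the $2$D quadrant statement into the $\R^\V$ cone statement. Your proposed dominance estimate via back-and-forth walks (``the diagonal contribution $\ldots$ dominates $\deg(\va)\cdot\f(\va)$'') is not the right tool here: the specific product you name, $\rhoa{h}\rhoa{v^{-1}}$ with entry $1-\lambda^2$, is actually a case where quadrant preservation in $\R^2$ \emph{fails} (it sends $\Q_{++}$ into neither a single quadrant nor $\Q_{++}$), so there is no cone containment to lift. The lift is needed only when $\rhoa{g_1}(\Q_s)\subset \Q_s$, which forces a restricted list of $(g_1,s)$ pairs (Corollary \ref{cor:quadrants}), and for those the paper's homological argument is what closes the loop. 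Without establishing something equivalent to Proposition \ref{prop:quadrant_tracking}, your lift does not go through.
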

This lemma is proved in section \ref{sect:quadrant tracking}. We prove this lemma, by showing that there a sequence of values of $n$ which are guaranteed to have the property above.
These values of $n$ satisfy a simple combinatorial criterion related to the shrinking sequence, and we call them {\em critical times}. See Definition \ref{def:critical_time}.

\begin{definition}[Critical decay property]
\label{def:critical_decay}
$S(\G,\w)$ has the {\em critical decay property}, if for any 
$\bm \theta \in \Rn_\lambda$ and any $\f \in \Surv_{\bm \theta}$ the sequence of critical times, $\langle g_{n_i} \rangle$, of the
$\lambda$-shrinking sequence of $\bm \theta$ satisfies
$$\lim_{i \to \infty} \Act^{g_{n_i}}(\f)(\vv) = 0 \quad \textrm{for all $\vv \in \V$.}$$
\end{definition}

The critical decay property implies the subsequence decay property of Definition \ref{def:subseqence_decay}.

\begin{definition}[Adjacency sign property]
\label{def:adjacency sign}
The graph $\G$ has the {\em adjacency sign property}, 
if for any $\bm \theta \in \Q_{++} \cap \Rn_\lambda$, any $\f \in \Surv_{\bm \theta}$, any 
$\x \in \R^\V_c$ such that $\A \x \in \hat \Q_{++}$ and any
critical time $t$ for the $\lambda$-shrinking sequence $\langle g_{n} \rangle$ we have
$$\langle \Upsilon^{g_t}(\f),  \Upsilon^{\gamma(g_t)}(\x)-\x \rangle \geq 0.$$
\end{definition}

In section \ref{sect:no valance one}, we prove the following.

\begin{theorem}[Graphs without vertices of valance one]
\label{thm:properties}
If $\G$ is an infinite connected bipartite graph with no vertices of valance one and $\w$ is a positive eigenfunction for the adjacency operator, then $S(\G,\w)$ has the critical decay property and $\G$ has the adjacency sign property.
\end{theorem}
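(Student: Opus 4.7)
The plan is to verify both properties by analyzing the combinatorial structure of the renormalization word $g_{n}$ at critical times and exploiting the no-valance-one hypothesis through the adjacency structure of $\G$. First I would unpack the forthcoming definition of critical time from Section \ref{sect:quadrant tracking} and describe in what sense a critical time $n_i$ corresponds to an index where the quadrant constraint on $\Upsilon^{g_{n_i}}(\f)$ is particularly restrictive, so that the survivor condition $\Upsilon^{g_{n_i}}(\f) \in \hat{\Q}_{s_{n_i}}$ is maximally informative. Lemma \ref{lem:critical_times} already asserts that these are the indices at which being a survivor self-propagates backwards, so they should coincide with the places where the terminal letter of the shrinking-sequence word switches between powers of $h^{\pm 1}$ and $v^{\pm 1}$.

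For the critical decay property, the strategy is as follows. Between consecutive critical times $n_i$ and $n_{i+1}$, the group element $g_{n_{i+1}} g_{n_i}^{-1}$ is a power $h^k$ or $v^k$ with $|k|$ eventually large. By \eqref{eq:Ho} and \eqref{eq:Vo}, applying $\Ho^k$ adds $k$ times the neighbor-sum to each $\Alpha$-coordinate while fixing $\Beta$-coordinates. The survivor condition forces the resulting vector to remain in the prescribed quadrant $\hat{\Q}_{s_{n_{i+1}}}$; since the added contribution scales linearly in $|k|$, the $\Alpha$-coordinates whose sign would be flipped must themselves be small. The bipartite bounded-valance hypothesis together with the no-valance-one assumption ensures every vertex has at least two neighbors, so no single neighbor can ``shield'' a vertex from this shrinking, and iterating along the critical subsequence should give $\Upsilon^{g_{n_i}}(\f)(\vv) \to 0$ for each fixed $\vv$.

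For the adjacency sign property, I would use the pairing identity \eqref{eq:operator_relations2} to rewrite $\langle \Upsilon^{g_t}(\f), \Upsilon^{\gamma(g_t)}(\x) - \x \rangle \geq 0$ as $\langle \f, \x \rangle \geq \langle \Upsilon^{g_t}(\f), \x \rangle$. Introducing the prefixes $e = g_0, g_1, \ldots, g_t$ of the shrinking word, I would telescope the right-hand side and apply Proposition \ref{prop:kernel}, which expresses each increment as a signed multiple of $\pi_\Beta \circ \A$ or $\pi_\Alpha \circ \A$ applied to the current $\Upsilon^{g_s}(\f)$. The hypothesis $\A\x \in \hat{\Q}_{++}$, combined with the equivariance $\A\Upsilon^g = \Upsilon^{\bar{g}}\A$ of \eqref{eq:conj_by_A} and the quadrant constraint coming from $\f$ being a survivor, should ensure that each telescoped contribution carries a consistent sign. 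The critical time condition is used precisely to guarantee that all intermediate quadrant alignments match up, so that no prefix contributes a term of the wrong sign.

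The main obstacle will be the critical decay property. The heuristic ``bounded valance plus growing exponents plus quadrant confinement forces coordinatewise decay'' is intuitive, but making it rigorous requires careful bookkeeping of how mass spreads across $\G$ under iterated application of $\Ho^{\pm 1}$ and $\Vo^{\pm 1}$. In the valance-one case the spread can stall at degree-one vertices, which is why that hypothesis is needed; under the no-valance-one assumption the argument will likely proceed via a compactness extraction along the critical subsequence together with an induction on graph distance from a base vertex, using the bounded-valance hypothesis to control growth constants. The adjacency sign property should be comparatively easier, reducing to sign-tracking bookkeeping once the structure of critical times is in hand.
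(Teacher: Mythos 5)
The proposal has two concrete errors and one major structural gap.

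First, the characterization of critical times is wrong. By Proposition~\ref{prop:crit_equiv}, $n$ is a critical time iff $s_{n-1}=s_n$, which in terms of the word is equivalent to $g_{n+1}g_{n-1}^{-1} \in \{h^2, hv, vh, v^2, h^{-2}, h^{-1}v^{-1}, v^{-1}h^{-1}, v^{-2}\}$. In particular the transition $h^{-1}\cdot h^{-1}$ produces a critical time with \emph{no switch} between $h$-letters and $v$-letters, so critical times are not ``the places where the terminal letter switches.'' Worse, your claim that ``the group element $g_{n_{i+1}}g_{n_i}^{-1}$ is a power $h^k$ or $v^k$ with $|k|$ eventually large'' is false: consecutive critical times can be one apart (two consecutive $h^{-1}$ steps), so the increments never need to grow, and the increment between critical times is generally a \emph{mixed} word, not a power of a single generator. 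Because of this, the whole mechanism you propose for decay --- ``added contribution scales linearly in $|k|$'' --- has nothing to bite on.

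Second and more fundamentally, you have no source of \emph{quantitative} decay, and the central device the paper uses to get one is absent from your plan. The paper first proves the decay and adjacency-sign statements for the specific graph $\G_\Z$ by explicit combinatorics of the $\Upsilon^G$-action on $\R^\Z$ (Proposition~\ref{prop:growth_list}, Theorem~\ref{thm:adj_sign_int}); it obtains a uniform factor-of-$\tfrac12$ contraction at some critical time via the pairing $|\f(\vv)| = \langle \Upsilon^{g_t}(\f), \Upsilon^{\gamma(g_t)}(\e_\vv)\rangle$ together with the fact that $|\Upsilon_\Z^{\gamma(g_t)}(\e_\vv)(\vv)|\geq 2$. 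It then transfers this to an arbitrary graph without valance-one vertices by Theorem~\ref{thm:subgraph} (coordinatewise domination along a subgraph) and Lemma~\ref{lem:covers} (every vertex lies on an embedded copy of $\G_\Z$ in $\G$ or a double cover). This subgraph/cover reduction is where the no-valance-one hypothesis actually enters; your informal ``no single neighbor can shield'' does not produce an embedded $\G_\Z$ nor any comparison of operators. Similarly, for the adjacency sign property, telescoping with Proposition~\ref{prop:kernel} alone does not control signs across a general word; the paper's proof needs the perturbed group action $\Chi_\y$ of Section~\ref{ss:adjacency_sign}, the resulting notion of $\vv$-ASP, and the same subgraph/cover machinery (Lemmas~\ref{lem:subgraphs_and_ASP} and~\ref{lem:cover_ASP}) reducing to an explicit $\G_\Z$ verification. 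Without these ingredients, the proposal does not constitute a proof strategy, only an intuition.
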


\begin{remark}
The author believes that these properties should hold even when $\G$ has vertices of valance one.
\end{remark}

\begin{proposition}
\label{prop:weak_surj}
Assume $S(\G,\w)$ has the critical decay property and $\G$ has
the adjacency sign property. Let $\bm \theta \in \Rn_\lambda$ and $\f \in \Surv_{\bar {\bm \theta}}$.
Then there is a $(\bm \theta, 0)$-survivor, $\g$, with $\A\g=\f$.
\end{proposition}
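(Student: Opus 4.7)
The plan is to apply the Farkas-type lemma for the adjacency operator (Lemma \ref{lem:farkas}) and recognize the resulting preimage as a $(\btheta,0)$-survivor via Proposition \ref{prop:equiv_survivors}. By the dihedral group action of Remark \ref{rem:dihedral}, I reduce without loss of generality to the case $\btheta \in \Q_{++}$; then the sign pair $s_0$ associated to $\btheta$ equals $++$, and since coordinate swapping preserves $\Q_{++}$, we also have $\bar\btheta \in \Q_{++}$. Proposition \ref{prop:equiv_survivors} at $n=0$ converts the assumption $\f \in \Surv_{\bar\btheta}$ into $\f \in \hat\Q_{++}$, placing us in the setting where Lemma \ref{lem:farkas} is applicable.

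To apply that lemma, I must verify its condition (2): $\langle \f, \x\rangle \geq 0$ for every $\x \in \R^\V_c$ with $\A\x \in \hat\Q_{++}$. Let $\langle \bar g_n\rangle$ denote the $\lambda$-shrinking sequence of $\bar\btheta$, and let $t$ be a critical time for this sequence (infinitely many exist by Lemma \ref{lem:critical_times}). The adjacency sign property, applied with $\bar\btheta$ in the role of $\btheta$, yields
$$\langle \Upsilon^{\bar g_t}(\f),\, \Upsilon^{\gamma(\bar g_t)}(\x) - \x\rangle \geq 0.$$
Combined with the invariance identity \eqref{eq:operator_relations2}, which gives $\langle \Upsilon^{\bar g_t}(\f), \Upsilon^{\gamma(\bar g_t)}(\x)\rangle = \langle \f, \x\rangle$, this rearranges to
$$\langle \f, \x\rangle \geq \langle \Upsilon^{\bar g_t}(\f),\, \x\rangle.$$

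To close the argument, I let $t$ tend to infinity through the critical times. The critical decay property applied to $\f \in \Surv_{\bar\btheta}$ forces $\Upsilon^{\bar g_t}(\f)$ to converge pointwise to $\0$ on $\V$. Since $\x \in \R^\V_c$ has finite support, the pairing $\langle \Upsilon^{\bar g_t}(\f), \x\rangle$ is a finite sum and therefore tends to $0$. Hence $\langle \f, \x\rangle \geq 0$, and Lemma \ref{lem:farkas} produces $\g \in \hat\Q_{++}$ with $\A\g = \f$. Because $s_0 = ++$, Proposition \ref{prop:equiv_survivors} (at $n=0$) identifies $\g$ as a $(\btheta,0)$-survivor, finishing the proof. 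The main point requiring care is the bookkeeping of which quadrant each object lives in and the fact that the shrinking sequence of $\bar\btheta$ is $\langle \bar g_n\rangle$ while critical times are determined by that sequence; the identity \eqref{eq:operator_relations2}, together with the dihedral reduction, handle these matchups cleanly so that the abstract duality of Lemma \ref{lem:farkas} is powered by the two nontrivial hypotheses of the proposition.
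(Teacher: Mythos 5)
Your proof is correct and uses the same approach as the paper's own: Farkas' lemma for the adjacency operator, with condition (2) verified by combining the adjacency sign property with the critical decay property along critical times. The paper's proof as printed writes $\langle g_{n_i}\rangle$ (the shrinking sequence of $\btheta$) where it should write $\langle \bar g_{n_i}\rangle$ (the shrinking sequence of $\bar\btheta$, since $\f \in \Surv_{\bar\btheta}$); your careful bookkeeping of which direction's shrinking sequence powers each hypothesis quietly corrects this slip.
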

\begin{proof}
Without loss of generality,
we may assume $\f \in \hat \Q_{++}$. The statement that $\g$ is a $(\bm \theta, 0)$-survivor is equivalent to saying $\g \in \hat \Q_{++}$. We show
the existence of such a $\g$ using Farkas' theorem for the adjacency operator. Let $\x \in \R^\V_c$ with $\A \x \in \Q_{++}$. By equation \ref{eq:operator_relations2}, for any $g \in G$, 
$$\langle \f, \x \rangle=\langle \Upsilon^g (\f), \Upsilon^{\gamma(g)}(\x) - \x\rangle +  \langle \Upsilon^g (\f), \x\rangle.$$
Let $\langle g_{n_i} \rangle$ be the critical subsequence of the $\lambda$-shrinking sequence of $\bm \theta$.
By the critical decay property, we see $\langle \Upsilon^{g_{n_i}} (\f), \x\rangle \to 0$ as $i \to \infty$. By the adjacency sign property,
we have $\langle \Upsilon^{g_{n_i}} (\f), \Upsilon^{\gamma(g_{n_i})}(\x) - \x\rangle$ is always non-negative. Thus, 
$$\langle \f, \x \rangle= \lim_{i \to \infty} \langle \Upsilon^{g_{n_i}} (\f), \Upsilon^{\gamma(g_{n_i})}(\x) - \x\rangle+\langle \Upsilon^{g_{n_i}} (\f), \x\rangle \geq 0$$
as needed to apply Farkas' Theorem (Lemma \ref{lem:farkas}).
\end{proof}

Given Proposition \ref{prop:weak_surj}, the proof that there is a $\bm \theta$-survivor, $\g$, with $\A \g=\f$ is not difficult. Roughly, the proof
carefully combines this Proposition with the Critical Times Lemma. Let $n$ be a critical time. Then,
we can find a $\big(\rho_\lambda^{g_n}(\bm \theta), 0\big)$-survivor, $\g_n$, such that $\A(\g_n)=\Act^{g_n}(\f)$.
Let $\g=\Act^{\bar g_n^{-1}}(\g_n)$. By Lemma \ref{lem:critical_times}, $\g$ is a $(\bar {\bm \theta}, m)$-survivor for 
all $m=0, \ldots, n$. Thus, the main technical difficulty is showing that the sets of all such candidates of the form 
$\Act^{\bar g_n^{-1}}(\g_n)$ for each critical time $n$ have a non-trivial intersection. This argument is fully explained in
section \ref{sect:surjectivity} in the proof of the following lemma.

\begin{lemma}[Surjectivity]
\label{lem:surjectivity}
Assume $S(\G,\w)$ has the critical decay property and $\G$ has
the adjacency sign property. Let $\bm \theta \in \Rn_\lambda$ and $\f \in \Surv_{\bar {\bm \theta}}$.
Then, there is a $\g \in \Surv_{\bm \theta}$ with $\A(\g)=\f$.
\end{lemma}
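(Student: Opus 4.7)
The plan is to bootstrap Proposition \ref{prop:weak_surj} (which produces only $(\btheta,0)$-survivor preimages of $\f$) to a full $\btheta$-survivor preimage, via a renormalize-then-take-a-limit argument. At each critical time $n$ of the $\lambda$-shrinking sequence $\langle g_n \rangle$ of $\btheta$, I will first construct a $(\btheta,n)$-survivor $\g_n$ with $\A\g_n=\f$ by applying Proposition \ref{prop:weak_surj} to the renormalized direction $\rhoa{g_n}(\btheta)$ and the correspondingly shifted data $\Act^{\bar g_n}(\f)$. The Critical Times Lemma (Lemma \ref{lem:critical_times}) then upgrades $\g_n$ to a $(\btheta,m)$-survivor for every $m \leq n$. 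Finally, I will extract a common limit from the $\g_n$'s by a compactness argument in $\R^\V$ that exploits how the sign condition from being a $(\btheta,0)$-survivor combines with $\A\g=\f$ to yield uniform coordinate-wise bounds.

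For the construction at a fixed critical time $n$, let $\btheta_n \in \Circ$ be the unit vector in the direction $\rhoa{g_n}(\btheta)$; then $\btheta_n \in \Rn_\lambda$ with shrinking sequence $\langle g_{n+i}g_n^{-1}\rangle_{i \geq 0}$, and the identity $\rhoa{\bar g}(\bar\v)=\overline{\rhoa{g}(\v)}$ shows $\bar\btheta_n$ has shrinking sequence $\langle \bar g_{n+i}\bar g_n^{-1}\rangle_{i \geq 0}$. A direct computation using Proposition \ref{prop:equiv_survivors} then gives $\Act^{\bar g_n}(\f) \in \Surv_{\bar\btheta_n}$, since $\Act^{\bar g_{n+i}\bar g_n^{-1}}\Act^{\bar g_n}(\f) = \Act^{\bar g_{n+i}}(\f) \in \hat\Q_{\bar s_{n+i}}$ by $\f \in \Surv_{\bar\btheta}$. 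Applying Proposition \ref{prop:weak_surj} to $\btheta_n$ and $\Act^{\bar g_n}(\f)$ produces a $(\btheta_n,0)$-survivor $\g_n'$ with $\A\g_n' = \Act^{\bar g_n}(\f)$. Setting $\g_n = \Act^{g_n^{-1}}(\g_n')$ and invoking the conjugation identity $\A\Act^g = \Act^{\bar g}\A$ of (\ref{eq:conj_by_A}) gives $\A\g_n = \f$; and since $\Act^{g_n}(\g_n) = \g_n' \in \hat\Q_{s_n}$, Proposition \ref{prop:equiv_survivors} makes $\g_n$ a $(\btheta,n)$-survivor. Lemma \ref{lem:critical_times} then promotes this to being a $(\btheta,m)$-survivor for every $m \leq n$.

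The main step is extracting a common limit. After reducing to $s_0 = ++$ via the dihedral symmetry of Remark \ref{rem:dihedral}, every $\g \in \hat\Q_{s_0}$ is pointwise nonnegative and likewise $\f \geq 0$. For any such $\g$ with $\A\g = \f$ and any $\vb \in \Beta$, the identity $\f(\vb) = \sum_{\va \sim \vb}\g(\va)$ with nonnegative summands forces $0 \leq \g(\va) \leq \f(\vb)$ for every neighbor $\vb$ of $\va$; the symmetric bound holds on $\Beta$. Hence the set of $(\btheta,0)$-survivor preimages of $\f$ lies in a product of compact intervals and is closed in the product topology (the conditions are given by the continuous finite-sum relations $\A\g = \f$ and closed sign constraints), hence compact by Tychonoff. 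Letting $C_n$ denote the closed subset consisting of those preimages that are also $(\btheta,m)$-survivors for all $m \leq n$, I have shown that each $C_n$ (for $n$ a critical time) is nonempty; the $C_n$'s are nested and compact, so their intersection is nonempty by Cantor's intersection theorem. Any $\g$ in this intersection satisfies $\A\g = \f$ and, since critical times are unbounded, is a $(\btheta,m)$-survivor for every $m$. The anticipated obstacle is precisely this compactness: $\R^\V$ is not locally compact in the product topology, so the nested family $C_n$ could in principle have empty intersection without the coordinate-wise bound; the bound depends essentially on both the sign information (forcing nonnegativity after the dihedral reduction) and on the adjacency equation, and it is fortunate that these two ingredients interact so cleanly.
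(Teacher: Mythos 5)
Your proof is correct and follows essentially the same strategy as the paper's: apply Proposition \ref{prop:weak_surj} to the renormalized direction at each critical time, promote to a $(\btheta,m)$-survivor for all $m\leq n$ via Lemma \ref{lem:critical_times}, and pass to a limit using the coordinate-wise bound $0\leq\g(\vv)\leq\f(\vw)$ for $\vw\sim\vv$. The paper phrases the limit step as a Cantor diagonal argument on nested weakly-closed sets $\mathbf{G}_{n_i}$ rather than Tychonoff plus Cantor's intersection theorem, but these are the same argument.
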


Combining Lemma \ref{lem:surjectivity} with Proposition \ref{prop:injectivity}, we observe the following.

\begin{theorem}
\label{thm:bijection}
Assume $S(\G,\w)$ has the critical decay property and $\G$ has
the adjacency sign property. If $\bm \theta$ is a $\lambda$-renormalizable direction, then the adjacency operator restricts to bijection between the 
set of $\bm \theta$-survivors and the set of $\bar {\bm \theta}$-survivors.
\end{theorem}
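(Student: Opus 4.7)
The plan is to assemble Theorem \ref{thm:bijection} as a short corollary of results already stated in the section, so the proposal is really about how the pieces fit together rather than any new construction. First I would observe that the unnamed proposition just after equation \ref{eq:conj_by_A} already shows that $\A$ carries the cone $\Surv_{\bm\theta}$ into the cone $\Surv_{\bar{\bm\theta}}$, so the restriction $\A|_{\Surv_{\bm\theta}} \colon \Surv_{\bm\theta} \to \Surv_{\bar{\bm\theta}}$ makes sense as a map between the two cones.

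Next I would quote injectivity directly. Since the critical decay property implies the subsequence decay property, Proposition \ref{prop:injectivity} applies and the restriction is injective. The little point to mention here is that the argument in Proposition \ref{prop:injectivity} only uses that the difference of two survivors with the same image under $\A$ lies in $\ker\A$ and is therefore $G$-invariant by Proposition \ref{prop:kernel}; then one picks a subsequence along which $\Act^{g_{n_i}}$ sends each survivor to zero coordinatewise, and concludes that the difference vanishes pointwise. No new work is needed.

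Surjectivity is exactly Lemma \ref{lem:surjectivity}: given $\f \in \Surv_{\bar{\bm\theta}}$, there is some $\g \in \Surv_{\bm\theta}$ with $\A\g = \f$. The lemma requires both the critical decay property and the adjacency sign property, both of which are in our hypotheses, so we may apply it directly. Combining injectivity and surjectivity gives that $\A$ restricts to a bijection $\Surv_{\bm\theta} \to \Surv_{\bar{\bm\theta}}$, which is the theorem.

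The main obstacle, of course, has been pushed into Lemma \ref{lem:surjectivity}, whose proof in \S \ref{sect:surjectivity} is the genuinely technical step: one uses Proposition \ref{prop:weak_surj} to produce $(\rho_\lambda^{g_n}(\bm\theta),0)$-preimages at each critical time $n$ and pulls them back by $\Act^{\bar g_n^{-1}}$ to candidates that are $(\bar{\bm\theta},m)$-survivors for every $m \le n$; the hard part is showing that the sequence of convex sets of such candidates has a nonempty intersection, which is what promotes the weak surjectivity from Farkas' theorem (Lemma \ref{lem:farkas}) into a true $\bar{\bm\theta}$-survivor. Since Theorem \ref{thm:bijection} itself is simply the amalgamation of these facts, I would present it in a couple of sentences citing Proposition \ref{prop:injectivity} and Lemma \ref{lem:surjectivity}.
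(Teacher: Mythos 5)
Your proposal is correct and matches the paper's own treatment exactly: the paper states Theorem \ref{thm:bijection} as an immediate combination of Proposition \ref{prop:injectivity} (injectivity, via the subsequence decay property, which follows from the critical decay property) and Lemma \ref{lem:surjectivity} (surjectivity), with the unnamed proposition after equation \ref{eq:conj_by_A} establishing that $\A$ sends $\Surv_{\bm\theta}$ into $\Surv_{\bar{\bm\theta}}$. Nothing further is needed.
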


We have the following trivial proposition.

\begin{proposition}
Suppose $\f \in \Surv_{\bm \theta}$ then $\A^2(\f)-\f \in \Surv_{\bm \theta}$.
\end{proposition}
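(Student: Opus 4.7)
The plan is to reduce the statement to the following combinatorial claim: $\A^2 - I$ maps each sign-pair quadrant $\hat \Q_s$ into itself. Let $\langle g_n \rangle$ and $\langle s_n \rangle$ denote the $\lambda$-shrinking and sign sequences of $\bm \theta$. Applying equation \ref{eq:conj_by_A} twice and using that $\bar \cdot : G \to G$ is an involution, I obtain $\A^2 \Act^{g} = \A\, \Act^{\bar g}\, \A = \Act^{g} \A^2$ for every $g \in G$. Setting $\f_n = \Act^{g_n}(\f)$, this gives $\Act^{g_n}\bigl(\A^2(\f) - \f\bigr) = \A^2(\f_n) - \f_n$. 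The hypothesis that $\f$ is a $\bm \theta$-survivor together with Proposition \ref{prop:equiv_survivors} yields $\f_n \in \hat \Q_{s_n}$, so by that same proposition it suffices to show $\A^2(\f_n) - \f_n \in \hat \Q_{s_n}$ for every $n$.

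I now prove the general claim. Fix $s = (s_1, s_2) \in \SP$ and $\g \in \hat \Q_s$. For any $\vv \in \Alpha$,
\begin{equation*}
\A^2(\g)(\vv) = \sum_{\vw \sim \vv} \sum_{\vu \sim \vw} \g(\vu).
\end{equation*}
Because $\G$ is bipartite, every index $\vu$ appearing in this double sum lies in $\Alpha$, so each $\g(\vu)$ is either zero or of sign $s_1$. Moreover, each edge incident to $\vv$ contributes one copy of $\g(\vv)$ to the inner sum via the return edge, so $\g(\vv)$ appears with coefficient equal to the valance of $\vv$, which is at least $1$ since $\G$ is connected and infinite. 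Therefore
\begin{equation*}
\A^2(\g)(\vv) - \g(\vv) = \bigl(\deg(\vv) - 1\bigr)\, \g(\vv) + \sum_{\vu \in \Alpha,\ \vu \neq \vv} c_\vv(\vu)\, \g(\vu),
\end{equation*}
where each $c_\vv(\vu) \geq 0$ counts the length-two walks from $\vv$ to $\vu$; the right-hand side is therefore zero or of sign $s_1$. The identical computation with the roles of $\Alpha$ and $\Beta$ swapped handles the case $\vv \in \Beta$, establishing $\A^2(\g) - \g \in \hat \Q_s$.

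Applying this with $\g = \f_n$ and $s = s_n$ completes the proof. I do not expect any serious obstacle here: the substance is the elementary observation that on a bipartite graph with all valances at least $1$, the operator $\A^2 - I$ has non-negative entries relative to the natural basis and sends each bipartite class into itself, so it preserves all four sign-pair quadrants of $\R^\V$.
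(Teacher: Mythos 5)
Your proof is correct and takes essentially the same approach as the paper: first show that $\A^2 - \I$ preserves each quadrant $\hat \Q_s$, then observe that $\A^2 - \I$ commutes with $\Act^g$ via equation \ref{eq:conj_by_A}, and combine. The paper reduces the first step to the inequality $\A^2(\f)(\vv) \geq \A(\f)(\vw) \geq \f(\vv)$ for some $\vw \sim \vv$ in the quadrant $\hat\Q_{++}$, whereas you unpack the diagonal coefficient $\deg(\vv)$ of $\A^2$ explicitly, but these are the same observation written at different levels of detail.
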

\begin{proof}
We consider the operator $\A^2-\I$. First observe that for all $s \in \SP$, we have $(\A^2-\I)(\hat \Q_{s}) \subset \hat \Q_{s}$. To see this, we may take $s=++$ without loss of generality. Then, if $\vv \in \V$ we can find an $\vw \sim \vv$. Then by definition of $\A$ we have
$$\A^2(\f)(\vv) \geq \A(\f)(\vw) \geq \f(\vv),$$
and therefore $\A^2(\f)(\vv) -\f(\vv) \geq 0$ as desired.

Second, we may observe that $\A^2-\I$ commutes with $\Act^g$ for all $g \in G$. This follows from linearity and equation \ref{eq:conj_by_A}. 
Since we assumed $\f$ was a $(\theta,n)$ survivor, we know $\Act^{g_n}(\f)\in \hat \Q_{s_n}$. Therefore, by our first observation, we see 
$$\Act^{g_n} \circ (\A^2-\I)(\f)= (\A^2-\I) \circ \Act^{g_n}(\f) \in \hat \Q_{s_n}$$ 
as well. 
Thus, $(\A^2-\I)(\f)$ is a $(\theta, n)$-survivor for all $n$.
\end{proof}

Our main theorem is really a corollary of Theorem \ref{thm:bijection}. The Ergodic measures theorem (Theorem \ref{thm:ergodic2}) follows by combining
the theorem below with Theorem \ref{thm:properties} and some Martin boundary theory as described in the next subsection.

\begin{theorem}
\label{main_thm:properties}
Assume $S(\G,\w)$ has the critical decay property and $\G$ has
the adjacency sign property.
If $\f \in \Surv_{\bm \theta}$ is extremal, then $\A^2(\f)=\lambda_2^2(\f)$ for some positive $\lambda_2 \in \R$. 
\end{theorem}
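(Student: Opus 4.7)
The plan is to combine Theorem \ref{thm:bijection} with the immediately preceding proposition (that $\A^2(\g)-\g\in\Surv_{\bm\theta}$ whenever $\g\in\Surv_{\bm\theta}$) to exhibit a concrete decomposition of $\f$ in the cone $\Surv_{\bm\theta}$, and then extract the eigenvalue relation from extremality.

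First, I would observe that $\A^2$ restricts to a bijection of $\Surv_{\bm\theta}$ onto itself. Indeed, by Theorem \ref{thm:bijection} the adjacency operator $\A$ bijects $\Surv_{\bm\theta}$ with $\Surv_{\bar{\bm\theta}}$, and applying the theorem a second time (with $\bm\theta$ replaced by $\bar{\bm\theta}$, noting $\bar{\bar{\bm\theta}}=\bm\theta$) bijects $\Surv_{\bar{\bm\theta}}$ with $\Surv_{\bm\theta}$. Thus $\A^2$ admits an inverse on $\Surv_{\bm\theta}$, which I will denote $\A^{-2}$. Trivially we may assume $\f\neq\0$ (else any $\lambda_2$ works), so $\g:=\A^{-2}(\f)\in\Surv_{\bm\theta}$ is also nonzero.

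Next, I would apply the proposition preceding the theorem to $\g$: since $\g\in\Surv_{\bm\theta}$, we have $\A^2(\g)-\g\in\Surv_{\bm\theta}$. Rewriting, this says $\f-\g\in\Surv_{\bm\theta}$. Therefore
\[
\f \;=\; \g \;+\; (\f-\g)
\]
expresses $\f$ as a sum of two elements of $\Surv_{\bm\theta}$. By the extremality of $\f$ (in the sense recalled in \S\ref{ss:eigenfunction} applied now to the convex cone $\Surv_{\bm\theta}$), there exists $c\in[0,1]$ with $\g=c\f$. Equivalently, $\A^{-2}(\f)=c\f$, so $\A^2(\f)=c^{-1}\f$ provided $c\neq 0$. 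The only potential obstacle is ruling out $c=0$, but this is immediate: if $c=0$ then $\g=\A^{-2}(\f)=\0$, and applying the bijection $\A^2$ would force $\f=\0$, contradicting our assumption. Setting $\lambda_2:=c^{-1/2}>0$, we conclude $\A^2(\f)=\lambda_2^2\f$.

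The proof is essentially a one-line deduction once the two ingredients are in place; the genuine work was done earlier in assembling Theorem \ref{thm:bijection} from Lemma \ref{lem:surjectivity} and Proposition \ref{prop:injectivity}. There is no hard step here beyond checking that $\A^{-2}$ is well-defined on $\Surv_{\bm\theta}$ (which is where both the critical decay and adjacency sign hypotheses implicitly enter, through their use in Theorem \ref{thm:bijection}), and the harmless observation that the scalar $c$ cannot vanish.
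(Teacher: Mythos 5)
Your proof is correct and follows essentially the same route as the paper: both hinge on the bijectivity of $\A^2:\Surv_{\bm\theta}\to\Surv_{\bm\theta}$ (from Theorem \ref{thm:bijection}) and the fact that $\A^2-\I$ preserves $\Surv_{\bm\theta}$, combined with extremality. The only difference is cosmetic — the paper decomposes $\A^2(\f)=\f+\big(\A^2(\f)-\f\big)$ and appeals to extremality of $\A^2(\f)$ (noting first that a linear bijection sends extreme rays to extreme rays), whereas you decompose $\f=\A^{-2}(\f)+\big(\f-\A^{-2}(\f)\big)$ and apply extremality to $\f$ directly, which lets you skip the observation that $\A^2(\f)$ is extremal.
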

\begin{proof}
If $\f$ is extremal, then $\A^2(\f)$ must be extremal, since $\A^2: \Surv_{\bm \theta} \to \Surv_{\bm \theta}$ is a linear bijection. 
But, $\A^2(\f)=\f+\big(\A^2(\f)-\f\big)$ and both $\f, \A^2(\f)-\f \in \Surv_{\bm \theta}$. These two functions are linearly independent (contradicting extremality of $\A^2(\f)$)
unless $\A^2(\f)=\lambda_2^2(\f)$ for some positive $\lambda_2 \in \R$.
\end{proof}

\subsection{Martin boundary theory}
\label{sect:martin boundary intro}
In this subsection, we use Martin boundary theory to finish the proof of the ergodic measures theorem. 
We formally define the Martin boundary in Appendix \ref{sect:boundary}. Here, we will only introduce the facts we need to use to prove our main theorem.

Choose any vertex $o \in \V$ called the root. Given $\lambda$ such that positive eigenfunctions of $\A$ exist with eigenvalue $\lambda$, the Martin compactification ${\mathcal V}_\lambda \label{not:V lambda 1}$ is a compactification of the vertex set $\V$. The Martin boundary is the set ${\mathcal M}_\lambda=
{\mathcal V}_\lambda \smallsetminus \V \label{not:M lambda 1}$. Points $\zeta \in {\mathcal M}_\lambda \label{not:zeta}$ correspond to positive functions $\k_\zeta \in \R^\V \label{not:k zeta}$ so that $\A\big(\k_\zeta\big)=\lambda \k_\zeta$ and $\k_\zeta(o)=1$. The function $\zeta \mapsto \k_\zeta$ is continuous when $\R^\V$ is given the topology of pointwise convergence. The subset 
$${\mathcal M}^{\textit{min}}_\lambda=\{\zeta \in {\mathcal M}_\lambda~:~\textrm{$\k_\zeta$ is an extremal positive eigenfunction}\}.
\label{not:M lambda min}$$
is called the minimal Martin boundary, and is a Borel subset of ${\mathcal M}_\lambda$. 

For the following result see \cite[Part IV, Theorems 24.7-24.9]{W00}.

\begin{theorem}[Poisson-Martin representation theorem]
\label{thm:poisson-martin}
For each non-negative $\f \in \R^\V$ with $\A(\f)=\lambda \f$, there is a unique Borel measure $\nu_\f \label{not:nu f}$ on ${\mathcal M}_\lambda$ with $\nu_\f({\mathcal M}_\lambda \smallsetminus {\mathcal M}^{\textit{min}}_\lambda)=0$ and 
$$\f(\vv)=\int_{{\mathcal M}_\lambda} \k_\zeta(\vv) ~d\nu_\f(\zeta),$$
for all $\vv \in \V$. Furthermore, if $\f$ is an extremal positive eigenfunction, then $\f=c \k_\zeta$ for some $c>0$ and $\zeta \in {\mathcal M}^{\textit{min}}_\lambda$, and $\nu_\f$ is the Dirac measure with mass $c$ at $\zeta$.
\end{theorem}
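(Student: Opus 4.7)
The plan is to follow the classical potential-theoretic approach: build a Martin compactification from a $\lambda$-Green's function, then use Choquet-type integral representation on the compact convex set of positive eigenfunctions. First, I would introduce the $\lambda$-Green's function
$$G_\lambda(\vv,\vw) \;=\; \sum_{n=0}^{\infty} \lambda^{-n}\, N_n(\vv,\vw),$$
where $N_n(\vv,\vw)$ is the number of walks of length $n$ from $\vv$ to $\vw$, i.e.\ the $(\vv,\vw)$ entry of $\A^n$. Bounded valence gives $N_n(\vv,\vw) \leq \Delta^n$ where $\Delta$ bounds the valence, so the series is manageable for $\lambda$ strictly above the spectral radius; in the critical case one builds $G_\lambda$ by a limiting procedure on $\lambda$-potentials of positive finitely supported functions. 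I would then form the Martin kernel $K_\lambda(\vv,\vw) = G_\lambda(\vv,\vw)/G_\lambda(o,\vw)$ and verify a Harnack-type inequality: $K_\lambda(\vv,\vw)$ is bounded above by a constant depending only on the graph distance from $o$ to $\vv$, uniformly in $\vw$. This follows from the identity $\A \bigl( G_\lambda(\cdot,\vw) \bigr) = \lambda G_\lambda(\cdot,\vw) - \delta_\vw$, iterated along a shortest path from $o$ to $\vv$. The bound lets me form ${\mathcal V}_\lambda$ as the closure of $\V$ in a compact product space, and each boundary point $\zeta \in {\mathcal M}_\lambda$ corresponds by construction to a limiting function $\k_\zeta$ which is automatically a positive $\lambda$-eigenfunction with $\k_\zeta(o)=1$.

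Next I would set up the Choquet representation on
$$K \;=\; \{\f \in \R^\V \,:\, \A(\f) = \lambda \f,\ \f \geq 0,\ \f(o)=1\},$$
which is compact and metrizable in the topology of pointwise convergence (compactness by Harnack, metrizability since $\V$ is countable) and convex. Continuity and injectivity of $\zeta \mapsto \k_\zeta$ from ${\mathcal M}_\lambda$ into $K$ is built into the construction. Choquet's theorem then produces, for each $\f \in K$, at least one probability measure on the extreme points of $K$ whose barycenter is $\f$; pushing forward under $\zeta \mapsto \k_\zeta$ gives a measure on ${\mathcal M}_\lambda$, and rescaling by a total-mass factor extends this to arbitrary non-negative $\lambda$-eigenfunctions. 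Once uniqueness is in hand, the final claim of the theorem is automatic: an extremal positive eigenfunction $\f$ corresponds after normalization to an extreme point of $K$, the only representing measure is a Dirac mass at some $\zeta \in {\mathcal M}^{\textit{min}}_\lambda$, so $\f = c \k_\zeta$ and $\nu_\f = c \delta_\zeta$ with $c = \f(o)$.

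The main obstacle is uniqueness of $\nu_\f$, which amounts to showing that $K$ is a Choquet simplex. I would prove this via a Riesz-type decomposition for $\lambda$-eigenfunctions along an exhaustion $F_1 \subset F_2 \subset \cdots$ of $\V$ by finite subsets containing $o$. For each $\f \in K$, on $F_n$ one obtains a canonical finite-dimensional representation of $\f$ against the Martin kernels $K_\lambda(\cdot,\vw)$ with $\vw \in \partial F_n$, giving a discrete measure $\nu_n$ on the vertex set whose barycenter agrees with $\f$ on $F_n$. The Harnack bound forces tightness of the $\nu_n$; any weak limit gives a representing measure, and uniqueness of the limit reduces to the classical statement that any finite signed measure on ${\mathcal M}_\lambda$ representing the zero function must vanish on ${\mathcal M}^{\textit{min}}_\lambda$. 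The delicate point, and where the bulk of the work lies, is justifying that this sweeping procedure behaves well at the critical range of $\lambda$ for which positive eigenfunctions first appear (where $G_\lambda$ may diverge); this requires a careful inductive construction of $G_\lambda$ as an approximable limit from supercritical values, and the verification that the resulting Martin boundary theory still produces the Choquet-simplex structure needed for uniqueness.
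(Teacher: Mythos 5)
The paper does not prove this theorem; it is quoted verbatim from Woess (\cite[Part IV, Theorems 24.7--24.9]{W00}) and used as a black box, so there is no ``paper's own proof'' to compare against. Your sketch is a reasonable roadmap of the classical argument found there: build the $\lambda$-Green kernel, form the Martin kernel and compactification, deduce existence of a representing measure by compactness, and attack uniqueness via a Riesz-type decomposition along a finite exhaustion. That matches Woess's treatment in outline.

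That said, two points in your proposal deserve caution. First, the Martin compactification in the paper (Definition~\ref{def:Martin_boundary}) is defined only for $\lambda>r$ or for $\lambda=r$ in the $r$-transient case; in the $r$-recurrent case the Green function diverges at every pair of vertices and the Martin machinery as you set it up simply does not exist. The theorem is nonetheless true there, but only because Theorem~\ref{thm:recurrent case} forces the cone $E_\lambda$ to be a single ray, so the representation is a trivial Dirac mass at the unique boundary point. Your ``limiting procedure on $\lambda$-potentials'' does not obviously recover this, and the honest fix is a case split rather than a uniform construction of $G_\lambda$. Second, the uniqueness step, which you rightly flag as the bulk of the work, is not established by your outline: the claim that ``any finite signed measure on ${\mathcal M}_\lambda$ representing the zero function must vanish on ${\mathcal M}^{\textit{min}}_\lambda$'' is precisely what needs to be proved, and neither the Harnack bound nor the tightness of the $\nu_n$ gives it to you. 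In Woess the uniqueness comes from a separate probabilistic/potential-theoretic argument (approximation of minimal harmonic functions by potentials and a boundary Fatou-type argument), not from an abstract Choquet-simplex verification. Your proposal is therefore a correct high-level plan with the correct choice of tools, but it leaves the critical eigenvalue and the uniqueness claim as genuine gaps.
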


Now we will study those $\f \in \hat \Q_s$ with $s$ a sign pair and for which $\A^2(\f)=\lambda^2(\f)$. 
We let $\G^2$ denote the graph with vertex set $\V$ and edge set $\E^2$, where each path of length two in $\E$
corresponds to an edge of $\G^2$. We allow loops in the graph $\G^2$, which arise from moving forward along an edge in $\E$ and then back to the starting point along
the same edge or another edge with the same endpoints. Since $\G$ is connected and bipartite, the graph $\G^2$ consists of two connected components with vertex sets $\Alpha$ and $\Beta$. We denote these two components by $\G_\Alpha$ and $\G_\Beta$. We will let $\A_\Alpha$ and $\A_\Beta$ be the adjacency operators on these two graphs and they satisfy the equations
$$\A^2(\f)(\va)=\A_\Alpha(\f|_\Alpha)(\va) \quad \textrm{and} \quad \A^2(\f)(\vb)=\A_\Beta(\f|_\Beta)(\vb)$$
for all $\f \in \R^\V$ and all $\va \in \Alpha$ and $\vb \in \Beta$. 

As in \S \ref{ss:eigenfunction}, we let $E_\lambda \label{not:E lambda}$ be the collection of non-negative $\f \in \R^\V$ such that $\A(\f)=\lambda \f$. For $s \in \SP$, define
$\widehat E_s=\{\f \in \hat \Q_s~:~\A^2(\f)=\lambda^2 \f\}. \label{not:hat E s}$
Note that $E_\lambda \subsetneq \hat E_{++}$.
For $\f \in \R^\V$ we define the functions $\f_\Alpha , \f_\Beta \in \R^\V$ according to the rule
\begin{equation}
\label{eq:fsub}
\f_\Alpha(\vv)=\begin{cases}
\f(\vv) & \textrm{if $\vv \in \Alpha$} \\
\frac{1}{\lambda} \A(\f)(\vv) & \textrm{if $\vv \in \Beta$}
\end{cases}
\quad \textrm{and} \quad
\f_\Beta(\vv)=\begin{cases}
\frac{1}{\lambda} \A(\f)(\vv) & \textrm{if $\vv \in \Alpha$} \\
\f(\vv) & \textrm{if $\vv \in \Beta$.}
\end{cases}
\end{equation}
Observe that if $\f \in \R^\V$ is a non-negative function satisfying $\A^2(\f)=\lambda^2 \f$, then we have 
$\A(\f_\Alpha)=\lambda \f_\Alpha$ and $\A(\f_\Beta)=\lambda \f_\Beta$. In fact,

\begin{proposition}
For all $s=(s_1, s_2) \in \SP$, the function $\f \mapsto (\f_\Alpha, \f_\Beta)$ restricts to a linear bijection 
$\widehat E_s \to s_1 E_\lambda \times s_2 E_\lambda$. Here, $s_i E_\lambda$ denotes non-negative eigenfunctions
with eigenvalue $\lambda$ when $s_i=1$ and the non-positive eigenfunctions when $s_i=-1$. The inverse map is given by
$(\f_\Alpha, \f_\Beta) \mapsto \pi_\Alpha(\f_\Alpha)+\pi_\Beta(\f_\Beta)$, with 
$\pi_\ast$ defined as in equation \ref{eq:pi_U}.
\end{proposition}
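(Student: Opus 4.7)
The plan is to verify that the map $\Theta : \f \mapsto (\f_\Alpha, \f_\Beta)$ is well-defined on $\widehat E_s$, to exhibit its inverse explicitly, and to check both compositions. Linearity of $\Theta$ and the candidate inverse is immediate from the formulas, so the content is entirely in well-definedness and invertibility.

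First I would check that $\Theta\big(\widehat E_s\big) \subseteq s_1 E_\lambda \times s_2 E_\lambda$. Fix $\f \in \widehat E_s$ and consider $\f_\Alpha$. On $\Alpha$ it agrees with $\f$, so its sign on $\Alpha$ is $s_1$. On $\Beta$ we have $\f_\Alpha = \frac{1}{\lambda}\A(\f)$, and since $\A(\f)(\vb) = \sum_{\va \sim \vb}\f(\va)$ is a sum of terms all of sign $s_1$ (the graph is bipartite, so only neighbors in $\Alpha$ contribute), we obtain $\f_\Alpha \in s_1 \hat Q_{++}$. To verify $\A(\f_\Alpha) = \lambda \f_\Alpha$, separate into cases. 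For $\va \in \Alpha$,
\begin{equation*}
\A(\f_\Alpha)(\va) = \sum_{\vb \sim \va} \f_\Alpha(\vb) = \frac{1}{\lambda}\sum_{\vb \sim \va}\A(\f)(\vb) = \frac{1}{\lambda}\A^2(\f)(\va) = \lambda \f(\va) = \lambda \f_\Alpha(\va),
\end{equation*}
using $\A^2(\f) = \lambda^2 \f$. For $\vb \in \Beta$,
\begin{equation*}
\A(\f_\Alpha)(\vb) = \sum_{\va \sim \vb} \f_\Alpha(\va) = \sum_{\va \sim \vb} \f(\va) = \A(\f)(\vb) = \lambda \f_\Alpha(\vb).
\end{equation*}
The same argument applied to $\f_\Beta$ gives $\f_\Beta \in s_2 E_\lambda$, so $\Theta$ is well-defined.

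Next I would show the proposed inverse $\Omega:(\g_\Alpha, \g_\Beta) \mapsto \pi_\Alpha(\g_\Alpha) + \pi_\Beta(\g_\Beta)$ lands in $\widehat E_s$. Write $\f = \Omega(\g_\Alpha, \g_\Beta)$. By construction, $\f(\va) = \g_\Alpha(\va)$ has sign $s_1$ and $\f(\vb) = \g_\Beta(\vb)$ has sign $s_2$, so $\f \in \hat Q_s$. For the eigenvalue condition, observe that since $\G$ is bipartite, $\A^2$ preserves $\Alpha$ and $\Beta$ separately and depends only on the restriction $\f|_\Alpha$ (respectively $\f|_\Beta$) when evaluated on $\Alpha$ (respectively $\Beta$). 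Since $\f|_\Alpha = \g_\Alpha|_\Alpha$ and $\A(\g_\Alpha) = \lambda \g_\Alpha$, we get $\A^2(\f)(\va) = \A^2(\g_\Alpha)(\va) = \lambda^2 \g_\Alpha(\va) = \lambda^2 \f(\va)$ for $\va \in \Alpha$, and symmetrically on $\Beta$.

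Finally, I would verify $\Omega \circ \Theta = \mathrm{id}_{\widehat E_s}$ and $\Theta \circ \Omega = \mathrm{id}_{s_1 E_\lambda \times s_2 E_\lambda}$. The first is immediate: $\Omega \circ \Theta(\f)$ equals $\f|_\Alpha$ on $\Alpha$ and $\f|_\Beta$ on $\Beta$, hence equals $\f$. For the second, write $(\g_\Alpha', \g_\Beta') = \Theta\circ\Omega(\g_\Alpha, \g_\Beta)$. On $\Alpha$, $\g_\Alpha'(\va) = \f(\va) = \g_\Alpha(\va)$. On $\Beta$, $\g_\Alpha'(\vb) = \frac{1}{\lambda}\A(\f)(\vb) = \frac{1}{\lambda}\sum_{\va \sim \vb}\g_\Alpha(\va) = \frac{1}{\lambda}\A(\g_\Alpha)(\vb) = \g_\Alpha(\vb)$, using that $\g_\Alpha$ is itself a $\lambda$-eigenfunction of $\A$. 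The same computation recovers $\g_\Beta' = \g_\Beta$. This concludes the proof. There is no serious obstacle here; the only place careful bookkeeping is needed is in the eigenfunction check, where the identity $\A^2(\f) = \lambda^2 \f$ converts cleanly into $\A(\f_\Alpha) = \lambda \f_\Alpha$ precisely because of the bipartite decomposition used in the definition of $\f_\Alpha$.
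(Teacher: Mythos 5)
Your proof is correct and fills in exactly the routine verification the paper leaves implicit (the paper states this proposition without proof, preceded only by the one-line observation that $\A(\f_\Alpha)=\lambda\f_\Alpha$ and $\A(\f_\Beta)=\lambda\f_\Beta$). The bipartite structure of $\G$ is indeed the only real ingredient, and you use it in the intended way.
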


This means that we can use the Poison-Martin representation theorem to express those $\f \in \widehat E_s$ for $s \in \SP$. For this it is natural to consider signed measures on the Martin boundary. For $\nu$ a Borel measure on 
${\mathcal M}_\lambda$, we let $\nu^+$ and $\nu^-$ be the mutually singular unsigned Borel measures satisfying $\nu=\nu^+ - \nu^-$
obtained via the Hahn decomposition theorem. We let $M \label{not: M}$ be the space of signed Borel measures $\nu$ on ${\mathcal M}_\lambda$ such that 
\begin{enumerate}
\item $\nu(A)=0$ for all measurable $A \subset {\mathcal M}_\lambda \smallsetminus {\mathcal M}^{\textit{min}}_\lambda$, and
\item $\int_{{\mathcal M}_\lambda} \k_\zeta(\vv) ~d\nu(\zeta)$ is defined and finite for all $\vv \in \V$.
\end{enumerate}
Note the space $M$ is a real vector space.
We also define the subsets
$$\label{not: M +} M^+=\{\nu \in M~:~\nu^-({\mathcal M}_\lambda)=0\} \quad \textrm{and} \quad
M^-=\{\nu \in M~:~\nu^+({\mathcal M}_\lambda)=0\}.$$
We have the following corollary to the Poisson-Martin representation theorem.

\begin{corollary}
\label{cor:measure_pair}
Let $s=(s_1, s_2) \in \SP$ and $\f \in \widehat E_s$. Then there is a unique $(\nu_\Alpha, \nu_\Beta) \in M^2=M \times M$ for which 
$$\f(\va)=\int_{{\mathcal M}_\lambda} \k_\zeta(\va) ~d\nu_\Alpha(\zeta) \quad \textrm{and} \quad
\f(\vb)=\int_{{\mathcal M}_\lambda} \k_\zeta(\vb) ~d\nu_\Beta(\zeta)$$
for all $\va \in \Alpha$ and all $\vb \in \Beta$.
Moreover, $\nu_\Alpha \in M^{s_1}$ and $\nu_\Beta \in M^{s_2}$.
\end{corollary}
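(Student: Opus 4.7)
The plan is to leverage the proposition immediately preceding the corollary, which decomposes $\f \in \widehat{E}_s$ into a pair $(\f_\Alpha, \f_\Beta)$ with $s_1\f_\Alpha, s_2\f_\Beta \in E_\lambda$, and then to apply the Poisson-Martin representation theorem (Theorem \ref{thm:poisson-martin}) coordinate by coordinate.

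First I would set $\g_\Alpha = s_1 \f_\Alpha$ and $\g_\Beta = s_2 \f_\Beta$; by the preceding proposition these are non-negative functions on $\V$ satisfying $\A(\g_\Alpha) = \lambda \g_\Alpha$ and $\A(\g_\Beta) = \lambda \g_\Beta$. Applying Theorem \ref{thm:poisson-martin} to each yields unique non-negative Borel measures $\mu_\Alpha, \mu_\Beta$ on ${\mathcal M}_\lambda$, each vanishing on ${\mathcal M}_\lambda \smallsetminus {\mathcal M}^{\textit{min}}_\lambda$, for which $\g_\Alpha(\vv) = \int \k_\zeta(\vv)\,d\mu_\Alpha(\zeta)$ and $\g_\Beta(\vv) = \int \k_\zeta(\vv)\,d\mu_\Beta(\zeta)$ for all $\vv \in \V$. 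I then define $\nu_\Alpha = s_1 \mu_\Alpha$ and $\nu_\Beta = s_2 \mu_\Beta$. Since $s_i \in \{-1,+1\}$, these scalings preserve the support condition and finiteness of the integrals, so $(\nu_\Alpha, \nu_\Beta) \in M^2$. Moreover $\nu_\Alpha$ is the pure positive (resp.\ negative) part of itself when $s_1 = +1$ (resp.\ $-1$), so $\nu_\Alpha \in M^{s_1}$; similarly $\nu_\Beta \in M^{s_2}$.

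For the integral representation, if $\va \in \Alpha$ then by definition of $\f_\Alpha$ one has $\f(\va) = \f_\Alpha(\va) = s_1 \g_\Alpha(\va) = s_1 \int \k_\zeta(\va)\,d\mu_\Alpha(\zeta) = \int \k_\zeta(\va)\,d\nu_\Alpha(\zeta)$, and symmetrically for $\vb \in \Beta$.

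The main obstacle is uniqueness. Suppose $(\nu_\Alpha, \nu_\Beta)$ and $(\nu_\Alpha', \nu_\Beta')$ are two pairs in $M^2$ representing $\f$. Setting $\tilde\nu = \nu_\Alpha - \nu_\Alpha' \in M$, one has $\int \k_\zeta(\va)\, d\tilde\nu(\zeta) = 0$ for every $\va \in \Alpha$. Let $\tilde\nu = \tilde\nu^+ - \tilde\nu^-$ be the Hahn decomposition; both parts are supported on ${\mathcal M}^{\textit{min}}_\lambda$ and satisfy condition (2) in the definition of $M$ (since $\tilde\nu$ does and the parts are non-negative). Define $\g^\pm(\vv) = \int \k_\zeta(\vv)\,d\tilde\nu^\pm(\zeta)$; each $\g^\pm$ is a non-negative eigenfunction with eigenvalue $\lambda$, and they agree on $\Alpha$. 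Because $\A\g^\pm = \lambda\g^\pm$ and $\A$ restricted to $\Beta$-values is determined by values on $\Alpha$ (the bipartite structure gives $\g^\pm(\vb) = \lambda^{-1}\sum_{\va \sim \vb} \g^\pm(\va)$), equality on $\Alpha$ forces $\g^+ = \g^-$ on all of $\V$. Applying the uniqueness clause of Theorem \ref{thm:poisson-martin} to the common function $\g^+ = \g^-$ gives $\tilde\nu^+ = \tilde\nu^-$; mutual singularity then forces both to be zero, so $\nu_\Alpha = \nu_\Alpha'$. The argument for $\nu_\Beta = \nu_\Beta'$ is identical, using the symmetric relation $\g^\pm(\va) = \lambda^{-1}\sum_{\vb \sim \va}\g^\pm(\vb)$ to extend agreement from $\Beta$ to all of $\V$.
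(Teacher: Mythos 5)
Your existence argument is essentially identical to the paper's: build $\nu_\Alpha = s_1\nu_{s_1\f_\Alpha}$, $\nu_\Beta = s_2\nu_{s_2\f_\Beta}$ via the preceding proposition and Theorem~\ref{thm:poisson-martin}, then verify the integral identities by direct substitution. The sign-membership $\nu_\Alpha\in M^{s_1}$, $\nu_\Beta\in M^{s_2}$ also follows exactly as in the paper.

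Your uniqueness argument is correct but takes a slightly different route from the paper. You set $\tilde\nu=\nu_\Alpha-\nu_\Alpha'$ and take its Hahn decomposition $\tilde\nu^+\perp\tilde\nu^-$, then show the two associated eigenfunctions $\g^\pm$ agree on $\Alpha$ and hence (by the bipartite eigenfunction relation $\g(\vb)=\lambda^{-1}\sum_{\va\sim\vb}\g(\va)$) on all of $\V$; Poisson--Martin uniqueness then gives $\tilde\nu^+=\tilde\nu^-$, and mutual singularity kills both. The paper instead defines the two \emph{crossed} unsigned measures $\sigma_1=\nu_\Alpha^++\mu_\Alpha^-$ and $\sigma_2=\nu_\Alpha^-+\mu_\Alpha^+$, runs the same extend-from-$\Alpha$-to-$\V$ argument to conclude $\sigma_1=\sigma_2$, and then appeals to the fact that $\nu_\Alpha$ is known to lie in $M^{s_1}$ (so one of $\nu_\Alpha^\pm$ vanishes) together with mutual singularity of $\mu_\Alpha^\pm$ to finish. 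Both arguments are sound and the middle step is shared; your version is a bit more symmetric in that it proves uniqueness among \emph{all} pairs in $M^2$ without invoking the sign membership of the constructed representative, while the paper's version avoids taking a fresh Hahn decomposition of the difference. One small point worth noting but not a gap: you implicitly rely on $\tilde\nu^\pm$ satisfying condition~(2) in the definition of $M$ so that $\g^\pm$ are pointwise finite; this follows from $|\tilde\nu|\le|\nu_\Alpha|+|\nu_\Alpha'|$ and the membership of $\nu_\Alpha,\nu_\Alpha'$ in $M$, and is the reason $M$ is closed under addition in the first place.
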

\begin{proof}
We first prove existence. Given $\f$, we may consider $\f_\Alpha, \f_\Beta$ defined as in equation \ref{eq:fsub}. We have that $s_1 \f_\Alpha, s_2 \f_\Beta \in E_\lambda$. Therefore using the notation from the Poisson-Martin representation theorem, we may set 
$\nu_\Alpha=s_1 \nu_{s_1 \f_\Alpha}$ and $\nu_\Beta=s_2 \nu_{s_2 \f_\Beta}$. The pair $(\nu_\Alpha, \nu_\Beta)$ satisfy the equations of the corollary and we have $\nu_\Alpha \in M^{s_1}$ and $\nu_\Beta \in M^{s_2}$.

Now assume that $(\mu_\Alpha, \mu_\Beta) \in M^2$ is a second pair of measures satisfying the statement of the corollary. We will prove that $\mu_\Alpha=\nu_\Alpha$. Define the unsigned measures $\sigma_1=\nu^+_\Alpha+\mu_\Alpha^-$ and  $\sigma_2=\nu^-_\Alpha+\mu_\Alpha^+$. Define $\g_1, \g_2 \in \R^\V$ by 
$$\g_i(\vv)=\int_{{\mathcal M}_\lambda} \k_\zeta(\vv) ~d\sigma_i(\zeta) \quad \textrm{for all $\vv \in \V$.} $$
Note that for all $\va \in \Alpha$ we have $\g_1(\va)=\g_2(\va)$ since
$$\f(\va)=\int_{{\mathcal M}_\lambda} \k_\zeta(\va) ~d\nu_\Alpha(\zeta)=\int_{{\mathcal M}_\lambda} \k_\zeta(\va) ~d\mu_\Alpha(\zeta).$$
So that $\g_1(\va)=\g_2(\va)=\f(\va)+\int_{{\mathcal M}_\lambda} \k_\zeta(\va) ~d\nu^-_\Alpha(\zeta)+ \int_{{\mathcal M}_\lambda} \k_\zeta(\va) ~d\mu^-_\Alpha(\zeta)$. Now observe that both $\g_1$ and $\g_2$ satisfy
$\A(\g_i)=\lambda \g_i$, since each $\k_\zeta$ is such an eigenfunction. It follows that for all $\vb \in \Beta$,
$$\g_i(\vb)=\frac{1}{\lambda} \A(\g_i)(\vb)=\frac{1}{\lambda} \sum_{\va \sim \vb} \g_i(\va).$$
Since the expression on the right only depends on vertices $\va \in \Alpha$, we see $\g_1=\g_2$. Therefore, the uniqueness
part of the Poisson-Martin representation theorem implies $\sigma_1=\sigma_2$. Finally, observe that if $s_1=+1$ then $\nu_\Alpha^- \equiv 0$. Therefore, $\nu^+_\Alpha+\mu_\Alpha^-=\mu_\Alpha^+$. But, the fact that $\mu_\Alpha^- \perp \mu_\Alpha^+$ implies that $\mu_\Alpha^- \equiv 0$ and $\nu^+_\Alpha=\mu_\Alpha^+$. Similarly, if $s_1=-1$ then $\nu_\Alpha^+ \equiv 0$, and so $\mu_\Alpha^+ \equiv 0$ and $\nu^-_\Alpha=\mu_\Alpha^-$.
In either case, $\mu_\Alpha=\nu_\Alpha$. An identical argument shows $\mu_\Beta=\nu_\Beta$, concluding the uniqueness part of the proof. 
\end{proof}

Corollary \ref{cor:measure_pair} defines a linear map ${\mathcal N}:\bigcup_{s \in \SP} \widehat E_s \to M^2 \label{not:Nu}$
according to the rule ${\mathcal N}(\f)=(\nu_\Alpha, \nu_\Beta)$ with $(\nu_\Alpha, \nu_\Beta)$ the unique pair of signed
measures guaranteed to exist by the corollary.

The group of matrices $\GL(2, \R)$ acts on $M^2$ by matrix multiplication. Namely,
$$\left[\begin{array}{cc} a & b \\
c & d \end{array}\right]\left[\begin{array}{c} \nu_\Alpha \\
\nu_\Beta \end{array}\right]=\left[\begin{array}{c} a \nu_\Alpha+b \nu_\Beta \\
c \nu_\Alpha+d \nu_\Beta\end{array}\right].$$
Therefore, our representation $\rho_\lambda:G \to \SL(2, \R)$ induces an action on $M^2$. 

\begin{proposition}[Adjacency operation on measures]
\label{prop:adj_op_measures}
Let $\f \in \R^\V$ and suppose there exists $(\nu_\Alpha, \nu_\Beta) \in M^2$ so that
$$\f(\va)=\int_{{\mathcal M}_\lambda} \k_\zeta(\va) ~d\nu_\Alpha(\zeta) \quad \textrm{and} \quad
\f(\vb)=\int_{{\mathcal M}_\lambda} \k_\zeta(\vb) ~d\nu_\Beta(\zeta)$$
for all $\va \in \Alpha$ and $\vb \in \Beta$. Then
$$\A(\f)(\va)=\lambda \int_{{\mathcal M}_\lambda} \k_\zeta(\va) ~d\nu_\Beta(\zeta) \quad
\textrm{and} \quad
\A(\f)(\vb)=\lambda \int_{{\mathcal M}_\lambda} \k_\zeta(\vb) ~d\nu_\Alpha(\zeta).$$
\end{proposition}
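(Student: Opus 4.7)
The plan is to compute $\A(\f)(\va)$ directly from the definition of the adjacency operator and push the adjacency action through the integrals using Fubini-type reasoning, then invoke the eigenfunction property of each $\k_\zeta$. Fix $\va \in \Alpha$. By Definition \ref{def:adj},
$$\A(\f)(\va)=\sum_{\vw \sim \va}\f(\vw),$$
and because $\G$ is bipartite with $\va \in \Alpha$, every neighbor $\vw$ of $\va$ lies in $\Beta$. So I substitute the integral representation of $\f|_\Beta$ to write
$$\A(\f)(\va)=\sum_{\vb \sim \va} \int_{\M_\lambda}\k_\zeta(\vb)\,d\nu_\Beta(\zeta).$$

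Next I would interchange the sum and the integral. This is not a deep step: the bounded valence hypothesis on $\G$ (item 4 in Section \ref{sect:graphs}) says that $\vb\sim\va$ is a finite sum, and each summand is an absolutely convergent integral by definition of $M$ (property (2) of the measure class $M$). Thus the interchange yields
$$\A(\f)(\va)=\int_{\M_\lambda}\Bigl(\sum_{\vb \sim \va}\k_\zeta(\vb)\Bigr)\,d\nu_\Beta(\zeta)=\int_{\M_\lambda}\A(\k_\zeta)(\va)\,d\nu_\Beta(\zeta).$$
Now I apply the defining property of the Martin kernels $\k_\zeta$ recalled at the start of \S\ref{sect:martin boundary intro}: each $\k_\zeta$ satisfies $\A(\k_\zeta)=\lambda\k_\zeta$. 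Pulling the scalar $\lambda$ out of the integral gives
$$\A(\f)(\va)=\lambda\int_{\M_\lambda}\k_\zeta(\va)\,d\nu_\Beta(\zeta),$$
which is the first identity in the proposition.

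The computation of $\A(\f)(\vb)$ for $\vb \in \Beta$ is entirely symmetric: the neighbors of $\vb$ all lie in $\Alpha$, so expanding $\f|_\Alpha$ via $\nu_\Alpha$, swapping the finite sum with the integral, and using $\A(\k_\zeta)=\lambda\k_\zeta$ again produces the second identity. There is no genuine obstacle here; the only subtlety worth flagging explicitly is the justification for the interchange of sum and integral, which is why I would be careful to note that only finitely many neighbors contribute and that each integrand $\k_\zeta(\vb)$ is non-negative with $\int\k_\zeta(\vb)\,d|\nu_\Beta|(\zeta)<\infty$ by definition of $M$ applied to $\nu_\Beta^+$ and $\nu_\Beta^-$ separately.
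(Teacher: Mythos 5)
Your proof is correct and follows essentially the same approach as the paper's. The paper packages the computation by introducing auxiliary functions $\f_\Alpha, \f_\Beta$ (each an eigenfunction of $\A$) and writing $\f = \pi_\Alpha(\f_\Alpha)+\pi_\Beta(\f_\Beta)$, whereas you unroll the same idea into a direct pointwise calculation; both ultimately reduce to pushing $\A$ through the integral and using $\A(\k_\zeta)=\lambda\k_\zeta$, and you are right that the only analytic point worth flagging is the interchange of the finite sum with the integral, which the paper leaves implicit.
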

\begin{proof}
Define $\f_\Alpha, \f_\Beta \in \R^\V$ according to the rule that
$$\f_\ast(\vv)=\int_{{\mathcal M}_\lambda} \k_\zeta(\vv) ~d\nu_\ast(\zeta) \quad \textrm{for all $\vv \in \V$ and $\ast \in \{\Alpha, \Beta\}$.}$$
Then $\A(\f_\ast)=\lambda \f_\ast$ since each $\k_\zeta$ satisfies $\A(\k_\zeta)=\lambda \k_\zeta$. Then we have $\f=\pi_\Alpha(\f_\Alpha)+\pi_\Beta(\f_\Beta)$. It follows that
$$\A(\f)=\A\big(\pi_\Alpha(\f_\Alpha)+\pi_\Beta(\f_\Beta)\big)=
\pi_\Beta \circ \A(\f_\Alpha)+\pi_\Alpha \circ \A(\f_\Beta)=\pi_\Beta (\lambda \f_\Alpha)+\pi_\Alpha (\lambda \f_\Beta).$$
Therefore, the conclusion follows. 
\end{proof}

\begin{proposition}[Group operation on measures]
\label{prop:group_op_measures}
Suppose $\f \in \R^\V$ satisfies $\A^2(\f)=\lambda^2 \f$ and there exist $(\nu_\Alpha, \nu_\Beta) \in M^2$ so that
$$\f(\va)=\int_{{\mathcal M}_\lambda} \k_\zeta(\va) ~d\nu_\Alpha(\zeta) \quad \textrm{and} \quad
\f(\vb)=\int_{{\mathcal M}_\lambda} \k_\zeta(\vb) ~d\nu_\Beta(\zeta)$$
for all $\va \in \Alpha$ and $\vb \in \Beta$. Let $g \in G$, and set $(\mu_\Alpha, \mu_\Beta)=\rhoa{g} \cdot (\nu_\Alpha, \nu_\Beta)$
and ${\hat \f}= \Upsilon^g(\f)$. Then $\A^2({\hat \f})=\lambda^2 {\hat \f}$ and
$${\hat \f}(\va)=\int_{{\mathcal M}_\lambda} \k_\zeta(\va) ~d\mu_\Alpha(\zeta) \quad \textrm{and} \quad
{\hat \f}(\vb)=\int_{{\mathcal M}_\lambda} \k_\zeta(\vb) ~d\mu_\Beta(\zeta)$$
for all $\va \in \Alpha$ and $\vb \in \Beta$. 
\end{proposition}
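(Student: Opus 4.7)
My plan is to prove the proposition by induction on the word length of $g$ in $G=\langle h,v\rangle$. The base case $g=e$ is immediate since $\rho_\lambda^e$ is the identity matrix and $\Upsilon^e$ is the identity operator. For the induction step, it suffices to verify both conclusions when $g$ is a single generator $h^{\pm 1}$ or $v^{\pm 1}$, and then to iterate: if the conclusions hold for $g'$ and the measure pair associated to $\Upsilon^{g'}(\f)$ is $\rho_\lambda^{g'}\cdot(\nu_\Alpha,\nu_\Beta)^{\top}$, then applying the single-generator case to $\Upsilon^{g'}(\f)$ and composing gives the result for any $g=sg'$ with $s$ a generator.

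The eigenvalue statement $\A^2(\hat\f)=\lambda^2\hat\f$ reduces to showing that $\A^2$ commutes with $\Upsilon^g$ for every $g\in G$. From the commutation relations $\A\Ho=\Vo\A$ and $\A\Vo=\Ho\A$ of equation (\ref{eq:interactions}), one computes $\A^2\Ho=\A\Vo\A=\Ho\A^2$ and likewise $\A^2\Vo=\Vo\A^2$. Thus $\A^2$ commutes with both generators of the $G$-action, so with all of $G$, and therefore $\A^2\hat\f=\A^2\Upsilon^g(\f)=\Upsilon^g\A^2(\f)=\lambda^2\Upsilon^g(\f)=\lambda^2\hat\f$.

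For the integral representation I check $g=h$ in detail; the other three generators are symmetric. For $\va\in\Alpha$, equation (\ref{eq:Ho}) gives $\Upsilon^{h}(\f)(\va)=\f(\va)+\A(\f)(\va)$. Proposition \ref{prop:adj_op_measures} rewrites the second term as $\lambda\int \k_\zeta(\va)\,d\nu_\Beta(\zeta)$, so by linearity of the integral
$$\Upsilon^{h}(\f)(\va)=\int_{{\mathcal M}_\lambda} \k_\zeta(\va)\,d(\nu_\Alpha+\lambda\nu_\Beta)(\zeta).$$
For $\vb\in\Beta$, equation (\ref{eq:Ho}) gives $\Upsilon^{h}(\f)(\vb)=\f(\vb)=\int \k_\zeta(\vb)\,d\nu_\Beta(\zeta)$. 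The resulting pair $(\nu_\Alpha+\lambda\nu_\Beta,\nu_\Beta)$ is exactly $\rho_\lambda^{h}\cdot(\nu_\Alpha,\nu_\Beta)^{\top}$ under the matrix action described before the proposition. The case $g=v$ is identical after interchanging the roles of $\Alpha$ and $\Beta$, yielding $(\nu_\Alpha,\lambda\nu_\Alpha+\nu_\Beta)=\rho_\lambda^{v}\cdot(\nu_\Alpha,\nu_\Beta)^{\top}$. For the inverse generators one repeats the computation using Proposition \ref{prop:kernel} with $k=-1$ in place of $k=1$, producing signs that agree with the inverse matrices $\rho_\lambda^{h^{-1}}$ and $\rho_\lambda^{v^{-1}}$. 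Along the way one needs to know that each new measure still lies in $M$, but this is immediate since $M$ is a real vector space of signed Borel measures concentrated on ${\mathcal M}^{\textit{min}}_\lambda$, and any $\R$-linear combination inherits both the support condition and the finiteness of the pairings $\int \k_\zeta(\vv)\,d(\cdot)(\zeta)$.

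The proof is essentially bookkeeping; there is no serious analytic obstacle because Proposition \ref{prop:adj_op_measures} has already transferred the action of the adjacency operator onto the measure component, and the operators $\Ho^{\pm 1},\Vo^{\pm 1}$ combine $\f$ with $\A(\f)$ in exactly the additive pattern encoded by the off-diagonal entries of $\rho_\lambda^{h^{\pm 1}}$ and $\rho_\lambda^{v^{\pm 1}}$. The only place one must be careful is in the induction step, where one must verify that applying the single-generator formula to the measures produced at the previous stage corresponds to left-multiplying the column vector of measures by the matrix of the new generator — i.e., that the matrix action of $\SL(2,\R)$ on $M^2$ is compatible with composition, which it is by definition.
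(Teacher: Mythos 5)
Your proof is correct and takes essentially the same approach as the paper's: reduce to generators of $G$, combine Proposition \ref{prop:kernel} with Proposition \ref{prop:adj_op_measures} to match the integral representation against the matrix action of $\rho_\lambda$, and derive the eigenvalue claim from the commutation of $\A^2$ with $\Upsilon^G$ (via equation \ref{eq:conj_by_A}). The paper handles all powers $g=h^k$, $k\in\Z$, in a single computation rather than treating $h$ and $h^{-1}$ separately, and is terser about the composition/induction bookkeeping and the stability of $M$, but the underlying argument is identical.
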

\begin{proof}
First, the statement that $\A^2({\hat \f})=\lambda^2 {\hat \f}$ follows from equation \ref{eq:conj_by_A}. Namely,
$$\A^2({\hat \f})=\Upsilon^{g}\circ \A^2(\f)=\lambda^2 \Upsilon^{g}(\f)=\lambda^2 {\hat \f}.$$

We prove the remainder of the statement by induction. In fact, it is sufficient to prove
it for $g$ the identity and a collection of generators. The case of the identity is trivial. 
We consider the case of $g=h^k$ and $k \in \Z$. Then 
$(\mu_\Alpha, \mu_\Beta)=(\nu_\Alpha+k \nu_\Beta, \nu_\Beta)$. We know that
$${\hat \f}=\Ho^k(\f)=\f+k \pi_\Alpha \circ \A(\f)=\f+k \pi_\Alpha \circ \A(\f).$$
In this case, the conclusion follows from Proposition \ref{prop:adj_op_measures}. The case of $g=v^K$ is similar.
\end{proof}

\begin{corollary}
\label{cor:Nu_conj}
Let $\f \in \widehat E_s$ for some $s \in \SP$. Suppose for some $g \in G$ we have $\Upsilon^g(\f) \in \widehat \Q_{s'}$ for some $s' \in \SP$. Then, $\Upsilon^g(\f) \in \widehat E_{s'}$ and 
$$\Nu \big( \Upsilon^g(\f) \big)=\rhoa{g} \cdot \Nu(\f).$$
\end{corollary}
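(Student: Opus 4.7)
The plan is to deduce the corollary directly from Proposition \ref{prop:group_op_measures} together with the uniqueness statement in Corollary \ref{cor:measure_pair}. First I would apply Corollary \ref{cor:measure_pair} to $\f \in \widehat{E}_s$ to obtain the unique pair $\Nu(\f)=(\nu_\Alpha,\nu_\Beta) \in M^2$ such that $\f(\va)=\int \k_\zeta(\va)\,d\nu_\Alpha(\zeta)$ for all $\va \in \Alpha$ and $\f(\vb)=\int \k_\zeta(\vb)\,d\nu_\Beta(\zeta)$ for all $\vb \in \Beta$.

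Next, I would set $(\mu_\Alpha,\mu_\Beta)=\rhoa{g}\cdot(\nu_\Alpha,\nu_\Beta)$. Since $M$ is a real vector space, both $\mu_\Alpha$ and $\mu_\Beta$ lie in $M$ (condition (1) in the definition of $M$ is preserved by linear combinations, and condition (2) follows from the elementary estimate $\int \k_\zeta(\vv)\,d|\mu_\ast|(\zeta) \leq |a|\int \k_\zeta(\vv)\,d|\nu_\Alpha|(\zeta) + |b|\int \k_\zeta(\vv)\,d|\nu_\Beta|(\zeta) < \infty$ for the relevant coefficients). Proposition \ref{prop:group_op_measures} then yields
$$\Upsilon^g(\f)(\va)=\int_{\M_\lambda}\k_\zeta(\va)\,d\mu_\Alpha(\zeta) \quad\textrm{and}\quad \Upsilon^g(\f)(\vb)=\int_{\M_\lambda}\k_\zeta(\vb)\,d\mu_\Beta(\zeta),$$
together with $\A^2\bigl(\Upsilon^g(\f)\bigr)=\lambda^2\,\Upsilon^g(\f)$.

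Combining the identity $\A^2\bigl(\Upsilon^g(\f)\bigr)=\lambda^2\,\Upsilon^g(\f)$ with the hypothesis $\Upsilon^g(\f)\in\widehat{\Q}_{s'}$ gives $\Upsilon^g(\f)\in\widehat{E}_{s'}$, so $\Nu\bigl(\Upsilon^g(\f)\bigr)$ is defined. By Corollary \ref{cor:measure_pair} applied to $\Upsilon^g(\f)\in\widehat{E}_{s'}$, there is a \emph{unique} pair in $M^2$ representing $\Upsilon^g(\f)$ by these integral formulas; hence $\Nu\bigl(\Upsilon^g(\f)\bigr)=(\mu_\Alpha,\mu_\Beta)=\rhoa{g}\cdot\Nu(\f)$, as required.

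The main thing to verify carefully—though not difficult—is that $(\mu_\Alpha,\mu_\Beta)$ actually lies in $M^2$ so that the uniqueness clause of Corollary \ref{cor:measure_pair} is applicable; the sign conditions $\mu_\Alpha\in M^{s'_1}$ and $\mu_\Beta\in M^{s'_2}$ then come for free from that same uniqueness clause, since $\Upsilon^g(\f)$ lies in $\widehat{E}_{s'}$ by hypothesis. It is worth noting that it is only in this last step that the hypothesis $\Upsilon^g(\f)\in\widehat{\Q}_{s'}$ enters; without it one would still obtain the representation of $\Upsilon^g(\f)$ by $\rhoa{g}\cdot\Nu(\f)$, but would be unable to assert either $\Upsilon^g(\f)\in\widehat{E}_{s'}$ or the sign structure of the resulting measures.
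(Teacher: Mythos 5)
Your proof follows the same route as the paper: apply Proposition \ref{prop:group_op_measures} to deduce both that $\A^2(\Upsilon^g(\f))=\lambda^2\Upsilon^g(\f)$ and that $\rhoa{g}\cdot\Nu(\f)$ gives the integral representation of $\Upsilon^g(\f)$, combine with the hypothesis $\Upsilon^g(\f)\in\widehat\Q_{s'}$ to land in $\widehat E_{s'}$, then invoke the uniqueness clause of Corollary \ref{cor:measure_pair}. The only point you treat more explicitly than the paper is the verification that $\rhoa{g}\cdot\Nu(\f)\in M^2$ (the paper asserts this in passing); your observation that $M$ is a vector space closes this small gap cleanly.
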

\begin{proof}
Let ${\hat \f}=\Upsilon^g(\f)$. Proposition \ref{prop:group_op_measures} implies that $\A^2({\hat \f})=\lambda^2 {\hat \f}$. Therefore,
${\hat \f} \in \widehat E_{s'}$. Set $(\mu_\Alpha, \mu_\Beta)=\rhoa{g} \cdot \Nu(\f) \in M^2$. Proposition \ref{prop:group_op_measures} also indicates that 
$${\hat \f}(\va)=\int_{{\mathcal M}_\lambda} \k_\zeta(\va) ~d\mu_\Alpha(\zeta) \quad \textrm{and} \quad
{\hat \f}(\vb)=\int_{{\mathcal M}_\lambda} \k_\zeta(\vb) ~d\mu_\Beta(\zeta)$$
for all $\va \in \Alpha$ and $\vb \in \Beta$. By Corollary \ref{cor:measure_pair}, the pair of measures that satisfy this
statement are uniquely determined by ${\hat \f}$. Therefore, it must be that $\Nu({\hat \f})=(\mu_\Alpha, \mu_\Beta)$ as desired.
\end{proof}

This corollary allows us to prove the following important lemma.

\begin{lemma}
\label{lem:square}
Suppose $\f \in \R^\V$ satisfies $\A^2(\f)=\lambda^2 \f$ and is a $\bm \theta$-survivor with $\bm \theta=(x,y)\in \Rn_\lambda$. 
Let $(\nu_\Alpha, \nu_\Beta)=\Nu(\f)$. Then, 
$\frac{1}{x} \nu_\Alpha=\frac{1}{y} \nu_\Beta$. 
\end{lemma}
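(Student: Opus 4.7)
The plan is to exploit the $\SL(2,\R)$-invariance of the wedge form, applied simultaneously to $\btheta$ and to the measure-valued pair $(\nu_\Alpha,\nu_\Beta)$. Set
$$\sigma = x\nu_\Beta - y\nu_\Alpha \in M,$$
which is the wedge of $\btheta=(x,y)$ with $(\nu_\Alpha,\nu_\Beta)$. Because the wedge is bilinear and $\rho_\lambda^g \in \SL(2,\R)$ preserves $\wedge$, a direct computation (which carries over verbatim from the scalar to the measure-valued case) gives
$$\sigma = x_n \nu_\Beta^{(n)} - y_n \nu_\Alpha^{(n)}$$
for every element $g_n$ of the $\lambda$-shrinking sequence of $\btheta$, where $(x_n,y_n)=\rho_\lambda^{g_n}(\btheta)$ and $(\nu_\Alpha^{(n)},\nu_\Beta^{(n)}) = \rho_\lambda^{g_n}\cdot(\nu_\Alpha,\nu_\Beta)$. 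Since $\f$ is a $\btheta$-survivor and $\A^2(\f)=\lambda^2\f$, every $\f_n := \Upsilon^{g_n}(\f)$ lies in some $\widehat E_{s_n}$, so Corollary~\ref{cor:Nu_conj} identifies $(\nu_\Alpha^{(n)},\nu_\Beta^{(n)})$ with $\Nu(\f_n)$.

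Next I would test $\sigma$ against Martin kernels. For $\va \in \Alpha$, Proposition~\ref{prop:adj_op_measures} applied to $\f_n$ and $\Nu(\f_n)$ yields $\int \k_\zeta(\va)\,d\nu_\Alpha^{(n)} = \f_n(\va)$ and $\int \k_\zeta(\va)\,d\nu_\Beta^{(n)} = \frac{1}{\lambda}\A(\f_n)(\va)$, so
$$\int_{\mathcal{M}_\lambda} \k_\zeta(\va)\,d\sigma(\zeta) = \frac{x_n}{\lambda}\A(\f_n)(\va) - y_n \f_n(\va).$$
The left-hand side is independent of $n$. On the right, the shrinking hypothesis gives $x_n,y_n\to 0$, and the subsequence decay property (which is available here via Theorem~\ref{thm:properties}) produces a subsequence $\langle g_{n_i}\rangle$ along which $\f_{n_i}(\vw)\to 0$ for every $\vw\in\V$; bounded valence then forces $\A(\f_{n_i})(\va)\to 0$ as well. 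Consequently the right-hand side vanishes in the limit, forcing $\int \k_\zeta(\va)\,d\sigma = 0$ for every $\va \in \Alpha$. A symmetric computation at $\vb\in\Beta$ gives $\int \k_\zeta(\vb)\,d\sigma = 0$.

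To promote these integral identities to $\sigma=0$, I would invoke uniqueness in Theorem~\ref{thm:poisson-martin}. Hahn-decompose $\sigma = \sigma^+ - \sigma^-$; both $\sigma^\pm$ are non-negative Borel measures, concentrated on $\mathcal{M}^{\textit{min}}_\lambda$ because $\sigma$ is, and each has finite integral against every $\k_\zeta(\vv)$ because $\nu_\Alpha$ and $\nu_\Beta$ do. The previous step now reads $\int \k_\zeta(\vv)\,d\sigma^+ = \int \k_\zeta(\vv)\,d\sigma^-$ for all $\vv\in\V$, so both $\sigma^\pm$ represent the same non-negative element of $E_\lambda$, which must equal the zero function. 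Since the unique representing measure of $0$ is $0$, we get $\sigma^+=\sigma^-=0$, hence $\sigma=0$. This says $y\nu_\Alpha = x\nu_\Beta$; as $\btheta\in\Rn_\lambda$ excludes the eigendirections of $\rho_\lambda^h,\rho_\lambda^v$, we have $x,y\neq 0$, and the conclusion $\frac{1}{x}\nu_\Alpha = \frac{1}{y}\nu_\Beta$ follows.

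The hardest part will be the middle step: arranging the \emph{same} subsequence along which $x_n,y_n\to 0$ and $\f_n\to 0$ pointwise. This is exactly the content of the subsequence decay property for the shrinking sequence, so no further work is needed beyond invoking it. The wedge manipulation in the first step is a routine bilinearity argument, and the concluding uniqueness step is a standard application of Poisson--Martin.
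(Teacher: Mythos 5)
Your wedge construction is genuinely different from, and in some ways more elegant than, the paper's argument, but it contains a gap: the invocation of the subsequence decay property. You cite Theorem~\ref{thm:properties} to obtain decay, but that theorem requires $\G$ to have no vertices of valance one, which is \emph{not} a hypothesis of Lemma~\ref{lem:square}. The lemma is stated (and used, e.g.\ in Corollary~\ref{cor:bijection}) for arbitrary $\G$, $\w$, with the only assumptions being $\A^2(\f)=\lambda^2\f$, $\f$ a $\btheta$-survivor, and $\btheta\in\Rn_\lambda$. Without a decay or monotonicity hypothesis, there is no control on the size of $\f_n(\va)=\Upsilon^{g_n}(\f)(\va)$, and so the products $y_n\f_n(\va)$ and $\tfrac{x_n}{\lambda}\A(\f_n)(\va)$ need not tend to zero even though $x_n,y_n\to 0$. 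Thus your limiting step $\int \k_\zeta(\va)\,d\sigma = 0$ is not justified at the level of generality the lemma requires.

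The paper avoids the issue entirely with a finite-dimensional argument. After pushing forward to $(\nu_{\Alpha,n},\nu_{\Beta,n})=\rho_\lambda^{g_n}\cdot(\nu_\Alpha,\nu_\Beta)$ and recording the sign constraints $\nu_{\Alpha,n}\in M^{s_{n,1}}$, $\nu_{\Beta,n}\in M^{s_{n,2}}$ from Corollary~\ref{cor:measure_pair}, one fixes a measurable $A\subset\mathcal{M}_\lambda$ and sets $\v=\big(\nu_\Alpha(A),\nu_\Beta(A)\big)\in\R^2$. Then $\rho_\lambda^{g_n}(\v)\in\cl(\Q_{s_n})$ for all $n$, and Proposition~\ref{prop:quadrant_sequence} (which is purely about the action of $\rho_\lambda^{g_n}$ on $\R^2$ and requires no graph hypotheses) forces $\v$ to be a non-negative multiple of $\btheta$. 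Since $A$ was arbitrary, $\tfrac{1}{x}\nu_\Alpha=\tfrac{1}{y}\nu_\Beta$. This sidesteps any need to take $n\to\infty$ in the vector-valued setting. Your first step (the $\SL(2,\R)$-invariance of $\sigma=x\nu_\Beta-y\nu_\Alpha$) and the final Poisson--Martin/Hahn-decomposition argument are fine modulo the small presentational slip (you should deduce $\sigma^+=\sigma^-$ from uniqueness and \emph{then} conclude both vanish from mutual singularity, rather than asserting the represented eigenfunction is zero first); the problem is entirely in the middle step where you need to control $\f_n$ pointwise. If you replaced that step with the paper's evaluation-on-$A$ plus quadrant-tracking idea, you would obtain a correct and unconditional proof.
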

\begin{proof}
Let $\langle g_n \rangle$ be the $\lambda$-shrinking sequence of $\bm \theta$, and let $\langle s_n \rangle$ be the shrinking sequence. Since $\f$ is a $\bm \theta$-survivor, $\Upsilon^{g_n}(\f) \in \widehat \Q_{s_n}$.
Moreover, by equation \ref{eq:conj_by_A}, $\A^2 \big(\Upsilon^{g_n}(\f)\big) = \lambda^2 \Upsilon^{g_n}(\f)$.
Therefore, $\Upsilon^{g_n}(\f) \in \widehat E_{s_n}$ for all $n$. Let $(\nu_{\Alpha,n}, \nu_{\Beta,n}) =\Nu \circ \Upsilon^{g_n}(\f)$. Write $s_n=(s_{n,1}, s_{n,2})$. By Corollary \ref{cor:measure_pair}, we have $\nu_{\Alpha,n} \in M^{s_{n,1}}$
and $\nu_{\Beta,n} \in M^{s_{n,2}}$. Then by Corollary \ref{cor:Nu_conj}, we have
$$(\nu_{\Alpha,n}, \nu_{\Beta,n})=\rhoa{g_n} \cdot (\nu_{\Alpha,0}, \nu_{\Beta,0}).$$

Now let $A \subset {\mathcal M}_\lambda$ be any measurable subset. Let $\v=\big(\nu_{\Alpha,0}(A), \nu_{\Beta,0}(A)\big)$.
Then we know $\big(\nu_{\Alpha,n}(A), \nu_{\Beta,n}(A)\big)=\rhoa{g_n}(\v)$ for all $n$. Moreover, since $\nu_{\Alpha,n} \in M^{s_{n,1}}$
and $\nu_{\Beta,n} \in M^{s_{n,2}}$, we have $\rhoa{g_n}(\v) \in \cl({Q}_{s_n})$ for all $n$. 
Since $\langle g_n \rangle$  and $\langle s_n \rangle$  are $\lambda$-shrinking and sign sequences for a $\lambda$-renormalizable $\bm \theta$, by Proposition \ref{prop:quadrant_sequence}, we see that $\v$ must be a non-negative scalar multiple of $\bm \theta$. Equivalently, $\frac{1}{x} \nu_\Alpha(A)=\frac{1}{y} \nu_\Beta(A)$. Since this is true for all measurable sets $A$, it must be that $\frac{1}{x} \nu_\Alpha=\frac{1}{y} \nu_\Beta$.
\end{proof}

A converse to Lemma \ref{lem:square} follows from Proposition \ref{prop:survivor_in_plane}.
If $\nu$ is any Borel measure on ${\mathcal M}_\lambda$ with 
$\int_{{\mathcal M}_\lambda} \k_\zeta(\v) ~d\nu(\zeta)$ defined and finite for all $\vv \in \V$, then the pair $(x \nu, y \nu)=\Nu(\f)$ for a $\bm \theta$-survivor $\f \in \R^\V$. Indeed, if we define $\g \in \R^\V$ according to the rule that
$\g(\vv)=\int_{{\mathcal M}_\lambda} \k_\zeta(\vv) ~d\nu(\zeta)$ for all $\vv \in \V$, then we see that
$\f=P_\g(\bm \theta)$ where $P_\g$ denotes the parametrized plane defined in section  
\ref{sect:top_conj}. Therefore, we have the following two results.

\begin{corollary}[Survivors and the Martin boundary]
\label{cor:bijection}
The linear map $\Nu$ restricts to a bijection from the space of $\bm \theta$-survivors
$\f \in \R^\V$ satisfying $\A^2(\f)=\lambda^2 \f$ to pairs of measures of the form $(x \nu, y \nu)$ with $\nu$ an unsigned Borel measure on ${\mathcal M}_\lambda$ satisfying $\nu({\mathcal M}_\lambda \smallsetminus {\mathcal M}^{\textit{min}}_\lambda)=0$ and so that $\int_{{\mathcal M}_\lambda} \k_\zeta(\vv) ~d\nu(\zeta)$ is defined and finite for all $\vv \in \V$.
\end{corollary}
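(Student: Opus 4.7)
The plan is to combine Lemma \ref{lem:square} (which handles the forward direction) with the converse sketch appearing in the paragraph following it (which handles the backward direction), and to extract injectivity from the uniqueness part of the Poisson-Martin representation theorem.

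For the forward direction, suppose $\f$ is a $\bm \theta$-survivor with $\A^2(\f) = \lambda^2 \f$, and write $(\nu_\Alpha, \nu_\Beta) = \Nu(\f)$. Writing $\bm \theta = (x,y)$, let $s_0 = (s_{0,1}, s_{0,2})$ be the first entry of its $\lambda$-sign sequence, so $\sgn x = s_{0,1}$ and $\sgn y = s_{0,2}$. Since $\f$ is a $(\bm \theta, 0)$-survivor it lies in $\widehat \Q_{s_0}$, and together with $\A^2(\f) = \lambda^2 \f$ this gives $\f \in \widehat E_{s_0}$. Corollary \ref{cor:measure_pair} then yields $\nu_\Alpha \in M^{s_{0,1}}$ and $\nu_\Beta \in M^{s_{0,2}}$. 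Lemma \ref{lem:square} provides $\tfrac{1}{x}\nu_\Alpha = \tfrac{1}{y}\nu_\Beta$; call this common signed measure $\nu$. Because the signs of $\nu_\Alpha$ and $x$ coincide, $\nu$ is unsigned, and the remaining conditions (support in ${\mathcal M}^{\textit{min}}_\lambda$ and pointwise finiteness of $\int \k_\zeta(\vv)\,d\nu(\zeta)$) follow from $\nu_\Alpha \in M$.

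For the backward direction, given an unsigned $\nu$ satisfying the two listed conditions, define $\g \in \R^\V$ by $\g(\vv) = \int_{{\mathcal M}_\lambda} \k_\zeta(\vv)\,d\nu(\zeta)$. Since each $\k_\zeta$ satisfies $\A(\k_\zeta) = \lambda \k_\zeta$, so does $\g$; as $\g$ is non-negative on the connected graph $\G$ it is either identically zero (a trivial case yielding $\f = \0$) or strictly positive. Assume the latter and set $\f = P_\g(\bm \theta)$, with $P_\g$ the parametrized plane of equation \ref{eq:P}. A direct computation from Definition \ref{def:adj} shows $\A \circ P_\g(\v) = \lambda P_\g(\bar \v)$ for all $\v \in \R^2$, so $\A^2(\f) = \lambda^2 \f$. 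Proposition \ref{prop:survivor_in_plane} (applied with $\w_2 = \g$, whose existence of a compatible $\lambda$-renormalizable direction is $\bm \theta$ itself) tells us that $\f$ is a $\bm \theta$-survivor. Finally, for $\va \in \Alpha$ we have $\f(\va) = x \g(\va) = \int \k_\zeta(\va)\,d(x\nu)(\zeta)$, and similarly $\f(\vb) = \int \k_\zeta(\vb)\,d(y\nu)(\zeta)$ for $\vb \in \Beta$, so $\Nu(\f) = (x\nu, y\nu)$.

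For injectivity, if $\Nu(\f_1) = \Nu(\f_2)$ for two survivors satisfying $\A^2(\f_i) = \lambda^2 \f_i$, then decomposing via equation \ref{eq:fsub} and applying the uniqueness clause of Corollary \ref{cor:measure_pair} (which in turn invokes the uniqueness of the Poisson-Martin representation) yields $(\f_1)_\Alpha = (\f_2)_\Alpha$ and $(\f_1)_\Beta = (\f_2)_\Beta$, hence $\f_1 = \f_2$. I do not anticipate any serious obstacle: the substantive analytic work is already carried out in Lemma \ref{lem:square} and Proposition \ref{prop:survivor_in_plane}, and the remaining argument is largely a matter of tracking sign conventions and unpacking definitions.
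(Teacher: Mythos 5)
Your proposal is correct and reconstructs exactly the argument the paper leaves implicit: the forward direction is Lemma \ref{lem:square} combined with the sign bookkeeping from Corollary \ref{cor:measure_pair} (which makes $\nu = \tfrac{1}{x}\nu_\Alpha$ unsigned), the backward direction is the remark following Lemma \ref{lem:square} that $\f = P_\g(\bm\theta)$ is a $\bm\theta$-survivor by Proposition \ref{prop:survivor_in_plane}, and injectivity is immediate from the integral representation. The only cosmetic remark is that injectivity follows even more directly than you state — if $\Nu(\f_1) = \Nu(\f_2) = (\nu_\Alpha, \nu_\Beta)$, then $\f_i(\vv)$ is recovered from the integrals $\int \k_\zeta(\vv)\,d\nu_\Alpha$ or $\int \k_\zeta(\vv)\,d\nu_\Beta$ for each $\vv$, without needing to pass through the decomposition into $\f_\Alpha, \f_\Beta$.
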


We have been working in the specific case when $\bm \theta$ is a $\lambda$-renormalizable direction and 
$\A(\f)=\lambda \f$. But, the statements of our main results in section \ref{sect:restatement} concern two eigenfunctions with possibly different eigenvalues. In the context of the statements of these theorems we have the following.  

\begin{corollary}[Extremal survivors]
\label{cor:bijection2}
Suppose $\w_1 \in \R^\V$ satisfies $\A(\w_1)=\lambda_1 \w_1$ and 
assume $S(\G,\w_1)$ has the critical decay property and $\G$ has the adjacency sign property.
Let $\bm \theta_1$ be a $\lambda_1$-renormalizable direction with
$\lambda_1$-shrinking sequence $\langle g_n \rangle$. Suppose $\f \in \Surv_{\bm \theta_1}$ is extremal. By Theorem \ref{main_thm:properties}, there is a 
$\lambda_2$ so that $\A(\f)=\lambda_2^2 \f$. Set $\bm \theta_2=(x_2, y_2)=\bm \theta(\langle g_n \rangle, \lambda_2)$ as in \S \ref{ss:renormalizable directions} and define $\w_2 \in \R^\V$ by
$$\w_2(\va)=\frac{1}{x_2} \f(\va) \quad \textrm{and} \quad
\w_2(\vb)=\frac{1}{y_2} \f(\vb) \quad \textrm{for $\va \in \Alpha$ and $\vb \in \Beta$.}$$
Then, $\w_2$ is an extremal positive eigenfunction of $\A$ with eigenvalue $\lambda_2$.
\end{corollary}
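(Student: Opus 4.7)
The plan is to pass from the given extremal $\f \in \Surv_{\bm \theta_1}$ to a representing measure on $\mathcal{M}^{\textit{min}}_{\lambda_2}$ via Corollary \ref{cor:bijection}, recover $\w_2$ from this representation, and transfer extremality of $\f$ to extremality of $\w_2$ using the parametrized-plane construction from Section \ref{sect:top_conj}.

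First I would verify that $\f$ is also a $\bm \theta_2$-survivor. By Lemma \ref{lem:compatible directions}, $\bm \theta_2$ is $\lambda_2$-renormalizable with $\lambda_2$-shrinking sequence $\langle g_n \rangle$, and by Proposition \ref{prop:same_sign} its $\lambda_2$-sign sequence $\langle s_n \rangle$ coincides with the $\lambda_1$-sign sequence of $\bm \theta_1$. Since Proposition \ref{prop:equiv_survivors} characterises being a $(\bm \theta,n)$-survivor purely by the condition $\Upsilon^{g_n}(\f) \in \hat \Q_{s_n}$, this condition is identical for $\bm \theta_1$ and $\bm \theta_2$, so $\f \in \Surv_{\bm \theta_2}$.

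Next, since $\A^2(\f) = \lambda_2^2 \f$ by Theorem \ref{main_thm:properties}, I apply Corollary \ref{cor:bijection} with $\bm \theta = \bm \theta_2$ and $\lambda = \lambda_2$ to produce an unsigned Borel measure $\nu$ on $\mathcal{M}_{\lambda_2}$, concentrated on $\mathcal{M}^{\textit{min}}_{\lambda_2}$, with $\Nu(\f) = (x_2 \nu, y_2 \nu)$. Unpacking the definition of $\Nu$ and dividing by $x_2$ (on $\Alpha$) or $y_2$ (on $\Beta$), the defining prescription of $\w_2$ becomes
$$\w_2(\vv) = \int_{\mathcal{M}_{\lambda_2}} \k_\zeta(\vv)\, d\nu(\zeta), \qquad \vv \in \V.$$
Because each $\k_\zeta$ is a positive eigenfunction with $\A(\k_\zeta) = \lambda_2 \k_\zeta$ and $\G$ has bounded valence, Tonelli's theorem (applied to non-negative integrands) gives that $\w_2$ is non-negative and satisfies $\A(\w_2) = \lambda_2 \w_2$. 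Connectedness of $\G$ and the eigenvalue equation force any non-negative eigenfunction to be either $\equiv \0$ or strictly positive, and $\f \not\equiv \0$ ensures $\w_2 \not\equiv \0$, yielding positivity.

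For extremality, suppose $\w_2 = \w_2^{(1)} + \w_2^{(2)}$ with $\w_2^{(i)} \in E_{\lambda_2}$. I set $\f^{(i)} = P_{\w_2^{(i)}}(\bm \theta_2)$ using the parametrized plane of Section \ref{sect:top_conj}; a direct computation with the definitions of $\Ho$, $\Vo$ and $\A$ yields $\A^2(\f^{(i)}) = \lambda_2^2 \f^{(i)}$, and clearly $\f = \f^{(1)} + \f^{(2)}$. By Proposition \ref{prop:invt_planes}, $\Upsilon^{g_n}(\f^{(i)}) = P_{\w_2^{(i)}}\big(\rho_{\lambda_2}^{g_n}(\bm \theta_2)\big)$ lies in $\hat \Q_{s_n}$ because $\w_2^{(i)} \geq 0$ and $\rho_{\lambda_2}^{g_n}(\bm \theta_2) \in \Q_{s_n}$; hence $\f^{(i)} \in \Surv_{\bm \theta_2} = \Surv_{\bm \theta_1}$ by Proposition \ref{prop:equiv_survivors}. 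Extremality of $\f$ in $\Surv_{\bm \theta_1}$ then forces $\f^{(1)} = c\f$ for some $c \in [0,1]$, whence $\w_2^{(1)} = c\w_2$. The main conceptual point---and essentially the only nontrivial one---is the identification, via the rescaling baked into the definition of $\w_2$, between the $\f$-parameterization of $\bm \theta_2$-survivors satisfying $\A^2(\f) = \lambda_2^2 \f$ and the Poisson--Martin parameterization of $E_{\lambda_2}$; once this identification is in place, extremality transfers for free.
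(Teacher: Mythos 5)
Your argument is correct, and it follows the paper's route for the ``eigenfunction'' part but diverges on the ``extremality'' part, in an interesting way. Both you and the paper pass through Corollary \ref{cor:bijection} (hence Lemma \ref{lem:square}) to obtain the representation $\Nu(\f)=(x_2\nu, y_2\nu)$ and deduce from it that $\w_2$ is a positive eigenfunction of $\A$ with eigenvalue $\lambda_2$ — this step cannot be skipped, since $\A^2(\f)=\lambda_2^2\f$ alone does not force $\f_\Alpha$ and $\f_\Beta$ to have the compatibility needed for $\w_2$ to be a $\lambda_2$-eigenfunction. Where you differ: the paper transfers extremality by observing that $\Nu$ is a linear bijection onto a cone of measure pairs, so extremality of $\f$ forces $\nu$ to be a point mass at some $\zeta \in {\mathcal M}^{\textit{min}}_{\lambda_2}$, and then $\w_2 = c\k_\zeta$ is extremal by definition of the minimal Martin boundary. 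You instead test extremality of $\w_2$ directly: given $\w_2 = \w_2^{(1)} + \w_2^{(2)}$ in $E_{\lambda_2}$, you lift via the parametrized planes $\f^{(i)} = P_{\w_2^{(i)}}(\bm\theta_2)$, use Proposition \ref{prop:invt_planes} together with the argument of Proposition \ref{prop:survivor_in_plane} to land the $\f^{(i)}$ back in $\Surv_{\bm\theta_1} = \Surv_{\bm\theta_2}$, and invoke extremality of $\f$. Your route has the virtue of keeping the extremality transfer entirely within the cone $\Surv_{\bm\theta_1}$ and avoiding the identification of extremal boundary measures with Dirac masses; the paper's route is shorter once the Poisson--Martin machinery is on the table and delivers the explicit formula $\w_2 = c\k_\zeta$. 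One small economy you missed: once you know $\w_2$ is a positive $\lambda_2$-eigenfunction, you have $\f = P_{\w_2}(\bm\theta_2)$, and the extremality argument you give is already self-contained — the invocation of $\nu$ being concentrated on the minimal boundary contributes nothing to the extremality step, only to establishing that $\w_2$ is an eigenfunction at all.
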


Note that $\f=P_{\w_2}(\bm \theta_2)$ by definition of $P_{\w_2}$ in Section \ref{sect:top_conj}. This is the connection with the topological conjugacy construction.

\begin{proof}
Let $\f \in \Surv_{\bm \theta_1}$ be extremal.
The statement of the corollary defines $\w_2$. We only need to prove that $\w_2$ is an extremal positive eigenfunction. 
From the previous corollary with $\lambda=\lambda_2$ and $\bm \theta=\bm \theta_2$, we know that $\Nu(\f)=(x_2 \nu, y_2 \nu)$ for some
Borel measure $\nu$ on ${\mathcal M}_\lambda$ satisfying $\nu({\mathcal M}_\lambda \smallsetminus {\mathcal M}^{\textit{min}}_\lambda)=0$. Moreover, since $\Nu$ is a linear bijection, we know that since $\f$ is extremal, $\nu$ is a Dirac measure supported on a single point $\zeta \in {\mathcal M}^{\textit{min}}_\lambda$. Let $c$ be the total mass of this atomic measure. Then, since $\Nu(\f)=(x_2 \nu, y_2 \nu)$,
$$\f(\va)=c x_2 \k_\zeta(\va) \quad \textrm{and} \quad \f(\vb)=c y_2 \k_\zeta(\vb) \quad \textrm{for $\va \in \Alpha$ and $\vb \in \Beta$.}$$
Therefore by definition of $\w_2$, we have $\w_2=c \k_\zeta$. Since $\zeta \in {\mathcal M}^{\textit{min}}_\lambda$, we know $\w_2$ is an extremal positive eigenfunction.
\end{proof}

We need to show that each measure associated to an extremal positive eigenfunction $\w_2$ is ergodic. This is essentially a converse of the above
Corollary. 

\begin{lemma}
\label{lem:ergodicity_implication}
Assume $S(\G,\w_1)$ has the critical decay property and $\G$ has the adjacency sign property. Let $\bm \theta_1$ be a $\lambda_1$-renormalizable direction with
$\lambda_1$ shrinking sequence $\langle g_n \rangle$. 
Suppose $\w_2$ is an extremal positive eigenfunction with eigenvalue $\lambda_2$, and let $\bm \theta_2=\bm \theta(\langle g_n \rangle, \lambda_2)$. Then the transverse measure 
$\mu_2=\Psi_{\bm \theta_1}^{-1} \circ \Xi\big(P_{\w_2}(\bm \theta_2)\big)$
is ergodic. 
\end{lemma}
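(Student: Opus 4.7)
The plan is to deduce ergodicity of $\mu_2$ from extremality of $\f := P_{\w_2}(\bm\theta_2)$ in the convex cone $\Surv_{\bm\theta_1}$. By Corollary \ref{cor:symbolic}, $\Xi^{-1} \circ \Psi_{\bm\theta_1}$ is a linear isomorphism $\M_{\bm\theta_1} \to \Surv_{\bm\theta_1}$, so ergodicity of $\mu_2$ translates into extremality of $\f$. Using the dihedral symmetry of Remark \ref{rem:dihedral}, I reduce to the case $\bm\theta_1, \bm\theta_2 \in Q_{++}$, in which $\Surv_{\bm\theta_1} \subset \hat Q_{++}$ and every survivor is non-negative.

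Assume for contradiction that $\mu_2$ is not ergodic, equivalently that $\f$ is not extremal. Applying the ergodic decomposition to $\mu_2$ (available from the conservativity supplied by Theorem \ref{thm:pr}) and transporting it through Corollary \ref{cor:symbolic}, I obtain a non-trivial representation $\f = \int_{\mathcal E} \f_e \, d\rho(e)$, where $\mathcal E$ parametrizes extremal rays of $\Surv_{\bm\theta_1}$ and $\rho$ is a $\sigma$-finite measure not concentrated on the ray through $\f$. Theorem \ref{main_thm:properties} assigns each $\f_e$ an eigenvalue $\lambda_e > 0$ with $\A^2(\f_e) = \lambda_e^2 \f_e$, and Corollary \ref{cor:bijection2} realizes $\f_e$ as a positive scalar multiple of $P_{\k_{\zeta_e}}(\bm\theta(\langle g_n\rangle,\lambda_e))$ for some $\zeta_e \in {\mathcal M}^{\textit{min}}_{\lambda_e}$; in particular $\f_e(\vv) > 0$ for every $\vv \in \V$.

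The crucial step is to show that $\rho$ is concentrated on $\{e : \lambda_e = \lambda_2\}$. Since $\A^{2k}$ acts on each coordinate by a finite sum, it passes through the integral to give $\lambda_2^{2k}\f(\vv) = \int \lambda_e^{2k} \f_e(\vv) \, d\rho(e)$ for every $k \geq 0$ and $\vv \in \V$. Dividing by $\lambda_2^{2k}$ and applying Fatou's lemma as $k \to \infty$, the integrand tends pointwise to $+\infty$ on $\{\lambda_e > \lambda_2\}$ while the left-hand side stays equal to the finite quantity $\f(\vv)$; hence $\rho(\{\lambda_e > \lambda_2\}) = 0$. Once this holds, the integrand is dominated by $\f_e(\vv)$ whose $\rho$-integral is $\f(\vv)$, so dominated convergence identifies the $k \to \infty$ limit with $\int_{\{\lambda_e = \lambda_2\}} \f_e(\vv) \, d\rho(e)$; comparing with $\int \f_e(\vv) \, d\rho(e) = \f(\vv)$ forces $\rho(\{\lambda_e < \lambda_2\}) = 0$ as well.

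With $\rho$-almost every $\f_e$ a $\lambda_2^2$-eigenfunction of $\A^2$, Corollary \ref{cor:bijection} (applicable with $\bm\theta = \bm\theta_2$ because $\Surv_{\bm\theta_1} = \Surv_{\bm\theta_2}$ by Proposition \ref{prop:same_sign}) sends each such $\f_e$ under $\Nu$ to a Dirac pair $(x_2 c_e \delta_{\zeta_e}, y_2 c_e \delta_{\zeta_e})$, while $\Nu(\f) = (x_2 c\delta_\zeta, y_2 c\delta_\zeta)$ with $\w_2 = c\k_\zeta$. Linearity of $\Nu$ under the integral, combined with the uniqueness clause of the Poisson-Martin representation (Theorem \ref{thm:poisson-martin}), forces the pushforward of $c_e\, d\rho(e)$ under $e \mapsto \zeta_e$ to equal $c\delta_\zeta$. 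Hence $\zeta_e = \zeta$ for $\rho$-a.e.\ $e$, every $\f_e$ is a scalar multiple of $\f$, and the ergodic decomposition is trivial, contradicting the assumption. The principal technical obstacle I anticipate is setting up the ergodic decomposition of the locally finite transverse measure $\mu_2$ in a form that integrates cleanly against the cone $\Surv_{\bm\theta_1}$; I plan to handle this via a standard Choquet- or Hopf-type decomposition on a Poincar\'e section together with the identification of Corollary \ref{cor:symbolic}.
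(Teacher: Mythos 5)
Your overall architecture mirrors the paper's proof: take the ergodic decomposition of $\mu_2$ (justified by conservativity, Theorem~\ref{thm:pr}), transport it through $\Xi^{-1}\circ\Psi_{\bm\theta_1}$ into the cone $\Surv_{\bm\theta_1}$, show that the eigenvalues $\lambda_e$ all equal $\lambda_2$, and then use extremality of $\w_2$ to collapse the decomposition to a single ray. So the strategy is the same, and there is no gap. Two of your steps, though, differ technically from the paper and are worth contrasting.

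For the eigenvalue concentration step, the paper works with the quantity $\big(\A^{2k}(\f)(\vv)\big)^{1/(2k)}$ and lets $k\to+\infty$ to rule out $\lambda_\omega>\lambda_2$ and $k\to-\infty$ to rule out $\lambda_\omega<\lambda_2$; the negative powers of $\A^2$ are made sense of by invoking the bijectivity result (Theorem~\ref{thm:bijection}). Your Fatou-then-dominated-convergence argument reaches the same conclusion while only ever applying nonnegative powers of $\A^2$: after dividing by $\lambda_2^{2k}$, Fatou forces $\rho(\{\lambda_e>\lambda_2\})=0$, after which $(\lambda_e/\lambda_2)^{2k}\f_e(\vv)$ is dominated by the integrable $\f_e(\vv)$, and DCT forces $\rho(\{\lambda_e<\lambda_2\})=0$ as well. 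This is slightly cleaner in the sense that you do not need to appeal to the invertibility of $\A^2$ on the survivor cone, though that result is of course already available. Both arguments rely on strict positivity of the extremal survivors $\f_e$ (resp.\ $\g_\omega$) at every vertex, which you correctly flag and which follows from the remark at the start of \S\ref{ss:eigenfunction}.

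For the final collapse, you route through $\Nu$, Corollary~\ref{cor:bijection}, and the uniqueness clause of the Poisson--Martin representation to show the pushforward of $c_e\,d\rho(e)$ under $e\mapsto\zeta_e$ is a Dirac at $\zeta$. The paper is more economical here: once $\lambda_\omega=\lambda_2$ a.e., it rescales $\g_\omega$ to positive eigenfunctions $\w_\omega$, notes $\w_2=\int\w_\omega\,dm$, and invokes extremality of $\w_2$ directly to force $\w_\omega\propto\w_2$ a.e.\ --- no Martin-boundary machinery is needed at this last stage (it has already been absorbed into Corollary~\ref{cor:bijection2}). Your detour is correct but requires a measurability hypothesis on $e\mapsto\zeta_e$ that the paper's route avoids; I would recommend the paper's formulation of this step as the cleaner one.
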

\begin{proof}
By Proposition \ref{prop:survivor_in_plane}, the function $\f=P_{\w_2}(\bm \theta_2)$ is a $\bm \theta_1$-survivor. Therefore, by Lemma \ref{lem:injective} and Theorem \ref{thm:pr} there is a unique transverse measure $\mu_2=\Psi_{\bm \theta_1}^{-1} \circ \Xi(\f)$.
By Corollary \ref{cor:pullback_measure}, we know that $\mu_2$ arises from the pullback of a homomorphism
$\phi:S(\G, \w_1) \to S(\G, \w_2)$ as described in the Topological conjugacy theorem (Theorem \ref{thm:topological conjugacy}). In particular, Theorem \ref{thm:pr} implies that the system consisting of the surface $S(\G, \w_1)$ and measure $\mu_2$ is conservative. Hence the measure $\mu_2$ has an ergodic decomposition. See \cite[Theorem 2.2.9]{A97}. Recall that ${\mathcal M}_{\bm \theta_1}$ denotes the collection of all locally finite transverse measures to the straight-line foliation of $S(\G, \w_1)$ in direction $\bm \theta_1$. The ergodic decomposition yields a space $\Omega$ equipped with a probability measure $m$ and a collection of ergodic transverse measures 
$$\{\mu_\omega \in {\mathcal M}_{\bm \theta_1}~:~ \omega \in \Omega\}$$
(all depending on $\mu_2$) so that for any measurable transversal $\tau \subset S(\G, \w_1)$ the map $\omega \mapsto \mu_\omega(\tau)$ is measurable and 
$$\mu_2(\tau)=\int_{\Omega} \mu_\omega(\tau)~dm(\omega).$$
For each $\va \in \Alpha$, let $\sigma_\va$ be a vertical saddle connection crossing $\cyl_\va$. Similarly for each $\vb \in \Beta$, let $\sigma_\vb$ be a horizontal saddle connection crossing the vertical cylinder $\cyl_\vb$. Note that since $\Psi_{\bm \theta_1}(\mu_2)= \Xi(\f)$ we have 
$|\f(\va)|=\mu_2(\sigma_\va)$ and 
$|\f(\vb)|=\mu_2(\sigma_\vb)$, with a sign depending only on the quadrant containing $\bm \theta$. In particular,
$$|\f(\va)|=\int_{\Omega} \mu_\omega(\sigma_\va)~dm(\omega) \quad
\textrm{and} \quad 
|\f(\vb)|=\int_{\Omega} \mu_\omega(\sigma_\vb)~dm(\omega).$$
For each $\omega \in \Omega$, set $\g_\omega=\Xi^{-1} \circ \Psi_{\bm \theta_1}(\mu_\omega)$. Each $\g_\omega$ is well defined and is a $\bm \theta_1$-survivor by 
Corollary \ref{cor:symbolic}. Then $|\g_\omega(\va)|=\mu_\omega(\sigma_\va)$ and
$|\g_\omega(\vb)|=\mu_\omega(\sigma_\vb)$, with the same sign considerations as before. Therefore, we have
$$\f(\vv)=\int_{\Omega} \g_\omega(\vv)~dm(\omega) \quad \textrm{for all $\vv \in \V$}.$$
The map $\Xi^{-1} \circ \Psi_{\bm \theta_1}$ is a linear bijection ${\mathcal M}_{\bm \theta_1} \to \Surv_{\bm \theta_1}$. Therefore, each $\g_\omega$ is extremal in 
$\Surv_{\bm \theta_1}$, because $\mu_\omega$ is ergodic. From Theorem \ref{main_thm:properties}, it follows that there is an $\lambda_\omega>0$ depending on $\omega$ for which $\A^2(\g_\omega)=\lambda_\omega^2 \g_\omega$.
Moreover the map $\omega \mapsto \lambda_\omega$ is measurable, since it can be computed as a ratio of sums of measures of saddle connections. Then for any integer $k$ we have
\begin{equation}
\label{eq:adjacency_powers}
\lambda_2^{2k} \f(\v)=\A^{2k}(\f)(\vv)=\int_{\Omega} \lambda_\omega^{2k} 
\g_\omega(\vv)~dm(\omega) \quad \textrm{for all $\vv \in \V$},
\end{equation}
where we make sense of negative of $\A$ powers by noting that $\A^2$ restricts to a bijection 
$\Surv_{\bm \theta_1} \to \Surv_{\bm \theta_1}$. See Theorem \ref{thm:bijection}.
Now suppose that 
$$m\big(\{\omega~:~\lambda_\omega>\lambda_2\}\big)>0.$$
Then we observe from the right side of equation \ref{eq:adjacency_powers} that
$$\lim_{k \to +\infty} \big(\A^{2k}(\f)(\vv)\big)^\frac{1}{2k}>\lambda_2,$$
but this contradicts the left side of the equation which says that this limit must be $\lambda_2$. Similarly, if
$$m\big(\{\omega~:~\lambda_\omega<\lambda_2\}\big)<0 \quad \textrm{then} 
\lim_{k \to -\infty} \big(\A^{2k}(\f)(\vv)\big)^\frac{1}{2k}<\lambda_2,$$
which again is a contradiction. Thus $m$-almost everywhere we have $\lambda_\omega=\lambda_2$. Then as in Corollary \ref{cor:bijection2}, we may set 
$$\w_\omega(\va)=\frac{1}{x_2} \f(\va) \quad \textrm{and} \quad \w_\omega(\vb)=\frac{1}{y_2} \f(\vb) \quad \textrm{for all $\va \in \Alpha$ and $\vb \in \Beta$.}$$
Then by this corollary, $\w_\omega$ is an extremal positive eigenfunction with eigenvalue $\lambda_2$ $m$-a.e.. Since $\f=P_{\w_2}(\bm \theta_2)$, we know that
$$\w_2(\vv)=\int_{\Omega} \w_\omega(\vv)~dm(\omega) \quad \textrm{for all $\vv \in \V$}.$$
But since $\w_2$ is an extremal positive eigenfunction and almost every $\w_\omega$
is an eigenfunction with the same eigenvalue, we have that $\w_\omega$ is a scalar multiple of $\w_2$ $m$-almost everywhere. 
Therefore $m$-a.e., $\g_\omega$ is a scalar multiple of $\f$. Since $\mu_\omega=\Psi_{\bm \theta_1}^{-1} \circ \Xi(\g_\omega)$ and $\mu_2=\Psi_{\bm \theta_1}^{-1} \circ \Xi(\f)$, we know $m$-a.e. that $\mu_\omega$ is a scalar multiple of $\mu_2$.
In particular, each $\mu_\omega$ was known to be ergodic, so the measure $\mu_2$ must be ergodic as well. 
\end{proof}

By combining the corollary and lemma above, we have the following.

\begin{theorem}
\label{thm:general_ergodic_measure_classification}
Assume $S(\G,\w_1)$ has the critical decay property and $\G$ has the adjacency sign property.
Let $\bm \theta_1$ be a $\lambda_1$-renormalizable direction with
$\lambda_1$ shrinking sequence $\langle g_n \rangle$. Then, the collection of locally finite ergodic invariant measures is given by 
$$\big\{\Psi_{\bm \theta_1}^{-1} \circ \Xi\big(P_{\w_2}(\bm \theta_2)\big)\big\},$$
where $\w_2$ varies over the extremal positive eigenfunctions of $\G$,
$\bm \theta_2=\bm \theta(\langle g_n \rangle, \lambda_2)$ and $\lambda_2$
is the eigenvalue of $\w_2$. 
\end{theorem}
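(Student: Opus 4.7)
The plan is to combine three results already established in the excerpt: Corollary \ref{cor:symbolic} (which identifies locally finite transverse measures with $\bm\theta_1$-survivors in $\R^\V$), Corollary \ref{cor:bijection2} (which says every extremal survivor arises from an extremal positive eigenfunction via the $P_{\w_2}(\bm\theta_2)$ construction), and Lemma \ref{lem:ergodicity_implication} (which says the measures built from extremal positive eigenfunctions are ergodic). Since both the critical decay property and the adjacency sign property are assumed, these results all apply, and Theorem \ref{thm:properties} is not needed here.

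For the forward containment, I would start with a locally finite ergodic invariant measure $\mu$ on $S(\G,\w_1)$ transverse to $\hat\F_{\bm\theta_1}$. By Corollary \ref{cor:symbolic}, the map $\Xi^{-1}\circ\Psi_{\bm\theta_1}:\M_{\bm\theta_1}\to\Surv_{\bm\theta_1}$ is a linear bijection, so $\mu$ corresponds to a unique $\bm\theta_1$-survivor $\f=\Xi^{-1}\circ\Psi_{\bm\theta_1}(\mu)$. Ergodicity of $\mu$ translates directly into extremality of $\f$ in the convex cone $\Surv_{\bm\theta_1}$: if $\f=\f_1+\f_2$ with $\f_i\in\Surv_{\bm\theta_1}$ then the corresponding decomposition $\mu=\mu_1+\mu_2$ with $\mu_i\in\M_{\bm\theta_1}$ would violate ergodicity unless each $\mu_i$ is a non-negative scalar multiple of $\mu$. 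Extremality of $\f$ then feeds into Corollary \ref{cor:bijection2}, which produces $\lambda_2>0$ with $\A^2(\f)=\lambda_2^2\f$, the direction $\bm\theta_2=\bm\theta(\langle g_n\rangle,\lambda_2)$, and an extremal positive eigenfunction $\w_2$ of $\A$ with eigenvalue $\lambda_2$ such that $\f=P_{\w_2}(\bm\theta_2)$ (by the very definition of $P_{\w_2}$). Therefore $\mu=\Psi_{\bm\theta_1}^{-1}\circ\Xi\big(P_{\w_2}(\bm\theta_2)\big)$, exhibiting $\mu$ in the desired form.

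For the reverse containment, I would let $\w_2$ be any extremal positive eigenfunction with eigenvalue $\lambda_2$, set $\bm\theta_2=\bm\theta(\langle g_n\rangle,\lambda_2)$, and define $\mu_2=\Psi_{\bm\theta_1}^{-1}\circ\Xi(P_{\w_2}(\bm\theta_2))$. This is well-defined as an element of $\M_{\bm\theta_1}$: Proposition \ref{prop:survivor_in_plane} shows $P_{\w_2}(\bm\theta_2)\in\Surv_{\bm\theta_1}$, and Corollary \ref{cor:symbolic} then realizes $\Xi(P_{\w_2}(\bm\theta_2))$ as the cohomology class of a locally finite transverse measure. Lemma \ref{lem:ergodicity_implication} then directly asserts that $\mu_2$ is ergodic.

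Since everything needed is already packaged into the preceding results, there is no real obstacle here; this theorem is essentially a bookkeeping step that assembles the ergodic/extremal correspondence from Corollary \ref{cor:symbolic}, the algebraic classification of extremal survivors from Corollary \ref{cor:bijection2}, and the ergodicity criterion from Lemma \ref{lem:ergodicity_implication}. The only minor care needed is to check that the correspondence $\mu\leftrightarrow\f$ genuinely identifies ergodic measures with extremal survivors in both directions, which follows from the fact that $\Xi^{-1}\circ\Psi_{\bm\theta_1}$ is a linear bijection of convex cones and hence sends extreme rays to extreme rays.
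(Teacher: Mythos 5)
Your proposal is correct and follows essentially the same approach as the paper: the paper's own proof is a two-sentence assembly of Corollary \ref{cor:bijection2}, Corollary \ref{cor:symbolic}, and Lemma \ref{lem:ergodicity_implication}, and you have spelled out exactly that assembly with the appropriate intermediate observations (linear bijection of cones sends ergodic measures to extremal survivors, and Proposition \ref{prop:survivor_in_plane} giving well-definedness in the reverse direction).
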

\begin{proof}
All locally finite ergodic invariant measures arise in this way by Corollary \ref{cor:bijection2} and Corollary \ref{cor:symbolic}. Conversely, every such measure is ergodic by Lemma \ref{lem:ergodicity_implication}.
\end{proof}

Finally, we can prove the ergodic measure classification theorem.

\begin{proof}[Proof of the Ergodic Measures Theorem II (Theorem \ref{thm:ergodic2})]
Since $\G$ has no vertices of valance one, by Theorem \ref{thm:properties}
we know that $S(\G,\w_1)$ has the critical decay property and $\G$ has
the adjacency sign property. Therefore by Theorem \ref{thm:general_ergodic_measure_classification} each ergodic measure is of the form 
$\mu_2=\Psi_{\bm \theta_1}^{-1} \circ \Xi\big(P_{\w_2}(\bm \theta_2)\big)$, where $\w_2$ is an extremal positive eigenfunction. Then it follows from Corollary \ref{cor:pullback_measure} that $\mu_2$ arises from a pullback construction as described in the statement of the theorem.
\end{proof}

\section{Renormalizable directions}
\name{sect:symmetry}

In this section, we revisit the idea of renormalizable directions which was introduced in section \ref{ss:renormalizable directions}. We work out a number of their basic properties. 

\subsection{Shrinking sequences and directions}
\label{sect:unique}
We begin by recalling some of the ideas from section \ref{ss:renormalizable directions}. We gave $G$ a metric structure coming from viewing $G$ as a subset of its Cayley graph constructed from the symmetric generating set $\{h,v,h^{-1},v^{-1}\}$. 
In particular, an infinite (resp. finite) sequence $\langle g_0, g_1, \ldots \rangle$ of elements of $G$ is a {\em geodesic ray (resp. segment) in $G$} if and only if it satisfies the following
statements.
\begin{enumerate}
\item[1.] $g_{n+1} g_{n}^{-1} \in \{h,v,h^{-1}, v^{-1}\}$ for all $n \geq 0$.
\item[2.] $g_{n+2} g_{n}^{-1} \neq e$ for all $n \geq 0$.
\end{enumerate}
Here, we have restated Proposition \ref{prop:ray}.

Now consider the representation $\rho_\lambda:G \to \SL(2,\R)$ for $\lambda \geq 2$, as defined in Definition \ref{def:rho}.
Because $\lambda \geq 2$, the representations $\rho_\lambda$ are always discrete and faithful. 

\begin{definition}[Shrinking]
\label{def:shrinking sequences}
Let $\btheta \in \Circ$. A geodesic ray $\langle g_i\rangle_{i \geq 0}$ with $g_0=e$ is a {\em $\lambda$-shrinking sequence of $\bm \theta$} if
$\|\rhoa{g_i}(\btheta)\|$ is strictly monotone decreasing.
 We say $\langle g_i \rangle$ is {\em a $\lambda$-shrinking sequence} if it is the $\lambda$-shrinking sequence of some $\bm \theta \in \Circ$,
 and we say the associated $\bm \theta \in \Circ$ is {\em $\lambda$-shrinkable}.
\end{definition}

Our main result of this subsection is the following:

\begin{theorem}[The Correspondence Theorem]
\label{thm:unique shrinking}~
\begin{enumerate}
\item If $\bm \theta \in \Circ$ is $\lambda$-shrinkable, then there is a unique geodesic ray $\langle g_i \rangle$ with $g_0=e$
which is a $\lambda$-shrinking sequence of $\bm \theta$.
\item Conversely,
if $\langle g_i \rangle$ is a geodesic ray with $g_0=e$, then there is at most one pair of antipodal $\lambda$-shinkable directions $\btheta$ for which $\langle g_i \rangle$ is its $\lambda$-shrinking sequence.
\end{enumerate} 
\end{theorem}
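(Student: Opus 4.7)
The plan is to reduce both parts to a local calculation: at each step, at most one generator of $G$ can strictly decrease the Euclidean norm. A direct computation gives
$$\|\rhoa{h}(x,y)\|^2 - \|(x,y)\|^2 = \lambda y(2x + \lambda y), \qquad \|\rhoa{h^{-1}}(x,y)\|^2 - \|(x,y)\|^2 = \lambda y(\lambda y - 2x),$$
with analogous identities for $v^{\pm 1}$ obtained by swapping $x$ and $y$. First I would note that shrinkability forces each $\rhoa{g_n}(\bm \theta)$ into the interior of a quadrant, since on a coordinate axis one pair of generators fixes the vector and the other strictly expands it, leaving no strict contraction available. Working in $\Q_{++}$ (the other quadrants follow by symmetry), $h$ and $v$ always strictly expand, while $h^{-1}$ contracts iff $y/x < 2/\lambda$ and $v^{-1}$ contracts iff $y/x > \lambda/2$. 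Since $\lambda \geq 2$, the intervals $(0,2/\lambda)$ and $(\lambda/2,\infty)$ are disjoint, so at each step at most one generator is a valid choice for $g_{n+1} g_n^{-1}$. This yields Part (1), and the geodesic-ray condition of Proposition \ref{prop:ray} comes for free: backtracking to $\rhoa{g_{n-1}}(\bm \theta)$ would increase the norm, violating strict shrinkability.

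For Part (2), I would argue by contradiction. Suppose $\bm \theta$ and $\bm \theta'$ are both $\lambda$-shrinkable with a common shrinking sequence $\langle g_n \rangle$ and are linearly independent. Then $\rhoa{g_n}(\bm \theta)$ and $\rhoa{g_n}(\bm \theta')$ are each bounded (being monotone decreasing in norm), and expressing $\rhoa{g_n}$ in the basis $\{\bm \theta,\bm \theta'\}$ shows that $\|\rhoa{g_n}\|$ is a uniformly bounded sequence of operator norms in $\SL(2,\R)$. But $\rho_\lambda$ is faithful for $\lambda \geq 2$ by Definition \ref{def:rho} and $\rhoa{G}$ is discrete in $\SL(2,\R)$, so any bounded subset of $\rhoa{G}$ is finite, contradicting the distinctness of the $g_n$. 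Hence $\bm \theta$ and $\bm \theta'$ must be proportional, and being unit vectors they differ by at most a sign.

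The main obstacle I foresee is not in the two contradiction arguments themselves but in invoking discreteness of $\rhoa{G}$ in $\SL(2,\R)$ for $\lambda \geq 2$. This rests on the classical ping-pong/Schottky argument applied to the two parabolics $\rhoa{h}$ and $\rhoa{v}$; the paper records this as a standard fact (e.g.\ in the remark of \S \ref{ss:renormalizable directions} that $\rhoa{G}$ is a non-elementary Fuchsian group, which is automatically discrete). Once discreteness is granted, both halves of the correspondence theorem reduce to the short computations above.
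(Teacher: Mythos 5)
Your proof is correct and follows essentially the same path as the paper's: Part (1) reduces to the uniqueness of the shrinking generator (your explicit norm computations are what the paper packages as Proposition \ref{prop:shrink_exp} and Corollary \ref{cor:shrink}), and Part (2) is the paper's boundedness-plus-discreteness argument nearly verbatim. No gaps.
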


In addition to proving this result, in this subsection, we will also state a number of facts which will be useful later about the action of a shrinking sequence.

Since the Cayley graph of $G$ is a tree, 
for any $g \in G$, there is a unique geodesic segment (in the word metric) $\langle g_0, \ldots, g_n \rangle$ in $G$ which satisfies $g_0=e$ and $g_n=g$. We begin by studying which 
unit vectors are shrunk or expanded along such a sequence.

\begin{definition}[Shrinking and expanding sets]
\label{def:shrink_exp}
Let $g \in G$ and let $\langle g_0, \ldots, g_n \rangle$ be the unique geodesic segment with $g_0=e$ and $g_n=g$.
Define $\Shrink_\lambda(g)$ and $\Exp_\lambda(g)$ to be the sets
$$\Shrink_\lambda(g) =
\{ \bm \theta \in \Circ ~:~  \|\rho_\lambda^{g_n}(\bm \theta)\| < \|\rho_\lambda^{g_{n-1}}(\bm \theta)\| < \ldots < \|\rho_\lambda^{g_1}(\bm \theta)\| < \|\bm \theta\|=1 \}. \label{not:shrink}$$
$$\Exp_\lambda(g)=
\{ \bm \theta \in \Circ ~:~  \|\rho_\lambda^{g_n}(\bm \theta)\| > \|\rho_\lambda^{g_{n-1}}(\bm \theta)\| > \ldots > \|\rho_\lambda^{g_1}(\bm \theta)\| > \|\bm \theta\|=1 \}. \label{not:expand}$$
\end{definition}

The next proposition follows from the observation that these two definitions are related by switching the directions of the inequalities. We define the projection map $\pi_\Circ:\R^2 \smallsetminus \{\0\} \to \Circ \label{not:pi circ}$ to be the map
$\pi_\Circ(\v)=\frac{\v}{\|\v\|}$.

\begin{proposition}
\label{prop:shrink_exp_relation}
For all $g \in G$, $\Shrink_\lambda(g)=\pi_\Circ \circ \rhoa{g^{-1}} \big(\Exp_\lambda(g^{-1})\big)$.
\end{proposition}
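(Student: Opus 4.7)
The plan is to exploit the fact that in the tree-like Cayley graph of $G$, the geodesic from $e$ to $g^{-1}$ is obtained from the geodesic $\langle g_0 = e, g_1, \ldots, g_n = g\rangle$ by reversing and right-multiplying by $g^{-1}$. First I would check that $h_i := g_{n-i}\, g^{-1}$ defines a geodesic segment $\langle h_0 = e, h_1, \ldots, h_n = g^{-1}\rangle$: the consecutive product $h_{i+1} h_i^{-1} = g_{n-i-1} g_{n-i}^{-1}$ is a generator, and the absence of backtracking $h_{i+2}\neq h_i$ follows from the absence of backtracking in $\langle g_i \rangle$ via Proposition \ref{prop:ray}. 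By uniqueness of geodesics in the Cayley tree, this is the geodesic used in Definition \ref{def:shrink_exp} to characterize $\Exp_\lambda(g^{-1})$.

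The next step is the key computation. Given $\bm \theta \in \Circ$, set $\bm \phi := \rhoa{g}(\bm \theta)$. Since $\rho_\lambda$ is a homomorphism,
\[
\rhoa{h_i}(\bm \phi) \;=\; \rhoa{g_{n-i}\, g^{-1}} \circ \rhoa{g}(\bm \theta) \;=\; \rhoa{g_{n-i}}(\bm \theta).
\]
Thus the sequence $i \mapsto \|\rhoa{h_i}(\bm \phi)\|$ is exactly $i \mapsto \|\rhoa{g_{n-i}}(\bm \theta)\|$, i.e.\ the reverse of $i \mapsto \|\rhoa{g_i}(\bm \theta)\|$. Reading off the strict inequalities, the chain defining $\bm\theta\in\Shrink_\lambda(g)$ becomes, after the substitution $i \mapsto n-i$, precisely the chain defining $\bm\phi \in \Exp_\lambda(g^{-1})$.

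The only wrinkle is that $\bm \phi$ need not have unit length, so to land in $\Exp_\lambda(g^{-1})\subset\Circ$ I would normalize: $\bm \phi' := \pi_\Circ(\bm \phi)$. Since every norm $\|\rhoa{h_i}(\bm \phi)\|$ rescales by the common positive factor $\|\bm \phi\|^{-1}$, the monotone chain is preserved and $\bm \phi' \in \Exp_\lambda(g^{-1})$. A direct calculation then gives
\[
\pi_\Circ \circ \rhoa{g^{-1}}(\bm \phi') \;=\; \pi_\Circ\bigl(\rhoa{g^{-1}}(\bm \phi)/\|\bm \phi\|\bigr) \;=\; \pi_\Circ(\bm \theta / \|\bm \phi\|) \;=\; \bm \theta,
\]
using that $\rhoa{g^{-1}}(\bm \phi) = \bm \theta$ is already a unit vector. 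This proves $\Shrink_\lambda(g) \subseteq \pi_\Circ \circ \rhoa{g^{-1}}\bigl(\Exp_\lambda(g^{-1})\bigr)$.

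The reverse inclusion is obtained by running the same argument backwards: starting from $\bm \psi \in \Exp_\lambda(g^{-1})$, set $\bm \theta := \pi_\Circ \circ \rhoa{g^{-1}}(\bm \psi) \in \Circ$, and apply the identity $\rhoa{h_i}(\rhoa{g}(\bm \theta)) = \rhoa{g_{n-i}}(\bm \theta)$ together with the fact that $\pi_\Circ \circ \rhoa{g}(\bm \theta) = \bm \psi$ to conclude $\bm \theta \in \Shrink_\lambda(g)$. No step presents a substantive obstacle; the proposition is pure bookkeeping about how $\rho_\lambda$ intertwines the two reversed geodesics in the Cayley tree, and the content reduces to reindexing a strictly monotone sequence of norms.
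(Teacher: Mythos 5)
Your argument is correct and follows the same route as the paper: identify the geodesic from $e$ to $g^{-1}$ as the reversal $h_i = g_{n-i}g^{-1}$, observe that $\rhoa{h_i}\circ\rhoa{g} = \rhoa{g_{n-i}}$ so the chain of norms is reversed, and then normalize and conjugate to translate between the two sets. The paper compresses this into a single chain of set equalities ending with $\Exp_\lambda(g^{-1}) = \pi_\Circ\circ\rhoa{g}\big(\Shrink_\lambda(g)\big)$, whereas you verify both inclusions separately and handle the $\pi_\Circ$ normalization a bit more explicitly, but the underlying computation is identical.
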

\begin{proof}
The geodesic segment joining $e$ to $g^{-1}$ is the sequence $\langle g'_i=g_{n-i} g^{-1} \rangle$. Thus,
$$
\begin{array}{rcl}
\Exp_\lambda(g^{-1}) & = & \{ \bm \theta \in \Circ ~:~  \|\rho_\lambda^{g^{-1}}(\bm \theta)\| > \|\rho_\lambda^{g_{1} g^{-1}}(\bm \theta)\| > \ldots > \|\rho_\lambda^{g_{n-1} g^{-1}}(\bm \theta)\| > \|\bm \theta\| \} \\
& = & \pi_\Circ \circ \rhoa{g} \big(\{ \bm \theta' \in \Circ ~:~  \|\bm \theta'\| > \|\rho_\lambda^{g_{1}}(\bm \theta')\| > \ldots > \|\rho_\lambda^{g_{n-1}}(\bm \theta')\| > \|\rho_\lambda^{g_{n}}(\bm \theta')\| \}\big) \\
& = & \pi_\Circ \circ \rhoa{g} \big(\Shrink_\lambda(g)\big)
\end{array}$$
This is equivalent to the version stated in the proposition.
\end{proof}

\begin{proposition}
\label{prop:shrink_exp}
We have the following descriptions of the shrinking sets of generators.
$$
\begin{array}{lcl}
\Shrink_\lambda(h)=\{(x,y)\in \Circ~:~\frac{-2}{\lambda}<\frac{y}{x}<0\}.
& \quad &
\Shrink_\lambda(h^{-1})=\{(x,y)\in \Circ~:~0<\frac{y}{x}<\frac{2}{\lambda}\}. \\
\Shrink_\lambda(v)=\{(x,y)\in \Circ~:~-\infty<\frac{y}{x}<\frac{-\lambda}{2}\}.
& \quad &
\Shrink_\lambda(v^{-1})=\{(x,y)\in \Circ~:~\frac{\lambda}{2}<\frac{y}{x}<\infty\}.
\end{array}$$
When $g \in \{h,v,h^{-1}, v^{-1}\}$ we have $\Exp_\lambda(g)=\Circ \smallsetminus \cl\big(\Shrink_\lambda(g)\big)$, where $\cl$ denotes the closure.
\end{proposition}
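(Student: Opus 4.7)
The plan is a direct computation. The key observation is that each of the four generators $g \in \{h,v,h^{-1},v^{-1}\}$ has word-length one in $G$, so the unique geodesic segment from $e$ to $g$ in the Cayley graph of $G$ is $\langle e,g\rangle$. By Definition \ref{def:shrink_exp}, this reduces the description of $\Shrink_\lambda(g)$ and $\Exp_\lambda(g)$ to solving the single inequality $\|\rhoa{g}(\btheta)\| < 1$ (respectively $\|\rhoa{g}(\btheta)\| > 1$) as $\btheta=(x,y)$ ranges over $\Circ$.

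First I would carry out the algebraic computation for each generator using the matrices from Definition \ref{def:rho}. For instance, $\rhoa{h}(x,y)=(x+\lambda y,\, y)$ gives $\|\rhoa{h}(x,y)\|^2 - 1 = (x+\lambda y)^2 + y^2 - (x^2+y^2) = \lambda y(2x+\lambda y)$, and analogous expansions yield $\lambda y(\lambda y - 2x)$, $\lambda x(\lambda x + 2y)$, and $\lambda x(\lambda x - 2y)$ for $h^{-1}$, $v$, and $v^{-1}$, respectively. Next, for each generator I would solve $\|\rhoa{g}(x,y)\|^2<1$ on $\Circ$ by a short case split on the sign of the monomial factor. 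For $g=h$ the inequality $\lambda y(2x+\lambda y)<0$ (using $\lambda>0$) forces $y$ and $2x+\lambda y$ to have opposite signs; distinguishing $y>0$ from $y<0$ converts this to $-2/\lambda<y/x<0$, matching the stated description of $\Shrink_\lambda(h)$. The three remaining inequalities are handled by identical sign analyses and produce the other three slope intervals in the proposition, where the symbols $\pm\infty$ for $v^{\pm 1}$ encode the vertical direction $x=0$, which indeed lies in each of those arcs.

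Finally, for the last assertion I would note that the zero locus $\{\btheta \in \Circ : \|\rhoa{g}(\btheta)\|=1\}$ is, by the factored formulas above, exactly the pair of antipodal unit vectors at the two endpoints of the open arc $\Shrink_\lambda(g)$. Hence $\cl\big(\Shrink_\lambda(g)\big)$ adds in precisely these two points, and $\Circ\setminus\cl\big(\Shrink_\lambda(g)\big)$ is the complementary open arc on which $\|\rhoa{g}(\btheta)\|>1$, namely $\Exp_\lambda(g)$. There is no conceptual obstacle: the only thing to watch is the direction of an inequality when dividing by $x$ or $y$ of indeterminate sign (compare Remark \ref{rem:ineq}), and the parametrization of the vertical direction by ``slope $\pm\infty$'' for the generators $v^{\pm 1}$.
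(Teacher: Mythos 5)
Your proposal is correct and takes exactly the approach the paper has in mind: the paper declines to give a proof, remarking only that it is ``a simple computation,'' and your calculation is that computation written out. Since each generator has word length one, Definition \ref{def:shrink_exp} collapses to the single inequality $\|\rhoa{g}(\btheta)\|^2<1$, and your factored expressions $\lambda y(2x+\lambda y)$, $\lambda y(\lambda y - 2x)$, $\lambda x(\lambda x + 2y)$, $\lambda x(\lambda x - 2y)$ are correct; the sign analyses you describe then yield precisely the stated slope intervals, and the ``moreover'' clause follows because the zero locus of each quadratic form is exactly the boundary $\partial\Shrink_\lambda(g)$.

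One small imprecision in the last step: you say the zero locus is ``the pair of antipodal unit vectors at the two endpoints of the open arc $\Shrink_\lambda(g)$.'' Since the slope condition defining $\Shrink_\lambda(g)$ is invariant under $(x,y)\mapsto(-x,-y)$, the set $\Shrink_\lambda(g)$ consists of \emph{two} antipodal open arcs, and the zero locus of the factored form is \emph{two} antipodal pairs (four points total) — e.g.\ for $g=h$, the points $(\pm 1,0)$ together with $\pm\pi_\Circ(\lambda,-2)$. This does not affect the conclusion: the closure still adds exactly these boundary points, and the complement of the closure is the two antipodal open arcs where $\|\rhoa{g}(\btheta)\|^2>1$, which is $\Exp_\lambda(g)$.
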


We will not prove this proposition as it is a simple computation. Disjointness of these sets gives the following consequence:

\begin{corollary}[Unique shrinking generator]
\label{cor:shrink}
Given any $\lambda \geq 2$ and $\bm \theta \in \R^2 \smallsetminus \{{\mathbf 0}\}$, there is at most one element 
$g \in \{h,h^{-1},v,v^{-1}\}$ so that 
$\|\rho_\lambda^g(\bm \theta)\|<\|\bm \theta\|$. Moreover, if $g_1 \in \{h,h^{-1},v,v^{-1}\}$ satisfies
$|\rhoa{g_1}(\bm \theta)|<|\bm \theta|$ (resp. $|\rhoa{g_1}(\bm \theta)| \leq |\bm \theta|$), then every $g_2 \in \{h,h^{-1},v,v^{-1}\}$ with $g_1 \neq g_2$ satisfies
$|\rhoa{g_2}(\bm \theta)|>|\bm \theta|$ (resp. $|\rhoa{g_2}(\bm \theta)| \geq |\bm \theta|$).
\end{corollary}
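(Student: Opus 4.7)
My plan is to derive the corollary directly from Proposition~\ref{prop:shrink_exp} by inspection of slope intervals. Since the geodesic segment in $G$ from $e$ to any generator $g \in \{h,h^{-1},v,v^{-1}\}$ has length $1$, Definition~\ref{def:shrink_exp} collapses to $\Shrink_\lambda(g) = \{\bm\theta \in \Circ : \|\rhoa{g}(\bm\theta)\| < \|\bm\theta\|\}$, with the analogous collapse for $\Exp_\lambda(g)$. These conditions depend only on the projective direction $\pi_\Circ(\bm\theta) \in \Circ$, so I can work on the unit circle throughout.

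Proposition~\ref{prop:shrink_exp} describes the four open arcs $\Shrink_\lambda(h)$, $\Shrink_\lambda(h^{-1})$, $\Shrink_\lambda(v)$, $\Shrink_\lambda(v^{-1})$ as the loci where $y/x$ lies in $(-\tfrac{2}{\lambda},0)$, $(0,\tfrac{2}{\lambda})$, $(-\infty,-\tfrac{\lambda}{2})$, and $(\tfrac{\lambda}{2},+\infty)$, respectively. The hypothesis $\lambda \geq 2$ gives $\tfrac{2}{\lambda} \leq \tfrac{\lambda}{2}$, and hence these four open slope intervals are pairwise disjoint. This immediately proves the first sentence of the corollary: no two distinct generators can simultaneously strictly shrink $\bm\theta$.

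For the second sentence, the key step I plan to verify is the stronger pairwise disjointness that, for distinct $g_1, g_2$, the open arc $\Shrink_\lambda(g_1)$ is disjoint from the closed arc $\cl\big(\Shrink_\lambda(g_2)\big)$, and symmetrically. This follows by the same slope comparison: the open inequality defining $\Shrink_\lambda(g_1)$ is always incompatible with the weak inequality defining $\cl\big(\Shrink_\lambda(g_2)\big)$, even in the borderline case $\lambda = 2$ where $\tfrac{2}{\lambda} = \tfrac{\lambda}{2} = 1$, since the strictness of the open condition on one side prevents overlap with the weak closed condition on the other. Using the identity $\Exp_\lambda(g_2) = \Circ \smallsetminus \cl\big(\Shrink_\lambda(g_2)\big)$ from Proposition~\ref{prop:shrink_exp}, strict shrinking by $g_1$ then translates to strict expansion by every other $g_2$; the parenthetical non-strict version follows from the symmetric disjointness. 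There is really no serious obstacle: the corollary is a bookkeeping consequence of Proposition~\ref{prop:shrink_exp}, and the only moment requiring care is the boundary case $\lambda = 2$, which is handled uniformly by the open-versus-closed observation above.
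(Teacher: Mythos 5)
Your proposal is correct and takes essentially the same route as the paper: the paper's one-line proof observes $\Shrink_\lambda(g_1) \subset \Exp_\lambda(g_2)$ and $\cl\big(\Shrink_\lambda(g_1)\big) \subset \cl\big(\Exp_\lambda(g_2)\big)$ for distinct generators, which (via $\Exp_\lambda(g_2) = \Circ \smallsetminus \cl\big(\Shrink_\lambda(g_2)\big)$) is exactly the open-versus-closed arc disjointness you verify from the slope intervals of Proposition~\ref{prop:shrink_exp}. The only point worth stating a shade more explicitly is that $\cl\big(\Shrink_\lambda(g_1)\big)$ coincides with the set where $\|\rhoa{g_1}(\bm\theta)\| \leq \|\bm\theta\|$, which follows from the same slope formula and is already encoded in the identity $\Exp_\lambda(g)=\Circ \smallsetminus \cl\big(\Shrink_\lambda(g)\big)$ that you invoke.
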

\begin{proof}
The proof follows from the fact that if $g_1$ and $g_2$ are distinct elements of $\{h,h^{-1},v,v^{-1}\}$ then 
$\Shrink_\lambda(g_1) \subset \Exp_\lambda(g_2)$ (resp. $\cl\big(\Shrink_\lambda(g_1)\big) \subset \cl\big(\Exp_\lambda(g_2)\big)$). This can be derived directly from Proposition \ref{prop:shrink_exp}.
\end{proof}

\begin{proof}[Proof of Theorem \ref{thm:unique shrinking}]
First consider statement (1). Suppose $\bm \theta \in \Circ$ is $\lambda$-shrinkable, i.e., there is a geodesic ray
$\langle g_i \rangle$ with $g_0=e$ so that $\|\rho_\lambda^{g_i}(\btheta)\|$ decreases monotonically.
Suppose $\langle g_i' \rangle$ is another such geodesic ray. We claim they are equal. Otherwise,
there is a smallest $i$ so that $g_i \neq g_i'$. Since $g_0=g_0'=e$, we know $i \geq 1$ and $g_{i-1}=g_{i-1}'$. Thus,
$$\|\rhoa{g_{i} g_{i-1}^{-1}}\big(\rhoa{g_{i-1}}(\btheta)\big)\|<\|\rhoa{g_{i-1}}(\btheta)\| \quad \textrm{and} \quad \|\rhoa{g_{i}' g_{i-1}^{-1}}\big(\rhoa{g_{i-1}}(\btheta)\big)\|<\|\rhoa{g_{i-1}}(\btheta)\|.$$
But, $g_{i} g_{i-1}^{-1}, g_{i}' g_{i-1}^{-1} \in \{h,v,h^{-1},v^{-1}\}$ are distinct,
which contradicts Corollary \ref{cor:shrink}.

Now consider statement (2). Let $\langle g_i \rangle$ be a geodesic ray, and suppose that it is the $\lambda$-shrinking sequence for two non-parallel directions $\btheta$ and $\btheta'$. Observe that by definition we have 
$$\|\rhoa{g_i}(\btheta)\| \leq 1 \quad \text{and} \quad \|\rhoa{g_i}(\btheta')\| \leq 1.$$
But the set of all $M \in \SL(2, \R)$ for which $\|M \btheta\| \leq 1$ and
$\|M \btheta'\| \leq 1$ is compact. So by discreteness of $\rhoa{G}$, the sequence $\langle \rhoa{g_i} \rangle$ can only take finitely may values. But this contradicts the definition of a $\lambda$-shrinking sequence. \end{proof}

\begin{proposition}
\label{prop:increasing}
Let $\bm \theta \in \Circ$.
Let $\langle g_i \rangle$ be a geodesic segment or ray in $G$ for which $\|\rhoa{g_{n+1}}(\bm \theta)\| > \|\rhoa{g_{n}}(\bm \theta)\|$
(resp. $\|\rhoa{g_{n+1}}(\bm \theta)\| \geq \|\rhoa{g_{n}}(\bm \theta)\|$)
for some $n \geq 0$. Then, the subsequence $\|\rhoa{g_{n+j}}(\bm \theta)\|$ for $j \geq 0$
is a strictly increasing (resp. non-strictly increasing) sequence.
\end{proposition}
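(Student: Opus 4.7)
The plan is a straightforward induction on $j \geq 0$, combining the no-backtracking condition of geodesics (Proposition \ref{prop:ray}) with the dichotomy provided by Corollary \ref{cor:shrink}. I would first prove the strict case; the non-strict case is obtained verbatim by replacing every strict inequality by its weak counterpart and appealing to the parenthetical part of Corollary \ref{cor:shrink}.

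The base case $j=0$ is the hypothesis. For the inductive step, assume $\|\rhoa{g_{n+j+1}}(\btheta)\| > \|\rhoa{g_{n+j}}(\btheta)\|$ and that $g_{n+j+2}$ exists (otherwise there is nothing further to prove). Write $s = g_{n+j+1} g_{n+j}^{-1}$ and $t = g_{n+j+2} g_{n+j+1}^{-1}$, both elements of $\{h, v, h^{-1}, v^{-1}\}$, and set $\bm \phi = \rhoa{g_{n+j+1}}(\btheta)$. Since $\rhoa{s^{-1}}(\bm \phi) = \rhoa{g_{n+j}}(\btheta)$, the generator $s^{-1}$ strictly shrinks $\bm \phi$. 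By Corollary \ref{cor:shrink}, $s^{-1}$ is then the \emph{unique} generator in $\{h, v, h^{-1}, v^{-1}\}$ that weakly shrinks $\bm \phi$, so every other generator strictly expands it. The no-backtracking condition from Proposition \ref{prop:ray} gives $g_{n+j+2} \neq g_{n+j}$, which is exactly $t \neq s^{-1}$. Hence
\[
\|\rhoa{g_{n+j+2}}(\btheta)\| = \|\rhoa{t}(\bm \phi)\| > \|\bm \phi\| = \|\rhoa{g_{n+j+1}}(\btheta)\|,
\]
closing the induction. If $\langle g_i \rangle$ is a finite segment rather than a ray, the induction simply terminates at its last element.

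I do not anticipate any genuine obstacle. The conceptual content of the proof is the observation that, once a geodesic step moves away from the unique shrinking generator at a given vector, the next step on the geodesic is forced by the no-backtracking condition to avoid the new shrinking generator as well, since that new shrinking generator is precisely the inverse of the step just taken.
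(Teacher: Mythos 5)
Your proof is correct and follows exactly the same route as the paper's: induct on $j$, note that the step from $g_{n+j+1}$ back to $g_{n+j}$ is the unique weakly-shrinking generator at $\rhoa{g_{n+j+1}}(\btheta)$ by Corollary~\ref{cor:shrink}, and use the no-backtracking property of geodesics to conclude that the step forward to $g_{n+j+2}$ must expand. The paper phrases the uniqueness in terms of the sets $\Shrink_\lambda$ and $\Exp_\lambda$ rather than your ``every other generator strictly expands,'' but the content is identical.
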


\begin{proof}
We will prove the strictly increasing case. The non-strict case follows similarly.
The proof is by induction. Suppose $\|\rhoa{g_{i+1}}(\bm \theta)\| > \|\rhoa{g_{i}}(\bm \theta)\|$. Then
$\pi_\Circ\big(\rhoa{g_{i+1}}(\bm \theta)\big) \in \Shrink_\lambda(g_{i}g_{i+1}^{-1})$. 
Since $\langle g_i \rangle$ is a geodesic,
 $g_{i+2}g_{i+1}^{-1} \neq g_{i}g_{i+1}^{-1}$. So by Corollary \ref{cor:shrink},  
$$\pi_\Circ\big(\rhoa{g_{i+1}}(\bm \theta)\big) \in \Exp_\lambda(g_{i+2}g_{i+1}^{-1})\big).$$ 
In other words, $\|\rhoa{g_{i+2}}(\bm \theta)\| > \|\rhoa{g_{i+1}}(\bm \theta)\|$.
\end{proof}

We also have the following consequence:

\begin{corollary}
\label{cor:length_minimizer}
If $\bm \theta \in \Circ \smallsetminus \big(\Shrink_\lambda(h) \cup \Shrink_\lambda(h^{-1}) \cup \Shrink_\lambda(v) \cup \Shrink_\lambda(v^{-1}) \big)$, 
then for all $g \in G$ we have $\| \bm \theta \| \leq \|\rhoa{g}(\bm \theta)\|$.
\end{corollary}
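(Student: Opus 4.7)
The plan is to reduce the statement to a single application of Proposition \ref{prop:increasing}. The hypothesis on $\bm \theta$ says precisely that $\bm \theta$ avoids the (open) shrinking set of every generator, so for each $s \in \{h,v,h^{-1},v^{-1}\}$ we have $\|\rhoa{s}(\bm \theta)\| \geq \|\bm \theta\|$. This is the non-strict analogue of the hypothesis in Proposition \ref{prop:increasing}, which is exactly what we need to propagate the inequality along an entire geodesic.

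Concretely, given an arbitrary $g \in G$, first I would handle the trivial case $g=e$ and then let $\langle g_0=e, g_1, \ldots, g_n=g \rangle$ be the unique geodesic segment in the Cayley graph of $G$ from $e$ to $g$. Since $g_1 \in \{h,v,h^{-1},v^{-1}\}$ and $\bm \theta \notin \Shrink_\lambda(g_1)$, the description of $\Shrink_\lambda(g_1)$ in Proposition \ref{prop:shrink_exp} as an open set gives
\[
\|\rhoa{g_1}(\bm \theta)\| \;\geq\; \|\rhoa{g_0}(\bm \theta)\| \;=\; \|\bm \theta\|.
\]
Then I would invoke the non-strict form of Proposition \ref{prop:increasing} with $n=0$ to conclude that $\langle \|\rhoa{g_i}(\bm \theta)\| \rangle_{i \geq 0}$ is non-strictly increasing. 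In particular, $\|\rhoa{g}(\bm \theta)\|=\|\rhoa{g_n}(\bm \theta)\| \geq \|\bm \theta\|$, as desired.

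There is no real obstacle here: all the technical work sits inside Propositions \ref{prop:shrink_exp} and \ref{prop:increasing}. The only thing to be mildly careful about is to use the \emph{non-strict} version of Proposition \ref{prop:increasing}, since the hypothesis only excludes the interiors $\Shrink_\lambda(s)$ and thus allows equality $\|\rhoa{s}(\bm \theta)\| = \|\bm \theta\|$ on the boundary (which occurs precisely when $\bm \theta$ is an eigendirection of $\rhoa{s}$ with eigenvalue of modulus one, i.e.\ a fixed direction). This causes no trouble because the non-strict version still yields $\|\rhoa{g}(\bm \theta)\| \geq \|\bm \theta\|$, which is exactly what the corollary asserts.
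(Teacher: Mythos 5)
Your proof is correct and follows the same route as the paper, which simply says ``Apply Proposition \ref{prop:increasing} to the geodesic segment joining $e$ to $g$.'' You have spelled out the implicit first step (the hypothesis on $\bm \theta$ forces $\|\rhoa{g_1}(\bm \theta)\| \geq \|\bm \theta\|$ for the first generator $g_1$ of the geodesic) and correctly identified that the non-strict version of the proposition is the one needed.
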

\begin{proof}
Apply Proposition \ref{prop:increasing} to the geodesic segment joining $e$ to $g$.
\end{proof}

Using Proposition \ref{prop:shrink_exp_relation}, we can obtain a concrete description of the shrinking and expanding sets. (To make it extremely concrete, you can combine it with Proposition \ref{prop:shrink_exp}.)

\begin{corollary}
\label{cor:shrunk}
Let $g \in G$ and let $\langle g_0=e, \ldots, g_n=g\rangle$ be the corresponding geodesic segment.
For $\lambda \geq 2$, we have $\Exp_\lambda(g)= \Exp_\lambda(g_1)$ and
$$\Shrink_\lambda(g)= \pi_\Circ \circ \rhoa{g^{-1}} \big(\Exp_\lambda(g_{n-1} g^{-1})\big)=\pi_\Circ \circ \rhoa{g_{n-1}^{-1}}\big(\Shrink_\lambda(g g_{n-1}^{-1})\big).$$
\end{corollary}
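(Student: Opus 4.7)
The plan is to derive the three equalities by repeatedly combining Proposition~\ref{prop:shrink_exp_relation} with a ``bootstrapping'' consequence of Proposition~\ref{prop:increasing}. The main observation is that along the geodesic $\langle g_0,\ldots,g_n\rangle$ only the first step actually matters for membership in $\Exp_\lambda(g)$: once a direction expands on the first step, Proposition~\ref{prop:increasing} forces it to expand on every subsequent step. I expect no serious obstacle here; the work is really just a careful tracking of inverses and the normalization map~$\pi_\Circ$.

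First I would establish the equality $\Exp_\lambda(g)=\Exp_\lambda(g_1)$. The inclusion $\Exp_\lambda(g)\subseteq \Exp_\lambda(g_1)$ is immediate from the definitions, because any $\btheta\in\Exp_\lambda(g)$ satisfies $\|\rhoa{g_1}(\btheta)\|>1$. Conversely, if $\btheta\in\Exp_\lambda(g_1)$, then $\|\rhoa{g_1}(\btheta)\|>\|\btheta\|$, and Proposition~\ref{prop:increasing} applied to the geodesic $\langle g_0,g_1,\ldots,g_n\rangle$ guarantees that $\|\rhoa{g_i}(\btheta)\|$ is strictly increasing in $i$, so $\btheta\in\Exp_\lambda(g)$.

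Second, I would apply Proposition~\ref{prop:shrink_exp_relation} directly to $g$ to rewrite
\[
\Shrink_\lambda(g)=\pi_\Circ\circ\rhoa{g^{-1}}\bigl(\Exp_\lambda(g^{-1})\bigr).
\]
The geodesic from $e$ to $g^{-1}$ is $\langle g_{n-i}g^{-1}\rangle_{i=0}^{n}$, whose first step is $g_{n-1}g^{-1}$. Applying the first paragraph's result with $g$ replaced by $g^{-1}$ then gives $\Exp_\lambda(g^{-1})=\Exp_\lambda(g_{n-1}g^{-1})$, producing the middle expression of the corollary.

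Finally, I would obtain the third expression by applying Proposition~\ref{prop:shrink_exp_relation} to the element $gg_{n-1}^{-1}\in\{h,v,h^{-1},v^{-1}\}$, whose inverse is $g_{n-1}g^{-1}$:
\[
\Shrink_\lambda(gg_{n-1}^{-1})=\pi_\Circ\circ\rhoa{g_{n-1}g^{-1}}\bigl(\Exp_\lambda(g_{n-1}g^{-1})\bigr).
\]
Then I apply $\pi_\Circ\circ\rhoa{g_{n-1}^{-1}}$ to both sides. Since $\pi_\Circ(M\v)$ depends only on the ray through $\v$, the composition $\pi_\Circ\circ\rhoa{g_{n-1}^{-1}}\circ\pi_\Circ\circ\rhoa{g_{n-1}g^{-1}}$ agrees on any subset of $\Circ$ with $\pi_\Circ\circ\rhoa{g_{n-1}^{-1}\cdot g_{n-1}g^{-1}}=\pi_\Circ\circ\rhoa{g^{-1}}$, which matches the middle expression already obtained. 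The only point to watch is that $\pi_\Circ$ does not compose as linear maps do, but it commutes past positive scalars, and this is exactly what makes the cancellation $g_{n-1}^{-1}\cdot g_{n-1}g^{-1}=g^{-1}$ go through. This completes the three-way equality.
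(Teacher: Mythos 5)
Your proposal is correct and follows essentially the same route as the paper: first use Proposition~\ref{prop:increasing} to collapse $\Exp_\lambda(g)$ to $\Exp_\lambda(g_1)$, then apply Proposition~\ref{prop:shrink_exp_relation} to $g$ (together with the $\Exp$ collapse for $g^{-1}$) to get the middle expression, and finally apply Proposition~\ref{prop:shrink_exp_relation} once more to the generator $g g_{n-1}^{-1}$ to pass between the second and third expressions. The only cosmetic difference is that the paper writes the last application in the form $\Exp_\lambda(g_{n-1}g^{-1}) = \pi_\Circ\circ\rhoa{g g_{n-1}^{-1}}\bigl(\Shrink_\lambda(g g_{n-1}^{-1})\bigr)$ and substitutes, whereas you write the inverse relation and cancel inside $\pi_\Circ$; these are the same computation.
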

\begin{proof}
The statement $\Exp_\lambda(g_1)=\Exp_\lambda(g)$ follows directly from Proposition \ref{prop:increasing}. To prove the identity for
$\Shrink_\lambda(g)$, we apply the equation for $\Exp_\lambda(g)$ and Proposition \ref{prop:shrink_exp_relation}:
$$\Shrink_\lambda(g)=\pi_\Circ \circ \rhoa{g^{-1}} \big(\Exp_\lambda(g^{-1})\big)=\pi_\Circ \circ \rhoa{g^{-1}} \big(\Exp_\lambda(g_{n-1} g^{-1})\big).
$$
Finally by Proposition \ref{prop:shrink_exp_relation}, we see
$\Exp_\lambda(g_{n-1} g^{-1})=\pi_\Circ \circ \rhoa{g g_{n-1}^{-1}}\big(\Shrink_\lambda(g g_{n-1}^{-1})\big)$. 
\end{proof}

We can now prove our main result for this subsection.

\subsection{The limit set}
\label{sect:the limit set}
Our original definition of $\lambda$-renormalizable directions involved the limit set. See \S \ref{ss:renormalizable directions}. Here, we will introduce 
 hyperbolic geometry and the limit set. For the necessary background, we refer the reader to the book of Matsuzaki and Taniguchi \cite{MT98}.

The {\em hyperbolic plane} is $\H^2=\SO(2) \setminus \SL(2, \R) \label{not:hyperolic plane}$.
The boundary of the hyperbolic plane is naturally defined to be $\partial \H^2=\R\P^1=(\R^2 \smallsetminus\{0\})/\R$. For $M \in \SL(2, \R)$ and
$\v \in \R^2\smallsetminus\{0\}$, we will use $[M] \in \H^2$ and $[\v] \in \R\P^1$ to denote the corresponding equivalence classes.
To make  $\R\P^1$ the boundary of the hyperbolic plane, we say a sequence $\langle [M_n] \in \H^2\rangle$ converges to $[\v] \in \R\P^1$
if for any $[\w] \neq [\v] \in \R\P^1$, 
\begin{equation}
\label{eq:limit set def}
\frac{\|M_n \v\|}{\|M_n \w\|} \to 0 \textrm{ as } n \to \infty.
\end{equation}

The {\em limit set}, $\Lambda(\Gamma) \subset \R\P^1$ of a discrete group $\Gamma \subset \SL(2, \R)$ is the set of all limit points of sequences 
in $\SO(2) \setminus \Gamma \subset \H^2$. Equivalently, the limit set $\Lambda(\Gamma) \subset \R\P^1$ is the smallest non-empty closed $\Gamma$-invariant subset of 
$\R\P^1$.
An {\em open horodisk} in $\H^2$ at $[\v ] \in \R\P^1$ is a set of the form 
$$\{[M] \in \H^2 ~:~ \|M \v\| < \epsilon \}$$
for some $\epsilon>0$. The only accumulation point in $\R \P^1$ of the horodisk defined above is $[\v]$. 
The {\em horospherical limit set} $\Lambda_h(\Gamma) \subset \R\P^1$ is the set of all $[\v] \in \Lambda(\Gamma)$
for which any horodisk at $[\v]$ contains points in the orbit $[\Gamma]=\{[M]~:~M \in \Gamma\}$. 

From work in the last section, we can conclude the following.

\begin{lemma}
\label{lem:horocyclic implies shrinkable}
Suppose $[\btheta] \in \Lambda_h(\Gamma)$. Then, $\btheta$ is $\lambda$-shrinkable. Moreover, if $\langle g_i \rangle$ is the $\lambda$-shrinking sequence of $\btheta$, then $\lim_{i \to \infty} \|\rhoa{g_i}(\btheta)\|=0$.
\end{lemma}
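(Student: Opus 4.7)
The plan is to construct the shrinking ray inductively, and then show that its norms tend to zero; both steps rest on the strict-versus-non-strict dichotomy provided by Corollary \ref{cor:shrink} together with the monotonicity of Proposition \ref{prop:increasing}.

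First I would build the $\lambda$-shrinking ray one generator at a time, starting from $g_0 = e$. Suppose a strictly decreasing geodesic segment $\langle g_0, \ldots, g_k \rangle$ has already been constructed and set $\btheta_k = \rhoa{g_k}(\btheta)$. Either some generator $s \in \{h, h^{-1}, v, v^{-1}\}$ satisfies $\|\rhoa{s}(\btheta_k)\| < \|\btheta_k\|$, or none does. In the former case, Corollary \ref{cor:shrink} forces such an $s$ to be unique and different from $(g_k g_{k-1}^{-1})^{-1}$ (which takes $\btheta_k$ back to the strictly larger $\rhoa{g_{k-1}}(\btheta)$), so $g_{k+1} := s g_k$ is a legal geodesic extension. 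The latter case cannot occur: since $\btheta_k/\|\btheta_k\|$ then lies in $\Circ \smallsetminus \bigcup_s \Shrink_\lambda(s)$, Corollary \ref{cor:length_minimizer} yields $\|\rhoa{g'}(\btheta)\| \geq \|\rhoa{g_k}(\btheta)\| > 0$ for every $g' \in G$, which is incompatible with the existence of horodisks at $[\btheta]$ of arbitrarily small radius containing $[\Gamma]$-orbit points. Induction therefore produces an infinite $\lambda$-shrinking sequence for $\btheta$, and Theorem \ref{thm:unique shrinking}(1) identifies it as the unique such ray.

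Second, I would argue $\|\rhoa{g_i}(\btheta)\| \to 0$ by contradiction. Let $\delta = \inf_i \|\rhoa{g_i}(\btheta)\|$ and suppose $\delta > 0$. By hypothesis there is $g \in G$ with $\|\rhoa{g}(\btheta)\| < \delta$, and I would examine the geodesic segment $\langle h_0 = e, \ldots, h_m = g \rangle$. The key subclaim is that $\|\rhoa{h_j}(\btheta)\|$ is strictly decreasing in $j$. If $i_0$ were the smallest index with $\|\rhoa{h_{i_0+1}}(\btheta)\| \geq \|\rhoa{h_{i_0}}(\btheta)\|$, then Proposition \ref{prop:increasing} would force $\|\rhoa{h_j}(\btheta)\|$ to be non-decreasing for $j \geq i_0$; the case $i_0 = 0$ produces $\|\rhoa{g}(\btheta)\| \geq 1 > \delta$, while the case $i_0 > 0$ combines the minimality of $i_0$ with Corollary \ref{cor:shrink} (applied successively to match unique strict shrinkers) to identify $h_j = g_j$ for $j \leq i_0$, giving $\|\rhoa{g}(\btheta)\| \geq \|\rhoa{g_{i_0}}(\btheta)\| \geq \delta$. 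Either way we contradict the choice of $g$. With strict decrease established, the same uniqueness of shrinking initial segments identifies $h_j = g_j$ for all $j \leq m$, so $\|\rhoa{g_m}(\btheta)\| < \delta$, contradicting $\|\rhoa{g_i}(\btheta)\| \geq \delta$.

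The main obstacle is the second step: a priori a geodesic to a small element could detour through elements of larger norm before becoming small. The crucial input ruling this out is Proposition \ref{prop:increasing}, which prohibits a geodesic norm sequence from ever decreasing after it has even weakly increased. Once that monotonicity is in place, the remaining bookkeeping reduces to the uniqueness of strictly shrinking initial segments, which is an immediate consequence of Corollary \ref{cor:shrink}.
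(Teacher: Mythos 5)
Your proposal is correct and follows essentially the same route as the paper: build the ray inductively using Corollary~\ref{cor:shrink} for uniqueness and Corollary~\ref{cor:length_minimizer} for existence, then show $\|\rhoa{g_i}(\btheta)\| \to 0$ by comparing a geodesic to a small element against the shrinking ray via Corollary~\ref{cor:shrink} and Proposition~\ref{prop:increasing}. The only difference is cosmetic: you organize the second step around the first index where the comparison geodesic's norms fail to strictly decrease, whereas the paper locates the first index where it deviates from $\langle g_i\rangle$; these are the same index by the uniqueness of the shrinking generator, and both lead to the same contradiction.
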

\begin{proof}
Suppose that $[\bm \theta] \in \Lambda_h(\rhoa{G})$. From the definitions above, we know that
for every $\epsilon>0$, there is a $g \in G$ so that $\|\rhoa{g}(\btheta)\|<\epsilon$. Now we attempt to build a $\lambda$-shrinking sequence for $\btheta$. We define the geodesic ray $\langle g_i \rangle$ inductively so that 
$g_0=e$ and $g_{i} g_{i-1}^{-1} \in \{h,v,h^{-1},v^{-1}\}$ is the unique choice for which 
$$\|\rhoa{g_{i} g_{i-1}^{-1}} \big( \rhoa{g_{i-1}}(\btheta)\big)\| < \|\rhoa{g_{i-1}}(\btheta)\|.$$
Here, uniqueness is provided by Corollary \ref{cor:shrink}. Existence of this choice is a consequence
of Corollary \ref{cor:length_minimizer}: if there is no generator which shrinks $\rhoa{g_{i-1}}(\btheta)$, then
$$\|\rhoa{g_{i-1}}(\btheta)\|= \min_{g \in G} \|\rhoa{g}(\btheta)\|.$$
But, this contradicts the hypothesis that $[\bm \theta] \in \Lambda_h(\rhoa{G})$.

It remains to show that
$\lim_{i \to \infty} \|\rhoa{g_i}(\btheta)\|=0$. Suppose not, then there is an $\epsilon>0$ so that
$\|\rhoa{g_i}(\btheta)\|> \epsilon$ for all $i \in \N$. Since this is true for $i=0$, we know $\epsilon<1$.
On the other hand, since $[\bm \theta] \in \Lambda_h(\rhoa{G})$,
we know that there is a $\gamma \in G$ so that $\|\rhoa{\gamma}(\btheta) \|< \epsilon$.
Since $\epsilon<1$, we know $\gamma \neq e$. Consider the geodesic segment
$\langle e=\gamma_0, \gamma_1, \ldots, \gamma_n=\gamma \rangle$. By hypothesis, we know the segment
can not coincide with the initial segment of the ray $\langle g_i \rangle$. Thus, there is a minimal $i>0$ so that
$\gamma_i \neq g_i$. Since $\langle g_i \rangle$ is a shrinking sequence, we know
$\|\rhoa{g_i}(\btheta)\|<\|\rhoa{g_{i-1}}(\btheta)\|$. Then by uniqueness of the shrinking generator (i.e., Corollary \ref{cor:shrink} applied to
$\rhoa{g_{i-1}}(\btheta)$), we know that $\|\rhoa{\gamma_i}(\btheta)\| > \|\rhoa{g_{i-1}}(\btheta)\|$. 
But then Proposition \ref{prop:increasing} implies that 
$$\|\rhoa{\gamma_n}(\btheta)\| > \|\rhoa{\gamma_{n-1}}(\btheta)\| > \ldots >\|\rhoa{\gamma_i}(\btheta)\| > \|\rhoa{g_{i-1}}(\btheta)\|.$$
Since $\|\rhoa{\gamma_n}(\btheta)\|< \epsilon$, we conclude that $\|\rhoa{g_{i-1}}(\btheta)\|<\epsilon$,
which is a contradiction.
\end{proof}

We now recall a theorem of Beardon and Maskit \cite[Theorem 2]{BM74}. If $\Gamma \subset \SL(2, \R)$ is geometrically finite, then 
$\Lambda(\Gamma) \smallsetminus \Lambda_h(\Gamma)$ is the collection of fixed points of parabolics. Because $\rhoa{G}$ is geometrically finite, we have:

\begin{corollary}
\label{cor:dichotomy}
If $[\bm \theta] \in \Lambda(\rhoa{G})$ then either $\bm \theta$ is fixed by a parabolic in $\rhoa{G}$ or $[\bm \theta] \in \Lambda_h(\rhoa{G})$.
\end{corollary}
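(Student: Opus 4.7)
The plan is simply to invoke the Beardon--Maskit theorem just cited, after checking that its hypothesis holds. First I would verify that $\rhoa{G}$ is geometrically finite. Since $\lambda \geq 2$, Definition \ref{def:rho} gives that $\rhoa{G}$ is discrete and faithful, so it is a free, non-elementary Fuchsian group generated by the two parabolic elements $\rhoa{h}$ and $\rhoa{v}$. Any finitely generated Fuchsian group of this form is geometrically finite; concretely, $\H^2/\rhoa{G}$ has finite topological type (a thrice-punctured sphere when $\lambda = 2$ and a surface with two cusps and a funnel when $\lambda > 2$).

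Once geometric finiteness is in hand, the corollary is immediate: Beardon--Maskit \cite[Theorem 2]{BM74} yields
\[
\Lambda(\rhoa{G}) \smallsetminus \Lambda_h(\rhoa{G}) = \{[\v] \in \R\P^1 : \text{$\v$ is fixed by some parabolic element of $\rhoa{G}$}\}.
\]
So if $[\bm \theta] \in \Lambda(\rhoa{G})$, then either $[\bm \theta] \in \Lambda_h(\rhoa{G})$ or $\bm \theta$ is fixed by a parabolic in $\rhoa{G}$, which are exactly the two alternatives in the statement.

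There is essentially no obstacle; all the work is in the cited theorem, and my only contribution is to record that the hypothesis applies. If one wanted to avoid quoting Beardon--Maskit, a self-contained argument in the spirit of Lemma \ref{lem:horocyclic implies shrinkable} would proceed as follows: given $[\btheta] \in \Lambda(\rhoa{G}) \smallsetminus \Lambda_h(\rhoa{G})$, pick the supremum $\epsilon_0 > 0$ of those $\epsilon$ for which the open horodisk $\{[M] \in \H^2 : \|M \btheta\| < \epsilon\}$ is disjoint from the orbit $[\rhoa{G}]$; then the stabilizer of the closed horoball at $[\btheta]$ would be infinite cyclic generated by a parabolic fixing $[\btheta]$. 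This is less efficient than invoking the cited theorem, so I would stick with the direct citation.
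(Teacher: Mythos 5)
Your proof is correct and takes exactly the same route as the paper: cite Beardon--Maskit \cite[Theorem 2]{BM74} and observe that $\rhoa{G}$ is geometrically finite (as a finitely generated Fuchsian group). The paper states the corollary with precisely this justification and no further detail, so your verification of geometric finiteness is, if anything, a touch more explicit than the original.
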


Recall that the collection $\Rn_\lambda \subset \Circ$ of all $\lambda$-renormalizable directions was defined in \S \ref{ss:renormalizable directions} 
to be the limit set with orbits of some eigenvectors removed. In particular,
because we removed the conjugacy classes of the parabolics, $\Rn_\lambda \subset \Lambda_h(\rhoa{G})$. As a consequence, we can apply Lemma \ref{lem:horocyclic implies shrinkable} to obtain:

\begin{theorem}[Shrinking renormalizable directions]
\label{thm:shrinking}
If $\btheta \in \Rn_\lambda$, then it is $\lambda$-shrinkable. Furthermore,
its shrinking sequence $\langle g_i \rangle$ satisfies $\lim_{i \to \infty} \|\rho_\lambda^{g_i}(\btheta)\|=0$.
\end{theorem}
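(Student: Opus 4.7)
The plan is to chain together the dichotomy already stated in Corollary~\ref{cor:dichotomy} with the horospherical-shrinking result Lemma~\ref{lem:horocyclic implies shrinkable}. Concretely, I would show that the definition of renormalizable direction forces $[\btheta] \in \Lambda_h(\rhoa{G})$, and then Lemma~\ref{lem:horocyclic implies shrinkable} supplies everything the theorem claims (existence of a $\lambda$-shrinking sequence and the decay $\|\rhoa{g_i}(\btheta)\| \to 0$).

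First I would unpack the hypothesis: by condition~(1) in the definition of $\Rn_\lambda$ we have $[\btheta]\in\Lambda(\rhoa{G})$. Because $\rhoa{G}$ is geometrically finite (it is a non-elementary, discrete, free Fuchsian group generated by two parabolic generators $\rhoa{h}$ and $\rhoa{v}$), Corollary~\ref{cor:dichotomy} applies and splits into two cases: either $[\btheta] \in \Lambda_h(\rhoa{G})$, or $\btheta$ is fixed by some parabolic element of $\rhoa{G}$. The entire content of the argument is ruling out the second case using condition~(2) of renormalizability.

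To rule out the parabolic case, I would identify the parabolic conjugacy classes of $\rhoa{G}$. When $\lambda>2$, $\rhoa{G}$ is a free Fuchsian group on the two parabolic generators $\rhoa{h},\rhoa{v}$, and a standard fact (every parabolic in a free Fuchsian group with parabolic free generators is conjugate to a nontrivial power of one of the generators) says every parabolic in $\rhoa{G}$ is conjugate in $\rhoa{G}$ to $\rhoa{h^k}$ or $\rhoa{v^k}$ for some nonzero $k\in\Z$. When $\lambda=2$, $\rhoa{G}=\Gamma(2)$ has quotient a thrice-punctured sphere; a short computation (tracking fixed points on $\partial\H^2$) shows the three cusp classes are represented by $\rhoa{h}$ (fixing $\infty$), $\rhoa{v}$ (fixing $0$), and $\rhoa{v^{-1}h}$ (fixing $-1$), so every parabolic is conjugate in $\rhoa{G}$ to a nontrivial power of one of $\rhoa{h}$, $\rhoa{v}$, $\rhoa{v^{-1}h}$. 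Since $\rhoa{}$ is faithful (because $\lambda\ge 2$), conjugacy in $\rhoa{G}$ lifts to conjugacy in $G$: any parabolic fixing $\btheta$ has the form $\rhoa{kg_0^m k^{-1}}$ with $g_0\in\{h,v,v^{-1}h\}$, $k\in G$, $m\neq 0$. The eigendirections of $\rhoa{kg_0^m k^{-1}}=\rhoa{kg_0 k^{-1}}^m$ coincide with the eigendirections of $\rhoa{kg_0 k^{-1}}$ (nonzero powers have the same fixed points in $\R\P^1$). Hence $\btheta$ is an eigendirection of $\rhoa{g'}$ for $g'=kg_0k^{-1}$ conjugate in $G$ to an element of $\{h,v,v^{-1}h\}$, contradicting condition~(2) in the definition of $\Rn_\lambda$.

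The parabolic case being excluded, we are in the horospherical case $[\btheta]\in\Lambda_h(\rhoa{G})$, and Lemma~\ref{lem:horocyclic implies shrinkable} finishes the proof. The only nontrivial step is the Fuchsian bookkeeping that classifies parabolic conjugacy classes; everything else is invocation of results already established in the excerpt.
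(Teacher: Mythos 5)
Your argument is correct and follows the paper's approach exactly: the Beardon--Maskit dichotomy of Corollary~\ref{cor:dichotomy} reduces the claim to excluding parabolic fixed points, condition~(2) of $\lambda$-renormalizability does this exclusion, and Lemma~\ref{lem:horocyclic implies shrinkable} supplies both conclusions. The paper is terser --- after Corollary~\ref{cor:dichotomy} it simply asserts $\Rn_\lambda \subset \Lambda_h(\rhoa{G})$ because ``we removed the conjugacy classes of the parabolics'' --- but your Fuchsian bookkeeping (classifying the parabolic conjugacy classes in the two cases $\lambda>2$ and $\lambda=2$, and observing that nonzero powers share eigendirections with their primitive) is precisely what justifies that one-line assertion.
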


We will now use the limit set to improve Proposition \ref{prop:shrink_exp},
which described the vectors shrunk by the generators of $G$. 
In this paper, we only care about the behavior of $\lambda$-renormalizable directions, which we now understand to lie in $\Lambda_h(\rhoa{G})$. But,
note that we also removed the eigendirections of conjugates of $\rho_\lambda^{vh^{-1}}$ from the limit set to obtain $\Rn_\lambda$. This viewpoint gives the following:

\begin{proposition}
\label{prop:limit_set2}
For all $\lambda \geq 2$, we have the following statements.
$$\Shrink_\lambda({h^{-1}}) \cap \Rn_\lambda \subset \big\{\bm \theta=(x,y) \in \Circ~:~0 < \frac{y}{x} < \frac{\lambda-\sqrt{\lambda^2-4}}{2}\big\}.$$
$$\Shrink_\lambda({v^{-1}}) \cap \Rn_\lambda \subset \big\{\bm \theta=(x,y) \in \Circ~:~ \frac{\lambda+\sqrt{\lambda^2-4}}{2} < \frac{y}{x} < \infty \big\}.$$
$$\Shrink_\lambda({h}) \cap \Rn_\lambda \subset \big\{\bm \theta=(x,y) \in \Circ~:~\frac{-\lambda+\sqrt{\lambda^2-4}}{2} < \frac{y}{x} < 0 \big\}.$$
$$\Shrink_\lambda({v}) \cap \Rn_\lambda \subset \big\{\bm \theta=(x,y) \in \Circ~:~ -\infty < \frac{y}{x} < \frac{-\lambda-\sqrt{\lambda^2-4}}{2} \big\}.$$
\end{proposition}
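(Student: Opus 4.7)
The plan is to prove the first inclusion; the remaining three then follow by applying the three non-trivial elements of the order-8 dihedral symmetry of Remark \ref{rem:dihedral}. The case $\lambda=2$ will be vacuous since $\frac{\lambda-\sqrt{\lambda^2-4}}{2}=1=\frac{2}{\lambda}$, so I assume $\lambda>2$ and set $\mu_\pm=\frac{\lambda\pm\sqrt{\lambda^2-4}}{2}$, so that $\mu_-<\frac{2}{\lambda}<\frac{\lambda}{2}<\mu_+$. First I will compute that $\mu_\pm$ are the two eigenslopes of $\rhoa{vh^{-1}}$ (which has trace $2-\lambda^2$). The identity $vh^{-1}=v(h^{-1}v)v^{-1}$ displays $vh^{-1}$ as a $G$-conjugate of $h^{-1}v=(v^{-1}h)^{-1}$, and since a matrix and its inverse share eigenvectors, $\mu_\pm$ will lie in the $\rhoa{G}$-orbit of eigenvectors of $\rhoa{v^{-1}h}$ and so be excluded from $\Rn_\lambda$ by the definition in \S\ref{ss:renormalizable directions}. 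It therefore suffices to show that no $\btheta\in\Rn_\lambda$ has slope in the open interval $(\mu_-,\frac{2}{\lambda})$.

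I will argue by contradiction: suppose $\btheta\in\Rn_\lambda$ has slope $t_0\in(\mu_-,\frac{2}{\lambda})$. Since $\Rn_\lambda\subset\Lambda_h(\rhoa{G})$ (by Corollary \ref{cor:dichotomy} together with the exclusion of parabolic fixed points from $\Rn_\lambda$), Theorem \ref{thm:shrinking} supplies an infinite $\lambda$-shrinking sequence $\langle g_n\rangle$ for $\btheta$ with $\|\rhoa{g_n}(\btheta)\|\to 0$; write $t_n$ for the slope of $\rhoa{g_n}(\btheta)$. Two structural obstructions will drive every subsequent contradiction: (i) no $t_n$ may lie in the safe region $[-\frac{\lambda}{2},-\frac{2}{\lambda}]\cup[\frac{2}{\lambda},\frac{\lambda}{2}]$, since otherwise Corollary \ref{cor:length_minimizer} would force $\|\rhoa{g_n}(\btheta)\|$ to be a global minimum; and (ii) the unique shrinking generator at any step (Corollary \ref{cor:shrink}) cannot be the inverse of the previously applied generator, by the no-backtracking condition in Proposition \ref{prop:ray}. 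A direct calculation using (i) will give $g_1=h^{-1}$, $t_1=t_0/(1-\lambda t_0)\in(-\mu_+,-\frac{\lambda}{2})\subset\Shrink_\lambda(v)$, and hence $g_2=vh^{-1}$ with $t_2=T(t_0)$, where $T$ denotes the Mobius action of $\rhoa{vh^{-1}}$ on slopes.

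The heart of the proof is an induction showing that $g_{2k}=(vh^{-1})^k$ and $t_{2k}=T^k(t_0)\in(\mu_-,\frac{2}{\lambda})$ for every $k\geq 0$. In the inductive step the image $T\bigl((\mu_-,\tfrac{2}{\lambda})\bigr)=(\mu_-,\lambda-\tfrac{2}{\lambda})$ decomposes into the desired continuation interval $(\mu_-,\frac{2}{\lambda})$, the safe sliver $[\frac{2}{\lambda},\frac{\lambda}{2}]$ (forbidden by (i)), and the arc $(\frac{\lambda}{2},\lambda-\frac{2}{\lambda})\subset\Shrink_\lambda(v^{-1})$; the analogous split at the odd index controls the possibility of $f(t_{2k}):=t_0/(1-\lambda t_0)$ landing in the safe region under $\rhoa{h^{-1}}$. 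The $\Shrink_\lambda(v^{-1})$ case is the main obstacle: here the unique shrinking generator would be $v^{-1}$, which directly cancels the just-applied $v$, so (ii) blocks it and no extension of $\langle g_n\rangle$ is possible, contradicting its infiniteness.

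To finish, I will note that $T$ is an increasing Mobius transformation on $(\mu_-,\mu_+)$ with repelling fixed point $\mu_-$ and attracting fixed point $\mu_+$, so $T^k(t_0)$ is strictly increasing in $k$ and converges to $\mu_+$. Since $\mu_+>\frac{2}{\lambda}$ for $\lambda>2$, some iterate satisfies $T^k(t_0)\geq\frac{2}{\lambda}$, breaking the induction and yielding the desired contradiction. The hardest step to pin down will be that the ``jump'' into $\Shrink_\lambda(v^{-1})$ is genuinely forbidden---not by any direct length obstruction, but by the reduced-word (geodesic) structure of shrinking sequences in the free group $G$.
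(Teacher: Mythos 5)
Your proposal is correct and takes a genuinely different route from the paper's. Both arguments hinge on the same fact — the bounding slopes $\mu_\pm=\frac{\lambda\pm\sqrt{\lambda^2-4}}{2}$ are the eigendirections of $\rhoa{vh^{-1}}$ and are excluded from $\Rn_\lambda$ — but the paper then finishes in two lines by exploiting the geometry of the limit set: the quotient $\H^2/\rhoa{G}$ is a thrice-punctured sphere, the conjugacy class of $vh^{-1}$ corresponds to the flaring end when $\lambda>2$, so the slope interval $(\mu_-,\mu_+)$ is a maximal complementary interval of $\Lambda(\rhoa{G})$, and the inclusion follows from $\Rn_\lambda\subset\Lambda(\rhoa{G})$ together with Proposition \ref{prop:shrink_exp}. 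You instead work dynamically with shrinking sequences, iterating the M\"obius action $T$ of $\rhoa{vh^{-1}}$ and showing that any hypothetical $\btheta\in\Rn_\lambda$ with slope in $(\mu_-,2/\lambda)$ would be forced into a forbidden configuration because $T^k$ attracts toward $\mu_+>2/\lambda$. Your route is longer and more computational but avoids an explicit appeal to the Beardon--Maskit dichotomy and the hyperbolic surface picture — though note this is only a cosmetic gain, since your invocation of Theorem \ref{thm:shrinking} already smuggles the limit set machinery in through the back door.

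One structural remark: the step you single out as the main obstacle — the arc $(\frac{\lambda}{2},\lambda-\frac{2}{\lambda})\subset\Shrink_\lambda(v^{-1})$, to be blocked by the no-backtracking condition of Proposition \ref{prop:ray} — is in fact superfluous. If $t_{2k+2}=T(t_{2k})\in(\frac{\lambda}{2},\lambda-\frac{2}{\lambda})$, then the intermediate odd-step slope $t_{2k+1}=t_{2k+2}-\lambda$ lies in $(-\frac{\lambda}{2},-\frac{2}{\lambda})$, which is already the interior of the safe region — so obstruction (i), via Corollary \ref{cor:length_minimizer}, has already terminated the sequence one step earlier. Put differently, $\rhoa{g_{2k+2}}(\btheta)\in\Shrink_\lambda(v^{-1})$ would say $\|\rhoa{v^{-1}}\rhoa{g_{2k+2}}(\btheta)\|<\|\rhoa{g_{2k+2}}(\btheta)\|$, i.e.\ $\|\rhoa{g_{2k+1}}(\btheta)\|<\|\rhoa{g_{2k+2}}(\btheta)\|$, directly contradicting strict monotone decay without any appeal to the geodesic structure of $G$. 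So the cleaner version of your induction tracks $t_{2k+1}\in(-\mu_+,-\frac{2}{\lambda})$, eliminates the safe sliver $[-\frac{\lambda}{2},-\frac{2}{\lambda})$ by (i), gets $t_{2k+2}\in(\mu_-,\frac{\lambda}{2})$, eliminates $[\frac{2}{\lambda},\frac{\lambda}{2})$ by (i), and never meets $\Shrink_\lambda(v^{-1})$ at all. Your argument as written is still correct — the backtracking obstruction is valid, just never triggered — but the last sentence of your proposal identifies as the hardest step the one step that the rest of your own argument already makes unnecessary.
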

\begin{proof}
Note that when $\lambda=2$, this statement is directly implied by Proposition \ref{prop:shrink_exp}. So, we will assume $\lambda>2$. 
The quotient $\H^2/\rhoa{G}$ is homeomorphic to a thrice punctured sphere. Recall that the homotopy classes of loops on $\H^2/\rhoa{G}$ are in bijective correspondence
with conjugacy classes in $G$. The conjugacy classes of $h$, $v$, and $v h^{-1}$ correspond to simple loops traveling around each of the three punctures. 
As $\lambda>2$, $\rhoa{v h^{-1}}$ is a hyperbolic isometry of $\H^2$ whose axis is the hyperbolic geodesic with whose endpoints are the projectivizations of the eigendirections of $\rho^{v h^{-1}}$. 
The endpoints of this geodesic are the points of $\R\P^1$,
$$e_1=[(2,\lambda-\sqrt{\lambda^2-4}] 
\quad \textrm{and} \quad 
e_2=[(2,\lambda+\sqrt{\lambda^2-4})].$$
Note that these points were explicitly removed from the set of $\lambda$-renomalizable directions. The geodesic joining these points in $\H^2$ descends
to a closed loop $\H^2/\rhoa{G}$ around the flaring end. Thus, the interval 
$$I_1=\big\{[(2,y)]~:~\lambda-\sqrt{\lambda^2-4}<y<\lambda+\sqrt{\lambda^2-4}\big\} \subset \R \P^1$$
is a maximal interval in the compliment of $\Lambda(\rhoa{G})$. The set of such complimentary intervals is $\rhoa{G}$ invariant, thus
$$I_2=\rhoa{h^{-1}}(I_2)=\big\{[(2,y)]~:~\lambda-\sqrt{\lambda^2-4}<-y<\lambda+\sqrt{\lambda^2-4}\big\}$$
is also a complimentary interval. Our formulas follow from the fact that
$$\Rn_\lambda \subset \Circ \smallsetminus \big(\bar I_1 \cup \bar I_2 \cup \{(1,0),(-1,0), (0,1), (0,-1)\}\big),$$
where we are using $\bar I_1$ and $\bar I_2$ to denote the closure of the lifts of $I_1$ and $I_2$ from $\RP^1$ to $\Circ$. The conclusion follows from intersecting the set on the right with our formulas for $\Shrink_\lambda(g)$ in Proposition \ref{prop:shrink_exp} for each $g \in \{h,v,h^{-1},v^{-1}\}$.
\end{proof}

\subsection{Combinatorics of renormalizable sequences}
\name{ss:shrinking}
We would like to understand which geodesic rays are $\lambda$-shrinking sequences for $\lambda$-renormalizable directions. In order to describe the answer, we define the following shift action on geodesic rays 
$\langle g_n \rangle_{n \geq 0}$ with $g_0=e$:
$${\mathcal S}(\langle g_0, g_1, g_2, \ldots \rangle)=\langle g_1 g_1^{-1}, g_2 g_1^{-1}, g_3 g_1^{-1}, \ldots \rangle.$$
We call two sequences of elements in $G$, $\langle g_i \rangle$ and $\langle g_i' \rangle$, {\em tail equivalent} if there are non-negative integers $m$ and $n$ for which
${\mathcal S}^m(\langle g_i\rangle)={\mathcal S}^n(\langle g_i' \rangle)$.

\begin{theorem}[Shrinking sequences of renormalizable directions]
\label{thm:characterization}
Let $\lambda \geq 2$. Then, the geodesic ray $\langle g_i \rangle$ with $g_0=e$ is a $\lambda$-shrinking sequence
for a $\lambda$-renormalizable direction if and only if $\langle g_i \rangle$ is not tail equivalent to any of the four geodesic rays fixed by ${\mathcal S}$,
$$
\langle e, h, h^2, \ldots \rangle, \quad \langle e, h^{-1}, h^{-2}, \ldots \rangle, \quad
\langle e, v, v^2, \ldots \rangle \quad \textrm{and} \quad \langle e, v^{-1}, v^{-2}, \ldots \rangle,
$$
and is not tail equivalent to either of the following two geodesic rays of period two,
$$
\langle e, h, v^{-1}h, h v^{-1}h, v^{-1} h v^{-1}h, \ldots \rangle \quad \textrm{and} \quad
\langle e, h^{-1}, v h^{-1}, h^{-1} v h^{-1}, v h^{-1} v h^{-1}, \ldots \rangle.
$$ 
\end{theorem}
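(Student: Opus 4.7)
My plan is to prove both implications via Theorem \ref{thm:unique shrinking} together with a careful analysis of the boundaries of the shrinking sets $\Shrink_\lambda(s)$ described in Proposition \ref{prop:shrink_exp}.

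For the necessity direction, suppose $\bm\theta \in \Rn_\lambda$ has $\lambda$-shrinking sequence $\langle g_n\rangle$ tail equivalent to one of the six exceptional rays $\langle g_n'\rangle$. Shifting by ${\mathcal S}$ reduces to the case that $\langle g_n'\rangle$ is itself the $\lambda$-shrinking sequence of a normalization $\rhoa{g_m}(\bm\theta)/\|\rhoa{g_m}(\bm\theta)\|$. For each of the four ${\mathcal S}$-fixed rays such as $\langle e,h,h^2,\ldots\rangle$, the explicit formula $\|\rhoa{h^n}(x,y)\|^2 = (x + n\lambda y)^2 + y^2$ is never strictly decreasing in $n$, a contradiction. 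For each of the two period-two rays, Theorem \ref{thm:unique shrinking}(2) combined with an eigenvector calculation shows that the unique antipodal pair having this shrinking sequence is an eigendirection of $\rhoa{v^{-1}h}$, but these are excluded from $\Rn_\lambda$ by definition (and at $\lambda = 2$, $\rhoa{v^{-1}h}$ is parabolic, so in fact no direction shrinks at all along this ray).

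For the sufficiency direction, let $\langle g_n\rangle$ be a geodesic ray with $g_0 = e$ that is not tail equivalent to any exceptional ray, and set $K_n = \cl\bigl(\Shrink_\lambda(g_n)\bigr) \subset \Circ$. The non-strict version of Proposition \ref{prop:increasing} yields the nesting $K_{n+1} \subset K_n$, so $K = \bigcap_n K_n$ is nonempty and compact. I first argue that $[K] \subset \R\P^1$ is a single point: two linearly independent $\bm\theta_1,\bm\theta_2 \in K$ would force $\|\rhoa{g_n}(\bm\theta_i)\| \leq 1$ for both $i$, bounding $\rhoa{g_n}$ in $\SL(2,\R)$ and contradicting discreteness of the faithful image $\rhoa{G}$. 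Thus $K = \{\pm\bm\theta\}$ for a single antipodal pair, and $\|\rhoa{g_n}(\bm\theta)\|$ is non-strictly decreasing.

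To promote this to strict decrease and to verify $\bm\theta \in \Rn_\lambda$, suppose the first equality occurs at step $N$, so that $\rhoa{g_{N-1}}(\bm\theta) \in \partial \Shrink_\lambda(s_N)$ with $s_N = g_N g_{N-1}^{-1}$. By Proposition \ref{prop:shrink_exp} this boundary contains four unit vectors: the two fixed points of the parabolic $\rhoa{s_N}$ and two further points. In the parabolic-fixed-point case, constancy of the norm at all subsequent steps (forced by combining $\bm\theta \in K$ with the non-strict half of Proposition \ref{prop:increasing}) together with a generator-by-generator check (in which $s_N^{-1}$ is ruled out by the geodesic condition) gives $s_{N+k} = s_N$ for all $k \geq 0$, so ${\mathcal S}^{N-1}\langle g_n\rangle$ is a fixed exceptional ray, a contradiction. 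In the other boundary case, a parallel orbit computation (strictly norm-increasing for $\lambda > 2$, period-two oscillation for $\lambda = 2$) forces ${\mathcal S}^{N-1}\langle g_n\rangle$ to be a period-two exceptional ray or yields an immediate contradiction. Hence $\bm\theta$ is strictly shrunk and $\langle g_n\rangle$ is its $\lambda$-shrinking sequence. The same bounded-orbit argument then forces $\|\rhoa{g_n}(\bm\theta)\| \to 0$ (else $[\bm\theta]$ would be a parabolic fixed point and, by the shrinking-sequence uniqueness, $\langle g_n\rangle$ would be tail equivalent to a fixed exceptional ray), so $[\bm\theta] \in \Lambda_h(\rhoa{G})$; a parallel orbit argument rules out $\bm\theta$ being an eigendirection of a conjugate of $h$, $v$, or $v^{-1}h$. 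Thus $\bm\theta \in \Rn_\lambda$. The main obstacle throughout is this boundary-case analysis; the dihedral symmetry of Remark \ref{rem:dihedral} cuts down the cases to essentially the two treated above.
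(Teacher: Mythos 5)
Your proposal follows essentially the same route as the paper's: necessity comes from the observation that parabolics (and, for $\lambda>2$, the hyperbolic $\rhoa{v^{-1}h}$ whose eigendirections are excluded by definition) cannot be shrunk indefinitely along the relevant tails; sufficiency comes from the nested intersection of the sets $\cl\bigl(\Shrink_\lambda(g_n)\bigr)$ together with the boundary case analysis using the explicit description in Proposition \ref{prop:shrink_exp}. One place your version is actually tidier: you take $K=\bigcap_n\cl\bigl(\Shrink_\lambda(g_n)\bigr)$ directly and argue by discreteness of $\rhoa{G}$ that $[K]$ is a single point of $\R\P^1$, whereas the paper first produces an accumulation point $[\bm\theta]$ of the orbit $[\rhoa{g_n}]$ in $\R\P^1$, separately produces a vector $\bm\eta\in K$, compares the two, and then concludes $\bm\theta\in\cl\bigl(\Shrink_\lambda(g_i)\bigr)$ for all $i$. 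Both arguments work, but yours is more direct.

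The one substantive divergence is your final paragraph, where you verify $\bm\theta\in\Rn_\lambda$. This is the right instinct: the theorem asserts that $\langle g_i\rangle$ is the shrinking sequence \emph{of a $\lambda$-renormalizable direction}, but the paper's published proof stops as soon as it has shown $\bm\theta\in\Shrink_\lambda(g_i)$ for all $i$, without confirming $\bm\theta\in\Rn_\lambda$. However, the step you invoke---that boundedness of $\|\rhoa{g_n}(\bm\theta)\|$ away from $0$ ``forces $[\bm\theta]$ to be a parabolic fixed point''---is not immediate. The horoball ring $\{[M]\in\H^2:c\le\|M\bm\theta\|\le 1\}$ is noncompact, so discreteness alone does not bound the orbit; you need the thick--thin structure of $\H^2/\rhoa{G}$ (or equivalently an argument like the one the paper carries out in Lemma \ref{lem:horocyclic implies shrinkable}, working in the quotient by the parabolic stabilizer of $\bm\theta$). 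The intended logic is: $[\bm\theta]\in\Lambda(\rhoa{G})$ since it is a limit of orbit points; by Corollary \ref{cor:dichotomy} either $[\bm\theta]\in\Lambda_h$ (in which case Lemma \ref{lem:horocyclic implies shrinkable} gives the desired decay and the exclusion of the $v^{-1}h$-eigendirections needs checking) or $\bm\theta$ is a parabolic fixed point, and the latter alternative must be shown incompatible with the assumption that $\langle g_n\rangle$ is not tail equivalent to one of the four $\mathcal S$-fixed rays. That last incompatibility is genuinely nontrivial (the parabolic fixing $\bm\theta$ could a priori be a conjugate $\rhoa{g_0 h g_0^{-1}}$ with $g_0\ne e$, and one must show this still pins down the tail of $\langle g_n\rangle$) and deserves to be spelled out rather than asserted.
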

\begin{proof}
First suppose that $\langle g_i \rangle$ is tail equivalent to one of the first four listed sequences.
Then there is a parabolic $P \in \rhoa{G}$ (namely, $\rhoa{g}$ for some $g \in \{h,v,h^{-1},v^{-1}\}$), and an $n$ so that
$\rhoa{g_{n+k}}=P^k \rhoa{g_{n}}$. But, you can not make any vector shrink infinitely often by successively applying the same parabolic. So,
$\langle g_i \rangle$ is not a $\lambda$-shrinking sequence. 
Now suppose it is tail equivalent to either of the last two listed sequences.
We break into cases. If $\lambda=2$, then there is a parabolic $P \in \rhoa{G}$ (namely, $\rhoa{g}$ for some $g \in \{v^{-1}h,h^{-1}v\}$),
and an $n$ so that $\rhoa{g_{n+2k}}=P^k \rhoa{g_{n}}$ for all $k \geq 0$. The same reasoning works as before. Now suppose that $\lambda>2$. This time there is a hyperbolic matrix $H \in \rhoa{G}$ (namely $H=\rhoa{g}$ for some $g \in \{v^{-1}h,h^{-1}v\}$) and an $n$ so that $\rhoa{g_{n+2k}}=H^k \rhoa{g_{n}}$ for all $k \geq 0$. The only vectors which can be repeatedly shrunk by a hyperbolic matrix are its contracting eigenvectors. So, $\rhoa{g_n}(\btheta)$ is a contracting eigenvector of $H$. Equivalently, $\btheta$ is
a contracting eigenvector of $\rhoa{g_n} H \rhoa{g_n^{-1}}$. But these
eigenvectors were explicitly thrown out by the definition of $\lambda$-renormalizable directions.

Now let $\langle g_i \rangle$ be any geodesic ray with $g_0=e$. We will
show that this sequences is a $\lambda$-shrinking sequence unless
it is tail equivalent to one of the six sequences listed in the theorem.

Since $\rho_\lambda^G$ is discrete and $\langle g_i \rangle$ is infinite, there must be an accumulation point $[\btheta] \in \RP^1$. Let $\btheta \in \Circ$ be a lift of $[\btheta]$. We claim that $\btheta$ is $\lambda$-shinkable and $\langle g_i \rangle$ is its shrinking sequence. 

First we claim that $\btheta \in {\overline \Shrink}_\lambda(g_i)$ for all $i$,
where ${\overline \Shrink}_\lambda(g_i)$ denotes the closure of $\Shrink_\lambda(g_i)$. By definition, the sets $\Shrink_\lambda(g_i)$ are nested. (See Definition \ref{def:shrink_exp}.) Observe that Corollary \ref{cor:shrunk} implies that each set is non-empty. We conclude that there is a direction ${\bm \eta} \in \bigcap_i {\overline \Shrink}_\lambda(g_i).$  It follows that 
$\|\rho_\lambda^{g_i}({\bm \eta})\|$ is non-strictly monotone decreasing. 
By equation \ref{eq:limit set def}, which defined convergence to the boundary in $\H^2$, we know that infinitely often the inequality $\|\rho_\lambda^{g_i}({\bm \theta})\| \leq \|\rho_\lambda^{g_i}({\bm \eta})\|$ is satisfied. So, it follows
that infinitely often $\rho_\lambda^{g_{i-1}}({\bm \theta})$ must be non-strictly shrunk
by $\rho_\lambda^{g_{i} g_{i-1}^{-1}}$. So, infinitely often
$$\pi_\Circ \circ \rhoa{g_i}(\btheta) \in \overline{\Exp}_\lambda(g_{i-1} g_i^{-1}).$$
So by Corollary \ref{cor:shrunk}, infinitely often $\btheta \in {\overline \Shrink}_\lambda(g_i)$.
Since these sets are nested, we conclude that $\btheta \in {\overline \Shrink}_\lambda(g_i)$ for all $i$, as desired.

Now we claim that $\btheta \in \Shrink_\lambda(g_i)$.
Observe that the projectivized sets $[{\overline \Shrink}_\lambda(g_i)]$ form
a nested intersection of closed intervals. Thus, the conclusion follows
unless there is an $n$ so that $\btheta \in \partial {\overline \Shrink}_\lambda(g_i)$ for all $i \geq n$. 
Suppose this is the case.
It follows then from Corollary \ref{cor:shrunk} that
$$\pi_\Circ \circ \rhoa{g_{i-1}} (\btheta) \in \partial \Shrink_\lambda(g_i g_{i-1}^{-1})$$
for all $i \geq n$. Observe that $g_i g_{i-1}^{-1} \in \{h,v,h^{-1},v^{-1}\}$. 
Then using the explicit description for shrinking sets provided by Proposition \ref{prop:shrink_exp}, we see that one of the following must hold for each ``time'' $i \geq n$:
\begin{enumerate}
\item[(a)] $[\pi_\Circ \circ \rhoa{g_{i-1}}(\btheta)]=[(1,0)]$ and $g_i g_{i-1}^{-1} \in \{h,h^{-1}\}$. 
\item[(b)] $[\pi_\Circ \circ \rhoa{g_{i-1}}(\btheta)]=[(0,1)]$ and $g_i g_{i-1}^{-1} \in \{v,v^{-1}\}$. 
\item[(c)] $[\pi_\Circ \circ \rhoa{g_{i-1}}(\btheta)]=[(-\lambda,2)]$ and $g_i g_{i-1}^{-1} = h$.
\item[(d)] $[\pi_\Circ \circ \rhoa{g_{i-1}}(\btheta)]=[(\lambda,2)]$ and $g_i g_{i-1}^{-1} = h^{-1}$.
\item[(e)] $[\pi_\Circ \circ \rhoa{g_{i-1}}(\btheta)]=[(2,-\lambda)]$ and $g_i g_{i-1}^{-1} = v$.
\item[(f)] $[\pi_\Circ \circ \rhoa{g_{i-1}}(\btheta)]=[(2,\lambda)]$ and $g_i g_{i-1}^{-1} = v^{-1}$.
\end{enumerate}
We will use these statements to show that the sequence $\langle g_i \rangle$ 
must in fact be tail equivalent to one of the sequences from the theorem.
For example, suppose we are in case (a) for some time $i$.
Then, we have $\rhoa{g_{i-1}^{-1}}(\btheta)=(x,0)$ for some $x \neq 0$, and $g_i g_{i-1}^{-1} \in \{h,h^{-1}\}$. Then,
$$\rhoa{g_i}(\btheta)=\rhoa{g_i g_{i-1}^{-1}}(x,0)=(x,0).$$
So, we are again in case (a) but at time $i+1$. We conclude that if we are in case (a) at time $n$, then $g_i g_{i-1}^{-1} \in \{h,h^{-1}\}$ for all $i \geq n$. 
But this means that $\langle g_i \rangle$ is tail equivalent to either
$\langle e, h, h^2,\ldots \rangle$ or $\langle e, h^{-1}, h^{-2},\ldots \rangle$. Case (b) works similar. Now suppose at time $n$ we are in case (c). Then, $\btheta=c(-\lambda,2)$ for some $c \neq 0$ and $g_n g_{n-1}^{-1}=h$. We see that
$$\rhoa{g_n}(\btheta)=c \rhoa{g_n g_{n-1}^{-1}}(-\lambda,2)=(\lambda,2).$$
Now we must be in one of the six cases at time $n+1$. If $\lambda \neq 2$, then
we must be in case (d), but this is a contradiction because
then $g_n g_{n-1}^{-1}=h$ and $g_{n+1} g_{n}^{-1}=h^{-1}$ which contradicts the definition of geodesic ray. If $\lambda=2$, for the same reason, we must be in case (f) at time $n+1$. Thus, $g_{n+1} g_{n}^{-1}=v^{-1}$. Continuing inductively, we see that 
$\langle g_i\rangle$ is tail equivalent to $\langle e, h, v^{-1} h, h v^{-1} h, \ldots\rangle$. The remaining cases work in the same way.
\end{proof}

Lemma \ref{lem:compatible directions} really follows as a corollary to the above result. 

\begin{proof}[Proof of Lemma \ref{lem:compatible directions}]
Let $\lambda_1 \geq 2$ and let $\btheta_1 \in \Rn_{\lambda_1}$. 
By Theorem \ref{thm:shrinking}, we know that $\btheta_1$ is $\lambda_1$-shrinkable. So by definition it has a $\lambda_1$-shrinking sequence $\langle g_i \rangle$. Moreover, this sequence is unique by 
The Correspondence Theorem (Theorem \ref{thm:unique shrinking}). This proves statement (1) of the Lemma.

Now let $\lambda_2 \geq 2$. Statement (2) says there us a unique pair of antipodal $\lambda_2$-renormalizable vectors $\pm \btheta_2$ so that the same $\langle g_i \rangle$ is the  $\lambda_2$-shrinking sequence for each.  The characterization of shrinking sequences of $\lambda$-renormalizable directions given in Theorem \ref{thm:characterization} is independent of $\lambda$. So, because $\langle g_i\rangle$ is $\lambda_1$-shrinking sequence for a $\lambda_1$-renormalizable direction, it is also the $\lambda_2$-shrinking sequence for a $\lambda_2$-renormalizable direction. We call this new direction $\btheta_2$. The Correspondence Theorem tells us that 
$\btheta_2$ is unique up to the antipodal map.
\end{proof}

We conclude this section by giving a proof of Proposition \ref{prop:quadrants_match}, which stated that the pair of quadrants containing
a $\lambda$-renormalizable direction depends only on its shrinking sequence
and not on $\lambda$. 

\begin{proof}[Proof of Proposition \ref{prop:quadrants_match}]
Suppose $\langle g_n \rangle$ is a renormalizing sequence
and $\lambda, \lambda' \geq 2$. Then we have 
$$\pm \btheta(\langle g_n \rangle,\lambda) \in \Shrink_\lambda(g_1) \quad \text{and} \quad \pm \btheta(\langle g_n \rangle,\lambda') \in \Shrink_{\lambda'}(g_1).$$
Note that because $g_0=e$ and $\langle g_n \rangle$ is a geodesic ray, $g_1 \in \{h,v,h^{-1},v^{-1}\}$. These shrinking sets are contained in the same pair of opposite quadrants by Proposition \ref{prop:shrink_exp}.
\end{proof}

\section{Geometry of graph surfaces}
\label{sect:geometry surfaces}

In this section, we discuss the geometry of surfaces of the form $S(\G,\w)$, where $\w$ is a positive eigenfunction of the adjacency operator. In the subsection \ref{sect:facts}, we discuss features
that distinguish eigenfunctions on graphs with a vertex of valance one from
graphs with no vertices of valance one. In subsection \ref{sect:no saddles}, we use these observations to prove that our surfaces have no saddle connections in renormalizable directions.

\subsection{Facts about eigenfunctions}
\label{sect:facts}
In this section, we discuss some facts about eigenfunctions of graphs which distinguish graphs with no vertices of valance one. Note that $\G_\Z$ is the only infinite 
connected graph with no vertices of valance one and no vertices of valance larger than two. We will pay particular attention to the case of vertices
of valance larger than two.

To distinguish graphs with vertices of valance one, we make the following definition.

\begin{definition}
\label{def:spoke}
For an integer $k \geq 2$, a {\em $k$-spoke} is an ordered $k$-tuple
$(\vv_1, \ldots, \vv_k)\in \V^k$
such that
\begin{enumerate}
\item $\val(\vv_1)=1$.
\item For each $i \in \N$ with $2 \leq i \leq k-1$, $\val(\vv_i)=2$, $\vv_i \sim \vv_{i-1}$ and $\vv_i \sim \vv_{i+1}$.
\end{enumerate}
\end{definition}

Note that there is no condition on the valance of the vertex $\vv_k$ of a $k$-spoke. See Figure \ref{fig:spoke} for an example.

\begin{figure}[ht]
\begin{center}
\includegraphics[height=1in]{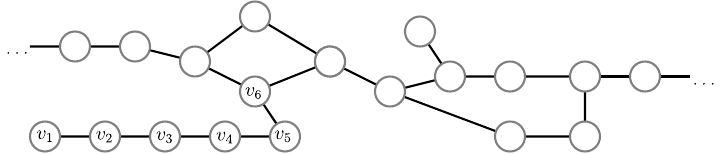}
\caption{$(\vv_1, \ldots, \vv_6)$ is a $6$-spoke.}
\label{fig:spoke}
\end{center}
\end{figure}

\begin{proposition}[Eigenfunctions and spokes]
\label{prop:eig_spokes}
Let $(\vv_1, \ldots, \vv_k)$ be a spoke. Assume $\w \in \R^\V$ is a positive function satisfying $\A \w=\lambda \w$. 
If $\lambda=2$, then for all $j$ with $2 \leq j \leq k$, we have $\w(\vv_j)=j \w(\vv_1)$. If $\lambda>2$ then
$$\w(\vv_j)=\frac{\sinh (jz)}{\sinh(z)} \w(\vv_1) \quad \textrm{where $z=\ln \frac{\lambda+\sqrt{\lambda^2-4}}{2} =\cosh^{-1}(\frac{\lambda}{2})$.}$$
\end{proposition}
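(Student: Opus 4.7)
The plan is to extract a linear recurrence from the eigenfunction equation along the spoke and then solve it by standard characteristic-polynomial techniques.

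First I would apply the eigenvalue equation $\A \w = \lambda \w$ at $\vv_1$. Since $\val(\vv_1) = 1$ and its unique neighbor is $\vv_2$, this yields $\w(\vv_2) = \lambda \w(\vv_1)$. Next, for each $i$ with $2 \leq i \leq k-1$, the vertex $\vv_i$ has valence $2$ with neighbors exactly $\vv_{i-1}$ and $\vv_{i+1}$ (this is part of the definition of a spoke), so the equation at $\vv_i$ reads
\[
\w(\vv_{i-1}) + \w(\vv_{i+1}) = \lambda\, \w(\vv_i),
\]
which rearranges to the second-order linear recurrence $\w(\vv_{i+1}) = \lambda\, \w(\vv_i) - \w(\vv_{i-1})$. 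If we introduce a formal value $\w(\vv_0) := 0$, then applying this recurrence at $i=1$ recovers the boundary relation $\w(\vv_2) = \lambda\, \w(\vv_1)$, so we may regard the recurrence as holding for all $i$ with $1 \leq i \leq k-1$ together with the initial conditions $\w(\vv_0) = 0$ and $\w(\vv_1)$ prescribed.

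Now I would solve the recurrence using its characteristic polynomial $x^2 - \lambda x + 1 = 0$. In the degenerate case $\lambda = 2$, the polynomial has $1$ as a double root, giving the general solution $\w(\vv_j) = A + B j$; imposing $\w(\vv_0) = 0$ forces $A = 0$, and the value at $j=1$ determines $B = \w(\vv_1)$, yielding $\w(\vv_j) = j\, \w(\vv_1)$. In the generic case $\lambda > 2$, the roots are $\frac{\lambda \pm \sqrt{\lambda^2-4}}{2}$, which are reciprocals; writing them as $e^{\pm z}$ with $z = \ln\frac{\lambda+\sqrt{\lambda^2-4}}{2}$ (equivalently $\cosh z = \lambda/2$), the general solution is $\w(\vv_j) = A e^{jz} + B e^{-jz}$. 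The condition $\w(\vv_0) = 0$ gives $B = -A$, so $\w(\vv_j) = 2A \sinh(jz)$, and matching $\w(\vv_1) = 2A \sinh(z)$ yields $2A = \w(\vv_1)/\sinh(z)$, i.e.\ $\w(\vv_j) = \frac{\sinh(jz)}{\sinh(z)}\, \w(\vv_1)$.

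There is no real obstacle here; the only thing to be careful about is that the recurrence closes at the correct index. Specifically, the eigenvalue equation at $\vv_k$ is \emph{not} used (and generally will not give a compatible closing condition, since $\val(\vv_k)$ is unconstrained and $\vv_k$ may have further neighbors outside the spoke), which is why the stated formula is asserted only for $j \leq k$ and involves no constraint from $\vv_k$'s equation. The recurrence is driven entirely by the valence-one endpoint $\vv_1$ and the valence-two interior vertices $\vv_2,\dots,\vv_{k-1}$, which is precisely what the definition of spoke supplies.
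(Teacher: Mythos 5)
Your proof is correct and takes essentially the same route as the paper's: both extract the three-term recurrence $\w(\vv_{j}) = \lambda\,\w(\vv_{j-1}) - \w(\vv_{j-2})$ with boundary condition $\w(\vv_2) = \lambda\,\w(\vv_1)$ from the eigenvalue equation at the valence-one endpoint and the valence-two interior vertices, and then observe that this determines $\w(\vv_j)$ uniquely from $\w(\vv_1)$. The only difference is presentational: where the paper says the stated formulas can be checked "by inspection and trigonometry," you actually solve the recurrence via its characteristic polynomial $x^2 - \lambda x + 1$ (and your device of setting $\w(\vv_0) := 0$ is a clean way to absorb the boundary condition into the recurrence); your closing remark that the equation at $\vv_k$ is neither used nor needed is also a correct and useful observation.
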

\begin{proof}[Sketch of proof]
Note that $\w(\vv_2)=\lambda \w(\vv_1)$ and $\w(\vv_{j})=\lambda \w(\vv_{j-1})-\w(\vv_{j-2})$.
Thus, the values of $\w(\vv_j)$ are determined by the previous values, and thus the value of $\w(\vv_j)$ is uniquely
determined by the value of $\w(\vv_1)$. Finally by inspection and trigonometry, it can be checked that the values stated in the proposition do give rise to a solution to these equations.
\end{proof}

Note that every vertex of valance one belongs to a spoke. The following handles vertices with greater valance.

\begin{proposition}[Detecting spokes]
\label{prop:detecting_spokes}
Let $\w$ be a positive function satisfying $\A \w=\lambda \w$. Let $x$ be a vertex with $\val(x) \geq 2$. 
Then, $\vx$ belongs to a spoke if and only if
there is a $\vy \sim \vx$ such that 
$$\frac{\w(\vy)}{\w(\vx)} < \frac{\lambda-\sqrt{\lambda^2-4}}{2}.$$
Furthermore, if this inequality is satisfied then $x$ and $y$ belong to the same spoke.
\end{proposition}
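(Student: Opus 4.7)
For the forward direction, I observe that if $\vx=\vv_i$ lies in a $k$-spoke $(\vv_1,\ldots,\vv_k)$, then $i\geq 2$ (forced by $\val(\vx)\geq 2$), and Proposition~\ref{prop:eig_spokes} gives $\w(\vv_{i-1})/\w(\vv_i)=\sinh((i-1)z)/\sinh(iz)$ for $\lambda>2$ (where $z=\cosh^{-1}(\lambda/2)$) and $(i-1)/i$ for $\lambda=2$. The addition formula $\sinh(iz)=\cosh z\,\sinh((i-1)z)+\sinh z\,\cosh((i-1)z)$ combined with $\cosh z-e^{z}=-\sinh z$ yields
\[
\sinh(iz)-e^{z}\sinh((i-1)z)=\sinh z\,\big(\cosh((i-1)z)-\sinh((i-1)z)\big)=e^{-(i-1)z}\sinh z>0,
\]
so $\sinh((i-1)z)/\sinh(iz)<e^{-z}$, and for $\lambda=2$ we trivially have $(i-1)/i<1=e^{-z}$. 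Either way $\vy=\vv_{i-1}$ realizes the desired strict inequality, and this $\vy$ lies in the same spoke as $\vx$.

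For the converse, the plan is to walk forward from $\vx$ through $\vy$ and argue the walk terminates at a valance-$1$ vertex, producing the claimed spoke. Setting $\vy_{-1}=\vx$, $\vy_0=\vy$, and letting $\vy_{i+1}$ be the unique other neighbor of $\vy_i$ whenever $\val(\vy_i)=2$, define $\mu_i=\w(\vy_i)/\w(\vy_{i-1})$. At each valance-$2$ vertex the eigenvalue equation becomes $\mu_{i+1}=g(\mu_i):=\lambda-1/\mu_i$. The fixed points of $g$ are $e^{\pm z}$, and since $r^{2}-\lambda r+1>0$ on $(0,e^{-z})$ we have $g(r)<r$ there, so $\mu_i$ is strictly decreasing. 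With no fixed point of $g$ in $[1/\lambda,e^{-z})$, the sequence cannot converge while remaining $\geq 1/\lambda$; but if $\mu_i<1/\lambda$ then $\mu_{i+1}\leq 0$, contradicting positivity of $\w$. Hence the walk must hit some $\vy_n$ with $\val(\vy_n)\neq 2$ in finitely many steps.

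If $\val(\vy_n)=1$ then $(\vy_n,\vy_{n-1},\ldots,\vy_0,\vx)$ is an $(n+2)$-spoke containing both $\vx$ and $\vy$, as required. To rule out $\val(\vy_n)=d\geq 3$, the equation at $\vy_n$ gives $\sum_{j=1}^{d-1}q_j=\lambda-1/\mu_n<\lambda-e^{z}=e^{-z}$ where $q_j=\w(\vz_j)/\w(\vy_n)>0$ ranges over the remaining neighbors of $\vy_n$. Each $q_j<e^{-z}$, so I apply the proposition recursively to each pair $(\vy_n,\vz_j)$: since $\val(\vy_n)\geq 2$, each $\vz_j$ must lie in a spoke rooted at $\vy_n$, and Proposition~\ref{prop:eig_spokes} gives $q_j=\sinh(s_jz)/\sinh((s_j+1)z)\geq 1/\lambda$ (the infimum attained at spoke-length $s_j=1$). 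Summing, $\sum_j q_j\geq(d-1)/\lambda\geq 2/\lambda$, and a short calculation ($4/\lambda>\lambda-\sqrt{\lambda^2-4}\iff\lambda\sqrt{\lambda^2-4}>\lambda^2-4$) shows $2/\lambda>e^{-z}$ for $\lambda>2$, with $2/\lambda=e^{-z}=1$ at $\lambda=2$; in both cases this contradicts the strict bound $\sum_j q_j<e^{-z}$.

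The main obstacle will be making the recursive step in the last paragraph well-founded, since the forward configuration $(\vy_n,\vz_j)$ is not a priori simpler than the original. I expect to handle this via a minimum-counterexample argument, exploiting that the linearization $e^{-z}-\mu_{i+1}\approx e^{2z}(e^{-z}-\mu_i)$ near the repelling fixed point (neutral only at $\lambda=2$, where a direct computation with $\mu_i=1-1/(c-i)$ still gives finite termination) bounds the length of each valance-$2$ segment between consecutive valance-$\geq 3$ branchings in terms of $\lambda$ and $\mu_0$, so bounded valance then forces any nested descent to terminate. In the special case that $\G$ has no valance-$1$ vertex—the setting of most direct relevance to the paper—this subtlety disappears: no spoke exists anywhere, so the recursive claim at $(\vy_n,\vz_j)$ is immediately contradicted, giving a clean proof of the contrapositive $\w(\vy)/\w(\vx)\geq e^{-z}$.
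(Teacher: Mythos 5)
Your forward direction is essentially the paper's, and your termination argument (fixed-point analysis of $g(r)=\lambda-1/r$) is a legitimate reformulation of the paper's explicit solution of the recurrence $\w(\vx_j)=r\omega^j+s\omega^{-j}$; both show the walk cannot continue forever through valance-$2$ vertices while $\w$ stays positive.

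The genuine gap is exactly the one you flag: the step ruling out $\val(\vy_n)\geq 3$ is circular as written, and neither your minimum-counterexample sketch nor your no-valance-$1$ special case actually breaks the circularity (in the latter, ``no spoke exists, so the recursive claim is contradicted'' is still an invocation of the proposition you are trying to prove). The fix is much more elementary than what you are reaching for, and does not need recursion at all. You already showed $\sum_{j=1}^{d-1} q_j < e^{-z}$ with $d-1\geq 2$, so by pigeonhole some single neighbor satisfies
\[
q_j \;<\; \frac{e^{-z}}{d-1} \;\leq\; \frac{e^{-z}}{2} \;=\; \frac{1}{\lambda+\sqrt{\lambda^2-4}} \;\leq\; \frac{1}{\lambda},
\]
i.e.\ $\w(\vz_j) < \w(\vy_n)/\lambda$. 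Now read off the eigenvalue equation at $\vz_j$ itself: since $\vy_n\sim\vz_j$ and $\w$ is positive,
\[
\bigl(\A\w\bigr)(\vz_j)\;=\;\sum_{\vd\sim\vz_j}\w(\vd)\;\geq\;\w(\vy_n)\;>\;\lambda\,\w(\vz_j),
\]
contradicting $\A\w=\lambda\w$. This is the paper's argument, and it is applied to the very first pair: the ratio inequality at an edge $\overline{\va\vb}$ already forces $\val(\vb)\leq 2$, so the walk never encounters a valance-$\geq 3$ vertex and your terminal case analysis disappears. In short, the observation ``each $q_j<e^{-z}$'' isn't strong enough to close the loop without recursion, but the stronger pigeonhole bound on the \emph{smallest} $q_j$ is, and checking the eigenvalue equation at that smallest neighbor gives the contradiction in one line.
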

\begin{proof}
We will assume $\lambda>2$. (There are only three infinite connected graphs with a positive eigenfunction with eigenvalue $2$, and these satisfy the statement.)

First suppose that $\vx=\vv_j$ belongs to a spoke. Let $\vy=\vv_{j-1}$. Then by Proposition \ref{prop:eig_spokes} and angle addition formulas,
we know that 
$$\frac{\w(\vy)}{\w(\vx)}=\frac{\sinh \big((j-1) z\big)}{\sinh(jz)}=\frac{\sinh(jz) \cosh z- \cosh(jz)\sinh(z)}{\sinh(jz)}.$$
We have $\cosh(z)=\lambda/2$ and $\sinh(z)=\sqrt{\lambda^2-4}/2$. Thus,
$$\frac{\w(\vy)}{\w(\vx)}=\frac{\lambda}{2}-\frac{\cosh(jz)\sqrt{\lambda^2-4}}{2 \sinh(jz)}<\frac{\lambda-\sqrt{\lambda^2-4}}{2}.$$

Now we will approach the converse. We claim that if $\va$ and $\vb$ are two adjacent vertices, with 
\begin{equation}
\label{eq:ratio_inequality}
\frac{\w(\vb)}{\w(\va)} < \frac{\lambda-\sqrt{\lambda^2-4}}{2},
\end{equation}
then they are elements of the same spoke. First, we will prove that
this inequality implies that $\val (\vb) \leq 2$. Suppose $\val(\vb) = k \geq 3$. Let $\va$ and $\vc_1, \ldots, \vc_{k-1}$ be the vertices adjacent to $\vb$.
We have that 
$$\lambda=\frac{\w(\va)+\sum_{i=1}^{k-1} \w(\vc_i)}{\w(\vb)}>\frac{\lambda+\sqrt{\lambda^2-4}}{2}+\frac{\sum_{i=1}^{k-1} \w(\vc_i)}{\w(\vb)}.$$
Thus, there is a $j$ such that 
$$\frac{\w(\vc_j)}{\w(\vb)}<\frac{\lambda-\sqrt{\lambda^2-4}}{2(k-1)}=\frac{2}{(k-1)(\lambda+\sqrt{\lambda^{2}-4})}
\leq \frac{1}{\lambda+\sqrt{\lambda^{2}-4}} \leq \frac{1}{\lambda}.$$
In summary $\w(\vc_j) < \w(\vb)/\lambda$. Therefore,
$$\big(\A \w\big)(\vc_j)=\sum_{\vd \sim \vc_j} \w(\vd) \geq \w(\vb)> \lambda \w(\vc_j),$$
which contradicts our assumption that $\A \w=\lambda \w$.

Now suppose $\va \sim \vb$, $\val (\vb)=2$ and equation \ref{eq:ratio_inequality} holds. 
We will show $\va$ belongs to a spoke, completing the proof.
Let $\vc$ denote the other vertex adjacent to $\vb$.
We have
$\lambda \w(\vb)=\w(\va)+\w(\vc)$. Thus,
$$\frac{\w(\vc)}{\w(\vb)}=\lambda-\frac{\w(\va)}{\w(\vb)}>\lambda-\big(\frac{\lambda-\sqrt{\lambda^2-4}}{2}\big)^{-1}=\frac{\lambda-\sqrt{\lambda^2-4}}{2}.$$
Thus, equation \ref{eq:ratio_inequality} is satisfied with $\va$ replaced by $\vb$ and $\vb$ replaced by $\vc$. By the claim above, we know $\val(\vc) \leq 2$. 
By induction, we see that either $\{\va,\vb,\vc\}$ is a subset of a spoke, or there is an infinite sequence of vertices
$\{\vx_0=\va, \vx_1=\vb, \vx_2=\vc, \vx_3, \vx_4, \ldots \}$ with each $\vx_j$ for $j \geq 1$ satisfying $\val(\vx_j)=2$, $\vx_j \sim \vx_{j-1}$, and $\vx_j \sim \vx_{j+1}$. 
We will show that $\{\va,\vb,\vc\}$ must be a subset of a spoke, by proving this other possibility is false. Note that the values of $\w(\vx_j)$ is uniquely determined by $\w(\vx_{j-1})$ and $\w(\vx_{j-2})$ according to the rule $\w(\vx_j)+\w(\vx_{j-2})=\lambda \w(\vx_{j-1})$. In particular, the value of each $\w(\vx_j)$ may be determined inductively from $\w(\vx_0)$ and $\w(\vx_1)$.
Any such solution can be written as 
$$\w(\vx_j)=r \big(\frac{\lambda+\sqrt{\lambda^2-4}}{2}\big)^j+s \big(\frac{\lambda-\sqrt{\lambda^2-4}}{2}\big)^j$$
for all $j$ and some $r,s \in \R$. We will now solve for $r$ and $s$. We have $\w(\vx_0)=r+s$. Then,
$$
\begin{array}{rcl}
\displaystyle \frac{\w(\vb)}{\w(\va)} & \displaystyle = & \displaystyle \frac{\w(\vx_1)}{\w(\vx_0)}=
\frac{r \big(\frac{\lambda+\sqrt{\lambda^2-4}}{2}\big)+s \big(\frac{\lambda-\sqrt{\lambda^2-4}}{2}\big)}{\w(\vx_0)}= \\
& \displaystyle = & 
\displaystyle \frac{r \big(\frac{\lambda+\sqrt{\lambda^2-4}}{2}\big)+\big(\w(\vx_0)-r\big) \big(\frac{\lambda-\sqrt{\lambda^2-4}}{2}\big)}{\w(\vx_0)}= 
\big(\frac{\lambda-\sqrt{\lambda^2-4}}{2}\big) + r\sqrt{\lambda^2-4}.
\end{array}
$$
Thus, by equation \ref{eq:ratio_inequality}, we have $r<0$. It follows that there is a $j \in \N$ such that
$\w(\vx_j)<0$. This contradicts our initial assumption that $\w$ is a positive eigenfunction. Thus, 
$\va$ is an element of a spoke.
\end{proof}

\subsection{Absence of saddle connections}
\label{sect:no saddles}

Let $S=S(\G,\w)$ be a surface built as in \S \ref{sect:graphs}. Here, $\w$ is a positive eigenfunction of the adjacency operator with eigenvalue $\lambda$. Recall that $\V=\Alpha \cup \Beta$ is the vertex set of the graph, and each $\vv \in \V$ represents a cylinder,
$\cyl_\vv$, which is horizontal when $\vv \in \Alpha$ and vertical when $\vv \in \Beta$. 

\begin{definition}
The {\em support} of a saddle connection $\sigma$ in $S$ is
the collection $\supp(\sigma) \subset \V$ defined so that
$\vv \in \supp(\sigma)$ if $\sigma$ intersects a core curve of $\cyl_\vv$. 
\end{definition}

Recall Definition \ref{def:spoke} of a spoke.

\begin{definition}
The {\em extended support} of a saddle connection $\sigma$,
denoted $\suppbar(\sigma)$ is the union of the support $\supp(\sigma)$ and all spokes which intersect the support. 
\end{definition}

\begin{lemma}
\label{lemma:support lemma}
Let $\btheta$ be a $\lambda$-renormalizable direction, and let $\langle g_i \rangle$ be its $\lambda$-shrinking sequence. If $\sigma$ is a saddle connection whose holonomy is parallel to $\btheta$, then
$\suppbar\big(\Phi^{g_1}(\sigma)\big) \subset \suppbar(\sigma).$
\end{lemma}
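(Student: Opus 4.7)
The plan is to first reduce the lemma to a single case using the dihedral symmetry of Remark \ref{rem:dihedral}. Since $g_0=e$ and $\langle g_i \rangle$ is a geodesic ray, $g_1 \in \{h, v, h^{-1}, v^{-1}\}$; the dihedral action permutes these, so it suffices to treat $g_1 = h^{-1}$. In this case $\btheta \in \Shrink_\lambda(h^{-1}) \cap \Rn_\lambda$, and by Proposition \ref{prop:limit_set2} we may write $\btheta = (x, y)$ with $0 < y/x < (\lambda - \sqrt{\lambda^2 - 4})/2$. The map $\Phi^{h^{-1}}$ acts as a Dehn twist on each horizontal cylinder (preserving each such cylinder setwise) by Proposition \ref{prop:eigenfunction}, so the horizontal support is already preserved: $\supp(\Phi^{h^{-1}}(\sigma)) \cap \Alpha = \supp(\sigma) \cap \Alpha$. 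Only the vertical part of the support requires further analysis.

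Next I would study a single traversal of $\sigma$ through a horizontal cylinder $\cyl_\va$ with $\va \in \supp(\sigma) \cap \Alpha$. Because the slope of $\sigma$ is parallel to $\btheta$ and bounded as above, each such traversal enters the bottom of $\cyl_\va$ and exits the top, with horizontal displacement $(x/y)\,\w(\va) > ((\lambda + \sqrt{\lambda^2 - 4})/2)\,\w(\va)$. Identifying the bottom boundary of $\cyl_\va$ with $\R/(\lambda \w(\va) \Z)$, where $\lambda \w(\va) = \sum_{\vb \sim \va} \w(\vb)$ is the circumference, the arc of rectangles $R_{\overline{\va \vb}}$ crossed by this traversal has length exceeding $((\lambda + \sqrt{\lambda^2-4})/2)\,\w(\va)$, i.e.\ more than half the circumference. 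Applying $\Phi^{h^{-1}}$ shears the traversal by one full circumference; a direct computation will show the image traversal corresponds precisely to the complementary arc on $\R/(\lambda \w(\va) \Z)$, which has length strictly less than $((\lambda - \sqrt{\lambda^2-4})/2)\,\w(\va)$.

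From this the containment $\supp(\Phi^{h^{-1}}(\sigma)) \subset \suppbar(\sigma)$ will follow. Indeed, any vertical cylinder $\cyl_\vb$ crossed by $\Phi^{h^{-1}}(\sigma)$ inside $\cyl_\va$ but not by $\sigma$ must correspond to a rectangle $R_{\overline{\va \vb}}$ entirely contained in the short complementary arc, so its width satisfies $\w(\vb) < ((\lambda - \sqrt{\lambda^2-4})/2)\,\w(\va)$. By Proposition \ref{prop:detecting_spokes}, $\va$ and $\vb$ then lie on a common spoke, and since $\va \in \supp(\sigma)$, this spoke meets $\supp(\sigma)$, placing $\vb$ in $\suppbar(\sigma)$.

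To promote this to $\suppbar(\Phi^{h^{-1}}(\sigma)) \subset \suppbar(\sigma)$, I would argue that any spoke $S$ meeting $\supp(\Phi^{h^{-1}}(\sigma)) \subset \suppbar(\sigma)$ is itself contained in $\suppbar(\sigma)$: if $S$ meets $\supp(\sigma)$ directly this is immediate, and otherwise $S$ shares a vertex $\vv$ of valance at most two with another spoke $S'$ meeting $\supp(\sigma)$, whence the combinatorics of spokes (each valance-$\leq 2$ vertex lies on an essentially unique maximal spoke, determined by the nearest valance-one vertex) forces $S$ and $S'$ to share the same valance-one endpoint, so $S \subset \suppbar(\sigma)$. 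The main obstacle I expect is the precise arc-complementation computation in the second paragraph, where the sharp bound from Proposition \ref{prop:limit_set2} must line up exactly with the sharp spoke-detection threshold of Proposition \ref{prop:detecting_spokes}; any slackness in either bound would break the implication.
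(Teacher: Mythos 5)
Your proposal is correct and follows essentially the same route as the paper's proof: reduce to $g_1 = h^{-1}$ via the dihedral symmetry, invoke the refined slope bound $0 < y/x < \frac{\lambda - \sqrt{\lambda^2-4}}{2}$ from Proposition \ref{prop:limit_set2}, observe that $\Phi^{h^{-1}}$ preserves horizontal cylinders so only new vertical cylinders can appear, and then use the width-to-slope bookkeeping in a single horizontal cylinder together with Proposition \ref{prop:detecting_spokes} to conclude that any new vertical cylinder sits in a spoke through $\va$. The paper phrases this as a contradiction (pick a would-be new vertex $\vv$, show $\sigma$ would be forced to have slope at least $\frac{\lambda-\sqrt{\lambda^2-4}}{2}$ to avoid $\cyl_\va \cap \cyl_\vv$), while you phrase it constructively via the complementary-arc computation, but the two calculations are the same; in particular your worry about ``slackness'' is resolved exactly as you hoped, since $\frac{2}{\lambda - \sqrt{\lambda^2-4}} = \frac{\lambda + \sqrt{\lambda^2-4}}{2}$ makes the renormalizable-slope bound and the spoke-detection threshold match precisely.

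One small clarification about your third paragraph: the reason a new rectangle $R_{\overline{\va\vb}}$ is \emph{entirely} contained in the short arc (rather than merely having its midline there) is the definition of $\supp$. Since $\vb \notin \supp(\sigma)$ means $\sigma$ misses the open cylinder $\cyl_\vb$ altogether (not just its central core circle), the original traversal of $\cyl_\va$ must avoid the whole strip $R_{\overline{\va\vb}}$, which therefore lies in the closed complementary arc of length $< \frac{\lambda-\sqrt{\lambda^2-4}}{2}\,\w(\va)$. Your final paragraph promoting $\supp$ to $\suppbar$ is a bit more elaborate than needed; the paper sidesteps this by simply choosing a bad vertex inside $\supp(\sigma')$ from the start, and since the new vertices you produce lie on spokes whose far endpoint is already known to be $\va \in \supp(\sigma)$, the containment of spokes follows directly without appealing to ``unique maximal spokes.''
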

\begin{proof}
By Remark \ref{rem:dihedral}, it suffices to consider the case when
$g_1=h^{-1}$. Let $\btheta=(x,y)$. Since $\btheta \in \Shrink_\lambda(h^{-1}) \cap \Rn_\lambda$, by Proposition \ref{prop:limit_set2}, we know that 
\begin{equation}
\label{eq:limit set inequality}
0<\frac{y}{x}<\frac{\lambda-\sqrt{\lambda^2-4}}{2}.
\end{equation}
Now suppose that the statement is not true for some $\sigma$.
Let $\sigma'=\Phi^{g_1}(\sigma)$. If our statement is false,
there is a $\vv \in \suppbar(\sigma') \smallsetminus \suppbar(\sigma)$. 

We first claim that we can assume $\vv \in \supp(\sigma') \smallsetminus \suppbar(\sigma)$. Otherwise
$\vv$ would lie in a spoke which intersects $\supp(\sigma')$
but not $\supp(\sigma)$. But, if this is the case, we can replace our choice of $\vv$ with a vertex in this intersection. 

Now we claim that $\vv \in \Beta$. Indeed, the effect of
applying $\Phi^{g_1}=\Phi^{h^{-1}}$ is to simultaneously left Dehn twist all horizontal cylinders. It follows that $\sigma'$ intersects
the same horizontal cylinders that $\sigma$ does. So,
if $\vv \in \Alpha$, then $\vv \in \supp(\sigma')$ implies
$\vv \in \supp(\sigma)$. 

We now know that $\vv \in \supp(\sigma')$ and $\vv \in \Beta$. 
Note that because $\sigma'$ is not horizontal or vertical, 
there must be a vertex $\va \in \supp(\sigma') \cap \Alpha$ so that $\va$ is adjacent to $\vv$. Again, because $\supp(\sigma) \cap \Alpha=\supp(\sigma') \cap \Alpha$, we know that $\va \in \supp(\sigma)$. 
Consider the cylinder $\cyl_\va$, which intersects the vertical cylinder $\cyl_\vv$. Note that $\va$ and $\vv$ can not be elements
of the same spoke (or else $\va \in \supp(\sigma)$ implies $\vv \in \suppbar(\sigma)$). Therefore Proposition \ref{prop:detecting_spokes} implies that
$$\frac{\w(\vv)}{\w(\va)} \geq \frac{\lambda-\sqrt{\lambda^2-4}}{2}.$$
So, there is at least one rectangle in the intersection
$\cyl_\va \cap \cyl_\vv$ and its width is non-strictly greater than
$\frac{\lambda-\sqrt{\lambda^2-4}}{2} \w(\va)$. Since the circumference of $\cyl_\va$ is $\lambda \w(\va)$, it follows that any geodesic segment of positive slope crossing from the bottom of $\cyl_{\va}$ to the top without passing through the interior of $\cyl_\va \cap \cyl_\vv$
has slope greater than or equal to
$$\frac{\w(\va)}{\lambda \w(\va)-\frac{\lambda-\sqrt{\lambda^2-4}}{2} \w(\va)}=\frac{2}{\lambda+\sqrt{\lambda^2-4}}=\frac{\lambda-\sqrt{\lambda^2-4}}{2}.$$
But, $\sigma$ is supposed to be such a segment. Moreover, 
$\sigma$ points in the direction of $\btheta$, which satisfies equation \ref{eq:limit set inequality}. This is a contradiction.
\end{proof}

Now we will show that there are no saddle connections
on $S(\G,\w)$ which point in $\lambda$-renormalizable directions.

\begin{proof}[Theorem \ref{thm:no saddles}]
Suppose to the contrary that there is a surface $S(\G,\w)$
with $\w$ an eigenfunction with eigenvalue $\lambda$, a $\lambda$-renormalizable direction $\btheta$, and a saddle connection $\sigma$ whose holonomy is parallel to $\btheta$. 
Let $\langle g_i \rangle$ be the $\lambda$-shrinking sequence of $\btheta$. 
Observe that the extended support $\suppbar(\sigma)$ is a finite set. Furthermore, by inductively applying Lemma \ref{lemma:support lemma}, we see that
$$\suppbar\big(\Phi^{g_i}(\sigma)\big) \subset \suppbar(\sigma).$$
Thus, each $\Phi^{g_i}(\sigma)$ is a saddle connection contained in a finite union of cylinders indexed by $\suppbar(\sigma)$. We conclude that 
\begin{equation}
\label{eq:bar inequality}
\|\hol\big(\Phi^{g_i}(\sigma)\big)\| \geq \min~\{\w(\vv)~:~\vv \in \suppbar(\sigma)\}>0\quad \text{for all $i$.}
\end{equation}
On the other hand, we know that $\hol(\sigma)=c \btheta$ for some $c \neq 0$. Therefore,
$$\hol \big(\Phi^{g_i}(\sigma)\big)=c \rhoa{g_i}(\btheta)\quad \text{for all $i$.}$$
But, Theorem \ref{thm:shrinking} stated that $\lim_{i \to \infty} \|\rhoa{g_i} (\btheta)\|=0$, which contradicts equation \ref{eq:bar inequality}.
\end{proof}

\section{Conservativity of the straight-line flow}
\label{sect:recurrence}
The goal of this section is to prove Theorem \ref{thm:pr}. By hypothesis,
$S=S(\G,\w)$ is a surface built as in \S \ref{sect:graphs},
with $\w$ and eigenfunction for the adjacency operator with eigenvalue $\lambda \geq 2$, and $\bm \theta$ is a $\lambda$-renormalizable direction. The theorem concludes that the straight line flow in direction $\bm \theta$ is conservative.

The following lemma describes the technique we use to prove conservativity. For this lemma, let $S$ be a translation surface described as a countable union of polygons with edge identifications. We refer to a subset of $S$ as {\em bounded} if it is contained in a finite union of the polygons
making up $S$. We let $\bm \theta \in S^1$ be a direction and let $\mu$ be the Lebesgue transverse measure to the foliation in direction $\bm \theta$.

\begin{lemma}[Criterion for conservativity]
\label{lem:recurrence_criterion}
Suppose that for all bounded subsets $K \subset S$ and all $\epsilon>0$, there is a bounded subset $U \subset S$ such that $K \subset U$ and $\mu(\partial U) < \epsilon$.
Then the straight line flow in direction $\bm \theta$ is conservative.
\end{lemma}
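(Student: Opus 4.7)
I plan to argue by contradiction using Halmos's characterization of conservativity via wandering sets. If $F_{\bm\theta}^t$ fails to be conservative, then there exists a measurable set $W\subset S$ of positive Lebesgue area whose integer-time translates $\{F^n(W)\}_{n\in\mathbb{Z}}$ are pairwise disjoint (up to null sets); by $\sigma$-finiteness of $S$ I may take $W$ bounded, and set $a:=\text{area}(W)>0$.

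Applying the hypothesis with $K=W$ and $\epsilon>0$ yields a bounded $U\supset W$ with $\mu(\partial U)<\epsilon$. The crucial input is a flux estimate
\[
\text{area}\bigl\{x\in U : F^n(x)\notin U\bigr\} \;\leq\; n\,\mu(\partial U),
\]
which I obtain by parametrizing the swept-out region $\bigcup_{t\in[0,n]}F^{-t}(\partial U)$ through the flow and applying Fubini with $\mu$ as the transverse measure in the $\bm\theta$-direction: every point whose forward orbit leaves $U$ within time $n$ must cross $\partial U$ at some $t\in(0,n]$ and hence lies in that swept-out set, whose two-dimensional area is bounded by $n\,\mu(\partial U)$.

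Setting $W_n:=\{x\in W : F^n(x)\in U\}$, the flux bound gives $\text{area}(W_n)\geq a-n\epsilon$. Since $W$ is wandering, the sets $F^n(W_n)\subset U$ are pairwise disjoint, so
\[
\text{area}(U)\;\geq\;\sum_{n=0}^{N}\text{area}(W_n)\;\geq\;\sum_{n=0}^{N}(a-n\epsilon)_+.
\]
For any fixed $N$, taking $\epsilon$ small drives the right-hand side arbitrarily close to $(N+1)a$, forcing $\text{area}(U)\to\infty$ along any sequence $\epsilon_n\to 0$ permitted by the hypothesis; equivalently the packing inequality $\text{area}(U)\cdot\mu(\partial U)\gtrsim a^2/2$ must hold for every bounded $U\supset W$.

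The main obstacle is to upgrade this unbounded packing into an outright contradiction, since the hypothesis in general places no upper bound on $\text{area}(U)$. I plan to address this by iterating the hypothesis: replace $W$ with the bounded disjoint union $K_m:=\bigsqcup_{k=0}^{m-1}F^k(W)$ of area $ma$, extract bounded $U_m\supset K_m$ with $\mu(\partial U_m)<\epsilon_m$, and balance the choice of $(m,\epsilon_m)$ so that the resulting simultaneous packing estimates in $U_m$ force the existence of a non-trivial flow-invariant set of positive finite Lebesgue measure inside $S$, which would contradict the assumed wandering of $W$. I expect this final bookkeeping — squeezing a globally invariant remainder out of the locally almost-invariant sets $U_m$ — to be the crux of the proof.
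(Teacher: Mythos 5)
Your proposal has a genuine, and indeed acknowledged, gap: the flux/packing argument estimates $\text{area}(U)$ from below, but the hypothesis gives no upper bound on $\text{area}(U)$, so the chain of inequalities never closes. The proposed fix via iterating over $K_m = \bigsqcup_{k<m} F^k(W)$ and ``squeezing a globally invariant remainder'' is a sketch of an idea rather than an argument; as it stands, nothing forces the $U_m$ to stabilize or to produce an invariant set.

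The paper's proof sidesteps this entirely by never comparing two-dimensional areas against each other. Instead, pick a bounded transversal $K$ to the $\btheta$-foliation with $\mu(X)>0$ for a putative wandering $X\subset K$ (here $\mu$ is the \emph{one-dimensional} transverse Lebesgue measure), and choose a bounded $U \supset K$ with $\mu(\partial U) < \mu(X)$. Define, for $x\in X$, the first positive backward-flow time $t(x)$ at which the orbit meets $X \cup \partial U$. The finiteness of $\text{area}(U)$ is used only to conclude, via $\int_X t(x)\,d\mu(x) \leq \text{area}(U)$, that $t(x)<\infty$ $\mu$-a.e., so that the first-return map $f\colon X \to X\cup\partial U$, $f(x)=F_{-\btheta}^{t(x)}(x)$, is defined. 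This map preserves the transverse measure $\mu$, so $\mu\bigl(f(X)\bigr)=\mu(X)>\mu(\partial U)$, forcing $\mu\bigl(f(X)\cap X\bigr)>0$ and contradicting the wandering of $X$. The contradiction is obtained at the level of the transverse measure, where $\mu(\partial U)$ is small by hypothesis, rather than at the level of area, where $U$ is uncontrolled. Recasting your argument in terms of a measure-preserving return map on the transversal, instead of a time-$n$ flux estimate on two-dimensional sets, is precisely what resolves the obstacle you identified.
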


The proof uses ideas from \cite[Proof of Theorem 1]{Tro04}. See \cite[Proof of Lemma 15]{Higl} for another variant of a proof.

\begin{proof}
It suffices to consider a bounded transversal $K$ to the foliation in direction $\btheta$ and demonstrate that there is no wandering $X \subset K$ with $\mu(X)>0$. ($X$ is {\em wandering} if no backward orbit of a point in $X$ returns to $X$.) Suppose not. By hypothesis we can find a bounded subset $U$ containing $K$ so that $\mu(\partial U)<\mu(X)$. Consider the backward straight line flow $F_{-\btheta}^t$ applied to $x \in X$. Let $t(x) \in \R \cup \{\infty\}$ be the first positive time the trajectory hits $X \cup \partial U$,
or $\infty$ if it never hits.
Observe that the portion of the trajectories $F_{-\btheta}^t(x)$ with $0 \leq t < t(x)$ are disjoint and contained entirely within $U$. The measure of the union of such trajectories is $\int_X t(x)~d\mu(x)$, which is bounded by the area of $U$. We conclude that $t(x)$ is almost everywhere finite. Now consider the $\mu$-a.e. defined map
$f:X \to X \cup \partial U$ defined by $f(x)=F_{-\btheta}^{t(x)}(x)$. This map is measure preserving in the sense that $\mu \big( f(A)\big)=\mu(A)$ for all measurable $A \subset X$. So because of our choice of $U$, we know 
$$\mu\big(f(X) \cap \partial U\big) \leq \mu (\partial U) < \mu(X).$$
We conclude that $\mu\big(f(X) \cap X\big)>0$, but this contradicts our original assumption that $X$ was wandering. 
\end{proof}

Let us restate the lemma in the context of our work. We let $S=S(\G,\w)$ and assume $\btheta$ is $\lambda$-renormalizable. 
For any bounded set $K$, we will find a nested sequence of bounded sets $U_1 \subset U_2 \subset \ldots \subset S$ so that $K \subset U_1$. The lemma tells us that if $\liminf_{n \to \infty} \mu(\partial U_n) =0$, then the straight-line flow in direction $\bm \theta$ is conservative.

By Lemma \ref{lem:compatible directions}, a $\lambda$-renormalizable direction $\bm \theta$ has a unique $\lambda$-shrinking sequence $\langle g_0, g_1, \ldots \rangle$. By an action of the dihedral group, we may assume that $g_1=h$ (see Remark \ref{rem:dihedral}). Our sequence of sets $U_i$ will be defined
using a natural subsequence $\gamma_n=g_{i(n)}$ of the shrinking sequence. We will inductively define this subsequence.
We define $i(0)=0$ so that $\gamma_0=e$, the identity. Recall that $g_i \circ g_{i-1}^{-1} \in \{h, h^{-1}, v, v^{-1}\}$ for all $i \geq 1$. 
For $n>0$, inductively define $$i(n)=\min \{ j>i(n-1) ~:~g_j \circ g_{j-1}^{-1} \neq g_{j+1} \circ g_{j}^{-1}\}.$$
Such an $i(n)$ exists because $\btheta$ is $\lambda$-renormalizable by Theorem \ref{thm:characterization}.
For example if 
$$\langle g_i \rangle=\langle e, h, h^2, vh^2, v^2 h^2, v^3 h^2, h^{-1}v^3 h^2, \ldots \rangle
\quad \textrm{then} \quad
\langle \gamma_n \rangle=\langle e, h^2, v^3 h^2, \ldots \rangle.$$
In particular, $\gamma_n \circ \gamma_{n-1}^{-1}$ is a non-zero power of $h$ when $n$ is odd and a non-zero power of $v$ when $n$ is even.

Recall that for a vertex $v \in \V$, $\cyl_v \subset S$ denotes the cylinder associated to $v$. 
Recall $\cyl_a$ is horizontal for $a \in \Alpha$ and $\cyl_b$ is vertical for $b \in \Beta$. 
We will now define subsets $V_n \subset \V$. Recall $K \subset S$ is bounded. 
Thus, the following set is finite:
$$V_0=\{a \in \Alpha~:~\cyl_a \cap K \neq \nullset\}.$$
For $i>0$ inductively define 
$$V_n=\{v \in \V~:~v \sim w \textrm{ for some $w \in V_{n-1}$}\}.$$
Note that $V_n \subset \Alpha$ for $n$ even, and $V_n \subset \Beta$ for $n$ odd. 
It can be observed inductively that each $V_n$ is finite because $\G$ has bounded valance. 

Using the $V_n$ we define subsets of $U_n \subset S$. Let $X_n=\bigcup_{v \in V_n} \cyl_v$. 
Note that $X_{n-1} \subset X_n$ for all $n \geq 1$. 
We define
$$U_n=\Phi^{\gamma_n^{-1}}(X_n).$$
We can see that these sets are nested by noting that 
$$U_n=\Phi^{\gamma_{n-1}^{-1}} \circ \Phi^{(\gamma_n \gamma_{n-1}^{-1})^{-1}}\big(X_n\big)
\supset \Phi^{\gamma_{n-1}^{-1}} \circ \Phi^{(\gamma_n \gamma_{n-1}^{-1})^{-1}}\big(X_{n-1}\big)=U_{n-1}.$$
The last equality follows from two statements. First, $X_{n-1}$ is a union of horizontal cylinders when $n$ is odd, and a union of
vertical cylinders when $n$ is even. And second, $\gamma_n \gamma_{n-1}$ is a power of $h$ when $n$ is odd, and a power of $v$ when $n$ is even.
Thus,  $\Phi^{(\gamma_n \gamma_{n-1})^{-1}}\big(X_{n-1}\big)=X_{n-1}$ for all $n$.

The $U_n$ are likely getting larger with much longer boundary measured using Euclidean length. But, $\partial U_n$ is getting closer to pointing in the direction $\bm \theta$. We will show that this convergence in direction is happening so fast that the Lebesgue transverse measure in direction $\bm \theta$ of $\partial U_n$ decays to zero.

\begin{proposition}[Boundary growth]
\label{prop:boundary_growth}
Let $\ell_n$ denote the Euclidean length of $\partial X_n$.
If $\lambda>2$ then there is a constant $c$ such that
$$\ell_n \leq c \Big(\frac{\lambda+\sqrt{\lambda^2-4}}{2}\Big)^n$$
for all $n$. If $\lambda=2$ then there is a $c$ for which $\ell_n<cn$ for all $n$.
\end{proposition}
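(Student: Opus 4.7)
The plan is to write $\ell_n$ as a weighted sum over the combinatorial boundary of the ball $V_n$ in $\G$, and then estimate that sum using the eigenvalue equation $\A\w=\lambda\w$ together with the spoke analysis of Subsection~\ref{sect:facts}. I treat only the even case, as the odd case follows by symmetry from Remark~\ref{rem:dihedral}.

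For $n$ even, $V_n\subset\Alpha$ and $X_n$ is a union of horizontal cylinders, so the vertical edges of the rectangular decomposition lie in the interior of $X_n$ or its complement. Thus $\partial X_n$ consists entirely of horizontal edges. A horizontal edge---the top of $R_e$ and the bottom of $R_{\North(e)}$---lies on $\partial X_n$ precisely when exactly one of $\alpha(e), \alpha(\North(e))$ lies in $V_n$, and its Euclidean length is $\w(\beta(e))$. Each such edge is contained in $\partial\cyl_{a'}$ for a unique $a'\in V_{n+2}\sm V_n$, hence
\begin{equation*}
  \ell_n \;\leq\; \sum_{a'\in V_{n+2}\sm V_n}|\partial\cyl_{a'}|
  \;=\; 2\lambda\sum_{a'\in V_{n+2}\sm V_n}\w(a'),
\end{equation*}
where the final equality uses $|\partial\cyl_{a'}|=2\lambda\w(a')$, itself an immediate consequence of $\A\w=\lambda\w$. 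The proposition reduces to showing that the $\w$-mass $T_m$ of the shell of vertices at graph distance exactly $m$ from $V_0$ satisfies $T_m=O(\zeta^m)$ when $\lambda>2$ and $T_m=O(m)$ when $\lambda=2$, where $\zeta=(\lambda+\sqrt{\lambda^2-4})/2$.

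To bound $T_m$, I would partition $\V$ into ``body'' vertices (those not on any spoke) and vertices on spokes. Proposition~\ref{prop:detecting_spokes} forces adjacent body--body ratios $\w(\vy)/\w(\vx)$ to lie in $[1/\zeta,\zeta]$, while Proposition~\ref{prop:eig_spokes} provides the explicit formula $\w(\vv_j)=\sinh(jz_0)/\sinh(z_0)\cdot\w(\vv_1)$ with $\cosh(z_0)=\lambda/2$ for spokes, whose asymptotic rate at depth $j$ is $O(\zeta^j)$. Summing $\A\w=\lambda\w$ over each shell, and partitioning each vertex's neighbors into inner (closer to $V_0$) and outer contributions, produces a Chebyshev-type recursive inequality of the form $T_{m+1}+T_{m-1}\leq\lambda\,T_m+R_m$. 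The characteristic polynomial $x^2-\lambda x+1$ has roots $\zeta,\zeta^{-1}$ for $\lambda>2$ and a double root at $1$ for $\lambda=2$, so the homogeneous part of the solution contributes exactly the stated asymptotics.

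The main obstacle will be controlling the remainder $R_m$ arising from spoke--body interactions: in principle many spokes could meet a single shell, and naive bounds would overcount. The key observation that makes this tractable is that each spoke terminates in a unique valence-one tip and a body vertex of bounded valence can be the root of at most boundedly many spokes; combined with the explicit geometric decay $\sinh(jz_0)/\sinh(z_0)=O(\zeta^j)$, this absorbs the total spoke contribution to $T_m$ into the recursive bound without disturbing its asymptotic rate. Inserting the resulting estimate for $T_{n+2}$ into the display above yields the stated bounds on $\ell_n$.
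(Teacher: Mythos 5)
Your proposal takes a genuinely different route from the paper's, and it has a gap. The paper works directly with the boundary lengths $\ell_n$: it establishes the Chebyshev-type recursion $\ell_{n+1}\le\lambda\ell_n-\ell_{n-1}$ by a purely geometric argument (every cylinder $\cyl_v$ with $v\in W_n=V_{n+1}\sm V_{n-1}$ crosses $\partial X_n$ at least twice, giving $\sum_{v\in W_n}\mathrm{length}(\partial\cyl_v)\le\lambda\ell_n$, while $\partial X_{n-1}\cup\partial X_{n+1}\subset\bigcup_{v\in W_n}\partial\cyl_v$ essentially disjointly), and then feeds this into Claim~\ref{claim:growth}. The spoke propositions play no role. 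You instead bound $\ell_n$ by the $\w$-mass of a graph shell and try to run an analogous recursion on shell masses $T_m$.

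The gap is in the shell-mass recursion. Writing $\lambda T_m=\sum_{d(v)=m}\sum_{w\sim v}\w(w)$ and splitting the inner sum into outward ($d(w)=m+1$) and inward ($d(w)=m-1$) terms, the outward part dominates $T_{m+1}$; but the inward part equals $\sum_{d(w)=m-1}\bigl(\text{\# neighbors of $w$ at distance $m$}\bigr)\,\w(w)$, which need \emph{not} dominate $T_{m-1}$: a vertex at distance $m-1$ can have all of its neighbors at distance $m-2$. This happens already in graphs with all valences $\ge 2$ --- any short cycle passing near $V_0$ (e.g.\ $a\sim b_1\sim a'\sim b_2\sim a$ with $a\in V_0$ and $\mathrm{val}(a')=2$) produces a shell vertex with zero outward neighbors. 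So the deficiency $R_m$ in your inequality $T_{m+1}+T_{m-1}\le\lambda T_m+R_m$ is not a spoke phenomenon, and the spoke-decay estimates of Propositions~\ref{prop:eig_spokes} and~\ref{prop:detecting_spokes} do not address it. (Note also that Proposition~\ref{prop:boundary_growth} is invoked in the proof of Theorem~\ref{thm:pr}, which carries no valence-one hypothesis, so an argument that leans on spokes must at least be framed so as not to require one.) Either you need to replace $T_m$ by an edge-weighted quantity for which the three-term inequality actually closes, or you need a separate mechanism to absorb the ``trapped-vertex'' contribution; the paper avoids the whole issue by staying with $\ell_n$, where the topology of cylinders (a core curve that enters $X_n$ must exit) enforces the inequality without any combinatorics on $\w$.
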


In order to prove this proposition, we first verify the following:

\begin{claim}
\label{claim:growth}
Let $\lambda \geq 2$ and assume that $\langle a_0, a_1, \ldots \rangle$ is a sequence of non-negative real numbers satisfying
$a_{n+1} \leq \lambda a_n - a_{n-1}$.
If $\lambda>2$ then there is a constant $c$ such that
$$a_n \leq c \Big(\frac{\lambda+\sqrt{\lambda^2-4}}{2}\Big)^n$$
for all $n$. If $\lambda=2$ then there is a $c$ for which $a_n<cn$ for all $n$.
\end{claim}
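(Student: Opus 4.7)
The plan is to treat the inequality as a discrete super-solution for the linear recurrence $b_{n+1} = \lambda b_n - b_{n-1}$, whose characteristic polynomial $x^2 - \lambda x + 1 = 0$ has roots $\mu_\pm = \frac{\lambda \pm \sqrt{\lambda^2-4}}{2}$ satisfying $\mu_+ \mu_- = 1$. When $\lambda > 2$, one has $\mu_+ > 1 > \mu_- > 0$, and the claim asserts $a_n = O(\mu_+^n)$; when $\lambda = 2$, both roots collapse to $1$, and the claim asserts $a_n = O(n)$. In either case I would reduce the problem to a one-step inequality on the discrete first difference $\delta_n := g_n - g_{n-1}$ of an appropriately rescaled sequence $g_n$.

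For $\lambda > 2$, set $g_n = \mu_+^{-n} a_n$, so $a_n = g_n \mu_+^n$ and $g_n \geq 0$. Using $\mu_- = 1/\mu_+$ and $\lambda = \mu_+ + \mu_-$, a direct calculation turns $a_{n+1} \leq \lambda a_n - a_{n-1}$ into
\[
g_{n+1} - g_n \;\leq\; \mu_-^2 \, (g_n - g_{n-1}).
\]
Writing $\delta_n = g_n - g_{n-1}$ and $x^+ := \max(x,0)$, I would prove by induction on $n$ that $\delta_{n+1} \leq \mu_-^{2n}\,\delta_1^+$; the sign case is handled by the observation that when $\delta_n \leq 0$ we have $\mu_-^2 \delta_n \leq 0 \leq \mu_-^{2n}\delta_1^+$ automatically. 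Telescoping and then summing the geometric series $\sum_{k \geq 0}\mu_-^{2k}$ bounds $g_n$ uniformly by the constant $c = g_0 + \delta_1^+/(1-\mu_-^2)$, which gives $a_n \leq c \mu_+^n$ as desired.

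For $\lambda = 2$, the inequality reduces directly to $a_{n+1} - a_n \leq a_n - a_{n-1}$, so the first differences $d_n = a_n - a_{n-1}$ are non-increasing and therefore $d_n \leq d_1 = a_1 - a_0$ for every $n \geq 1$. Telescoping gives $a_n \leq a_0 + n(a_1 - a_0)$. Because the hypothesis forces $a_n \geq 0$ for all $n$, the case $a_1 < a_0$ would drive this bound strictly negative in finitely many steps; hence $a_1 \geq a_0$, and the linear bound $a_n \leq c n$ follows for any $c$ at least $\max(a_0, a_1)$ once $n \geq 1$ (and for $n=0$ by enlarging $c$ if necessary).

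I do not anticipate a genuine obstacle here: the whole argument is an elementary analysis of a two-term scalar inequality. The only real care is keeping track of signs in the $\lambda > 2$ step, since $\delta_{n+1} \leq \mu_-^2 \delta_n$ behaves very differently depending on whether $\delta_n$ is positive or negative; isolating the positive part $\delta_1^+$ at the outset sidesteps the issue cleanly and is what makes the geometric-series telescoping work regardless of whether $g_1 \geq g_0$.
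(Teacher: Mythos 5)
Your proof is correct, and it takes a genuinely different route from the paper's. The paper argues \emph{geometrically in the plane}: it tracks the pair $(a_{n-1},a_n)$ under the linear map $\phi(x,y)=(y,\lambda y - x)$, whose eigenvectors are $(1,\omega)$ and $(\omega,1)$ with $\omega=\frac{\lambda+\sqrt{\lambda^2-4}}{2}$, observes that non-negativity of the $a_n$ forces the ratio $a_n/a_{n-1}$ to stay at or above $\omega^{-1}$ (otherwise the trajectory is ejected from the positive quadrant), and then exhibits a parallelogram $P$ with $\phi(P)\subset \omega P$ so that $(a_n,a_{n+1})\in c\,\omega^n P$ by induction. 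You instead rescale to $g_n=\mu_+^{-n}a_n$, which turns the two-term super-recurrence into the one-dimensional inequality $\delta_{n+1}\leq\mu_-^2\,\delta_n$ on first differences, and close by an induction on $\delta_{n}\leq \mu_-^{2(n-1)}\delta_1^+$ (isolating the positive part correctly handles the fact that the difference sequence can switch sign once, from positive to non-positive, and then stays non-positive) followed by a geometric series. Both arguments rest on exactly the same spectral data $\mu_\pm=\omega^{\pm1}$, but yours reduces the problem to a scalar telescoping estimate and avoids any picture-drawing, while the paper's version gives a somewhat more visual account of how the phase point migrates; your version also makes transparent that the $\lambda=2$ case is the degenerate limit $\mu_-\to 1$ in which the geometric series becomes linear growth. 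One shared imprecision worth flagging: the statement $a_n<cn$ literally fails at $n=0$ unless $a_0=0$, and your suggested patch of ``enlarging $c$'' does not cure it; the paper's own proof has the same issue, and the intended reading is clearly $n\geq 1$ (which is all that is used downstream in Proposition \ref{prop:boundary_growth}).
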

\begin{proof}
We consider the linear map $\phi(x,y)=(y,\lambda y-x)$. We note that $(a_n,a_{n+1})$ has
the same $x$ coordinate and non-strictly smaller $y$ coordinate than $\phi(a_{n-1},a_{n})$. 
Let $\omega=\frac{\lambda+\sqrt{\lambda^2-4}}{2}$. 
The eigenvectors of $\phi$ are ${\mathbf v_1}=(1, \omega)$ and ${\mathbf v_2}=(\omega,1)$. They satisfy
$\phi(\mathbf v_1)=\omega \mathbf v_1$ and $\phi(\mathbf v_2)=\omega^{-1} \mathbf v_2$.
We first note that if $\frac{a_n}{a_{n-1}}<\omega^{-1}$ for some $n$, then there is no way to infinitely continue
the sequence so that $a_{n} \geq 0$ for all $n$. This because for every point $(x,y)$ with $\frac{y}{x}<\omega^{-1}$
is eventually ejected from the positive quadrant by a power of $\phi$. This is illustrated in the left half of Figure \ref{fig:growth}. 
Moreover, lowering the $y$-coordinate along the way, will only result in the vector being ejected from the quadrant faster.

Now assume $\lambda>2$. By the above argument, we know that $\frac{a_n}{a_{n-1}} \geq \omega^{-1}$ for all $n$. Let $P$ denote the closed parallelogram constructed from the convex
hull of the points $(0,0)$, $\omega^{-1} \mathbf v_2=(1,\omega^{-1})$, $\mathbf v_1$ and $(0,\omega-1)$. We note that $\phi(P) \subset \omega P$. 
See the right half of Figure \ref{fig:growth}. 
Moreover
$$\{(x,y)~:~\exists (x_0, y_0) \in P \textrm{ such that } x=y_0, y<\lambda y_0-x_0 \textrm{ and } \frac{y}{x} \geq \omega^{-1} \}\subset \omega(P).$$
We may assume that $(a_0,a_1) \in c P$ for some $c>0$. 
By induction, we conclude that $(a_{n}, a_{n+1}) \in c \omega^n P$ for all $n$. The conclusion for $\lambda>2$ follows.

When $\lambda=2$, we set $P_k$ to be the convex hull of the points $(0,0)$, $(k,k)$, $(k,k+1)$, and $(0,1)$.  We may check that
$\phi(P_k) \subset P_{k+1}$ for all $k>0$. Moreover, 
$$\{(x,y)~:~\exists (x_0, y_0) \in P_k \textrm{ such that } x=y_0, y<2 y_0-x_0 \textrm{ and } \frac{y}{x} \geq 1\}\subset P_{k+1}.$$
Assuming $(a_0,a_1) \in c P_0$, we have $(a_{n}, a_{n+1}) \in c P_n$ for all $n$, and the
conclusion follows.
\end{proof}

\begin{figure}[ht]
\begin{center}
\includegraphics[width=6in]{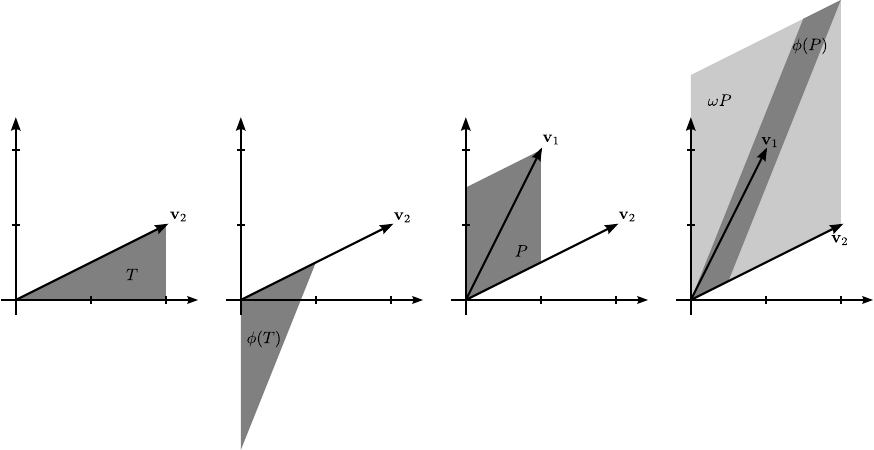}
\caption{This figure illustrates the action of $\phi$ from the proof of Claim \ref{claim:growth} on $\R^2$ in the case of $\lambda=\frac{5}{2}$. 
The left two pictures illustrate a triangle $T$ of points satisfying $\frac{y}{x}<\omega^{-1}$ being (eventually) ejected from the positive quadrant.
The right two pictures indicate how $\phi(P) \subset \omega P$.}
\label{fig:growth}
\end{center}
\end{figure}

\begin{proof}[Proof of Proposition \ref{prop:boundary_growth}]
We will show that the numbers $\ell_n$ are related by the inequality
\begin{equation}
\label{eq:recurrence_eqn}
\ell_{n+1} \leq \lambda \ell_n - \ell_{n-1}.
\end{equation}
By Claim \ref{claim:growth} above, this is sufficient to guarantee the result.
Consider $X_n$ with boundary of length $\ell_n$. Let 
$W_n=V_{n+1} \smallsetminus V_{n-1}$. 
The set of all $\cyl_v$ for $v \in W_n$ is the set of all horizontal or vertical cylinders which cross $\partial X_n$. (These cylinders are all horizontal if $n$ is odd and all vertical if $n$ is even.) As every such cylinder has inverse modulus
$\lambda$, we know that
$$\sum_{v \in W_n} \textit{length}(\partial \cyl_v) \leq \lambda \ell_n.$$
(Each such cylinder $\cyl_v$ for $v \in W_n$ crosses $\partial X_n$ at least twice. A cylinder which crosses segments of length $l$, must have circumference $\lambda l$.)
Moreover, 
$$X_{n+1}=X_{n-1} \cup \bigcup_{v \in W_n} \cyl_v.$$ 
Explicitly, $X_{n+1}$
is formed from $X_{n-1}$ by attaching horizontal (or vertical) cylinders to the horizontal  (or vertical) boundaries of $X_{n-1}$.
Thus, we have that 
$$\partial X_{n-1} \cup \partial X_{n+1} \subset \bigcup_{b \in W_n} \partial \cyl_v.$$
The sets $\partial X_{n-1}$ and $\partial X_{n+1}$ are disjoint other than they may contain common singular points. Hence equation \ref{eq:recurrence_eqn} holds.
\end{proof}

Now as $U_n=\Phi^{\gamma_n^{-1}}(X_n)$, we can compute the Lebesgue transverse measure of the boundary of $U_n$ in terms of the unit vector $\bm \theta$ and $\ell_n$.
Recall that the derivative $D(\Phi^g)$ is $\rhoa{g}$. From this observation, we have
\begin{equation}
\mu(\partial U_n)=\begin{cases}
|\rhoa{\gamma_n^{-1}}(\ell_n,0) \wedge \bm \theta| & \textrm{if $n$ is even} \\
|\rhoa{\gamma_n^{-1}}(0,\ell_n) \wedge \bm \theta| & \textrm{if $n$ is odd} 
\end{cases}=\begin{cases}
|(\ell_n,0) \wedge \rhoa{\gamma_n}(\bm \theta)| & \textrm{if $n$ is even} \\
|(0,\ell_n) \wedge \rhoa{\gamma_n}(\bm \theta)| & \textrm{if $n$ is odd}, 
\end{cases}
\end{equation}
where $(a,b)\wedge (c,d)=ad-bc$ is the usual wedge product in the plane. In any case, we have
\begin{equation}
\mu(\partial U_n) \leq \ell_n \| \rhoa{\gamma_n}(\bm \theta) \|.
\end{equation}
The proof of conservativity proceeds by observing that along a subsequence $\| \rhoa{\gamma_n}(\bm \theta) \|$ decays faster than $\ell_n$ grows,
and therefore Lemma \ref{lem:recurrence_criterion} applies. 

We define a collection of troublesome elements of the group $G$. Let 
$$\begin{array}{rcl}
{\mathcal T} & = & 
\{(hv^{-1})^k\} \cup \{v^{-1} (hv^{-1})^k\} \cup 
\{(h^{-1} v)^k\} \cup \{v (h^{-1} v)^k  \} \cup \\
& & \{(vh^{-1})^k\} \cup \{h^{-1} (vh^{-1})^k\} \cup 
\{(v^{-1} h)^k\} \cup \{h (v^{-1} h)^k \},\end{array}$$
where $k$ is allowed to range over the set $\{0,1,2,3, \ldots\}$. We define a subsequence $\langle \gamma_{n_j} \rangle$ by
$n_0=0$ and
$$n_j=\min \{n_j>n_{j-1}~:~\gamma_{n_j} \gamma_{n_{j-1}}^{-1} \not \in {\mathcal T}\}.$$
Note that $n_j$ is well defined for all $j$ so long as $\bm \theta$ is a $\lambda$-renormalizable direction by Theorem \ref{thm:characterization}.
(If $n_j$ is not well defined for some $j$, then $\langle g_n \rangle$ is tail equivalent to one of last two pair of sequences in the theorem.)

Given the above arguments, the following two lemmas imply the conservativity of the straight-line flow in a $\lambda$-renormalizable direction $\bm \theta$. (That is,
they imply Theorem \ref{thm:pr}.) The first lemma handles the case of $\lambda=2$, and the second handles the case of $\lambda>2$ which is made more technical because we have to work with the varying values of $\lambda$. The proofs of both these lemmas use the same ideas. 
\begin{lemma}
\label{lem:theta_decay2}
Assume $\lambda=2$. Then, there is a positive $\epsilon<1$ depending such that for all $j$
$$\eta(n_1) \eta(n_2-n_1) \ldots \eta(n_j-n_{j-1}) \| \rho_2^{\gamma_{n_j}}(\bm \theta) \| \leq \epsilon^j \quad \textrm{for all $j$},$$
where $\eta(1)=\frac{3}{2}$ and $\eta(n)=n$ for integers $n \geq 2$. 
\end{lemma}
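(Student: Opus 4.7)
The plan is to reduce the product estimate to a one-step contraction bound and then induct on $j$. For each $j\geq 1$, set $g_j=\gamma_{n_j}\gamma_{n_{j-1}}^{-1}$ and $\v_{j-1}=\rho_2^{\gamma_{n_{j-1}}}(\btheta)$. The lemma will follow immediately from a telescoping product once we produce an $\epsilon\in(0,1)$, independent of $j$ and $\btheta$, such that
$$\eta(n_j-n_{j-1})\,\|\rho_2^{g_j}(\v_{j-1})\|\leq \epsilon\,\|\v_{j-1}\|.$$

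To analyze $g_j$, I would first exploit that $\gamma_n\gamma_{n-1}^{-1}$ is a non-zero power of $h$ when $n$ is odd and of $v$ when $n$ is even, so $g_j$ is a word $s_1 s_2\cdots s_m$ with $m=n_j-n_{j-1}$ syllables alternating between the two generators. Inspecting the list defining $\mathcal T$, the troublesome words are exactly those alternating words whose every syllable has exponent $\pm 1$ and whose signs follow one of eight patterns aligned with the axes of the parabolics $\rho_2^{hv^{-1}}$, $\rho_2^{h^{-1}v}$, $\rho_2^{vh^{-1}}$, $\rho_2^{v^{-1}h}$. Consequently ``$g_j\notin\mathcal T$'' forces either (a)~some syllable exponent $|k_i|\geq 2$, or (b)~the sign pattern of the syllables to break the parabolic-axis template at least once.

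Next I would localize the direction $\pi_\Circ(\v_{j-1})\in\Circ$. Because $s_1=\gamma_{n_{j-1}+1}\gamma_{n_{j-1}}^{-1}$ is the first syllable of $g_j$ and also a step of the shrinking sequence, Proposition~\ref{prop:shrink_exp} together with Corollary~\ref{cor:shrunk} confines $\pi_\Circ(\v_{j-1})$ to an explicit arc of $\Circ$ attached to $s_1$. On this arc, $\rho_2^{g_j}$ acts as a genuine radial contraction, and one can compute its rate directly from the matrix form: a single syllable $h^{k}$ sends $(x,y)$ to $(x+2ky,y)$, so when $k$ is chosen to shrink a vector of slope $y/x$, the resulting length is comparable to $|y|/(2|k|)$; chaining $m$ alternating syllables produces a length roughly equal to the reciprocal of the continuant of $2|k_1|,\ldots,2|k_m|$. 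This continuant dominates $m=\eta(m)$ for $m\geq 2$, and in the single-syllable case $m=1$ with $|k_1|\geq 2$ (forced by $g_j\notin\mathcal T$) it beats $\eta(1)=3/2$ with room to spare, handling regime~(a).

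The hard part will be regime~(b): there every syllable of $g_j$ has exponent $\pm 1$, so $g_j$ is close in $\SL(2,\R)$ to a parabolic power and only a single sign-break provides extra contraction, while $\eta(m)=m$ grows. I would argue by locating the earliest index at which the sign pattern deviates from the troublesome template, splitting $g_j$ at that position, and applying the shrinking estimate to the short non-parabolic factor that emerges. Proposition~\ref{prop:limit_set2} ensures $\pi_\Circ(\v_{j-1})$ is bounded away from each of the four parabolic fixed points on $\Circ$ by a universal amount (precisely by the four gaps that $\Rn_\lambda$ excises), which forces the non-parabolic factor to produce uniform contraction regardless of the length of the parabolic tails to either side. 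Combining regimes~(a) and~(b) yields a single $\epsilon<1$ valid for all non-troublesome $g_j$, which closes the induction.
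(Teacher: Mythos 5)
Your telescoping reduction to a single-step inequality is exactly the paper's reduction to the case $j=1$, and your classification of the non-troublesome words (some syllable of exponent $\geq 2$, or a break in the alternating sign pattern) correctly describes the eight canonical forms the paper lists in equation (8.4). However, two concrete gaps prevent the sketch from going through.

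First, the localization of $\pi_\Circ(\v_{j-1})$ is too weak. You pin $\btheta$ only by the constraint coming from the first syllable $s_1$, i.e.\ $\btheta\in\Shrink_2(s_1)$, which for $\lambda=2$ is an entire open quarter-circle. Over this full arc the one-step ratio is not uniformly bounded below $1/\eta(m)$: already for $m=1$ and $g_j=h^{-2}$, taking $\btheta$ near the fixed point $(1,0)$ of $h$ gives a ratio approaching $1$, while $\eta(1)=3/2$. The paper uses a different, much tighter constraint: since $v\delta_1$ or $v^{-1}\delta_1$ also lies in the shrinking sequence, Corollary \ref{cor:shrunk} forces $\pi_\Circ\circ\rho_2^{\delta_1}(\btheta)$ into the fixed arc $\Shrink_2(v)\cup\Shrink_2(v^{-1})=\{(x,\pm1):-1<x<1\}$. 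Pulling this back through $\rho_2^{\delta_1^{-1}}$ yields an $a$- and $k$-dependent arc for $\btheta$ that shrinks precisely in the regime where your continuant heuristic would otherwise fail. Without this extra constraint even the easy regime (a) does not close.

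Second, the regime (b) argument relies on Proposition \ref{prop:limit_set2} giving a universal separation of $\pi_\Circ(\v_{j-1})$ from the four parabolic fixed points, via ``the four gaps that $\Rn_\lambda$ excises.'' This is false for $\lambda=2$: the limit set of $\rho_2^G$ is all of $\R\P^1$, there are no complementary intervals, and $\Rn_2$ is the dense set of irrational slopes, which accumulates on every parabolic fixed point. (The separation you describe exists only for $\lambda>2$, i.e.\ in Lemma \ref{lem:theta_decay}, whereas the present lemma is specifically $\lambda=2$.) Consequently the near-parabolic tails cannot be controlled by any avoidance of the fixed point. The paper handles them instead by decomposing in an eigenbasis $\{\v,\u\}$ of the parabolic $\rho_2^{hv^{-1}}$ and tracking the linear-in-$k$ growth of the $\v$-coefficient under $\rho_2^{(hv^{-1})^k}$, combined with the constrained arc above, culminating in a finite list of explicit numerical inequalities — a genuinely different and more hands-on mechanism than the one you propose.
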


The statement of the lemma was chosen because it admits a recursive proof. Given the lemma, we get the type of decay we really want:
\begin{corollary}
Assume $\lambda=2$. Then for all $j$, there is a positive $\epsilon<1$ such that
$$n_j \| \rho_2^{\gamma_{n_j}}(\bm \theta) \| \leq \epsilon^j \quad \textrm{for all $j$}.$$
\end{corollary}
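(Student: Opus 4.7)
The plan is to deduce the corollary from Lemma \ref{lem:theta_decay2} by showing that the product of $\eta$-values on the left-hand side of the lemma dominates $n_j$. Writing $d_i = n_i - n_{i-1}$ for $i=1,\ldots,j$, the strict monotonicity of $\langle n_i \rangle$ gives $d_i \ge 1$, and by construction $\sum_{i=1}^{j} d_i = n_j$. So what I need to establish is the elementary arithmetic inequality
\[
\prod_{i=1}^{j} \eta(d_i) \;\ge\; n_j,
\]
which, combined with Lemma \ref{lem:theta_decay2}, immediately yields $n_j \,\|\rho_2^{\gamma_{n_j}}(\bm\theta)\| \le \epsilon^j$ with the same $\epsilon$ provided by the lemma.

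I would prove the inequality by induction on $j$, and in fact prove the slightly stronger statement $\prod_{i=1}^{j} \eta(d_i) \ge \eta\!\bigl(\sum_{i=1}^{j} d_i\bigr)$; this is what makes the induction close. The base case $j=1$ is trivial, since $\eta(d_1) \ge \eta(d_1)$. For the inductive step, write $S_{j-1} = \sum_{i<j} d_i$; then by induction $\prod_{i=1}^{j}\eta(d_i) \ge \eta(d_j)\,\eta(S_{j-1})$, and one must verify $\eta(d_j)\,\eta(S_{j-1}) \ge \eta(S_{j-1}+d_j)$ in four small cases according to whether $d_j$ and $S_{j-1}$ are equal to $1$ or at least $2$. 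Each case reduces to a one-line arithmetic check: for instance, when $d_j = 1$ and $S_{j-1} \ge 2$, the needed bound is $\tfrac{3}{2}S_{j-1} \ge S_{j-1}+1$, i.e., $S_{j-1}\ge 2$; when $d_j \ge 2$ and $S_{j-1}\ge 2$, it reduces to $S_{j-1} \ge d_j/(d_j - 1) \le 2$; the remaining two ``corner'' cases are equally direct. Since $\eta(n) \ge n$ for every integer $n \ge 1$, this gives the required $\prod \eta(d_i) \ge n_j$.

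There is no real obstacle here: once Lemma \ref{lem:theta_decay2} is in hand, the corollary is purely combinatorial. The only mildly delicate point is that the naive induction $\prod \eta(d_i) \ge n_j$ does not close cleanly when $d_j=1$ and $S_{j-1}=1$ (i.e., $j=2$, both differences $1$), which is precisely why one strengthens the inductive hypothesis to the $\eta(\sum d_i)$ form so that the stronger base $\eta(1)\eta(1) = 9/4 \ge 2 = \eta(2)$ can carry the argument forward.
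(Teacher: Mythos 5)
Your proposal is correct and follows essentially the same strategy as the paper: reduce the corollary to the purely arithmetic statement $\prod_{i=1}^{j}\eta(d_i)\geq\sum_{i=1}^{j}d_i$ and prove it by induction on the number of summands, then invoke Lemma~\ref{lem:theta_decay2}. The one (minor) difference is in how the induction is made to close. You strengthen the inductive claim to $\prod_i\eta(d_i)\geq\eta\bigl(\sum_i d_i\bigr)$ so that the step $\eta(d_j)\eta(S_{j-1})\geq\eta(S_{j-1}+d_j)$ can be checked in four small cases starting from $j=1$. The paper instead proves the two-term inequality $m_1+m_2\leq\eta(m_1)\eta(m_2)$ directly (including the corner case $m_1=m_2=1$), and then runs the induction starting from $k=2$, where the step uses the two-term inequality applied to $(m_1+\cdots+m_k,\,m_{k+1})$ together with the observation that $\eta(m_1+\cdots+m_k)=m_1+\cdots+m_k$ once this sum is at least $2$. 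Both fixes address the same corner case that you correctly identified ($d_j=S_{j-1}=1$): you avoid it by strengthening the hypothesis, the paper avoids it by anchoring the base case at $k=2$. Either way the argument goes through; neither buys anything substantive over the other.
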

\begin{proof}
This follows from the fact that for any finite collection of $k \geq 1$ positive integers,
$$m_1+m_2+\ldots+m_k \leq \eta(m_1) \eta(m_2) \ldots \eta(m_k).$$
(Here, $m_k=n_k-n_{k-1}$ with $n_0$ taken to be zero.) It is clearly true for for $k=1$. We now prove it for $k=2$. We break into cases:
\begin{itemize}
\item If $m_1=m_2=1$ then $m_1+m_2=2$ and $\eta(m_1)\eta(m_2)=\frac{9}{4}>2$. 
\item If $m_1=1$ and $m_2 \geq 2$ then $\frac{\eta(m_1)\eta(m_2)}{m_1+m_2}=\frac{\frac{3}{2} m_2}{m_2+1}=\frac{3}{2}-\frac{3}{2(m_2+1)} \geq 1$, with equality when $m_2=2$. 
\item If $m_1 \geq 2$ and $m_2 \geq 2$ then $\eta(m_1)\eta(m_2) \geq 2 \max(m_1, m_2) \geq m_1+m_2$.
\end{itemize}
Now suppose the statement is true for $k\geq 2$. Then,
$$\begin{array}{rcl}
m_1+m_2+\ldots+m_k+m_{k+1} & \leq & \eta(m_1+\ldots+m_k)\eta(m_k) = (m_1+\ldots+m_k)\eta(m_{k+1}) \\
& \leq  &
\eta(m_1) \eta(m_2) \ldots \eta(m_k) \eta(m_{k+1}).\end{array}$$
\end{proof}

\begin{proof}[Proof of Lemma \ref{lem:theta_decay2}]
To simplify notation, write $\delta_j=\gamma_{n_j}$. It is sufficient to prove the statement for the case of $j=1$. To see this, suppose the statement is true for $j=1$ and all 
$\lambda$-renormalizable directions $\bm \theta$. Then 
$$
\begin{array}{rcl}
\eta(n_1) \ldots \eta(n_j-n_{j-1}) \| \rho_2^{\delta_{j}}(\bm \theta) \|  & = & 
\eta(n_1) \ldots \eta(n_j-n_{j-1}) \| \rho_2^{\delta_{j} \delta_1^{-1}} \circ \rho_2^{\delta_1}(\bm \theta)\| \\
& = & \Big(\eta(n_1) \|\rho_2^{\delta_1}(\bm \theta)\|\Big)\Big(\eta(n_1')\ldots \eta(n'_{j-1}-n'_{j-2}) \| \rho_2^{\delta'_{j-1}}(\bm \theta') \| \\
& \leq & \epsilon \eta(n_1')\ldots \eta(n'_{j-1}-n'_{j-2}) \| \rho_2^{\delta'_{j-1}}(\bm \theta')\|.\end{array}$$ 
where $\bm \theta'=\rho_2^{\delta_1}(\bm \theta)/\|\rho_2^{\delta_1}(\bm \theta)\|$, $n_j'=n_{j+1}-n_1$, and $\delta'_j=\delta_{j+1} \delta_1^{-1}$. Note that $\delta'_j$ arises from the shrinking sequence for $\bm \theta'$ in the same way in which $\delta_j$ arises from the shrinking sequence of $\btheta$. The inequality arises from the statement of the lemma in the case $j=1$.
We repeat this argument $j-1$ more times to obtain the statement of the lemma.

We will now concentrate on proving the lemma in the case of $j=1$. We note that the possible words $\delta_1$ are of one of the following eight forms:
\begin{equation}
\label{eq:good_forms}
\begin{array}{cccc}
v^{-a} (hv^{-1})^k &
v^a (h^{-1} v)^k &
h^{-a} (vh^{-1})^k &
h^a (v^{-1} h)^k 
\\
h^a v^{-1} (hv^{-1})^k & 
h^{-a} v (h^{-1} v)^k &
v^a h^{-1} (vh^{-1})^k & 
v^{-a} h (v^{-1} h)^k 
\end{array}
\end{equation}
Here $k\geq 0$ and $a \in \Z \smallsetminus \{0,1\}$. In addition,  $(a, k) \neq (-1,0)$ for words in the first row. 
By the previous paragraph, it is enough to prove that if $\delta_1$ is one of these words then 
\begin{equation}
\label{eq:enough1B}
\eta(n_1) \|\rho_2^{\delta_1}(\bm \theta)\| \leq \epsilon < 1,
\end{equation}
where $n_1=2k+1$ if $\delta_1$ is chosen from the first line of equation \ref{eq:good_forms}, or
$n_1=2k+2$ if $\delta_1$ is chosen from the second. We simplify our job more by noting that many of these words are equivalent under the dihedral group.
See Remark \ref{rem:dihedral}.
Thus we really only need to cover one case from the first line and one case from the second.

We will prove the statement for the two cases 
$\delta_1=h^{-a} (vh^{-1})^k$ and $\delta_1=h^{a} v^{-1} (hv^{-1})^k$.
In both cases either $v \delta_1$ or $v^{-1} \delta_1$ is an element of the shrinking sequence for $\bm \theta$. 
(Otherwise $\delta_1$ would be reducible or a longer word.) 
Then by Corollary \ref{cor:shrunk}, we know that
$$\bm \theta \in \Shrink_2(v \delta_1) \cup \Shrink_2(v^{-1} \delta_1)=\pi_\Circ \circ \rho_2^{\delta_1^{-1}}\big(\Shrink_2(v) \cup \Shrink_2(v^{-1})\big).$$
For $\lambda=2$, $\Shrink_2(v) \cup \Shrink_2(v^{-1}) \subset \{(x,y) \in \Circ~:~-1<\frac{x}{y}<1\}$ by Proposition \ref{prop:shrink_exp}. Therefore,
$\bm \theta \in \pi_\Circ \circ \rho_2^{\delta_1^{-1}}(x,\pm 1)$
for some choice of $-1 < x <1$ and some choice of $\pm 1$. In particular it follows that 
$$\|\rho_2^{\delta_1}(\bm \theta)\|=\frac{\|(x,\pm 1)\|}{\|\rho_2^{\delta_1^{-1}}(x,\pm 1)\|} \leq \frac{\sqrt{2}}{\|\rho_2^{\delta_1^{-1}}(x,\pm 1)\|}.$$
Therefore, it is sufficient to prove the statement that
\begin{equation}
\label{eq:sufficient_eq2}
\inf_{-1<x<1} \frac{\|\rho_2^{\delta_1^{-1}}(x,1)\|}{\eta(n_1)\sqrt{2}} \geq \frac{1}{\epsilon}>1.
\end{equation}
(We can drop the $\pm 1$ by symmetry.)

We will now work out the case of $\delta_1=h^{-a} (vh^{-1})^k$ where $k\geq 0$, $a \in \Z \smallsetminus \{0,1\}$, $(a, k) \neq (-1,0)$ and $n_1=2k+1$. We need to estimate the length of 
$$\rho_2^{\delta_1^{-1}}(x,1)=\rho_2^{(hv^{-1})^k} \circ \rho_2^{h^a}(x, 1).$$
Note that $\rho_2^{h^a}(x, 1)=(x+2a,1)$. We break into cases depending on $a$. Recall $-1<x<1$.
\begin{enumerate}
\item $x+2a>3$ when $a \geq 2$, or 
\item $x+2a<-1$ when $k \geq 1$ and $a \leq -1$.
\item $x+2a<-3$ when $k=0$ and $a  \leq -2$.
\end{enumerate}
In each of these cases, we have shown that $\rho_2^{h^a}(x, 1)$ lies in a ray contained in the line $L=\{(x',1)~:~x' \in \R\}$. Now consider the matrix
$$\rho_2^{hv^{-1}}=\left[\begin{array}{cc} -3 & 2 \\ -2 & 1\end{array}\right].$$
This matrix is a parabolic with eigenvector $\v=(1,1)$ satisfying $\rho_2^{hv^{-1}}(\v)=-\v$. 
Let $\u=(1,0)$. Then $\rho_2^{hv^{-1}}(\u)=-\u-2 \v$. We see that $L=\{\v+t \u~:~t \in \R\}$. We compute that 
$$\rho_2^{(hv^{-1})^k}(\v+t \u)=(-1)^k\big((2kt+1) \v+t \u\big).$$
It can be checked that the point of $\rho_2^{(hv^{-1})^k}(L)$ which lies closest to the origin is 
$\rho_2^{(hv^{-1})^k}(\v+t \u)$ for $-1 \leq t <0$. This is the image under $\rho_2^{(hv^{-1})^k}$ of $\v+t \u=(x',1)$ for some $0 \leq x' < 1$. In particular, this point is not in any of the rays. It follows that the infimum
we need to find to apply equation \ref{eq:sufficient_eq2} is obtained at the endpoint of the ray. We must check this equation in each of the three cases. In case (1), the endpoint is $(3,1)=\v+2\u$. Therefore, 
\begin{equation*}
\inf_{-1<x<1} \frac{\|\rho_2^{\delta_1^{-1}}(x,1)\|}{\eta(n_1)\sqrt{2}} >
\frac{\|\rho_2^{(hv^{-1})^k}(\v+2 \u)\|}{\eta(2k+1)\sqrt{2}}=
\frac{\|(1+4k)\v+2 \u\|}{\eta(2k+1)\sqrt{2}}.
\end{equation*}
When $k=0$, we have
\begin{equation*}
\inf_{-1<x<1} \frac{\|\rho_2^{\delta_1^{-1}}(x,1)\|}{\eta(n_1)\sqrt{2}} > \frac{\|(3, 1)\|}{\frac{3 \sqrt{2}}{2}}=
\frac{\sqrt{10}}{\frac{3\sqrt{2}}{2}}=\frac{\sqrt{20}}{3}>1.
\end{equation*}
When $k \geq 1$, we know $\eta(2k+1)=2k+1$. We apply the triangle inequality
\begin{equation*}
\begin{array}{rcl}
\displaystyle \inf_{-1<x<1} \frac{\|\rho_2^{\delta_1^{-1}}(x,1)\|}{\eta(n_1)\sqrt{2}} & > &
\displaystyle \frac{(1+4k)\|\v\|-2\|\u\|}{(2k+1)\sqrt{2}}=\frac{(1+4k)\sqrt{2}-2}{2k+1} \\ 
& = & \displaystyle  2-\frac{1+\sqrt{2}}{2k+1} \geq 2-\frac{1+\sqrt{2}}{3}>1.\end{array}
\end{equation*}
In case (2), the endpoint is $(-1,1)=\v-2 \u$. We have $k \geq 1$. We check equation \ref{eq:sufficient_eq2}.
\begin{equation*}
\inf_{-1<x<1} \frac{\|\rho_2^{\delta_1^{-1}}(x,1)\|}{\eta(n_1)\sqrt{2}} >
\frac{\|\rho_2^{(hv^{-1})^k}(\v-2 \u)\|}{(2k+1)\sqrt{2}}=
\frac{\|(1-4k)\v-2 \u\|}{(2k+1)\sqrt{2}}.\end{equation*}
When $k=1$, we have $\|(1-4k)\v-2 \u\|=\|(-5,-3)\|=\sqrt{34}$. Therefore,
\begin{equation*}
\inf_{-1<x<1} \frac{\|\rho_2^{\delta_1^{-1}}(x,1)\|}{\eta(n_1)\sqrt{2}} >
\frac{\sqrt{34}}{4\sqrt{2}}=\frac{\sqrt{17}}{4}>1.\end{equation*}
When $k\geq 2$, we can apply the triangle inequality.
\begin{equation*}
\begin{array}{rcl}
\displaystyle \inf_{-1<x<1} \frac{\|\rho_2^{\delta_1^{-1}}(x,1)\|}{\eta(n_1)\sqrt{2}} & > &
\displaystyle \frac{\|(1-4k)\v-2 \u\|}{(2k+1)\sqrt{2}}>\frac{(4k-1)\sqrt{2}-2}{(2k+1)\sqrt{2}}
=\frac{4k-1-\sqrt{2}}{2k+1} \\
& = & \displaystyle 
2-\frac{3+\sqrt{2}}{2k+1} \geq 2-\frac{3+\sqrt{2}}{5}>1.\end{array}\end{equation*}
Finally, we consider case (3). Here $k=0$ and the ray endpoint is $(-3,1)$ so, 
\begin{equation*}
\inf_{-1<x<1} \frac{\|\rho_2^{\delta_1^{-1}}(x,1)\|}{\eta(n_1)\sqrt{2}}  > 
\frac{\|(-3,1)\|}{\frac{3}{2}\sqrt{2}}=\frac{\sqrt{20}}{3}>1.
\end{equation*}

Now we consider the case of $\delta_1=h^{a} v^{-1} (hv^{-1})^k$ where $n_1=2k+2$. We must estimate the length of 
$$\rho_2^{\delta_1^{-1}}(x,1)=\rho_2^{(vh^{-1})^k} \circ \rho_2^{v} \circ \rho_2^{h^{-a}}(x, 1).$$
Noting that $\rhoa{h^{-a}}(x,1)=(x-2a,1)$, we break into two cases depending on
the choice of $k \geq 0$ and $a \not \in \{0, 1\}$.
\begin{enumerate}
\item[(1')] $x-2a < -3$ when $a \geq 2$, or 
\item[(2')] $x-a > 1$ when $a \leq -1$.
\end{enumerate}
Let $\v$ and $\u$ be as in the previous paragraph. We have $\rho_2^{vh^{-1}}(\v)=-\v$ and 
$\rho_2^{vh^{-1}}(\u)=-(\u-2\v)$. Let $L=\{(x',1)\}$. Set $L'= \rho_2^v(L)=\{(t,1+2t)~:~t \in \R\}.$
The closest point to the origin on $L'$ is $(\frac{-2}{5},\frac{1}{5})$. Now consider 
the closest point on $\rho_2^{(vh^{-1})^k}(L')$. Using the action of this parabolic, it can be checked that
this closest point is of the form $\rho_2^{(vh^{-1})^k}(t,1+2t)$ where $-1<t\leq \frac{-2}{5}$. It follows that the closest point
on $\rho_2^{(vh^{-1})^k v}(L)$ is also of this form. The preimage of this point lies in 
$$\{\rho_2^{v^{-1}}(t,1+2t)~:~-1<t\leq \frac{-2}{5}\}=\{(t,1)~:~-1<t\leq \frac{-2}{5}\}.$$
Again we observe, that the rays in cases (1') and (2') do not contain such points. Therefore, the minimum length must occur at the endpoints of the rays. We now check equation \ref{eq:sufficient_eq2} in case (1'). 
Here the endpoint is $(-3,1)$ and $\rho_2^v(-3,1)=(-3,-5)=-5 \v+2 \u$. We simplify equation \ref{eq:sufficient_eq2} using the triangle inequality.
\begin{equation*}
\begin{array}{rcl}
\displaystyle \inf_{-1<x<1} \frac{\|\rho_2^{\delta_1^{-1}}(x,1)\|}{\eta(n_1)\sqrt{2}} & > &
\displaystyle \frac{\|\rho_2^{(vh^{-1})^k}(-5 \v+2 \u)\|}{(2k+2)\sqrt{2}}=
\frac{\|(-5-4k)\v+2 \u\|}{(2k+2)\sqrt{2}} \\
& \geq &
\displaystyle \frac{(5+4k)\sqrt{2}-2}{(2k+2)\sqrt{2}}=2-\frac{\sqrt{2}-1}{2k+2}\geq 2-\frac{\sqrt{2}-1}{2}>1.
\end{array}
\end{equation*}
In case (2'), the endpoint is $(1,1)$ and $\rho_2^v(1,1)=(1,3)=3\v-2 \u$. Therefore,
\begin{equation*}
\inf_{-1<x<1} \frac{\|\rho_2^{\delta_1^{-1}}(x,1)\|}{\eta(n_1)\sqrt{2}} > 
\frac{\|\rho_2^{(vh^{-1})^k}(3 \v-2 \u)\|}{(2k+2)\sqrt{2}}=
\frac{\|(3+4k)\v-2 \u\|}{(2k+2)\sqrt{2}}.
\end{equation*}
When $k=0$, we have $(3+4k)\v-2 \u=(1,3)$. Therefore, in this case,
\begin{equation*}
\inf_{-1<x<1} \frac{\|\rho_2^{\delta_1^{-1}}(x,1)\|}{\eta(n_1)\sqrt{2}} > 
\frac{\|(3,1)\|}{2\sqrt{2}}=\frac{\sqrt{5}}{2}>1.
\end{equation*}
When $k \geq 1$, we apply the triangle inequality
\begin{equation*}
\inf_{-1<x<1} \frac{\|\rho_2^{\delta_1^{-1}}(x,1)\|}{\eta(n_1)\sqrt{2}} > 
\frac{(3+4k)\sqrt{2}-2}{(2k+2)\sqrt{2}}=\frac{3+4k-\sqrt{2}}{2k+2}=2-\frac{1+\sqrt{2}}{2k+2} \geq
2-\frac{1+\sqrt{2}}{4}>1.
\end{equation*}
This concludes the argument. The constant $\epsilon$ can be taken so that $\frac{1}{\epsilon}$ is the minimum of the finite number of constants used the cases above.
\end{proof}

\begin{lemma}
\label{lem:theta_decay}
If $\lambda>2$, 
there is a positive $\epsilon<1$ depending only on $\lambda$ such that
$$\Big(\frac{\lambda+\sqrt{\lambda^2-4}}{2}\Big)^{n_j} \| \rhoa{\gamma_{n_j}}(\bm \theta) \| \leq \epsilon^j \quad \textrm{for all $j$}.$$
\end{lemma}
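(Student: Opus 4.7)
The plan is to mirror the proof of Lemma \ref{lem:theta_decay2}, replacing the parabolic dynamics at $\lambda=2$ with the hyperbolic dynamics of $\rho_\lambda^{hv^{-1}}$ and its conjugates at $\lambda>2$. A direct computation shows $\mathrm{tr}(\rho_\lambda^{hv^{-1}})=2-\lambda^2$, so this matrix is hyperbolic with eigenvalues $-\omega^{\pm 2}$, where $\omega=\frac{\lambda+\sqrt{\lambda^2-4}}{2}$. Thus $\rho_\lambda^{(hv^{-1})^k}$ expands in one direction and contracts in another by a factor of exactly $\omega^{2k}$, which is the key cancellation we need against the $\omega^{n_1}$ appearing in the statement.

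First I would reduce to the case $j=1$ by the same induction as in the proof of Lemma \ref{lem:theta_decay2}: if there is a uniform $\epsilon<1$ so that $\omega^{n_1}\|\rho_\lambda^{\delta_1}(\bm\theta)\|\leq\epsilon$ for every $\lambda$-renormalizable $\bm\theta$ and its associated $\delta_1=\gamma_{n_1}$, then iterating the inequality along $\delta_j=\gamma_{n_j}$ (applied at each stage to $\bm\theta_i=\rho_\lambda^{\delta_i}(\bm\theta)/\|\rho_\lambda^{\delta_i}(\bm\theta)\|$, which is again $\lambda$-renormalizable) telescopes to the full statement. By the dihedral symmetry of Remark \ref{rem:dihedral}, it suffices to handle two representative forms from equation \ref{eq:good_forms}, namely $\delta_1=h^{-a}(vh^{-1})^k$ with $n_1=2k+1$ and $\delta_1=h^{a}v^{-1}(hv^{-1})^k$ with $n_1=2k+2$.

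Next, as in the $\lambda=2$ argument, since $v\delta_1$ or $v^{-1}\delta_1$ belongs to the shrinking sequence of $\bm\theta$, Corollary \ref{cor:shrunk} and Proposition \ref{prop:shrink_exp} force $\bm\theta=\pi_\Circ\circ\rho_\lambda^{\delta_1^{-1}}(x,\pm 1)$ for some $x$ with $|x|<2/\lambda$. Proposition \ref{prop:limit_set2} tightens this range further using the assumption that $\bm\theta\in\Rn_\lambda$. It therefore suffices to establish the lower bound
\begin{equation*}
\inf_{|x|<2/\lambda}\|\rho_\lambda^{\delta_1^{-1}}(x,1)\|\;\geq\;C(\lambda)\,\omega^{n_1}
\end{equation*}
for a positive constant $C(\lambda)$, since $\|\rho_\lambda^{\delta_1}(\bm\theta)\|\leq\sqrt{1+4/\lambda^2}/\|\rho_\lambda^{\delta_1^{-1}}(x,1)\|$.

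The main obstacle, and the crux of the argument, is to control the expanding component of $\rho_\lambda^{h^{\pm a}}(x,1)$ in the eigenbasis $\{\v_e,\v_c\}$ of $\rho_\lambda^{hv^{-1}}$ (or $\rho_\lambda^{vh^{-1}}$, in the analogous case). Writing $\rho_\lambda^{h^{\pm a}}(x,1)=\alpha\v_e+\beta\v_c$, a direct (but case-by-case) verification using the constraints on $a\in\Z\setminus\{0,1\}$ and the bound $|x|<2/\lambda$ shows that $|\alpha|$ is uniformly bounded below by a constant depending only on $\lambda$; geometrically, the images of the line $\{(x,1):|x|<2/\lambda\}$ under $\rho_\lambda^{h^{\pm a}}$ stay at definite projective distance from the contracting eigendirection of $\rho_\lambda^{hv^{-1}}$. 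Then
\begin{equation*}
\|\rho_\lambda^{(hv^{-1})^k}(\alpha\v_e+\beta\v_c)\|\;\geq\;|\alpha|\,\omega^{2k}\,\|\v_e\|\cdot\sin\angle(\v_e,\v_c),
\end{equation*}
which gives the desired $\omega^{n_1}$-scale lower bound in both representative cases (since $n_1-2k\in\{1,2\}$). Choosing $\epsilon$ slightly larger than $\omega^{-1}$ then closes the argument. The work is essentially replacing the parabolic closest-point analysis of the $\lambda=2$ proof with an eigenvector decomposition, and then checking a finite number of cases to rule out degeneracy at the boundary of the allowed range of $x$ and $a$.
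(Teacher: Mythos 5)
Your overall strategy — reduce to $j=1$ by the same telescoping induction, use dihedral symmetry to cut down to two representative forms of $\delta_1$, locate $\bm\theta=\pi_\Circ\circ\rho_\lambda^{\delta_1^{-1}}(x,\pm 1)$ via Corollary~\ref{cor:shrunk} and Proposition~\ref{prop:limit_set2}, and then lower-bound $\|\rho_\lambda^{\delta_1^{-1}}(x,1)\|$ by decomposing in the eigenbasis of $\rho_\lambda^{hv^{-1}}$ — is exactly the paper's structure. The gap is in your choice of lower bound.

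Your inequality $\|\rho_\lambda^{(hv^{-1})^k}(\alpha\v_e+\beta\v_c)\|\geq|\alpha|\,\omega^{2k}\,\|\v_e\|\,\sin\angle(\v_e,\v_c)$ is valid, but it discards the contracting component entirely, and it is exactly at the threshold in one of the cases. Concretely, take $\delta_1=h^{-a}(vh^{-1})^k$ with $k\geq 1$ and $a\leq -1$, so $n_1=2k+1$, and let $x'=x+a\lambda$. Writing $\v_1=(\omega,1)$, $\v_2=(\omega^{-1},1)$ one computes $\alpha=\dfrac{x'-\omega^{-1}}{\omega-\omega^{-1}}$, and the allowed range is $x'<-\omega$ (not including the endpoint, because $x>-\omega^{-1}$). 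Thus $|\alpha|>\dfrac{\lambda}{\omega-\omega^{-1}}$ strictly, but the infimum of $|\alpha|$ over the allowed range is $\dfrac{\lambda}{\omega-\omega^{-1}}$ exactly. Feeding this into your $\sin\angle$ bound and normalizing by $\omega^{-n_1}/\sqrt{\lambda\omega^{-1}}$ gives precisely $\dfrac{|\alpha|(\omega-\omega^{-1})}{\lambda}$, whose infimum is $1$, not strictly greater than $1$. So there is no uniform $\epsilon<1$ — the ``choose $\epsilon$ slightly larger than $\omega^{-1}$'' step has nothing to catch. Your claim that ``$|\alpha|$ is uniformly bounded below by a constant depending only on $\lambda$'' is true, but the constant is exactly the critical one, which is not enough.

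The paper closes this hole by using the triangle inequality $\|c_1\v_1+c_2\v_2\|\geq|c_1|\|\v_1\|-|c_2|\|\v_2\|$ instead of the $\sin\angle$ projection. This keeps the contracting term and exploits that it actually shrinks: with $c_2=\beta\omega^{-2k}$ and $k\geq 1$, one gets a factor $(\lambda-2\omega^{-4k})\geq(\lambda-2\omega^{-4})$, and the resulting normalized bound is $\dfrac{\lambda-2\omega^{-4}}{\omega-\omega^{-1}}=1+\dfrac{2(\omega^{-1}-\omega^{-4})}{\omega-\omega^{-1}}>1$, with a margin that is uniform for $k\geq 1$. That extra $\omega^{-1}-\omega^{-4}>0$ is exactly what your bound throws away. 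To repair your argument you would need to replace the $\sin\angle$ estimate by this triangle-inequality estimate (or otherwise quantify the contribution of the shrinking component) in the case $\delta_1=h^{-a}(vh^{-1})^k$, $a\leq -1$, $k\geq 1$, and check the analogous cases for the other form of $\delta_1$.
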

\begin{proof}
To simplify notation, write $\delta_j=\gamma_{n_j}$ and $\omega=(\lambda+\sqrt{\lambda^2-4})/2$. As in the previous case, it is in fact sufficient to prove the statement for $j=1$. To see this, suppose the statement is true for all $\lambda$-renormalizable directions $\bm \theta$
in the case of $j=1$. Then we have
$$
\omega^{n_{j+1}} \| \rhoa{\delta_{j+1}}(\bm \theta) \|  = 
\omega^{n_{j+1}} \| \rhoa{\delta_{j+1} \delta_1^{-1}} \circ \rhoa{\delta_1}(\bm \theta)\| =
\omega^{n_{j+1}} \| \rhoa{\delta_{j+1} \delta_1^{-1}}(\btheta')\| \|\rhoa{\delta_1}(\bm \theta)\|
< \epsilon \omega^{n_j'} \| \rhoa{\delta'_{j}} (\bm \theta')\|, 
$$
where $\bm \theta'=\rhoa{\delta_1}(\bm \theta)/\|\rhoa{\delta_1}(\bm \theta)\|$, $n_j'=n_{j+1}-n_1$, and $\delta'_j=\delta_{j+1} \delta_1^{-1}$. Again, we can apply induction to obtain the statement of the lemma.

We now prove the lemma in the case of $j=1$. As in the previous proof, it is sufficient to prove the lemma in the cases of 
$\delta_1=h^{-a} (vh^{-1})^k$ and $\delta_1=h^{a} v^{-1} (hv^{-1})^k$. Here, $k\geq 0$ and $a \in \Z \smallsetminus \{0,1\}$.
For $\delta_1=h^{-a} (vh^{-1})^k$ we can also assume $(a, k) \neq (-1,0)$.
It is enough to prove that if $\delta_1$ is one of these words then 
\begin{equation}
\label{eq:enough1}
\omega^{n_1} \|\rhoa{\delta_1}(\bm \theta)\| \leq \epsilon < 1.
\end{equation}
Here $n_1=2k+1$ if $\delta_1=h^{-a} (vh^{-1})^k$ and $n_1=2k+2$ if $\delta_1=h^{a} v^{-1} (hv^{-1})^k$.

In both cases either $v \delta_1$ or $v^{-1} \delta_1$ is an element of the shrinking sequence for $\bm \theta$. 
Then by Corollary \ref{cor:shrunk}, we know that in either case
$$\bm \theta \in \Shrink_\lambda(v \delta_1) \cup \Shrink_\lambda(v^{-1} \delta_1)=\pi_\Circ \circ \rhoa{\delta_1^{-1}}\big(\Shrink_\lambda(v) \cup \Shrink_\lambda(v^{-1})\big),$$
where $\pi_\Circ$ denotes projection onto the unit circle.
From Proposition \ref{prop:limit_set2}, we know 
\begin{equation}
\label{eq:shrink_interval}
\Rn_\lambda \cap \big(\Shrink_\lambda(v) \cup \Shrink_\lambda(v^{-1})\big) \subset \pi_\Circ( \{(x, \pm 1)~:~-\omega^{-1} < x < \omega^{-1} \}).
\end{equation}
In particular, $\pi_\Circ \circ \rhoa{\delta_1^{-1}}(\bm \theta)$ lies in this set. Without loss of generality, we may assume
$\bm \theta=\pi_\Circ \circ \rhoa{\delta_1^{-1}}(x,1)$ for some $x$ with $-\omega^{-1} < x < \omega^{-1}$. Thus,
\begin{equation}
\label{eq:accessible_form}
\begin{array}{rcl}
\|\rhoa{\delta_1}(\bm \theta)\|
 & \leq & \displaystyle \sup_{-\omega^{-1} < x < \omega^{-1}} ~ \frac{\|(x,1)\|}{\|\rhoa{\delta_1^{-1}}(x, 1)\|}  \\
& \leq & 
\displaystyle \sup_{-\omega^{-1} < x < \omega^{-1}} ~ \frac{\sqrt{\lambda \omega^{-1}}}{\|\rhoa{\delta_1^{-1}}(x,1)\|} =
\frac{\sqrt{\lambda \omega^{-1}}}{\inf_{-\omega^{-1} < x < \omega^{-1}} \|\rhoa{\delta_1^{-1}}(x,1)\|}.
\end{array}
\end{equation}
The last inequality follows from the fact that $\|(x,1)\| \leq \|(\omega^{-1},1)\| = \sqrt{\lambda \omega^{-1}}$. 
In particular from equation \ref{eq:enough1}, it is sufficient to prove that 
\begin{equation}
\label{eq:enough2}
\inf_{-\omega^{-1} < x < \omega^{-1}} \frac{\omega^{-n_1}}{\sqrt{\lambda \omega^{-1}}}  \|\rhoa{\delta_1^{-1}}(x,1)\| \geq
\frac{1}{\epsilon} >1.
\end{equation}
for some $\epsilon<1$.

Now we concentrate on the case
$\delta_1=h^{-a} (vh^{-1})^k$ and $n_1=2k+1$. We need to compute the infimum of $\rhoa{\delta_1^{-1}}(x,1)=\rhoa{(hv^{-1})^k h^{a}}(x,1)$ over those $x$ satisfying $-\omega^{-1} < x < \omega^{-1}$. We have that $\rhoa{h^a}(x,1)=(x+a \lambda,1)$. Now, we break into cases depending on $a$ and $k$. 
As $a \not \in \{0, 1\}$, $k \geq 0$ and $(a,k) \neq (-1,0)$, either
\begin{enumerate}
\item $x+a \lambda \geq 2\lambda-\omega^{-1}=\lambda+\omega$ when $a \geq 2$, or 
\item $x+a \lambda \leq \omega^{-1}-\lambda=-\omega$ when $k \geq 1$ and $a \leq -1$.
\item $x+a \lambda \leq \omega^{-1}-2\lambda=-\lambda-\omega$ when $k=0$ and $a  \leq -2$.
\end{enumerate}
(For the equalities above, we use the identity $\omega+\omega^{-1}=\lambda$.) 
We will compute the infimum over $x'$ of
$\|\rhoa{(hv^{-1})^k}(x',1)\|$, where $x'$ ranges over the possible values of $x+a \lambda$ allowed by the inequalities
in cases (1)-(3). First consider the infimum over the line
$\rhoa{(hv^{-1})^k} (\R \times \{1\})$. The two eigenvectors of $\rhoa{hv^{-1}}$ are 

\begin{equation}
\label{eq:eigenvectors1}
\mathbf v_1=(\omega,1)
\quad \textrm{and} \quad \mathbf v_2=(\omega^{-1},1)
,\end{equation} with
$\rhoa{hv^{-1}}(\mathbf v_1)=-\omega^2 \mathbf v_1$ and $\rhoa{hv^{-1}}(\mathbf v_2)=-\omega^{-2} \mathbf v_2$. Therefore,
$$\inf_{x' \in \R} \|\rhoa{(hv^{-1})^k}(x',1)\|=\|\rhoa{(hv^{-1})^k}(x^\circ,1)\|,$$
where $0 \leq x^\circ < \omega^{-1}$. As the inequalities above exclude the possibility of $x'=x^\circ$, we conclude
that the infimum in cases (1)-(3) must be realized at the endpoints of the rays. 

In case (1), we have
$$\inf_{x' \geq \lambda+\omega} \|\rhoa{(hv^{-1})^k}(x',1)\|=
\|\rhoa{(hv^{-1})^k}(\lambda+\omega,1)\|.$$
We break into two subcases depending if $k=0$ or $k \geq 1$. 
If $k=0$, then 
$$\inf_{x' \geq \lambda+\omega} \|\rhoa{(hv^{-1})^k}(x',1)\|=\|(\lambda+\omega,1)\|>
\lambda+\omega.$$
Applying this to equation \ref{eq:enough2} in this case yields
$$\inf_{-\omega^{-1} < x < \omega^{-1}} \frac{\omega^{-n_1}}{\sqrt{\lambda \omega^{-1}}}  \|\rhoa{\delta_1^{-1}}(x,1)\| \geq \frac{\omega^{-1}}{\sqrt{\lambda \omega^{-1}}} (\lambda+\omega)=\sqrt{\frac{(\lambda+\omega)^2}{\lambda \omega}} > \sqrt{2}.$$
Now consider $k \geq 1$.
We have that $(\lambda+\omega,1)=\frac{1}{\omega-\omega^{-1}} (2\omega \mathbf v_1-\lambda \mathbf v_2)$, with $\mathbf v_i$ as in equation \ref{eq:eigenvectors1}.
Thus,
$$\begin{array}{rcl}
\|\rhoa{(hv^{-1})^k}(\lambda+\omega,1)\| & = & 
\frac{\omega^{2k+1}}{\omega-\omega^{-1}}
\|2 \mathbf v_1-\lambda \omega^{-4k} \mathbf v_2\| \geq 
\frac{\omega^{2k+1}}{\omega-\omega^{-1}}
(2 \|\v_1\|- \lambda \omega^{-4} \|\v_2\|) \\
& = & \frac{\omega^{2k+1}}{\omega-\omega^{-1}}
(2 \sqrt{\lambda \omega}- \lambda \omega^{-4} \sqrt{\lambda \omega^{-1}}) =
\frac{\omega^{2k+1} \sqrt{\lambda \omega^{-1}}}{\omega-\omega^{-1}}(2 \omega -\lambda \omega^{-4}).\end{array}$$
Applying this to equation \ref{eq:enough2} yields
$$\begin{array}{rcl}
\displaystyle \inf_{-\omega^{-1} < x < \omega^{-1}} \frac{\omega^{-n_1}}{\sqrt{\lambda \omega^{-1}}}  \|\rhoa{\delta_1^{-1}}(x,1)\| 
 & \geq & \displaystyle \frac{\omega^{-2k-1}}{\sqrt{\lambda \omega^{-1}}} \Big(\frac{\omega^{2k+1} \sqrt{\lambda \omega^{-1}}}{\omega-\omega^{-1}}(2 \omega -\lambda \omega^{-4}) \Big) \\
& = & \displaystyle \frac{2 \omega -\lambda \omega^{-4}}{\omega-\omega^{-1}} =
\frac{\omega-\omega^{-1}+\lambda(1-\omega^{-4})}{\omega-\omega^{-1}}
>1. \end{array}
$$

In case (2), we have $k \geq 1$ and 
$$\inf_{x' \leq -\omega} \|\rhoa{(hv^{-1})^k)}(x',1)\|=
\|\rhoa{(hv^{-1})^k)}(-\omega,1)\|.$$
We may write $(-\omega,1)=\frac{1}{\omega-\omega^{-1}} (- \lambda \mathbf v_1+2 \omega \mathbf v_2)$. Therefore,
$$\begin{array}{rcl}
\displaystyle \|\rhoa{(hv^{-1})^k}(-\omega,1)\| & = & \displaystyle \frac{1}{\omega-\omega^{-1}} \| 2 \omega^{1-2k} \mathbf v_2-\omega^{2k} \lambda \mathbf v_1\|
\geq \frac{1}{\omega-\omega^{-1}} \big(\omega^{2k} \lambda \|\mathbf v_1\|-
2 \omega^{1-2k} \|\mathbf v_2\|\big) \\
& = & \displaystyle \frac{\omega^{2k+1} \sqrt{\lambda \omega^{-1}}}{\omega-\omega^{-1}} \big(\lambda
-2 \omega^{-4k}\big)>\frac{\omega^{2k+1} \sqrt{\lambda \omega^{-1}}}{\omega-\omega^{-1}} (\lambda-2 \omega^{-4}).
\end{array}$$
Applying this to equation \ref{eq:enough2} yields
$$\begin{array}{rcl}
\displaystyle \inf_{-\omega^{-1} < x < \omega^{-1}} \frac{\omega^{-n_1}}{\sqrt{\lambda \omega^{-1}}}  \|\rhoa{\delta_1^{-1}}(x,1)\| 
 & \geq & \displaystyle \frac{\omega^{-2k-1}}{\sqrt{\lambda \omega^{-1}}} \Big(\frac{\omega^{2k+1} \sqrt{\lambda \omega^{-1}}}{\omega-\omega^{-1}} (\lambda-2 \omega^{-4}) \Big) \\
& = & \displaystyle \frac{\lambda-2 \omega^{-4}}{\omega-\omega^{-1}}=
 \frac{\omega-\omega^{-1}+2(\omega^{-1}-\omega^{-4})}{\omega-\omega^{-1}}
>1. \end{array}
$$

Case (3) is simpler because $k=0$. For $\delta_1$ as in this case, we have that
$$\inf_{x' \leq -\lambda-\omega} \|\rhoa{(hv^{-1})^k}(x',1)\|=\|(-\lambda-\omega,1)\|>
\lambda+\omega,$$
and we can proceed as in the first subcase of case (1).

Now we consider the case of $\delta_1=h^{a} v^{-1} (hv^{-1})^k$ and $n_1=2k+2$. 
Noting that $\rhoa{h^{-a}}(x,1)=(x-a \lambda,1)$, we break into two cases depending on
the choice of $k \geq 0$ and $a \not \in \{0, 1\}$.
\begin{enumerate}
\item[(1')] $x-a \lambda \leq \omega^{-1}-2\lambda=-\lambda-\omega$ when $a \geq 2$, or 
\item[(2')] $x-a \lambda \geq \lambda-\omega^{-1}=\omega$ when $a \leq -1$.
\end{enumerate}
We will analyze
$\inf_{x'} \|\rhoa{(vh^{-1})^k v}(x',1)\|$, where $x'$ ranges over the possible values of $x-a \lambda$ allowed by the inequalities
in cases (1') and (2').
This time, we have that 
$$\inf_{x' \in \R} \|\rhoa{(vh^{-1})^k v}(x',1)\|=\|\rhoa{(vh^{-1})^k v}(x^\circ,1)\|,$$
where $-\frac{\lambda}{1+\lambda^2} \leq x^\circ < \omega^{-1}$. This interval is in the complement of the set of values allowed for $x-a\lambda$ by cases (1) and (2),
therefore the value of $\inf_{x'=x-a\lambda}  \|\rhoa{(vh^{-1})^k v}(x',1)\|$ is realized at the endpoints of the rays of these cases. 

In case (1') we have,
$$\inf_{x' \leq -\lambda-\omega} \|\rhoa{(hv^{-1})^k}(x',1)\|=\|\rhoa{(vh^{-1})^k v}(-\lambda-\omega,1)\|=
\|\rhoa{(vh^{-1})^k}(-\lambda-\omega,-\lambda^2-\omega^2)\|.$$
As before $\mathbf v_1=(\omega,1)$ and $\mathbf v_2=(\omega^{-1},1)$ are eigenvectors of $\rhoa{v h^{-1}}$, but here
$\rhoa{v h^{-1}}(\mathbf v_1)=\omega^{-2} \mathbf v_1$ and
$\rhoa{v h^{-1}}(\mathbf v_2)=\omega^{2} \mathbf v_2$.
We write our vector in terms of
these eigenvectors as
$$(-\lambda-\omega,-\lambda^2-\omega^2)=\frac{\lambda}{\omega^3-\omega} \mathbf v_1+\frac{-2 \omega^4}{\omega^2-1} \mathbf v_2.$$
Thus, we see 
$$\rhoa{(vh^{-1})^k v}(-\lambda-\omega,1)=\frac{\lambda}{\omega^3-\omega} \omega^{-2k} \mathbf v_1+\frac{-2 \omega^4}{\omega^2-1} \omega^{2k} \mathbf v_2.$$
By the triangle inequality, we have
$$\|\rhoa{(vh^{-1})^k v}(-\lambda-\omega,1)\|
\geq 
\frac{2 \omega^{2k+4}\|\mathbf v_2\|}{\omega^2-1}-\frac{\lambda \omega^{-2k-1}\|\mathbf v_1\|}{\omega^2-1}
=\frac{\sqrt{\lambda \omega^{-1}}}{\omega^2-1}
(2 \omega^{2k+4} - \lambda \omega^{-2k}).$$
In this case, we apply equation \ref{eq:enough2} and see
$$\begin{array}{rcl}
\displaystyle \inf_{-\omega^{-1} < x < \omega^{-1}} \frac{\omega^{-n_1}}{\sqrt{\lambda \omega^{-1}}}  \|\rhoa{\delta_1^{-1}}(x,1)\| 
 & \geq & \displaystyle \frac{\omega^{-2k-2}}{\sqrt{\lambda \omega^{-1}}} \Big(\frac{\sqrt{\lambda \omega^{-1}}}{\omega^2-1}
(2 \omega^{2k+4} - \lambda \omega^{-2k}) \Big) \\
& = & \displaystyle \frac{2 \omega^2-\lambda \omega^{-4k-2}}{\omega^2-1}=
\frac{2 \omega^2-\omega^{-4k-1}-\omega^{-4k-3}}{\omega^2-1}
 \\
& > & \displaystyle \frac{2 (\omega^2-1)}{\omega^2-1}=2.
\end{array}
$$

In case (2'), we have the following calculations.
$$\inf_{x' \geq \omega} \|\rhoa{(vh^{-1})^k v}(x',1)\|=|\rhoa{(vh^{-1})^k v}(\omega,1)\|=
|\rhoa{(vh^{-1})^k}(\omega,\lambda \omega+1)\|.$$
We may write $(\omega,\lambda \omega+1)=\frac{1}{\omega^2-1}(-2 \mathbf v_1+\omega^3 \lambda \v_2)$. Therefore,
$$\rhoa{(vh^{-1})^k}(\omega,\lambda \omega+1)=\frac{1}{\omega^2-1}(-2 \omega^{-2k} \mathbf v_1+\omega^{3+2k} \lambda \mathbf v_2).$$
By the triangle inequality, we have
$$|\rhoa{(vh^{-1})^k}(\omega,\lambda \omega+1)\| \leq \frac{\sqrt{\lambda\omega^{-1}}}{\omega^2-1}(\omega^{3+2k} \lambda -2 \omega^{1-2k}).$$
We apply equation \ref{eq:enough2} and see
$$\begin{array}{rcl}
\displaystyle \inf_{-\omega^{-1} < x < \omega^{-1}} \frac{\omega^{-n_1}}{\sqrt{\lambda \omega^{-1}}}  \|\rhoa{\delta_1^{-1}}(x,1)\| 
 & \geq & \displaystyle \frac{\omega^{-2k-2}}{\sqrt{\lambda \omega^{-1}}} \Big(\frac{\sqrt{\lambda\omega^{-1}}}{\omega^2-1}(\omega^{3+2k} \lambda -2 \omega^{1-2k}) \Big) \\
& = & \displaystyle \frac{\omega \lambda-2 \omega^{-4k-1}}{\omega^2-1} =
\frac{\omega^2+1-2 \omega^{-4k-1}}{\omega^2-1} \\
& \geq & \frac{\omega^2+1-2 \omega^{-1}}{\omega^2-1},
\end{array}
$$
and the fraction $\frac{\omega^2+1-2 \omega^{-1}}{\omega^2-1}$
is strictly greater than one because $1-2 \omega^{-1} > -1$. 
This completes the proof of equation \ref{eq:enough2} in all cases.
\end{proof}

\section{Quadrants and shrinking sequences}
\label{sect:quadrants}
Up to this point the $\lambda$-shrinking sequence $\langle g_i \rangle$ of a vector $\bm \theta$ has been characterized by the property that 
$$\|\bm \theta \|> \|\rhoa{g_1}(\bm \theta)\|>\|\rhoa{g_2}(\bm \theta)\|>\ldots.$$
In this section, we will see that this condition is essentially equivalent to saying that each $\rhoa{g_i}(\bm \theta)$ lies in some specific
quadrant depending on the shrinking sequence. This section culminates with a proof of Theorem \ref{thm:operator_theorem}.

\subsection{Quadrants and expansion}
\label{sect:quadrants and expansion}
Because $G$ is a free group, it is natural to identify each $g \in G$ with its unique expression as a reduced word in the generators. This is equivalent to identifying $g$ with a geodesic segment in the Cayley graph joining $e \in G$ to $g \in G$. In this context, it is natural to consider $\tilde G$ which we define to be the free monoid with generating set $\{h,v,h^{-1},v^{-1}\}$. That is, $\tilde G$ is the set of all words in these symbols. Our expression of $g \in G$ as its unique reduced word gives a set-theoretic embedding $G \hookrightarrow \tilde G$.

Recall Definitions \ref{def:sign_pairs} and \ref{def:quadrants1} of sign pairs and quadrants in $\R^2$, respectively.
We define the following action on sign pairs.
\begin{definition}[Expanding sign action]
\label{def:sign_action}
The monoid action $\sa:\tilde G \times \SP \to \SP \label{not:sa}$ on the set of sign pairs is the action determined by action of generators shown in the following diagram.
\begin{center}
\includegraphics{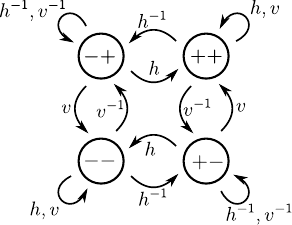}
\end{center}
For $g \in G$, we define $\Sigma^g$ to be the action of $g$ written as a reduced word in $\tilde G$.
\end{definition}

Now recall Definition \ref{def:shrink_exp} of $\Exp_\lambda(g)$. 
Motivation for the definition of the expanding sign action comes from the following.

\begin{proposition}[Quadrants and expansion]
\label{prop:quadrants_and_expansion}
Let $\langle g_i \rangle$ be a geodesic ray or segment with $g_0=e$ and let $s \in \SP$. Suppose that
$\bm \theta \in \cl\big(\Exp_\lambda(g_1) \cap \Q_s\big)$.
Define $s_i=\sa^{g_i}(s)$.  Then
$\rhoa{g_i}(\bm \theta) \in \cl(\Q_{s_i})$ for all $i$.
\end{proposition}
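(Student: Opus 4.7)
The plan is to prove the proposition by induction on $i$. The base case $i=0$ is immediate: since $g_0=e$, we have $\rhoa{g_0}(\btheta)=\btheta\in\cl(\Q_s)=\cl(\Q_{s_0})$ by hypothesis. For the inductive step, assume $\rhoa{g_i}(\btheta)\in\cl(\Q_{s_i})$, and set $k=g_{i+1}g_i^{-1}\in\{h,v,h^{-1},v^{-1}\}$, so that by the definition of $\Sigma$ we have $s_{i+1}=\Sigma^{k}(s_i)$. The goal becomes showing that $\rhoa{k}\!\left(\rhoa{g_i}(\btheta)\right)\in\cl(\Q_{s_{i+1}})$.

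The key supplementary ingredient is that $\pi_\Circ\!\left(\rhoa{g_i}(\btheta)\right)$ lies in $\cl\big(\Exp_\lambda(k)\big)$. Indeed, the hypothesis $\btheta\in\cl(\Exp_\lambda(g_1))$ means $\|\rhoa{g_1}(\btheta)\|\geq\|\btheta\|$; applying the non-strict version of Proposition \ref{prop:increasing} along the geodesic $\langle g_0,g_1,g_2,\ldots\rangle$ shows that $\|\rhoa{g_{j+1}}(\btheta)\|\geq\|\rhoa{g_j}(\btheta)\|$ for every $j\geq 0$, and so in particular $\pi_\Circ\!\left(\rhoa{g_i}(\btheta)\right)\in\cl(\Exp_\lambda(k))$. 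Combined with the inductive hypothesis, this gives $\pi_\Circ\!\left(\rhoa{g_i}(\btheta)\right)\in\cl\big(\Exp_\lambda(k)\cap\Q_{s_i}\big)$.

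What remains is the finite verification that for each of the sixteen pairs $(k,q)\in\{h,v,h^{-1},v^{-1}\}\times\SP$ one has $\rhoa{k}\!\left(\cl(\Exp_\lambda(k)\cap\Q_q)\right)\subset\cl(\Q_{\Sigma^k(q)})$. Using Proposition \ref{prop:shrink_exp} to describe $\Exp_\lambda(k)=\Circ\smallsetminus\cl(\Shrink_\lambda(k))$ explicitly, and the explicit matrices of Definition \ref{def:rho}, each case reduces to a single linear inequality. For example, if $k=h$ and $q=+-$, then $\cl(\Exp_\lambda(h)\cap\Q_{+-})=\{(x,y):x\geq 0,\ \lambda y+2x\leq 0\}$, and $\rhoa{h}(x,y)=(x+\lambda y,\,y)$ has first coordinate at most $-x\leq 0$ and second coordinate $y\leq 0$, verifying $\Sigma^{h}(+-)=--$. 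The remaining cases are analogous or follow by the symmetry $\rhoa{k^{-1}}(-\v)=-\rhoa{k^{-1}}(\v)$.

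The only real obstacle is careful bookkeeping: I must confirm that the diagram defining $\Sigma^g$ in Definition \ref{def:sign_action} agrees, case by case, with the sign behavior computed from the matrix action, and that boundary points of quadrants are handled correctly (which is precisely why the proposition is stated in terms of closures). Once this case check is complete, the induction closes and the proposition follows.
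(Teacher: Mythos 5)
Your proof is correct and takes essentially the same approach as the paper: an induction over $i$ whose key lemma is that the sequence $\|\rhoa{g_i}(\btheta)\|$ is non-strictly increasing (Proposition \ref{prop:increasing}), reducing the inductive step to a one-step verification that $\rhoa{k}$ maps $\cl\big(\Exp_\lambda(k)\cap\Q_q\big)$ into $\cl(\Q_{\sa^k(q)})$. The only difference is organizational: the paper invokes the dihedral group action of Remark \ref{rem:dihedral} to fix $g_1 = h$ and $\btheta$ in an upper-half-plane pair of quadrants, thereby reducing the case check to essentially a single computation, whereas you propose doing the sixteen checks directly (using the milder negation symmetry $\v\mapsto -\v$ to halve the work). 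Both are fine; the paper's dihedral reduction is slicker but your exhaustive check is more transparent. One detail you should make explicit: you pass from $\pi_\Circ\big(\rhoa{g_i}(\btheta)\big)\in\cl(\Exp_\lambda(k))\cap\cl(\Q_{s_i})$ to membership in $\cl\big(\Exp_\lambda(k)\cap\Q_{s_i}\big)$, and these sets coincide here only because $\Exp_\lambda(k)\cap\Q_{s_i}$ is always a nonempty arc for generators $k$ and any sign pair $s_i$ (readily checked from Proposition \ref{prop:shrink_exp}); without this observation the containment $\cl(A)\cap\cl(B)\subset\cl(A\cap B)$ does not hold in general.
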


\begin{remark}
The technical condition $\bm \theta \in \cl(\Exp_\lambda(g_1) \cap \Q_s)$ 
allows the proposition to handle horizontal and vertical $\bm \theta$.
If $\bm \theta$ is not horizontal or vertical, the condition that 
$\bm \theta \in \cl(\Exp_\lambda(g_1) \cap \Q_s)$
is equivalent to saying that $\bm \theta \in \Q_s \cap \Circ$ and $\|\rho_\lambda^{g_1}(\bm \theta)\| \geq \| \bm \theta\|$. 
\end{remark}

\begin{proof}[Proof of Proposition \ref{prop:quadrants_and_expansion}]
First, we will prove that the statement is true for $i=1$. Any such statement must be invariant under the action of
the dihedral group on $\langle g_i \rangle$ and $\bm \theta$. See Remark \ref{rem:dihedral}. Thus we may assume that $g_1=h$ and $\bm \theta \in \cl(\Q_{++} \cup \Q_{-+})$.
We have $\bm \theta \in \Exp_\lambda(h) \cap (\cl(\Q_{++} \cup \Q_{-+})$. Thus,
$$\rhoa{h}(\bm \theta) \in \rhoa{h}\big( \Exp_\lambda(h) \cap (\cl(\Q_{++} \cup \Q_{-+}) \big) \subset \Q_{++}.$$
This agrees with the fact that $\sa^h(-+)=\sa^h(++)=++$. So, the proposition is true for $i=1$, by dihedral group invariance.

To see the statement is true for $i>1$, we apply induction. Proposition \ref{prop:increasing} implies that the sequence $\|\rhoa{g_i}(\bm \theta)\|$ is non-strictly increasing. Assume $\rhoa{g_i}(\bm \theta) \in \cl(\Q_{s_i})$. We have $\|\rhoa{g_{i+1}}(\bm \theta)\| \geq \|\rhoa{g_i}(\bm \theta)\|$.
Then by the first paragraph, $\rhoa{g_{i+1}}(\bm \theta) \in \cl(\Q_{s'})$, where $s'=\sa^{g_{i+1}g_{i}^{-1}}(s_i)=s_{i+1}$.
\end{proof}

\begin{proposition}
\label{prop:inside_self}
Let $g \in \{h,v,h^{-1},v^{-1}\}$ and $s \in \SP$. If $\rhoa{g}(\Q_s) \subset \Q_s$ then for any $\v \in \Q_s$ we have $\|\rhoa{g}(\v)\|>\|\v\|$.
\end{proposition}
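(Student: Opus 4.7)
My plan is to reduce to a single case using symmetries, then finish with a short direct computation.

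First I will enumerate the pairs $(g,s)$ with $g \in \{h,v,h^{-1},v^{-1}\}$ and $s \in \SP$ satisfying $\rhoa{g}(\Q_s) \subset \Q_s$. For $g = h$, the map $\rhoa{h}(x,y) = (x+\lambda y, y)$ preserves the sign of $y$, and preserves the sign of $x$ precisely when $x$ and $y$ have the same (strict) sign; so the admissible quadrants for $g=h$ are $\Q_{++}$ and $\Q_{--}$. The other three generators can be handled analogously, but I will instead invoke the dihedral group action of Remark \ref{rem:dihedral}, which permutes the generators $\{h,v,h^{-1},v^{-1}\}$ transitively by conjugation and simultaneously permutes quadrants. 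Since the operator norm of $\rhoa{g}$ is conjugation-invariant, the statement $\|\rhoa{g}(\v)\| > \|\v\|$ is preserved under the dihedral action, so it suffices to treat the single case $g = h$.

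For $g = h$, I further reduce from $s \in \{++,--\}$ to $s = ++$ using the $\v \mapsto -\v$ symmetry, which commutes with the linear map $\rhoa{h}$ and preserves norms.

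It remains to verify the inequality for $\v = (x,y) \in \Q_{++}$, i.e.\ $x,y > 0$. Compute
\begin{equation*}
\|\rhoa{h}(\v)\|^2 - \|\v\|^2 = (x+\lambda y)^2 + y^2 - (x^2 + y^2) = \lambda y\,(2x + \lambda y).
\end{equation*}
Since $\lambda \geq 2 > 0$, $x > 0$, and $y > 0$, the right-hand side is strictly positive, giving $\|\rhoa{h}(\v)\| > \|\v\|$.

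There is really no main obstacle here: the only subtle point is making sure the dihedral reduction is legitimate, i.e.\ that the hypothesis $\rhoa{g}(\Q_s) \subset \Q_s$ is equivariant under the symmetry group (so that if $(g,s)$ satisfies the hypothesis, so does its image). This is immediate because the dihedral group action is defined precisely so that it acts compatibly on $G$ (via conjugation of $\rhoa{G}$) and on $\R^2$ (hence on quadrants). After this reduction the proof is a two-line computation.
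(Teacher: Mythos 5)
Your proof is correct and follows essentially the same approach as the paper: reduce by the dihedral group action to the case $g=h$, identify $\Q_{++}$ and $\Q_{--}$ as the admissible quadrants, and then verify the norm inequality there. The only difference is that you verify the base case by computing $\|\rhoa{h}(\v)\|^2 - \|\v\|^2 = \lambda y(2x+\lambda y)$ directly, whereas the paper simply cites Proposition \ref{prop:shrink_exp}, which already records that $\Q_{++}\cup\Q_{--}\subset \Exp_\lambda(h)$; both are equally valid.
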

\begin{proof}
By the dihedral group action, we may assume $g=h$. We have $\rhoa{h}(\Q_{++}) \subset \Q_{++}$ and $\rhoa{h}(\Q_{--}) \subset \Q_{--}$, but
$\rhoa{h}(\Q_{+-}) \not \subset \Q_{+-}$ and $\rhoa{h}(\Q_{-+}) \not \subset \Q_{-+}$. By Proposition \ref{prop:shrink_exp} we have
$\Q_{++} \cup \Q_{--} \subset \Exp_\lambda(h)$. 
\end{proof}

By combining these two propositions, we have the following.

\begin{corollary}
\label{cor:quadrants}
Suppose $\langle g_i \rangle$ is a geodesic ray and $\rhoa{g_1}(\Q_s) \subset \Q_s$, or equivalently
$$(g_1, s) \in \{(h,++), (h,--), (v,++), (v,--), (h^{-1},+-), (h^{-1},-+), (v^{-1},+-), (v^{-1},-+)\}.$$
Then for all $i$, $\rhoa{g_i}\big(\cl(\Q_s)\big) \subset \cl(\Q_{s_i})$ 
where $s_i = \sa^{g_i}(s)$.
\end{corollary}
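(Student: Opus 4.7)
The plan is to deduce this corollary directly from Propositions \ref{prop:quadrants_and_expansion} and \ref{prop:inside_self}, which do essentially all the work. First I would verify the equivalence of the two formulations of the hypothesis. This amounts to a finite case check over the sixteen pairs $(g_1, s) \in \{h,v,h^{-1},v^{-1}\} \times \SP$, applying the matrix formulas of Definition \ref{def:rho} to see that the listed eight pairs are precisely those for which $\rhoa{g_1}$ preserves the signs of both coordinates. For instance, $\rhoa{h}(x,y)=(x+\lambda y, y)$ preserves $\Q_{++}$ and $\Q_{--}$ since $\lambda>0$, but not $\Q_{+-}$ or $\Q_{-+}$.

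Next, assuming $\rhoa{g_1}(\Q_s) \subset \Q_s$, I would apply Proposition \ref{prop:inside_self} to conclude $\|\rhoa{g_1}(\v)\| > \|\v\|$ for every $\v \in \Q_s$. Since $\Exp_\lambda(g_1) \subset \Circ$ consists of those unit vectors that are strictly lengthened by $\rhoa{g_1}$, this gives $\Q_s \cap \Circ \subset \Exp_\lambda(g_1)$. Equivalently, $\Exp_\lambda(g_1) \cap \Q_s = \Q_s \cap \Circ$, and taking closures (in $\Circ$) yields
\[
\cl(\Q_s) \cap \Circ \;=\; \cl\bigl(\Exp_\lambda(g_1) \cap \Q_s\bigr).
\]

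Now fix any $\bm\theta \in \cl(\Q_s) \cap \Circ$. By the equality just established, $\bm\theta$ satisfies the hypothesis of Proposition \ref{prop:quadrants_and_expansion} applied to the geodesic ray $\langle g_i \rangle$ and sign pair $s$, so $\rhoa{g_i}(\bm\theta) \in \cl(\Q_{s_i})$ for all $i$, where $s_i = \sa^{g_i}(s)$. Finally, since $\rhoa{g_i}$ is linear and $\cl(\Q_{s_i})$ is a closed cone, scaling by non-negative reals extends this from unit vectors in $\cl(\Q_s)$ to all of $\cl(\Q_s)$, giving $\rhoa{g_i}\bigl(\cl(\Q_s)\bigr) \subset \cl(\Q_{s_i})$.

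The argument is essentially bookkeeping, so there is no substantive obstacle; the only subtle point is the mild mismatch between $\Exp_\lambda(g_1)$, which lives in $\Circ$, and the quadrants $\Q_s$, which live in $\R^2$. This is resolved simply by passing through unit vectors and then invoking linearity of $\rhoa{g_i}$ to recover the cone statement.
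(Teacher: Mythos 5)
Your proof is correct and follows exactly the route the paper intends: combine Proposition \ref{prop:inside_self} (which converts $\rhoa{g_1}(\Q_s) \subset \Q_s$ into $\Q_s \cap \Circ \subset \Exp_\lambda(g_1)$) with Proposition \ref{prop:quadrants_and_expansion}, then lift from unit vectors to the closed cone by linearity. The paper merely states "by combining these two propositions" without spelling out the passage through $\Circ$, which you do cleanly.
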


Recall that given a $\lambda$-renormalizable direction $\bm \theta \in \Circ$ with shrinking sequence $\langle g_0, g_1, g_2, \ldots \rangle$, we can associate
a sign sequence $\langle s_0, s_1, s_2, \ldots \rangle$ with $s_i \in \SP$ so that $\rhoa{g_i}(\bm \theta) \in \Q_{s_i}$ for all $i$. 
See Definition \ref{def:sign_sequence}. These two sequences are related by the following.

\begin{proposition}[Shrinking and sign sequences]
\label{prop:shrinking_and_sign}
Let $\langle g_i\rangle$ be the shrinking sequence of a $\lambda$-renormalizable direction $\bm \theta \in \Circ \cap \Q_{s_0}$, with $s_0 \in \SP$. 
Then, there is a unique (infinite) path in the following diagram which begins at the node labeled $\Q_{s_0}$ and follows edges
labeled $g_1, g_2 g_1^{-1}, g_3 g_2^{-2}$, etc. Moreover,
the path visits the sequence of nodes $\langle \Q_{s_i}\rangle$ where $\langle s_i \rangle$ is the sign sequence of $\btheta$. 
\begin{center}
\includegraphics{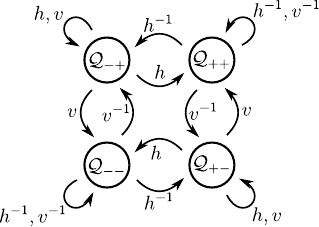}
\end{center}
\end{proposition}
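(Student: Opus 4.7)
The plan is to proceed by induction on $i$, showing that at each step the vector $\rhoa{g_i}(\btheta)$ lies in the quadrant labeling the node visited at step $i$ along the path determined by the shrinking sequence.

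I would start by observing that the sign sequence is well defined: the set $\Rn_\lambda$ is $\rho_\lambda^G$-invariant (immediate from its definition in \S \ref{ss:renormalizable directions}), so every $\rhoa{g_i}(\btheta)$ lies in $\Rn_\lambda$ and hence in a unique open quadrant $\Q_{s_i}$, since $\lambda$-renormalizable directions are neither horizontal nor vertical. The base case $\rhoa{g_0}(\btheta) = \btheta \in \Q_{s_0}$ is the hypothesis.

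For the inductive step the task is to identify the outgoing edge of $\Q_{s_i}$ with label $g_{i+1}g_i^{-1}$ and check that its target is $\Q_{s_{i+1}}$. By the dihedral group action (Remark \ref{rem:dihedral}), the number of cases reduces to a handful. In each essential case I would use the refined slope bounds of Proposition \ref{prop:limit_set2} to locate $\rhoa{g_i}(\btheta) \in \Shrink_\lambda(g_{i+1}g_i^{-1}) \cap \Rn_\lambda \cap \Q_{s_i}$ within a specific sub-interval of $\Q_{s_i}$, apply the matrix $\rhoa{g_{i+1}g_i^{-1}}$ explicitly, and read off the quadrant of the image. Matching this target against the edges of the diagram gives the correspondence; uniqueness of the path then follows because each transition is pinned down inductively.

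The hard part will be that for some combinations the target quadrant is not determined by the pair $(s_i,\,g_{i+1}g_i^{-1})$ alone: the refined slope interval from Proposition \ref{prop:limit_set2} can straddle the critical ratio at which one image coordinate changes sign (e.g., for $g_{i+1}g_i^{-1}=h^{-1}$ acting on $\Q_{++}$, the sign of $x-\lambda y$ is not forced by the slope bound $y/x\in(0,\tfrac{\lambda-\sqrt{\lambda^2-4}}{2})$ alone). To resolve this I would look one step ahead: the next shrinking generator $g_{i+2}g_{i+1}^{-1}$ must shrink $\rhoa{g_{i+1}}(\btheta)$, so intersecting with $\Shrink_\lambda(g_{i+2}g_{i+1}^{-1})\cap\Rn_\lambda$ forces $\rhoa{g_{i+1}}(\btheta)$ into a single quadrant. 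This ``look-ahead'' is what makes the diagram path unique even when a naive reading of the outgoing edges at a node appears ambiguous, and I expect the diagram's structure to encode precisely this disambiguation.
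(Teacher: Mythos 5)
Your proposal is correct and follows essentially the same route as the paper: you establish that a node in the path is determined not by $(s_i, g_{i+1}g_i^{-1})$ alone but by also looking ahead to $g_{i+2}g_{i+1}^{-1}$, which is exactly the paper's two-observation argument. The only cosmetic difference is that you invoke the refined slope bounds of Proposition~\ref{prop:limit_set2} to locate $\rhoa{g_i}(\btheta)$ within $\Q_{s_i}$, whereas the paper needs only the coarser Proposition~\ref{prop:shrink_exp} together with non-emptiness of $\Shrink_\lambda(g_{i+2}g_{i+1}^{-1}) \cap \Q_{s_{i+1}}$ and of $\rhoa{g_{i+1}g_i^{-1}}(\Q_{s_i}) \cap \Q_{s_{i+1}}$; your version works but carries slightly more machinery. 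You correctly pinpointed the subtle failure of the naive memoryless argument (the example with $g_{i+1}g_i^{-1}=h^{-1}$ and $s_i=++$ is the right one, since $1/\lambda$ lies in the interval $(0,\tfrac{\lambda-\sqrt{\lambda^2-4}}{2})$ for $\lambda > 2$), and your look-ahead repair is the same mechanism the paper uses.
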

\begin{proof}
We will prove that we can determine $s_{i+1}$ from $s_i$, $g_{i+1} g_{i}^{-1}$ and $g_{i+2} g_{i+1}^{-1}$.
(This is similar to the diagram; a node in a path is uniquely determined from the previous node and the labels on the adjacent arrows.)
Consider the following observations.
\begin{enumerate}
\item We have $\rhoa{g_{i+1}}(\bm \theta) \in \Shrink_\lambda(g_{i+2} g_{i+1}^{-1}) \cap \Q_{s_{i+1}}$. So the intersection
$\Shrink_\lambda(g_{i+2} g_{i+1}^{-1}) \cap \Q_{s_{i+1}}$ is non-empty. Therefore,
$g_{i+2} g_{i+1}^{-1} \in \{h,v\}$ implies $s_{i+1} \in \{+-,-+\}$ and
$g_{i+2} g_{i+1}^{-1} \in \{h^{-1},v^{-1}\}$ implies $s_{i+1} \in \{++,--\}$.
\item 
Similarly, we have $\rhoa{g_{i+1}}(\bm \theta) \in \rhoa{g_{i+1} g_{i}^{-1}}(\Q_{s_{i}}) \cap \Q_{s_{i+1}}$, so this intersection is non-empty. Therefore, we have the following.
$$\begin{array}{c}
s_i=+- \textrm{ and } g_{i+1} g_{i}^{-1}=h \textrm{ implies } s_{i+1} \in \{+-, --\}.\\
s_i=-+ \textrm{ and } g_{i+1} g_{i}^{-1}=h \textrm{ implies } s_{i+1} \in \{++, -+\}.\\
s_i=+- \textrm{ and } g_{i+1} g_{i}^{-1}=v \textrm{ implies } s_{i+1} \in \{++, +-\}.\\
s_i=-+ \textrm{ and } g_{i+1} g_{i}^{-1}=v \textrm{ implies } s_{i+1} \in \{-+, --\}.\\
s_i=++ \textrm{ and } g_{i+1} g_{i}^{-1}=h^{-1} \textrm{ implies } s_{i+1} \in \{++, -+\}.\\
s_i=-- \textrm{ and } g_{i+1} g_{i}^{-1}=h^{-1} \textrm{ implies } s_{i+1} \in \{+-, --\}.\\
s_i=++ \textrm{ and } g_{i+1} g_{i}^{-1}=v^{-1} \textrm{ implies } s_{i+1} \in \{++, +-\}.\\
s_i=-- \textrm{ and } g_{i+1} g_{i}^{-1}=v^{-1} \textrm{ implies } s_{i+1} \in \{-+, --\}.
\end{array}$$
\end{enumerate}
These two observations combine to uniquely determine $s_{i+1}$ as in the diagram. 
\end{proof}

\begin{proof}[Proof of Proposition \ref{prop:same_sign}]
Proposition \ref{prop:same_sign} claimed that the sign sequence of $\btheta=\btheta(\langle g_i \rangle,\lambda)$ only depended on the sequence $\langle g_i \rangle$ and on the quadrant containing $\btheta$ and not on $\lambda$. This is a consequence of \ref{prop:shrinking_and_sign}, because $\lambda$ plays no role.
\end{proof}

%

\subsection{Critical times for the shrinking sequence}

Let $\langle s_n \rangle$ be the sign sequence of $\bm \theta$. We have the following important definition.

\begin{definition}
\label{def:critical_time}
An integer $n \geq 1$ for which $s_{n-1}=s_{n}$ is a {\em critical time}. 
\end{definition}

There are various relationships between the sign sequence and the shrinking sequence $\langle g_n \rangle$ that appear at critical times.

\begin{proposition} 
\label{prop:crit_equiv}
Let $n>0$ be an integer. The following statements are equivalent.
\begin{enumerate}
\item $n$ is a critical time.
\label{crit1}
\item $\rhoa{g_{n-1} g_n^{-1}}(\Q_{s_n}) \subset \Q_{s_n}$.
\label{crit2}
\item \label{crit3} The pair $(g_{n-1} g_n^{-1}, s_{n})$ lies in the set
$$\{ (h,++), (h,--), (v,++), (v,--), (h^{-1},+-), (h^{-1},-+), (v^{-1},+-), (v^{-1},-+)\}.$$
\item \label{crit6}
$g_{n+1} g_{n-1}^{-1} \in \{h^2, hv, vh, v^2, h^{-2}, h^{-1}v^{-1}, v^{-1}h^{-1},v^{-2}\}.$
\end{enumerate}
\end{proposition}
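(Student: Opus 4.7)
The plan is to prove the chain of equivalences $(\ref{crit2}) \Leftrightarrow (\ref{crit3})$, $(\ref{crit1}) \Leftrightarrow (\ref{crit3})$, and $(\ref{crit1}) \Leftrightarrow (\ref{crit6})$, each essentially by direct case analysis using the combinatorial description of the quadrant dynamics already built up in Propositions \ref{prop:shrink_exp}, \ref{prop:inside_self}, and \ref{prop:shrinking_and_sign}.

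First I would establish $(\ref{crit2}) \Leftrightarrow (\ref{crit3})$, which is the easy part. Since $g_{n-1}g_n^{-1}$ is the inverse of a generator of the shrinking sequence, it lies in $\{h,v,h^{-1},v^{-1}\}$. For each of the four generators, a one-line matrix calculation (or application of Proposition \ref{prop:inside_self} combined with the description of $\Shrink_\lambda$ in Proposition \ref{prop:shrink_exp}) shows that $\rhoa{g}(\Q_s) \subset \Q_s$ holds for exactly the eight pairs $(g,s)$ listed in (\ref{crit3}). So these two conditions are just two ways of saying the same thing.

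Next, for $(\ref{crit1}) \Leftrightarrow (\ref{crit3})$ I would argue as follows. For $(\ref{crit1}) \Rightarrow (\ref{crit3})$, write $g = g_n g_{n-1}^{-1} \in \{h,v,h^{-1},v^{-1}\}$. Since $\|\rhoa{g}\big(\rhoa{g_{n-1}}(\btheta)\big)\| < \|\rhoa{g_{n-1}}(\btheta)\|$, we have $\rhoa{g_{n-1}}(\btheta) \in \Shrink_\lambda(g) \cap \Q_{s_{n-1}}$; the explicit formulas in Proposition \ref{prop:shrink_exp} then tell us exactly which values of $s_{n-1}$ are possible for each $g$. Under the assumption $s_{n-1}=s_n$, inverting $g$ and inspecting the cases shows that $(g_{n-1}g_n^{-1}, s_n)$ must be one of the eight pairs listed. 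For the converse $(\ref{crit3}) \Rightarrow (\ref{crit1})$, apply $(\ref{crit2})$ (which we now know is equivalent) to the specific vector $\rhoa{g_n}(\btheta) \in \Q_{s_n}$; its image is $\rhoa{g_{n-1}}(\btheta)$, so this vector lies in both $\Q_{s_{n-1}}$ and $\Q_{s_n}$, forcing $s_{n-1} = s_n$.

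Finally, for $(\ref{crit1}) \Leftrightarrow (\ref{crit6})$, the idea is that (\ref{crit6}) says precisely that $g_n g_{n-1}^{-1}$ and $g_{n+1} g_n^{-1}$ have the same ``sign'' (both in $\{h,v\}$ or both in $\{h^{-1},v^{-1}\}$). One direction: if $n$ is critical with $s_{n-1} = s_n = s$, then both of these consecutive generators have shrinking sets meeting $\Q_s$, and Proposition \ref{prop:shrink_exp} shows that $\{h,v\}$ are the generators whose shrinking sets meet $\Q_{+-} \cup \Q_{-+}$ while $\{h^{-1},v^{-1}\}$ are those meeting $\Q_{++} \cup \Q_{--}$; so both generators lie in the same sign class, giving (\ref{crit6}). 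Conversely, if $g_{n+1} g_{n-1}^{-1}$ lies in the set in (\ref{crit6}), then $\rhoa{g_n}(\btheta)$ and $\rhoa{g_{n-1}}(\btheta)$ both lie in the shrinking set of a generator of the same sign, and this pins down $s_{n-1}$ and $s_n$ to the same pair of opposite quadrants; the transition diagram of Proposition \ref{prop:shrinking_and_sign} (or a direct check using the action of $\rhoa{g_n g_{n-1}^{-1}}$ on that pair) then forces $s_{n-1} = s_n$.

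The main obstacle is not conceptual but notational: one must keep careful track of sign conventions and the distinction between $g_n g_{n-1}^{-1}$ and its inverse $g_{n-1} g_n^{-1}$, and the dihedral symmetry noted in Remark \ref{rem:dihedral} is the cleanest way to cut the case analysis down from sixteen subcases to essentially one.
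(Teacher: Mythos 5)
Your proposal is correct and is essentially the paper's own argument: both use the dihedral symmetry of Remark \ref{rem:dihedral} to collapse the case analysis and then read the equivalences off the explicit descriptions of shrinking sets and sign transitions in Propositions \ref{prop:shrink_exp} and \ref{prop:shrinking_and_sign}. The paper organizes this slightly differently (normalizing so that $\rhoa{g_{n-1}}(\btheta)\in\Q_{++}$ and $g_ng_{n-1}^{-1}=h^{-1}$, then checking one representative implication, here (1)$\Leftrightarrow$(4), by inspection), but the underlying computation and the key facts invoked are the same as yours.
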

\begin{proof}
It can be observed that each of these statements is invariant
under the action of the dihedral group. (See Remark \ref{rem:dihedral}.)
To simplify things, we use the dihedral group to arrange that 
$(x,y) = \rhoa{g_{n-1}}(\btheta)$ satisfy $x>0$, $y>0$ and $y<x$. 
This guarantees that $g_n=h^{-1} g_{n-1}$ 
(since $\rhoa{h^{-1}}$ is the only generator which shrinks $(x,y)$; see Proposition \ref{prop:shrink_exp}).
It also implies that $(x,y) \in \Q_{++}$ (i.e., $s_{n-1}=++$). So, saying that $n$ is a critical time is the same as saying that $\rhoa{h^{-1}}(x,y) \in \Q_{++}$, i.e., $s_n=++$. If $n$ is not a critical time, then it must be that
$\rhoa{h^{-1}}(x,y) \in \Q_{-+}$, i.e., $s_n=-+$. Using this information, the other statements can be observed to be equivalent by inspection.
For example, we will verify that (4) is equivalent to being a critical time. If it is a critical time, then because 
$\rhoa{g_n}(\btheta) \in \Q_{++}$, we know that in order to further shrink the vector we must have $g_{n+1} g_n^{-1} \in \{h^{-1},v^{-1}\}$. So, $g_{n+1} g_{n-1}^{-1} \in \{h^{-2},v^{-1}h^{-1}\}$, which is allowed by (4).
On other hand if $n$ is not a critical time, then $\rhoa{g_n}(\btheta) \in \Q_{-+}$. So in order to further shrink the vector, we have $g_{n+1} g_n^{-1} \in \{h,v\}$. But because $g_{n} g_{n-1}^{-1}=h^{-1}$, the first option is not allowed. We conclude that $g_{n+1} g_{n-1}^{-1}=v h^{-1}$, which is not in the list in item (4). \end{proof}

\begin{corollary}[Critical times occur]
\label{cor:critical_times_occur}
For $\bm \theta \in \Rn_\lambda$,
there are infinitely many critical times.
\end{corollary}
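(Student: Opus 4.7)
The plan is to argue by contraposition: if $\langle g_n\rangle$ is the $\lambda$-shrinking sequence of some $\btheta\in\Circ$ and only finitely many critical times occur, then $\langle g_n\rangle$ is tail equivalent to one of the two period-two geodesic rays
$$\langle e, h, v^{-1}h, hv^{-1}h, v^{-1}hv^{-1}h, \ldots\rangle \quad\text{or}\quad \langle e, h^{-1}, vh^{-1}, h^{-1}vh^{-1}, vh^{-1}vh^{-1}, \ldots\rangle,$$
and hence by Theorem \ref{thm:characterization} cannot arise from a $\lambda$-renormalizable direction. That will contradict $\btheta\in\Rn_\lambda$.

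First I would invoke part (\ref{crit6}) of Proposition \ref{prop:crit_equiv}: $n$ is a critical time if and only if $g_{n+1}g_{n-1}^{-1}$ lies in the set $\{h^{\pm 2}, v^{\pm 2}, hv, vh, h^{-1}v^{-1}, v^{-1}h^{-1}\}$. Equivalently, $n$ fails to be critical precisely when $g_{n+1}g_{n-1}^{-1}$ is one of the four ``alternating'' length-two words $hv^{-1},\, v^{-1}h,\, vh^{-1},\, h^{-1}v$.

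Now suppose, for contradiction, that there are only finitely many critical times; choose $N$ large enough that no $n\geq N$ is critical. Then for every $n\geq N$, writing $a_n:=g_{n+1}g_n^{-1}\in\{h^{\pm 1},v^{\pm 1}\}$, the product $a_{n+1}a_n = g_{n+2}g_n^{-1}$ must be one of the four alternating words above. Inspection of those four words shows that each element of $\{h^{\pm 1},v^{\pm 1}\}$ appears as the right factor of exactly one of them; for instance, if $a_n=h$ then $a_{n+1}a_n$ must be $hv^{-1}$ (read right-to-left), forcing $a_{n+1}=v^{-1}$; and similarly $a_n=v^{-1}\Rightarrow a_{n+1}=h$, while $a_n=h^{-1}\Rightarrow a_{n+1}=v$ and $a_n=v\Rightarrow a_{n+1}=h^{-1}$. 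Hence the one-sided sequence $\langle a_n\rangle_{n\geq N}$ is completely determined by $a_N$, and in every case it alternates periodically, either between $h$ and $v^{-1}$ or between $h^{-1}$ and $v$.

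Consequently the tail $\langle g_n\rangle_{n\geq N}$ is, up to the initial shift, identical to one of the two period-two geodesic rays displayed above. That is, $\langle g_n\rangle$ is tail equivalent (in the sense of Section \ref{ss:shrinking}) to one of those two rays. By Theorem \ref{thm:characterization}, no such geodesic ray can be the $\lambda$-shrinking sequence of a $\lambda$-renormalizable direction, contradicting $\btheta\in\Rn_\lambda$. The main step is the combinatorial observation about alternating words, which is entirely mechanical once the reformulation from Proposition \ref{prop:crit_equiv} is in place; there is no analytic difficulty.
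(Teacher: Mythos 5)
Your argument is correct and follows the paper's own route: both proofs invoke Proposition \ref{prop:crit_equiv}(\ref{crit6}) to recharacterize non-critical times via the four alternating words, then show the resulting tail is forced to be one of the two period-two geodesic rays excluded by Theorem \ref{thm:characterization}. You spell out the combinatorial forcing of the alternation more explicitly than the paper, which states it in one line, but the substance is identical.
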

\begin{proof}
We apply statement (\ref{crit6}) of Proposition \ref{prop:crit_equiv}. Suppose the conclusion is false for $\bm \theta$.
Then for all but finitely many $n$, we would have 
$g_n g_{n-2}^{-1} \in \{hv^{-1}, h^{-1}v, vh^{-1}, v^{-1}h\}$.
But, then $\langle g_i \rangle$ is tail equivalent to one of the last two sequences of Theorem \ref{thm:characterization}, and so $\btheta$ is not $\lambda$-renormalizable.
\end{proof}

\subsection{Interaction with the dot product} 

Recall the group automorphism $\gamma:G \to G$ defined as in equation \ref{eq:gamma} on page \pagelink{eq:gamma}. This automorphism has special significance for the representation $\rhoa{G}$.
\begin{proposition}
For all $\lambda$ and all $g \in G$, we have $\rhoa{\gamma(g)}=\transpose{\rho}_\lambda^{g^{-1}}$, the inverse transpose of 
$\rhoa{g}$. 
\end{proposition}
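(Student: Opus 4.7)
The plan is to verify the identity on the generators $h$ and $v$ by direct matrix computation, and then extend to arbitrary $g\in G$ by induction on word length, using the fact that $\gamma$ is a group homomorphism and that transposition reverses the order of matrix products.

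First I would check the base cases. Since $\gamma(h)=v^{-1}$, I compute
\[
\rhoa{\gamma(h)}=\rhoa{v^{-1}}=\begin{pmatrix}1&0\\-\lambda&1\end{pmatrix}=\transpose{\left(\begin{matrix}1&-\lambda\\0&1\end{matrix}\right)}=\transpose{\rhoa{h^{-1}}}=\transpose{\rho}_\lambda^{h^{-1}},
\]
and symmetrically, since $\gamma(v)=h^{-1}$,
\[
\rhoa{\gamma(v)}=\rhoa{h^{-1}}=\begin{pmatrix}1&-\lambda\\0&1\end{pmatrix}=\transpose{\left(\begin{matrix}1&0\\-\lambda&1\end{matrix}\right)}=\transpose{\rho}_\lambda^{v^{-1}}.
\]
The same computation (or the fact that $\gamma^2$ is the identity automorphism on $\{h,v,h^{-1},v^{-1}\}$) handles $h^{-1}$ and $v^{-1}$.

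Next I would extend to arbitrary $g$ by induction on the word length of $g$ in the generators $\{h,v,h^{-1},v^{-1}\}$. Assuming the identity holds for $g_1$ and $g_2$, write $g=g_1 g_2$. Because $\gamma$ is a group homomorphism, $\gamma(g)=\gamma(g_1)\gamma(g_2)$, and because $\rhoa{\cdot}$ is a representation,
\[
\rhoa{\gamma(g)}=\rhoa{\gamma(g_1)}\,\rhoa{\gamma(g_2)}=\transpose{\rho}_\lambda^{g_1^{-1}}\,\transpose{\rho}_\lambda^{g_2^{-1}}
=\transpose{\bigl(\rhoa{g_2^{-1}}\,\rhoa{g_1^{-1}}\bigr)}=\transpose{\rho}_\lambda^{g_2^{-1}g_1^{-1}}=\transpose{\rho}_\lambda^{g^{-1}},
\]
which completes the induction.

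There is no genuine obstacle here; the only thing to be careful about is the order reversal under transposition, which is precisely what forces the inversion $g\mapsto g^{-1}$ on the right-hand side and matches nicely with the fact that $\gamma$ swaps $h \leftrightarrow v^{-1}$ and $v \leftrightarrow h^{-1}$ (so that $\gamma$ intertwines the representation with its inverse transpose already on generators).
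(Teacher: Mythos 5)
Your proof is correct and follows essentially the same approach as the paper, which simply notes "the proof follows by checking that it is true on the generators." You have written out the implicit step — that both $g\mapsto\rhoa{\gamma(g)}$ and $g\mapsto\transpose{\rho}_\lambda^{g^{-1}}$ are group homomorphisms $G\to\SL(2,\R)$, so agreement on generators forces equality — and the base-case matrix checks and the order-reversal bookkeeping are all accurate.
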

The proof follows by checking that it is true on the generators. As a consequence, 
for all $\v, \w \in \R^2$ and all $g \in G$,
\begin{equation}
\label{eq:dot_product}
\v \cdot \w=\rhoa{g}(\v) \cdot \rhoa{\gamma(g)}(\w),
\end{equation}
where $\cdot$ denotes the usual dot product $\R^2 \times \R^2 \to \R$. 
We establish some simple corollaries of this observation.

\begin{corollary}[Sign pairs at critical times]
\label{cor:critical_time_formula}
Let $\langle g_i\rangle$ and $\langle s_i\rangle$ be shrinking sequences and sign pairs for a $\lambda$-shrinkable direction $\bm \theta \in \Circ$. Suppose $n>0$ is a critical time,
then 
$s_n=\Sigma^{\gamma(g_n)}(s_0).$
\end{corollary}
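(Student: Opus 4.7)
My plan is to combine a reverse-geodesic argument with a forward $\gamma$-twisted analog, linking them via the duality equation \ref{eq:dot_product}. First, at the critical time $n$, Proposition \ref{prop:crit_equiv}(2) gives $\rhoa{g_{n-1}g_n^{-1}}(\Q_{s_n}) \subset \Q_{s_n}$. I will apply Corollary \ref{cor:quadrants} to the reverse geodesic $\langle g_i' = g_{n-i}g_n^{-1}\rangle_{i=0}^{n}$ starting at $\Q_{s_n}$, which yields $\rhoa{g_n^{-1}}(\cl(\Q_{s_n})) \subset \cl(\Q_{s_0})$; the target quadrant must be $s_0$ because $\rhoa{g_n^{-1}}(\bm\theta_n) = \bm\theta$ lies in the interior of $\Q_{s_0}$.

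Next I will verify that the forward $\gamma$-twisted geodesic $\langle \gamma(g_i)\rangle$ also satisfies the hypothesis of Corollary \ref{cor:quadrants} at $\Q_{s_0}$: namely $\rhoa{\gamma(g_1)}(\Q_{s_0}) \subset \Q_{s_0}$. Since $g_1$ must shrink $\bm\theta \in \Q_{s_0}$, only eight pairs $(g_1, s_0)$ can arise, and a direct computation with $\rhoa{\gamma(g_1)} = \transpose{\rhoa{g_1^{-1}}}$ (independent of whether $n$ is critical) verifies the claim case by case. Corollary \ref{cor:quadrants} then gives $\rhoa{\gamma(g_n)}(\cl(\Q_{s_0})) \subset \cl(\Q_{t_n})$, where I write $t_n := \Sigma^{\gamma(g_n)}(s_0)$ for the quantity I want to identify with $s_n$.

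To link $t_n$ and $s_n$, for each $j \in \{1,2\}$ set $\bm f_j := s_{0j}\bm e_j$, which lies in $\cl(\Q_{s_0})$. The forward step puts $\rhoa{\gamma(g_n)}(\bm f_j)$ in $\cl(\Q_{t_n})$. Applying equation \ref{eq:dot_product} with $g = g_n$, $\v = \bm\eta$ ranging over $\rhoa{g_n^{-1}}(\Q_{s_n})$, and $\w = \bm f_j$: the left side $\bm\eta \cdot \bm f_j$ is nonnegative (both vectors lie in $\cl(\Q_{s_0})$), while the right side equals $\bm\xi \cdot \rhoa{\gamma(g_n)}(\bm f_j)$ with $\bm\xi = \rhoa{g_n}(\bm\eta)$ sweeping through all of $\Q_{s_n}$. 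Nonnegativity for every $\bm\xi \in \Q_{s_n}$ forces $\rhoa{\gamma(g_n)}(\bm f_j)$ into the polar dual of $\Q_{s_n}$, which is $\cl(\Q_{s_n})$.

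Thus $\rhoa{\gamma(g_n)}(\bm f_1)$ and $\rhoa{\gamma(g_n)}(\bm f_2)$ both lie in $\cl(\Q_{t_n}) \cap \cl(\Q_{s_n})$, and they are linearly independent because $\rhoa{\gamma(g_n)}$ is invertible. An intersection of two closed quadrants is either the full quadrant (if they coincide), a single ray (if adjacent), or just the origin (if antipodal), so only $t_n = s_n$ can accommodate two linearly independent vectors. This will give $\Sigma^{\gamma(g_n)}(s_0) = s_n$. The main technical obstacle will be the eight-case matrix check for the forward hypothesis in paragraph two; the rest is a clean interplay between the reverse-geodesic argument and the dot-product duality.
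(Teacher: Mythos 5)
Your proof is correct, but it follows a genuinely different route from the paper. The paper's argument applies the expansion-tracking proposition (Proposition \ref{prop:quadrants_and_expansion}) directly to the vector $\bm\theta$ along the $\gamma$-twisted geodesic, after deducing from the identity $\bm\theta\cdot\bm\theta = 1$ that $\rhoa{\gamma(g_1)}$ expands $\bm\theta$. It then reduces by the dihedral symmetry to the single case $s_n = ++$, $g_n g_{n-1}^{-1} = h^{-1}$, observes that a generator step in the expanding sign action has a two-element image $\{s, -s\}$, and rules out the antipodal option $--$ because $\rhoa{g_n}(\bm\theta)\cdot\rhoa{\gamma(g_n)}(\bm\theta) = 1 > 0$. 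Your proof instead tracks entire quadrants rather than the single vector $\bm\theta$: you combine a reverse-geodesic application of Corollary \ref{cor:quadrants} (to get $\rhoa{g_n^{-1}}(\cl\Q_{s_n}) \subset \cl\Q_{s_0}$) with a forward $\gamma$-twisted application (to get $\rhoa{\gamma(g_n)}(\cl\Q_{s_0}) \subset \cl\Q_{t_n}$), and then you pin down $t_n = s_n$ by showing via polar duality that $\rhoa{\gamma(g_n)}$ sends the two extreme rays of $\cl\Q_{s_0}$ into $\cl\Q_{s_n}$, forcing the two quadrants to coincide by linear independence. What the paper's route buys is brevity: the dihedral reduction plus the two-element sign-image observation collapse the problem to a single sign check. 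What your route buys is that it avoids both the dihedral reduction and the structural observation about $\Sigma^g(\SP)$, and the polar-cone argument is a clean way to exploit the duality equation. On the technical obstacle you flag — the eight-case check that $\rhoa{\gamma(g_1)}(\Q_{s_0}) \subset \Q_{s_0}$ — note you can shortcut it: since $g_1$ shrinks $\bm\theta$, the quadrant $\Q_{s_0}$ meets $\Shrink_\lambda(g_1)$, and inspection of Proposition \ref{prop:shrink_exp} shows that $\Shrink_\lambda(g_1)$ and $\Shrink_\lambda(\gamma(g_1))$ always live in complementary pairs of quadrants, so $\Q_{s_0}$ is disjoint from $\Shrink_\lambda(\gamma(g_1))$ and hence preserved by $\rhoa{\gamma(g_1)}$; this is one line rather than eight matrix computations. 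Alternatively the dot-product identity gives $\|\rhoa{\gamma(g_1)}(\bm\theta)\| > 1$ immediately, and Remark \ref{rem:dihedral} reduces to one case, which is what the paper does.
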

\begin{proof}
Consider that $1=\bm \theta \cdot \bm \theta=\rhoa{g_i}(\bm \theta) \cdot \rhoa{\gamma(g_i)}(\bm \theta)$ for all $i>0$. Since $\|\rhoa{g_1}(\bm \theta)\|<1$ we know
$\|\rhoa{\gamma(g_1)}(\bm \theta)\|>1$. Therefore, by Proposition \ref{prop:quadrants_and_expansion}, we know $\rhoa{\gamma(g_i)}(\bm \theta) \in \Q_{s'_i}$ where $s'_i=\Sigma^{\gamma(g_i)}(s_0)$ for all $i$. Since $n$ is a critical time, we know $s_n=s_{n-1}$. By the dihedral group action of Remark \ref{rem:dihedral}, without loss of generality, we can assume that
$s_{n-1}=s_n=++$ and $g_n \circ g_{n-1}^{-1}=h^{-1}$. Note that
$$s'_n \in \Sigma^{\gamma(h^{-1})}(\SP)=\Sigma^v(\SP)=\{++,--\}.$$
(See the diagram in Definition \ref{def:sign_action}.) Since $\rhoa{\gamma(g_n)}(\bm \theta) \in \Q_{s'_n}$, we know $s'_n=++$ because $\rhoa{g_n}(\bm \theta)\in \Q_{s_n}=\Q_{++}$ and $\rhoa{g_n}(\bm \theta) \cdot \rhoa{\gamma(g_n)}(\bm \theta)=1$.
\end{proof}

\begin{corollary}
\label{cor:dot_product_growth}
Suppose $\bm \theta$ is $\lambda$-renormalizable, and $\v \in \R^2$ satisfies $\bm \theta \cdot \v \neq 0$. Let $\langle g_n \rangle$ be the $\lambda$-shrinking sequence for $\bm \theta$. Then, 
$\|\rhoa{\gamma(g_n)}(\v)\| \to \infty$ as $n \to \infty$.
\end{corollary}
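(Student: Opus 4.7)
The plan is to combine equation \ref{eq:dot_product} with the Cauchy--Schwarz inequality and the decay of $\|\rhoa{g_n}(\bm \theta)\|$ guaranteed by Theorem \ref{thm:shrinking}.

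First, I would apply equation \ref{eq:dot_product} with $g = g_n$ to obtain
\[
\bm \theta \cdot \v \;=\; \rhoa{g_n}(\bm \theta) \cdot \rhoa{\gamma(g_n)}(\v)
\]
for every $n \geq 0$. Then, using Cauchy--Schwarz on the right-hand side and the hypothesis that $\bm \theta \cdot \v \neq 0$, I would rearrange to get the lower bound
\[
\|\rhoa{\gamma(g_n)}(\v)\| \;\geq\; \frac{|\bm \theta \cdot \v|}{\|\rhoa{g_n}(\bm \theta)\|}.
\]

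Next, since $\bm \theta$ is $\lambda$-renormalizable, Theorem \ref{thm:shrinking} tells us that $\langle g_n \rangle$ is a well-defined $\lambda$-shrinking sequence with $\lim_{n \to \infty} \|\rhoa{g_n}(\bm \theta)\| = 0$. Substituting this into the lower bound above immediately yields $\|\rhoa{\gamma(g_n)}(\v)\| \to \infty$.

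There is essentially no obstacle here; the argument is three lines once equation \ref{eq:dot_product} and Theorem \ref{thm:shrinking} are in hand. The only thing to verify is that the hypothesis $\bm \theta \cdot \v \neq 0$ is used in a nontrivial way: if it vanished, the identity would give no lower bound at all (and indeed the conclusion could fail, since $\rhoa{\gamma(g_n)}$ can shrink certain vectors). So the dot-product non-degeneracy is exactly what is required to transfer contraction of $\bm \theta$ under $\rhoa{g_n}$ into expansion of $\v$ under $\rhoa{\gamma(g_n)}$ via the adjoint-like relation $\rhoa{\gamma(g)} = \transpose{\rho}_\lambda^{g^{-1}}$.
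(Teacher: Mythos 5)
Your proof is correct and is essentially identical to the paper's argument: both apply equation \ref{eq:dot_product}, bound by Cauchy--Schwarz to obtain $\|\rhoa{\gamma(g_n)}(\v)\| \geq |\bm \theta \cdot \v|/\|\rhoa{g_n}(\bm \theta)\|$, and then invoke Theorem \ref{thm:shrinking} for the decay of the denominator.
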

\begin{proof}
By equation \ref{eq:dot_product}, we have that
$\bm \theta \cdot \v=\rhoa{g_n}(\bm \theta) \cdot \rhoa{\gamma(g_n)}(\v)$. Therefore,
$$\|\rhoa{\gamma(g_n)}(\v)\| \geq \frac{|\bm \theta \cdot \v|}{\|\rhoa{g_n}(\bm \theta)\|}.$$
The denominator of this expression tends to $0$ as $n \to \infty$, so $\|\rhoa{\gamma(g_n)}(\v)\| \to \infty$. 
\end{proof}

Of particular importance, we can conclude that if $\bm \theta \cdot \v \neq 0$ then eventually there is an $i$ for which 
$\|\rhoa{\gamma(g_i)}(\v)\|>\|\rhoa{\gamma(g_{i-1})}(\v)\|$. Then, for $n>i$, the quadrant containing $\rhoa{\gamma(g_n)}(\v)$
is governed by the expanding sign action. See Proposition \ref{prop:quadrants_and_expansion}. 
Then, the following applies.

\begin{proposition}
\label{prop:detecting_postive_dot}
Let $\bm \theta \in \Rn_\lambda$. Let $\langle g_i \rangle$ be the $\lambda$-shrinking sequence of $\bm \theta$, and let 
$\langle s_i \rangle$ be the sign sequence. Suppose $\v \in \R^2$ satisfies 
$\bm \theta \cdot \v > 0$. By Corollary \ref{cor:dot_product_growth},
there is an $i$ for which $\|\rhoa{\gamma(g_i)}(\v)\|>\|\rhoa{\gamma(g_{i-1})}(\v)\|$.
For any critical time
$n \geq i$, we have $\rhoa{\gamma(g_{n})}(\v) \in \Q_{s_n}$. 
\end{proposition}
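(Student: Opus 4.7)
The plan is to run the argument in the style of Corollary \ref{cor:critical_time_formula}: reduce to a canonical case via the dihedral symmetry of Remark \ref{rem:dihedral}, use the expansion dynamics to pin $\rhoa{\gamma(g_n)}(\v)$ to one of two opposite quadrants, and then use the dot-product identity (equation \ref{eq:dot_product}) to select the correct one. Write $\w_j = \rhoa{\gamma(g_j)}(\v)$. Since $\gamma$ permutes the generating set $\{h,v,h^{-1},v^{-1}\}$ of $G$, the sequence $\langle \gamma(g_j)\rangle$ is itself a geodesic ray based at $e$. The hypothesis $\|\w_i\| > \|\w_{i-1}\|$ together with Proposition \ref{prop:increasing} then gives $\|\w_{j-1}\| < \|\w_j\|$ for every $j \geq i$; in particular $\w_{j-1} \in \Exp_\lambda\bigl(\gamma(g_j g_{j-1}^{-1})\bigr)$ for all such $j$.

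Now fix a critical time $n \geq i$. By item (\ref{crit3}) of Proposition \ref{prop:crit_equiv}, the pair $(g_{n-1} g_n^{-1}, s_n)$ lies in a list of eight pairs that form a single orbit under the dihedral action of Remark \ref{rem:dihedral}. So I may normalize to $s_n = s_{n-1} = {++}$ and $g_{n-1} g_n^{-1} = h$, giving $\gamma(g_n g_{n-1}^{-1}) = \gamma(h^{-1}) = v$ and $\w_n = \rhoa{v}(\w_{n-1})$ with $\w_{n-1} \in \Exp_\lambda(v)$. A direct calculation using $\rhoa{v}(x,y) = (x, \lambda x + y)$ and the explicit description of $\Exp_\lambda(v)$ from Proposition \ref{prop:shrink_exp} shows
$$\rhoa{v}\bigl(\Exp_\lambda(v) \cap \Circ\bigr) \subset \Q_{++} \cup \Q_{--};$$
indeed, $\Q_{++}$ and $\Q_{--}$ are each preserved by $\rhoa{v}$, while $\Q_{+-}$ and $\Q_{-+}$ each intersect $\Exp_\lambda(v)$ in the half characterized by $y > -\tfrac{\lambda}{2} x$, and the shear sends these halves into $\Q_{++}$ and $\Q_{--}$ respectively. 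Consequently $\w_n \in \Q_{++} \cup \Q_{--}$.

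Finally, equation \ref{eq:dot_product} yields $\w_n \cdot \rhoa{g_n}(\btheta) = \v \cdot \btheta > 0$, and by hypothesis $\rhoa{g_n}(\btheta) \in \Q_{s_n} = \Q_{++}$ has both entries strictly positive (since $\btheta \in \Rn_\lambda$ keeps its $\rhoa{G}$-orbit off the axes). Any vector in $\Q_{--}$ has strictly negative dot product with every vector in $\Q_{++}$, contradicting the positivity above. Therefore $\w_n \in \Q_{++} = \Q_{s_n}$, as required.

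The main conceptual step is the quadrant-invariance computation $\rhoa{v}\bigl(\Exp_\lambda(v)\bigr) \subset \Q_{++} \cup \Q_{--}$; this is the mechanism by which the critical-time condition $s_n = s_{n-1}$ (item (\ref{crit2}) of Proposition \ref{prop:crit_equiv}) transfers, via $\gamma$ and the intertwining $\rhoa{\gamma(g)} = \transpose{\rho}_\lambda^{g^{-1}}$, to a constraint restricting $\w_n$ to two opposite quadrants. Once this is in hand, the dot-product identity functions exactly as in the proof of Corollary \ref{cor:critical_time_formula} to pick out the correct sign from the two candidates.
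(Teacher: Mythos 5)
Your proof is correct and takes essentially the same route as the paper's: the expanding dynamics confine $\rhoa{\gamma(g_n)}(\v)$ to one of two opposite quadrants, and the $\gamma$-duality identity $\btheta\cdot\v=\rhoa{g_n}(\btheta)\cdot\rhoa{\gamma(g_n)}(\v)$ then picks out $\Q_{s_n}$ rather than $\Q_{-s_n}$. The paper phrases the first step abstractly via the expanding sign action (that $\sa^g(\SP)$ has two elements, equal to $\{s_n,-s_n\}$ at a critical time), whereas you unpack the same fact by normalizing with the dihedral symmetry of Remark \ref{rem:dihedral} and doing the one explicit linear-algebra computation; both are sound, and yours is more self-contained.

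One sign slip to fix in the parenthetical justifying the display: in $\Q_{-+}$, where $x<0$, the condition $y/x>-\lambda/2$ defining $\Exp_\lambda(v)$ unpacks to $y<-\tfrac{\lambda}{2}x$, not $y>-\tfrac{\lambda}{2}x$ as you wrote (the inequality flips because $x<0$). As literally stated your claim fails: the set $\{(x,y)\in\Q_{-+}:y>-\tfrac{\lambda}{2}x\}$ contains points with $y>-\lambda x$, for which $\rhoa{v}(x,y)=(x,\lambda x+y)$ has positive second coordinate and lands in $\Q_{-+}$, not $\Q_{--}$. With the inequality reversed the shear does land in $\Q_{--}$, and the displayed inclusion $\rhoa{v}\bigl(\Exp_\lambda(v)\cap\Circ\bigr)\subset\Q_{++}\cup\Q_{--}$ is correct as stated, so the proof goes through once that line is corrected.
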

\begin{proof}
Note that if $g \in \{h,v, h^{-1}, v^{-1}\}$ then the image $\sa^g(\SP)$ consists of precisely two sign pairs. 
(For instance, $\sa^h(\SP)=\{+-,-+\}$. See Definition \ref{def:sign_action}.) In particular, for $n \geq i$, we have
$\rhoa{\gamma(g_{n})}(\v)$ lies in one of the two quadrants in $\sa^{g_n g_{n-1}^{-1}}(\SP)$.
Now suppose that $n$ is a critical time. Statement (\ref{crit3}) of Proposition \ref{prop:crit_equiv} explicitly describes the possible pairs
$(g_{n-1} g_n^{-1}, s_n)$. By inspection it can be observed that $\sa^{g_n g_{n-1}^{-1}}(\SP)=\{s_n, -s_n\}$. Now recalling equation \ref{eq:dot_product}, we have that 
$$\bm \theta \cdot \v=\rhoa{g_n}(\bm \theta) \cdot \rhoa{\gamma(g_n)}(\v).$$
Therefore $\bm \theta \cdot \v>0$ and $\rhoa{g_n}(\bm \theta) \in \Q_{s_n}$ implies that $\rhoa{\gamma(g_n)}(\v) \in \Q_{s_n}$.
\end{proof}

Now suppose that $\bm \theta \cdot \v = 0$. Let $R$ denote the linear map $R:\R^2 \to \R^2$ which rotates by $\frac{\pi}{2}$. 
As a matrix,
\begin{equation}
\label{eq:R}
R=\left[\begin{array}{rr}
0 & -1 \\ 1 & 0 \end{array}\right].
\end{equation}
Conjugation by this map induces an automorphism of $\rhoa{G}$. Inspection reveals that 
\begin{equation}
\label{eq:R_commute}
R \circ \rhoa{g}=\rhoa{\gamma(g)} \circ R
\end{equation}
for all $g \in G$. 
Note that this map $R$ permutes the quadrants of $\R^2$ and therefore induces permutation $r:\SP \to \SP$ so that
\begin{equation}
\label{eq:r}
R(\Q_s)=\Q_{r(s)}.
\end{equation}

The punchline of this section is that we can detect the sign of $\bm \theta \cdot \v$ using the shrinking and sign sequences of $\bm \theta$.

\begin{theorem}
\label{thm:dot_product}
Let $\bm \theta \in \Rn_\lambda$, and let $\langle g_n \rangle$ and $\langle s_n \rangle$ denote the $\lambda$-shrinking and sign sequences of $\bm \theta$, respectively. Choose any nonzero vector $\v \in \R^2$. Then,
\begin{enumerate}
\item[(a)] If $\bm \theta \cdot \v > 0$, then there is an $i \geq 0$ such that for any critical time $n>i$ we have $\rhoa{\gamma(g_{n})}(\v) \in \Q_{s_n}$. 
\item[(b)] If $\bm \theta \cdot \v < 0$, then there is an $i \geq 0$ such that for any critical time $n>i$ we have $-\rhoa{\gamma(g_{n})}(\v) \in \Q_{s_n}$. 
\item[(c)] If $\bm \theta \cdot \v = 0$, the for all $i \geq 0$ we have $\rhoa{\gamma(g_{n})}(\v) \in \Q_{r^{\pm 1} (s_n)}$ for some fixed choice of sign. 
\end{enumerate}
\end{theorem}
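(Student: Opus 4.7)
The plan is to derive parts (a) and (b) almost immediately from the work already in place, and then handle (c) by an exact identity exploiting how the rotation $R$ intertwines $\rho_\lambda^g$ with $\rho_\lambda^{\gamma(g)}$. Part (a) is already essentially Proposition \ref{prop:detecting_postive_dot}: when $\bm\theta\cdot\v>0$, Corollary \ref{cor:dot_product_growth} produces an index $i$ beyond which $\|\rhoa{\gamma(g_n)}(\v)\|$ is strictly increasing (hence governed by the expanding sign action), and then Proposition \ref{prop:detecting_postive_dot} gives $\rhoa{\gamma(g_n)}(\v)\in \Q_{s_n}$ at every critical time $n>i$.

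For part (b), I would simply apply (a) to the vector $-\v$, which satisfies $\bm\theta\cdot(-\v)>0$. This yields an $i$ such that $\rhoa{\gamma(g_n)}(-\v)\in\Q_{s_n}$ for every critical time $n>i$, i.e.\ $-\rhoa{\gamma(g_n)}(\v)\in\Q_{s_n}$.

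For part (c), since $\bm\theta\cdot\v=0$ and $\v\neq \0$, we may write $\v=c\,R(\bm\theta)$ for a unique nonzero $c\in\R$. Applying the commutation relation $R\circ\rhoa{g}=\rhoa{\gamma(g)}\circ R$ of equation \ref{eq:R_commute} gives the exact identity
\begin{equation*}
\rhoa{\gamma(g_n)}(\v)\;=\;c\,\rhoa{\gamma(g_n)}\bigl(R(\bm\theta)\bigr)\;=\;c\,R\bigl(\rhoa{g_n}(\bm\theta)\bigr).
\end{equation*}
By the definition of the sign sequence, $\rhoa{g_n}(\bm\theta)\in\Q_{s_n}$, and by equation \ref{eq:r}, $R(\Q_{s_n})=\Q_{r(s_n)}$. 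Hence if $c>0$ then $\rhoa{\gamma(g_n)}(\v)\in\Q_{r(s_n)}$ for every $n$. If $c<0$, note that $R^2=-I$, so $r^2$ is the antipodal permutation of $\SP$, and therefore $-\Q_{r(s_n)}=\Q_{r^{-1}(s_n)}$; thus $\rhoa{\gamma(g_n)}(\v)\in\Q_{r^{-1}(s_n)}$ for every $n$. In either case the sign ($r$ versus $r^{-1}$) is determined once and for all by the sign of $c$, which is what is claimed.

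There is no real obstacle to this proof. Parts (a) and (b) are immediate consequences of results already proved; the only point that requires any thought is part (c), where one simply observes that the perpendicularity condition forces $\v$ into the image of $\bm\theta$ under $R$, and then the intertwining relation between $R$ and the representation converts the known containment $\rhoa{g_n}(\bm\theta)\in\Q_{s_n}$ directly into the desired containment for $\rhoa{\gamma(g_n)}(\v)$, with no tail or asymptotic argument needed.
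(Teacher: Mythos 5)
Your proof is correct and follows the same approach as the paper: parts (a) and (b) are read off from Proposition \ref{prop:detecting_postive_dot} exactly as you do, and part (c) is the same application of equation \ref{eq:R_commute}. Your only real additions are the explicit normalization $\v = c\,R(\bm\theta)$ and the check that $-\Q_{r(s)}=\Q_{r^{-1}(s)}$ (via $R^2=-I$, so $r^2$ is the antipodal map on $\SP$), which the paper leaves implicit; these are worthwhile clarifications but do not change the argument.
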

\begin{proof}
Statement (a) follows directly from Proposition \ref{prop:detecting_postive_dot}. Statement (b) follows by applying this proposition to $- \v$. Statement
(c) follows from equation \ref{eq:R_commute} by noting that
$$\rhoa{\gamma(g_n)} \circ R(\bm \theta)=R \circ \rhoa{g_n}(\bm \theta) \in \Q_{r(s_n)}.$$
\end{proof}

\subsection{Proofs of results from \S \ref{sect:outline}}
\label{sect:proof_operator_theorem}
We need to convert between the
dot and wedge product. We recall that
\begin{equation}
\label{eq:dot_to_wedge}
\v \cdot \w=\v \wedge R(\w)=R^{-1}(\v) \wedge \w
\end{equation}
with $R$ defined as in equation \ref{eq:R}. 

Recall that the Quadrant Sequence Proposition (Prop. \ref{prop:quadrant_sequence}) said that the only $\v \in \Circ$ for which $\rhoa{g_n}(\v) \in \overline{\Q}_{s_n}$ for all $n$ is $\v=\bm \theta$. We now give the proof:

\begin{proof}[Proof of Proposition \ref{prop:quadrant_sequence}]
Suppose such a $\v$ exists. Then $\v \wedge \bm \theta \neq 0$. Let $\w=R(\v)$. Then,
$$\v \wedge \bm \theta=\bm \theta \cdot \w=\rhoa{g_n}(\bm \theta) \cdot \rhoa{\gamma(g_{n})}(\w).$$
Since $\bm \theta \cdot \w$ is non-zero, Theorem \ref{thm:dot_product} guarantees that there is a $n$ for which 
$\pm \rhoa{\gamma(g_{n})}(\w) \in \Q_{s_n}$, with the sign equal to the sign of $\bm \theta \cdot \w$.
By equation \ref{eq:R_commute}, we have
$\pm \rhoa{\gamma(g_{n})}(\w)=\pm R \circ \rhoa{g_{n}}(\v)$, so we may conclude that 
$\rhoa{g_{n}}(\v) \in \Q_{r(s_n)} \cup \Q_{r^{-1}(s_n)}$. This is a contradiction, since we assumed that $\rhoa{g_{n}}(\v) \in {\overline \Q}_{s_n}$.
\end{proof}

Recall Definition \ref{def:survivor} of a $(\bm \theta, n)$-survivor $m\in \Coh$. Theorem \ref{thm:operator_theorem} stated that a cohomology class $m \in \Coh$
arises from applying $\Psi_\btheta$ to a locally finite transverse measure to the foliation in a $\lambda$-renormalizable direction $\bm \theta$ if and only if $m$ is a $(\bm \theta, n)$-survivor for all $n \geq 0$.

\begin{proof}[Proof of Theorem \ref{thm:operator_theorem}]
Note that the ``only if'' direction is trivial. Checking that $m$ is a $(\bm \theta, n)$-survivor for all $n \geq 0$
simply checks that $m$ pairs correctly with some saddle connections. See Definition \ref{def:survivor} of $(\theta, n)$-survivors. In particular, Lemma \ref{lem:sign} implies
this is a necessary condition for $m \in \Psi_\btheta(\M_\btheta)$. 

For the ``if'' direction, we will use the sufficiency criterion given by Lemma \ref{lem:sign} . Let $\sigma$ be any saddle oriented connection in $S$, and let $\v=\hol~ \sigma$. We will check that if
$m(\hom{\sigma}) \neq 0$ then $\sgn\big(m(\hom{\sigma}) \big)=\sgn\big(\hol(\sigma) \wedge \bm \theta\big)$. 
So, assume that $m(\hom{\sigma}) \neq 0$.
We know that $\hol \sigma$ is not parallel to $\bm \theta$
by Theorem \ref{thm:no saddles}. (There are no saddle connections in $\lambda$-renormalizable directions.)
Let $\langle g_n \rangle$ denote the shrinking sequence for $\bm \theta$, and $\langle s_n \rangle$ denote the sign sequence. By Proposition \ref{prop:quadrant_sequence},
there is a smallest $n$ for which $\rhoa{g_n}(\hol~\sigma) \not \in \Q_{s_n} \cup \Q_{-s_n}$. 
Then, there is a sign pair $s \not \in \{\pm s_n\}$ so that $\rhoa{g_n}(\hol~\sigma)$ lies in the closed quadrant
${\overline \Q}_{s}$. Let $\sigma'=\Phi^{g_n}(\sigma)$ whose holonomy is given by $\rhoa{g_n}(\hol~\sigma)$.
Because our surface decomposes into rectangles, we can write
$$\hom{\sigma'}=\sum_{i=1}^{k} \hom{\sigma_i'},$$
where the $\sigma_i'$ are horizontal or vertical saddle connections (boundary edges of the rectangles) oriented so that their holonomies lie in the boundary of the quadrant $\Q_s$. Now let $\sigma_i=\Phi^{g_n^{-1}}(\sigma_i)$
for all $i$ so that $\hom{\sigma}=\sum_{i=1}^k \hom{\sigma_i}$.
Since we know that $m$ is a $(\btheta,n)$-survivor, we know that for each $i$, either
$m(\hom{\sigma_i})=0$ or 
$$\sgn\big(m(\hom{\sigma_i})\big)=\sgn\big(\hol(\sigma_i)\wedge \bm \theta\big).$$
Given this, it suffices to prove that 
$\sgn\big(\hol(\sigma_i)\wedge \bm \theta\big)=\sgn(\v \wedge \bm \theta)$ for all $i$,
because then we have
$$\sgn \big(m(\hom{\sigma})\big)=\sgn \sum_{i=1}^k m(\hom{\sigma_i})= 
\sgn(\v \wedge \bm \theta)$$
since each term in the sum has the sign the same as $\hol(\sigma_i)\wedge \bm \theta$
whenever it is non-zero, and since the total sum is non-zero from the assumption that $m(\hom{\sigma}) \neq 0$. To verify this sufficiency condition, recall that for $\nu=\sigma$ or $\nu=\sigma_i$ for some $i$, we have that
$\rhoa{g_n}(\hol~\nu)$ lies in the closed quadrant ${\overline \Q}_{s}$ with $s \not \in \{\pm s_n\}$, while by definition $\rhoa{g_n}(\btheta)$ lies in $\Q_{s_n}$. By invariance of the wedge product under orientation preserving linear maps,
$$\hol(\nu)\wedge \bm \theta= \rhoa{g_n}(\hol~\nu) \wedge \rhoa{g_n}(\btheta).$$
The sign of the right hand side is the same for all non-zero vectors such as $\rhoa{g_n}(\hol~\nu)$
taken from ${\overline \Q}_{s}$ wedged with all vectors such as $\rhoa{g_n}(\btheta)$ taken from $\Q_{s_n}$.
In particular, the sign of this wedge product does not change if we set $\nu=\sigma$ or set $\nu=\sigma_i$ for some $i$.
\end{proof}

\section{Survivors and Operators}
\label{sect:operators2}

The purpose of this section is to prove Theorem \ref{thm:surviving_functions}, which says that all $\bm \theta$-survivors in $\Coh$ arise as $\Xi(\f)$ where $\f \in \R^\V$ is a $\bm \theta$-survivor. In order to prove this statement, we will transform several results proved in section \ref{sect:quadrants}
about the interplay between the $\rhoa{G}$ action on $\R^2$ and quadrants in $\R^2$ into statements about the
$\Phi^{G}_\ast$ action on $\Coh$ and the $\Upsilon^G$ action on $\R^\V$. 

\subsection{Homology, Cohomology and \texorpdfstring{$\R^\V$}{R{\textasciicircum}V}}

We begin by proving Proposition \ref{prop:pullback_action}, which says that $\Phi^{g}_\ast \circ \Xi=\Xi \circ \Upsilon^{g}$.

\begin{proof}[Proof of Proposition \ref{prop:pullback_action}]
Let $\hom{x}\in H_1(S,V,\Z)$. By definition of $\Phi^{g}_\ast$ and of $\Xi$, 
$$\big(\Phi^{g}_\ast \circ \Xi(\f)\big)(\hom{x})=\Xi(\f)\big(\Phi^{g^{-1}}(\hom{x})\big)=\sum_{\vv \in \V} i\big(\Phi^{g^{-1}}(\hom{x}), \hom{\cyl_\vv}\big) \f(\vv).$$
By acting by $\Phi^{g}$ on each side of each expression for intersection number, we have
$$\big(\Phi^{g}_\ast \circ \Xi(\f)\big)(\hom{x})=\sum_{\vv \in \V} i\big(\hom{x}, \Phi^{g}(\hom{\cyl_\vv})\big) \f(\vv).$$
Consider the case of $g=h^k$ with $k \in \Z$. By Proposition \ref{prop:action}, we have
$$\Phi^{h^k}(\hom{\cyl_\vv})=\begin{cases}
\hom{\cyl_\vv}+k\sum_{\vw \sim \vv} \hom{\cyl_\vw} & \textrm{if $\vv \in \Alpha$}\\
\hom{\cyl_\vv} & \textrm{if $\vv \in \Beta$.}
\end{cases}
$$
Thus, we may write
$$\big(\Phi^{h^k}_\ast \circ \Xi(\f)\big)(\hom{x})=\sum_{\va \in \Alpha} i(\hom{x}, \hom{\cyl_\va}) \f(\va)+\sum_{\vb \in \Beta} i \big(\hom{x}, \hom{\cyl_\vb}+k \sum_{\va \sim \vb} \hom{\cyl_\va}\big)\f(\vb).$$
By regrouping terms, we see 
$$\big(\Phi^{h^k}_\ast \circ \Xi(\f)\big)(\hom{x})=\sum_{\va \in \Alpha} i(\hom{x}, \hom{\cyl_\va}) \big(\f(\va)+k\sum_{\vb\sim \va} \f(\vb)\big) +\sum_{\vb \in \Beta} i(\hom{x}, \hom{\cyl_\vb}) \f(\vb).$$
This last expression is equal to $\sum_{\vv \in \V} i\big(\hom{x}, \hom{\cyl_\vv}) \big(\Ho^k(\f)(\vv)\big)=\big(\Xi \circ \Ho^k(\f)\big)(\hom{x}).$ Thus, $\Xi \circ \Ho^k=\Phi^{h^k}_\ast \circ \Xi$
as desired. By a similar argument or by the action of the dihedral group, the same holds for $g=v^k$.
\end{proof}

Recall that $\R^\V_c$ represents the set of finitely supported functions from $\V \to \R$. We introduce a canonical linear map 
$\Zem:H_1(S,V,\R) \to \R^\V_c \label{not:Zem}$. Define
\begin{equation}
\label{eq:Zem}
\Zem(\hom{x})(\vv)=i(\hom{x}, \hom{\cyl_\vv}),
\end{equation}
where $\hom{\cyl_\vv} \in H_1(S \smallsetminus V, \Z)$ represents the homology class of the core curve of cylinder $\cyl_\vv$ for $\vv\in \V$.
Recalling the bilinear pairing $\langle, \rangle:\R^\V \times \R^\V_c \to \R$ given in equation \ref{eq:pairing}. By definition of $\Xi$ (see equation \ref{eq:Xi}), we have
\begin{equation}
\label{eq:two_embeddings}
\langle \f, \Zem(\hom{x}) \rangle=\Xi(\f)(\hom{x}).
\end{equation}

Note that $\Zem$ is not injective. A useful consequence of the construction is that 
$$\Zem(\hom{x})=\Zem(\hom{y}) \quad \textrm{implies} \quad \Xi(\f)(\hom{x})=\Xi(\f)(\hom{y}).$$

We collect the following corollary to Proposition \ref{prop:pullback_action}.

\begin{corollary}
\label{cor:pullback_action}
For all $\hom{x} \in H_1(S,V,\R)$, we have $\Upsilon^g \circ \Zem(\hom{x})=\Zem \circ \Phi^{\gamma(g)}(\hom{x})$, where $\gamma:G\to G$ is
the involutive homomorphism defined above equation \ref{eq:operator_relations2}.
\end{corollary}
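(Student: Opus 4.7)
The plan is to prove the identity by a duality argument: show that both sides pair the same way against every $\f \in \R^\V$ under the bilinear pairing $\langle \cdot, \cdot \rangle$, and then invoke the easy observation that this pairing separates elements of $\R^\V_c$ (apply both sides against indicator functions at single vertices). Since $\Zem(\hom{x})$ and $\Zem \circ \Phi^{\gamma(g)}(\hom{x})$ lie in $\R^\V_c$, and $\Upsilon^g$ preserves finite support, this separation property applies.

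To compute the pairing of $\f$ against $\Upsilon^g \circ \Zem(\hom{x})$, I will rewrite equation \eqref{eq:operator_relations2} in the form $\langle \f, \Upsilon^g \x \rangle = \langle \Upsilon^{\gamma(g)^{-1}} \f, \x \rangle$, which is justified by substituting $\f \to \Upsilon^{\gamma(g)^{-1}} \f$ into \eqref{eq:operator_relations2} together with $\gamma^2 = \mathrm{id}$. Applied with $\x = \Zem(\hom{x})$, this yields
\[
\langle \f, \Upsilon^g \circ \Zem(\hom{x}) \rangle = \langle \Upsilon^{\gamma(g)^{-1}} \f, \Zem(\hom{x}) \rangle.
\]
Then equation \eqref{eq:two_embeddings} converts the right-hand side to $\Xi(\Upsilon^{\gamma(g)^{-1}} \f)(\hom{x})$.

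The next step is to transport the $\Upsilon$-action across $\Xi$ using Proposition \ref{prop:pullback_action}, obtaining $(\Phi^{\gamma(g)^{-1}}_\ast \circ \Xi(\f))(\hom{x})$. The definition of the pushforward in \eqref{eq:pushforward} rewrites this as $\Xi(\f)\big(\Phi^{\gamma(g)}(\hom{x})\big)$, and a final application of \eqref{eq:two_embeddings} gives $\langle \f, \Zem \circ \Phi^{\gamma(g)}(\hom{x})\rangle$. Chaining the equalities produces $\langle \f, \Upsilon^g \circ \Zem(\hom{x})\rangle = \langle \f, \Zem \circ \Phi^{\gamma(g)}(\hom{x})\rangle$ for all $\f$, and the separation remark completes the proof.

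There is no real obstacle here; the argument is a formal consequence of the adjointness built into \eqref{eq:operator_relations2}, the intertwining of equation \eqref{eq:two_embeddings}, and Proposition \ref{prop:pullback_action}. The only small care needed is to track the appearance of an inverse caused by the pushforward convention (equation \eqref{eq:pushforward}) and to confirm that $\gamma$ is an involution so that $\gamma(g)^{-1} = \gamma(g^{-1})$ can be freely manipulated; once both are noted, the computation is mechanical.
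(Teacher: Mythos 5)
Your argument is exactly the paper's: it establishes the identity by showing both sides of the purported equality pair the same way against all $\f\in\R^\V$, then chains equation \eqref{eq:operator_relations2}, equation \eqref{eq:two_embeddings}, Proposition \ref{prop:pullback_action}, and the pushforward convention \eqref{eq:pushforward} in the same order to move $\Upsilon^g$ across $\Zem$. The only cosmetic difference is that the paper uses that $\gamma$ is a group homomorphism (so $\gamma(g)^{-1}=\gamma(g^{-1})$ automatically), not specifically that it is an involution, for that bit of bookkeeping.
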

\begin{proof}
Note that two elements $\x, \y \in \R^\V_c$ are equal if and only if $\langle \f, \x \rangle=\langle \f, \y \rangle$
for all $\f \in \R^\V$. Let $\f \in \R^\V$ be arbitrary. By equation \ref{eq:operator_relations2},
$$\langle \f, \Upsilon^g \circ \Zem(\hom{x}) \rangle=\langle \Upsilon^{\gamma(g)^{-1}} (\f), \Zem(\hom{x}) \rangle.$$
Then by equation \ref{eq:two_embeddings} and Proposition \ref{prop:pullback_action}, 
$$\langle \f, \Upsilon^g \circ \Zem(\hom{x}) \rangle=\Xi\big(\Upsilon^{\gamma(g)^{-1}}(\f)\big)(\hom{x})=
\big(\Phi^{\gamma(g^{-1})}_\ast \circ \Xi(\f)\big)(\hom{x}).$$
By the definition of $\Phi^{\gamma(g^{-1})}_\ast$ given in equation \ref{eq:pushforward}, we continue
$$\langle \f, \Upsilon^g \circ \Zem(\hom{x}) \rangle=\Xi(\f)\big(\Phi^{\gamma(g)}(\hom{x})\big)=\langle \f, \Zem \circ \Phi^{\gamma(g)}(\hom{x})\rangle.$$
Thus, $\Upsilon^g \circ \Zem=\Zem \circ \Phi^{\gamma(g)}$ as desired.
\end{proof}

Recall that $R$ is the linear map $\R^2 \to \R^2$ which rotates by $\frac{\pi}{2}$. Also,
$r$ is the induced permutation on the signs of quadrants in $\R^2$. See equations \ref{eq:R} and \ref{eq:r}.

\begin{proposition}
\label{prop:r}
Let $\sigma$ be a saddle connection and $s \in \SP$. Then, $\hol~\sigma \in \cl(\Q_{s})$ if and only if $\Zem(\hom{\sigma}) \in \hat \Q_{r(s)}$.
\end{proposition}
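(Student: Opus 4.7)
The plan is to compute the signed intersection numbers $i(\hom{\sigma}, \hom{\cyl_\vv})$ directly in terms of $\hol(\sigma)$, and then match the resulting sign pattern against the permutation $r$ induced by the quarter-turn $R$. Write $\hol(\sigma) = (a,b)$. Since every core curve of a horizontal cylinder $\cyl_\va$ (for $\va \in \Alpha$) is oriented by the tangent vector $(1,0)$, and every core curve of a vertical cylinder $\cyl_\vb$ (for $\vb \in \Beta$) is oriented by the tangent vector $(0,1)$, each transverse intersection of $\sigma$ with $\cyl_\va$ contributes the sign $\sgn \det\!\bigl[\begin{smallmatrix} a & 1 \\ b & 0 \end{smallmatrix}\bigr] = -\sgn(b)$, and each intersection with $\cyl_\vb$ contributes $\sgn \det\!\bigl[\begin{smallmatrix} a & 0 \\ b & 1 \end{smallmatrix}\bigr] = \sgn(a)$. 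All crossings with a given cylinder carry the same sign, so
\[
\sgn \Zem(\hom{\sigma})(\va) \in \{0, -\sgn(b)\} \text{ for } \va \in \Alpha, \qquad \sgn \Zem(\hom{\sigma})(\vb) \in \{0, \sgn(a)\} \text{ for } \vb \in \Beta,
\]
where the $0$ occurs precisely when $\sigma$ misses the corresponding core curve.

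Next, I would verify that $r$ acts on sign pairs by $r(s_1, s_2) = (-s_2, s_1)$. This is immediate from equations \ref{eq:R} and \ref{eq:r}: $R(x,y) = (-y, x)$, so $R$ carries $\Q_{(s_1, s_2)}$ to $\Q_{(-s_2, s_1)}$.

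Combining these two observations finishes the proof. Fix $s = (s_1, s_2) \in \SP$. By Definition \ref{def:quadrants1}, $\hol(\sigma) = (a,b) \in \cl(\Q_s)$ if and only if $\sgn(a) \in \{0, s_1\}$ and $\sgn(b) \in \{0, s_2\}$. By the sign computation above, this is in turn equivalent to
\[
\sgn \Zem(\hom{\sigma})(\va) \in \{0, -s_2\} \text{ for all } \va \in \Alpha, \qquad \sgn \Zem(\hom{\sigma})(\vb) \in \{0, s_1\} \text{ for all } \vb \in \Beta,
\]
which by Definition \ref{def:quadrants2} says exactly that $\Zem(\hom{\sigma}) \in \hat{\Q}_{(-s_2, s_1)} = \hat{\Q}_{r(s)}$.

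The only delicate point is handling degenerate holonomies. When $a=0$ (resp. $b=0$), $\hol(\sigma)$ lies in two adjacent closed quadrants and correspondingly $\Zem(\hom{\sigma})$ lies in two adjacent closed $\hat{\Q}$-quadrants, since $\sgn(0) \in \{0, t\}$ for every $t$; the equivalence above still holds on the nose. There is nothing really to overcome here provided one keeps the convention that closed quadrants admit zero coordinates (as in Definitions \ref{def:quadrants1} and \ref{def:quadrants2}), so the computation is essentially a sign-chasing exercise. The main thing to be vigilant about is the orientation convention used in the definition of algebraic intersection number, since flipping this convention would alter $r$ by the antipodal map and break the identification.
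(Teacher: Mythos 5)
Your proof is correct and follows essentially the same route as the paper's: both compute the coordinates of $\Zem(\hom{\sigma})$ as algebraic intersection numbers via the wedge/determinant of $\hol(\sigma)$ against the cylinder core directions $(1,0)$ and $(0,1)$, identify the sign pattern, and match it to $r(s_1,s_2)=(-s_2,s_1)$. If anything, your treatment is slightly more careful than the paper's display, which abbreviates the step allowing $\sgn\,i(\hom{\sigma},\hom{\cyl_\vv})$ to be $0$ when $\sigma$ misses the cylinder; you make that case explicit.
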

\begin{proof}
Choose any $s$ for which $\hol~\sigma \in \cl(\Q_{s})$. 
Write $s=(s_x, s_y)$ with $s_x,s_y \in \{1,-1\}$. Then $r(s)=(-s_y, s_x)$. 
Because $\hol~\sigma \in \cl(\Q_{s})$, we have $\sgn\big(\pi_x (\hol~\sigma)\big) \in \{s_x, 0\}$ and $\sgn\big(\pi_y (\hol~\sigma)\big)\in \{s_y,0\}$.
Let $\va \in \Alpha$. Then 
$$\sgn~\Zem(\hom{\sigma})(\va)=\sgn~i(\hom{\sigma}, \hom{\cyl_\va})=\sgn~(\hol~\sigma) \wedge (1,0)=
\sgn\big(-\pi_y(\hol~\sigma)\big) \in \{-s_y, 0\}.$$
Similarly if $\vb \in \Beta$, then 
$$\sgn~\Zem(\hom{\sigma})(\vb)=\sgn~i(\hom{\sigma}, \hom{\cyl_\vb})=\sgn~(\hol~\sigma) \wedge (0,1)=
\sgn\big(\pi_x(\hol~\sigma)\big)\in\{s_x,0\}.$$
So, $\Zem(\hom{\sigma}) \in \hat \Q_{r(s)}$. The converse follows by reversing this argument.
\end{proof}

\subsection{Quadrant tracking via expansion}
\label{sect:quadrant tracking}
Recall that conjugation by $R$ preserves the subgroup $\rhoa{G} \subset \SL(2, \R)$.
Equation \ref{eq:R_commute} explains exactly how conjugation by $R$ acts on $G$.
This action on $G$ also relates to the expanding sign action $\sa^G$. Namely,
\begin{equation}
\label{eq:r_commute}
\sa^g \circ r=r \circ \sa^{\gamma(g)}.
\end{equation}
We view the following as a corollary of Proposition \ref{prop:quadrants_and_expansion}.

\begin{corollary}
\label{cor:expansion_sign_action}
Let $\langle g_i \rangle$ be a geodesic ray and $s \in \SP$. Assume that $\sigma$ is a saddle connection
for which $\hol~\sigma \in \cl\Big(\Exp_\lambda\big(\gamma(g_1)\big) \cap \Q_{s}\Big)$. Then, 
$\Upsilon^{g_i} \circ \Zem(\hom{\sigma}) \in \hat \Q_{r(s_i)}$ where $s_i=\sa^{g_i}(s)$.
\end{corollary}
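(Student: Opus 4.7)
The plan is to reduce the statement to a combination of Corollary \ref{cor:pullback_action} (which moves the $\Upsilon^G$-action on $\R^\V$ across $\Zem$ to a $\Phi^G$-action on $H_1$), Proposition \ref{prop:quadrants_and_expansion} (which controls quadrants of holonomies under expansion), and Proposition \ref{prop:r} (which translates between quadrants for holonomies and for images under $\Zem$), together with the commutation relation \ref{eq:r_commute} between $r$ and $\sa^G$. The involution $\gamma:G \to G$ provides the bridge, since it permutes the generating set $\{h,v,h^{-1},v^{-1}\}$ and therefore sends geodesic rays to geodesic rays.

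First I would apply Corollary \ref{cor:pullback_action} to rewrite
$$\Upsilon^{g_i}\circ \Zem(\hom{\sigma})=\Zem\bigl(\hom{\Phi^{\gamma(g_i)}(\sigma)}\bigr).$$
Because $D(\Phi^{\gamma(g_i)})=\rhoa{\gamma(g_i)}$, the image $\Phi^{\gamma(g_i)}(\sigma)$ is a saddle connection whose holonomy equals $\rhoa{\gamma(g_i)}(\hol~\sigma)$. Since $\gamma$ is an involution interchanging the symmetric generating set with itself, $\langle \gamma(g_i)\rangle_{i\ge 0}$ is a geodesic ray with $\gamma(g_0)=e$, so the hypotheses of Proposition \ref{prop:quadrants_and_expansion} are satisfied for this new ray applied to the vector $\hol~\sigma$: indeed, the standing assumption gives $\hol~\sigma\in \cl\bigl(\Exp_\lambda(\gamma(g_1))\cap \Q_s\bigr)$. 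The conclusion of that proposition then provides, for every $i\ge 0$,
$$\rhoa{\gamma(g_i)}(\hol~\sigma)\in \cl(\Q_{t_i}), \qquad t_i=\Sigma^{\gamma(g_i)}(s).$$

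Next I would push this quadrant statement across $\Zem$ using Proposition \ref{prop:r}, which gives
$$\Zem\bigl(\hom{\Phi^{\gamma(g_i)}(\sigma)}\bigr)\in \hat \Q_{r(t_i)}.$$
Finally, I would rewrite $r(t_i)$ in the form required by the corollary. Applying equation \ref{eq:r_commute} with $g$ replaced by $\gamma(g_i)$, and using $\gamma^2=\mathrm{id}$, I get $r\circ \Sigma^{\gamma(g_i)}=\Sigma^{g_i}\circ r$, so $r(t_i)=\Sigma^{g_i}(r(s))$. To match the statement, I would identify this with $r(s_i)=r(\Sigma^{g_i}(s))$ by the same commutation identity (applied in the other direction), completing the chain of equalities and producing the desired membership $\Upsilon^{g_i}\circ \Zem(\hom{\sigma})\in \hat \Q_{r(s_i)}$.

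The routine part is assembling the diagram: each step is a direct appeal to a named result. The only place requiring care is the bookkeeping of sign pairs under the two non-commuting involutions $\gamma$ (on $G$) and $r$ (on $\SP$); this is the main obstacle, since a stray $\gamma$ or an inverted commutation identity sends the conclusion into a different quadrant. I would verify the commutation $r\circ \Sigma^g=\Sigma^{\gamma(g)}\circ r$ explicitly on the four generators (reading directly off the diagram in Definition \ref{def:sign_action} and the action of $R$ on quadrants) to make certain that the final translation from $\Sigma^{\gamma(g_i)}(s)$ to the form $r(s_i)$ is performed correctly.
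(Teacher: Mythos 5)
Your chain of invocations — Corollary \ref{cor:pullback_action} to move $\Upsilon^{g_i}$ across $\Zem$, Proposition \ref{prop:quadrants_and_expansion} applied to the geodesic ray $\langle \gamma(g_i)\rangle$, then Proposition \ref{prop:r} — is exactly the paper's route, and your intermediate computations are correct. You reach
$$\Upsilon^{g_i}\circ\Zem(\hom{\sigma})\in\hat\Q_{r(t_i)},\qquad t_i=\Sigma^{\gamma(g_i)}(s),\qquad r(t_i)=\Sigma^{g_i}\bigl(r(s)\bigr),$$
and up to that line everything is sound. The error is in your final sentence. You claim that equation \ref{eq:r_commute}, applied ``in the other direction,'' identifies $\Sigma^{g_i}(r(s))$ with $r(\Sigma^{g_i}(s))$. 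It does not. Equation \ref{eq:r_commute} reads $\Sigma^g\circ r=r\circ\Sigma^{\gamma(g)}$; running it in the other direction gives $r\circ\Sigma^g=\Sigma^{\gamma(g)}\circ r$. Either way, the $\gamma$ never disappears — there is no identity $\Sigma^g\circ r=r\circ\Sigma^g$. Concretely, take $g=h$, $s=++$: reading the values off the expanding sign action, $\Sigma^h(r(++))=\Sigma^h(-+)=++$, while $r(\Sigma^h(++))=r(++)=-+$. These are different quadrants, so the ``identification'' you invoke is false.

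The resolution is that the quadrant you actually derived, $\hat\Q_{\Sigma^{g_i}(r(s))}=\hat\Q_{r(\Sigma^{\gamma(g_i)}(s))}$, is the correct answer, and the published phrase ``$s_i=\Sigma^{g_i}(s)$'' in the statement of the corollary appears to be a misprint for $s_i=\Sigma^{\gamma(g_i)}(s)$. The paper's own one-line proof — ``By Proposition \ref{prop:quadrants_and_expansion}, we know $\rhoa{\gamma(g_i)}(\hol~\sigma)\in\cl(\Q_{s_i})$'' — is precisely that proposition applied to the ray $\langle\gamma(g_i)\rangle$, whose output sign sequence is $\Sigma^{\gamma(g_i)}(s)$, not $\Sigma^{g_i}(s)$; you can also verify the mismatch directly with $g_1=h^{-1}$, $\gamma(g_1)=v$, $s=++$, where the image is $\hat\Q_{-+}$ but the stated formula would predict $\hat\Q_{--}$. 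None of the downstream uses are affected, because the corollary is always invoked with a freely-chosen initial sign pair — for instance Theorem \ref{thm:subgraph} only records that the sequence has the form $\Sigma^{g_i}(s_0)$ for \emph{some} $s_0$, and with the corrected statement the appropriate choice is $s_0=r(s)$. So rather than forcing the false identity, the honest ending of your argument is to record the conclusion in the form $\Upsilon^{g_i}\circ\Zem(\hom{\sigma})\in\hat\Q_{\Sigma^{g_i}(r(s))}$ and to note that this is $\hat\Q_{r(s_i)}$ once $s_i$ is corrected to $\Sigma^{\gamma(g_i)}(s)$.
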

\begin{proof}
By Proposition \ref{prop:quadrants_and_expansion}, we know 
$\rhoa{\gamma(g_i)}(\hol~\sigma) \in \cl(\Q_{s_i})$ for all $i$. 
We also know that 
$\hol \circ \Phi^{\gamma(g_i)}(\hom{\sigma})=\rhoa{\gamma(g_i)}(\hol~\sigma)$ is the holonomy of
a saddle connection. By Proposition \ref{prop:r}, 
$\Zem \circ \Phi^{\gamma(g_i)}(\hom{\sigma}) \in \hat \Q_{r(s_i)}$ .
By Corollary \ref{cor:pullback_action}, we see $\Upsilon^{g_i} \circ \Zem(\hom{\sigma})=\Zem \circ \Phi^{\gamma(g_i)}(\hom{\sigma}) \in \hat \Q_{r(s_i)}$.
\end{proof}

\begin{proposition}[Quadrant tracking]
\label{prop:quadrant_tracking}
Suppose $\langle g_i \rangle$ is a geodesic ray and $\rhoa{g_1}(\Q_s) \subset \Q_s$. Then
$\Upsilon^{g_i}(\hat \Q_s) \subset \hat \Q_{s_i}$ where $s_i=\sa^{g_i}(s)$. 
\end{proposition}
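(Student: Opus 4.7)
The plan is to prove the proposition by induction on $i$, first using the dihedral group action of Remark \ref{rem:dihedral} to reduce the number of cases. The eight pairs $(g_1, s)$ with $\rhoa{g_1}(\Q_s) \subset \Q_s$ that are enumerated in Corollary \ref{cor:quadrants} form a single orbit under the dihedral action, which permutes $\{h, v, h^{-1}, v^{-1}\}$ and $\SP$ compatibly with $\Phi$, $\Upsilon$, $\rhoa{G}$ and the quadrant structures $\hat \Q_s$. It therefore suffices to prove the proposition in the representative case $g_1 = h$, $s = {++}$, in which $s_1 = \sa^h({++}) = {++}$. The base case $i = 1$ reads directly from equation \ref{eq:Ho}: for $\f \in \hat \Q_{++}$, $\Ho(\f)(\va) = \f(\va) + \sum_{\vb \sim \va} \f(\vb) \geq 0$ and $\Ho(\f)(\vb) = \f(\vb) \geq 0$, so $\Ho(\f) \in \hat \Q_{++}$.

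For the inductive step, write $g^* = g_{i+1} g_i^{-1} \in \{h, v, h^{-1}, v^{-1}\}$, with the geodesic condition forbidding $g^* = (g_i g_{i-1}^{-1})^{-1}$. When $\rhoa{g^*}(\Q_{s_i}) \subset \Q_{s_i}$ (equivalently, $s_{i+1} = s_i$), a direct computation, obtained from the base case by dihedral symmetry, gives $\Upsilon^{g^*}(\hat \Q_{s_i}) \subset \hat \Q_{s_i}$, so the induction passes trivially. The genuine difficulty arises in the ``sign-flipping'' subcase where $\rhoa{g^*}(\Q_{s_i}) \not\subset \Q_{s_i}$ and $s_{i+1} \neq s_i$: here $\Upsilon^{g^*}$ does not send all of $\hat \Q_{s_i}$ into $\hat \Q_{s_{i+1}}$, so the bare inductive hypothesis is insufficient. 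My remedy will be to strengthen the inductive hypothesis by carrying along, in addition to the containment, a family of ``domination inequalities'' between values of $\Upsilon^{g_i}(\f)$ at adjacent vertices of $\Alpha$ and $\Beta$ — inequalities that are built up by applications of $\Ho$ and $\Vo$ and then consumed by $\Ho^{-1}$ and $\Vo^{-1}$. The prototypical instance: for $\f \in \hat \Q_{++}$, $\g = \Ho(\f)$ satisfies $\g(\va) \geq \f(\vb) = \g(\vb)$ for every $\vb \sim \va$, since $\f(\vb)$ appears as a summand of $\Ho(\f)(\va)$; and this makes
$$\Vo^{-1}(\g)(\vb) = \g(\vb) - \sum_{\va \sim \vb} \g(\va) \leq (1 - \val(\vb))\, \g(\vb) \leq 0$$
hold (using $\val(\vb) \geq 1$, which follows from connectedness of $\G$), while $\Vo^{-1}(\g)(\va) = \g(\va) \geq 0$, so $\Vo^{-1}(\Ho(\f)) \in \hat \Q_{+-}$ and the sign-flipping transition ${++} \mapsto {+-}$ at $i = 2$ is handled.

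The main obstacle will be formulating a single precise strengthened inductive hypothesis — a family of such domination inequalities whose form depends on the current state $(g_i, s_i)$ and on the last generator $g_i g_{i-1}^{-1}$ — and then verifying that every allowed transition either preserves the inequalities (when $g^*$ is self-preserving for $s_i$) or consumes just enough of them to force the correct sign after a flip. I plan to organize this verification as a case analysis over a handful of transition types at each step, using dihedral symmetry to collapse symmetric cases, and relying on the explicit definitions (\ref{eq:Ho}) and (\ref{eq:Vo}) together with the connectedness of $\G$ to close the induction and yield $\Upsilon^{g_i}(\hat \Q_s) \subset \hat \Q_{s_i}$ for all $i \geq 0$.
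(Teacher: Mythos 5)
Your proposal takes a genuinely different route from the paper. The paper does not do an induction on the operators $\Ho$, $\Vo$ at all; instead it writes $\f=\sum_\vv |\f(\vv)|\,\Zem(\hom{\sigma_\vv})$ as a non-negative combination of $\Zem$-images of homology classes of horizontal and vertical saddle connections, uses Corollary \ref{cor:pullback_action} to convert $\Upsilon^{g_i}\circ\Zem$ into $\Zem\circ\Phi^{\gamma(g_i)}$, and then reads off the quadrant of each summand from the {\em two-dimensional} statement $\rhoa{\gamma(g_i)}\bigl(\cl(\Q_{r^{-1}(s)})\bigr)\subset\cl(\Q_{s'_i})$, which is Corollary \ref{cor:quadrants} (itself a restatement of Proposition \ref{prop:quadrants_and_expansion} about vectors in $\R^2$). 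This entirely bypasses the combinatorics you are trying to control: the hard ``sign-flipping'' step is reduced to how a $2\times 2$ matrix in $\rhoa{G}$ moves a single holonomy vector between quadrants. Your approach is more elementary in the sense that it would not need the $\Zem$ machinery, but it trades that for a delicate inductive bookkeeping that the paper never has to face.

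The real problem with the proposal as written is that the bookkeeping is not done: you verify one prototypical sign flip at $i=2$ and then defer the formulation of the invariant to later (``The main obstacle will be formulating a single precise strengthened inductive hypothesis...''). This is the entire content of the proposition, so the proposal has a genuine gap. Moreover, the specific invariant you record — the pointwise domination $\g(\va)\geq\g(\vb)$ for $\vb\sim\va$ — is too weak to propagate by itself. Already at the next step $g_3=hv^{-1}h$ (with $s_3=--$), estimating
$$\Ho(\h)(\va)\;=\;\g(\va)+\sum_{\vb\sim\va}\Bigl(\g(\vb)-\sum_{\va'\sim\vb}\g(\va')\Bigr)$$
using only $\g(\va')\geq\g(\vb')$ for adjacent pairs gives the bound $(2-\val(\va))\g(\va)$, which is not $\leq 0$ when $\val(\va)=1$. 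The conclusion $\Ho(\h)(\va)\leq 0$ is still true — one can recover it by instead invoking the stronger structural inequality $\g(\va)\geq\sum_{\vb\sim\va}\g(\vb)$ coming from $\g=\Ho(\f)$ with $\f\geq 0$, together with connectedness to guarantee $\vb$ has a second neighbor — but this already shows the prototype invariant is not self-propagating. The proposition carries no hypothesis excluding valance-one vertices, so whatever invariant you formulate must survive them; the sequence of inequalities that works will have to implicitly encode the fact that $\Upsilon^{g_i}(\f)$ lies in the cone $\Upsilon^{g_i}(\hat\Q_s)$, which is strictly smaller than $\hat\Q_{s_i}$, and characterizing that sub-cone combinatorially seems to be exactly the work the paper avoids by passing to $\R^2$. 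Until that invariant is written down and shown to close under each of the (up to dihedral symmetry, still several) transition types $g_{i+1}g_i^{-1}$, the argument is not a proof.
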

\begin{proof}
Let $\f \in \hat \Q_s$. Given $\vv \in \V$, let $\e_\vv \in \R^\V$ denote the function
\begin{equation}
\label{eq:e}
\e_\vv(\vw)=\begin{cases} 1 & \textrm{if $\vv=\vw$} \\ 0 & \textrm{if $\vv \neq \vw$.}\end{cases}
\end{equation}
Formally, we may write
$$\f=\sum_{\vv \in \V,~\f(\vv) \neq 0} \f(\vv) \e_\vv.$$
This sum makes sense, because for any $\vv \in \V$, there are only finitely many terms whose support includes $\vv$. 
For each $\vv \in \V$ with $\f(\vv) \neq 0$ we can choose a horizontal or vertical saddle connection $\sigma_\vv$ such that for any $\vw \in \V$ we have
$$i(\hom{\sigma_\vv}, \hom{\cyl_\vw})=\begin{cases}
\sgn~\f(\vv) & \textrm{if $\vv=\vw$} \\
0 & \textrm{otherwise.}
\end{cases}$$
Then by definition of $\Zem$ we have
\begin{equation}
\label{eq:f}
\f=\sum_{\vv \in \V,~\f(\vv) \neq 0} |\f(\vv)| \Zem(\hom{\sigma_\vv}).
\end{equation}
Note that each $\Zem(\hom{\sigma_\vv})\in \hat \Q_s$ and therefore by 
Proposition \ref{prop:r} we know $\hol~\sigma_\vv \in \cl(\Q_{r^{-1}(s)})$. 
We now apply $\Upsilon^{g_i}$ to equation \ref{eq:f}, yielding
\begin{equation}
\label{eq:f2}
\Upsilon^{g_i}(\f)=
\sum_{\vv \in \V,~\f(\vv) \neq 0} |\f(\vv)| \Upsilon^{g_i} \circ \Zem(\hom{\sigma_\vv})=
\sum_{\vv \in \V,~\f(\vv) \neq 0} |\f(\vv)|  \Zem \circ \Phi^{\gamma(g_i)} (\hom{\sigma_\vv}),
\end{equation}
with the last equality following from Corollary \ref{cor:pullback_action}. Now consider that if
$\rhoa{g_1}(\Q_s) \subset \Q_s$, then $\rhoa{\gamma(g_1)}\big(\cl(\Q_{r^{-1}(s)})\big) \subset \cl(\Q_{r^{-1}(s)})$. (This follows by inspecting the
pairs $(g_1, s)$ allowed by Corollary \ref{cor:quadrants}.) By Corollary \ref{cor:quadrants}, we may conclude that
$$\hol~\Phi^{\gamma(g_i)}(\hom{\sigma_\vv}) \in \cl(\Q_{s'_i}),$$
where $s'_i=\sa^{\gamma(g_i)} \circ r^{-1}(s)$. By equation \ref{eq:r_commute}, we have $s'_i=r^{-1} \circ \sa^{g_i}(s)$.
Then it follows from Proposition \ref{prop:r} that
$$\Zem \circ \Phi^{\gamma(g_i)} (\hom{\sigma_\vv}) \in \hat \Q_{r(s'_i)}=\hat \Q_{s_i},$$
where $s_i=r(s'_i)=\sa^{g_i}(s)$ as in the statement of the proposition. As each of the terms in the
sum for $\Upsilon^{g_i}(\f)$ in equation \ref{eq:f2} lie in $\Q_{s_i}$, we know $\Upsilon^{g_i}(\f) \in \Q_{s_i}$.
\end{proof}


Let $\langle g_i \rangle$ be the shrinking sequence for a $\lambda$-renormalizable direction $\bm \theta \in S^1$, and $\langle s_i \rangle$ denote the sign sequence.
Recall definition \ref{def:critical_time} that $n$ is a critical time if $s_{n-1}=s_{n}$. We have the following corollary to Proposition \ref{prop:quadrant_tracking}.

\begin{corollary}[Critical times]
\label{cor:survivor_implication}
Suppose $n$ is a critical time. Then if $\f \in \R^\V$ is a $(\bm \theta, n)$-survivor, then it is a
$(\bm \theta, k)$-survivor for all $k<n$.
\end{corollary}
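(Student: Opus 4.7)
The plan is to recast the statement in terms of the equivalent characterization of survivors given by Proposition \ref{prop:equiv_survivors}: $\f$ is a $(\btheta,k)$-survivor iff $\Upsilon^{g_k}(\f) \in \hat\Q_{s_k}$. So assuming $\Upsilon^{g_n}(\f) \in \hat\Q_{s_n}$, I want to show $\Upsilon^{g_k}(\f) \in \hat\Q_{s_k}$ for every $0\le k<n$.

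First I would reverse the shrinking sequence at time $n$: set $h_i = g_{n-i}g_n^{-1}$ for $0\le i\le n$, which is a geodesic segment with $h_0=e$. The point of using this reversed sequence is that $\Upsilon^{h_i}\circ \Upsilon^{g_n}(\f) = \Upsilon^{g_{n-i}}(\f)$, so understanding how $\Upsilon^{h_i}$ acts on $\hat\Q_{s_n}$ directly controls where $\Upsilon^{g_{n-i}}(\f)$ lands. The criticality hypothesis feeds in through statement (2) of Proposition \ref{prop:crit_equiv}, which gives $\rhoa{h_1}(\Q_{s_n}) \subset \Q_{s_n}$. This is exactly the hypothesis needed to apply the quadrant-tracking result (Proposition \ref{prop:quadrant_tracking}, whose proof applies verbatim to a finite geodesic segment) and conclude
\[
\Upsilon^{h_i}(\hat\Q_{s_n}) \subset \hat\Q_{\sa^{h_i}(s_n)} \qquad \text{for all }0\le i\le n.
\]

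The one thing left to check, and the main (but mild) obstacle, is the identification $\sa^{h_i}(s_n)=s_{n-i}$, so that the sign produced by the expanding sign action really matches the sign sequence of $\btheta$ at time $n-i$. For this I would apply Proposition \ref{prop:quadrants_and_expansion} to the direction $\btheta'=\rhoa{g_n}(\btheta)\in \Q_{s_n}$ under the segment $\langle h_i\rangle$: the hypothesis $\btheta'\in \cl\bigl(\Exp_\lambda(h_1)\cap \Q_{s_n}\bigr)$ holds because $\|\rhoa{h_1}(\btheta')\|=\|\rhoa{g_{n-1}}(\btheta)\|>\|\rhoa{g_n}(\btheta)\|=\|\btheta'\|$, which is exactly the shrinking property of the sequence $\langle g_i\rangle$. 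The conclusion of that proposition says $\rhoa{h_i}(\btheta')\in \cl(\Q_{\sa^{h_i}(s_n)})$. But $\rhoa{h_i}(\btheta')=\rhoa{g_{n-i}}(\btheta)\in \Q_{s_{n-i}}$ by definition of the sign sequence, and since $\lambda$-renormalizable directions cannot lie on the boundary of any quadrant $\rhoa{g_{n-i}}(\btheta)$ is in the interior, forcing $\sa^{h_i}(s_n)=s_{n-i}$.

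Combining these steps, for each $k=n-i$ with $0\le k\le n$ we get
\[
\Upsilon^{g_k}(\f) \;=\; \Upsilon^{h_i}\bigl(\Upsilon^{g_n}(\f)\bigr) \;\in\; \Upsilon^{h_i}(\hat\Q_{s_n}) \;\subset\; \hat\Q_{s_{n-i}} \;=\; \hat\Q_{s_k},
\]
and hence $\f$ is a $(\btheta,k)$-survivor, as desired. The only subtlety worth being careful about is the quadrant-interior issue used to identify $\sa^{h_i}(s_n)$ with $s_{n-i}$, but this is immediate from the definition of $\lambda$-renormalizable and Theorem \ref{thm:no saddles}'s underlying fact that the orbit of $\btheta$ under $\rhoa{G}$ avoids coordinate axes.
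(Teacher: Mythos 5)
Your argument is essentially identical to the paper's own proof: both reverse the shrinking sequence via $h_i = g_{n-i}g_n^{-1}$, invoke statement (2) of Proposition \ref{prop:crit_equiv} to get $\rhoa{h_1}(\Q_{s_n}) \subset \Q_{s_n}$, use Proposition \ref{prop:quadrants_and_expansion} to identify $\sa^{h_i}(s_n)=s_{n-i}$, and then close with the quadrant tracking Proposition \ref{prop:quadrant_tracking}. The only difference is that you spell out the verification of the hypothesis of Proposition \ref{prop:quadrants_and_expansion} and the quadrant-interior point, which the paper leaves implicit; this is correct and harmless.
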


\begin{proof}
Let $\f_k=\Upsilon^{g_k}(\f)$ and $\bm \theta_k=\rhoa{g_k}(\bm \theta)$.  We know that  
$\bm \theta_n \in \Q_{s_n}$ and
$\f_n \in \hat \Q_{s_n}$. By statement (\ref{crit2}) of Proposition \ref{prop:crit_equiv}, we know $\rhoa{g_{n-1} g_n^{-1}}(\Q_{s_n}) \subset \Q_{s_n}=\Q_{s_{n-1}}$.
Consider the geodesic segment $\langle h_i=g_{n-i} g_n^{-1}\rangle$.  By definition of the sign sequence $\langle s_i \rangle$, we have 
$\theta_{n-i} \in \Q_{s_{n-i}}$. By Proposition \ref{prop:quadrants_and_expansion}, we know 
$s_{n-i}=\sa^{h_i}(s_n)$. Finally, by Proposition \ref{prop:quadrant_tracking},
we know $\f_{n-i}=\Upsilon^{h_i}(\hat \Q_s) \subset \hat \Q_{s_{n-i}}$ too. Therefore, by definition, $\f_{n-i}$ is a $(\bm \theta, n-i)$-survivor.
\end{proof}

Recall that Lemma \ref{lem:critical_times} claimed that for $\btheta$ a $\lambda$-renormalizable direction,
there are infinitely many $n \in \N$ such that for $\f \in \R^\V$, being a $(\bm \theta, n)$-survivor implies 
being at  $(\bm \theta, k)$-survivor for all $k \leq n$. So, this was really a consequence of the above Corollary.

\begin{proof}[Proof of Lemma \ref{lem:critical_times}]
Let $n$ be a critical time for the shrinking sequence of $\btheta$. There are infinitely many of these times by Corollary \ref{cor:critical_times_occur}. Also if $\f$ is a $(\bm \theta, n)$-survivor then it is a $(\bm \theta, k)$-survivor for all $k < n$ by Corollary \ref{cor:survivor_implication}.
\end{proof}

\begin{corollary}
\label{cor:eventually}
Suppose $\bm \theta$ is a $\lambda$-renormalizable direction. If for some $N\geq 0$ we know that $\f \in \R^\V$ is a $(\bm \theta, k)$-survivor for all
$k \geq N$, then $\f$ is a $\bm \theta$-survivor.
\end{corollary}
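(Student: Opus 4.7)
The plan is a short deduction that combines Corollary \ref{cor:survivor_implication} (critical times propagate the survivor property downward) with Corollary \ref{cor:critical_times_occur} (there are infinitely many critical times when $\bm\theta$ is $\lambda$-renormalizable). Given the hypothesis that $\f$ is a $(\bm\theta,k)$-survivor for all $k\geq N$, I just need to fill in the finitely many indices $k<N$.

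First I would invoke Corollary \ref{cor:critical_times_occur} to choose a critical time $n$ of the $\lambda$-shrinking sequence of $\bm\theta$ with $n\geq N$. By hypothesis, $\f$ is a $(\bm\theta,n)$-survivor. Then I would apply Corollary \ref{cor:survivor_implication}: since $n$ is a critical time and $\f$ is a $(\bm\theta,n)$-survivor, $\f$ is a $(\bm\theta,k)$-survivor for every $k<n$. In particular, this handles all $k$ with $0\leq k<N$.

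Combining this with the given hypothesis for $k\geq N$, we conclude that $\f$ is a $(\bm\theta,k)$-survivor for every $k\geq 0$, which is exactly the defining property of a $\bm\theta$-survivor (Definition \ref{def:function_survivors}). There is no real obstacle here; the content of the statement is entirely absorbed by the two cited corollaries, and the role of the corollary is simply to package this useful reduction for later use.
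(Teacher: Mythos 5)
Your argument is correct and matches the paper's proof essentially verbatim: both pick a critical time $n \geq N$ via Corollary \ref{cor:critical_times_occur}, then use Corollary \ref{cor:survivor_implication} to propagate the survivor property down to all $k<n$. Nothing further to add.
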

\begin{proof}
Corollary \ref{cor:critical_times_occur} guarantees that we have infinitely many critical times $n$. Choose such an $n>N$. Corollary \ref{cor:survivor_implication} implies that
$\f$ is a $(\bm \theta, k)$-survivor for all $k<n$.
\end{proof}

\begin{corollary}[Group invariance]
\label{cor:group_invariance}
Assume that $\f$ is a $\bm \theta$-survivor, for $\bm \theta$ a $\lambda$-renormalizable direction. Then for all $g \in G$,
$\Upsilon^g(\f)$ is a $\rhoa{g}(\bm \theta)$-survivor. 
\end{corollary}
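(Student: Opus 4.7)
The plan is to argue by induction on the reduced word length of $g \in G$. The base case $g = e$ is immediate since $\Upsilon^e(\f) = \f$ and $\rhoa{e}(\bm \theta) = \bm \theta$. For the inductive step, I would write $g = s \cdot g'$ where $s$ is a generator and $g'$ has strictly shorter reduced word length; by the inductive hypothesis applied to $g'$, the function $\Upsilon^{g'}(\f)$ is a $\rhoa{g'}(\bm \theta)$-survivor. Since $\Rn_\lambda$ is $\rhoa{G}$-invariant, $\rhoa{g'}(\bm \theta)$ is again a $\lambda$-renormalizable direction (up to normalization, which does not affect survivor status since by Proposition \ref{prop:equiv_survivors} that notion depends only on quadrants). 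Hence it suffices to establish the statement in the case where $g = s$ is a single generator, applied to an arbitrary $\lambda$-renormalizable direction.

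So I would fix $s \in \{h, v, h^{-1}, v^{-1}\}$ and $\bm \theta \in \Rn_\lambda$, and split into two subcases according to Corollary \ref{cor:shrink}: either $s$ coincides with the unique shrinking generator $g_1$ of $\bm \theta$, or $s$ is one of the three strictly expanding generators. In the shrinking subcase $s = g_1$, the $\lambda$-shrinking and sign sequences of $\bm \theta'' = \rhoa{s}(\bm \theta)$ are obtained by the shift $\mathcal{S}$ of Section \ref{sect:unique}, giving $g_n'' = g_{n+1} g_1^{-1}$ and $s_n'' = s_{n+1}$. A direct computation then yields
$$\Upsilon^{g_n''}(\Upsilon^s(\f)) = \Upsilon^{g_n'' s}(\f) = \Upsilon^{g_{n+1}}(\f) \in \hat \Q_{s_{n+1}} = \hat \Q_{s_n''}$$
for every $n \geq 0$, invoking Proposition \ref{prop:equiv_survivors} and the hypothesis that $\f$ is a $\bm \theta$-survivor.

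The expanding subcase $s \neq g_1$ is the main obstacle. Here $s^{-1}$ shrinks $\bm \theta''$ back to $\bm \theta$, and continuing by the shrinking sequence of $\bm \theta$ leads to the prediction $g_n'' = g_{n-1} s^{-1}$ for $n \geq 1$ (with $g_0'' = e$). The hardest step is verifying that this candidate really is the $\lambda$-shrinking sequence of $\bm \theta''$: one must check that $g_{n-1} s^{-1}$ is already a reduced word (which follows because the rightmost letter of $g_{n-1}$ is $g_1 \neq s$), that the sequence $\langle g_n'' \rangle$ forms a geodesic ray in the Cayley graph of $G$, and that the norms $\|\rhoa{g_n''}(\bm \theta'')\| = \|\rhoa{g_{n-1}}(\bm \theta)\|$ are strictly decreasing; each item reduces to a short verification using Corollary \ref{cor:shrink} and the geodesic structure from Proposition \ref{prop:ray}. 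Granting this computation, one reads off $s_n'' = s_{n-1}$ for $n \geq 1$ and obtains
$$\Upsilon^{g_n''}(\Upsilon^s(\f)) = \Upsilon^{g_{n-1}}(\f) \in \hat \Q_{s_{n-1}} = \hat \Q_{s_n''}$$
for all $n \geq 1$. The remaining case $n = 0$, i.e.\ checking $\Upsilon^s(\f) \in \hat \Q_{s_0''}$, then follows from Corollary \ref{cor:eventually}, completing both subcases and hence the induction.
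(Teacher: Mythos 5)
Your proof is correct, and it takes a genuinely different route from the paper's. The paper dispatches the statement in one shot for arbitrary $g \in G$: it asserts that the $\lambda$-shrinking sequences of $\bm\theta$ and $\bm\theta' = \pi_\Circ \circ \rhoa{g}(\bm\theta)$ are tail equivalent, deduces that there exist $m,n$ with $g_m = g_n' g$ after which the two sequences track together, and then invokes Corollary~\ref{cor:eventually} once. You instead induct on reduced word length, reducing to a single generator $s$, and split into the shrinking subcase ($s = g_1$, where the new shrinking sequence is a shift of the old) and the expanding subcase ($s \neq g_1$, where one must verify that $g_n'' = g_{n-1}s^{-1}$ really is a geodesic ray along which the norm strictly decreases, so that by Theorem~\ref{thm:unique shrinking} it is the shrinking sequence). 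Both proofs hinge on Corollary~\ref{cor:eventually} at the end. What you gain is that your argument actually proves the tail equivalence that the paper merely asserts, making the argument more self-contained; what the paper's version gains is brevity and uniformity, since it avoids both the induction and the two-case analysis. Your checks on the candidate sequence in the expanding case (reducedness of $g_{n-1}s^{-1}$, the geodesic condition $g_{n+2}'' \neq g_n''$, and strict decay $\|\rhoa{g_n''}(\bm\theta'')\| = \|\rhoa{g_{n-1}}(\bm\theta)\|/\|\rhoa{s}(\bm\theta)\|$) all go through as you sketch; the one subtlety worth flagging is that the claim ``$g_1$ is the rightmost letter of $g_{n-1}$'' is needed to rule out cancellation in $g_{n-1}s^{-1}$, and it does hold because the geodesic from $e$ to $g_{n-1}$ in the Cayley graph passes through $g_1$ as its first step, so the reduced word for $g_{n-1}$ ends in $g_1$ and $g_1 \neq s$ by hypothesis.
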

\begin{proof}
Let $\f'=\Upsilon^g(\f)$. Let $\langle g_i \rangle$ be the $\lambda$-shrinking sequence for $\bm \theta$, and
$\langle g_i' \rangle$ be the $\lambda$-shrinking sequence for $\bm \theta'=\pi_\Circ \circ \rhoa{g}(\bm \theta)$.
Then the $\lambda$-shrinking sequences of $\bm \theta$ and $\bm \theta'$ are tail equivalent, as defined in section \ref{sect:symmetry}.
Moreover, regardless of the choice of $g \in G$ there are positive integers $m$ and $n$ such that 
$$\Upsilon^{g_m}(\f)=\Upsilon^{g'_n}(\f') \quad \textrm{and} \quad \pi_\Circ \circ \rhoa{g_m}(\bm \theta)=\pi_\Circ \circ \rhoa{g'_n}(\bm \theta').$$
Then this is also true for $m$ replaced by $m+k$ and $n$ replaced by $n+k$ for all $k \geq 0$. Thus,
Corollary \ref{cor:eventually} implies that $\f'$ is a $\bm \theta'$-survivor.
\end{proof}

\subsection{Survivors in \texorpdfstring{$\Coh$}{H{\textasciicircum}1} and \texorpdfstring{$\R^\V$}{R{\textasciicircum}V}}
\label{sect:survivors}
In this section we will provide a proof of Theorem \ref{thm:surviving_functions}, i.e., in the presence of the subsequence decay property, we have that
$\Psi_\btheta(\M_\btheta) \subset \Xi(\R^\V)$.
The main idea of the proof is to understand the function $\Fmap:\Coh \to \R^\V$ given by
$$\Fmap(m)(\vv)=m(\hom{\cyl_\vv}),$$
where $\hom{\cyl_\vv}$ represents the homology class of the cylinder associated to $\vv \in \V$ oriented rightward or upward. 
Recall that $r:\SP \to \SP$ is the action on signs of quadrants induced by a rotation by $\frac{\pi}{2}$ radians.
See equation \ref{eq:r}.
We have the following.

\begin{proposition}
\label{prop:Fmap_conj}
$\Fmap \circ \Phi^{g}_{\ast}=\Upsilon^{\gamma(g)} \circ \Fmap$.
\end{proposition}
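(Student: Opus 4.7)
The plan is to verify the identity on a generating set of $G$ and then extend by the homomorphism property. Since $\Phi^g_\ast$, $\Upsilon^{\gamma(g)}$, and the composition rule for $\gamma$ all behave well under composition, it suffices to check the equation for $g = h^k$ and $g = v^k$ with $k \in \Z$. By the dihedral symmetry of Remark~\ref{rem:dihedral} (which swaps horizontal and vertical data and exchanges $h \leftrightarrow v$), handling $g = h^k$ will immediately give the $g = v^k$ case, so I would focus on $g = h^k$, for which $\gamma(h^k) = v^{-k}$ and $\Upsilon^{\gamma(h^k)} = \Vo^{-k}$.

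The first step is to compute the action of $\Phi^{h^{-k}}$ on each cylinder class $\hom{\cyl_\vv}$. For $\va \in \Alpha$, the cylinder $\cyl_\va$ is horizontal and $\Phi^{h^{-k}}$ is (up to its global derivative) a product of Dehn twists in the horizontal cylinders, so it fixes each horizontal core curve: $\Phi^{h^{-k}}(\hom{\cyl_\va}) = \hom{\cyl_\va}$. For $\vb \in \Beta$, the core of $\cyl_\vb$ is a concatenation of vertical saddle connections in $E$, one for each rectangle $R_e$ with $e \in \beta^{-1}(\vb)$, and each such vertical saddle connection traverses the horizontal cylinder $\cyl_{\alpha(e)}$. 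Applying Proposition~\ref{prop:action} edge by edge yields
\[
\Phi^{h^{-k}}(\hom{\cyl_\vb}) = \hom{\cyl_\vb} - k \sum_{e \in \beta^{-1}(\vb)} \hom{\cyl_{\alpha(e)}}.
\]

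With these formulas, the computation reduces to unwinding definitions. For $\va \in \Alpha$,
\[
\bigl(\Fmap \circ \Phi^{h^k}_\ast(m)\bigr)(\va) = m\bigl(\Phi^{h^{-k}}(\hom{\cyl_\va})\bigr) = m(\hom{\cyl_\va}) = \Fmap(m)(\va),
\]
which agrees with $\Vo^{-k}\bigl(\Fmap(m)\bigr)(\va)$ since $\Vo$ acts as the identity on $\Alpha$. For $\vb \in \Beta$,
\[
\bigl(\Fmap \circ \Phi^{h^k}_\ast(m)\bigr)(\vb) = m(\hom{\cyl_\vb}) - k \sum_{e \in \beta^{-1}(\vb)} m(\hom{\cyl_{\alpha(e)}}) = \Fmap(m)(\vb) - k\!\!\sum_{e \in \beta^{-1}(\vb)}\!\!\Fmap(m)(\alpha(e)),
\]
and the right-hand side is exactly the definition of $\Vo^{-k}\bigl(\Fmap(m)\bigr)(\vb)$ (with multiplicity correctly recorded by summing over $e \in \beta^{-1}(\vb)$ rather than neighbors of $\vb$). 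Thus $\Fmap \circ \Phi^{h^k}_\ast = \Vo^{-k} \circ \Fmap = \Upsilon^{\gamma(h^k)} \circ \Fmap$, and the dihedral action finishes $g = v^k$.

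There is no real obstacle: the only subtlety is the bookkeeping in the second step, where one must express $\hom{\cyl_\vb}$ as a sum of vertical edges in $E$ (one per edge $e \in \beta^{-1}(\vb)$, not per neighboring vertex) before invoking Proposition~\ref{prop:action}, so that the coefficients match the sum-over-edges definitions in \eqref{eq:Ho} and \eqref{eq:Vo}.
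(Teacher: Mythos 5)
Your proof is correct and follows essentially the same route as the paper's: check the identity on the generator families $h^k$, $v^k$, use Proposition~\ref{prop:action} to compute $\Phi^{h^{-k}}(\hom{\cyl_\vv})$ by decomposing $\hom{\cyl_\vb}$ into vertical saddle connections, and match the result with the definition of $\Vo^{-k}$ from equation~\eqref{eq:Vo}. The only cosmetic differences are that you justify $\Phi^{h^{-k}}(\hom{\cyl_\va})=\hom{\cyl_\va}$ by the Dehn-twist description rather than by quoting Proposition~\ref{prop:action} for horizontal edges, and that you invoke the dihedral symmetry to dispatch the $v^k$ case where the paper simply says the argument is similar.
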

\begin{proof}
It is enough to handle the cases of $g \in \{h^{k}, v^{k}~:~k \in \Z\}$. Let $g=h^{k}$. Suppose $\va \in \Alpha$. 
Then by Proposition \ref{prop:action},
$$\big(\Fmap \circ \Phi^{h^{k}}_{\ast}(m)\big)(\va)=m \big(\Phi^{h^{-k}}(\hom{\cyl_\va})\big)=m(\hom{\cyl_\va}).$$
By definition of $\gamma$, we have $\gamma(h^{k})=v^{-k}$. By equation \ref{eq:Vo}, 
$$\big(\Upsilon^{v^{-k}} \circ \Fmap(m)\big)(\va)=\Fmap(m)(\va)=m(\hom{\cyl_\va}).$$
Now let $\vb \in \Beta$. We have
$$\big(\Fmap \circ \Phi^{h^{k}}_{\ast}(m)\big)(\vb)=m \big(\Phi^{h^{-k}}(\hom{\cyl_\vb})\big)=
m(\hom{\cyl_\vb}-k\sum_{\va \sim \vb} \hom{\cyl_\va}).$$
$$\big(\Upsilon^{v^{-k}} \circ \Fmap(m)\big)(\vb)=\Fmap(m)(\vb) -k \sum_{\va \sim \vb} \Fmap(m)(\va)=
m(\hom{\cyl_\vb}) -k \sum_{\va \sim \vb} m(\hom{\cyl_\va}).$$
These two expressions are equal by linearity of $m$. The proof is similar for the case of $g=v^{k}$.
\end{proof}

Recall that $R$ acts on $\R^2$ by rotation by $\frac{\pi}{2}$. See equation \ref{eq:R}.

\begin{proposition}
\label{prop:circumference_survivors}
If $m \in \Coh$ is a $(\bm \theta, n)$-survivor, then $\Fmap(m)\in \R^\V$ is a $\big(R^{-1}(\bm \theta),n\big)$-survivor.
\end{proposition}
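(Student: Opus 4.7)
The plan is to reduce the statement to a check of signs via Propositions \ref{prop:Fmap_conj} and \ref{prop:equiv_survivors}. The aim is to show that $\Upsilon^{\gamma(g_n)}\bigl(\Fmap(m)\bigr)$ lies in the appropriate sign-pair quadrant of $\R^\V$ predicted by the sign sequence of $R^{-1}(\btheta)$. Using Proposition \ref{prop:Fmap_conj}, we have $\Upsilon^{\gamma(g_n)}\circ \Fmap = \Fmap\circ \Phi^{g_n}_\ast$, so it is equivalent to show that $\Fmap\bigl(\Phi^{g_n}_\ast(m)\bigr) \in \hat\Q_{r^{-1}(s_n)}$, where $\langle s_n\rangle$ is the sign sequence of $\btheta$.

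Before the sign computation, I would pin down the shrinking and sign sequences of $R^{-1}(\btheta)$. From equation \ref{eq:R_commute} and the fact that $\gamma$ is involutive, $R^{-1}\circ \rhoa{g}=\rhoa{\gamma(g)}\circ R^{-1}$, so $\|\rhoa{\gamma(g_n)}\bigl(R^{-1}(\btheta)\bigr)\|=\|\rhoa{g_n}(\btheta)\|$; this monotone-decreasing property together with the uniqueness given by Theorem \ref{thm:unique shrinking} identifies $\langle \gamma(g_n)\rangle$ as the $\lambda$-shrinking sequence of $R^{-1}(\btheta)$. Moreover $\rhoa{\gamma(g_n)}\bigl(R^{-1}(\btheta)\bigr)= R^{-1}\bigl(\rhoa{g_n}(\btheta)\bigr)\in R^{-1}(\Q_{s_n})=\Q_{r^{-1}(s_n)}$, so the corresponding sign sequence is $\langle r^{-1}(s_n)\rangle$. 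Proposition \ref{prop:equiv_survivors} then tells us exactly which quadrant we need $\Fmap\bigl(\Phi^{g_n}_\ast(m)\bigr)$ to lie in.

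The remaining step is the sign computation. Set $m'=\Phi^{g_n}_\ast(m)$ and $\btheta'=\rhoa{g_n}(\btheta)=(x',y')$, with $s_n=(\sgn x', \sgn y')$. By Proposition \ref{prop:equiv_def}, the survivor hypothesis says that for any $\sigma\in E$, if $m'(\hom\sigma)\neq 0$ then its sign equals $\sgn(\hol(\sigma)\wedge \btheta')$; this evaluates to $\sgn(y')$ for rightward-oriented horizontal edges and to $-\sgn(x')$ for upward-oriented vertical edges. Now for $\va\in\Alpha$, the class $\hom{\cyl_\va}$ equals the sum of the rightward-oriented horizontal saddle connections forming (say) the top boundary of $\cyl_\va$, and for $\vb\in\Beta$ analogously $\hom{\cyl_\vb}$ equals a sum of upward-oriented vertical saddle connections. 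Taking $m'$ on these sums, each summand is either zero or has sign $\sgn(y')$ (respectively $-\sgn(x')$), so the total has the same sign or is zero. Hence $\Fmap(m')(\va)\in\{0\}\cup \sgn(y')\cdot \R_{>0}$ and $\Fmap(m')(\vb)\in\{0\}\cup (-\sgn(x'))\cdot \R_{>0}$, which is exactly the condition for $\Fmap(m')\in \hat\Q_{(\sgn y',-\sgn x')}$. A brief matrix computation shows $r^{-1}(s_x,s_y)=(s_y,-s_x)$, confirming that this quadrant equals $\hat\Q_{r^{-1}(s_n)}$, and the proposition follows.

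The only genuine obstacle here is verifying that $\hom{\cyl_\vv}$ admits the claimed decomposition as a signed sum of boundary saddle connections with consistent orientation; this is a routine homological fact about cylinders bounded by saddle connections in $S(\G,\w)$ (the core curve is homotopic in $S/V$ to its top or bottom boundary), but it must be unpacked carefully because the orientation conventions for $\hom{\cyl_\vv}$ (rightward for $\va\in\Alpha$, upward for $\vb\in\Beta$) need to match those for the boundary saddle connections in $E$. Everything else is a bookkeeping exercise in sign pairs and the commutation $R^{-1}\rhoa{g}=\rhoa{\gamma(g)}R^{-1}$.
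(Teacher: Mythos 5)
Your proof is correct and follows essentially the same route as the paper's: identify $\langle\gamma(g_n)\rangle$ and $\langle r^{-1}(s_n)\rangle$ as the shrinking and sign sequences of $R^{-1}(\btheta)$ via equation \ref{eq:R_commute}, reduce via Proposition \ref{prop:Fmap_conj} to the sign of $\Fmap(\Phi^{g_n}_\ast(m))(\vv)=\Phi^{g_n}_\ast(m)(\hom{\cyl_\vv})$, and evaluate that sign by writing $\hom{\cyl_\vv}$ as a sum of parallel boundary saddle connections and invoking Proposition \ref{prop:equiv_def}. The paper also leaves the homological decomposition of $\hom{\cyl_\vv}$ as an assertion, so the "obstacle" you flag is treated at the same level of detail there.
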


\begin{proof}
To ease notation define $\bm \theta'=R^{-1}(\bm \theta)$. 
Let $\langle g_n \rangle$ denote the shrinking sequence for $\bm \theta$. Let
$g'_n=\gamma(g_n)$. 
By equation \ref{eq:R_commute}, $\langle g'_n \rangle$ is the shrinking sequence for $\bm \theta'$. 
Therefore, if $\langle s_n \rangle$ is the sign sequence of $\bm \theta$, then the sign sequence
for $\bm \theta'$ is $\langle s'_n=r^{-1}(s_n)\rangle$. By Proposition \ref{prop:equiv_survivors},
we must show that $\Upsilon^{g'_n}  \circ \Fmap(m) \in \hat \Q_{s'_n}$.  By Proposition \ref{prop:Fmap_conj},
$\Upsilon^{g'_n}  \circ \Fmap(m)=\Fmap \big(\Phi^{g_n}_\ast(m)\big)$. For $\vv \in \V$, we have
$$\Fmap \big(\Phi^{g_n}_\ast(m)\big)(\vv)=\big(\Phi^{g_n}_\ast(m)\big)(\hom{\cyl_\vv}).$$
Define $x,y \in \R$ so that $(x,y)=\bm \theta$.
We may apply Proposition \ref{prop:equiv_def} because because $m$ is a $(\bm \theta, n)$-survivor
and because $\hom{\cyl_\vv}$ can be written as a sum of homology classes of saddle connections parallel to $\hol(\hom{\cyl_\vv})$. 
Therefore, for all $\va \in \Alpha$, unless $\Fmap \big(\Phi^{g_n}_\ast(m)\big)(\va)=0$,
$$\sgn~\Fmap \big(\Phi^{g_n}_\ast(m)\big)(\va)=
\sgn~\hol(\hom{\cyl_\va})\wedge \rhoa{g_n}(\bm \theta)=
\sgn~(1,0) \wedge \rhoa{g_n}(\bm \theta)=\sgn~y.$$
Similarly, for $\vb \in \Beta$, unless $\Fmap \big(\Phi^{g_n}_\ast(m)\big)(\vb)=0$, we have
$$\sgn~\Fmap \big(\Phi^{g_n}_\ast(m)\big)(\vb)=
\sgn~\hol(\hom{\cyl_\vb})\wedge \rhoa{g_n}(\bm \theta)=
\sgn~(0,1) \wedge \rhoa{g_n}(\bm \theta)=-\sgn~x.$$
Noting that $(x,y) \in \Q_{s_n}$, we see $(y, -x) \in \Q_{r^{-1}(s_n)}$. 
Thus, $\Fmap \big(\Phi^{g_n}_\ast(m)\big)(\vv) \in \hat \Q_{s'_n}$ as desired.
\end{proof}

\begin{proof}[Proof of Theorem \ref{thm:surviving_functions}]
Suppose that $m \in \Coh$ is a $\bm \theta$-survivor, and
suppose that there is no $\f \in \R^\V$ for which $m=\Xi(\f)$. Then, 
there are horizontal or vertical saddle connections $\sigma, \sigma' \in E$ both of which cross $\cyl_\vv$ for some $\vv \in V$ and 
for which $m(\hom{\sigma}) \neq m(\hom{\sigma'})$. Note that for such a pair of saddle connections, we have
$$\Phi^{g}(\hom{\sigma}-\hom{\sigma'})=\hom{\sigma}-\hom{\sigma'}$$
for all $g \in G$, because the difference $\hom{\sigma}-\hom{\sigma'}$ has zero algebraic intersection number
with each horizontal and vertical cylinder. Consider the shrinking sequence $\langle g_n \rangle$ of $\bm \theta$. 
We have 
\begin{equation}
\label{eq:difference_constant}
\big(\Phi^{g_n}_\ast(m)\big)(\hom{\sigma}-\hom{\sigma'})=
m\big(\Phi^{g_n^{-1}}(\hom{\sigma}-\hom{\sigma'})\big)=
m(\hom{\sigma}-\hom{\sigma'}).
\end{equation}
So, this quantity remains constant.

On the other hand, let $\g=\Fmap(m)$. By 
Proposition \ref{prop:circumference_survivors}, $\g$ is a $\bm \theta'=R(\bm \theta)$-survivor. The $\lambda$-shrinking sequence of $\bm \theta'$ is $\langle g'_n=\gamma(g_n)\rangle$ by equation \ref{eq:R_commute}. Now we utilize the subsequence decay property.
By this property, there is a subsequence $\langle g_{n_i}' \rangle$ such that
for all $\vv \in \V$ we have
$$\lim_{i \to \infty} \Upsilon^{g'_{n_i}} (\g)(\vv) = 0.$$
The quantity $\Act(g'_{n_i})(\g)(\vv)$ has meaning to us because
$$\Upsilon^{g'_{n_i}}(\g)(\vv)=\big(\Upsilon^{g'_{n_i}} \circ \Fmap(m)\big)(\vv)=
\big(\Fmap \circ \Phi^{g_{n_i}}_\ast(m)\big)(\vv)=\Phi^{g_{n_i}}_\ast(m)(\hom{\cyl_\vv}).$$
Because $m$ is a $(\bm \theta, n)$-survivor for all $n$, we know that any saddle connection $\nu \in E$ satisfies either
$\Phi^{g_{n}}_\ast(m)(\hom{\nu})=0$ or
\begin{equation}
\label{eq:saddle_sign}
\sgn~\Phi^{g_{n}}_\ast(m)(\hom{\nu})=\sgn~\big(\hol(\hom{\nu}) \wedge \rhoa{g_n}(\bm \theta)\big).
\end{equation}
In particular, choose and orient a boundary component of $\cyl_\vv$ for some $\vv \in \V$. This component is made up of some number of oriented horizontal or vertical saddle connections
$\nu_1, \ldots, \nu_k$. Equation \ref{eq:saddle_sign} implies that all nonzero
$\Phi^{g_{n}}_\ast(m)(\hom{\nu_j})$ have the same sign regardless of the choice of $j=1, \ldots, k$. Since $\hom{\nu_1}+\ldots+\hom{\nu_k}=\hom{\cyl_\vv}$ in $H_1(S,V,\Z)$, it follows that 
$$\lim_{i \to \infty} \Phi^{g_{n_i}}_\ast(m)(\hom{\nu_j}) (\g)(\vv) = 0 \quad \textrm{for all $j=1, \ldots k$}.$$
Now note that every horizontal and every vertical saddle connection lies in the boundary of some cylinder $\cyl_\vv$ for $\vv \in \V$. In particular, this holds for $\sigma$ and $\sigma'$. We conclude that
$$\lim_{i \to \infty} \Phi^{g_{n_i}}_\ast(m)(\hom{\sigma}-\hom{\sigma'})=0.$$
This contradicts our assumption that $m(\hom{\sigma}-\hom{\sigma'}) \neq 0$  via equation \ref{eq:difference_constant}.
\end{proof}

\subsection{Surjectivity}
\label{sect:surjectivity}
In this section, we prove Lemma \ref{lem:surjectivity}, which states that, under our hypotheses, if $\btheta$ is a $\lambda$-renormalizable direction, then $\A$ surjectively sends $\Surv_\btheta$ to $\Surv_{\bar \btheta}$. 
The proof follows without much difficulty from Proposition \ref{prop:weak_surj} of the outline. However, the ideas
used in the proof are most similar to the ideas appearing in Appendix \ref{sect:farkas}.

\begin{proof}[Proof of Lemma \ref{lem:surjectivity}]
We assume $S(\G,\w)$ has the critical decay property and the $\G$ has the adjacency sign property.
We let $\btheta \in \R_\lambda$ and $\f \in \Surv_{\bar \btheta}$. We will show that there is a $\g \in \Surv_{\bm \theta}$ with $\A(\g)=\f$.

Let $\langle g_n \rangle$ and $\langle s_n \rangle$ denote the shrinking and sign sequences of $\bm \theta$. Then $\langle \bar g_n \rangle$ and $\langle \bar s_n \rangle$ are the corresponding sequences of $\bar {\bm \theta}$. See the beginning of section \ref{sect:outline_adjacency}. To simplify notation define $\f_n=\Upsilon^{\bar g_n}(\f)$. 

For each critical time $n_i$, define the set
${\mathbf H}_{n_i}=\{\h \in \hat \Q_{s_{n_i}}~:~\A(\h)=\f_{n_i}\}$.  Applying Proposition \ref{prop:weak_surj} with $\f$ replaced by $\f_{n_i}$ and $\bm \theta$ replaced by $\pi_\Circ \circ \rhoa{g_{n_i}}(\bm \theta)$ implies that ${\mathbf H}_{n_i}$ is non-empty. For $\h \in {\mathbf H}_{n_i}$, we have
$$\A \circ \Upsilon^{g_{n_i}^{-1}}(\h)=\Upsilon^{\bar g_{n_i}^{-1}} \circ \A (\h)= \Upsilon^{\bar g_{n_i}^{-1}}(\f_{n_i})=\f.$$
Therefore any $\g_0 \in \Upsilon^{g_{n_i}^{-1}}({\mathbf H}_{n_i})$ represents a candidate $\g$. Set 
${\mathbf G}_{n_i}=\Upsilon^{g_t^{-1}}({\mathbf H}_{n_i})$. By definition, we have
\begin{equation}
\label{eq:closed}
{\mathbf G}_{n_i}=\{\g_0 \in \R^\V ~:~ \textrm{$\A(\g_0)=\f$ and $\g_0 \in \Upsilon^{g_{n_i}}(\hat \Q_{s_{n_i}})$} \}.
\end{equation}
Note that Lemma \ref{lem:critical_times} implies that
\begin{equation}
\label{eq:Gni}
{\mathbf G}_{n_i}=\{\g_0 ~:~ \textrm{$\A(\g_0)=\f$ and $\g_0$ is a $(\bm \theta, n)$-survivor for $0 \leq n \leq n_i$}\}.
\end{equation}
Thus, we have ${\mathbf G}_{n_{i+1}} \subset {\mathbf G}_{n_i}$ for all $i$. We conclude that
$$\{\g \in \Surv_{\bm \theta}~:~ \A(\g)=\f\}=\bigcap_i {\mathbf G}_{n_i},$$
where the intersection is taken over the critical times. We will use the notation ${\mathbf G}_{\infty}=\bigcap_i {\mathbf G}_{n_i}$.
We must show that ${\mathbf G}_{\infty}$ is non-empty. 

Note that each ${\mathbf G}_{n_i}$ is non-empty, because each ${\mathbf H}_{n_i}$ is. Furthermore, we note that each ${\mathbf G}_{n_i}$ is weakly-closed. 
This can be most easily seen by looking at equation \ref{eq:closed}. It follows because both $\A$ and $\Upsilon^{g_{n_i}}$ are weakly-continuous and $\hat \Q_{s_{n_i}}$ is weakly-closed.

For each critical time $n_i$ choose a $\g_{n_i} \in {\mathbf G}_{n_i}$. We will find a weak limit point, $\g$, of the a sequence of $\langle \g_{n_i}~|~\textrm{$n_i$ a critical time} \rangle$. As each $\g_{n_i}$ is a $\bm (\theta,0)$-survivor, we know that $\g_{n_i} \in \hat \Q_{s_0}$. Without loss of generality, we may assume that $s_0=++$. Therefore, $\g_{n_i},\f \in \hat \Q_{++}$. For each $\vv \in \V$, there is a $\vw \sim \vv$ and we have
$$0 \leq \g_{n_i}(\vv) \leq \A(\g_{n_i})(\vw)=\f(\vw).$$
In particular the sequence of $\g_{n_i}(\vv)$ is bounded for each $\vv \in \V$. Therefore, we can apply a Cantor diagonal argument as in the proof of Proposition \ref{prop:weakly-closed}. This produces our desired limit $\g$. 
Since each ${\mathbf G}_{n_i}$ is weakly-closed, and this sequence of sets is nested, we see that $\g \in {\mathbf G}_{\infty}$. Such a $\g$ lies in ${\mathbf G}_{n_i}$ for every $i$, and in view of equation \ref{eq:Gni},
we see that $A(\g)=\f$ and $\g$ is a $(\bm \theta, n)$-survivor for every $n \geq 0$. That is, $\g \in \Surv_{\bm \theta}$.
\end{proof}

\subsection{Survivors and the pairing of \texorpdfstring{$\R^\V$}{R{\textasciicircum}V} with \texorpdfstring{$\R^\V_c$}{R{\textasciicircum}V{\textunderscore}c}}
Ideas developed in this section will be useful in sections \ref{sect:subgraphs_and_covers} and \ref{sect:no valance one}, which address the critical decay property and the adjacency sign condition.
We begin this section by summarizing results from sections \ref{sect:quadrant tracking} and \ref{sect:survivors}.
Then we will state some consequences of our previous work.

Let $\bm \theta$ be a $\lambda$-renormalizable direction. Let $\langle g_i \rangle$ and $\langle s_i \rangle$ be the shrinking and sign sequences
of $\bm \theta$. Recall $\Surv_{\bm \theta}$ denotes the set of $\bm \theta$-survivors in $\R^\V$. By definition \ref{def:function_survivors} this is 
$$\Surv_{\bm \theta}=\{\f \in \R^\V~:~\textrm{$\Upsilon^{g_n}(\f) \in \hat \Q_{s_n}$ for all $n \geq 0$}\}.$$
Alternately, by Theorems \ref{thm:operator_theorem} and  \ref{thm:surviving_functions}, $\Xi(\Surv_{\bm \theta})$ is the set of all cohomology classes arising from transverse invariant measures to the foliation in the direction $\bm \theta$. Then Lemma \ref{lem:sign} implies that 
\begin{equation}
\label{eq:surv_equivalent}
\Surv_{\bm \theta}=\Big\{\f \in \R^\V~:~
\begin{array}{l}
\textrm{$\sgn \langle \f, \Zem(\hom \sigma)\rangle=\sgn (\hol~\sigma \wedge \bm \theta)$ for all saddle} \\
\textrm{connections $\sigma$ with $\langle \f, \Zem(\hom \sigma)\rangle \neq 0$}\end{array}\Big\}.
\end{equation}
Here, the quantity $\langle \f, \Zem(\hom \sigma)\rangle$ represents the value assigned to $\hom{\sigma}$ by the associated cohomology class
(which is potentially induced by an invariant measure). See equation \ref{eq:two_embeddings}.

The later sections of this paper are primarily interested in questions of the following sort. Given $\x \in \R^\V_c$ with some properties
and any $\f \in \Surv_{\bm \theta}$, is it true that $\langle \f, \x \rangle \geq 0$? We pursue this sort of question from the following point of view.
One way to show that $\langle \f, \x\rangle \geq 0$ is to find saddle connections $\sigma_j$ so that 
$\x=\sum_{j} \Zem(\hom{\sigma_j})$ and $\hol~\sigma_j \wedge \bm \theta > 0$. If we can find this, then $\langle \f, \x\rangle \geq 0$
follows from equation \ref{eq:surv_equivalent}. 

There is a second, more operator theoretic point of view as well.
\begin{proposition}[Positivity checks]
\label{prop:positivity_checks}
Let $\bm \theta$ be a $\lambda$-renormalizable direction. Let $\langle g_i \rangle$ and $\langle s_i \rangle$ be the shrinking and sign sequences
of $\bm \theta$. Let $\x \in \R^\V_c$. Either of the following two statements imply that for all $\f \in \Surv_{\bm \theta}$, we have $\langle \f, \x \rangle \geq 0$. 
\begin{enumerate}
\item There exist a finite collection oriented saddle connections $\{\sigma_j\}$ so that $\hol~\sigma_j \wedge \bm \theta > 0$ for each $j$
and $\x=\sum_{j} \Zem(\hom{\sigma_j})$.
\item There is a critical time $t$ so that $\Upsilon^{\gamma(g_t)}(\x) \in \hat \Q_{s_t}$. 
\end{enumerate}
\end{proposition}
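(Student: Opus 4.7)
The plan is to dispatch each of the two sufficient conditions separately, using machinery already developed in this section; both arguments are short and essentially immediate.

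For condition (1), I will combine the identity $\langle \f, \Zem(\hom \sigma)\rangle = \Xi(\f)(\hom \sigma)$ from equation \ref{eq:two_embeddings} with the survivor characterization recorded in equation \ref{eq:surv_equivalent}. The only direction of that characterization I need is the forward one: if $\f \in \Surv_{\bm \theta}$, then for every saddle connection $\sigma$ with $\langle \f, \Zem(\hom \sigma)\rangle \neq 0$ we have $\sgn \langle \f, \Zem(\hom \sigma)\rangle = \sgn(\hol~\sigma \wedge \bm \theta)$. This direction rests only on Theorem \ref{thm:operator_theorem} and Lemma \ref{lem:sign}, so it is available without invoking the subsequence decay property. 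Applying this to each $\sigma_j$ in the decomposition $\x = \sum_j \Zem(\hom{\sigma_j})$ together with the hypothesis $\hol~\sigma_j \wedge \bm \theta > 0$ forces $\langle \f, \Zem(\hom{\sigma_j})\rangle \geq 0$ for every $j$, and then bilinearity of $\langle\cdot,\cdot\rangle$ yields $\langle \f, \x\rangle = \sum_j \langle \f, \Zem(\hom{\sigma_j})\rangle \geq 0$.

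For condition (2), I will use the $G$-equivariance of the bilinear pairing recorded in equation \ref{eq:operator_relations2} to rewrite
$$\langle \f, \x \rangle = \langle \Upsilon^{g_t}(\f),\, \Upsilon^{\gamma(g_t)}(\x)\rangle.$$
Since $\f \in \Surv_{\bm \theta}$, Proposition \ref{prop:equiv_survivors} gives $\Upsilon^{g_t}(\f) \in \hat \Q_{s_t}$, and by hypothesis $\Upsilon^{\gamma(g_t)}(\x) \in \hat \Q_{s_t}$ as well. It then suffices to observe that the pairing of any two elements of a common quadrant is non-negative: by Definition \ref{def:quadrants2}, at every vertex $\vv \in \V$ the values $\Upsilon^{g_t}(\f)(\vv)$ and $\Upsilon^{\gamma(g_t)}(\x)(\vv)$ are each either zero or of the sign prescribed by the appropriate coordinate of $s_t$, so each summand in $\sum_{\vv \in \V} \Upsilon^{\gamma(g_t)}(\x)(\vv)\, \Upsilon^{g_t}(\f)(\vv)$ is $\geq 0$.

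There is no real obstacle in either argument; the only thing worth double-checking is that Lemma \ref{lem:sign} together with Theorem \ref{thm:operator_theorem} genuinely gives the forward direction of equation \ref{eq:surv_equivalent} unconditionally. I note in passing that the critical-time hypothesis in (2) does not enter the pairing estimate itself, since $\Upsilon^{g_t}(\f) \in \hat \Q_{s_t}$ holds at every time $t \geq 0$ for a $\bm \theta$-survivor; its role is rather to make the matching condition $\Upsilon^{\gamma(g_t)}(\x) \in \hat \Q_{s_t}$ a practical hypothesis, since at critical times the expanding sign action of $\gamma(g_t)$ can be made to land in the same $\hat\Q_{s_t}$ that contains $\Upsilon^{g_t}(\f)$.
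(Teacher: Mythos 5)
Your proof is correct and follows the same route as the paper: for (1), decompose $\x$ into saddle-connection classes and invoke the sign characterization of survivors applied to each piece; for (2), transport the pairing via the $G$-equivariance relation and note that both factors land in the same quadrant $\hat\Q_{s_t}$, making every coordinate-wise summand non-negative. Your aside about the scope of (2) — that the pairing estimate itself only needs $\Upsilon^{g_t}(\f)\in\hat\Q_{s_t}$, which holds at all times $t$, while criticality is what makes the matching hypothesis on $\x$ realistic — is an accurate observation not spelled out in the paper's one-line proof.
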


\begin{remark}
In fact, it can be observed that these two statements are equivalent. That (1) implies (2) follows essentially from Theorem \ref{thm:dot_product}. Conversely, if we have (2), then $\Upsilon^{\gamma(g_t)}(\x)$ can be expressed as $\Zem(\hom{\tau_1}+\ldots+\hom{\tau_N})$ where each $\tau_j$ is an appropriately oriented horizontal or vertical saddle connection, so that $\hol~\tau_j \in \cl(\Q_{r^{-1}(s_t)})$.
Then 
$$\x=\Upsilon^{\gamma(g_t)^{-1}} \circ \Zem(\hom{\tau_1}+\ldots+\hom{\tau_N})=
\Zem \circ \Phi^{g_t^{-1}}(\hom{\tau_1}+\ldots+\hom{\tau_N})$$
and we may take $\sigma_j=\Phi^{g_t^{-1}}(\tau_j)$.
\end{remark}

\begin{proof}[Proof of Proposition \ref{prop:positivity_checks}]
For statement (1), this follows from the paragraph preceding the proposition. In the case of statement (2), we have $\langle \f, \x \rangle=\langle \Upsilon^{g_t}(\f), \Upsilon^{\gamma(g_t)}(\x)\rangle$. Since
both $\Upsilon^{g_t}(\f)$ and $\Upsilon^{\gamma(g_t)}(\x)$ lie in $\hat \Q_{s_t}$, their product is non-negative.
\end{proof}

The following strengthens the utility of statement (2) of Proposition \ref{prop:positivity_checks}.

\begin{proposition}
\label{prop:gamma_critical}
Let $\bm \theta \in \Rn_\lambda$ and let $\f$ be a $\bm \theta$-survivor. Let $\langle g_i \rangle$ and $\langle s_i \rangle$ be the shrinking and sign sequences
of $\bm \theta$. If $\x \in \R^\V_c \cap \hat \Q_{s_0}$, then for any critical time $t$,
$\Upsilon^{\gamma(g_t)}(\x) \in \hat \Q_{s_t}$.
\end{proposition}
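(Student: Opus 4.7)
The plan is to reduce to the case $\x = \e_\vv$ (the indicator of a single vertex) by linearity, then realize $\e_\vv$ up to sign as $\Zem(\hom{\sigma})$ for a carefully chosen saddle connection $\sigma$, and finally exploit the fact that at critical times $t$, the formula $s_t = \Sigma^{\gamma(g_t)}(s_0)$ of Corollary \ref{cor:critical_time_formula} matches the expanding sign action applied to the geodesic ray $\langle \gamma(g_i) \rangle$. Specifically, for $\vv = \va \in \Alpha$, take $\sigma$ to be a vertical saddle connection contained in $\cyl_\va$ oriented so that $\Zem(\hom{\sigma}) = \pi_1(s_0) \e_\va \in \hat \Q_{s_0}$; for $\vv = \vb \in \Beta$, take $\sigma$ horizontal in $\cyl_\vb$ oriented so that $\Zem(\hom{\sigma}) = \pi_2(s_0) \e_\vb \in \hat \Q_{s_0}$. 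Computing the wedge products as in the proof of Proposition \ref{prop:r}, these sign conventions pin down the orientation of $\hol~\sigma$ uniquely.

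Next, I would rewrite $\Upsilon^{\gamma(g_t)} \circ \Zem(\hom{\sigma})$ using Corollary \ref{cor:pullback_action} (and the fact that $\gamma$ is involutive) as $\Zem \circ \Phi^{g_t}(\hom{\sigma})$. By Proposition \ref{prop:r}, showing that this image lies in $\hat \Q_{s_t}$ is equivalent to showing $\rho_\lambda^{g_t}(\hol~\sigma) \in \cl(\Q_{r^{-1}(s_t)})$. Applying $R$ and using equation \ref{eq:R_commute} converts this to the assertion
\[
\rho_\lambda^{\gamma(g_t)}\bigl(R(\hol~\sigma)\bigr) \in \cl(\Q_{s_t}).
\]

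Now set $\v = R(\hol~\sigma)$. A direct check of the sign conventions shows that $\v \in \cl(\Q_{s_0})$. Since $\v$ lies on a coordinate axis, Proposition \ref{prop:shrink_exp} implies $\v \in \cl(\Exp_\lambda(g))$ for every generator $g$, so in particular $\v \in \cl\bigl(\Exp_\lambda(\gamma(g_1)) \cap \Q_{s_0}\bigr)$. Noting that $\gamma$ permutes $\{h,v,h^{-1},v^{-1}\}$, the sequence $\langle \gamma(g_i) \rangle$ is again a geodesic ray starting at $e$, and Proposition \ref{prop:quadrants_and_expansion} applied to $\v$ and this ray yields
\[
\rho_\lambda^{\gamma(g_i)}(\v) \in \cl(\Q_{\Sigma^{\gamma(g_i)}(s_0)}) \quad \text{for all } i.
\]
At a critical time $t$, Corollary \ref{cor:critical_time_formula} gives $\Sigma^{\gamma(g_t)}(s_0) = s_t$, which is exactly the quadrant we need.

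Assembling the pieces proves $\Upsilon^{\gamma(g_t)}(\pi_j(s_0) \e_\vv) \in \hat \Q_{s_t}$ (with $j=1$ or $2$ depending on whether $\vv \in \Alpha$ or $\Beta$); since every value $\x(\vv)$ with $\x \in \hat \Q_{s_0}$ is a non-negative multiple of $\pi_j(s_0)$, summing over the finite support of $\x$ and using that $\hat \Q_{s_t}$ is a convex cone yields $\Upsilon^{\gamma(g_t)}(\x) \in \hat \Q_{s_t}$. The main subtlety will be the sign bookkeeping: matching the orientation of $\sigma$ against $\pi_1(s_0)$ or $\pi_2(s_0)$ so that $\v = R(\hol~\sigma)$ actually lands in $\cl(\Q_{s_0})$ rather than in the opposite closed quadrant. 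Once this alignment is verified once, everything else is a bookkeeping exercise, and crucially no dot-product positivity argument or appeal to Theorem \ref{thm:dot_product} is needed, so the statement genuinely holds at every critical time, not just sufficiently large ones.
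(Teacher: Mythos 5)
Your proposal follows essentially the same route as the paper's own proof: write $\x$ (here reduced to $\e_\vv$) as $\pm\Zem(\hom{\sigma})$, transfer the action of $\Upsilon^{\gamma(g_t)}$ to $\Phi^{g_t}$ via Corollary~\ref{cor:pullback_action}, track quadrants under the expanding sign action via Proposition~\ref{prop:quadrants_and_expansion}, and close with Corollary~\ref{cor:critical_time_formula} at the critical time. The only real difference is cosmetic: you apply $R$ at the outset and track $\langle \gamma(g_i)\rangle$, while the paper works with $\hol~\sigma$ and $\langle g_i\rangle$ and conjugates by $r$ at the end via equation~\ref{eq:r_commute}.

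There is, however, one step that as written does not follow. You derive $\v\in\cl(\Exp_\lambda(\gamma(g_1)))$ from the coordinate-axis observation and $\v\in\cl(\Q_{s_0})$ from the sign check, and then conclude ``so in particular $\v\in\cl(\Exp_\lambda(\gamma(g_1))\cap\Q_{s_0})$.'' This inference is not valid in general: $\cl(A)\cap\cl(B)$ can strictly contain $\cl(A\cap B)$, for instance when $A$ and $B$ are disjoint open arcs sharing a boundary point. What actually saves you (and is the observation the paper makes directly) is the inclusion $\Q_{s_0}\subset\Exp_\lambda(\gamma(g_1))$, which follows from Proposition~\ref{prop:shrink_exp} together with the fact that $g_1$ shrinks $\bm\theta\in\Q_{s_0}$ and hence $\Shrink_\lambda(g_1)$ is confined to $\Q_{s_0}\cup\Q_{-s_0}$. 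Once you know $\Exp_\lambda(\gamma(g_1))\cap\Q_{s_0}=\Q_{s_0}$, the conclusion is immediate from $\v\in\cl(\Q_{s_0})$, and the coordinate-axis observation becomes unnecessary. With that replacement your argument is complete and matches the paper's.
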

\begin{proof}
We can write $\x=\sum_{j=1}^{N} \Zem(\hom{\sigma_j})$ where each $\sigma_j$ is a horizontal or vertical saddle connection 
oriented so that $\hol~\sigma_j \in \cl(\Q_{r^{-1}(s_0)})$. See Proposition \ref{prop:r}. Then,
\begin{equation}
\label{eq:sum9}
\Upsilon^{\gamma(g_t)}(\x)=\sum_{j=1}^{N} \Zem \circ \Phi^{g_t} (\hom{\sigma_j}),
\end{equation}
by Corollary \ref{cor:pullback_action}. Looking at the action on holonomy, we have
$$\hol \circ \Phi^{g_t} (\hom{\sigma_j})=\rhoa{g_t} (\hol~\sigma_j).$$
By Proposition \ref{prop:shrink_exp}, since $\rhoa{g_1}$ shrinks the vector $\bm \theta \in \Q_{s_0}$, we know $\rhoa{g_1}$ expands every vector in $\Q_{r^{-1}(s_0)}$. Therefore,
the quadrant containing $\rhoa{g_t} (\hol~\sigma_j)$ is governed by the expanding sign action. Namely by Proposition \ref{prop:quadrants_and_expansion}, 
we must have
$\rhoa{g_i} (\hol~\sigma_j)\in \cl(\Q_{s'_i})$ where $s'_i=\Sigma^{g_i}\circ r^{-1}(s_0)$ for all $i$.
Therefore, by Proposition \ref{prop:r} we know each term from equation \ref{eq:sum9}
satisfies 
$$\Zem \circ \Phi^{g_t} (\hom{\sigma_j}) \in \hat \Q_{r(s'_t)}.$$
By equation \ref{eq:r_commute}, we know $s'_i=r^{-1} \circ \Sigma^{\gamma(g_i)}(s_0)$,
and so $r(s'_t)=\Sigma^{\gamma(g_t)}(s_0)$. Therefore $r(s'_t)=s_t$ by Corollary \ref{cor:critical_time_formula}. It follows from equation \ref{eq:sum9} that $\Upsilon^{\gamma(g_t)}(\x) \in \hat \Q_{s_t}$ as desired.
\end{proof}


\section{Subgraphs and covers}
\label{sect:subgraphs_and_covers}
In this section, we will consider how the group of operators $\Upsilon^G$ is affected by restricting to subgraphs and lifting to finite covers. This will be relevant to proofs 
in section \ref{sect:no valance one}, where we prove the adjacency sign property and the decay properties hold when $\G$ has no vertices of valance one. 

\subsection{Subgraphs}
Throughout this subsection, $\GH \subset \G$ will be a connected infinite subgraph. 
We will use subscripts to distinguish quantities related to the two graphs. For instance, we write $\V_\GH \subset \V_\G$ 
to indicate that the vertex set of $\GH$ is a subset of the vertex set of $\G$. The subgraph $\GH$ inherits a bipartite structure from $\G$. Namely, $\Alpha_\GH=\Alpha_\G \cap \V_\GH$ and  $\Beta_\GH=\Beta_\G \cap \V_\GH$.
We use $\Upsilon^G_\G:\R^{\V_\G} \to \R^{\V_\G}$
and $\Upsilon^G_\GH:\R^{\V_\GH} \to \R^{\V_\GH}$ to denote the appropriate groups of operators on the two spaces. We identify $\R^{\V_\GH}$ with 
the subset of $\R^{\V_\G}$ whose support is contained in $\V_\GH$. 

\begin{proposition}
\label{prop:difference_support}
Let $\f \in \R^{\V_\GH}$. Suppose that $\f \in \hat \Q_s$ for some $s \in \SP$. Write $s=(a, b)$ with $a, b \in \{\pm 1\}$. 
Then for all integers $k \neq 0$, the following statements hold.
\begin{enumerate}
\item $\Ho^k_\G(\f)-\Ho^k_\GH(\f)$ is supported on a subset of $\Alpha_\G$ and the sign of all non-zero values of this function is $b \cdot \sgn(k)$. 
\item $\Vo^k_\G(\f)-\Vo^k_\GH(\f)$ is supported on a subset of $\Beta_\G$ and the sign of all non-zero values of this function is $a \cdot \sgn(k)$. 
\end{enumerate}
\end{proposition}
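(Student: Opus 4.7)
The plan is to verify both statements by a direct computation of $\Ho^k_\G(\f) - \Ho^k_\GH(\f)$ (respectively $\Vo^k_\G(\f) - \Vo^k_\GH(\f)$) at each vertex $\vx \in \V_\G$. The argument for (2) will be entirely symmetric to that for (1)---swapping the roles of $\Alpha$ and $\Beta$, $h$ and $v$, and the sign coordinates $a$ and $b$, which corresponds to the dihedral involution of Remark~\ref{rem:dihedral}. So I will concentrate on statement (1).

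For the support claim, I would first observe that the definition in equation~\ref{eq:Ho} shows $\Ho^k$ leaves $\Beta$-values unchanged: for $\vx \in \Beta_\G$, both $\Ho^k_\G(\f)(\vx)$ and $\Ho^k_\GH(\f)(\vx)$ equal $\f(\vx)$ when $\vx \in \V_\GH$, and both vanish when $\vx \in \Beta_\G \setminus \V_\GH$ (since $\f$ is supported on $\V_\GH$). Hence the difference is already supported on $\Alpha_\G$.

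For the sign claim, I would split the case $\vx \in \Alpha_\G$ into $\vx \in \Alpha_\GH$ and $\vx \in \Alpha_\G \setminus \V_\GH$. In the first subcase, the $\f(\vx)$ terms cancel and unwinding the definitions yields
\[
\Ho^k_\G(\f)(\vx) - \Ho^k_\GH(\f)(\vx) = k \sum_{\substack{e \in \E_\G \setminus \E_\GH \\ \alpha(e) = \vx}} \f\big(\beta(e)\big).
\]
In the second subcase, $\f(\vx) = 0$ and $\Ho^k_\GH(\f)(\vx) = 0$, and the same formula holds with the sum ranging over all edges of $\G$ incident to $\vx$. In either subcase, every edge in the sum has its other endpoint in $\Beta_\G$, and only those edges whose other endpoint lies in $\Beta_\GH$ contribute, because $\f$ vanishes off $\V_\GH$. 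Each remaining term is of the form $k \f(\vy)$ for some $\vy \in \Beta_\GH$; since $\f \in \hat \Q_s$ and $s$ has second coordinate $b$, every nonzero such term has sign $b \cdot \sgn(k)$. All nonzero contributions therefore share a common sign, so the entire sum is either zero or has sign $b \cdot \sgn(k)$, as claimed.

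The main (quite mild) obstacle is simply the bookkeeping: keeping track of edges counted with their correct multiplicities in $\E_\G$ versus $\E_\GH$, and remembering that a vertex of $\GH$ may have additional neighbors in $\G$ lying either in $\V_\GH$ or outside it. Once the contributions are separated as above, monotonicity of signs in the sum does the rest, and statement (2) then follows by applying the same argument after the dihedral swap $\Alpha \leftrightarrow \Beta$.
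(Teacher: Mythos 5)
Your proposal is correct and follows essentially the same route as the paper: unwind the definition of $\Ho^k$ (equation~\ref{eq:Ho}), observe that the difference vanishes on $\Beta_\G$ and at $\vx \in \Alpha_\G$ equals $k$ times the sum of $\f(\vb)$ over edges of $\G$ incident to $\vx$ not present in $\GH$, and then read off the common sign from $\f \in \hat\Q_s$. Your extra subcase bookkeeping for $\vx \in \Alpha_\G \setminus \V_\GH$ is a small refinement the paper elides but doesn't change the argument.
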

\begin{proof}
We will prove statement (1). Statement (2) has a similar proof. By considering the definition of $\Ho$ given in equation \ref{eq:Ho}, for all $\vv \in \V_\G$ we have
$$\big(\Ho^k_\G(\f)-\Ho^k_\GH(\f)\big)(\vv)=\begin{cases}
\displaystyle \sum_{\vb \sim_\G \vv, \vb {\not \sim}_\GH \vv} k \f(\vb) & \textrm{if $\vv \in \Alpha_\G$} \\
0 & \textrm{if $\vv \in \Beta_\G$,}
\end{cases}$$
where the sum should be interpreted as over all edges leaving $\vv$ that appear in $\G$ but not in $\GH$. The conclusion follows.
\end{proof}

\begin{corollary}
\label{cor:diff_tracking}
Let $\langle g_i\rangle$ be a geodesic ray in $G$. Suppose $\x \in \R^{\V_\GH} \cap \hat \Q_{s_0}$ for some $s_0 \in \SP$, and define $s_i=\Sigma^{g_i}(s_0)$. Then for all $i \geq 1$,
$$\Upsilon_\G^{g_i g_1^{-1}}\big(\Upsilon_\G^{g_1}(\x)-\Upsilon_\GH^{g_1}(\x)\big) \in \hat \Q_{s_i}.$$
\end{corollary}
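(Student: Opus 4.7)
The plan is to exploit the one-sided support structure of $\d$ to reduce to an application of Proposition \ref{prop:quadrant_tracking} along a shifted geodesic ray. First, I would invoke Proposition \ref{prop:difference_support}: since $\x \in \R^{\V_\GH} \cap \hat \Q_{s_0}$ and $g_1$ is a single generator (so $g_1 = h^k$ or $g_1 = v^k$ with $k = \pm 1$), the proposition says $\d$ is supported entirely on $\Alpha_\G$ (in the $h$-case) or on $\Beta_\G$ (in the $v$-case), with all nonzero values sharing a common sign. Writing $s_0 = (a, b)$, in the $\Alpha$-supported case the common sign is $c := b \cdot \mathrm{sgn}(k)$, and symmetrically in the other case. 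In particular $\d$ lies simultaneously in two adjacent quadrants of $\R^{\V_\G}$ differing only in the coordinate corresponding to the empty side of the bipartition.

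Next, I would verify that $\d \in \hat\Q_{s_1} \cap \hat\Q_{s_2}$ by a direct case check. Two ``coordinate preservation'' observations make this transparent: $\Sigma^{h^{\pm 1}}$ preserves the second coordinate of any sign pair, and $\Sigma^{v^{\pm 1}}$ preserves the first coordinate, both readable from the shape of $\rho_\lambda^{h^{\pm 1}}$ and $\rho_\lambda^{v^{\pm 1}}$ via Definition \ref{def:sign_action}. Combined with the explicit values of $\Sigma$ on generators, this forces the coordinate of $s_1 = \Sigma^{g_1}(s_0)$ on the supported side of $\d$ to equal $c$, so $\d \in \hat\Q_{s_1}$. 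Since the transition $g_2 g_1^{-1}$ is either $g_1$ itself or one of the two generators ``of the opposite type'' from $g_1$, the same supported-side coordinate of $s_2 = \Sigma^{g_2 g_1^{-1}}(s_1)$ is unchanged, giving $\d \in \hat\Q_{s_2}$ as well. This already settles the $i = 1$ case of the corollary.

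Finally, I would apply Proposition \ref{prop:quadrant_tracking} to the shifted geodesic ray $\langle h_j := g_{j+1} g_1^{-1} \rangle_{j \geq 0}$ (which is a geodesic ray starting at $h_0 = e$), with starting quadrant $s^* := s_2$ and $\d \in \hat\Q_{s_2}$. The hypothesis $\rho_\lambda^{h_1}(\Q_{s_2}) \subset \Q_{s_2}$ holds because, by Corollary \ref{cor:quadrants}, the image of $\Sigma^\sigma$ on any generator $\sigma$ coincides with its set of fixed points, and $s_2 = \Sigma^{h_1}(s_1)$ lies in that image. The proposition then yields $\Upsilon_\G^{h_j}(\d) \in \hat\Q_{\Sigma^{h_j}(s_2)}$ for all $j \geq 0$, and a short manipulation using the action property of $\Sigma$ together with the fixed-point relation $\Sigma^{h_1}(s_2) = s_2$ gives
\[
\Sigma^{h_j}(s_2) = \Sigma^{h_j h_1^{-1}}(s_2) = \Sigma^{h_j h_1^{-1}}\bigl(\Sigma^{h_1}(s_1)\bigr) = \Sigma^{h_j}(s_1) = \Sigma^{g_{j+1}}(s_0) = s_{j+1}
\]
for every $j \geq 1$, which is exactly the corollary with $i = j+1 \geq 2$. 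The main obstacle is the coordinate-preservation check in the second paragraph: without the fact that $\d \in \hat\Q_{s_2}$, the hypothesis of Proposition \ref{prop:quadrant_tracking} cannot be arranged to hold starting from $s_1$, and the tracking argument would break down at the very first step.
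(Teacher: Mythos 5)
Your proof is correct, but it takes a different and somewhat more circuitous route than the paper's. The paper also begins by establishing $\mathbf{d} := \Upsilon_\G^{g_1}(\x)-\Upsilon_\GH^{g_1}(\x) \in \hat\Q_{s_1}$, but then makes the additional observation that $\Upsilon_\G^{g_1}(\mathbf{d}) = \mathbf{d}$: since $\mathbf{d}$ is supported on only one side of the bipartition (say $\Alpha$), the relevant generator of $\Upsilon$ only alters $\Alpha$-coordinates by summing values over $\Beta$-neighbors, all of which vanish. This invariance turns $\Upsilon_\G^{g_i g_1^{-1}}(\mathbf{d})$ into $\Upsilon_\G^{g_i}(\mathbf{d})$, so the paper can invoke Proposition \ref{prop:quadrant_tracking} directly on the \emph{original} ray $\langle g_i \rangle$ with starting quadrant $s_1$ (a fixed point of $\Sigma^{g_1}$ because $s_1 = \Sigma^{g_1}(s_0)$ and generator sign-actions are idempotent). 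You instead apply the tracking proposition to the shifted ray $\langle g_{j+1}g_1^{-1}\rangle$ with starting quadrant $s_2$, which forces you to additionally establish $\mathbf{d} \in \hat\Q_{s_2}$. That step does hold, but your coordinate-preservation argument only covers the transitions $g_2g_1^{-1}$ of the opposite type from $g_1$; when $g_2 g_1^{-1} = g_1$, you need the fixed-point relation $\Sigma^{g_1}(s_1) = s_1$ (giving $s_2 = s_1$), which is not implied by coordinate preservation and should be stated explicitly. The paper's invariance observation is exactly what makes this extra case analysis, and the shift of the ray, unnecessary.
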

\begin{proof}
By Proposition \ref{prop:difference_support} and comparison to Definition \ref{def:sign_action} of the expanding sign action, 
we see that $\Upsilon_\G^{g_1}(\x)-\Upsilon_\GH^{g_1}(\x) \in \hat \Q_{s_1}$. Moreover, because of the description of the support as a subset of either $\Alpha$ or $\Beta$,
we have 
$$\Upsilon_\G^{g_1}\big(\Upsilon_\G^{g_1}(\x)-\Upsilon_\GH^{g_1}(\x)\big)=\Upsilon_\G^{g_1}(\x)-\Upsilon_\GH^{g_1}(\x).$$ 
Since $s_1=\Sigma^{g_1}(s_0)$, we know $s_1=\Sigma^{g_1}(s_1)$. Therefore we have $\rho_\lambda^{g_1}(\Q_{s_1}) \subset \Q_{s_1}$ and thus
$$\Upsilon_\G^{g_i g_1^{-1}}\big(\Upsilon_\G^{g_1}(\x)-\Upsilon_\GH^{g_1}(\x)\big)=
\Upsilon_\G^{g_i}\big(\Upsilon_\G^{g_1}(\x)-\Upsilon_\GH^{g_1}(\x)\big)\in \hat \Q_{s_i'}$$
where $s_i'=\Sigma^{g_i}(s_1)$ by Proposition \ref{prop:quadrant_tracking}. Finally, note that 
$$s_i=\Sigma^{g_i}(s_0)=\Sigma^{g_i g_1^{-1}}(s_1)=\Sigma^{g_i}(s_1)=s'_i$$
since $\Sigma^{g_1}(s_1)=s_1$. Thus, $\Upsilon_\G^{g_i g_1^{-1}}\big(\Upsilon_\G^{g_1}(\x)-\Upsilon_\GH^{g_1}(\x)\big) \in \hat \Q_{s_i}$ as desired.
\end{proof}

For $\vv \in \V_\GH$, we use $\e_\vv$ to denote the element of either $\R^{\V_\G}_c$
or $\R^{\V_\GH}_c$ (depending on context) which assigns one to $\vv$ and zero to everything else. 

\begin{theorem}
\label{thm:subgraph}
Suppose $\GH \subset \G$ is an infinite connected subgraph.  Let $\langle g_i \rangle$ be any geodesic ray in $G$. Then, for any vertex $\vv \in \V_\GH$,
$$|\Upsilon^{g_i}_{\GH}(\e_\vv)(\vw)| \leq |\Upsilon^{g_i}_{\G}(\e_\vv)(\vw)|$$
for all $\vw \in \V_\GH$ and all $i \geq 0$. 
\end{theorem}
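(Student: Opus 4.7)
The plan is to prove the stronger assertion that both $\f_i^\G := \Upsilon_\G^{g_i}(\e_\vv)$ and $\f_i^\GH := \Upsilon_\GH^{g_i}(\e_\vv)$ (the latter extended by zero on $\V_\G \smallsetminus \V_\GH$), as well as their difference $\f_i^\G - \f_i^\GH$, all lie in a common quadrant $\hat \Q_{s_i} \subset \R^{\V_\G}$, where $s_i := \Sigma^{g_i}(s_0)$ for a suitable initial sign pair $s_0$ (and where, without loss of generality, $g_0 = e$). This is enough: at each $\vw \in \V_\GH$ the values $\f_i^\GH(\vw)$ and $\f_i^\G(\vw) - \f_i^\GH(\vw)$ share the sign prescribed by $s_i$ (or vanish), so that $|\f_i^\GH(\vw)| \leq |\f_i^\G(\vw)|$.

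First I would pick $s_0 \in \SP$ with $\e_\vv \in \hat \Q_{s_0}$ and $\rhoa{g_1}(\Q_{s_0}) \subset \Q_{s_0}$. Such an $s_0$ always exists: $\e_\vv$ lies in exactly two sign-quadrants (those whose sign on the relevant side of the bipartition is positive), and Corollary \ref{cor:quadrants} shows that at least one of them is preserved by $\rhoa{g_1}$. Proposition \ref{prop:quadrant_tracking}, applied separately to $\G$ and to $\GH$, then places $\f_i^\G$ and $\f_i^\GH$ in $\hat \Q_{s_i}$ for every $i$, leaving only the sign pattern of the difference to handle.

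For that I would exploit the telescoping identity
\begin{equation*}
\f_i^\G - \f_i^\GH = \sum_{j=1}^{i} \Upsilon_\G^{g_i g_j^{-1}}(e_j), \qquad e_j := \bigl(\Upsilon_\G^{g_j g_{j-1}^{-1}} - \Upsilon_\GH^{g_j g_{j-1}^{-1}}\bigr)\bigl(\f_{j-1}^\GH\bigr),
\end{equation*}
and show each summand lies in $\hat \Q_{s_i}$; since $\hat \Q_{s_i}$ is closed under addition, this suffices. Because $\f_{j-1}^\GH \in \hat \Q_{s_{j-1}}$, Proposition \ref{prop:difference_support} places $e_j$ in a quadrant with support contained in $\Alpha$ or in $\Beta$ (according to whether the $j$-th step generator is $h^{\pm 1}$ or $v^{\pm 1}$); a direct case-check against the expanding sign action of Definition \ref{def:sign_action} confirms that in fact $e_j \in \hat \Q_{s_j}$.

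The main obstacle is then propagating $e_j$ forward by $\Upsilon_\G^{g_i g_j^{-1}}$ while remaining in $\hat \Q_{s_i}$, since Proposition \ref{prop:quadrant_tracking} is not directly applicable with the pair $(g_i g_j^{-1}, s_j)$. To circumvent this I would reuse the trick from the proof of Corollary \ref{cor:diff_tracking}: because $e_j$ is supported on a single side of the bipartition, the operator $\Upsilon_\G^{g_j g_{j-1}^{-1}}$ fixes it, so $\Upsilon_\G^{g_i g_j^{-1}}(e_j) = \Upsilon_\G^{g_i g_{j-1}^{-1}}(e_j)$. Proposition \ref{prop:quadrant_tracking} now applies to the shifted geodesic ray $\langle g_{j-1+\ell}\, g_{j-1}^{-1}\rangle_{\ell \geq 0}$ with starting sign pair $s_j$, since $s_j$ lies in the image of the idempotent action $\sa^{g_j g_{j-1}^{-1}}$, whence $\rhoa{g_j g_{j-1}^{-1}}(\Q_{s_j}) \subset \Q_{s_j}$. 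A short reduced-word computation in $\tilde G$, using that $g_i g_{j-1}^{-1} = (g_j g_{j-1}^{-1})(g_{j+1} g_j^{-1})\cdots (g_i g_{i-1}^{-1})$ is already reduced and that the initial letter acts as the identity on $s_j$, then identifies $\Sigma^{g_i g_{j-1}^{-1}}(s_j)$ with $s_i$, completing the verification.
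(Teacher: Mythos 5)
Your proposal is correct and follows essentially the same route as the paper's proof: the same telescoping decomposition of $\f_i^{\G}-\f_i^{\GH}$ into terms $\Upsilon_\G^{g_i g_j^{-1}}(e_j)$ (your $e_j$ are the paper's $\y_j$), the same use of Proposition \ref{prop:difference_support} followed by quadrant tracking, the only cosmetic differences being that you invoke Proposition \ref{prop:quadrant_tracking} directly where the paper uses Corollary \ref{cor:expansion_sign_action}, and you re-derive the content of Corollary \ref{cor:diff_tracking} inline instead of citing it. One notational slip to fix: as a group product the reduced word is $g_i g_{j-1}^{-1}=(g_i g_{i-1}^{-1})\cdots(g_{j+1}g_j^{-1})(g_j g_{j-1}^{-1})$, so $g_j g_{j-1}^{-1}$ is the rightmost letter (hence the first one $\sa$ applies), which is what your argument actually needs; the ordering you wrote does not telescope correctly as a group product.
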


\begin{proof}
Give $\GH$ an arbitrary ribbon graph structure, and find an arbitrary eigenvector of the adjacency operator. Then Corollary \ref{cor:expansion_sign_action} gives a sequence $s_i \in \SP$ such that
$\Upsilon^{g_i}_{\GH}(\e_\vv) \in \hat \Q_{s_i}$ and $\Upsilon^{g_i}_{\G}(\e_\vv) \in \hat \Q_{s_i}$. 
In particular, $s_i=\sa^{g_i}(s_0)$ for the appropriate choice of $s_0 \in \SP$. 

Now we will inductively define elements $\y_i \in \R^{\V_\G}_c$ for each integer $i \geq 1$. 
Assuming $\y_1, \ldots, \y_{i-1}$ are defined, we define $\y_i$ to be the unique element of 
 so that the following equation holds
$$\Upsilon^{g_i}_{\G}(\e_\vv)-\Upsilon^{g_i}_{\GH}(\e_\vv)=
\Upsilon^{g_i g_1^{-1}}_{\G}(\y_1)+\Upsilon^{g_i g_2^{-1}}_{\G}(\y_2)+\ldots+\y_i.$$
We will show that 
\begin{equation}
\label{eq:subgraph_show}
\Upsilon^{g_i g_j^{-1}}_{\G}(\y_j) \in \hat \Q_{s_i} \textrm{\quad for all $i \geq j$}.
\end{equation}
In particular, for all $\vw \in \V_\G$ we have
$$\Upsilon^{g_i}_{\G}(\e_\vv)(\vw)=\Upsilon^{g_i}_{\GH}(\e_\vv)(\vw)+
\Upsilon^{g_i g_1^{-1}}_{\G}(\y_1)(\vw)+\Upsilon^{g_i g_2^{-1}}_{\G}(\y_2)(\vw)+\ldots+\y_i(\vw),$$
and all terms in this sum are either zero or have the same sign. Therefore, equation \ref{eq:subgraph_show}
implies the theorem.

To shorten our formulas, let $\x_j=\Upsilon^{g_j}_{\GH}(\e_\vv)$. By induction, we observe that 
$$\y_j=\Upsilon^{g_j g_{j-1}^{-1}}_\G(\x_{j-1})-\Upsilon^{g_j g_{j-1}^{-1}}_\GH(\x_{j-1}).$$
Let $i \geq j$. Recall from the first paragraph that $\x_j \in \hat \Q_{s_j}$. Applying Corollary \ref{cor:diff_tracking} to the geodesic ray $\langle g_i g_j^{-1} \rangle_{i \geq j}$ yields that 
$\Upsilon^{g_i g_j^{-1}}(\y_j) \in \hat \Q_{s'_{i,j}}$ where $s'_{i,j}=\Sigma^{g_i g_j^{-1}}(s_j)$. The conclusion follows from the observation that 
$s_j=\Sigma^{g_j}(s_0)$ and therefore 
$$s'_{i,j}=\Sigma^{g_i g_j^{-1}}(s_j)=\Sigma^{g_i g_j^{-1}} \circ \Sigma^{g_j} (s_0)=s_i.$$
This proves equation \ref{eq:subgraph_show} as desired.
\end{proof}

\subsection{Finite covers}
Now let $\widetilde \G$ be a finite cover of $\G$. We will let $\pi:\V_{\widetilde \G} \to \V_\G$
denote the restriction of the covering map to the vertices. This map induces
a map $\pi_\ast:\R^{\V_{\widetilde \G}} \to \R^{\V_\G}$ given by
\begin{equation}
\label{eq:pi_ast}
\pi_\ast(\widetilde \f)(\vv)=\sum_{\widetilde \vv \in \pi^{-1}(\vv)} \widetilde \f(\widetilde \vv).
\end{equation}
Since $\pi$ arose from a covering map, we have the following.
\begin{proposition}
\label{prop:cover_commute}
For all $g \in G$, $\Upsilon^g_\G \circ \pi_\ast = \pi_\ast \circ \Upsilon^g_{\widetilde \G}$.
\end{proposition}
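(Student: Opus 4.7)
The plan is to reduce the claim to a statement about the two generators of the group action and then verify it by a direct calculation that exploits the defining property of a graph covering.

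First, since $\Upsilon^G$ is generated by $\Upsilon^h = \Ho$ and $\Upsilon^v = \Vo$, it suffices to check the commutation identity for these generators, and more specifically for $\Ho^k$ and $\Vo^k$ for each $k \in \Z$. By the symmetry between $\Alpha$ and $\Beta$ (interchanging the roles of $h$ and $v$), it is enough to verify that $\Ho^k_\G \circ \pi_\ast = \pi_\ast \circ \Ho^k_{\widetilde \G}$ for every $k \in \Z$.

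Next, I would fix $\widetilde \f \in \R^{\V_{\widetilde \G}}$ and evaluate both sides at an arbitrary $\vv \in \V_\G$. If $\vv \in \Beta_\G$, then $\pi^{-1}(\vv) \subset \Beta_{\widetilde \G}$ (the cover respects the bipartite structure, since $\pi$ preserves adjacency), and both $\Ho^k_\G$ and $\Ho^k_{\widetilde \G}$ act as the identity on the respective values, so both sides reduce to $\pi_\ast(\widetilde \f)(\vv)$. The interesting case is $\vv \in \Alpha_\G$. Expanding the definitions using \eqref{eq:Ho} and \eqref{eq:pi_ast}, the identity to verify becomes
\begin{equation*}
\sum_{\widetilde \vv \in \pi^{-1}(\vv)} \sum_{\widetilde \vy \sim \widetilde \vv} \widetilde \f(\widetilde \vy)
\;=\; \sum_{\vy \sim \vv} \sum_{\widetilde \vy \in \pi^{-1}(\vy)} \widetilde \f(\widetilde \vy),
\end{equation*}
where the constant terms $\widetilde \f(\widetilde \vv)$ and the factor $k$ on both sides cancel against each other after using \eqref{eq:pi_ast}.

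The key step, which is the only substantive part of the proof, is to recognize that this equality of double sums is a consequence of the covering property of $\pi$. Specifically, for each $\widetilde \vv \in \pi^{-1}(\vv)$, the projection $\pi$ restricts to a bijection from the edges incident to $\widetilde \vv$ in $\widetilde \G$ onto the edges incident to $\vv$ in $\G$. Equivalently, for each edge $e$ of $\G$ joining $\vv$ to a neighbor $\vy$, the set of lifts $\pi^{-1}(e)$ gives a bijection between $\pi^{-1}(\vv)$ and $\pi^{-1}(\vy)$, each lifted edge having one endpoint over $\vv$ and one over $\vy$. Therefore both sides above reindex as a sum over pairs $(\widetilde e, \widetilde \vy)$, where $\widetilde e$ is a lift of an edge of $\G$ incident to $\vv$ and $\widetilde \vy$ is the endpoint of $\widetilde e$ not lying over $\vv$, weighted by $\widetilde \f(\widetilde \vy)$.

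I do not anticipate any real obstacle: the entire content of the proposition is the local bijection between edges at $\widetilde \vv$ and edges at $\vv$ provided by the covering, and once that is invoked the identity is a bookkeeping exercise. The analogous calculation for $\Vo^k$ is identical after swapping the roles of $\Alpha$ and $\Beta$, and the extension to arbitrary $g \in G$ follows by composition.
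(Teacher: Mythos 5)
Your proof is correct and fills in exactly the details the paper leaves implicit — the paper itself only says ``check on powers of the generators.'' Reducing to $\Ho^k$ (and $\Vo^k$ by symmetry), expanding via equations \eqref{eq:Ho} and \eqref{eq:pi_ast}, and then invoking the defining local bijection of a graph covering between edges at $\widetilde\vv$ and edges at $\vv$ is precisely the intended argument.
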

This may be proved by checking that the equation holds for powers of the generators of $G$. 

Finally, to use the previous propositions in our setting, we need to be able to find nice subgraphs of finite covers of $\G$. We use $\G_\Z$ to denote the graph with vertex set consisting of the integers which is formed by drawing edges between subsequent integers.

\begin{lemma}
\label{lem:covers}
Suppose $\G$ is an infinite connected graph so that every vertex has finite valance. Further suppose that $\G$ has no vertices of valance one. Let $\vv$ be any vertex of $\G$. Then either
\begin{enumerate}
\item there is an embedding $\phi:\G_\Z \to \G$ such that $\phi(0)=\vv$, or
\item there is a double cover $\widetilde \G$ of $\G$ (depending on $\vv$) and an embedding $\widetilde \phi:\G_\Z \to \widetilde \G$
such that $\widetilde \phi(0)$ is a lift of $\vv$. 
\end{enumerate}
\end{lemma}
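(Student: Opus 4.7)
The plan is to find a bi-infinite simple path through $\vv$, either in $\G$ itself (yielding alternative (1)) or in a suitable double cover of $\G$ (alternative (2)). Since $\G$ is infinite, connected, locally finite, and has no vertex of valance one, every vertex has valance at least two; together with König's lemma this produces at least one infinite simple ray in $\G$ from any prescribed starting vertex.

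First I would try to construct two infinite simple rays $r_1, r_2$ starting at $\vv$ with $r_1 \cap r_2 = \{\vv\}$. If such a pair exists, concatenating $r_1$ (traversed in reverse) with $r_2$ through $\vv$ yields the embedding $\phi : \G_\Z \to \G$ with $\phi(0) = \vv$, establishing alternative (1). Such rays can be found, for instance, whenever $\G \smallsetminus \{\vv\}$ has at least two infinite components, and more generally whenever $\vv$ is not trapped inside a finite cycle in $\G$.

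If no disjoint pair of rays exists, then any two rays out of $\vv$ must eventually collide, which forces the existence of a finite cycle $C$ through $\vv$ in $\G$ whose ``unwrapping'' is necessary. I would then construct a double cover $\pi : \widetilde \G \to \G$ by choosing a homomorphism $\rho : \pi_1(\G, \vv) \to \Z/2$ sending $[C]$ to the nontrivial element and sending all other generators (from a fundamental system of cycles relative to a spanning tree of $\G$) to zero. Under this choice, $C$ lifts to a single cycle $\widetilde C$ in $\widetilde \G$ of length $2|C|$ containing both lifts $\widetilde \vv, \widetilde \vv'$ of $\vv$ at distance $|C|$ apart along $\widetilde C$, while the preimage of $\G \smallsetminus C$ splits into two disjoint copies of $\G \smallsetminus C$, each attached to one of the two arcs of $\widetilde C$ joining $\widetilde \vv$ to $\widetilde \vv'$. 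Since $\G$ is infinite while $C$ is finite, at least one vertex of $C$ has a neighbor outside $C$; following such an exit edge out of each arc of $\widetilde C$ and then extending via König's lemma inside the corresponding copy of $\G \smallsetminus C$ yields two disjoint infinite simple rays based at $\widetilde \vv$, and hence the required embedding $\widetilde \phi : \G_\Z \to \widetilde \G$ with $\widetilde \phi(0) = \widetilde \vv$.

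The main obstacle is justifying the implicit claim in the second case that a single finite cycle $C$ captures the full obstruction to alternative (1); equivalently, that after passing to the double cover unwrapping $C$, the graph near $\widetilde \vv$ admits two disjoint infinite simple rays. I expect this to follow by choosing $C$ to be a shortest simple cycle through $\vv$ within the 2-edge-connected component of $\vv$, and then analyzing how the failure of the greedy search for disjoint rays is reflected by how rays are channeled through the edges of $C$; the two sheets of the double cover then separate such channeled rays into distinct sheets over $\G \smallsetminus C$.
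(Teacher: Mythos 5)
The key claim in your second case---that if there is no pair of disjoint infinite rays from $\vv$ then some finite cycle $C$ must pass through $\vv$---is false, and this gap is fatal to the argument as written. Consider the graph $\G$ with vertex set $\{\vv, a, a_1, a_2, b_0, b_1, b_2, \ldots\}$ and edges $\vv \sim a$, $\vv \sim b_0$, $a \sim a_1$, $a \sim a_2$, $a_1 \sim a_2$, and $b_i \sim b_{i+1}$ for all $i \geq 0$. This $\G$ is infinite, connected, locally finite, and has no vertex of valance one. Both edges at $\vv$ are bridges, so no cycle of $\G$ passes through $\vv$. Yet the only infinite simple ray out of $\vv$ leaves through $b_0$ (any simple path through $a$ dies inside the finite triangle $\{a,a_1,a_2\}$), so there is also no pair of disjoint infinite rays from $\vv$. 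In this situation your construction has nothing to unwrap, since it hinges on $C$ containing $\vv$ so that the antipodal lifts $\widetilde\vv$ and $\widetilde\vv'$ both lie on $\widetilde C$. The lemma still holds here: unwrapping the triangle $a$--$a_1$--$a_2$ (which does not contain $\vv$) gives a double cover in which the closed walk $\vv, a, a_1, a_2, a, \vv$ lifts to a simple path joining the two lifts of $\vv$, and concatenating the two disjoint lifts of the ray $\vv, b_0, b_1, \ldots$ onto its ends gives the desired embedding of $\G_\Z$---but your argument does not find this.

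The paper's proof avoids the issue by breaking the symmetry you are trying to exploit. It first fixes one infinite simple ray $\psi_0$ ending at $\vv$ (available because $\G$ is infinite with bounded valance) and then greedily extends forward one vertex at a time, always choosing a new edge at each step, which the no-valance-one hypothesis makes possible. If this extension is injective forever, that is alternative (1). Otherwise, let $N$ be the first index at which a vertex repeats, say $\psi_N(N)=\psi_N(M)$; the cycle $\psi_N([M,N])$---which need not contain $\vv$---is what gets unwrapped, and the two disjoint lifts of the tail $\psi_M$ (which already includes the original infinite backward ray) are stitched together through a lift of that cycle. The decisive advantage is that the backward ray chosen at the outset supplies the infinite escape on both sheets of the cover, so no further search for disjoint rays is needed upstairs. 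If you wish to retain your symmetric framing you would at a minimum have to treat separately the case where $\vv$ is a cut vertex all but one of whose incident branches is finite, and in that case unwrap a cycle lying in a finite branch rather than a cycle through $\vv$.
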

\begin{proof}
For $S \subset \Z$ let $\G_S$ denote the graph with vertex set $S$ which is formed by adding edges between all pairs of integers whose difference is $1$. 
Because $\G$ is infinite and has bounded valance, there are vertices of
arbitrary large distance from $\vv$. Thus, we can define a metric embedding $\psi_0:\G_{\{0,-1,-2, \ldots\}} \to \V_\G$ such that
$\psi(0)=\vv$. (For each $n \geq 0$, choose $\psi_0(-n)$ so that its distance from $\vv$ is $n$.)
Now for $n>0$ inductively define $\psi_n:\G_{\{n, n-1, \ldots\}} \to \V_\G$ so that 
\begin{itemize}
\item $\psi_n$ restricted to $\G_{\{n-1, n-2, \ldots\}}$ is $\psi_{n-1}$, and
\item $\psi_n$ applied to the edge $(n-1,n)$ is distinct from the edge $(n-2,n-1)$. (This can be done because $\G$ has no vertices of valance one.)
\end{itemize}
Now assume $\psi_n$ is always injective. Then the limit $\lim_{n \to \infty} \psi_n$ is an embedding
of $\G_\Z \to \G$. So assume $\psi_n$ is not injective for some $n$.
Let $N$ be the smallest integer for which $\psi_n$ is not injective. Then, there is an $M<N$ for which
$\psi_N(N)=\psi_N(M)$. Let $\GH$ be the circular subgraph of $\G$ consisting of the image $\psi_N([M,N])$. Let $\hom{\GH} \in H_1(\G; \Z)$ denote the corresponding homology class with $\Z$ coefficients. 
Note that $\hom{\GH}$ must be primitive. Let $p: H_1(\G; \Z) \to \Z_2$ be any group homomorphism so that $p(\hom{\GH})=1$. Let $p':\pi_1(\G) \to \Z_2$ be the homomorphism constructed by taking homology class of an element of $\pi_1(\G)$ and then applying $p$. Let $\widetilde G$ denote the double cover of $\G$ which corresponds to the kernel of $p$. Then $\psi_N([M,N])$ lifts to an embedding 
$\widetilde \psi_N(M)$ and $\widetilde \psi_N(N)$ are both lifts of the point $\psi_N(N)=\psi_N(M)$.
Also consider the two disjoint lifts of $\psi_M$. These two lifts are rays
which end at $\widetilde \psi_N(M)$ and $\widetilde \psi_N(N)$. The two lifts of the ray $\psi_M$ and one of the lifts of the segment $\psi_N \big|_{[M,N]}$ can be stitched together to form our desired embedding $\widetilde \phi:\G_\Z \to \widetilde \G$.
\end{proof}

\section{Graphs without vertices of valance one}
\label{sect:no valance one}
In this section, we will primarily consider graphs $\G$ with no vertices of valance one. We prove the following results about these graphs.

\begin{theorem}[Decay]
\label{thm:decay}
Suppose $\G$ has no vertices of valance one. Let $\bm \theta \in \Rn_\lambda$ and let $\langle g_{n} \rangle$ be the $\lambda$-shrinking sequence of $\bm \theta$. Then for any $\f \in \Surv_{\bm \theta}$ and any $\vv \in \V$, the sequence
$$|\Act^{g_{n}}(\f)(\vv)|$$
tends monotonically to zero as $n \to \infty$. 
\end{theorem}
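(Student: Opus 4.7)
The plan is to prove the statement in two steps: first monotonicity, then convergence to zero.

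\emph{Monotonicity.} Set $\h_n := \Upsilon^{g_n}(\f)$, so $\h_n \in \hat \Q_{s_n}$ for every $n \geq 0$ since $\f$ is a $\bm\theta$-survivor. Each transition $\h_n \to \h_{n+1}$ applies an operator $\Ho^{\pm 1}$ or $\Vo^{\pm 1}$: $\Ho^k$ modifies only values on $\Alpha$ and $\Vo^k$ only values on $\Beta$, so if $\vv$ lies in the inactive partite class at step $n+1$ then $|\h_{n+1}(\vv)| = |\h_n(\vv)|$ trivially. For $\vv$ in the active class, say $\va \in \Alpha$ during an $h^k$ transition, one has $\h_{n+1}(\va) = \h_n(\va) + k \sum_{\vb \sim \va} \h_n(\vb)$, where each summand has sign prescribed by $s_n$ or $s_{n+1}$. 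A case analysis on the transition diagram of Proposition~\ref{prop:shrinking_and_sign}, combined with the constraint that $\rhoa{h^k}$ shrinks $\rhoa{g_n}(\bm\theta)$ (Propositions~\ref{prop:shrink_exp} and~\ref{prop:limit_set2}), shows that $k \sum_{\vb \sim \va} \h_n(\vb)$ is always signed opposite to $\h_n(\va)$, yielding $|\h_{n+1}(\va)| \leq |\h_n(\va)|$. In the non-critical sign-flipping cases one additionally invokes the survivor condition at time $n-1$ to establish the required upper bound $|\sum_{\vb} \h_n(\vb)| \leq 2|\h_n(\va)|$.

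\emph{Reduction to an embedded $\G_\Z$.} Assume for contradiction that $\lim_n |\h_n(\vv_0)| = L > 0$ for some $\vv_0 \in \V$. By Lemma~\ref{lem:covers}, either $\G$ itself contains an embedded copy $\phi:\G_\Z \hookrightarrow \G$ with $\phi(0)=\vv_0$, or there is a double cover $\pi:\widetilde\G \to \G$ with such an embedding through some lift $\widetilde{\vv_0} \in \pi^{-1}(\vv_0)$. In the latter case, set $\widetilde\f := \tfrac12\, \f \circ \pi$, so that $\pi_\ast(\widetilde\f) = \f$; since the quadrant condition is preserved by pullback, $\widetilde\f$ is a $\bm\theta$-survivor on $\widetilde\G$. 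Proposition~\ref{prop:cover_commute} gives $\h_n(\vv_0) = \sum_{\widetilde\vv \in \pi^{-1}(\vv_0)} \Upsilon^{g_n}_{\widetilde\G}(\widetilde\f)(\widetilde\vv)$; monotonicity on $\widetilde\G$ applied to each of the two summands forces at least one lift to witness a nonzero limit, reducing us to the embedded case on $\widetilde\G$.

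\emph{Contradiction on the embedded line.} Rewrite $\h_n(\vv_0) = \langle \f, \x_n\rangle$ with $\x_n := \Upsilon^{\gamma(g_n^{-1})}(\e_{\vv_0}) \in \R^\V_c$ via equation~\ref{eq:operator_relations2}. Theorem~\ref{thm:subgraph} applied to the embedded $\phi(\G_\Z) \subset \G$ yields the pointwise domination $|\x_n(\phi(k))| \geq |\Upsilon^{\gamma(g_n^{-1})}_{\G_\Z}(\e_0)(k)|$ for every $k \in \Z$. On $\G_\Z$ the $G$-action is essentially the $\SL(2,\Z)$-action on pairs of consecutive values along the line and is hence explicitly computable; combining this with Corollary~\ref{cor:dot_product_growth} (norm blow-up of $\|\rhoa{\gamma(g_n^{-1})}(\v)\|$ for any $\v$ transverse to $\bm\theta$), one obtains $\sum_k |\Upsilon^{\gamma(g_n^{-1})}_{\G_\Z}(\e_0)(k)| \to \infty$. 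I would then use Proposition~\ref{prop:positivity_checks}, applied at a subsequence of critical times, to align the survivor signs of $\f|_{\phi(\Z)}$ with those of $\x_n$ there, producing $|\langle \f, \x_n\rangle| \to \infty$ and contradicting the monotone bound $|\h_n(\vv_0)| \leq |\f(\vv_0)|$.

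The hard part will be the final step: turning the explicit $\SL(2,\Z)$-dynamics on $\G_\Z$, the comparison of Theorem~\ref{thm:subgraph}, and the critical-time sign alignment into a quantitative lower bound $|\langle \f, \x_n\rangle| \to \infty$. This requires simultaneously tracking the spread of the support of $\x_n$ across $\Z$, the fixed sign pattern of $\f|_{\phi(\Z)}$ imposed by the survivor condition, and the sign alignment provided at critical times — essentially extracting the continued-fraction combinatorics of $\bm\theta$ encoded in the shrinking sequence $\langle g_n\rangle$, and it is also precisely where the no-valance-one hypothesis (guaranteeing a genuine two-sided embedded line via Lemma~\ref{lem:covers}) is consumed.
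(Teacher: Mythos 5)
Your two-step structure (monotonicity, then decay to zero) matches the paper's, and the reduction to an embedded copy of $\G_\Z$ via Lemma~\ref{lem:covers}, Theorem~\ref{thm:subgraph}, and Proposition~\ref{prop:cover_commute} is the right toolkit. But there are two real gaps.

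\emph{Monotonicity.} The hard inequality in the sign-flipping case is
$\sum_{\vb \sim \va} \h_n(\vb) \leq 2\,\h_n(\va)$, i.e.\ $\langle \h_n, 2\e_\va - \A(\e_\va)\rangle \geq 0$. You claim this follows from ``the survivor condition at time $n-1$'' combined with the transition diagram of Proposition~\ref{prop:shrinking_and_sign}, but it does not: the survivor condition at step $n$ gives non-negativity, at step $n+1$ gives $\sum_\vb \h_n(\vb) \geq \h_n(\va)$, and no low-order condition bounds $\sum_\vb \h_n(\vb)$ from above by $2\h_n(\va)$. The paper proves this (Lemma~\ref{lem:2ndB}) by representing $2\e_\va - \A(\e_\va)$ as $\Zem(\hom{\sigma_1}+\hom{\sigma_2})$ for explicit saddle connections $\sigma_1,\sigma_2$ on $S(\G,\w)$ and then checking $\hol(\sigma_i)\wedge\bm\theta \geq 0$ via Proposition~\ref{prop:positivity_checks}; the angle check hinges on Proposition~\ref{prop:detecting_spokes} and Corollary~\ref{cor:bound}, which are exactly where the no-valance-one hypothesis enters for the first time. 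Your closing remark that the hypothesis ``is consumed'' only through Lemma~\ref{lem:covers} misses this: the paper needs it already in the monotonicity step, and a purely combinatorial argument in terms of the transition diagram will not supply the required bound.

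\emph{Decay to zero.} You write $\h_n(\vv_0)=\langle \f, \x_n\rangle$ with $\x_n := \Upsilon^{\gamma(g_n^{-1})}(\e_{\vv_0})$ and then invoke Theorem~\ref{thm:subgraph} to dominate $\x_n$ by the corresponding $\G_\Z$ object. But Theorem~\ref{thm:subgraph} requires the group elements to run along a \emph{geodesic ray}, and $\langle \gamma(g_n^{-1})\rangle = \langle \gamma(g_n)^{-1}\rangle$ is not one: if $g_{n+1}=s\,g_n$ then $\gamma(g_{n+1})^{-1}\bigl(\gamma(g_n)^{-1}\bigr)^{-1}=\gamma(g_n)^{-1}\gamma(s)^{-1}\gamma(g_n)$, a conjugate of a generator, not a generator. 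The paper uses the pairing the other way around, writing
$|\f(\vv)|=\langle \Upsilon^{g_t}(\f), \Upsilon^{\gamma(g_t)}(\e'_\vv)\rangle$
with $\langle \gamma(g_n)\rangle$ (which \emph{is} a geodesic ray), and rather than arguing by contradiction, directly proves $|\Upsilon^{\gamma(g_t)}(\e_\vv)(\vv)|\geq 2$ at critical times $t$ (Proposition~\ref{prop:growth_list} and Corollary~\ref{cor:growth_list}). Combined with the fact that all terms in the pairing have the same sign at critical times (Proposition~\ref{prop:gamma_critical}), this gives the effective halving $|\Upsilon^{g_t}(\f)(\vv)| \leq \tfrac12 |\f(\vv)|$, which after iteration is what produces decay to zero. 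Your separate worry about ``the hard part'' is justified but mis-aimed: the missing content is not a divergence bound for $|\langle\f,\x_n\rangle|$ (which, being equal to $\h_n(\vv_0)$, cannot diverge) but the explicit growth estimate $|\Upsilon^{\gamma(g_t)}(\e_\vv)(\vv)|\geq 2$ from the $\G_\Z$ computation, transferred to $\G$ by the subgraph/cover comparison.
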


We break the proof of this theorem into two parts. We will prove that the sequence is monotonically decreasing in Section \ref{ss:montonic_decay}.
In Sections \ref{ss:effective_decay1} and \ref{ss:effective_decay2}, we will prove that this limit is zero. Clearly, this theorem implies that any subsequence also decays to zero. Thus, we have the following corollary.

\begin{corollary}[Decay properties]
If $\G$ has no vertices of valance one, then $S(\G, \w)$ has the subsequence decay property and the critical decay property.
\end{corollary}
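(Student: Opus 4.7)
The plan is to derive this corollary as an essentially immediate consequence of Theorem \ref{thm:decay}. Assuming $\G$ has no vertices of valance one, the theorem asserts that for every $\lambda$-renormalizable direction $\btheta$ with $\lambda$-shrinking sequence $\langle g_n \rangle$, every $\f \in \Surv_\btheta$, and every $\vv \in \V$, the full sequence $|\Upsilon^{g_n}(\f)(\vv)|$ tends monotonically to zero. Since convergence of a sequence to zero implies convergence of every subsequence, both decay properties should drop out for free.

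More concretely, I would proceed as follows. First, to establish the subsequence decay property (Definition \ref{def:subseqence_decay}), I simply take the subsequence to be the full shrinking sequence $\langle g_n \rangle$ itself (i.e., set $n_i = i$). Theorem \ref{thm:decay} gives $\lim_{i \to \infty} \Upsilon^{g_{n_i}}(\f)(\vv) = 0$ for every $\vv \in \V$, which is exactly the required condition. Second, to establish the critical decay property (Definition \ref{def:critical_decay}), I invoke Corollary \ref{cor:critical_times_occur}, which guarantees that the shrinking sequence of any $\lambda$-renormalizable direction has infinitely many critical times. Enumerating these gives a genuine subsequence $\langle g_{n_i} \rangle$ of $\langle g_n \rangle$, and since the underlying sequence $|\Upsilon^{g_n}(\f)(\vv)|$ already tends to zero by Theorem \ref{thm:decay}, so does the subsequence indexed by critical times, yielding $\lim_{i \to \infty} \Upsilon^{g_{n_i}}(\f)(\vv) = 0$ for every $\vv \in \V$.

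There is no real obstacle at this stage — all the substantial work is packaged into Theorem \ref{thm:decay}, whose proof is deferred to Sections \ref{ss:montonic_decay}, \ref{ss:effective_decay1}, and \ref{ss:effective_decay2}. The corollary itself is a one-line logical consequence, and the main role of stating it here is to connect the stronger pointwise decay result back to the two hypotheses (subsequence decay and critical decay) that were invoked in \S \ref{sect:outline_adjacency} to prove Theorems \ref{thm:surviving_functions}, \ref{thm:bijection}, and ultimately the Ergodic Measure Characterization Theorem \ref{thm:ergodic2}.
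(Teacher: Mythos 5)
Your proposal is correct and matches the paper's approach exactly: both decay properties are immediate consequences of Theorem \ref{thm:decay}, since convergence of the full sequence $|\Upsilon^{g_n}(\f)(\vv)|$ to zero implies convergence of every subsequence. The paper dispenses with this in one line; your additional citation of Corollary \ref{cor:critical_times_occur} to justify that the critical times form a bona fide infinite subsequence is a reasonable bit of extra care, but is not a departure in approach.
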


We will prove that graphs with no vertices of valance one have the adjacency sign property in sections \ref{ss:adjacency_sign} and \ref{ss:asp_Z}. See Theorem \ref{thm:adj_sign_int}
and Corollary \ref{cor:adj_sign}

\subsection{Monotonic decay}
\label{ss:montonic_decay}

In this subsection we prove the monotonicity part of Theorem \ref{thm:decay}.

\begin{lemma}[Monotonic Decay]
\label{lem:monotonic_decay}
Suppose $\G$ has no vertices of valance one.
Let $\bm \theta \in \Circ$ be a $\lambda$-renormalizable direction with $\lambda$-shrinking sequence $\langle g_n \rangle$.  
For every $\bm \theta$-survivor $\f \in \R^\V$ and
every $\vv \in \V$, the sequence
$|\Upsilon^{g_n}(\f)(\vv)|$ decreases (non-strictly) monotonically in $n$. 
\end{lemma}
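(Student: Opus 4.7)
The plan is to reduce the lemma to a single key inequality by symmetry, handle the easy case directly, and then attack the hard case using the structural theorems of this section.

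First, I would use Remark \ref{rem:dihedral} to reduce to the canonical situation $g_{n+1}g_n^{-1}=h^{-1}$ and $s_n=++$, so that $\f_n\in \hat{\Q}_{++}$ (i.e.\ $\f_n\geq 0$ on all of $\V$). The formula for $\Ho^{-1}$ from equation \eqref{eq:Ho} then gives $\f_{n+1}(\vb)=\f_n(\vb)$ for every $\vb\in\Beta$, so monotone decay is immediate for $\vv\in\Beta$. For $\vv=\va\in\Alpha$ we have $\f_{n+1}(\va)=\f_n(\va)-\A(\f_n)(\va)$. If $n+1$ is a critical time then $s_{n+1}=s_n=++$, so $\f_{n+1}\in\hat{\Q}_{++}$ forces $0\leq\f_n(\va)-\A(\f_n)(\va)\leq\f_n(\va)$, and we are done. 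All the content of the lemma is therefore concentrated in the non-critical case, where $s_{n+1}=-+$ forces $\f_{n+1}(\va)\leq 0$, and monotone decay reduces to the single key inequality
\begin{equation*}
\A(\f_n)(\va)\ \leq\ 2\,\f_n(\va)\qquad(\va\in\Alpha).
\end{equation*}

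Rephrasing with the pairing of equation \eqref{eq:pairing}, the key inequality asserts $\langle \f_n,\x\rangle\geq 0$ where $\x=2\e_\va-\A(\e_\va)\in\R^\V_c$. The natural attempt to apply Proposition \ref{prop:positivity_checks}(1) by writing $\x=\Zem(2\hom{\sigma}-\hom{\partial_+\cyl_\va})$ with $\sigma$ a downward vertical saddle connection in $\cyl_\va$ fails, because the horizontal loop $\partial_+\cyl_\va$ decomposes into horizontal saddle connections whose wedge product with $\rhoa{g_n}(\btheta)\in\Q_{++}$ has the wrong sign once we reorient to obtain $\x$; and Proposition \ref{prop:positivity_checks}(2) with $t=0$ fails because $\x\notin\hat{\Q}_{s_n}$. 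So one must bring in the no-valance-one hypothesis.

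Here is where I would use Lemma \ref{lem:covers}: either there is an embedded bi-infinite ray $\G_\Z\hookrightarrow \G$ through $\va$, or such an embedding exists into a double cover $\widetilde\G$. In the cover case, Proposition \ref{prop:cover_commute} and the pushforward $\pi_\ast$ from equation \eqref{eq:pi_ast} allow us to lift $\f_n$ to a survivor $\widetilde{\f}_n$ on $\widetilde\G$ whose value at any lift of $\va$ dominates (up to the fiber sum) the inequality on $\G$, so we may assume $\G$ itself contains the embedded ray. Along this ray $\va$ has two distinguished neighbors $\vb_1,\vb_2\in\Beta$, and Proposition \ref{prop:difference_support} splits $\A(\e_\va)=\A_{\G_\Z}(\e_\va)+\rm(remainder)$ with the remainder supported on neighbors of $\va$ outside the ray. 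One then uses the quadrant-tracking of Corollary \ref{cor:diff_tracking} to see that the remainder's contribution to $\langle \f_n,\cdot\rangle$ is non-negative, reducing the problem to the bi-infinite line case; on $\G_\Z$ the inequality $\f_n(\vb_1)+\f_n(\vb_2)\leq 2\f_n(\va)$ can be read off directly from the fact that any $\btheta$-survivor whose sign-sequence traces $\langle s_i\rangle$ must, in $\Q_{++}$ with $\rhoa{g_n}(\btheta)\in\Shrink_\lambda(h^{-1})\cap\Rn_\lambda$, have slope $y_n/x_n<2/\lambda$ by Proposition \ref{prop:limit_set2}, and this is exactly the numerical bound needed on the tridiagonal operator $\A$ acting on positive sequences with controlled growth.

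The main obstacle, and the step requiring the most work, is the reduction to the $\G_\Z$ inequality: one has to show that the contributions coming from neighbors of $\va$ outside the embedded ray cannot violate the bound, even though individual values $\f_n(\vb)$ for extra neighbors can a priori be large. I expect the cleanest way to control this is a combined use of Theorem \ref{thm:subgraph} (comparing the $\Upsilon$-action on $\GH$ and $\G$) and Corollary \ref{cor:diff_tracking} to see that the ``off-ray'' part of $\x$ pairs non-negatively with $\f_n$ via a quadrant-tracking argument through a finite initial segment of the shrinking sequence, so that $\langle \f_n,\x\rangle$ splits into a non-negative remainder plus the on-ray quantity which we have just bounded.
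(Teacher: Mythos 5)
Your reduction to the single inequality
\begin{equation*}
\A(\f)(\va)\ \leq\ 2\,\f(\va)\qquad(\va\in\Alpha),
\end{equation*}
after normalizing by the dihedral action so that $g_1=h^{-1}$, $s_0=++$, is correct and matches the paper's reduction to Lemma~\ref{lem:2ndB}. You are also right that the \emph{obvious} choices in Proposition~\ref{prop:positivity_checks} fail: $\x=2\e_\va-\A(\e_\va)\notin\hat\Q_{s_0}$, and decomposing into a vertical saddle connection plus the horizontal boundary loop gives the wrong signs.

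However, from here the paper does not go through covers, subgraphs, or Corollary~\ref{cor:diff_tracking} at all. Those tools are reserved for the \emph{effective} decay (Lemma~\ref{lem:eff}) and the adjacency sign property (Theorem~\ref{thm:adj_sign_int}). For monotonic decay, the paper's proof is direct and geometric: it writes $\x=\Zem(\hom{\sigma_1}+\hom{\sigma_2})$ where $\sigma_1$ is the diagonal of a cleverly chosen \emph{pair} of adjacent rectangles inside $\cyl_\va$ (chosen to have minimal total width, equation~\ref{eq:width_min}) and $\sigma_2$ is the diagonal of the complementary strip. The no-valance-one hypothesis enters entirely through Proposition~\ref{prop:detecting_spokes}, which forces $\w(\vb)/\w(\va)\geq\frac{\lambda-\sqrt{\lambda^2-4}}{2}$ for every $\vb\sim\va$ (because no vertex belongs to a spoke), and through Corollary~\ref{cor:bound}, which gives $\lambda\geq n/\sqrt{n-1}$ at a valance-$n$ vertex. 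Together these control the slopes of $\sigma_1$ and $\sigma_2$ just tightly enough to satisfy $\hol(\sigma_i)\wedge\btheta\geq 0$, after which Proposition~\ref{prop:positivity_checks} applies.

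Your proposed reduction to $\G_\Z$ has a sign error that I believe is fatal. Writing $\x_\Z=2\e_\va-\A_{\G_\Z}(\e_\va)$, the off-ray part is
\begin{equation*}
\x-\x_\Z\ =\ -\sum_{\vb\sim_\G\va,\ \vb\notin\{\vb_1,\vb_2\}}\e_\vb,
\end{equation*}
a \emph{non-positive} element of $\R^\V_c$. Pairing it against $\f\in\hat\Q_{++}$ yields $\langle\f,\x-\x_\Z\rangle\leq 0$, not $\geq 0$. So knowing $\langle\f,\x_\Z\rangle\geq 0$ on the bi-infinite line gives no control on $\langle\f,\x\rangle$: you would in fact need the \emph{stronger} bound $\langle\f,\x_\Z\rangle\geq\sum_{\vb\ \text{off-ray}}\f(\vb)$, which is not what the $\G_\Z$ argument produces. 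Corollary~\ref{cor:diff_tracking} cannot rescue this, because it tracks quadrants of $\Upsilon_\G^{g_1}(\x)-\Upsilon_{\GH}^{g_1}(\x)$ for a $\zz$ that starts in a quadrant, whereas your $\x$ does not start in $\hat\Q_{s_0}$. There is also a quantitative slip: you invoke the slope bound $y/x<2/\lambda$, but the argument needs the strictly tighter bound $y/x<\frac{\lambda-\sqrt{\lambda^2-4}}{2}$ from Proposition~\ref{prop:limit_set2}, because $\w(\vb)/\w(\va)$ can be as large as $\frac{\lambda+\sqrt{\lambda^2-4}}{2}=\big(\frac{\lambda-\sqrt{\lambda^2-4}}{2}\big)^{-1}>\lambda/2$. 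The reduction-to-$\Z$ route via Theorem~\ref{thm:subgraph} is the natural one for the effective decay lemma (where the relevant inequality \emph{improves} under passage to a subgraph because one compares $|\Upsilon^{g_i}(\e_\vv)(\vw)|$ across graphs), but the inequality you need here is the opposite: more neighbors make it harder, and the rescue comes from the surface geometry ($\lambda$ growing with valance and the rectangle widths constrained by the no-spoke condition), not from a graph-theoretic comparison.
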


We will simplify the statement of this lemma. Applying Corollary \ref{cor:group_invariance},
we know that $\Upsilon^{g_1}(\f)$ is a $\rhoa{g_1}(\bm \theta)$-survivor. By induction, it is sufficient to prove that 
\begin{equation}
\label{eq:monotonic_decay_need}
|\Upsilon^{g_1}(\f)(\vv)| \leq |\f(\vv)|,
\end{equation}
for all $\lambda$-renormalizable $\bm \theta$, all $\bm \theta$-survivors $\f$, and all $\vv \in \V$. 
Up to the dihedral group action, 
we may assume that $\bm \theta \in \Q_{++}$ and that $g_1=h^{-1}$. See Remark \ref{rem:dihedral}. Since $\f$ is a $\bm \theta$-survivor, we know
$\f \in \hat \Q_{++}$. By definition, $\Ho^{-1}(\f)(\vb)=\f(\vb)$ for all $\vb \in \Beta$. So it suffices to consider $\vv \in \Alpha$. 
Suppose that $\rhoa{h^{-1}}(\bm \theta) \in \Q_{++}$. Recalling the formula in equation \ref{eq:Ho} for $\Ho(\f)$, we have that for $\va \in \Alpha$,
$$\Ho^{-1}(\f)(\va)=\f(\va)-\sum_{\vb \sim \va} \f(\vb) \geq 0.$$
But, each $\f(\vb) \geq 0$ as $\f \in \hat \Q_{++}$, so $\Ho^{-1}(\f)(\va) \leq \f(\va)$. 
Hence, equation \ref{eq:monotonic_decay_need} is trivially true when $\rhoa{h^{-1}}(\bm \theta) \in \Q_{++}$.
By Proposition \ref{prop:shrinking_and_sign}, the alternative is that $\rhoa{h^{-1}}(\bm \theta) \in \Q_{-+}$.
In this case $\Ho^{-1}(\f)(\va)<0$, so equation \ref{eq:monotonic_decay_need} is equivalent to showing
\begin{equation}
\label{eq:need2}
\f(\va) \geq - \Ho^{-1}(\f)(\va).
\end{equation}
In fact, we have that $\Ho^{-1}(\f)(\va)=\f(\va)-\A(\f)(\va)$ by Proposition \ref{prop:kernel}. Thus, equation \ref{eq:need2}
is equivalent to showing that $2 \f(\va) \geq \A(\f)(\va)$. Perhaps more usefully, we have
$$2 \f(\va)-\A(\f)(\va)=2 \langle \f, \e_a \rangle-\langle \A(\f), \e_a\rangle=\langle \f, 2 \e_a-\A(\e_a) \rangle.$$
In summary, the following lemma implies the Monotonic Decay lemma (\ref{lem:monotonic_decay}) above.

\begin{lemma}
\label{lem:2ndB}
Suppose $\G$ has no vertices of valance one. Let $\bm \theta \in \Circ \cap \Q_{++}$ be a $\lambda$-renormalizable direction with shrinking sequence $\langle g_n \rangle$.
Assume that $g_1=h^{-1}$. Then for every $\bm \theta$-survivor $\f$
and every $\va \in \Alpha$, we have $\langle \f, 2 \e_a-\A(\e_a) \rangle \geq 0$.
\end{lemma}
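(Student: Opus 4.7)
The strategy is to apply the Positivity Checks Proposition \ref{prop:positivity_checks}(2) by finding a critical time $t$ at which $\Upsilon^{\gamma(g_t)}(\x) \in \hat \Q_{s_t}$, where $\x = 2\e_\va - \A(\e_\va)$. A direct check shows $\x \in \hat \Q_{+-}$ (with $\x(\va) = 2 > 0$ and $\x(\vb) = -1 < 0$ for $\vb \sim \va$), so $\x$ does not lie in $\hat \Q_{s_0} = \hat \Q_{++}$, and Proposition \ref{prop:gamma_critical} cannot be invoked directly.

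First I will analyze the initial segment of the shrinking and sign sequences. Write $\btheta = (x,y)$. By Proposition \ref{prop:shrink_exp}, $g_1 = h^{-1}$ forces $0 < y/x < 2/\lambda$, and Proposition \ref{prop:limit_set2} improves the upper bound to $(\lambda - \sqrt{\lambda^2-4})/2$. Since $\rhoa{h^{-1}}(\btheta) = (x - \lambda y, y)$, the sign $s_1$ is $++$ if $y/x < 1/\lambda$ and $-+$ otherwise. In the first subcase, $n=1$ is a critical time; a direct calculation yields $\Upsilon^{\gamma(g_1)}(\x) = \Upsilon^v(\x) = \Vo(\x) = 2\e_\va + \A(\e_\va) \in \hat \Q_{++} = \hat \Q_{s_1}$, and Proposition \ref{prop:positivity_checks}(2) concludes the proof.

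Second, assume $s_1 = -+$. Since $\rhoa{h^{-1}}(\btheta) \in \Q_{-+}$, checking each of the shrinking sets in Proposition \ref{prop:shrink_exp} against the geodesic condition $g_2 g_1^{-1} \neq h$ leaves only $g_2 g_1^{-1} = v$, so $g_2 = vh^{-1}$ and $\gamma(g_2) = h^{-1} v$. Computing $\Upsilon^{\gamma(g_2)}(\x) = \Ho^{-1}(\Vo(\x))$ with $\Vo(\x) = 2\e_\va + \A(\e_\va)$, I obtain the explicit values $\Upsilon^{\gamma(g_2)}(\x)(\va) = 2 - \deg(\va)$, $\Upsilon^{\gamma(g_2)}(\x)(\va') = -|\{\vb : \vb \sim \va,~\vb \sim \va'\}|$ for $\va' \in \Alpha \setminus \{\va\}$, and $\Upsilon^{\gamma(g_2)}(\x)(\vb) = \mathbf{1}_{\vb \sim \va}$ for $\vb \in \Beta$. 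The no valance-one hypothesis enters here in an essential way, forcing $2 - \deg(\va) \leq 0$, so $\Upsilon^{\gamma(g_2)}(\x) \in \hat \Q_{-+}$. If additionally $s_2 = -+$, then $n=2$ is a critical time and matches, finishing this case by Proposition \ref{prop:positivity_checks}(2).

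The main obstacle is the remaining subcase $s_2 = --$, where $\Upsilon^{\gamma(g_2)}(\x) \in \hat \Q_{-+}$ fails to match $s_2$. In this range of slopes a parallel analysis forces $g_3 = h^{-1} v h^{-1}$, and the plan is to continue the induction, tracking $\Upsilon^{\gamma(g_n)}(\x)$ via the explicit formulas for $\Ho^{\pm 1}$ and $\Vo^{\pm 1}$ and repeatedly invoking the no valance-one hypothesis to control signs at each step, until a critical time with matching quadrant is reached. An alternative route, which I expect to be cleaner, is to use Lemma \ref{lem:covers} to reduce to the case $\G = \G_\Z$ (or its double cover), where the inequality $2\f(\va) \geq \A(\f)(\va)$ can be verified directly using the explicit structure of eigenfunctions on a spoke (Proposition \ref{prop:eig_spokes}) together with the subgraph comparison Theorem \ref{thm:subgraph} and the cover compatibility in Proposition \ref{prop:cover_commute}. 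This subcase is the technically hardest step, since the quadrant mismatch may persist through several iterations before resolving.
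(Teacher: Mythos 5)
Your proposal attempts a genuinely different route from the paper's, and the route has an unresolved gap. You try to verify statement (2) of Proposition \ref{prop:positivity_checks} directly by tracking the iterates $\Upsilon^{\gamma(g_t)}(\x)$ through quadrants, where $\x=2\e_\va-\A(\e_\va)$. Your first two sub-cases are handled correctly: the computation $\Vo(\x)=2\e_\va+\A(\e_\va)\in\hat\Q_{++}$ is right when $s_1=++$, and the no-valance-one hypothesis gives $2-\deg(\va)\le 0$ to land $\Ho^{-1}\Vo(\x)$ in $\hat\Q_{-+}$ in the next sub-case. But you explicitly acknowledge that you have not closed the branch where the quadrant mismatch persists (the $s_2=--$ case and beyond). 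Because that mismatch may in principle persist through arbitrarily many critical times, your iteration does not terminate, and the complexity of $\Upsilon^{\gamma(g_n)}(\x)$ grows without bound. Your fallback plan --- reduce to $\G_\Z$ via Lemma \ref{lem:covers}, Theorem \ref{thm:subgraph} and Proposition \ref{prop:cover_commute} --- is also not carried out; as stated, those tools compare absolute values $|\Upsilon^{g_i}(\e_\vv)(\vw)|$ across graphs and are not directly adapted to pairing against an arbitrary survivor $\f\in\Surv_\btheta$ on the ambient graph $\G$.

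The paper takes a one-shot geometric route via statement (1) of Proposition \ref{prop:positivity_checks}. It splits into valance $\ge 3$ (Proposition \ref{prop:case1}) and valance $2$. In the first case one exhibits an explicit decomposition $2\e_\va-\A(\e_\va)=\Zem(\hom{\sigma_1})+\Zem(\hom{\sigma_2})$, where $\sigma_1$ is the diagonal, oriented down and to the left, of a pair of adjacent rectangles in $\cyl_\va$ chosen to minimize total width, and $\sigma_2$ is the diagonal of the complementary rectangle. The signs of $\hol(\sigma_i)\wedge\btheta$ are then checked directly: minimality gives $w_1\le 2\lambda/\val(\va)$; the no-valance-one hypothesis, via Proposition \ref{prop:detecting_spokes} and Corollary \ref{cor:bound}, gives $w_1\ge\lambda-\sqrt{\lambda^2-4}$ (hence $w_2\le\sqrt{\lambda^2-4}$) and $\lambda\ge 3/\sqrt 2$; and Proposition \ref{prop:limit_set2} bounds the slope $y/x<(\lambda-\sqrt{\lambda^2-4})/2$. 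Together these force both wedge products nonnegative (in fact positive, since Theorem \ref{thm:no saddles} rules out saddle connections parallel to $\btheta$), and equation \ref{eq:surv_equivalent} then gives $\langle\f,\x\rangle\ge 0$. The valance-two case is a smaller version of the same argument. In short, the paper trades your quadrant-chasing for a single geometric decomposition whose wedge-product verification happens entirely ``at time zero.''
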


Recall, the {\em valance} of a vertex $\vv \in \V$ is the number $\textit{val}(\vv)=\# \{ \vx \in \V~:~ \vx \sim \vv\}$.

\begin{proposition}
\label{prop:case1}
The conclusion of Lemma \ref{lem:2ndB} holds when $\textit{val}(\va) \geq 3 \label{not:val}$ 
and $\va$ is not a member of a spoke.
\end{proposition}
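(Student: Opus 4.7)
My plan is to apply Proposition \ref{prop:positivity_checks}(2) with the vector $\x = 2\e_\va - \A(\e_\va) \in \R^\V_c$, so that $\langle \f, \x \rangle = 2\f(\va) - \A(\f)(\va)$; it therefore suffices to exhibit a critical time $t$ for which $\Upsilon^{\gamma(g_t)}(\x) \in \hat \Q_{s_t}$. Because $g_1 = h^{-1}$ and $\btheta \in \Q_{++}$ (so $s_0 = ++$), Proposition \ref{prop:shrinking_and_sign} pins down the prefix of $\langle g_n \rangle$ up to the first critical time: either $t = 1$ with $s_1 = ++$; or else $s_1 = -+$ forces $g_2 = v h^{-1}$, and then either $t = 2$ with $s_2 = -+$ or else $s_2 = --$ forces $g_3 = h^{-1} v h^{-1}$, continuing alternately between $h^{-1}$ and $v$ until a critical time occurs. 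Since $\gamma$ swaps $h^{\pm 1}$ with $v^{\mp 1}$, the functions $\y_n := \Upsilon^{\gamma(g_n)}(\x)$ are built recursively as $\y_1 = \Vo(\x)$, $\y_2 = \Ho^{-1}(\y_1)$, $\y_3 = \Vo(\y_2)$, and so on.

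Direct computation gives $\y_1(\va) = 2$, $\y_1(\vb) = 1$ for each $\vb \sim \va$, and $\y_1$ vanishes elsewhere, so $\y_1 \in \hat \Q_{++}$ and the case $t = 1$ is handled. Next, $\y_2(\va) = 2 - \val(\va)$, $\y_2(\vv) = -N(\vv, \va)$ at other $\Alpha$-vertices $\vv$ (where $N(\vv, \va)$ is the number of common $\Beta$-neighbors of $\vv$ and $\va$), and the $\Beta$-entries of $\y_2$ coincide with those of $\y_1$. Using $\val(\va) \geq 3$ we get $\y_2(\va) \leq -1$ and all $\Alpha$-entries non-positive, so $\y_2 \in \hat \Q_{-+}$, handling $t = 2$. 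For $t = 3$, applying $\Vo$ leaves the $\Alpha$-entries of $\y_2$ intact and modifies only the $\Beta$-entries. At $\vb \sim \va$ one computes
\[
\y_3(\vb) \;=\; 3 - \val(\va) - \sum_{\vc \sim \vb,\ \vc \neq \va} N(\vc, \va) \;\leq\; 4 - \val(\va) - \val(\vb) \;\leq\; -1,
\]
using $\val(\vb) \geq 2$ from the no-valence-one assumption in force; at $\vb \not\sim \va$ all contributions are non-positive. Hence $\y_3 \in \hat \Q_{--}$.

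The main obstacle will be handling the sub-cases in which the first critical time is $t \geq 4$. I would proceed inductively by continuing to alternate $\Ho^{-1}$ and $\Vo$ on $\y_3$ and tracking $\y_n$ on progressively larger combinatorial neighborhoods of $\va$: at each step one operator toggles only the $\Alpha$- or only the $\Beta$-entries and adds a signed contribution supported on vertices one step farther from $\va$, with magnitudes controlled by products of valences, each $\geq 2$ under the no-valence-one assumption. The valence condition $\val(\va) \geq 3$ is exactly what is needed to ensure that every entry of $\y_n$ carries the sign prescribed by $s_n$, so that $\y_n \in \hat \Q_{s_n}$ at the candidate critical time. Finally, the ``not in a spoke'' hypothesis plays no role in this argument: since $\G$ has no valence-one vertex, it admits no spoke at all, so every vertex vacuously fails to belong to one; the hypothesis is included only for parallelism with later propositions that allow valence-one vertices.
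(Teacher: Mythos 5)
Your route is genuinely different from the paper's. The paper explicitly exhibits two diagonal saddle connections $\sigma_1,\sigma_2$ inside the cylinder $\cyl_\va$ satisfying $\Zem(\hom{\sigma_1}+\hom{\sigma_2})=2\e_\va-\A(\e_\va)$ and $\hol(\sigma_i)\wedge\btheta>0$, using the moduli inequalities from Proposition \ref{prop:detecting_spokes} and the eigenvalue bound $\lambda\geq n/\sqrt{n-1}$ of Corollary \ref{cor:bound}; this realizes condition (1) of Proposition \ref{prop:positivity_checks}. You instead attack the equivalent condition (2) by sign-tracking $\y_n=\Upsilon^{\gamma(g_n)}(\x)$. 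Your verification for $t=1,2,3$ is correct, and so is your observation that a spoke requires a valance-one vertex, so the spoke hypothesis is vacuous under Lemma \ref{lem:2ndB}'s assumptions (though the paper's proof of this proposition in fact uses only the local hypothesis that $\va$ avoids a spoke, not global absence of valance-one vertices, which matters for possible extensions of the type alluded to after Theorem \ref{thm:properties}).

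However, there is a genuine gap for critical times $t\geq 4$. You assert an induction ``controlled by products of valences'' without formulating an inductive hypothesis or proving an inductive step, and the existing machinery in the paper will not close it for you. The quadrants $q_n$ containing $\y_n$ (namely $+-,++,-+,--,+-,\ldots$, lagging the sign sequence $s_n$ by one index) are never stabilized by the next operator applied: $\rhoa{v}(\Q_{+-})\not\subset\Q_{+-}$ and $\rhoa{h^{-1}}(\Q_{++})\not\subset\Q_{++}$, so Proposition \ref{prop:quadrant_tracking} does not apply, and Proposition \ref{prop:gamma_critical} requires $\x\in\hat\Q_{s_0}=\hat\Q_{++}$ while here $\x\in\hat\Q_{+-}$. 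At each non-critical step one family of entries (the $\Alpha$- or the $\Beta$-values of $\y_n$) must flip sign, and this flip is not formal: it only occurs if the newly contributed neighbor-sum dominates the previous entry in magnitude. That dominance is exactly the quantitative content the paper extracts from the moduli bounds via Propositions \ref{prop:detecting_spokes} and \ref{prop:limit_set2}. To complete your argument you would have to state and prove a numerical inductive invariant on the magnitudes $|\y_n(\vv)|$ that forces the required flips at every step, not merely observe that they occur for $t\leq 3$ and in particular examples.
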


For one step in the proof, we need the following result,
which is a consequence of Proposition \ref{prop:detecting_spokes}.

\begin{corollary}
\label{cor:bound}
Suppose $\G$ contains a vertex of valance $n$ which is not a member of a spoke. If $\G$ admits a positive eigenfunction with eigenvalue $\lambda$, then $\lambda \geq \frac{n}{\sqrt{n-1}}$. 
\end{corollary}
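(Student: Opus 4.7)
The plan is to apply Proposition \ref{prop:detecting_spokes} at the hypothesized vertex $\vx$ and then perform a one-line counting argument using the eigenfunction equation.

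Suppose $\vx \in \V$ has $\val(\vx) = n$ and does not belong to a spoke. Since $n \geq 2$ (a valance-one vertex would automatically lie in a spoke by Definition \ref{def:spoke}), Proposition \ref{prop:detecting_spokes} applies and its contrapositive tells us that every neighbor $\vy$ of $\vx$ satisfies
\[
\frac{\w(\vy)}{\w(\vx)} \geq \frac{\lambda-\sqrt{\lambda^2-4}}{2}.
\]
Writing the eigenfunction identity $\A\w=\lambda\w$ at $\vx$ and summing over the $n$ neighbors,
\[
\lambda\,\w(\vx) \;=\; \sum_{\vy\sim\vx}\w(\vy) \;\geq\; n\cdot\frac{\lambda-\sqrt{\lambda^2-4}}{2}\,\w(\vx).
\]
Since $\w(\vx)>0$, this reduces to $\lambda(\lambda+\sqrt{\lambda^2-4})\geq 2n$, after multiplying both sides by $\lambda+\sqrt{\lambda^2-4}>0$ and using $\bigl(\lambda-\sqrt{\lambda^2-4}\bigr)\bigl(\lambda+\sqrt{\lambda^2-4}\bigr)=4$.

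It remains to verify that this inequality is equivalent to $\lambda\geq n/\sqrt{n-1}$. Set $F(\lambda)=\lambda^2+\lambda\sqrt{\lambda^2-4}$, which is monotonically increasing on $[2,\infty)$. A direct check gives $F\bigl(n/\sqrt{n-1}\bigr)=2n$: indeed, for $\lambda=n/\sqrt{n-1}$ we have $\lambda^2-4 = (n-2)^2/(n-1)$, so $\sqrt{\lambda^2-4} = (n-2)/\sqrt{n-1}$ (valid for $n\geq 2$), and thus
\[
F(\lambda)=\frac{n^2}{n-1}+\frac{n(n-2)}{n-1}=\frac{2n(n-1)}{n-1}=2n.
\]
By monotonicity of $F$, the inequality $F(\lambda)\geq 2n$ forces $\lambda\geq n/\sqrt{n-1}$, as claimed. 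There is no real obstacle here; the argument is a short computation, and the only thing to be careful about is invoking Proposition \ref{prop:detecting_spokes} in its ``if not in a spoke, then no neighbor is too small'' direction and ensuring $n\geq 2$ so that the proposition's hypothesis is met.
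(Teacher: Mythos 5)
Your argument is correct and is essentially the same as the paper's: invoke Proposition \ref{prop:detecting_spokes} to lower-bound each ratio $\w(\vy)/\w(\vx)$ by $\tfrac{\lambda-\sqrt{\lambda^2-4}}{2}$, sum the eigenfunction relation over the $n$ neighbors, and then simplify. The only difference is that you spell out the algebraic equivalence between $\lambda \geq n\cdot\tfrac{\lambda-\sqrt{\lambda^2-4}}{2}$ and $\lambda \geq n/\sqrt{n-1}$ (via monotonicity of $F(\lambda)=\lambda^2+\lambda\sqrt{\lambda^2-4}$), which the paper compresses into the sentence ``this is equivalent to the inequality given in the corollary.''
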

\begin{proof}
Let $\vv$ be the vertex of valance $n$ which is not a member of a spoke, and let $\f$ be a positive eigenfunction
with eigenvalue $\lambda$. 
Then by Proposition \ref{prop:detecting_spokes}, if $\vw \sim \vv$ we know $\frac{\f(\vw)}{\f(\vv)} \geq \frac{\lambda-\sqrt{\lambda^2-4}}{2}$. Since
$\sum_{\vw \sim \vv} \f(\vw)=\lambda \f(\vv)$, we know that
$$\lambda \geq n \big(\frac{\lambda-\sqrt{\lambda^2-4}}{2}\big).$$
This is equivalent to the inequality given in the corollary.
\end{proof}

\begin{proof}[Proof of Proposition \ref{prop:case1}]
By assumption $g_1=h^{-1}$. Thus, by Proposition \ref{prop:limit_set2}, we know $\bm \theta=(x,y) \in {\mathbb S}^1$ satisfies 
$$0<\frac{y}{x}<\frac{\lambda-\sqrt{\lambda^2-4}}{2}.$$
We will show that there exists two saddle connections $\sigma_1$ and $\sigma_2$ such that
\begin{enumerate}
\item $\Zem(\hom{\sigma_1}+\hom{\sigma_2})=2 \e_a-\A(\e_a)$, and
\item $\hol(\sigma_i) \wedge \bm \theta \geq 0$ for $i=1,2$.
\end{enumerate}
We will show these two statements imply the proposition. Statement (1) implies that
$$\langle \f, 2 \e_a-\A(\e_a) \rangle=\langle \f, \Zem(\hom{\sigma_1}+\hom{\sigma_2}) \rangle.$$
We wish to show that this quantity is non-negative. This follows from statement (2) of Proposition \ref{prop:positivity_checks} with 
$\x=\Zem(\hom{\sigma_1}+\hom{\sigma_2})$. Thus, the existence of such $\sigma_1$ and $\sigma_2$ imply the proposition.

Consider our surface $S=S(\G, \w)$, where $\A \w=\lambda \w$. We may assume that
$\w(\va)=1$ by scaling $\w$ if necessary.  The cylinder $\cyl_\va$ has a decomposition into rectangles of the form
$$\cyl_\va=\bigcup_{\vb \sim \va} (\cyl_\va \cap \cyl_\vb).$$
Let $k=\textit{val}(\va) \geq 3$.
We may number these rectangles $R_1, \ldots, R_k$ so that each $R_i$ is adjacent to $R_{i+1 \imod k}$.
Similarly, number the relevant vertices $\vb_1, \ldots, \vb_k \in \Beta$ so that $R_i=\cyl_\va \cap \cyl_{\vb_i}$ for all $i$.
Choose $j \in \{1, \ldots k\}$ so that
\begin{equation}
\label{eq:width_min}
\w(\vb_j)+\w(\vb_{j+1 \imod k})=\min~\{\w(\vb_i)+\w(\vb_{i+1 \imod k})~:~i=1, \ldots, k\}.
\end{equation}
Choose $\sigma_1$ to be the diagonal of the rectangle $R_j \cup R_{j+1 \imod k}$ which can be oriented downward and leftward. 
Choose $\sigma_2$ to be the diagonal of the complimentary rectangle, $\cyl_\va \smallsetminus (R_j \cup R_{j+1 \imod k})$, 
oriented downward and leftward. See Figure \ref{fig:rectangles}.

\begin{figure}[ht]
\begin{center}
\includegraphics[height=0.75in]{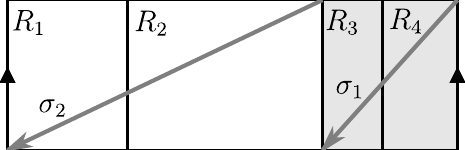}
\caption{An example decomposition of $\cyl_\va$ into rectangles. In this case, $\textit{val}(\va)=4$ and $j=3$ is the index satisfying equation \ref{eq:width_min}.}
\label{fig:rectangles}
\end{center}
\end{figure}

We now verify statements (1) and (2) hold for our choices of $\sigma_1$ and $\sigma_2$. Note that
$\sigma_1 \cup \sigma_2$ cross $\cyl_\va$ twice with positive algebraic sign, and cross
each of $\cyl_\vb$ for $\vb \sim \va$ once with negative algebraic sign. Moreover, $\sigma_1 \cup \sigma_2$ cross no other cylinders. 
This verifies statement (1). 

Now we verify statement (2). 
Write $\hol(\sigma_1)=(-w_1, -1)$ and $\hol(\sigma_2)=(-w_2, -1)$ with $w_1>0$ and $w_2>0$. 
Now recall that $\bm \theta=(x,y) \in S^1 \cap \Q_{++}$ is $\lambda$-renormalizable and that $g_1=h^{-1}$. Thus, Proposition \ref{prop:limit_set2}
implies that $\frac{y}{x}<\frac{\lambda-\sqrt{\lambda^2-4}}{2}$. 
For $i=1,2$, we have
\begin{equation}
\label{eq:hol_wedge}
\hol(\sigma_i) \wedge \bm \theta=(-w_i, -1) \wedge (x,y)=x-w_i y=x(1-w_i\frac{y}{x})>
\frac{x}{2}\big(2-w_i(\lambda-\sqrt{\lambda^2-4})\big).
\end{equation}
(Note $x>0$ since $\bm \theta \in \Q_{++}$.)
To proceed we must find upper bounds for $w_1$ and $w_2$. 
Because of equation \ref{eq:width_min}, we know that 
$$w_1 \leq \frac{2 \lambda}{\textit{val}(\va)} \leq \frac{2 \lambda}{3}.$$
($w_1$ must be less than or equal to the average length of pairs of adjacent rectangles.)
Therefore, by continuing equation \ref{eq:hol_wedge} for $i=1$ we have
$$\begin{array}{rcl}
\displaystyle \hol(\sigma_1) \wedge \bm \theta & \displaystyle > &  \displaystyle \frac{x}{2}\big(2-(\frac{2 \lambda}{3})(\lambda-\sqrt{\lambda^2-4})\big)=
\frac{x}{3}(3+\lambda\sqrt{\lambda^2-4}-\lambda^2) \\
& \displaystyle = & \displaystyle \frac{x}{3}\big(\frac{2 \lambda^2-9}{\lambda\sqrt{\lambda^2-4}+\lambda^2-3}\big) \geq 0.\end{array}$$
Here, the $\geq 0$ statement is somewhat subtle. The denominator $\lambda\sqrt{\lambda^2-4}+\lambda^2-3$ is positive because
$\lambda \geq 2$, and the numerator $2 \lambda^2-9 \geq 0$ because $\lambda \geq \frac{3 \sqrt{2}}{2}$ by Corollary \ref{cor:bound}.
To get an upper bound for $w_2$ we find a lower bound for $w_1$. We know 
$\va$ is not a member of a spoke and $\sigma_1$ crosses two rectangles, so by Proposition \ref{prop:detecting_spokes} we have
$$w_1 \geq \lambda-\sqrt{\lambda^2-4}.$$ Since the cylinder $\cyl_\va$ has inverse modulus $\lambda$ and width one, we know $w_1+w_2=\lambda$.
Therefore $w_2 \leq \sqrt{\lambda^2-4}$. By continuing equation \ref{eq:hol_wedge} for $i=2$ we have
$$\begin{array}{rcl}
\displaystyle \hol(\sigma_2) \wedge \bm \theta & \displaystyle > &  \displaystyle \frac{x}{2}\big(2-(\sqrt{\lambda^2-4})(\lambda-\sqrt{\lambda^2-4})\big)=
\frac{x}{2}(\lambda^2-2-\lambda\sqrt{\lambda^2-4}) \\
& \displaystyle = & \displaystyle \frac{x}{2}\Big(\frac{4}{\lambda^2-2+\lambda\sqrt{\lambda^2-4}}\Big)> 0.\end{array}$$
This proves statement (2) and concludes the proof.
\end{proof}

\begin{proposition}
Lemma \ref{lem:2ndB} holds when $\textit{val}(\va) = 2$ unless $\va$ belongs to a spoke.
\end{proposition}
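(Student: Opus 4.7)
The plan is to adapt the proof of Proposition \ref{prop:case1}. I will exhibit two oriented saddle connections $\sigma_1, \sigma_2$ satisfying
\begin{enumerate}
\item $\Zem(\hom{\sigma_1}+\hom{\sigma_2}) = 2\e_\va - \A(\e_\va)$, and
\item $\hol(\sigma_i) \wedge \bm\theta > 0$ for $i=1,2$,
\end{enumerate}
so that Proposition \ref{prop:positivity_checks}(1), applied with $\x = \Zem(\hom{\sigma_1}+\hom{\sigma_2})$, yields $\langle \f, 2\e_\va-\A(\e_\va)\rangle \geq 0$.

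Since $\val(\va)=2$, the cylinder $\cyl_\va$ has exactly two neighbors $\vb_1,\vb_2 \in \Beta$, and it decomposes as $\cyl_\va = R_1 \cup R_2$ with $R_i = \cyl_\va \cap \cyl_{\vb_i}$. Normalize $\w$ so that $\w(\va)=1$, and write $w_i = \w(\vb_i)$. The natural candidates are the two downward-and-leftward oriented diagonals $\sigma_i$ of the rectangles $R_i$, so that $\hol(\sigma_i) = (-w_i,-1)$. Then $\sigma_1 \cup \sigma_2$ crosses $\cyl_\va$ twice with positive algebraic sign and each $\cyl_{\vb_i}$ once with negative sign, and crosses no other cylinder; this yields condition (1) directly from the definition of $\Zem$.

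The substance of the proof lies in verifying (2), which for $\bm\theta=(x,y) \in \Q_{++}$ amounts to the inequality $x/y > w_i$. Because $\bm\theta \in \Shrink_\lambda(h^{-1}) \cap \Rn_\lambda$, Proposition \ref{prop:limit_set2} gives the strict lower bound
\[
\frac{x}{y} > \frac{\lambda + \sqrt{\lambda^2-4}}{2}.
\]
For the matching upper bound on $w_i$: the relation $\A\w = \lambda\w$ evaluated at the valance-two vertex $\va$ reads $w_1 + w_2 = \lambda$, while the hypothesis that $\va$ does not belong to a spoke, via Proposition \ref{prop:detecting_spokes}, forces $w_j \geq (\lambda - \sqrt{\lambda^2-4})/2$ for \emph{both} $j=1,2$. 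Therefore
\[
w_i \;=\; \lambda - w_{3-i} \;\leq\; \lambda - \frac{\lambda-\sqrt{\lambda^2-4}}{2} \;=\; \frac{\lambda+\sqrt{\lambda^2-4}}{2} \;<\; \frac{x}{y},
\]
establishing (2).

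The crucial point — and the only place the hypothesis enters — is that the not-in-a-spoke assumption, through Proposition \ref{prop:detecting_spokes}, applies symmetrically to both neighbors of $\va$. This is what converts the one-sided ratio bound into a two-sided bound on each $w_i$ and brings them into the interval where $\hol(\sigma_i) \wedge \bm\theta$ stays positive. I expect this symmetric use of Proposition \ref{prop:detecting_spokes} to be the main (indeed only) subtlety; the rest of the argument is parallel to Proposition \ref{prop:case1} but simpler, since each rectangle is crossed by a single saddle connection rather than by a diagonal that spans a pair.
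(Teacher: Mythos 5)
Your proof is correct and follows essentially the same route as the paper: same two diagonal saddle connections $\sigma_1,\sigma_2$ of the two rectangles $R_i=\cyl_\va\cap\cyl_{\vb_i}$, same verification that $\Zem(\hom{\sigma_1}+\hom{\sigma_2})=2\e_\va-\A(\e_\va)$, and the same pair of inequalities — the lower bound on $x/y$ from Proposition~\ref{prop:limit_set2} applied to $\Shrink_\lambda(h^{-1})\cap\Rn_\lambda$, and the upper bound $w_i\leq\frac{\lambda+\sqrt{\lambda^2-4}}{2}$ obtained by feeding the symmetric spoke-detection bound $w_j\geq\frac{\lambda-\sqrt{\lambda^2-4}}{2}$ (Proposition~\ref{prop:detecting_spokes}) into the eigenvalue relation $w_1+w_2=\lambda$. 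You have also correctly isolated the subtlety: the not-in-a-spoke hypothesis is precisely what makes Proposition~\ref{prop:detecting_spokes} apply to \emph{both} neighbors, without which one of the $w_i$ could exceed $\frac{\lambda+\sqrt{\lambda^2-4}}{2}$ and the wedge product would fail to be positive.
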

\begin{proof}
Let $\vb_1$ and $\vb_2$ denote the two vertices adjacent to $a$. For $i=1,2$, let $\sigma_i$ denote the
diagonal of rectangle $\cyl_\va \cap \cyl_{\vb_i}$ which can be oriented downward and leftward.
We observe that $\Zem(\hom{\sigma_1}+\hom{\sigma_2})=2 \e_\va-\A(\e_\va)$. Write $\bm \theta=(x,y)\in \Q_{++}$. 
Because $g_1=h^{-1}$, Proposition \ref{prop:limit_set2}
implies that $y<x(\frac{\lambda-\sqrt{\lambda^2-4}}{2})$. For $i=1,2$, we have
$\hol(\sigma_i)=\big(-\w(\vb_i),-\w(\va)\big)$. By Proposition \ref{prop:detecting_spokes}, 
$\w(\vb_i) \geq \w(\va) (\frac{\lambda-\sqrt{\lambda^2-4}}{2})$.
We have $\w(\vb_1)+\w(\vb_2)=\lambda \w(\va)$. Thus,
$$\w(\vb_1)=\lambda \w(\va)-\w(\vb_2) \leq \w(\va) \big(\lambda-\frac{\lambda-\sqrt{\lambda^2-4}}{2}\big)=\w(\va) \frac{\lambda+\sqrt{\lambda^2-4}}{2}.$$
And similarly, $\w(\vb_2) \leq \w(\va) (\frac{\lambda+\sqrt{\lambda^2-4}}{2})$.
Thus, we compute
$$\hol(\sigma_i) \wedge \bm \theta = x\w(\va)-y\w(\vb_i)>x \w(\va)-\big(x(\frac{\lambda-\sqrt{\lambda^2-4}}{2})\big)\big(\w(a) (\frac{\lambda+\sqrt{\lambda^2-4}}{2})\big)=0.$$
Thus, if $\f$ is a $\bm \theta$-survivor, $\langle \f, \Zem(\hom{\sigma_i}) \rangle >0$.
And therefore $\langle \f, 2 \e_a-\A(\e_a) \rangle \geq 0$
as desired.
\end{proof}

\subsection{Effective decay for the integers}
\label{ss:effective_decay1}
In this subsection, we will only consider the case when $\G=\G_\Z$, the graph whose vertex set is $\Z$ and edges join two integers if and only if they differ by one.
Our decomposition $\V=\Alpha \cup \Beta$ is a decomposition into even and odd integers.

\begin{lemma}[Effective decay for the integers]
\label{lem:eff_int}
Let $\langle g_i \rangle$ be a shrinking sequence for a renormalizable direction $\bm \theta$.
Then, there is a critical time $t>0$ for which 
$$|\Act^{g_{t}}(\f)(\vv)| \leq \frac{1}{2} |\f(\vv)|$$
for any $\bm \theta$-survivor $\f$ and any $\vv \in \Z$. 
\end{lemma}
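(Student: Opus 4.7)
By Lemma \ref{lem:monotonic_decay}, the sequence $|\Upsilon^{g_n}(\f)(\vv)|$ is non-increasing in $n$, so it suffices to locate a single critical time $t$ at which a uniform decrease by a factor of $1/2$ is forced. Using the dihedral symmetry (Remark \ref{rem:dihedral}) together with the translation invariance of $\G_\Z$, I reduce to the case $\bm\theta=(x,y)\in \Q_{++}\cap \Rn_\lambda$, $g_1=h^{-1}$, and $\vv \in \Alpha$. On $\G_\Z$ every vertex has valance $2$, so the operators act very transparently on the value $\f(\vv)$: $\Vo^k$ fixes $\f(\vv)$, while
$$\Ho^{-k}(\f)(\vv)=\f(\vv)-k\bigl(\f(\vv-1)+\f(\vv+1)\bigr).$$
Consequently only the $h$-subword of the shrinking sequence can alter $\f(\vv)$, and the problem reduces to an analysis of a three-term linear combination built from $\bigl(\f(\vv-1),\f(\vv),\f(\vv+1)\bigr)$.

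Next, I take $t$ to be the first critical time of $\langle g_n\rangle$, which exists by Corollary \ref{cor:critical_times_occur}. Proposition \ref{prop:crit_equiv}(\ref{crit6}) identifies $g_{t+1}g_{t-1}^{-1}$ as one of eight length-two words; after the dihedral/translation reductions there remain two essentially distinct configurations to analyse, namely (i) two consecutive $h^{-1}$ steps (when $t=1$ and $g_2g_0^{-1}=h^{-2}$), and (ii) an $h^{-1}$ step adjacent to a $v^{\pm 1}$ step in a critical combination. In each configuration I will combine the survivor conditions $\Upsilon^{g_{t-1}}(\f)\in\hat\Q_{s_{t-1}}$ and $\Upsilon^{g_t}(\f)\in\hat\Q_{s_t}$ with $s_{t-1}=s_t$, together with the quantitative renormalizability bound $y/x<(\lambda-\sqrt{\lambda^2-4})/2$ from Proposition \ref{prop:limit_set2}, in order to pin the ratio $\bigl(\f(\vv-1)+\f(\vv+1)\bigr)/\f(\vv)$ inside an interval tight enough to force $|\Upsilon^{g_t}(\f)(\vv)|\le\tfrac12|\f(\vv)|$. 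The sign and quantitative input comes from a saddle-connection diagonal construction of the type used in the proof of Proposition \ref{prop:case1}, but specialised to the valance-$2$ geometry where each cylinder decomposes into exactly two rectangles.

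The main technical obstacle is converting the qualitative statement that ``$t$ is a critical time'' into the explicit $1/2$ bound uniformly in $\f$ and $\vv$. Monotonic decay alone yields only $(\f(\vv-1)+\f(\vv+1))/\f(\vv)\in[1,2]$ in the quadrant-flipping sub-case of the first step $g_1=h^{-1}$, which merely re-proves $|\Ho^{-1}(\f)(\vv)|\le|\f(\vv)|$; criticality of $t$ must buy the extra room required to squeeze this ratio into $[\tfrac12,\tfrac32]$. I expect the criticality to manifest through Proposition \ref{prop:crit_equiv}(\ref{crit2}), which says $\rhoa{g_{t-1}g_t^{-1}}(\Q_{s_t})\subset \Q_{s_t}$, so that $\rhoa{g_{t-1}}(\bm\theta)$ lies in a sub-wedge of $\Q_{s_t}$ bounded strictly away from the coordinate axes. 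Combined with the renormalizability bound this should yield the required tight quantitative estimate by an elementary inequality analogous to the bound on $\hol(\sigma_i)\wedge\bm\theta$ obtained in equation \ref{eq:hol_wedge}.
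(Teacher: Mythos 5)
Your proposal has a genuine gap in two places, and the route you sketch would not close it.

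First, the claim that ``only the $h$-subword of the shrinking sequence can alter $\f(\vv)$'' is wrong. It is true that $\Vo^k$ fixes $\f(\vv)$ for $\vv\in\Alpha$, but $\Vo^k$ does modify the neighboring values $\f(\vv\pm1)\in\Beta$, and those modified values feed into the next $\Ho$ application. After only two $h^{-1}$ steps with a $v$ step in between, $\Upsilon^{g_n}(\f)(\vv)$ already depends on $\f(\vv),\f(\vv\pm1),\f(\vv\pm2),\f(\vv\pm3)$; the dependence keeps spreading. So the reduction to a three-term linear combination in $\bigl(\f(\vv-1),\f(\vv),\f(\vv+1)\bigr)$ is not available beyond the first step, and the whole sequence (not just its $h$-part) matters.

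Second, taking $t$ to be the \emph{first} critical time cannot work uniformly over survivors. The survivor condition at a single critical time $t$ is a sign condition (it says $\Upsilon^{g_t}(\f)\in\hat\Q_{s_t}$), which gives you only $\Upsilon^{g_1}(\f)(\vv)=\f(\vv)-\f(\vv-1)-\f(\vv+1)\ge 0$ in your base case; that places $(\f(\vv-1)+\f(\vv+1))/\f(\vv)$ in $[0,1]$, not $[\tfrac12,\tfrac32]$. Nothing in the constraints at times $0,\ldots,t$ alone prevents $\f(\vv\pm1)$ from being much smaller than $\f(\vv)$, so the ratio can sit near $0$ and the factor-$\tfrac12$ decrease fails at the first critical time. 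The renormalizability bound of Proposition \ref{prop:limit_set2} constrains $\bm\theta$, not the unknown survivor $\f$, and the survivor conditions are qualitative; you have no quantitative handle on $\f$, which is exactly the obstacle you flagged but then did not resolve.

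The idea your argument is missing is the duality trick that the paper uses. Rather than bound $|\Upsilon^{g_t}(\f)(\vv)|$ directly (which requires quantitative control of $\f$ that sign conditions do not provide), the paper passes to the finitely supported dual object $\e_\vv\in\R^\V_c$ and uses the pairing identity $|\f(\vv)|=\langle\Upsilon^{g_t}(\f),\Upsilon^{\gamma(g_t)}(\e'_\vv)\rangle$. At a critical time, both arguments of the pairing lie in $\hat\Q_{s_t}$ by Proposition \ref{prop:gamma_critical}, so every term of the sum is non-negative and one may drop to the single $\vw=\vv$ term, yielding $|\f(\vv)|\ge|\Upsilon^{g_t}(\f)(\vv)|\,|\Upsilon^{\gamma(g_t)}(\e'_\vv)(\vv)|$. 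The entire problem is thereby converted into a purely combinatorial growth statement about $\Upsilon^{\gamma(g_t)}(\e_\vv)(\vv)$, which is independent of $\f$; that statement is then established by an explicit case analysis on renormalizing words (Proposition \ref{prop:growth_list}), where the factor of $2$ comes from the integer coefficients of the linear action, and one pushes forward to a later critical time using the monotonicity of Lemma \ref{lem:growth}. Your plan never makes this pass to the dual side, and without it you are left trying to extract a quantitative bound from conditions that carry no quantitative content.
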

The idea of the proof is the following. We claim it is sufficient to find
a critical time $t$ for which 
\begin{equation}
\label{eq:half}
|\Upsilon^{\gamma(g_t)}(\e_\vv)(\vv)| \geq 2 \quad \textrm{
for all $\vv \in \Z$.}
\end{equation}
(The bulk of this section will be spent proving equation \ref{eq:half}.)
Let $\e_\vv \in \R^\Z_c$ be as in equation \ref{eq:e}.
Set $\e'_\vv=\pm \e_\vv \in \hat \Q_s$ where the sign is chosen depending on the quadrant $\Q_s$ containing $\bm \theta$. 
By equation \ref{eq:operator_relations2}, 
we have 
\begin{equation}
\label{eq:sum}
|\f(\vv)|=\langle \f, \e'_\vv \rangle=\langle \Upsilon^{g_t}(\f), \Upsilon^{\gamma(g_t)}(\e'_\vv)\rangle=
\sum_{\vw \in \V} \big(\Upsilon^{g_t}(\f)(\vw)\big)\big(\Upsilon^{\gamma(g_t)}(\e'_\vv)(\vw)\big).
\end{equation}
By Proposition \ref{prop:gamma_critical}, we have
$\Upsilon^{\gamma(g_t)}(\e'_\vv) \in \hat \Q_{s_t}$ for all critical times $t$.
Since both $\Upsilon^{g_t}(\f)$ and 
$\Upsilon^{\gamma(g_t)}(\e'_\vv)$ lie in $\hat \Q_{s_t}$, all terms in the sum above are non-negative. Therefore,
$$|\f(\vv)|=
\langle \Upsilon^{g_t}(\f), \Upsilon^{\gamma(g_t)}(\e'_\vv)\rangle
\geq \big(\Upsilon^{g_t}(\f)(\vv)\big)\big( \Upsilon^{\gamma(g_t)}(\e'_\vv)(\vv)\big)
\geq 2 |\Upsilon^{g_t}(\f)(\vv)|,$$
as claimed by the lemma.

\begin{proposition}
\label{prop:no_crit}
Consider the geodesic ray defined 
$$g_n=\begin{cases}
(v^{-1} h)^{\frac{n}{2}} & \textrm{if $n$ is even} \\
h(v^{-1} h)^{\frac{n-1}{2}} & \textrm{if $n$ is odd}.
\end{cases}$$
Then for all $n \geq 1$ and all $\va \in \Alpha$ and $\vb \in \Beta$, 
$$\Upsilon^{g_n}(\e_\va)=\begin{cases}
\sum_{\vc \in [\va-n+1,\va+n-1]} \e_\vc & \textrm{if $n \equiv 1 \pmod{4}$}, \\
\sum_{\vc \in \Alpha \cap [\va-n+1,\va+n-1]} \e_\vc-\sum_{\vc \in \Beta \cap [\va-n+1,\va+n-1]} \e_\vc & \textrm{if $n \equiv 2 \pmod{4}$}, \\
-\sum_{\vc \in [\va-n+1,\va+n-1]} \e_\vc & \textrm{if $n \equiv 3 \pmod{4}$}, \\
-\sum_{\vc \in \Alpha \cap [\va-n+1,\va+n-1]} \e_\vc+\sum_{\vc \in \Beta \cap [\va-n+1,\va+n-1]} \e_\vc & \textrm{if $n \equiv 0 \pmod{4}$.}
\end{cases}$$
$$\Upsilon^{g_n}(\e_\vb)=\begin{cases}
\sum_{\vc \in [\vb-n,\vb+n]} \e_\vc & \textrm{if $n \equiv 1 \pmod{4}$}, \\
\sum_{\vc \in \Alpha \cap [\vb-n,\vb+n]} \e_\vc-\sum_{\vc \in \Beta \cap [\vb-n,\vb+n]} \e_\vc & \textrm{if $n \equiv 2 \pmod{4}$}, \\
-\sum_{\vc \in [\vb-n,\vb+n]} \e_\vc & \textrm{if $n \equiv 3 \pmod{4}$}, \\
-\sum_{\vc \in \Alpha \cap [\vb-n,\vb+n]} \e_\vc+\sum_{\vc \in \Beta \cap [\vb-n,\vb+n]} \e_\vc & \textrm{if $n \equiv 0 \pmod{4}$.}
\end{cases}$$
\end{proposition}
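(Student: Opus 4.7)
My plan is to prove the proposition by induction on $n$, treating the four residues of $n \pmod 4$ together with the two parities $\va \in \Alpha$, $\vb \in \Beta$ in parallel. The combinatorial engine is the observation
$$g_{n+1}g_n^{-1} = \begin{cases} v^{-1} & \text{if $n$ is odd,} \\ h & \text{if $n$ is even,} \end{cases}$$
so the inductive step is simply the application of either $\Vo^{-1}$ or $\Ho$ to the signed indicator function produced at stage $n$. In $\G_\Z$, $\Alpha$ and $\Beta$ are the even and odd integers (or vice versa), and each vertex $\vx$ has exactly two neighbors $\vx-1$ and $\vx+1$ of the opposite parity, so the formulas (\ref{eq:Ho}) and (\ref{eq:Vo}) become the very concrete local rules
$$\Ho(\f)(\vx) = \begin{cases} \f(\vx-1)+\f(\vx)+\f(\vx+1) & \vx\in\Alpha,\\ \f(\vx) & \vx\in\Beta,\end{cases}\qquad \Vo^{-1}(\f)(\vx) = \begin{cases} \f(\vx) & \vx\in\Alpha,\\ \f(\vx)-\f(\vx-1)-\f(\vx+1) & \vx\in\Beta.\end{cases}$$

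For the base case $n=1$, I would just apply these rules to $\e_\va$ and $\e_\vb$. When $\va\in\Alpha$, no neighbor of any $\vx\in\Alpha$ lies in $\Alpha$, so the neighbor sum against $\e_\va$ vanishes and $\Ho(\e_\va)=\e_\va$; this agrees with the claim since $[\va-0,\va+0]=\{\va\}$. When $\vb\in\Beta$, the neighbor sum against $\e_\vb$ at $\vx\in\Alpha$ picks up $1$ exactly when $\vx\in\{\vb-1,\vb+1\}$, while $\Ho(\e_\vb)(\vb)=1$, giving $\Ho(\e_\vb)=\e_{\vb-1}+\e_\vb+\e_{\vb+1}$, as claimed.

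For the inductive step, I would write the stage-$n$ answer uniformly as a signed indicator $\f=\sum_{\vc\in I_n}\sigma_n(\vc)\,\e_\vc$ where $\sigma_n(\vc)\in\{\pm1\}$ is determined by $n\bmod4$ and the parity of $\vc$, then apply the appropriate operator and split $\vx$ into three regions: strict interior of $I_n$ (where $\f(\vx-1),\f(\vx),\f(\vx+1)$ are all nonzero and the formula telescopes to a single sign value depending only on the parity of $\vx$), the two boundary integers just outside $I_n$ (where exactly one of $\f(\vx\pm 1)$ is nonzero, extending the support by one on each side), and points far from $I_n$ (where the operator returns $0$). Matching the resulting signed indicator against the stated formula at the next stage is straightforward once one notes that the endpoints of $I_n$ lie in $\Alpha$ or $\Beta$ depending on $n\bmod 2$, which is exactly why the intervals $[\va-n+1,\va+n-1]$ (for $\va\in\Alpha$) and $[\vb-n,\vb+n]$ (for $\vb\in\Beta$) are asymmetric.

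The hard part will be purely bookkeeping rather than conceptual: I would need to run through eight cases (four residues of $n\bmod 4$, two parities of the base vertex) and verify that the endpoint parities line up so that the interval grows correctly under each transition, e.g., that $[\va-n+1,\va+n-1]$ with $n$ odd (so both endpoints in $\Alpha$) becomes $[\va-n,\va+n]=[\va-(n+1)+1,\va+(n+1)-1]$ after one application of $\Vo^{-1}$, since the two $-1$'s contributed at $\va\pm n$ are the new $\Beta$-endpoints. The sign patterns are handled by the easy observation that each application of $\Vo^{-1}$ negates the $\Beta$-coefficients and each application of $\Ho$ negates the $\Alpha$-coefficients (after the interval has stabilized to contain all of $I_n$), which produces precisely the mod-$4$ cycle recorded in the proposition.
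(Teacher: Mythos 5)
Your approach is correct and matches the paper's: the paper simply asserts that the formulas ``follow straightforwardly from the definition of the action $\Upsilon^G$,'' and your inductive bookkeeping is precisely how one carries out that check. The key points --- the alternation $g_{n+1}g_n^{-1}\in\{v^{-1},h\}$ depending on the parity of $n$, the local rules for $\Ho$ and $\Vo^{-1}$ on $\G_\Z$, and the three-region split into interior (sign flip of one parity), boundary (support growth by one on each side, at the correct parity), and exterior (zero) --- are all handled correctly, and the telescoping works because the relative sign of the $\Alpha$- and $\Beta$-coefficients at stage $n$ is exactly what makes the three-term sum collapse to a single $\pm 1$ under the operator applied at that stage.
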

It is straightforward to check that the formulas in the proposition above follow from the definition of the action $\Upsilon^G$. 

\begin{lemma}
\label{lem:growth}
Let $\langle g_n \rangle$ be a geodesic ray for which $\rhoa{g_1}(\Q_{s}) \subset \Q_s$.
Then for all $\x \in \R^\V_c \cap \hat \Q_s$ and all $\vv \in \R^\V_c$, the sequence
$|\Upsilon^{g_n}(\x)(\vv)|$
is non-decreasing.
\end{lemma}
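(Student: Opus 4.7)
The plan is to induct on $n$, first reducing to the case $\x=\hat{\e}_{\vv_0}$ for a single signed basis vector. Writing $s=(a,b)$ and $\hat{\e}_\vv=a\e_\vv$ for $\vv\in\Alpha$, $\hat{\e}_\vv=b\e_\vv$ for $\vv\in\Beta$, any $\x\in\hat{\Q}_s\cap\R^\V_c$ decomposes as a nonnegative combination $\x=\sum_\vv|\x(\vv)|\hat{\e}_\vv$. By Proposition~\ref{prop:quadrant_tracking} each $\Upsilon^{g_n}(\hat{\e}_\vv)\in\hat{\Q}_{s_n}$, so at any fixed $\vw$ all the terms of $\Upsilon^{g_n}(\x)(\vw)$ share a common sign (or vanish) and magnitudes add. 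It therefore suffices to prove the conclusion for $\x=\hat{\e}_{\vv_0}$.

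For the base case $n=0\to 1$, the hypothesis $\rhoa{g_1}(\Q_s)\subset\Q_s$ restricts $(g_1,s)$ to the eight pairs listed after Corollary~\ref{cor:quadrants}; in each case a direct inspection of $\Ho^{\pm 1}$ or $\Vo^{\pm 1}$ shows that the sign of $\x(\vv)$ and of the neighbor-sum contribution agree at every affected coordinate, so magnitudes add. For the inductive step, set $\y_n=\Upsilon^{g_n}(\x)$ and $\y_{n+1}=\Upsilon^{g_{n+1}g_n^{-1}}(\y_n)$; using the dihedral symmetry of Remark~\ref{rem:dihedral}, we may assume $g_{n+1}g_n^{-1}=h^k$. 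On $\Beta$ nothing changes, so the assertion is trivial. On $\va\in\Alpha$,
$$\y_{n+1}(\va)=\y_n(\va)+k\sum_{\vw\sim\va}\y_n(\vw),$$
where the two summands have signs $a_n$ and $kb_n$ respectively. If $a_n=kb_n$ (subcase B1), the magnitudes add and we are done. If $a_n=-kb_n$ (subcase B2), then by Proposition~\ref{prop:quadrant_tracking} the $\Alpha$-sign flips: $a_{n+1}=-a_n$, $b_{n+1}=b_n$. In this case the conclusion reduces to the key auxiliary inequality
$$|\y_n(\va-1)|+|\y_n(\va+1)|\ \geq\ 2|\y_n(\va)|.$$

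The main obstacle is establishing this auxiliary inequality, and I expect it to be the heart of the argument. The plan is to exploit two structural features of $\G_\Z$: every vertex has valance exactly two, and the geodesic condition forces the power $k$ to be constant within any maximal block of consecutive $h^{\pm 1}$ (or $v^{\pm 1}$) letters of $\langle g_n\rangle$. Inspection of the table for $\Sigma^{h^{\pm 1}}$ shows that $\Sigma^h$ and $\Sigma^{h^{-1}}$ each send their codomains into their respective fixed sets $\{++,--\}$ and $\{+-,-+\}$ after a single application, so within an $h$-block a B2 event at $\va$ can only occur at its first letter; all later letters of the block modify $\y_n(\va)$ in subcase B1 and therefore trivially increase $|\y_n(\va)|$. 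Between $h$-blocks, a $v$-block modifies only the neighbors $\va\pm 1\in\Beta$ of $\va$, and a direct calculation shows that each $v^k$ step augments $|\y_n(\va\pm 1)|$ by a quantity that contains $|\y_n(\va)|$ as a summand, which is exactly the factor-of-two cushion needed to enter the next $h$-block. The plan is to strengthen the inductive hypothesis to include the auxiliary B2 inequality at every $\va\in\Alpha$ that is slated for a B2 transition and to verify that this strengthened invariant is preserved across both $h$-blocks (automatically, since they act trivially on $\Beta$) and $v$-blocks (by the direct calculation above). The delicate point will be the first $v$-letter after an $h$-block, where the $\Alpha$-values at $\va$ may have grown via B1 additions and one must check that the corresponding growth at the $\Beta$-neighbors keeps pace; this is where the valance-two hypothesis of $\G_\Z$ is essential, and I expect a routine but careful induction on block structure to close the argument.
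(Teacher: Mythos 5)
Your reduction to signed basis vectors $\hat\e_{\vv_0}$ and your identification of the B1/B2 dichotomy are both sound, and the auxiliary inequality $|\y_n(\va-1)|+|\y_n(\va+1)|\geq 2|\y_n(\va)|$ is indeed exactly what B2 requires. But the proposal does not actually prove this inequality --- it only sketches a block-structure induction --- and two concrete obstructions arise when one tries to close it. First, the claim that the invariant is preserved ``automatically'' across $h$-blocks is backwards: an $h$-block leaves $|\y(\va\pm 1)|$ (which are $\Beta$-values) fixed while it can strictly increase $|\y(\va)|$, so the $\Alpha$-auxiliary inequality is \emph{worsened}, not preserved, during an $h$-block. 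Second, and more fundamentally, the inequality genuinely fails in general at particular times. For the alternating ray $g_n=(v^{-1}h)^{n/2}$ with $\x=\e_{\va_0}$, Proposition~\ref{prop:no_crit} shows that $\Upsilon^{g_n}(\e_{\va_0})$ is a signed indicator on the interval $[\va_0-n+1,\va_0+n-1]$. At the boundary vertex $\va=\va_0+n-1\in\Alpha$ one has $|\y_n(\va)|=|\y_n(\va-1)|=1$ but $|\y_n(\va+1)|=0$, so the auxiliary inequality reads $1\geq 2$ and fails. The inequality is restored exactly one step before the $h$-letter that touches $\va$, and with \emph{equality} rather than slack --- so any strengthened inductive hypothesis powerful enough to carry the argument must track these boundary values precisely, which in the alternating worst case amounts to re-deriving the explicit formula of Proposition~\ref{prop:no_crit}.

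The paper instead proves the lemma by a minimal-counterexample argument: if a geodesic ray admits a failure of monotonicity at time $n$ and is \emph{not} the alternating ray, one finds an index $i\leq n-2$ with $g_{i+2}g_i^{-1}\in\{vh,h^2,h^{-1}v^{-1},v^{-2}\}$, at which point $s_{i+1}=s_{i+2}$ and $\rhoa{g_{i+2}g_{i+1}^{-1}}(\Q_{s_{i+1}})\subset\Q_{s_{i+1}}$, so one restarts the tail at $g_{i+1}$ and obtains a strictly shorter counterexample. This pins the alternating ray as the only possible culprit, and that single case is dispatched by the explicit computation of Proposition~\ref{prop:no_crit}. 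Your route is not wrong in spirit --- the alternating ray really is the extremal case --- but the minimality reduction is what lets the paper avoid the delicate interleaving of $\Alpha$- and $\Beta$-auxiliary inequalities that your block-structure induction would have to manage. You should either carry out the explicit block computation (in which case you will rediscover Proposition~\ref{prop:no_crit}) or adopt the minimal-counterexample reduction.
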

\begin{proof}
Consider the orbit under the sign action, $s_n=\Sigma^{g_n}(s)$. 
Note that the condition that $\rhoa{g_1}(\Q_{s}) \subset \Q_s$ guarantees that 
$\Upsilon^{g_n}(\hat \Q_{s}) \subset \hat \Q_{s_n}$ for all $n$, by Proposition \ref{prop:quadrant_tracking}.

Suppose the lemma is false. Then there is a geodesic ray $\langle g_n \rangle$ such that 
$|\Upsilon^{g_n}(\x)(\vv)| < |\Upsilon^{g_{n-1}}(\x)(\vv)|$. We may assume that $n$ is minimal over all geodesic rays, all $\x$ and 
and all possible choices of $n$. Since the statement is invariant under the dihedral group, we may assume that $g_1=h$ and $s=++$. 
We will show that 
\begin{equation}
\label{eq:g_n}
g_n=\begin{cases}
(v^{-1} h)^{\frac{n}{2}} & \textrm{if $n$ is even} \\
h(v^{-1} h)^{\frac{n-1}{2}} & \textrm{if $n$ is odd}.
\end{cases}
\end{equation}
Otherwise, there is a $i \leq n-2$ for which $g_{i+2} g_i^{-1} \in \{vh,h^2, h^{-1} v^{-1}, v^{-2}\}$. 
(This is the first $i$ for which $g_{i+2}$ differs from the form above.) In this case, 
$s_{i+1}=s_{i+2}$, by the definition of the expanding sign action. Moreover, $\rhoa{g_{i+2} g_{i+1}^{-1}}(\Q_{s_{i+1}}) \subset \Q_{s_{i+1}}$. 
So letting $\y=\Upsilon^{g_{i+1}}(\x)$ and considering the geodesic ray $\langle g_{i+1+n} g_{i+1}^{-1} \rangle_n$ gives
a shorter counter example. 

Finally the case of $g_n$ as in equation \ref{eq:g_n} follows from Proposition \ref{prop:no_crit} above. Observe that
$$\Upsilon^{g_n}(\x)(\vv)=\sum_{\vw \in \V} \x(\vw) \Upsilon^{g_n}(\e_\vw)(\vv).$$
Note that the sign of each non-zero term is only dependent on $n$, and that by Proposition \ref{prop:no_crit} the sequence
$|\Upsilon^{g_n}(\e_\vw)(\vv)|$ is non-decreasing regardless of the choice of $\vw$.
\end{proof}

This proposition further lowers the bar for proving Lemma \ref{lem:eff_int}. It is sufficient to find any time $n$ for which 
\begin{equation}
\label{eq:half2}
|\Upsilon^{\gamma(g_n)}(\e_\vv)(\vv)| \geq 2 \quad \textrm{
for all $\vv \in \Z$.}
\end{equation}
We will then apply Lemma \ref{lem:growth}. We know that $\Q_{s_0}$ contains the vector $\bm \theta$, which is shrunk by 
$\rhoa{g_1}$. Then, $\rhoa{\gamma(g_1)}(\Q_{s_0})\subset \Q_{s_0}$ and  Lemma \ref{lem:growth} indicates that given any critical time $t \geq n$, we have
$|\Upsilon^{\gamma(g_t)}(\e_\vv)(\vv)|>|\Upsilon^{\gamma(g_n)}(\e_\vv)(\vv)| \geq 2$.
(We have such a critical time because of Corollary \ref{cor:critical_times_occur}.)
Thus, equation \ref{eq:half2} implies equation \ref{eq:half} which implies Lemma \ref{lem:eff_int}.

Recall from \S \ref{ss:renormalizable directions}
that a {\em renormalizing sequence} is a $\lambda$-shrinking sequences
for some $\lambda$-renormalizable direction.

\begin{proposition}
\label{prop:growth_list}
Let $\langle g_n \rangle$ be any renormalizing sequence. Then there is an $n$ for which $g_n$ is of one of the following forms. 
\begin{enumerate}
\item $g_n=h^c v^b h^a$ for some $a \in \Z$ and non-zero $b, c \in \Z$ such that 
$(b,c) \not \in \{(1,-1),(-1,1)\}$. 
\item $g_n=h^{-b} (h^{-b} v^b)^d h^a$ for some $a \in \Z$,  $b \in \{\pm 1\}$, and $d>0$. 
\item $g_n=h^f v^{e} (h^{-b} v^b)^d h^a$ for some $a \in \Z$,  $b \in \{\pm 1\}$, $d>0$, 
and nonzero $e, f \in \Z$ such that $(e,f) \neq (b, -b)$. 
\end{enumerate}
Moreover, for any such $g_n$, we have $|\Upsilon^{g_n}(\e_\vv)(\vv)| \geq 2$ for all $\vv \in \Alpha$. 
\end{proposition}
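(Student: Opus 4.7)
The proposition has two independent components: a combinatorial assertion that every renormalizing sequence admits some $n$ at which $g_n$ matches one of the three listed forms, and a computational assertion that any $g_n$ of such a form satisfies $|\Upsilon^{g_n}(\e_\vv)(\vv)| \geq 2$ for every $\vv \in \Alpha$. My plan is to prove these two pieces separately.

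\emph{Combinatorial part.} I will encode the geodesic ray $\langle g_n \rangle$ by its sequence of applied generators $s_1, s_2, \ldots \in \{h, v, h^{-1}, v^{-1}\}$, so that $g_n = s_n s_{n-1} \cdots s_1$ as a reduced word, and then group the $s_i$ into maximal monochromatic blocks $B_1, B_2, B_3, \ldots$ alternating between $h$- and $v$-generators, with $B_j = h^{\epsilon_j}$ or $v^{\epsilon_j}$ for some $\epsilon_j \in \Z \smallsetminus \{0\}$. By Theorem~\ref{thm:characterization} the ray is not tail-equivalent to any of the six distinguished forbidden sequences; equivalently, no tail of the block sequence $(B_2, B_3, \ldots)$ can consist entirely of unit-length blocks matching one of the two alternating patterns $(v^b, h^{-b}, v^b, h^{-b}, \ldots)$ with $b \in \{\pm 1\}$. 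I will then perform a case analysis on the smallest index at which this alternating pattern fails. If the failure occurs already at $(B_2, B_3)$, meaning $(\epsilon_2, \epsilon_3) \notin \{(1,-1),(-1,1)\}$, the reduced word $B_3 B_2 B_1 = h^{\epsilon_3} v^{\epsilon_2} h^{\epsilon_1}$ matches form~1 with $(a,b,c) = (\epsilon_1, \epsilon_2, \epsilon_3)$. If instead the alternating pattern persists for $d \geq 1$ full $(v^{\pm 1}, h^{\mp 1})$ pairs before failing at the pair $(B_{2d+2}, B_{2d+3})$, then reading off block parameters produces a word matching form~3 with the appropriate $b, d, e = \epsilon_{2d+2}, f = \epsilon_{2d+3}$, and the constraint $(e,f) \neq (b,-b)$ encodes precisely the non-continuation of the pattern. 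Form~2 is a (redundant) special case — form~1 when $d = 1$ and form~3 when $d \geq 2$ — corresponding to the minimal deviation $f = -2b$ with $e = b$; I will include it because it shortens the computation in the second part. Sequences whose initial block is a $v$-block are handled by setting $a = \epsilon_1 = 0$.

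\emph{Computational part.} By the translation invariance of $\G_\Z$ under shifts by even integers (which preserve $\Alpha$, $\Beta$, and adjacency, and hence commute with every $\Upsilon^g$), it suffices to check the estimate for $\vv = 0$. A key preliminary observation is that $\Upsilon^{h^a}(\e_0) = \e_0$, because $\Ho^a$ modifies $\Alpha$-values only through $\Beta$-values and $\e_0$ vanishes on $\Beta$; consequently the trailing $h^a$ in every form contributes nothing. For form~1 a direct two-step computation gives
\[
\Upsilon^{v^b}(\e_0) = \e_0 + b\e_{-1} + b\e_1, \qquad \Upsilon^{h^c v^b}(\e_0)(0) = 1 + 2bc,
\]
and because $1 + 2bc$ is odd and the exclusion $(b,c) \notin \{(1,-1),(-1,1)\}$ rules out $bc = -1$, we get $|1 + 2bc| \geq 3$. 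For forms~2 and 3 I will prove by induction on $d$ that $\phi_d := \Upsilon^{(h^{-b}v^b)^d}(\e_0)$ takes the value $(-1)^d$ on $\Alpha$-vertices with $|\vx| \leq 2d$ and the value $-b(-1)^d$ on $\Beta$-vertices with $|\vx| \leq 2d-1$, vanishing outside this range. Applying one further $\Ho^{-b}$ produces form~2 with value $(-1)^d - b \cdot 2 \cdot (-b)(-1)^d = 3(-1)^d$ at $0$, and applying $\Vo^e$ then $\Ho^f$ produces form~3 with value $(-1)^d(1 + 4ef - 2bf)$ at $0$. A short parity argument (the quantity is odd, and an integer equation $f(2e - b) = -1$ with $b \in \{\pm 1\}$ forces $(e,f) = (b,-b)$) shows $|1 + 4ef - 2bf| \geq 3$ in every allowed case.

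\emph{Main obstacle.} The substantive work lies in the combinatorial part: one must carefully verify that the three listed forms jointly exhaust the first-deviation patterns of a renormalizing sequence. The computational part is routine once the inductive formula for $\phi_d$ is in hand, the induction proceeding cleanly because the support of $\phi_d$ grows by exactly two $\Alpha$-vertices and two $\Beta$-vertices per step while the interior values remain constant. The only non-routine observation in the computation is the Diophantine exclusion identifying $(e,f) = (b,-b)$ as the unique obstruction, which matches exactly the hypothesis in form~3.
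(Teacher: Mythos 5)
Your proposal is correct and follows essentially the same route as the paper's proof: you extract the parameters $a,b,c,d,e,f$ from the geodesic ray (phrased as a maximal-block decomposition rather than the paper's iterative extraction, but these are the same bookkeeping) and then verify $|\Upsilon^{g_n}(\e_\vv)(\vv)|\geq 2$ via the same recurrences, arriving at the same three quantities $1+2bc$, $3(-1)^d$, and $(-1)^d(1+4ef-2bf)$ and the same parity/Diophantine exclusions. The only minor differences are stylistic: you give an explicit inductive description of the full support of $\phi_d=\Upsilon^{(h^{-b}v^b)^d}(\e_0)$ where the paper appeals to Proposition~\ref{prop:no_crit} together with the dihedral symmetry, and you correctly note that form~2 is subsumed by forms~1 and~3 (via $f=-2b$, $e=b$), an observation the paper does not make but which does not change the argument.
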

\begin{proof}
There is an $a \in \Z$ (possibly zero) of maximal absolute value such that $h^a =g_{|a|}$. 
(A renormalizing sequence cannot be $g_n=h^{\pm n}$ for all $n$, by definition.)
For all $\vv \in \Alpha$ and all $a \in \Z$, we have $\Ho^a(\e_\vv)=\e_\vv$
by definition of $\Ho$. See equation \ref{eq:Ho}.
Then to be a geodesic ray, one of $v h^a, v^{-1} h^a \in \{g_n\}$. 
Thus, there is a non-zero $b$ of maximal absolute value such that $v^b h^a=g_{|a|+|b|}$. 
By definition of $\Vo$, we have 
$$\Upsilon^{g_{|a|+|b|}}(\e_\vv)=\Vo^b \circ \Ho^a(\e_\vv)=\e_\vv+b(\e_{\vv-1}+\e_{\vv+1}).$$
Then there is a non-zero $c$ of maximal absolute value such that $h^c v^b h^a = g_{|a|+|b|+|c|}$.
We have
$$\Upsilon^{g_{|a|+|b|+|c|}}(\e_vv)=\e_\vv+b(\e_{\vv-1}+\e_{\vv+1})+bc(\e_{\vv-2}+2 \e_\vv+\e_{\vv+2}).$$
Therefore $|\Upsilon^{g_{|a|+|b|+|c|}}(\e_vv)(\v)|=|1+2bc|$. This quantity is larger than one unless 
$(b,c) \in \{(1,-1),(-1,1)\}$. This handles the case (1) of the proposition. 

If case (1) does not apply, then $b=\pm 1$, and $c=-b$. There is a maximal integer $d \geq 1$ such that
$(h^{-b} v^b)^d h^a=g_{|a|+2d}$. 
By conjugating by an element of the dihedral group and applying Proposition \ref{prop:no_crit}, we see that for some 
$\alpha, \beta \in \{\pm 1\}$ we have 
$$\Upsilon^{g_{|a|+2d}}(\e_\vv)(\vv)=\Upsilon^{g_{|a|+2d}}(\e_\vv)(\vv \pm 2)=\alpha
\quad \textrm{and} \quad
\Upsilon^{g_{|a|+2d}}(\e_\vv)(\vv \pm 1)=\beta.
$$
The choices of $\alpha$ and $\beta$ are given by the following rules that
$$\alpha=(-1)^d
\quad \textrm{and} \quad
\beta=-b(-1)^d.$$
The element $g_{|a|+2d+1}$ must be given by either $h^{-b} g_{|a|+2d}$ or $v^{\pm 1} g_{|a|+2d}$.
Assume that $g_{a+2d+1}=h^{-b} g_{|a|+2d+1}$. Then,
$$\Upsilon^{g_{|a|+2d+1}}(\e_\vv)(\vv)=
\Upsilon^{g_{|a|+2d}}(\e_\vv)(\vv)-2 \Upsilon^{g_{|a|+2d+1}}(\e_\vv)(\vv \pm 1)=\alpha-2b\beta=3(-1)^d,$$
which is of absolute value larger than $2$. This handles case (2). 

The only remaining possibility is that $g_{|a|+2d+1}=v^{\pm 1} g_{|a|+2d}$.
Then there is a nonzero $e$ of maximal magnitude for which $g_{|a|+2d+|e|}=v^{e} g_{|a|+2d}$. We compute
$$\Upsilon^{g_{|a|+2d+|e|}}(\e_\vv)(\vv)=\alpha 
\quad \textrm{and} \quad 
\Upsilon^{g_{|a|+2d+|e|}}(\e_\vv)(\vv \pm 1)=\beta-2 e \alpha=\alpha(-b+2e).$$
There is then an integer $f \neq 0$ such that $h^f g_{|a|+2d+|e|}=g_{|a|+2d+|e|+|f|}$. We compute that
$$\Upsilon^{g_{|a|+2d+|e|+|f|}}(\e_\vv)(\vv)=\alpha+2 f \Upsilon^{g_{|a|+2d+|e|}}(\e_\vv)(\vv \pm 1)=
\alpha\big(1+2f(-b+2e)\big).$$
This quantity has magnitude one only if $f(-b+2e)=-1$. Therefore, we must have $e=b$, and $f=-b$. 
This handles case (3).
Finally, if $e=b$ and $f=-b$, then 
$$g_{|a|+2d+|e|+|f|}=h^{-b} v^b (h^{-b} v^b)^d h^a,$$
which contradicts the maximality of $d$. 
\end{proof}

\begin{corollary}
\label{cor:growth_list}
Let $\langle g_n \rangle$ be any renormalizing sequence. Then there is an $N$ such that for all $n \geq N$.  
$$|\Upsilon^{g_n}(\e_\vv)(\vv)| \geq 2$$
for all $\vv \in \V$. 
\end{corollary}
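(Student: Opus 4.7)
The plan is to combine the initial-bound statement of Proposition \ref{prop:growth_list} with the monotonicity principle of Lemma \ref{lem:growth}, handling the two halves $\Alpha$ and $\Beta$ of the bipartition separately.

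First, I will show that for each fixed $\vv \in \V$ the sequence $|\Upsilon^{g_n}(\e_\vv)(\vv)|$ is non-decreasing in $n$. To apply Lemma \ref{lem:growth}, one must exhibit a sign pair $s \in \SP$ such that $\e_\vv \in \hat\Q_s$ and $\rhoa{g_1}(\Q_s) \subset \Q_s$. Such an $s$ always exists: direct computation shows that $\rhoa{h}$ and $\rhoa{v}$ preserve $\Q_{++}$ (and $\Q_{--}$), while $\rhoa{h^{-1}}$ and $\rhoa{v^{-1}}$ preserve $\Q_{+-}$ and $\Q_{-+}$. For $\vv \in \Alpha$ we have $\e_\vv \in \hat\Q_{++} \cap \hat\Q_{+-}$, and one of these matches the required stabilized quadrant depending on whether $g_1 \in \{h,v\}$ or $g_1 \in \{h^{-1}, v^{-1}\}$; for $\vv \in \Beta$ we have $\e_\vv \in \hat\Q_{++} \cap \hat\Q_{-+}$ and the same dichotomy. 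With the matching $s$, Lemma \ref{lem:growth} yields the desired monotonicity.

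Second, I need a single index beyond which the (now-monotone) sequence sits above $2$ for every $\vv \in \V$. Proposition \ref{prop:growth_list} already provides an index $n_0$ at which $g_{n_0}$ takes one of the three listed forms and $|\Upsilon^{g_{n_0}}(\e_\vv)(\vv)| \geq 2$ for all $\vv \in \Alpha$. For $\vv \in \Beta$ I will repeat the same inductive combinatorial analysis of the geodesic ray verbatim, but starting from the observation $\Vo^{a}(\e_\vv)(\vv) = \e_\vv(\vv) = 1$ (so initial powers of $v$ leave the value at $\vv$ undisturbed) rather than the dual identity for $\Ho^{a}$. This is precisely the dihedral swap of Remark \ref{rem:dihedral}: peeling off maximal alternating $v$-then-$h$ blocks (rather than $h$-then-$v$) yields three analogous forms with the roles of $h$ and $v$ interchanged, and produces an index $n_0'$ at which $|\Upsilon^{g_{n_0'}}(\e_\vv)(\vv)| \geq 2$ for all $\vv \in \Beta$. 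Setting $N = \max(n_0, n_0')$ and invoking the monotonicity established in the first step completes the proof.

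The main obstacle I anticipate lies in step two: verifying that the $\Beta$-analogue of Proposition \ref{prop:growth_list} genuinely exhausts all cases, so that a suitable $n_0'$ exists for every renormalizing sequence $\langle g_n \rangle$. The cases giving $|1 + 2bc| = 1$ in the proof of Proposition \ref{prop:growth_list} correspond exactly to the geodesic being forced into tail-equivalence with one of the alternating sequences $\langle e, h, v^{-1}h, \ldots\rangle$ or $\langle e, h^{-1}, vh^{-1}, \ldots\rangle$ forbidden by Theorem \ref{thm:characterization}. Under the $h \leftrightarrow v$ swap these two forbidden tails are interchanged, so the $\Beta$-analogue inherits exactly the same structural obstructions, all of which are ruled out by the assumption that $\langle g_n\rangle$ is renormalizable. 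Once this dihedral transfer is made precise, the rest of the argument follows from the two formally symmetric analyses and the quadrant-preservation bookkeeping already assembled in Sections \ref{sect:quadrants} and \ref{sect:operators2}.
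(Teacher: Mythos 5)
Your proof is correct and follows the same strategy as the paper's own: apply Proposition \ref{prop:growth_list} to get a bound for all $\vv \in \Alpha$, use dihedral symmetry to transfer this to a bound for all $\vv \in \Beta$, and then propagate both bounds to all larger indices via the monotonicity established in Lemma \ref{lem:growth}. You spell out the dihedral transfer and the quadrant-matching step in Lemma \ref{lem:growth} more explicitly than the paper's terse proof, but the underlying argument is identical.
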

\begin{proof}
Applying Proposition \ref{prop:growth_list}, there is an integer $n_1$ for which 
$|\Upsilon^{g_{n_1}}(\e_\va)(\va)| \geq 2$
for all $\va \in \Alpha$. By applying dihedral symmetry and Proposition \ref{prop:growth_list} again,
there is a integer $n_2$ for which $|\Upsilon^{\gamma(g_{n_2})}(\e_\vb)(\vb)| \geq 2$
for all $\vb \in \Beta$. Set $n=\max~\{n_1, n_2\}$. By Lemma \ref{lem:growth},
for all $\vv \in \V$ we have $|\Upsilon^{\gamma(g_{n_1})}(\e_\vv)(\vv)| \geq 2$.
\end{proof}

\begin{proof}[Proof of Lemma \ref{lem:eff_int} (Effective decay for the integers).]
From the discussion below the statement of the lemma, we see it is sufficient to prove the statement in equation \ref{eq:half2}. 
Since $\bm \theta$ is renormalizable, its shrinking sequence $\langle g_n \rangle$ is renormalizable.
The automorphism $\gamma$ sends renormalizable sequences to renormalizable sequences by Theorem \ref{thm:characterization}.
Therefore, the sequence $\langle \gamma(g_n) \rangle$ is also renormalizable. Thus, equation \ref{eq:half2}
follows directly from Corollary \ref{cor:growth_list}.
\end{proof}

\subsection{Effective decay for graphs without vertices of valance one}
\label{ss:effective_decay2}
Essentially, we use covers and subgraphs to deduce 
effective decay for graphs with no vertices of valance one from effective decay for $\G_\Z$. 

\begin{lemma}[Effective decay]
\label{lem:eff}
Let $\G$ be any infinite connected bipartite graph with bounded valance and without vertices of valance one.
Let $\langle g_i \rangle$ be a shrinking sequence for a renormalizable direction $\bm \theta$.
Then, there is a critical time $t>0$ for which 
$$|\Act_\G^{g_{t}}(\f)(\vv)| \leq \frac{1}{2} |\f(\vv)|$$
for any $\bm \theta$-survivor $\f \in \R^{\V_\G}$ and any $\vv \in \V_\G$. 
\end{lemma}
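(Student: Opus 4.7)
The plan is to reduce the general case to the integer case (Lemma \ref{lem:eff_int}) via the subgraph and double-cover machinery of Section \ref{sect:subgraphs_and_covers}. Since the automorphism $\gamma$ preserves renormalizing sequences (by Theorem \ref{thm:characterization}), applying Corollary \ref{cor:growth_list} to the sequence $\langle \gamma(g_n) \rangle$ on $\G_\Z$ produces an integer $N$ such that $|\Upsilon^{\gamma(g_n)}_{\G_\Z}(\e_\vv)(\vv)| \geq 2$ for every $n \geq N$ and every $\vv \in \Z$. Corollary \ref{cor:critical_times_occur} then lets me fix a critical time $t \geq N$ for the shrinking sequence of $\btheta$, and I will show that this single $t$ serves uniformly for every vertex of $\G$.

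For each $\vv \in \V_\G$, Lemma \ref{lem:covers} supplies either (a) an embedding $\G_\Z \hookrightarrow \G$ sending $0$ to $\vv$, or (b) a double cover $\pi \colon \widetilde\G \to \G$ together with an embedding $\G_\Z \hookrightarrow \widetilde\G$ sending $0$ to some lift $\widetilde\vv$ of $\vv$. In case (a), I apply Theorem \ref{thm:subgraph} directly to transfer the lower bound from $\G_\Z$ up to $\G$, yielding $|\Upsilon^{\gamma(g_t)}_\G(\e_\vv)(\vv)| \geq 2$. In case (b), I first use Theorem \ref{thm:subgraph} on $\widetilde\G$ to obtain $|\Upsilon^{\gamma(g_t)}_{\widetilde\G}(\e_{\widetilde\vv})(\widetilde\vv)| \geq 2$, then push down to $\G$ via Proposition \ref{prop:cover_commute}; since $\pi_\ast \e_{\widetilde\vv} = \e_\vv$, this identifies $\Upsilon^{\gamma(g_t)}_\G(\e_\vv)(\vv)$ with the sum of $\Upsilon^{\gamma(g_t)}_{\widetilde\G}(\e_{\widetilde\vv})(\widetilde\vu)$ over $\widetilde\vu \in \pi^{-1}(\vv)$.

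The one place the argument needs real care is ensuring this fiber sum does not cancel. Here I invoke Proposition \ref{prop:gamma_critical}: because $t$ is a critical time and $\e_{\widetilde\vv}$ lies in $\hat\Q_{s_0}$ for the appropriate $s_0 \in \SP$, the image $\Upsilon^{\gamma(g_t)}_{\widetilde\G}(\e_{\widetilde\vv})$ lies in $\hat\Q_{s_t}$. Since the fiber $\pi^{-1}(\vv)$ sits entirely in a single part of the bipartition of $\widetilde\G$, all summands share the same sign and the bound $|\Upsilon^{\gamma(g_t)}_\G(\e_\vv)(\vv)| \geq 2$ is preserved.

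The final step is the duality pairing argument from the discussion following Lemma \ref{lem:eff_int}: choose $\e'_\vv = \pm \e_\vv$ so that $\e'_\vv \in \hat\Q_{s_0}$; then both $\Upsilon^{g_t}_\G(\f)$ (by the survivor hypothesis) and $\Upsilon^{\gamma(g_t)}_\G(\e'_\vv)$ (by Proposition \ref{prop:gamma_critical}) lie in $\hat\Q_{s_t}$, so expanding $|\f(\vv)| = \langle \Upsilon^{g_t}_\G(\f),\, \Upsilon^{\gamma(g_t)}_\G(\e'_\vv) \rangle$ as a sum of non-negative terms and keeping only the $\vv$-term gives $|\f(\vv)| \geq 2 |\Upsilon^{g_t}_\G(\f)(\vv)|$. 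The main obstacle I anticipate is obtaining a single critical time $t$ uniform in $\vv$, not one $t$ per vertex; this is precisely what forces the use of the vertex-uniform version of Corollary \ref{cor:growth_list} on $\G_\Z$, and it is the reason the integer case was formulated that way.
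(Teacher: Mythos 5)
Your proposal matches the paper's proof almost step for step: reduce to the integer case via Corollary~\ref{cor:growth_list} applied to $\langle\gamma(g_n)\rangle$, use Lemma~\ref{lem:covers} to get a subgraph or double-cover embedding of $\G_\Z$ through each vertex, transfer the lower bound by Theorem~\ref{thm:subgraph} and Proposition~\ref{prop:cover_commute}, pick a critical time $t\geq N$ by Corollary~\ref{cor:critical_times_occur}, and close with the $\langle\cdot,\cdot\rangle$ pairing and Proposition~\ref{prop:gamma_critical}. One small point in your favor: you explicitly address why the fiber sum when pushing down the double cover cannot lose the bound, via the sign/quadrant argument. The paper's written proof asserts an equality $|\Upsilon_\G^{\gamma(g_n)}(\e_\vv)(\vv)| = |\Upsilon_{\widetilde\G}^{\gamma(g_n)}(\e_{\phi(0)})(\phi(0))|$ citing only Proposition~\ref{prop:cover_commute}, which as written is a two-term fiber sum and is only an inequality $\geq$; the no-cancellation justification is implicit in the preceding paragraph's sign-tracking (Corollary~\ref{cor:expansion_sign_action} applied to $\widetilde\G$), but it is not spelled out there. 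Your treatment makes that step precise.
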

\begin{proof}
The sequence $\langle \gamma(\g_n) \rangle$ is a renormalizable sequence.
By Corollary \ref{cor:growth_list}, there is an $N$ such that for all $n \geq N$, 
$|\Upsilon_\Z^{\gamma(g_n)}(\e_\vv)(\vv)| \geq 2$ for all $\vv \in \Z$, where $\Upsilon_\Z$ denotes the action
associated to the graph $\G_\Z$. We will show that for $n \geq N$ we also have
$|\Upsilon_\G^{\gamma(g_n)}(\e_\vv)(\vv)| \geq 2$ for all $\vv \in \V_\G$. 

Let $\langle s_i \rangle$ denote the sign sequence of $\bm \theta$. Without loss of generality, we may assume $s_0=++$, $g_1 \in \{h^{-1}, v^{-1}\}$,
and $\gamma(g_1) \in \{h, v\}$. In particular, $\rhoa{\gamma(g_1)}(\Q_{++}) \subset \Q_{++}$. 
Then by Corollary \ref{cor:expansion_sign_action}, for any infinite connected graph bipartite $\GH$ 
we have $\Upsilon_\GH(\hat \Q_{++}) \subset \Q_{s'_i}$  where $s_i'=\Sigma^{\gamma(g_n)}(++)$.
Choose any $\vv \in \V$. By Lemma \ref{lem:covers}, there is a $\widetilde \G$ which is either $\G$ or a double cover of $\G$, and an embedding $\widetilde \phi:\G_\Z \to \widetilde \G$ such that $\widetilde \phi(0)$ is a lift of $\vv$. Let $\pi:\widetilde \G \to \G$ denote this covering, and 
$\pi_\ast:\R^{\V_{\widetilde G}} \to \R^{\V_\G}$ be as in equation \ref{eq:pi_ast}. 
Then by Proposition \ref{prop:cover_commute} for $n \geq N$, we have 
$$|\Upsilon_\G^{\gamma(g_n)}(\e_\vv)(\vv)| = \big|\Upsilon_{\widetilde \G}^{\gamma(g_n)}(\e_{\phi(0)})\big(\phi(0)\big)\big|.$$
Then by Theorem \ref{thm:subgraph}, we know that
$$\big|\Upsilon_{\widetilde \G}^{\gamma(g_n)}(\e_{\phi(0)})\big(\phi(0)\big)\big| \geq |\Upsilon_\Z^{\gamma(g_n)}(\e_{0})(0)| \geq 2,$$
so that $|\Upsilon_\G^{\gamma(g_n)}(\e_\vv)(\vv)| \geq 2$ as desired.

We complete the proof by following the logic applied to the case of $\G=\Z$. By Corollary \ref{cor:critical_times_occur}, there is a critical time 
$t \geq N$. Then, for any $\bm \theta$-survivor $\f \in \R^{\V_\G}$ and any $\vv \in \V_\G$,
$$\f(\vv)=\langle \f, \e_\vv\rangle=
\langle \Upsilon^{g_t}(\f), \Upsilon^{\gamma(g_t)}(\e_\vv)\rangle \geq 
\big|\Upsilon^{g_t}(\f)(\vv)\big| \big|\Upsilon^{\gamma(g_t)}(\e_\vv)(\vv)\big|\geq 2 |\Upsilon^{g_t}(\f)(\vv)|.$$
Here the first inequality follows because both $\Upsilon^{g_t}(\f), \Upsilon^{\gamma(g_t)}(\e_\vv) \in \Q_{s_t}$ by 
Proposition \ref{prop:gamma_critical}. See equation \ref{eq:sum} for another example of this.
We deduce that $|\Upsilon^{g_t}(\f)(\vv)| \leq \frac{1}{2} \f(\vv)$ as claimed by the lemma.
\end{proof}

\subsection{A perturbed group action}
\label{ss:adjacency_sign}

Let $\A \x=\y$. 
For any $g \in G$, we can compute $\Upsilon^{g}(\x)-\x$ inductively in terms of $\y$.
In this section we will explain how this is done.

Fixing any $\y \in \R^\V_c$, define the following ``affine'' actions on $\R^\V_c$.
$$\Ho_\y(\zz)=\Ho(\zz)+\pi_\Alpha(\y) 
\quad \textrm{and} \quad
\Vo_\y(\zz)=\Vo(\zz)+\pi_\Beta(\y) .$$
Powers of these functions are given by
$$\Ho^k_\y(\zz)=\Ho^k(\zz)+k \pi_\Alpha(\y) 
\quad \textrm{and} \quad
\Vo^k_\y(\zz)=\Vo^k(\zz)+k \pi_\Beta(\y) \label{not:perturb generators}.$$
From these formulas, we can check that $\Ho_\y$ and $\Vo_\y$ generate a nonlinear group action $\Chi_\y:G \times \R^\V_c \to \R^\V_c \label{not:perturb action}$, defined by
$\Chi^{h^k}_\y=\Ho^k_\y$ and $\Chi^{v^k}_\y=\Vo^k_\y$. 

\begin{proposition}
\label{prop:chi_props}
For all $g \in G$, $c \in \R$ and $\y, \y_1, \y_2, \zz_1, \zz_2 \in \R^\V_c$, we have 
\begin{enumerate}
\item $\displaystyle \Chi_\y^g(\zz_1+\zz_2)=\Chi_\y^g(\zz_1)+\Upsilon^g(\zz_2)$.
\item $\displaystyle \Chi_{\y_1+\y_2}^g(\0)=\Chi_{\y_1}^g(\0)+\Chi_{\y_2}^g(\0)$.
\item $\displaystyle \Chi_{c \y}^g(\0)=c \Chi_{\y}^g(\0)$.
\end{enumerate}
\end{proposition}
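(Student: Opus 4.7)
The plan is to prove statement (1) first, by induction on the word length of $g$, and then to leverage (1) to handle (2) and (3). All three claims boil down to the fact that the perturbation by $\y$ enters additively through $\pi_\Alpha(\y)$ or $\pi_\Beta(\y)$, while the unperturbed part of each generator is linear.

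For (1), I would first verify it on the generators. For $g=h^k$ and any $\zz_1,\zz_2 \in \R^\V_c$, using linearity of $\Ho$,
\[
\Chi_\y^{h^k}(\zz_1+\zz_2)=\Ho^k(\zz_1+\zz_2)+k\pi_\Alpha(\y)=\big(\Ho^k(\zz_1)+k\pi_\Alpha(\y)\big)+\Ho^k(\zz_2)=\Chi_\y^{h^k}(\zz_1)+\Upsilon^{h^k}(\zz_2),
\]
and similarly for $g=v^k$. For the inductive step, write $g=g_1 g_2$ where $g_1 \in \{h^k,v^k\}$ is obtained from a reduced-word decomposition. Since $\Chi_\y$ is a group action, $\Chi_\y^{g_1 g_2}=\Chi_\y^{g_1}\circ\Chi_\y^{g_2}$. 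Then
\[
\Chi_\y^{g_1 g_2}(\zz_1+\zz_2)=\Chi_\y^{g_1}\big(\Chi_\y^{g_2}(\zz_1)+\Upsilon^{g_2}(\zz_2)\big)=\Chi_\y^{g_1}\big(\Chi_\y^{g_2}(\zz_1)\big)+\Upsilon^{g_1}\big(\Upsilon^{g_2}(\zz_2)\big),
\]
where the first equality uses the inductive hypothesis and the second uses the generator case with $\zz_1$ replaced by $\Chi_\y^{g_2}(\zz_1)$ and $\zz_2$ replaced by $\Upsilon^{g_2}(\zz_2)$. Since $\Upsilon$ is a group action, this equals $\Chi_\y^{g_1 g_2}(\zz_1)+\Upsilon^{g_1 g_2}(\zz_2)$.

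For (2) and (3), I induct on word length of $g$ again. The base case on generators is immediate: $\Chi_{\y_1+\y_2}^{h^k}(\0)=k\pi_\Alpha(\y_1+\y_2)=k\pi_\Alpha(\y_1)+k\pi_\Alpha(\y_2)=\Chi_{\y_1}^{h^k}(\0)+\Chi_{\y_2}^{h^k}(\0)$, and similarly for $\Vo$ and for scalar multiplication. For the inductive step with $g=g_1 g_2$, apply statement (1) twice (with perturbation parameter $\y_1+\y_2$) to split the evaluation at $\Chi_{\y_1}^{g_2}(\0)+\Chi_{\y_2}^{g_2}(\0)$:
\begin{align*}
\Chi_{\y_1+\y_2}^{g_1 g_2}(\0)
&=\Chi_{\y_1+\y_2}^{g_1}\big(\Chi_{\y_1}^{g_2}(\0)+\Chi_{\y_2}^{g_2}(\0)\big) \\
&=\Chi_{\y_1+\y_2}^{g_1}(\0)+\Upsilon^{g_1}\big(\Chi_{\y_1}^{g_2}(\0)\big)+\Upsilon^{g_1}\big(\Chi_{\y_2}^{g_2}(\0)\big),
\end{align*}
where the first line uses the inductive hypothesis on $g_2$ and the second uses (1) twice. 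Now apply the inductive hypothesis to $g_1$ to split $\Chi_{\y_1+\y_2}^{g_1}(\0)=\Chi_{\y_1}^{g_1}(\0)+\Chi_{\y_2}^{g_1}(\0)$, and regroup terms. Applying (1) in reverse for each $\y_i$ reassembles $\Chi_{\y_i}^{g_1}(\0)+\Upsilon^{g_1}\big(\Chi_{\y_i}^{g_2}(\0)\big)=\Chi_{\y_i}^{g_1}\big(\Chi_{\y_i}^{g_2}(\0)\big)=\Chi_{\y_i}^{g_1 g_2}(\0)$, proving (2). Statement (3) follows by identical bookkeeping, replacing the sum decomposition with scalar homogeneity in the base case.

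The only real subtlety is making sure the induction in (2) invokes (1) correctly: when we split a sum inside $\Chi_{\y_1+\y_2}^{g_1}$ using (1), the ``linear piece'' that gets pulled out is $\Upsilon^{g_1}$ rather than a shifted $\Chi$, which is exactly what allows the terms indexed by $\y_1$ and $\y_2$ to recombine. No step is deep; the care is purely combinatorial bookkeeping of the affine decomposition.
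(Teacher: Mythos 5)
Your proof is correct and essentially matches the paper's: both argue by induction on word length, with the inductive step peeling a generator power off the reduced word and pushing through the recursion $\Chi_\y^{g_1 g_2} = \Chi_\y^{g_1} \circ \Chi_\y^{g_2}$.

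The one organizational difference is in (2) and (3): you bootstrap them from (1), applying it forward to pull $\Upsilon^{g_1}$ out of the perturbed term and then in reverse to reassemble into $\Chi_{\y_i}^{g_1 g_2}(\0)$. The paper instead does a direct computation in the inductive step, re-expanding $\Chi_{\y}^{h^k g_0}(\0) = \Ho^k\big(\Chi_{\y}^{g_0}(\0)\big) + k\pi_\Alpha(\y)$ and using linearity of $\Ho^k$ and of $\pi_\Alpha$ in $\y$. Your version is slightly more self-referential but avoids returning to the explicit generator formulas after the base case; the paper's is marginally more direct. Both are valid, and neither has a gap. One small point worth being explicit about: when you invoke the inductive hypothesis on $g_1$ in the middle of the argument for (2), this is legitimate only because $g_2 \neq e$ (so $g_1$ has strictly shorter word length than $g = g_1 g_2$); the case $g_2 = e$ is your base case, so there is no circularity, but the reader should see that this is what makes the appeal well-founded.
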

\begin{proof}
All statements follow from induction on the word length of $g$. The statements are clearly true when $g$ is the identity.
Assume statement (1) is true for $g_0$. We will show it is true for $h^k g_0$.
$$
\begin{array}{rcl}
\Chi_\y^{h^k g_0} (\zz_1+\zz_2) & = & \Chi_\y^{h^k} \big(\Chi_\y^{g_0} (\zz_1)+\Upsilon^{g_0}(\zz_2)\big)
=\Ho^k\big(\Chi_\y^{g_0} (\zz_1)+\Upsilon^{g_0}(\zz_2)\big)+k \pi_\Alpha(\y) \\
& = & \Ho^k\big(\Chi_\y^{g_0} (\zz_1)\big)+k \pi_\Alpha(\y) +\Upsilon^{h^k g_0}(\zz_2)
=\Chi_\y^{h^k g_0}(\zz_1)+\Upsilon^{h^k g_0}(\zz_2).
\end{array}
$$
A similar equation holds for $v^k g_0$. Now assume (2) is true for $g_0$. Then,
$$
\begin{array}{rcl}
\Chi_{\y_1+\y_2}^{h^k g_0}(\0) & = & \Ho^k\big(\Chi_{\y_1+\y_2}^{g_0} (\0)\big)+k\pi_\Alpha(\y_1+\y_2) \\
& = &
\Ho^k\big(\Chi_{\y_1}^{g_0} (\0)+\Chi_{\y_2}^{g_0}(\0)\big)+k\pi_\Alpha(\y_1)+k\pi_\Alpha(\y_2) 
= \Chi_{\y_1}^{h^k g_0}(\0)+\Chi_{\y_2}^{h^k g_0}(\0).
\end{array}
$$
Again, a similar formula holds for $v^k g_0$. Statement (3) holds for similar reasons.
\end{proof}

The following proposition connects this group action to the adjacency sign property. 

\begin{proposition}
\label{prop:chi2}
Let $\x \in \R^\V_c$ and set $\y=\A\x$. Then for all $g \in G$,
$$\Upsilon^g(\x)-\x=\Chi_\y^g(\0).$$
\end{proposition}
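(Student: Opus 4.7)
The plan is induction on the word length of $g$ in the symmetric generating set $\{h^{\pm 1}, v^{\pm 1}\}$. The base case $g = e$ is immediate since both sides equal $\0$.

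For the inductive step, suppose $\Upsilon^{g_0}(\x) - \x = \Chi^{g_0}_\y(\0)$, and consider $g = h^k g_0$; the case $g = v^k g_0$ is entirely symmetric (and can be reduced to this one via Remark \ref{rem:dihedral}). Two ingredients drive the calculation: the linearity of $\Ho^k = \Upsilon^{h^k}$ on $\R^\V_c$, and the identity
$$
\Ho^k(\x) - \x \;=\; k\,\pi_\Alpha\!\circ \A(\x) \;=\; k\,\pi_\Alpha(\y),
$$
which is immediate from the definition \eqref{eq:Ho} of $\Ho^k$ (this is the content of Proposition~\ref{prop:kernel}). Splitting $\Upsilon^{g_0}(\x) = \x + \bigl(\Upsilon^{g_0}(\x) - \x\bigr)$ and applying $\Ho^k$ gives
\begin{align*}
\Upsilon^{h^k g_0}(\x) - \x
&= \Ho^k(\x) - \x + \Ho^k\bigl(\Upsilon^{g_0}(\x) - \x\bigr) \\
&= k\,\pi_\Alpha(\y) + \Ho^k\bigl(\Chi^{g_0}_\y(\0)\bigr) \\
&= \Ho^k_\y\bigl(\Chi^{g_0}_\y(\0)\bigr) \;=\; \Chi^{h^k g_0}_\y(\0),
\end{align*}
where the final two equalities invoke the definitions of $\Ho^k_\y$ and of the action $\Chi_\y$, respectively. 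The $v^k g_0$ case is handled identically, using $\Vo^k(\x) - \x = k\,\pi_\Beta(\y)$.

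There is no real obstacle here; the proof is essentially formal. Its content is just the observation that the linearity of $\Upsilon^{h^k}$ and $\Upsilon^{v^k}$ lets us peel the constant $\x$-contribution off of $\Upsilon^{g_0}(\x)$, and that this $\x$-contribution is precisely the inhomogeneous term $k\,\pi_\Alpha(\y)$ (resp.\ $k\,\pi_\Beta(\y)$) that was built into the definition of the affine generators $\Ho^k_\y$, $\Vo^k_\y$ of $\Chi_\y$.
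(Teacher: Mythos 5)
Your proof is correct and follows essentially the same route as the paper's own argument: induction on word length, the split $\Upsilon^{g_0}(\x) = \x + (\Upsilon^{g_0}(\x) - \x)$, linearity of $\Ho^k$, and the identity $\Ho^k(\x) - \x = k\,\pi_\Alpha \circ \A(\x) = k\,\pi_\Alpha(\y)$ matching the inhomogeneous term in the definition of $\Ho^k_\y$. (Incidentally, you have the projection right — the statement of Proposition~\ref{prop:kernel} in the paper has a typo, writing $\pi_\Beta$ where $\pi_\Alpha$ is meant, as one sees from equation~\eqref{eq:Ho} and from the paper's own use of $\pi_\Alpha$ in the proof of Proposition~\ref{prop:chi2}.)
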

\begin{proof}
We may prove this by induction on the word length of $g$. The statement is clearly true when $g$ is the identity. Now suppose
we know the statement for $g_0 \in G$. We must prove the statement holds for $h^k g_0$ and $v^k g_0$ for $k=\pm 1$. We write
$$\begin{array}{rcl}
\Upsilon^{h^k g_0}(\x)-\x & = & \Ho^k \circ \Upsilon^{g_0}(\x)-\x=\Ho^k \big( \Upsilon^{g_0}(\x)-\x\big)+\Ho^k(\x)-\x \\
& = & \Ho^k \circ \Chi_\y^g(\0)+k \pi_\Alpha \circ \A(\x)=\Ho^k \circ \Chi_\y^g(\0)+k \pi_\Alpha (\y)=\Chi_\y^{h^k g_0}(\0).
\end{array}$$
A similar statement holds for the case of $v^k g_0$.
\end{proof}

We will now connect this operation to the adjacency sign property. Recall that for $\vv \in \V$ the function $\e_\vv \in \R^\V_c$ is the function that assigns one
to $\vv$ and assigns $\0$ to all other vertices.

\begin{definition}
Let $\lambda \geq 2$ and suppose $\bm \theta \in \Rn_\lambda \cap \Q_{++}$ has $\lambda$-shrinking sequence $\langle g_i \rangle$. 
Let $s_i'=\Sigma^{\gamma(g_i)}(++)$. 
For a vertex $\vv \in \V$ we say the graph $\G$ has $\vv$-ASP if for all $i \geq 0$, we have 
$\Chi^{\gamma(g_i)}_{\e_\vv}(\0) \in \hat \Q_{s'_i}$.
\end{definition}

\begin{proposition}
\label{prop:adj_sign_master}
Suppose that $\G$ has $\vv$-ASP for all $\vv \in \V$. Then $\G$ has the adjacency sign property.
\end{proposition}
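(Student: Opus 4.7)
The plan is to assemble the proof from the three structural ingredients already in hand: the formula $\Upsilon^g(\x)-\x = \Chi^g_{\A\x}(\0)$ from Proposition \ref{prop:chi2}, the bi-linearity of $\Chi^g_\y(\0)$ in the parameter $\y$ from Proposition \ref{prop:chi_props}(2)--(3), and the identification $s_t = \Sigma^{\gamma(g_t)}(s_0)$ at critical times from Corollary \ref{cor:critical_time_formula}.

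First I would reduce to $\bm \theta \in \Q_{++}$; the other three quadrants follow by the dihedral symmetry of Remark \ref{rem:dihedral}. Let $\x \in \R^\V_c$ with $\y := \A \x \in \hat \Q_{++}$, let $\langle g_n \rangle$ be the $\lambda$-shrinking sequence of $\bm \theta$ with sign sequence $\langle s_n\rangle$ (so $s_0=++$), and let $t$ be a critical time. Because $\y$ has finite support and lies in $\hat \Q_{++}$, we may write
$$\y = \sum_{\vv \in \V,\, \y(\vv) > 0} \y(\vv) \e_\vv,$$
a non-negative linear combination of indicator functions. Applying Proposition \ref{prop:chi2} and then parts (2)--(3) of Proposition \ref{prop:chi_props} gives
$$\Upsilon^{\gamma(g_t)}(\x) - \x = \Chi^{\gamma(g_t)}_{\y}(\0) = \sum_{\vv} \y(\vv) \, \Chi^{\gamma(g_t)}_{\e_\vv}(\0).$$

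Now invoke the hypothesis: for each $\vv$, $\G$ has $\vv$-ASP, so $\Chi^{\gamma(g_t)}_{\e_\vv}(\0) \in \hat \Q_{s'_t}$ where $s'_t = \Sigma^{\gamma(g_t)}(++)$. Since $\hat \Q_{s'_t}$ is a convex cone and each $\y(\vv) \geq 0$, the displayed sum lies in $\hat \Q_{s'_t}$. By Corollary \ref{cor:critical_time_formula}, at the critical time $t$ we have $s_t = \Sigma^{\gamma(g_t)}(s_0) = s'_t$, so
$$\Upsilon^{\gamma(g_t)}(\x) - \x \in \hat \Q_{s_t}.$$
On the other hand, since $\f \in \Surv_{\bm\theta}$, Proposition \ref{prop:equiv_survivors} yields $\Upsilon^{g_t}(\f) \in \hat \Q_{s_t}$ as well. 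The pairing $\langle \cdot, \cdot \rangle$ of two elements of the same quadrant $\hat \Q_{s_t}$ is a sum of products of coordinates with matching signs, hence is non-negative. This establishes the adjacency sign inequality and completes the proof.

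There is really no substantial obstacle here; the proposition is a linearity packaging argument, and the work has been front-loaded into (i) the definition of $\Chi^g_\y$ so that it is affine-linear in $\y$, and (ii) Corollary \ref{cor:critical_time_formula}, which is what forces the sign pair $s_t$ produced by the shrinking sequence to agree with the sign pair $s'_t$ produced by the $\gamma$-twisted expansion action at critical times. The only mild care needed is to verify the convex cone remark --- $\hat \Q_s$ is closed under addition and non-negative scaling directly from Definition \ref{def:quadrants2} --- and to recall that the pairing $\langle \cdot,\cdot\rangle$ restricted to $\hat \Q_s \times (\hat \Q_s \cap \R^\V_c)$ is non-negative, which is immediate from equation \ref{eq:pairing}.
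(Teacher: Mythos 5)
Your proof is correct and takes essentially the same route as the paper: split $\y = \A\x$ into a non-negative combination $\sum_\vv \y(\vv)\e_\vv$, push through $\Chi^{\gamma(g_t)}_{\cdot}(\0)$ by linearity in the parameter (Proposition \ref{prop:chi_props}(2)--(3)), invoke $\vv$-ASP term-by-term, use the convex-cone structure of $\hat\Q_{s'_t}$, identify $s'_t$ with $s_t$ at critical times, and conclude that the pairing of two elements of $\hat\Q_{s_t}$ is non-negative. One minor improvement: you cite Corollary \ref{cor:critical_time_formula} directly to get $s_t = \Sigma^{\gamma(g_t)}(s_0) = s'_t$, which is the cleanest possible reference, while the paper cites Proposition \ref{prop:gamma_critical} (whose statement does not literally assert this equality; it only appears inside that proposition's proof, itself via Corollary \ref{cor:critical_time_formula}). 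One superfluity: the opening reduction to $\bm\theta\in\Q_{++}$ by dihedral symmetry is unnecessary, since Definition \ref{def:adjacency sign} already restricts the adjacency sign property to $\bm\theta\in\Q_{++}\cap\Rn_\lambda$.
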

\begin{proof}
Let $\x \in \R^\V_c$ and assume that $\y=\A(\x) \in \hat \Q_{++}$. Let $t$ be a critical time.
Then Proposition \ref{prop:chi2} implies that $\Upsilon^{\gamma(g_t)}(\x)-\x=\Chi_\y^{\gamma(g_t)}(\0)$.  
By Proposition \ref{prop:chi_props}, we can write
\begin{equation}
\label{eq:chi_sum}
\Chi_\y^{\gamma(g_t)}(\0)=\sum_{\vv \in \V} \Chi_{\y(\vv) \e_{\vv}}^{\gamma(g_t)}(\0)=
\sum_{\vv \in \V} \y(\vv) \Chi_{\e_{\vv}}^{\gamma(g_t)}(\0).
\end{equation}
Note that each $\y(\vv) \geq 0$ by the assumption that $\y \in \hat \Q_{++}$. In addition, each 
$\Chi_{\e_{\vv}}^{\gamma(g_t)}(\0) \in \hat \Q_{s'_t}$ because $\G$ has $\vv$-ASP.
Now let $\langle s_i\rangle$ denote the sign sequence of $\bm \theta$, and let $\f \in \Surv_{\bm \theta}$ be a $\bm \theta$-survivor. By Proposition \ref{prop:equiv_survivors},
we know $\Upsilon^{g_t} (\f) \in \hat \Q_{s_t}$. By Proposition \ref{prop:gamma_critical}, for all critical times $t$ we have $s_t'=s_t$.
Therefore, each $\Chi_{\e_{\vv}}^{\gamma(g_t)}(\0) \in \hat \Q_{s_t}$. Each term in equation \ref{eq:chi_sum} lies in $\hat \Q_{s_t}$.
It follows that $\langle \Upsilon^{g_t} (\f), \Chi_\y^{\gamma(g_t)}(\0)\rangle \geq 0$ as desired.
\end{proof}

\begin{lemma}[Subgraphs and $\vv$-ASP]
\label{lem:subgraphs_and_ASP}
Let $\vv \in \V$, and assume that there is an infinite connected subgraph $\GH \subset \G$ containing the vertex $\vv$ such that $\GH$ has $\vv$-ASP. Then, $\G$ also has $\vv$-ASP. 
\end{lemma}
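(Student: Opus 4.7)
The plan is to show that the discrepancy
$$D_i := \Chi_{\G,\e_\vv}^{\gamma(g_i)}(\0) - \Chi_{\GH,\e_\vv}^{\gamma(g_i)}(\0)$$
lies in $\hat\Q_{s'_i}$ for every $i \geq 0$. Since the $\vv$-ASP hypothesis on $\GH$ gives $\Chi_{\GH,\e_\vv}^{\gamma(g_i)}(\0)\in \hat\Q_{s'_i}$, adding then yields $\Chi_{\G,\e_\vv}^{\gamma(g_i)}(\0) = D_i + \Chi_{\GH,\e_\vv}^{\gamma(g_i)}(\0) \in \hat\Q_{s'_i}$, which is exactly $\vv$-ASP for $\G$.

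Setting $w^{(k)} := \gamma(g_k g_{k-1}^{-1}) \in \{h^{\pm 1}, v^{\pm 1}\}$ and $U_k^\star := \Chi_{\star,\e_\vv}^{\gamma(g_k)}(\0)$ for $\star \in \{\G,\GH\}$, the first step is to establish the telescoping identity
$$D_i = \sum_{k=1}^{i} \Upsilon_\G^{\gamma(g_i g_k^{-1})}\Big(\big(\Upsilon_\G^{w^{(k)}} - \Upsilon_\GH^{w^{(k)}}\big)(U_{k-1}^\GH)\Big).$$
This comes from iterating the recursion $U_k^\star = \Upsilon_\star^{w^{(k)}}(U_{k-1}^\star) + \Chi_{\star,\e_\vv}^{w^{(k)}}(\0)$ of Proposition \ref{prop:chi_props}, together with the key observation that the inhomogeneous term $\Chi_{\star,\e_\vv}^{w^{(k)}}(\0)$ equals $\pm\pi_{\Alpha_\star}(\e_\vv)$ or $\pm\pi_{\Beta_\star}(\e_\vv)$ and therefore coincides in $\G$ and $\GH$ because $\vv \in \V_\GH$; with this, the induction on $i$ starting from $D_0 = \0$ is short.

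The second step is to show that each summand lies in $\hat\Q_{s'_i}$ by direct appeal to Corollary \ref{cor:diff_tracking}. Fix $k$ and consider the geodesic ray $\tilde g_l := \gamma(g_{k-1+l}\,g_{k-1}^{-1})$ for $l \geq 0$; this is geodesic as a left translate of the geodesic $\langle \gamma(g_l)\rangle$, since $\gamma$ is a Cayley-graph automorphism. Take $\x := U_{k-1}^\GH \in \R^{\V_\GH}_c \cap \hat\Q_{s'_{k-1}}$ (the quadrant membership is the $\vv$-ASP hypothesis on $\GH$) and index $l = i-k+1$ in Corollary \ref{cor:diff_tracking}; using $\tilde g_1 = w^{(k)}$ and $\tilde g_l \tilde g_1^{-1} = \gamma(g_i g_k^{-1})$, the corollary places the summand in $\hat\Q_{\Sigma^{\tilde g_l}(s'_{k-1})}$. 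Finally, since $\gamma(g_i) = \gamma(g_i g_{k-1}^{-1})\cdot\gamma(g_{k-1})$ is a reduced factorization (as $\langle \gamma(g_i)\rangle$ is geodesic), the monoid-action property of $\Sigma$ gives
$$\Sigma^{\tilde g_l}(s'_{k-1}) = \Sigma^{\gamma(g_i g_{k-1}^{-1})}\circ \Sigma^{\gamma(g_{k-1})}(++) = \Sigma^{\gamma(g_i)}(++) = s'_i,$$
so each summand, and hence $D_i$ itself, lies in $\hat\Q_{s'_i}$.

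The only nontrivial piece is justifying the telescoping identity, which is a routine induction but depends on the cancellation of inhomogeneous terms that $\vv \in \V_\GH$ supplies; the real quadrant-tracking work is already packaged into Corollary \ref{cor:diff_tracking}, which was engineered precisely for operator differences $\Upsilon_\G - \Upsilon_\GH$ applied to functions supported in $\V_\GH$.
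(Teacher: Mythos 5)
Your argument is correct and follows the same strategy as the paper's proof: both express the discrepancy $D_i$ as a telescoping sum of terms of the form $\Upsilon_\G^{\gamma(g_i g_k^{-1})}\big((\Upsilon_\G^{w^{(k)}}-\Upsilon_\GH^{w^{(k)}})(U_{k-1}^\GH)\big)$, both rely on the observation that $\Chi_{\star,\e_\vv}^{w^{(k)}}(\0)=\pm\pi_{\Alpha_\star}(\e_\vv)$ or $\pm\pi_{\Beta_\star}(\e_\vv)$ coincides in $\G$ and $\GH$ because $\vv\in\V_\GH$, and both invoke Corollary \ref{cor:diff_tracking} to put each summand in $\hat\Q_{s'_i}$. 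Your presentation is a little cleaner than the paper's — you iterate the recursion $D_k=\Upsilon_\G^{w^{(k)}}(D_{k-1})+\y_k$ directly, whereas the paper defines the $\y_j$ implicitly via the telescoping identity and then recovers the closed form through the auxiliary quantities $\a_j$ and $\b_j$ and the identity $\a_j+\b_j=\0$; and you make the reduced-word argument for $\Sigma^{\tilde g_l}(s'_{k-1})=s'_i$ explicit where the paper simply asserts $s(i,j)=s'_i$. These are presentational improvements rather than a different route.
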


\begin{proof}
We recall our notation from section \ref{sect:subgraphs_and_covers}. We distinguish the actions 
of $\Upsilon_\G:G \times \R^{\V_\G} \to \R^{\V_\G}$ and $\Upsilon_\GH:G \times \R^{\V_\GH} \to \R^{\V_\GH}$. To distinguish the two $\Chi_\y$ actions, we use
$\Chi_{\G,\y}:G \times \R^{\V_\G} \to \R^{\V_\G}$ and $\Upsilon_{\GH,\y}:G \times \R^{\V_\GH} \to \R^{\V_\GH}$. We will abuse notation by identifying
$\R^{\V_\GH}$ with the subset of $\R^{\V_\G}$ which is supported on $\V_\GH$.

We must show that $\Chi^{\gamma(g_i)}_{\G,\e_\vv}(\0) \in \hat \Q_{s'_i}$ for all $i$. The statement of the proposition guarantees
$\Chi^{\gamma(g_i)}_{\GH,\e_\vv}(\0) \in \hat \Q_{s'_i}$ for all $i$. We will show that for all $i$,
\begin{equation}
\label{eqn:prove_super_asp}
\Chi^{\gamma(g_i)}_{\G,\e_\vv}(\0)-\Chi^{\gamma(g_i)}_{\GH,\e_\vv}(\0) \in \hat \Q_{s'_i}.
\end{equation}
This implies the proposition.

Now we will inductively define elements $\y_i \in \R^{\V_\G}_c$ for each integer $i \geq 1$. 
Assuming $\y_1, \ldots, \y_{i-1}$ are defined, we define $\y_i$ to be the unique element of 
 so that the following equation holds
\begin{equation}
\label{eq:sg_sum}
\Chi^{\gamma(g_i)}_{\G,\e_\vv}(\0)-\Chi^{\gamma(g_i)}_{\GH,\e_\vv}(\0)=
\Upsilon^{\gamma(g_i g_1^{-1})}_{\G}(\y_1)+\Upsilon^{\gamma(g_i g_2^{-1})}_{\G}(\y_2)+\ldots+\y_i.
\end{equation}
We will show that 
\begin{equation}
\label{eqn:sg_to_show} 
\Upsilon^{\gamma(g_i g_j^{-1})}_{\G}(\y_j) \in \hat \Q_{s'_i}.
\quad \textrm{for all $i \geq j$.}
\end{equation}
This implies that equation \ref{eqn:prove_super_asp} holds, because it holds for each term in the sum given in equation \ref{eq:sg_sum}.

We observe by combining the cases $i=j$ and $i=j-1$ of equation \ref{eq:sg_sum} that
$$\y_j=\Chi^{\gamma(g_j)}_{\G,\e_\vv}(\0)-\Chi^{\gamma(g_j)}_{\GH,\e_\vv}(\0)-\Upsilon_\G^{\gamma(g_j g_{j-1}^{-1})} \big(\Chi^{\gamma(g_{j-1})}_{\G,\e_\vv}(\0)-\Chi^{\gamma(g_{j-1})}_{\GH,\e_\vv}(\0)\big).$$
Define the following two quantities. 
$$\a_j=\Chi^{\gamma(g_j)}_{\G,\e_\vv}(\0)-\Upsilon_\G^{\gamma(g_j g_{j-1}^{-1})} \circ \Chi^{\gamma(g_{j-1})}_{\G,\e_\vv}(\0).$$
$$\b_j=-\Chi^{\gamma(g_j)}_{\GH,\e_\vv}(\0)+\Upsilon_\GH^{\gamma(g_j g_{j-1}^{-1})} \circ \Chi^{\gamma(g_{j-1})}_{\GH,\e_\vv}(\0).$$
Observe that
$$\y_j=\a_j+\b_j+
\Upsilon_\G^{\gamma(g_j g_{j-1}^{-1})} \circ \Chi^{\gamma(g_{j-1})}_{\GH,\e_\vv}(\0)
-\Upsilon_\GH^{\gamma(g_j g_{j-1}^{-1})} \circ \Chi^{\gamma(g_{j-1})}_{\GH,\e_\vv}(\0).
$$
By Proposition \ref{prop:chi_props}, we have
$$\a_j =
\Chi^{\gamma(g_jg_{j-1}^{-1})}_{\G,\e_\vv} \big( \Chi^{\gamma(g_{j-1})}_{\G,\e_\vv}(\0)\big)+\Upsilon_\G^{\gamma(g_j g_{j-1}^{-1})} \big( -\Chi^{\gamma(g_{j-1})}_{\G,\e_\vv}(\0)\big) =
\Chi^{\gamma(g_jg_{j-1}^{-1})}_{\G,\e_\vv}(\0).$$
Similarly, 
$$\b_j=-\Chi^{\gamma(g_jg_{j-1}^{-1})}_{\GH,\e_\vv} \big( \Chi^{\gamma(g_{j-1})}_{\GH,\e_\vv}(\0)\big)-\Upsilon_\GH^{\gamma(g_j g_{j-1}^{-1})} \big( -\Chi^{\gamma(g_{j-1})}_{\G,\e_\vv}(\0)\big) =
-\Chi^{\gamma(g_jg_{j-1}^{-1})}_{\GH,\e_\vv}(\0).$$
Thus $\a_j+\b_j=\0$, because $\Chi^{\gamma(g_jg_{j-1}^{-1})}_{\ast,\e_\vv}(\0)$
equals $\pm \pi_\Alpha(\e_v)$ when $g_jg_{j-1}^{-1}=h^{\pm 1}$ and 
equals $\pm \pi_\Beta(\e_v)$ when $g_jg_{j-1}^{-1}=v^{\pm 1}$. Thus,
$$\y_j=\Upsilon_\G^{\gamma(g_j g_{j-1}^{-1})} \circ \Chi^{\gamma(g_{j-1})}_{\GH,\e_\vv}(\0)
-\Upsilon_\GH^{\gamma(g_j g_{j-1}^{-1})} \circ \Chi^{\gamma(g_{j-1})}_{\GH,\e_\vv}(\0).
$$
Now we will use the assumption that $\GH$ has $\vv$-ASP. Therefore, 
$\Chi^{\gamma(g_{j-1})}_{\GH,\e_\vv}(\0) \in \hat \Q_{s'_j}$. 
Then, by applying Corollary \ref{cor:diff_tracking} to the geodesic ray 
$\langle \gamma(g_{i} g_{j-1}^{-1}) \rangle_{i \geq j-1}$ we see
that 
$$\Upsilon_\G^{\gamma(g_i g_j^{-1})}(\y_j) \in \hat \Q_{s(i,j)},$$
where $s(i,j)=\Sigma^{\gamma(g_i g_{j-1}^{-1})}(s'_{j-1})$. We observe $s(i,j)=s'_i$,
and therefore we have proved equation \ref{eqn:sg_to_show} as desired.
\end{proof}

\begin{lemma}[Covers and $\vv$-ASP]
\label{lem:cover_ASP}
Let $\vv \in \V$, and assume that there is a finite cover $\widetilde \G$ of $\G$ and
a lift $\widetilde \vv$ of $\vv$ for so that $\widetilde \G$ has $\widetilde \vv$-ASP. 
Then, $\G$ has $\vv$-ASP. 
\end{lemma}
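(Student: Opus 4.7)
The plan is to transport the $\widetilde{\vv}$-ASP property on $\widetilde{\G}$ down to $\vv$-ASP on $\G$ via the linear map $\pi_\ast:\R^{\V_{\widetilde{\G}}} \to \R^{\V_\G}$ from equation \ref{eq:pi_ast}. The first observation is that $\pi_\ast(\e_{\widetilde{\vv}}) = \e_\vv$: indeed, $\pi_\ast(\e_{\widetilde{\vv}})(\vw)$ sums the values of $\e_{\widetilde{\vv}}$ over the fiber $\pi^{-1}(\vw)$, which is $1$ when $\vw=\vv$ (since $\widetilde{\vv} \in \pi^{-1}(\vv)$) and $0$ otherwise. Also, $\pi_\ast$ preserves the bipartite decomposition, so $\pi_\ast \circ \pi_\Alpha = \pi_\Alpha \circ \pi_\ast$ and similarly for $\pi_\Beta$. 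Together with Proposition \ref{prop:cover_commute}, this tells us $\pi_\ast$ intertwines the affine generators $\Ho^k_{\widetilde{\G},\widetilde{\y}}$ and $\Ho^k_{\G,\pi_\ast(\widetilde{\y})}$ (and similarly for $\Vo^k$), because
\[
\pi_\ast\bigl(\Ho^k_{\widetilde{\G}}(\widetilde{\zz}) + k\pi_\Alpha(\widetilde{\y})\bigr)
= \Ho^k_\G\bigl(\pi_\ast(\widetilde{\zz})\bigr) + k\pi_\Alpha\bigl(\pi_\ast(\widetilde{\y})\bigr).
\]

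The second step is to extend this intertwining from generators to the whole group action by induction on word length, yielding
\[
\pi_\ast \circ \Chi^{g}_{\widetilde{\G},\widetilde{\y}} = \Chi^{g}_{\G,\pi_\ast(\widetilde{\y})} \circ \pi_\ast
\qquad \text{for all } g \in G.
\]
The inductive step is a direct calculation of exactly the same flavor as the generator check, using the fact that $\pi_\ast$ is $\R$-linear and commutes with $\Upsilon^g$. Applying this identity with $\widetilde{\y} = \e_{\widetilde{\vv}}$ and evaluating at $\0 \in \R^{\V_{\widetilde{\G}}}$ (noting $\pi_\ast(\0) = \0$) gives
\[
\pi_\ast \circ \Chi^{\gamma(g_i)}_{\widetilde{\G},\e_{\widetilde{\vv}}}(\0) = \Chi^{\gamma(g_i)}_{\G,\e_\vv}(\0).
\]

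Finally I would verify that $\pi_\ast$ is quadrant-preserving: $\pi_\ast(\hat \Q_s) \subset \hat \Q_s$ for every $s \in \SP$. This follows because a cover of a bipartite graph inherits a canonical bipartite structure in which each lift of an $\Alpha$-vertex (respectively $\Beta$-vertex) lies in $\widetilde{\Alpha}$ (respectively $\widetilde{\Beta}$), and $\pi_\ast$ collapses a fiber by summation of values having the same sign constraint. Combining the three ingredients, since $\widetilde{\G}$ has $\widetilde{\vv}$-ASP we have $\Chi^{\gamma(g_i)}_{\widetilde{\G},\e_{\widetilde{\vv}}}(\0) \in \hat \Q_{s'_i}$ for all $i$, and hence $\Chi^{\gamma(g_i)}_{\G,\e_\vv}(\0) \in \hat \Q_{s'_i}$, which is exactly the definition of $\vv$-ASP for $\G$.

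The only real obstacle is the inductive verification of the covariance $\pi_\ast \circ \Chi^{g}_{\widetilde{\G},\widetilde{\y}} = \Chi^{g}_{\G,\pi_\ast(\widetilde{\y})} \circ \pi_\ast$, since $\Chi_\y$ is nonlinear in $\zz$; but the nonlinearity is simply a translation by a $\y$-dependent term in the bipartite projection, which $\pi_\ast$ handles cleanly because $\pi_\ast$ commutes with $\pi_\Alpha$ and $\pi_\Beta$. All other steps are immediate from the definitions and Proposition \ref{prop:cover_commute}.
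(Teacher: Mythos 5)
Your proof is correct and follows the same route as the paper: the paper simply invokes Proposition \ref{prop:chi_commute} (which packages the identity $\pi_\ast \circ \widetilde{\Chi}^g_{\widetilde{\y}} = \Chi^g_{\pi_\ast(\widetilde{\y})} \circ \pi_\ast$ and the commutations $\pi_\ast \circ \widetilde{\pi}_{\widetilde{\Alpha}} = \pi_\Alpha \circ \pi_\ast$, etc.) and then the fact $\pi_\ast(\e_{\widetilde{\vv}}) = \e_\vv$, whereas you re-derive that proposition inline by the same generator-plus-induction argument. The one thing you make explicit that the paper leaves implicit is that $\pi_\ast(\hat\Q_s) \subset \hat\Q_s$ (needed to pass the quadrant conclusion from $\widetilde{\G}$ down to $\G$); your justification — $\pi_\ast$ sums values over a fiber lying entirely in $\widetilde\Alpha$ or entirely in $\widetilde\Beta$, all of one sign — is correct and a worthwhile clarification.
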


Recall our notation for dealing with covers given in section \ref{sect:subgraphs_and_covers}.
When discussing a covering graph $\widetilde G$ of $\G$, we will use tildes to denote
functions in $\R^{\widetilde \V}$ and operators on this space.
For instance,
$\widetilde \Chi_{\widetilde \y}$ denotes an action of $G$ on $\R^{\widetilde \V}$.
Also recall that $\pi:\widetilde \G \to \G$ denotes the covering map, and 
$\pi_\ast:\R^{\widetilde \V} \to \R^{\V}$ is the induced map given in equation
\ref{eq:pi_ast}. We also use $\tilde \pi_{\tilde \Alpha}: \R^{\tilde \V} \to \R^{\tilde \Alpha}$ to denote the variant of the projection $\pi_\Alpha: \R^\V \to \R^\Alpha$ for the cover $\tilde \G$.

The key to the lemma is the following observation.

\begin{proposition}
\label{prop:chi_commute}
Let $\widetilde \G$ be a finite cover of $\G$.
Let $\widetilde \x, \widetilde \y \in \R^{\widetilde \V}$. Then
\begin{enumerate}
\item $\pi_\ast \circ \widetilde \pi_{\widetilde \Alpha}(\widetilde \y)=\pi_\Alpha \circ \pi_\ast(\widetilde \y)$, 
\item  $\pi_\ast \circ \widetilde \pi_{\widetilde \Beta}(\widetilde \y)=\pi_\Beta \circ \pi_\ast(\widetilde \y)$, and 
\item for all $g \in G$, 
$\pi_\ast \circ \widetilde \Chi^g_{\widetilde \y}(\widetilde \x)=
\Chi^g_{\pi_\ast(\widetilde \y)} \circ \pi_\ast(\widetilde \x)$.
\end{enumerate}
\end{proposition}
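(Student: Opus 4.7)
The plan is to handle (1) and (2) as direct calculations from definitions, and then bootstrap (3) from these and the already-established Proposition \ref{prop:cover_commute} via induction on word length in $G$.

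For (1), I would first note that the covering $\pi:\widetilde \G \to \G$ is bipartite-preserving: the two-coloring of $\widetilde \G$ is obtained by pulling back the two-coloring of $\G$, so $\pi^{-1}(\Alpha)=\widetilde \Alpha$ and $\pi^{-1}(\Beta)=\widetilde \Beta$. From this, for any $\widetilde \y \in \R^{\widetilde \V}$ and any $\va \in \Alpha$, every $\widetilde \vv \in \pi^{-1}(\va)$ lies in $\widetilde \Alpha$, so $\widetilde \pi_{\widetilde \Alpha}(\widetilde \y)(\widetilde \vv)=\widetilde \y(\widetilde \vv)$, giving
\[
\pi_\ast \circ \widetilde \pi_{\widetilde \Alpha}(\widetilde \y)(\va)=\sum_{\widetilde \vv \in \pi^{-1}(\va)}\widetilde \y(\widetilde \vv)=(\pi_\ast\widetilde \y)(\va)=\pi_\Alpha\circ \pi_\ast(\widetilde \y)(\va).
\]
For $\vb \in \Beta$, both sides vanish (the summands on the left all have a zero factor, and the right is killed by $\pi_\Alpha$). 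Statement (2) is symmetric.

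For (3), I induct on the word length of $g$. The identity case is trivial. For the generator step, suppose $g=h^k g_0$ and the result is known for $g_0$. Using the definition $\widetilde \Ho^k_{\widetilde \y}=\widetilde \Ho^k+k\widetilde \pi_{\widetilde \Alpha}(\widetilde \y)$, linearity of $\pi_\ast$, Proposition \ref{prop:cover_commute} (which gives $\pi_\ast \circ \widetilde \Ho^k=\Ho^k \circ \pi_\ast$), and item (1), I compute the base generator identity
\[
\pi_\ast \circ \widetilde \Ho^k_{\widetilde \y}(\widetilde \w)=\Ho^k\circ\pi_\ast(\widetilde \w)+k\pi_\Alpha\circ\pi_\ast(\widetilde \y)=\Ho^k_{\pi_\ast(\widetilde \y)}\circ\pi_\ast(\widetilde \w)
\]
for any $\widetilde \w$. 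Setting $\widetilde \w=\widetilde \Chi^{g_0}_{\widetilde \y}(\widetilde \x)$ and applying the inductive hypothesis $\pi_\ast(\widetilde \w)=\Chi^{g_0}_{\pi_\ast\widetilde \y}\circ\pi_\ast(\widetilde \x)$ yields
\[
\pi_\ast \circ \widetilde \Chi^{h^k g_0}_{\widetilde \y}(\widetilde \x)=\Ho^k_{\pi_\ast\widetilde \y}\circ \Chi^{g_0}_{\pi_\ast\widetilde \y}\circ\pi_\ast(\widetilde \x)=\Chi^{h^k g_0}_{\pi_\ast\widetilde \y}\circ\pi_\ast(\widetilde \x),
\]
and the analogous computation with $v^k$ and (2) handles the other family of generators.

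I do not expect any real obstacle here: the proposition is essentially the assertion that $\pi_\ast$ is natural with respect to the linear part of the action (Proposition \ref{prop:cover_commute}) and the additive perturbation by $\y$ (items (1) and (2)). The only thing to be a little careful about is keeping the roles of the tilded and untilded $\pi_\Alpha/\pi_\Beta$ straight, so that one verifies that pushing forward a pulled-back sign datum is compatible with the inhomogeneous term in the definition of $\Ho_\y$ and $\Vo_\y$. Once (1)--(2) are in place, the induction for (3) is mechanical.
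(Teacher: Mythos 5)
Your proof is correct and takes essentially the same route as the paper: verify (1) and (2) directly from the fact that the covering respects the bipartite structure, then reduce (3) to the generator case $g=h^k$ (and $v^k$) using Proposition~\ref{prop:cover_commute} together with items (1)--(2), and close by induction on word length. The paper's proof is terser (it states "it is enough to check on powers of the generators" rather than spelling out the induction), but the content is identical.
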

\begin{proof}
The first two statements follow trivially from the definitions. 
It is enough to check statement (3) on powers of the generators. 
We will consider the case of $g=h^k$. The case of $g=v^k$ follows similarly. By definition of $\widetilde \Chi^{h^k}$,
by statement (1) and by Proposition \ref{prop:cover_commute} we have
$$\pi_\ast \circ \widetilde \Chi^{h^k}_{\widetilde \y}(\widetilde \x)=
\pi_\ast \big(\widetilde \Upsilon^{h^k}(\widetilde \x)+
k\widetilde \pi_{\widetilde \Alpha}({\widetilde \y})\big)=
\Upsilon^{h^k} \circ \pi_\ast(\widetilde \x)+k \pi_\Alpha \circ \pi_\ast({\widetilde \y})$$
So, by definition of $\Chi^{h^k}$, we see $\pi_\ast \circ \widetilde \Chi^{h^k}_{\widetilde \y}(\widetilde \x)=\Chi^{h^k}_{\pi_\ast(\widetilde \y)} \circ \pi_\ast(\widetilde \x)$ as desired.
\end{proof}

\begin{proof}[Proof of Lemma \ref{lem:cover_ASP}]
First note that $\pi_\ast(\e_{\widetilde \vv})=\e_{\vv}$. 
Since $\widetilde \G$ has $\widetilde \vv$-ASP, we know 
$\widetilde \Chi^{\gamma(g_i)}_{\e_{\widetilde \vv}}(\widetilde \0) \in \hat \Q_{s'_i}$
for all $i$. By Proposition \ref{prop:chi_commute},
$$\Chi^{\gamma(g_i)}_{\e_{\vv}}(\0)=\pi_\ast \circ \widetilde \Chi^{\gamma(g_i)}_{\e_{\widetilde \vv}}(\widetilde \0),$$
and therefore $\Chi^{\gamma(g_i)}_{\e_{\vv}}(\0)\in \hat \Q_{s'_i}$
for all $i$.
\end{proof}

\subsection{The adjacency sign property for \texorpdfstring{$\Z$}{Z}}
\label{ss:asp_Z}

In this subsection, we complete the proof that graphs with no vertices of valance one have the adjacency sign property.

\begin{theorem}
\label{thm:adj_sign_int}
The graph $\G_\Z$ has $\vv$-ASP for all $\vv \in \Z$.
\end{theorem}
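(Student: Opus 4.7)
The plan is to prove, by induction on $i$, that $\phi_i := \Chi^{\gamma(g_i)}_{\e_\vv}(\mathbf{0}) \in \hat \Q_{s'_i}$. By the translation invariance of $\G_\Z$ together with the dihedral symmetry of Remark \ref{rem:dihedral} (which interchanges $\Alpha \leftrightarrow \Beta$ and $h \leftrightarrow v$), I reduce to the case $\vv = 0 \in \Alpha$. Writing $h_i := \gamma(g_i)$, so that $\langle h_i \rangle$ is a geodesic ray in $G$ with $h_1 \in \{h,v\}$, and setting $g^{\ast} := h_i h_{i-1}^{-1} \in \{h^{\pm 1},v^{\pm 1}\}$, the recursion for $\Chi$ gives
$$\phi_i = \Upsilon^{g^{\ast}}(\phi_{i-1}) + \epsilon\, \pi_U(\e_0),$$
with $(\epsilon, U) = (1,\Alpha), (-1,\Alpha), (1,\Beta), (-1,\Beta)$ according as $g^{\ast} = h, h^{-1}, v, v^{-1}$. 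The base cases $i=0,1$ are immediate, since $\phi_0 = \mathbf{0}$ and $\phi_1 \in \{\e_0, \mathbf{0}\}$, both lying in $\hat \Q_{++} = \hat \Q_{s'_1}$.

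The inductive step splits according to whether $\rho_\lambda^{g^{\ast}}(\Q_{s'_{i-1}}) \subset \Q_{s'_{i-1}}$. In the expansion case (equivalently, $s'_i = s'_{i-1}$), Proposition \ref{prop:quadrant_tracking} gives $\Upsilon^{g^{\ast}}(\phi_{i-1}) \in \hat \Q_{s'_i}$, and an inspection of the sign-action table shows that the shift term $\epsilon\, \pi_U(\e_0)$ also lies in $\hat \Q_{s'_i}$; hence so does $\phi_i$.

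In the transition case ($s'_i \neq s'_{i-1}$), the image $\Upsilon^{g^{\ast}}(\phi_{i-1})$ need not lie in $\hat \Q_{s'_i}$: it has a ``defect'' concentrated at the vertex $0$ (and possibly its immediate neighbors) that must be cancelled by the shift term. The key tool here is Proposition \ref{prop:chi2}: I choose $\x \in \R^\V$ with $\A\x = \e_0$---for instance the bounded symmetric solution with $\x(2k)=0$ and $\x(2k+1) = \x(-2k-1) = (-1)^k/2$ for $k \geq 0$---so that $\phi_i = \Upsilon^{h_i}(\x) - \x$. Combining this with equation~(\ref{eq:conj_by_A}) yields the identity
$$\A\,\phi_{i-1} = \Upsilon^{\bar h_{i-1}}(\e_0) - \e_0,$$
which, evaluated at $0$, controls the sum $\phi_{i-1}(-1) + \phi_{i-1}(1)$ in terms of $\Upsilon^{\bar h_{i-1}}(\e_0)(0)$. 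This precisely furnishes the cancellation between $\Upsilon^{g^{\ast}}(\phi_{i-1})(0)$ and $\epsilon\, \pi_U(\e_0)(0)$ needed to place $\phi_i(0)$ on the correct side of zero; an analogous argument (using the identity $\A^2 \phi_{i-1} = \Upsilon^{h_{i-1}}(\A\e_0) - \A\e_0$) handles the few subcases where the defect also touches the neighbors $\pm 1$ of $0$.

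The main obstacle is the finite but nontrivial case analysis in the transition case: for each pair $(g^{\ast}, s'_{i-1})$ giving rise to $s'_i \neq s'_{i-1}$, the requisite cancellation must be verified. The dihedral symmetry of Remark \ref{rem:dihedral} reduces the distinct configurations to a handful, which can then be checked by the algebraic identity above combined with explicit computation on $\G_\Z$.
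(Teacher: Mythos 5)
Your plan runs into a genuine gap in the transition case, and the gap is not of the kind that a finite cleanup of ``the vertex $0$ and its neighbors'' can repair.

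Work through what the inductive hypothesis actually buys you. Take the representative transition where $s'_{i-1}=++$ and $g^\ast := h_i h_{i-1}^{-1}=h^{-1}$, so $s'_i=-+$. Using the (corrected) formula $\Ho^{-1}(\f)(\va)=\f(\va)-\A(\f)(\va)$ for $\va\in\Alpha$, one has
\[
\phi_i(\va)\;=\;\phi_{i-1}(\va)-\A\phi_{i-1}(\va)-[\va=0],
\]
and you need this to be $\le 0$ for \emph{every} $\va\in\Alpha$, not just at $0$. Your identity $\A\phi_{i-1}=\Upsilon^{\bar h_{i-1}}(\e_0)-\e_0$ is correct, but substituting it only reshuffles the requirement into
\[
\phi_{i-1}(\va)\;\le\;\Upsilon^{\bar h_{i-1}}(\e_0)(\va)\qquad\text{for all }\va\in\Alpha,
\]
which is an inequality over the whole (growing) support of $\phi_{i-1}$. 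Nothing in the hypothesis $\phi_{i-1}\in\hat\Q_{++}$ gives you this: a generic nonnegative, finitely supported $\f$ can have $\f(\va)>\A\f(\va)$ at many interior points (e.g.\ $\f=\e_2$), so $\Ho^{-1}$ can eject you from the target quadrant at arbitrary vertices. The ``defect'' is not concentrated near $0$; your proposed fallback via $\A^2\phi_{i-1}$ shares the same problem one step removed. In short, the naive inductive predicate ``$\phi_i\in\hat\Q_{s'_i}$'' is too weak to close the induction.

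This is precisely what the paper's proof is engineered to avoid. The paper does not induct on the quadrant membership of $\phi_i$ alone; it decomposes $\Chi^{\gamma(g_i)}_{\e_0}(\0)=\zz_i+\w_i$, where $\zz_i$ ranges over a \emph{finite} list of explicit finitely supported functions tracked by the state diagram (Figure \ref{fig:adj_sign_diagram}), and inducts on a strictly stronger predicate: that $\Upsilon^{\gamma(g_{i+k}g_i^{-1})}(\w_i)\in\hat\Q_{s'_{i+k}}$ for \emph{all future} $k\ge 0$ (statement (3) in that proof). That forward-looking invariant is exactly what makes the Quadrant Tracking Proposition (\ref{prop:quadrant_tracking}) and its saddle-connection variant (Corollary \ref{cor:expansion_sign_action}) applicable at each step, and it reduces the unbounded check you are implicitly facing to the thirty arrow-transitions of the finite diagram. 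Without an inductive invariant of this forward-propagating type, the transition cases cannot be controlled, and the argument does not close.
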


From the previous subsection, we have the following corollary.

\begin{corollary}
\label{cor:adj_sign}
Suppose $\G$ be any infinite graph without vertices of valance one. 
Then $\G$ has $\vv$-ASP for all $\vv \in \V$. Thus, $\G$ has the adjacency sign property.
\end{corollary}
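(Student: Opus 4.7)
The plan is to assemble Corollary \ref{cor:adj_sign} by combining Theorem \ref{thm:adj_sign_int} with the transfer lemmas from the previous subsection and Lemma \ref{lem:covers}, and then invoking Proposition \ref{prop:adj_sign_master} at the end. Fix any $\vv \in \V$. By Lemma \ref{lem:covers}, since $\G$ is infinite, connected, has bounded valance, and has no vertices of valance one, we are in one of two cases: either there is an embedding $\phi : \G_\Z \to \G$ with $\phi(0) = \vv$, or there exist a double cover $\pi : \widetilde \G \to \G$, a lift $\widetilde \vv$ of $\vv$, and an embedding $\widetilde \phi : \G_\Z \to \widetilde \G$ with $\widetilde \phi(0) = \widetilde \vv$.

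In the first case, Theorem \ref{thm:adj_sign_int} gives that $\G_\Z$ has $0$-ASP, and transporting along the embedding $\phi$ this says that $\phi(\G_\Z) \subset \G$ is an infinite connected subgraph containing $\vv$ which has $\vv$-ASP. Lemma \ref{lem:subgraphs_and_ASP} then promotes this to $\vv$-ASP for $\G$ itself. In the second case, the same application of Theorem \ref{thm:adj_sign_int} together with Lemma \ref{lem:subgraphs_and_ASP} (applied inside $\widetilde \G$, using the subgraph $\widetilde\phi(\G_\Z)$) gives that $\widetilde \G$ has $\widetilde \vv$-ASP. Now Lemma \ref{lem:cover_ASP} applied to the finite cover $\pi : \widetilde \G \to \G$ yields $\vv$-ASP for $\G$.

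Running this argument for every $\vv \in \V$ produces $\vv$-ASP for all vertices of $\G$, and Proposition \ref{prop:adj_sign_master} then delivers the adjacency sign property for $\G$. There is essentially no obstacle here beyond verifying the hypotheses of each lemma are met; the one mildly delicate point is the second case, where one must recall that the ribbon graph and bipartite structures on $\widetilde \G$ are inherited from $\G$ via $\pi$, so that Lemmas \ref{lem:subgraphs_and_ASP} and \ref{lem:cover_ASP} apply as stated and the sign sequence $\langle s_i' \rangle$ used in the definition of $\widetilde \vv$-ASP is the same as the one used for $\vv$-ASP. Once this is observed, the corollary follows by direct concatenation of the cited results.
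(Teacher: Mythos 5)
Your proof is correct and follows exactly the same route as the paper: Lemma \ref{lem:covers} to produce the copy of $\G_\Z$ (directly or in a double cover), Theorem \ref{thm:adj_sign_int} for the base case, Lemmas \ref{lem:subgraphs_and_ASP} and \ref{lem:cover_ASP} to transfer, and Proposition \ref{prop:adj_sign_master} to conclude. The only slight imprecision is claiming that $\G_\Z$ has ``$0$-ASP'' as if that were one fact — what you actually need is that $\G_\Z$ has $\vw$-ASP for the vertex $\vw$ carrying the same bipartite label ($\Alpha$ or $\Beta$) as $\vv$; but since Theorem \ref{thm:adj_sign_int} establishes $\vw$-ASP for every $\vw\in\Z$ (handling both labels via the reflective dihedral symmetry), this is immediately covered.
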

\begin{proof}
Lemma \ref{lem:covers} guarantees that given any $\vv \in \V$, we can find an embedding of $\G_\Z$ into $\G$ such that $\vv$ lies in the image, or we can find a double cover $\widetilde \G$ of $\G$ and a lift $\widetilde \vv$ of $\vv$ and a embedding of $\G_\Z$
into $\widetilde \G$ which passes through $\widetilde \vv$.
In the first case, $\G$ has $\vv$-ASP by Lemma \ref{lem:subgraphs_and_ASP}. 
In the second, $\widetilde \G$ has $\widetilde \vv$-ASP by Lemma \ref{lem:subgraphs_and_ASP} and $\G$ has $\vv$-ASP by Lemma \ref{lem:cover_ASP}.
The adjacency sign property follows from Proposition \ref{prop:adj_sign_master}.
\end{proof}

\begin{proof}[Proof of Theorem \ref{thm:adj_sign_int}]
By possibly applying reflective dihedral symmetry $\bar \cdot$ (which reflects in the line $x=y$), 
we may assume that $\vv \in \Alpha$. By further applying translational symmetries of the graph $\G_\Z$, we may assume that $\vv=0$. Thus, $\Alpha$ consists of the even integers and $\Beta$ consists of the odd integers. To simplify notation let $\y=\e_0$. 

We now recall what we must prove. Let $\lambda \geq 2$ and suppose $\bm \theta \in \Rn_\lambda \cap \Q_{++}$ has shrinking sequence $\langle g_i \rangle$ and
sign sequence $\langle s_i \rangle$. Let $s_i'=\Sigma^{\gamma(g_i)}(++)$. Then, we must show that for all $i \geq 0$, we have
$\Chi_\y^{\gamma(g_i)}(\0) \in \hat \Q_{s'_i}$.

We primarily view this as a combinatorial statement, however we will find it useful to use some geometric tricks. Therefore, we define $\1 \in \R^\Z$ to be the function such that $\1(\vx)=1$ for all $\vx \in \Z$. This is an eigenvector for the adjacency operator. Namely, $\A(\1)=2\cdot \1$. We can therefore build a surface $S_\1=S(\G_\Z, \1)$. We will find it useful
to compute the holonomies of homology classes on this surface. We use
$$\hol_\1:H_1(S_\1, V; \Z) \to \R^2$$
to denote the holonomy map for this surface.

The idea of the proof is to find $\zz_i, \w_i \in \R^\V_c \cap \hat \Q_{s_i'}$ for integers $i \geq 0$ such that 
the following statements hold.
\begin{enumerate}
\item For all $i \geq 0$ we have $\Chi_\y^{\gamma(g_{i})}(\0)=\zz_i+\w_i$. By Proposition \ref{prop:chi_props}, this ensures that for all $k \geq 0$
$$\Chi_\y^{\gamma(g_{i+k})}(\0)=\Chi_\y^{\gamma(g_{i+k} g_i^{-1})}(\zz_i)+\Upsilon^{\gamma(g_{i+k} g_i^{-1})}(\w_i).$$
\item $\zz_i \in \hat \Q_{s'_i}$ for all $i \geq 0$.
\item For all $i$ and all $k \geq 0$ we have $\Upsilon^{\gamma(g_{i+k} g_i^{-1})}(\w_i) \in \hat \Q_{s'_{i+k}}$. In particular, $\w_i \in \hat \Q_{s'_i}$ for all $i$. 
\end{enumerate}
These statements guarantee $\Chi_\y^{\gamma(g_i)}(\0) \in \hat \Q_{s'_i}$ for all $i$ as desired. Therefore, they imply the lemma.

We will now explain our choice of $\zz_i$. Our choice determined inductively. We set $\zz_0=\0$. 
Note that $\zz_0 \in \hat \Q_{s_0'}=\hat \Q_{++}$. Subsequent choices are determined by following the arrows in 
Figure \ref{fig:adj_sign_diagram}. We begin at the node labeled $\0$ in the quadrant $++$ of the diagram. 
Then to find $\zz_1$ we follow the arrow labeled $\gamma(g_1)$. To find $\zz_i$ we follow the arrows labeled
$\gamma(g_1)$, then $\gamma(g_2 g_1^{-1})$, then $\gamma(g_3 g_2^{-1})$, and continue until we follow 
the arrow labeled $\gamma(g_i g_{i-1}^{-1})$. By comparison with the diagram in Definition \ref{def:sign_action} of
the expanding sign action, we see that $\zz_i \in \hat \Q_{s'_i}$ for all $i$. This verifies statement (2) above.

\begin{figure}
\begin{center}
\includegraphics{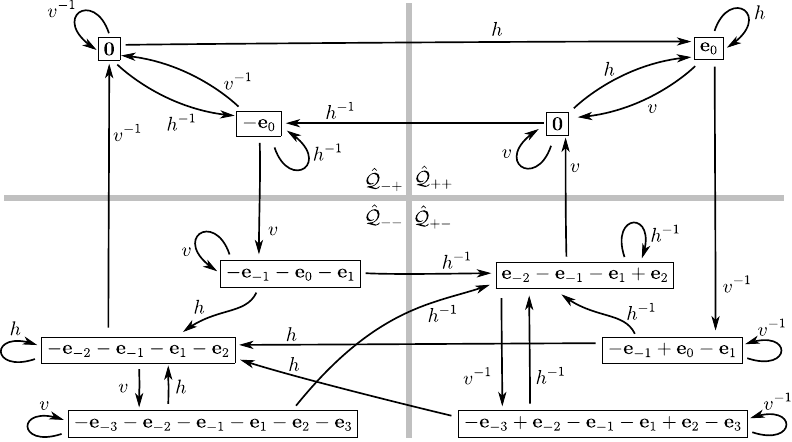}
\caption{Diagram for the proof of Theorem \ref{thm:adj_sign_int}. Arrows labeled $\gamma(g_i g_{i-1}^{-1})$ join $\zz_{i-1}$ to $\zz_i$.}
\label{fig:adj_sign_diagram}
\end{center}
\end{figure}

The formula in statement (1) above inductively determines $\w_i$ from $\zz_i$. We have $\w_0=\0$. For $i \geq 1$ we have
\begin{equation}
\label{eq:w_i}
\begin{array}{rcl}
\w_i & = & \Chi_\y^{\gamma(g_{i})}(\0)- \zz_i=
\Chi_\y^{\gamma(g_{i} g_{i-1}^{-1})} \circ \Chi_\y^{\gamma(g_{i-1})}(\0) -\zz_i \\
& = & 
\Chi_\y^{\gamma(g_{i} g_{i-1}^{-1})} (\zz_{i-1}+\w_{i-1})-\zz_i \\
& = &
\Chi_\y^{\gamma(g_{i} g_{i-1}^{-1})} (\zz_{i-1})-\zz_i+\Upsilon^{\gamma(g_{i} g_{i-1}^{-1})}(\w_{i-1}).\end{array}
\end{equation}
Because of choice of $\w_i$, we have automatically verified statement (1) above.

It remains to verify statement (3). We prove this statement by induction in $i$. Clearly, statement (3) is true for $\w_0=\0$ for all $k$. 
Now assume the statement is true for $i-1$. In particular, we assume that $\Upsilon^{\gamma(g_{i+k} g_{i-1}^{-1})}(\w_{i-1}) \in \hat \Q_{s_{i+k}'}$
for all $k \geq 0$. We wish to show statement (3) holds for $i$. By equation \ref{eq:w_i}, we have
$$\Upsilon^{\gamma(g_{i+k} g_i^{-1})}(\w_i)=\Upsilon^{\gamma(g_{i+k} g_i^{-1})}\big(\Chi_\y^{\gamma(g_{i} g_{i-1}^{-1})} (\zz_{i-1})-\zz_i\big)+\Upsilon^{\gamma(g_{i+k} g_{i-1}^{-1})}(\w_{i-1}).$$
By our inductive hypothesis, it is sufficient to show that for all $k \geq 0$, 
\begin{equation}
\label{eq:to_check}
\Upsilon^{\gamma(g_{i+k} g_i^{-1})}\big(\Chi_\y^{\gamma(g_{i} g_{i-1}^{-1})} (\zz_{i-1})-\zz_i\big) \in \hat \Q_{s'_{i+k}}.
\end{equation}

Despite the fact that we need equation \ref{eq:to_check} for all $k \geq 0$, this is really a finite check. We will use the Quadrant Tracking Proposition \ref{prop:quadrant_tracking} 
to verify this for all $k$ in one step. The number of checks is therefore equal to the number ($30$) of arrows in figure \ref{fig:adj_sign_diagram} above.

In order to cut down on the number of checks, we consider the element of the dihedral group action on the plane $\Delta:\R^2 \to \R^2$ defined by $\Delta(x,y)=(-x,y)$. Note that it satisfies the equations
$$\Delta \circ \rhoa{h}=\rhoa{h^{-1}} \circ \Delta
\quad \textrm{and} \quad
\Delta \circ \rhoa{v}=\rhoa{v^{-1}} \circ \Delta.$$
We define $\delta:G \to G$ to be the automorphism induced by the action on generators
$\delta(h)=h^{-1}$ and $\delta(v)=v^{-1}$. The action satisfies $\Delta \circ \rhoa{g}=\rhoa{\delta(g)} \circ \Delta$ for all $g \in G$. There is an orientation reversing affine automorphism of the surface $S_\1$ which preserves each rectangle which appears as an intersection of a horizontal and vertical cylinder and which has derivative given by $\Delta$. This affine automorphism preserves $\Xi(\R^\Z)$. The action therefore lifts to an action on $\R^\Z$. This action $\Delta_\ast:\R^\V \to \R^\V$ is given by 
$$\Delta_\ast(\f)(\vv)=\begin{cases}
-\f(\vv) & \textrm{if $\vv \in \Alpha$} \\
\f(\vv) & \textrm{if $\vv \in \Beta$.} 
\end{cases}$$
We can use this action to conjugate our group of operators $\Upsilon^G$.
$$\Upsilon^g \circ \Delta_\ast=\Delta_\ast \circ \Upsilon^{\delta(g)}
\quad \textrm{for all $g \in G$.}$$
For our specific choice of $\y=\e_0$ (or more generally for any $\y$ supported on $\Alpha$)
we have
$$\Delta_\ast \circ \Ho_\y=\Ho_\y^{-1} \circ \Delta_\ast
\quad \textrm{and} \quad
\Delta_\ast \circ \Vo_\y=\Vo_\y^{-1} \circ \Delta_\ast.$$
Thus for all $g \in G$, we have $\Chi_\y^g \circ \Delta_\ast=\Delta_\ast \circ \Chi_\y^{\delta(g)}$.
In particular, we can apply $\Delta_\ast$ to both sides of equation \ref{eq:to_check} to obtain
$$\Upsilon^{\delta \circ \gamma(g_{i+k} g_i^{-1})}\big(\Chi_\y^{\delta \circ \gamma(g_{i} g_{i-1}^{-1})} \circ \Delta_\ast(\zz_{i-1})-\Delta_\ast(\zz_i)\big) \in \hat \Q_{\Delta(s'_{i+k})}.$$
Therefore by the invariance of figure \ref{fig:adj_sign_diagram} under $\Delta_\ast$, we only
need to consider the case when $\zz_{i-1} \in \hat \Q_{++} \cup \Q_{+-}$. 
We will show that equation \ref{eq:to_check} holds in each of these cases below.

For all $i$ let $\x_i=\Chi_\y^{\gamma(g_{i} g_{i-1}^{-1})} (\zz_{i-1})-\zz_i$. 
\begin{enumerate}
\item Suppose $\zz_{i-1}=\0 \in \hat \Q_{++}$.
\begin{enumerate}
\item If $\gamma(g_{i} g_{i-1}^{-1})=h$, then $\zz_i=\e_0 \in \hat \Q_{++}$. We compute 
$$\x_i=\Ho_\y (\0)-\e_0=\0.$$
Thus, in this case $\Upsilon^{\gamma(g_{i+k} g_i^{-1})}(\x_i)=\0$ for all $k$. The conclusion follows trivially, because $\0 \in \hat \Q_s$ for all $s \in \SP$. 
\item If $\gamma(g_{i} g_{i-1}^{-1})=v$, then $\zz_i=\0 \in \hat \Q_{++}$. We compute $\x_i=\0$. The conclusion follows as in case (1a).
\item If $\gamma(g_{i} g_{i-1}^{-1})=h^{-1}$, then $\zz_i=-\e_0 \in \hat \Q_{-+}$. We compute $\x_i=\0$. The conclusion follows as in case (1a).
\end{enumerate}

\item Suppose $\zz_{i-1}=\e_0 \in \hat \Q_{++}$.
\begin{enumerate}
\item If $\gamma(g_{i} g_{i-1}^{-1})=h$, then $\zz_i=\e_0 \in \hat \Q_{++}$. We compute
$$\x_i=\Ho_\y (\zz_{i-1})-\zz_i=2 \e_0-\e_0=\e_0.$$
We see that $\x_i=\Ho(\e_0) \in \hat \Q_{++}$. We have $\x_i, \e_0 \in \Q_{++}$, while $\rhoa{h}(\Q_{++}) \subset \Q_{++}$. Thus Proposition \ref{prop:quadrant_tracking}
entails that the quadrant containing $\Upsilon^{\gamma(g_{i+k} g_i^{-1})}(\x_i)$ is governed by the expanding sign action. Namely,
we consider the geodesic ray $\langle \gamma(g_{l+i-1} g_{i-1}^{-1}) \rangle_{l \geq 0}$.
Proposition \ref{prop:quadrant_tracking} implies that $\Upsilon^{\gamma(g_{l+i-1} g_{i-1}^{-1})}(\e_0) \in \hat \Q_{\tilde s_l}$ where $\tilde s_l=\Sigma^{\gamma(g_{l+i-1} g_{i-1}^{-1})}(++)$. Note by induction that $\tilde s_{k+1}=s'_{i+k}$ for all $k \geq 0$, since $s'_{i-1}=\tilde s_0=++$ and $s'_{i+k}=\Sigma^{g_{i+k} g_{i-1}^{-1}}(s'_{i-1})=\tilde s_{k+1}$. Therefore, for all $k \geq 0$,
$$\Upsilon^{\gamma(g_{i+k} g_i^{-1})}(\x_i)=\Upsilon^{\gamma(g_{i+k} g_{i-1}^{-1})}(\e_0)
\in \hat \Q_{s'_{i+k}}.$$
\item If $\gamma(g_{i} g_{i-1}^{-1})=v$, then $\zz_i=\0 \in \hat \Q_{++}$. We compute
$$\x_i=\Vo_\y (\zz_{i-1})-\zz_i=\e_{-1}+\e_0+\e_1.$$
We have $\x_i=\Vo(\e_0)$ with both $\x_i, \e_0 \in \hat \Q_{++}$, while $\rhoa{v}(\Q_{++}) \subset \Q_{++}$. Thus, Proposition \ref{prop:quadrant_tracking}
guarantees that $\Upsilon^{\gamma(g_{i+k} g_i^{-1})}(\x_i) \in \hat \Q_{s'_{i+k}}$ for all $k \geq 0$. See case (2a).
\item If $\gamma(g_{i} g_{i-1}^{-1})=v^{-1}$, then $\zz_i=-\e_{-1}+\e_0-\e_1 \in \hat \Q_{-+}$. We compute $\x_i=\0$. The conclusion follows as in case (1a).
\end{enumerate}

\item Suppose $\zz_{i-1}=-\e_{-1}+\e_0-\e_1 \in \hat \Q_{+-}$.
\begin{enumerate}
\item If $\gamma(g_{i} g_{i-1}^{-1})=h$, then $\zz_i=-\e_{-2}-\e_{-1}-\e_1-\e_{2} \in \hat \Q_{--}$. We compute
$$\x_i=\Ho_\y (\zz_{i-1})-\zz_i=(-\e_{-2}-\e_{-1}-\e_1-\e_{2})-\zz_i=\0.$$
The conclusion follows as in case (1a).
\item If $\gamma(g_{i} g_{i-1}^{-1})=h^{-1}$, then $\zz_i=\e_{-2}-\e_{-1}-\e_1+\e_{2} \in \hat \Q_{+-}$. We compute
$$\x_i=\Ho^{-1}_\y (\zz_{i-1})-\zz_i=(\e_{-2}-\e_{-1}+2\e_0-\e_1+\e_{2})-\zz_i=2\e_0.$$
Therefore $\x_i=\Ho^{-1}(\x_i) \in \hat \Q_{+-}$ while $\rhoa{h^{-1}}(\Q_{+-})\subset \Q_{+-}$. Proposition \ref{prop:quadrant_tracking}
guarantees that $\Upsilon^{\gamma(g_{i+k} g_i^{-1})}(\x_i) \in \hat \Q_{s'_{i+k}}$ for all $k \geq 0$. See case (2a).
\item If $\gamma(g_{i} g_{i-1}^{-1})=v^{-1}$, then $\zz_i=-\e_{-1}+\e_0-\e_1 \in \hat \Q_{+-}$. We compute
$$\x_i=\Vo^{-1}_\y (\zz_{i-1})-\zz_i=(-2\e_{-1}+\e_0-2\e_1)-\zz_i=-\e_{-1}-\e_{1}.$$
Therefore $\x_i=\Vo^{-1}(\x_i) \in \hat \Q_{+-}$. By Proposition \ref{prop:quadrant_tracking},
 $\Upsilon^{\gamma(g_{i+k} g_i^{-1})}(\x_i) \in \hat \Q_{s'_{i+k}}$ for all $k \geq 0$ as in case (2a).
\end{enumerate}

\item Suppose $\zz_{i-1}=\e_{-2}-\e_{-1}-\e_1+\e_{2} \in \hat \Q_{+-}$.
\begin{enumerate}
\item If $\gamma(g_{i} g_{i-1}^{-1})=v$, then $\zz_i=\0 \in \hat \Q_{++}$. We compute
$$\x_i=\Vo_\y (\zz_{i-1})=\Vo (\zz_{i-1})=\e_{-3}+\e_{-2}+\e_{2}+\e_3.$$
In this case $\x_i=\Vo(\zz_{i-1})$. There exist saddle connections $\sigma_1$ and $\sigma_2$ on the surface $S_\1$ such that $\Zem(\hom{\sigma_1})=\e_{-2}-\e_{-1}$ and $\Zem(\hom{\sigma_2})=-\e_1+\e_2$, and so
$\zz_{i-1}=\Zem(\hom{\sigma_1})+\Zem(\hom{\sigma_2})$. These saddle connections are pictured on the left side of Figure \ref{fig:saddles}. For $j \in \{1,2\}$, we have 
$\Vo \circ \Zem(\hom{\sigma_j})=\Zem \circ \Phi^{h^{-1}}(\hom{\sigma_j})$ by Corollary \ref{cor:pullback_action}. We note that the length of the holonomy of these saddle connections is
$$\| \hol_\1 \circ \Phi^{h^{-1}}(\sigma_j) \|=
\|\hol_\1 (\sigma_j)\|=\sqrt{2}.$$
Thus, $\hol~\sigma_{j} \in \cl\big(\Exp_2(h^{-1}) \cap \Q_{--}\big)$. Therefore, 
by considering the geodesic ray $\langle \gamma(g_{i-1+l} g_{i-1}^{-1}) \rangle_{l \geq 0}$,
Corollary \ref{cor:expansion_sign_action} implies that 
$\Upsilon^{\gamma(g_{i-1+l} g_{i-1}^{-1})} \circ \Zem(\hom{\sigma_j})\in \hat \Q_{\tilde s_l}$ where $\tilde s_l=r \circ \Sigma^{g_{i-1+l} g_{i-1}^{-1}}(--)$.
By equation \ref{eq:r_commute}, we have 
$$\tilde s_l=\Sigma^{\gamma(g_{i-1+l} g_{i-1}^{-1})} \circ r(--)=\Sigma^{\gamma(g_{i-1+l} g_{i-1}^{-1})} (+-).$$
Thus, we have $s'_{i+k}=\tilde s_{k+1}$ for all $k \geq 0$, since $s'_{i-1}=\tilde s_{0}=+-$ and $s'_{i+k}=\Sigma^{\gamma(g_{i+k}g_{i-1}^{-1})}(s'_{i-1})$
while $\tilde s_{k+1}=\Sigma^{\gamma(g_{k+i} g_{i-1}^{-1})}(\tilde s_0)$. Therefore, we have 
$$\Upsilon^{\gamma(g_{i+k} g_i^{-1})}(\x_i)=
\Upsilon^{\gamma(g_{i+k} g_{i-1}^{-1})} \circ \Zem(\hom{\sigma_1})+
\Upsilon^{\gamma(g_{i+k} g_{i-1}^{-1})} \circ \Zem(\hom{\sigma_2}) \in 
\hat \Q_{s'_{i+k}}$$
for all $k \geq 0$. 

\begin{figure}
\begin{center}
\includegraphics{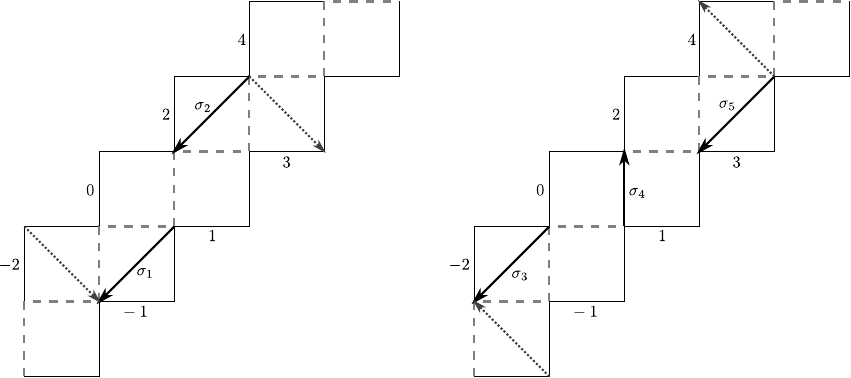}
\caption{Both sides of this figure illustrate the surface $S_\1=S(\G_\Z, \1)$. To obtain the surface, identify opposite horizontal and vertical edges by vertical and horizontal translations (respectively). This figure illustrates the saddle connections $\sigma_1, \ldots, \sigma_5$ used in the proof of Theorem \ref{thm:adj_sign_int}. From left to right,
the dotted arrows represent the saddle connections $\Phi^{h^{-1}}(\sigma_1)$, $\Phi^{h^{-1}}(\sigma_2)$, $\Phi^{v^{-1}}(\sigma_3)$ and $\Phi^{v^{-1}}(\sigma_5)$. Also, $\Phi^{v^{-1}}(\sigma_4)=\sigma_4$. }
\label{fig:saddles}
\end{center}
\end{figure}

\item If $\gamma(g_{i} g_{i-1}^{-1})=h^{-1}$, then $\zz_i=\zz_{i-1}=\e_{-2}-\e_{-1}-\e_1+\e_{2} \in \hat \Q_{+-}$. We compute
$$\x_i=\Ho^{-1}_\y (\zz_{i-1})-\zz_i=(2\e_{-2}-\e_{-1}+\e_0-\e_1+2\e_{2})-\zz_i=\e_{-2}+\e_0+\e_2.$$
Thus $\x_i=\Ho^{-1}(\x_i) \in \hat \Q_{+-}$ while $\rhoa{h^{-1}}(\Q_{+-})\subset \Q_{+-}$. Proposition \ref{prop:quadrant_tracking}
guarantees that $\Upsilon^{\gamma(g_{i+k} g_i^{-1})}(\x_i) \in \hat \Q_{s'_{i+k}}$ for all $k \geq 0$ as in case (2a).
\item If $\gamma(g_{i} g_{i-1}^{-1})=v^{-1}$, then $\zz_i=-\e_{-3}+\e_{-2}-\e_{-1}-\e_1+\e_2-\e_3  \in \hat \Q_{+-}$. We compute
$$\x_i=(-\e_{-3}+\e_{-2}-2\e_{-1}-2\e_1+\e_{2}-\e_3)-\zz_i=-\e_{-1}-\e_1.$$
In particular, we have that 
$\x_i=\Vo^{-1}(\x_i)$. So, Proposition \ref{prop:quadrant_tracking}
guarantees that $\Upsilon^{\gamma(g_{i+k} g_i^{-1})}(\x_i) \in \hat \Q_{s'_{i+k}}$ for all $k \geq 0$ as in case (2a).
\end{enumerate}

\item Suppose $\zz_{i-1}=-\e_{-3}+\e_{-2}-\e_{-1}-\e_1+\e_2-\e_3  \in \hat \Q_{+-}$.
\begin{enumerate}
\item If $\gamma(g_{i} g_{i-1}^{-1})=h$, then $\zz_i=-\e_{-2}-\e_{-1}-\e_1-\e_2 \in \hat \Q_{--}$. We compute
$$\begin{array}{rcl} 
\x_i & = & \Ho_\y (\zz_{i-1})-\zz_i \\
& = &
(-\e_{-4}-\e_{-3}-\e_{-2}-\e_{-1}-\e_0-\e_1-\e_2-\e_3-\e_4)-\zz_i \\
& = & -\e_{-4}-\e_{-3}-\e_0-\e_3-\e_4.\end{array}$$
In this case $\x_i=\Ho(-\e_{-3}+\e_{-2}-\e_0+\e_2-\e_3)$. 
We can find saddle connections $\sigma_3$, $\sigma_4$ and $\sigma_5$ so that
$\Zem(\hom{\sigma_3})=-\e_{-3}+\e_{-2}$, $\Zem(\hom{\sigma_4})=-\e_{0}$
and $\Zem(\hom{\sigma_5})=\e_2-\e_3$. These saddle connections are depicted on the right side of Figure \ref{fig:saddles}. 
For $j \in \{3,4, 5\}$, we have 
$\Ho \circ \Zem(\hom{\sigma_j})=\Zem \circ \Phi^{v^{-1}}(\hom{\sigma_j})$ by Corollary \ref{cor:pullback_action}. For each such $j$ we have
$$\|\hol_\1(\sigma_j)\|=\|\hol_\1 \circ \Phi^{v^{-1}}(\hom{\sigma_j})\|.$$
In particular, $\hol_\1(\sigma_j) \in \cl\big(\Exp_2(v^{-1})\cap (\Q_{-+} \cup \Q_{--})\big)$.
 Corollary \ref{cor:expansion_sign_action} implies that the quadrant containing 
$\Upsilon^{\gamma(g_{i+k} g_{i-1}^{-1})} \circ \Zem(\hom{\sigma_j})$ is given by 
may determined 
by following the expanding sign action. As in case (4a), we consider the geodesic ray $\langle \gamma(g_{i-1+l} g_{i-1}^{-1}) \rangle_{l \geq 0}$, and
$\Upsilon^{\gamma(g_{i+l} g_{i-1}^{-1})} \circ \Zem(\hom{\sigma_j}) \in \hat \Q_{\widetilde s_l}$ where
$$\widetilde s_l=r \circ \Sigma^{g_{i+l} g_{i-1}^{-1}}(s)=\Sigma^{\gamma(g_{i+l} g_{i-1}^{-1})} \circ r(s),$$
where $s=--$ or $s=-+$ depending on $j \in \{3,4,5\}$. In these cases $r(s)=+-$ or $r(s)=--$. We have $\gamma(g_{i}g_{i-1}^{-1})=h$, and
$$\Sigma^h(+-)=\Sigma^h(--)=--.$$
Therefore in either case, we have $\widetilde s_l=\Sigma^{\gamma(g_{i-1+l} g_{i-1}^{-1})}(+-)$ for $l \geq 1$. We also observe 
$s'_{i+k}=\widetilde s_{1+k}$ for all $k \geq 0$.  We conclude that for all $k \geq 0$, 
$$\Upsilon^{\gamma(g_{i+k} g_i^{-1})}(\x_i)=
\sum_{j=3}^5 \Upsilon^{\gamma(g_{i+k} g_{i-1}^{-1})} \circ \Zem(\hom{\sigma_j})
\in \hat \Q_{s'_{i+k}}.$$
\item If $\gamma(g_{i} g_{i-1}^{-1})=h^{-1}$, then $\zz_i=\e_{-2}-\e_{-1}-\e_1+\e_2 \in \hat \Q_{+-}$. We compute
$$\begin{array}{rcl} \x_i & = & \Ho_\y^{-1} (\zz_{i-1})-\zz_i \\
& = & (\e_{-4}-\e_{-3}+3\e_{-2}-\e_{-1}+\e_0-\e_1+3\e_2-\e_3+\e_4)-\zz_i \\
& =& \e_{-4}-\e_{-3}+2\e_{-2}+\e_0+2\e_2-\e_3+\e_4. \end{array}$$
In particular, 
$\x_i=\Ho^{-1}(-\e_{-3}+\e_{-2}+\e_0+\e_2-\e_3)$. Both $\x_i$ and $-\e_{-3}+\e_{-2}+\e_0+\e_2-\e_3$ lie in $\hat \Q_{+-}$,
while $\rhoa{h^{-1}}(\Q_{+-}) \subset \Q_{+-}$.  
Therefore, Proposition \ref{prop:quadrant_tracking}
guarantees that $\Upsilon^{\gamma(g_{i+k} g_i^{-1})}(\x_i) \in \hat \Q_{s'_{i+k}}$ for all $k \geq 0$ as in case (2a). 
\item If $\gamma(g_{i} g_{i-1}^{-1})=v^{-1}$, then $\zz_i=\zz_{i-1} \in \hat \Q_{+-}$. We compute
$$\x_i=\Vo_\y^{-1} (\zz_{i-1})-\zz_i=-\e_{-3}-\e_{-1}-\e_1-\e_3.$$
Therefore, $\x_i=\Vo^{-1}(\x_i)$ while $\rhoa{v^{-1}}(\Q_{+-})\subset \Q_{+-}$.
So, Proposition \ref{prop:quadrant_tracking}
guarantees that $\Upsilon^{\gamma(g_{i+k} g_i^{-1})}(\x_i) \in \hat \Q_{s'_{i+k}}$ for all $k \geq 0$ as in case (2a).
\end{enumerate}
\end{enumerate}
\end{proof}

\appendix
\section{Invariant measures and coding}
\name{sect:coding}

In this section, we rehash some of the arguments used to understand invariant measures of interval exchange maps. See
\cite[\S 14.5]{KH95}, for instance.

\subsection{Coding orbits}
Let $S=\bigsqcup_{i \in \Lambda} P_i/\sim$ be a translation surface written as a union of polygons with edge identifications.
Let $\Edge$ denote the set of all identified pairs of edges in $\partial P_i \subset S$ for $i \in \Lambda$. 
Let $\bm \theta \in \Circ$ be a direction.
Define $\Edge_\ast \subset \Edge$ to be the collection of those edges which are not parallel to $\bm \theta$. 

\begin{remark}
In this paper, our surfaces are built from polygons with horizontal and vertical sides,
and the directions we consider are $\lambda$-renormalizable which disallows
$\bm \theta$ from being horizontal or vertical. 
So, in our setting $\Edge=\Edge_\ast$. 
\end{remark}

We view each edge $e \in E$ as a closed interval (including its endpoints).
The union of edges, $X_\ast=\bigcup_{e \in \Edge_\ast} e$ is a section for the straight line flow in direction ${\bm \theta}$.
That is, given any point in $S$, its forward orbit under the straight line flow $F_\btheta$
hits a point in $X_\ast$.
We use $\ret: X_\ast \to X_\ast$ to denote the return map of the flow $F_\btheta$ to this section. 

Recall that $V \subset S$ denotes the identified vertices of the polygons making up $S$. This is also the union of endpoints of edges in $E_\ast$. 
When equipped with the measure on $X_\ast$ induced by the Lebesgue transverse measure to the foliation in direction $\btheta$, 
the return map $\ret$ conjugate to an interval exchange involving infinitely many intervals.

As with interval exchange transformations, we 
run into a problem concerning orbits which visit the endpoints of an edge $e \in \Edge_\ast$. (These endpoints lie in $V$.)
We resolve this by splitting all orbits which hit a singularity in two. 
Namely, if the forward or backward orbit of a point $p$ hits an endpoint of an edge, we replace $p$ with two new points $p^-$ and $p^+$. The orbit of $p^-$ tracks the points
to the left of $p$, and the orbit of $p^+$ tracks points to the right. (Left and right make sense once we rotate the picture so that $\bm \theta$ is vertical.)
By {\em track} we mean to follow points infinitesimally nearby. We keep track of both the point's location, and the edge it lies on.
For $e \in \Edge_\ast$, let $\hat e$ denote $e$ with all points with singular
orbits split as described above. Let $l$ and $r$ be the left and right endpoints of $e \in \Edge_\ast$, respectively. We also replace $l$ with $l^+$ and $r$ with $r^-$ in $\hat e$. 
We call $\hat e$ a {\em split edge}. 

For each $e \in \Edge_\ast$, there is a natural ``unsplitting'' map $\pi_e: \hat e \to e$. This map is surjective, and one-to-one except at countably many points, where it is two-to-one. 
We will be considering the disjoint union of all split edges
$\hat X_\ast=\bigsqcup_{e \in \Edge_\ast} \hat e$, and 
we define the map 
$$\pi:\hat X_\ast \to X_\ast; \quad
p \in \hat e \mapsto \pi_e(p).$$
This map may be countable-to-one at points in $V$, but is only finite-to-one at the finite-order cone singularities.

The return map $\ret$ has a natural lift to the map $\hat \ret: \hat X_\ast \to \hat X_\ast$. We define $\hat \ret$ to be the unique continuous map
so that whenever $\pi$ is one-to-one
at $p \in \hat X_\ast$, we have
$$\ret(p)=\pi \circ \hat \ret \circ \pi^{-1}(p).$$ 
With this definition, $\hat T_\btheta^n(q)$ is well defined for all $q \in \hat X_\ast$ and all $n \in \Z$.

Let $\textit{edge}:\hat X_\ast \to \Edge_\ast$ denote the map which recovers the edge a point of $\hat X_\ast$ lies on.
Now consider the coding map
\begin{equation}
\label{eq:coding_map}
\textit{code}:\hat X_\ast \to \Edge_\ast^\Z:q \mapsto \langle \textit{edge} \circ \hat T_\btheta^n(q) \rangle_{n \in \Z}.
\end{equation}
The image of this map is a shift space $\Omega \subset \Edge_\ast^\Z$ on the countable alphabet $\Edge_\ast$. The shift space $\Omega$ has the property that each symbol $e \in \Edge_\ast$ is only followed by 
finitely many other symbols.

We give the space $\hat X_\ast=\bigsqcup_{e \in \Edge_\ast} \hat e$ the coarsest topology
which makes both the map $\pi$ and the map $\textit{code}$
continuous. 

\begin{proposition}
\label{prop:coding}
The coding map is injective if the straight line flow in direction $\bm \theta$ is conservative and has no periodic trajectories.
\end{proposition}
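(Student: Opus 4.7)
The plan is to suppose, for contradiction, that $p, q \in \hat X_\ast$ satisfy $\textit{code}(p)=\textit{code}(q)$ but $p \neq q$, and then rule out each possibility. First I would reduce to the case $\pi(p) \neq \pi(q)$: if $\pi(p)=\pi(q)$, the splitting occurs precisely because this point's orbit passes through a singularity at some time $n$, whence $\hat T_\btheta^n(p)$ and $\hat T_\btheta^n(q)$ must land on the two distinct edges on opposite sides of that singularity, contradicting equal codes. Assuming then that $\pi(p) \neq \pi(q)$ lie on a common edge $e_0$, set $J \subset e_0$ to be the closed segment joining them; an analogous ``opposite sides of a singularity'' argument shows that no point interior to $J$ can have an orbit that visits a singularity, for such a point would split $J$ with $\pi(p)$ and $\pi(q)$ on opposite sides, forcing their codes to diverge at that time. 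Consequently the flow map $\phi\colon J \times \R \to S$, $\phi(s,t) := F_\btheta^t(s)$, is defined on the whole cylinder and is a local isometry.

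Next I would split on whether $\phi$ is injective. If $\phi$ is injective then its image is an embedded flat strip of width $|J|$ on which the flow acts as translation in the $t$-coordinate; any small open box $B = \phi(J_0 \times (0,\epsilon))$ inside the strip satisfies $B \cap F_\btheta^t(B)=\emptyset$ for $t>\epsilon$, so $B$ is a wandering set of positive Lebesgue measure, directly contradicting conservativity. Otherwise there exist $(s_1,t_1)\neq (s_2,t_2)$ in $J \times \R$ with $\phi(s_1,t_1)=\phi(s_2,t_2)$; after reordering, $\tau := t_1 - t_2 > 0$ and $F_\btheta^\tau(s_1) = s_2$ with $s_1,s_2 \in J$. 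If $s_1=s_2$, then $s_1$ lies on a closed trajectory, violating the no-periodic-orbit hypothesis. Otherwise $c := s_2 - s_1 \neq 0$ satisfies $|c|<|J|$, and since the bi-infinite code is constant along $J$, writing $\tau=t_n$, the iterate $T_\btheta^n$ acts on all of $J$ as the single translation $s \mapsto s+c$.

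To close the argument I would bootstrap this single return into full periodicity. Choose $r \in \{\pi(p),\pi(q)\}$ so that $r + c \in J$ (use $\pi(p)$ when $c>0$ and $\pi(q)$ when $c<0$); the code of the point $T_\btheta^n(r) = r+c \in J$ then agrees both with the constant code on $J$ and with the $n$-step shift of $r$'s code, which forces the bi-infinite code of $r$ to be periodic with period $n$. A straightforward induction then yields $T_\btheta^{kn}(r) = r + kc$ for every $k \geq 0$, since $r + kc$ inherits the same first-$n$ code as $r$ and therefore lies in the same piece of the piecewise-translation $T_\btheta^n$. The resulting infinite sequence of distinct points $r, r+c, r+2c,\ldots$ all lies in $e_0$; but $e_0$ is an edge of a convex polygon and hence has finite length, producing the final contradiction. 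The hard step is this last bootstrap from a single returning orbit to full periodicity of the code and then to the finiteness contradiction; the earlier dichotomy (embedded strip versus self-intersecting strip) and the reduction to the case $\pi(p) \neq \pi(q)$ are essentially bookkeeping.
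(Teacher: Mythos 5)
Your argument is correct and tracks the paper's proof through the construction of the interval $J$ of constant code and through the use of conservativity to force a self-overlap of the flow strip, i.e.\ a return $T_\btheta^n(s_1)=s_2\in J$. At that point the two proofs diverge. The paper develops the (now periodic) bi-infinite polygon sequence into the plane, observes that the strip of $\omega$-coded lines is carried to itself by the period-$n$ translation symmetry, and argues implicitly that this translation must be parallel to $\btheta$ (otherwise iterating it would push the strip off to infinity transversally, impossible since the strip is pinched inside bounded polygons), so each line descends to a closed trajectory, contradicting aperiodicity. Your iteration argument is a dynamical restatement of exactly that dichotomy: a nonzero transverse component of the period symmetry is your $c\neq 0$, and ``the strip escapes its own bounded cross-section'' becomes ``the points $r,r+c,r+2c,\ldots$ escape the bounded edge $e_0$.'' Your explicit handling of the degenerate case $\pi(p)=\pi(q)$, and the clean split between an embedded strip (killed by conservativity via a wandering box) and a self-intersecting one (handing you the return), is tidier bookkeeping than the paper's.

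The one step that needs a touch more care is the bootstrap ``$r+kc$ inherits the same first-$n$ code, therefore lies in the same piece of $T_\btheta^n$.'' For a general translation surface two points with the same $n$-step code need not lie in the same continuity interval of $T_\btheta^n$ (a convex polygon can have two distinct exit edges glued to the same edge class, splitting the fiber of an $n$-step code into two intervals). The cleanest repair is to replace $J$ by the maximal subinterval $Z\subseteq e_0$ of constant bi-infinite code $\omega$: the absence of interior singular orbits shows $Z$ sits in a single piece of $T_\btheta^n$, so $T_\btheta^n|_Z$ is translation by $c$; the $n$-periodicity of $\omega$ together with maximality of $Z$ force $T_\btheta^n(Z)\subseteq Z$; and a nonempty bounded interval cannot be carried into itself by a translation by $c\neq 0$. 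With that adjustment your proof goes through in full generality.
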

\begin{proof}
Suppose the straight line flow is conservative and has no periodic trajectories. 
We will show that distinct trajectories have distinct codes. 

Suppose we have a pair of distinct trajectories with the same code, $\omega$. The two trajectories
cross the same sequence of edges. Consider the intersection of the two trajectories
with one edge $e \in E$. Since the trajectories are distinct, they bound a closed interval $J$ in $e$.
The straight-line flow of points in this interval must also have the same code. Consider the forward and backward straight-line flows of all points in $J$. This defines a continuous isometric immersion of a bi-infinite strip into the surface $S$.
We will draw a contradiction to the existence of this immersion.

Since the straight-line flow is conservative, there is a trajectory in $J$ which returns to $J$. 
The code of a point and another point on the 
orbit of the point (intersected with $\bigcup_{e \in E} e$) differ by a shift. 
Since a point in $J$ returns to $J$, we know that the code $\omega$ is periodic. Now consider the set of points
whose trajectories cross edges according to a periodic $\omega$. This set can be found by developing the bi-infinite periodic sequence
of polygons crossed into the plane. Lines which run through this sequence of polygons correspond to trajectories
with code $\omega$. Such a line must be preserved by the translation symmetry of the developed sequence of polygons.
The quotient of such a line by this symmetry gives a closed straight-line flow trajectory in this direction.
But, this is ruled out by our assumption of no periodic trajectories.
\end{proof}

\begin{proposition}
\name{prop:Cantor set}
If the coding map is injective, then the coding map is a homeomorphism from $\hat X_\ast$ onto its image, $\Omega \subset \Edge_\ast^\Z$. In this case, the coding map is a topological conjugacy from
$\hat \ret$ to the shift map on $\Omega$.
\end{proposition}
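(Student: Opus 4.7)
The plan is to verify the four conditions needed for $\textit{code}$ to be a topological conjugacy in order: shift-equivariance, bijectivity onto $\Omega$, continuity, and continuity of the inverse. Shift-equivariance is immediate from the definition: if $\textit{code}(p) = \langle \omega_n \rangle$ then by definition $\omega_n = \textit{edge}\circ \hat \ret^n(p)$, so $\textit{code}(\hat \ret(p)) = \langle \textit{edge}\circ \hat \ret^{n+1}(p)\rangle$ which is the shift of $\omega$. Bijectivity onto $\Omega$ is automatic: $\Omega$ is defined as the image, and injectivity is the hypothesis. Continuity of $\textit{code}$ is built into the topology on $\hat X_\ast$, which was defined as the coarsest topology making both $\pi$ and $\textit{code}$ continuous.

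All the work therefore lies in showing $\textit{code}^{-1}: \Omega \to \hat X_\ast$ is continuous, equivalently, that $\textit{code}$ is an open map. A base for the topology on $\hat X_\ast$ consists of finite intersections of sets of the form $\pi^{-1}(U)$ with $U\subset X_\ast$ open and $\textit{code}^{-1}(C)$ with $C\subset \Omega$ open. Since $\textit{code}$ is a bijection onto $\Omega$, the image $\textit{code}(\textit{code}^{-1}(C) \cap \pi^{-1}(U)) = C \cap \textit{code}(\pi^{-1}(U))$, so it suffices to show that $\textit{code}(\pi^{-1}(U))$ is open in $\Omega$ for every open $U\subset X_\ast$. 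In other words, for every $p\in\hat X_\ast$ with $\pi(p)\in U$, I need to exhibit a cylinder $C \subset \Omega$ containing $\textit{code}(p)$ such that $\pi(\textit{code}^{-1}(C))\subset U$.

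Fix such a $p$, let $e = \textit{edge}(p)$, and let $\omega = \textit{code}(p)$. For $N\geq 0$ consider the cylinder $C_N = [\omega_{-N},\ldots,\omega_N]\subset \Omega$ and the set $A_N = \textit{code}^{-1}(C_N) \subset \hat e$. Since $\hat \ret$ is piecewise defined by isometries that are continuous on each interval of continuity, $A_N$ is an arc of $\hat e$ (a nested intersection of finitely many such intervals of continuity), and hence $I_N := \pi(A_N)$ is an interval in the edge $e$ containing $\pi(p)$. The sets $A_N$ are nested and by injectivity of $\textit{code}$ they satisfy $\bigcap_N A_N = \{p\}$; consequently $\bigcap_N I_N = \{\pi(p)\}$. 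An elementary argument (the sequences of left and right endpoints of $I_N$ are monotone and must both converge to $\pi(p)$) then shows that the length of $I_N$ tends to zero. For $N$ large, $I_N$ therefore fits inside the open set $U \cap e$, and by replacing $C_N$ by the intersection with a cylinder specifying on which side of each split point of $I_N$ the coding sits, I can arrange $\pi(\textit{code}^{-1}(C_N))\subset U$, completing the argument.

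The main technical obstacle is the third-paragraph claim that $A_N$ is an arc and that $\bigcap_N I_N = \{\pi(p)\}$ implies the lengths of $I_N$ shrink to zero. The first requires tracking how the splittings in $\hat X_\ast$ interact with iterates of $\hat \ret$: one wants the defining conditions $\textit{edge}\circ \hat \ret^{n}(\cdot) = \omega_n$ to cut out connected pieces of $\hat e$, which uses that each single-symbol condition cuts out a connected arc (an interval of continuity of $\hat \ret^n$ restricted to $\hat e$). The second is elementary monotonicity once connectedness is established; the only subtlety is handling the split structure at endpoints, where one uses that the two preimages of a split point in $\hat X_\ast$ are distinguished by whether the next symbol takes the left or right continuation, so finite specification already suffices to pick out the correct side.
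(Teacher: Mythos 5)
Your proof is correct and uses essentially the same core argument as the paper: both reduce the problem to showing that the nested closed intervals $\pi(\textit{code}^{-1}(C_N))$ shrink to $\{\pi(p)\}$, then invoke compactness/monotonicity of nested intervals to conclude that some $C_N$ eventually fits inside $U$. The packaging differs slightly — the paper runs the argument by endowing $\hat X_\ast$ with the pull-back topology from $\Omega$ and then verifying that $\pi$ remains continuous (so the two topologies coincide), whereas you show directly that $\textit{code}$ is an open map by checking it on the subbasis — but these are logically equivalent formulations of the same step. One small remark: your closing sentence about ``replacing $C_N$ by the intersection with a cylinder specifying on which side of each split point of $I_N$ the coding sits'' is unnecessary; once you know $I_N$ has length tending to zero and $\pi(p) \in U$ with $U$ open, $I_N \subset U$ for $N$ large already gives $\pi(\textit{code}^{-1}(C_N)) \subset U$, which is exactly what is needed.
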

\begin{proof}
For this proof, endow $\hat X_\ast$ with the (apriori new) topology which makes
the coding map a homeomorphism. We must show that $\pi$ is continuous. 
Let $U$ be an open subset of an edge $e \in E$.
We will show that $\pi^{-1}(U)$ is open.
Let $q \in \pi^{-1}(U)$ and let $p=\pi(q)$. 
It suffices to find an open subset of $\pi^{-1}(U)$ which contains $q$. 
For each $n \geq 0$,
let $C_n \subset \Omega$ be the cylinder set defined so that for each $\omega \in C_n$, we have $\omega_m=\textit{code}(q)_m$ for $-n \leq m \leq n$. 
Observe that $\pi\circ\textit{code}^{-1}(C_n)$ is a closed interval in $e$. 
By definition of $C_n$, we have $\bigcap_{n} C_n=\{\textit{code}(q)\}$.
By injectivity 
of the coding map, 
$$\bigcap_n \textit{code}^{-1}(C_n)=\{q\} \quad \text{and} \quad 
\bigcap_n \pi\circ\textit{code}^{-1}(C_n)=\{p\}.$$ 
Since the later is a nested intersection of closed intervals, there must be an $N$ so that $\pi\circ\textit{code}^{-1}(C_{N}) \subset U$. So, our needed open set is
given by $\textit{code}^{-1}(C_{N})$.

The fact that this is a conjugacy follows from the fact that
the coding map is a homeomorphism together with the definition of the coding map given in equation \ref{eq:coding_map}.
\end{proof}

\subsection{Laminations and invariant measures}
We mentioned that $X_\ast=\bigcup_{e \in \Edge_\ast} e$ gives
a section of the straight line flow in direction $\btheta$.
Recall that $\F_\btheta$ denoted the foliation by orbits of this flow.
We can split leaves which hit points in $V$ in a similar manner to how we split points of $X_\ast$ to form $\hat X_\ast$. Namely, if a leaf hits a singularity $v \in V$ in forward or backward time, we replace it by two leaves one of which tracks points to the left, and one which tracks nearby points to the right. We separate the two new leaves by a gap on the surface, and leave the point $v$ in the gap. Since there are only countably many singular leaves, this can be done everywhere. We let $\hat \F_\btheta
\label{not:split foliation2}$
denote the new leaf space.

Recall that we call a measure {\em locally finite} if it is finite on compact sets. Similarly, a transverse measure is {\em locally finite} if
it assigns finite mass to compact transversals. Since we have split leaves 
and orbits in the same way, we have the following:
\begin{proposition}
\label{prop:measure_spaces}
The following two spaces of measures are isomorphic:
\begin{itemize}
\item The space of locally finite $\hat \ret$-invariant measures on $\hat X_\ast$.
\item The space $\M_\btheta$ of locally finite transverse measures to $\hat \F_\btheta$.
\end{itemize}
If the coding map is injective, these two spaces are also isomorphic to 
\begin{itemize}
\item The space of locally finite shift-invariant measures on $\Omega$.
\end{itemize}
\end{proposition}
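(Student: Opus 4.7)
The plan is to construct explicit bijections between the three measure spaces and check they are linear and preserve local finiteness. The third space is handled last and is essentially immediate from Proposition \ref{prop:Cantor set}, so the main content is the correspondence between the first two.

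First, I would build a map from $\M_\btheta$ to locally finite $\hat T_\btheta$-invariant measures on $\hat X_\ast$ as follows. Given $\mu \in \M_\btheta$, the section $X_\ast$ is a (countable) union of transversals to $\F_\btheta$, and after splitting points whose forward or backward flow orbits hit $V$, the space $\hat X_\ast$ is naturally a union of transversals to $\hat \F_\btheta$. So $\mu$ restricts to a Borel measure $\nu_\mu$ on each $\hat e$, and hence on $\hat X_\ast=\bigsqcup_{e \in E_\ast} \hat e$. Local finiteness of $\mu$ gives local finiteness of $\nu_\mu$, because each $\hat e$ with the point-splitting topology is a countable union of compact transversals to $\hat \F_\btheta$. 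Invariance of $\nu_\mu$ under $\hat T_\btheta$ follows from the holonomy-invariance built into the definition of a transverse measure: the restriction of the flow between successive returns to the section defines a holonomy map between subarcs of split edges, and $\mu$ is preserved by holonomy by definition.

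Next I would build the inverse. Given a locally finite $\hat T_\btheta$-invariant measure $\nu$ on $\hat X_\ast$, I would define a transverse measure $\mu_\nu$ on $\hat\F_\btheta$ as follows. Any compact transversal $\tau$ in $S$ (away from $V$) has a well-defined first-return map to the section $X_\ast$ defined along the flow $\flow$, and after splitting this lifts to a first-return map $\hat\tau \to \hat X_\ast$ which is a piecewise isometric injection onto a subset of $\hat X_\ast$. Set $\mu_\nu(\hat\tau)$ to be the $\nu$-measure of the image. Two different times of first return to the section differ by an iterate of $\hat T_\btheta$, so invariance of $\nu$ under $\hat T_\btheta$ makes this definition independent of the choice of return time and shows that $\mu_\nu$ is holonomy-invariant, i.e., a genuine transverse measure. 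Additivity and countable additivity are inherited from $\nu$. Local finiteness of $\mu_\nu$ follows because a compact transversal hits $\hat X_\ast$ in a set contained in a finite union of compact subsets of the split edges. A direct computation shows these two constructions are mutually inverse and linear. The main obstacle here, though mostly routine, is bookkeeping the splitting carefully so that the point-splitting on $\hat X_\ast$ matches the leaf-splitting on $\hat\F_\btheta$ consistently; this is already forced by the parallel way the two splittings were defined.

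Finally, for the third equivalence, assume the coding map is injective. Then Proposition \ref{prop:Cantor set} gives a homeomorphism $\textit{code}:\hat X_\ast\to\Omega$ that conjugates $\hat T_\btheta$ with the shift $\sigma:\Omega\to\Omega$. Pushforward and pullback under $\textit{code}$ then define mutually inverse linear isomorphisms between locally finite $\hat T_\btheta$-invariant measures on $\hat X_\ast$ and locally finite $\sigma$-invariant measures on $\Omega$. Local finiteness is preserved because compact subsets of $\hat X_\ast$ correspond under $\textit{code}$ precisely to compact subsets of $\Omega$ (both spaces being given the topology that makes $\textit{code}$ a homeomorphism). Composing this with the first bijection yields the third isomorphism stated in the proposition.
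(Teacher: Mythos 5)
Your proposal is correct and carries out, in detail, exactly the verification that the paper treats as routine: the paper does not actually give a proof of this proposition beyond the one-line observation preceding it ("Since we have split leaves and orbits in the same way, we have the following"), and your three bijections are precisely what that remark is gesturing at. Your construction of the map $\M_\btheta\to\{\hat\ret\text{-invariant measures}\}$ by restriction, the inverse by first-return holonomy, and the pushforward/pullback under \textit{code} are the natural choices, and your checks of linearity, mutual inversion, and preservation of local finiteness are the right things to check.

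One place where I would tighten the wording: the first-return map $\hat\tau\to\hat X_\ast$ need not be globally injective (a compact transversal can meet a leaf many times, and distinct points of $\tau$ on the same orbit can land on the same point of the section). What you actually need is that $\hat\tau$ admits a countable partition into sub-transversals, each of which maps injectively by holonomy onto a subset of $\hat X_\ast$, and that the resulting sum is independent of the partition by $\hat\ret$-invariance of $\nu$. This is the standard foliation-theoretic definition of a transverse measure from a measure on a complete transversal, and it is also where the boundedness of first-return times on compact sets (which follows from the convexity of the polygons, so every orbit exits the polygon it is in through an edge of $X_\ast$ in bounded time) is genuinely used to get local finiteness of $\mu_\nu$. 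With that refinement your argument is complete and matches what the paper implicitly assumes.
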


Suppose the straight line flow in direction $\bm \theta$ is conservative and has no periodic trajectories. Proposition \ref{prop:Cantor set} guarantees that each split edge
$\hat e$ is homeomorphic to a Cantor set. It then follows that,
in this case, $\hat \F_\btheta$ is a {\em lamination}, i.e., it is locally homeomorphic to a Cartesian product of a Cantor set and a line.
This lamination can also be produced via the standard construction of a lamination from an interval exchange
obtained by placing a hyperbolic structure on the suspended IET (the surface $S$) and straightening the leaves to geodesics. See \cite[Part I]{Bonahon01}.

\subsection{Interaction with homology}
\label{sect:homology}

In this section, we will consider the edges of the set $\Edge_\ast$ to be oriented, with an arbitrary choice of orientation made for each $e \in \Edge_\ast$. The homology classes of oriented edges  
generate $H_1(S, V, \Z)$. 

We recall some notation from Section \ref{sect:reinterpret}. The cohomology space $\Coh$ is the space of all linear maps $H_1(S, V, \R) \to \R$. 
We let $\M_\btheta$ of locally finite transverse measures to the oriented leaf space $\hat \F_\btheta$. We will now formally define the map 
$\Psi_{\bm \theta}:\M_{\bm \theta} \to \Coh$.
Given a measure $\mu \in \M_\theta$, we define
a linear map $\Psi_{\bm \theta}(\mu):H_1(S, V, \R) \to \R$. This is the unique map for which $\Psi_{\bm \theta}(\mu)(\hom{\gamma})=\mu(\gamma)$ if
the homology class $\hom{\gamma}$ can be realized by a curve $\gamma$
which has the property that whenever it crosses a leaf of 
$\hat \F_\btheta$, it crosses with positive algebraic sign.
We are following the convention that if the leaf of $\hat \F_\btheta$ is vertical,
then $\gamma$ crosses with positive algebraic sign if it moves rightward across the leaf. This determines the values of $\Psi_{\bm \theta}(\mu)$
on a set which generates $H_1(S, V, \R)$, and we extend linearly.

\begin{lemma}
\label{lem:Psi}
Suppose the coding map given in equation \ref{eq:coding_map} is injective.
Then the map $\Psi_{\bm \theta}$ is also injective. 
\end{lemma}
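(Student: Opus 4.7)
The plan is to pass to the shift space and argue that $\Psi_\btheta(\mu)$ determines the measure of every cylinder set in $\Omega$. Suppose $\mu_1,\mu_2 \in \M_\btheta$ satisfy $\Psi_\btheta(\mu_1)=\Psi_\btheta(\mu_2)$. By Proposition \ref{prop:measure_spaces}, each $\mu_i$ corresponds to a locally finite shift-invariant Borel measure $\nu_i$ on $\Omega \subset \Edge_\ast^\Z$. Since the cylinder sets form a generating algebra for the Borel $\sigma$-algebra of $\Omega$, it suffices to show that $\nu_1(C)=\nu_2(C)$ for every cylinder set $C=[e_{-m},\ldots,e_n]$.

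First I would describe the preimage $\textit{code}^{-1}(C) \subset \hat X_\ast$: by Proposition \ref{prop:Cantor set} this is a clopen subset of $\hat e_{0}$, and under the unsplitting map $\pi$ it projects to a half-open interval $J_C \subset e_0$. The endpoints of $J_C$ are points of $e_0$ whose $F_\btheta$-orbit meets $V$ after crossing at most $m$ edges in backward time or at most $n$ edges in forward time. By construction $\nu_i(C)$ equals the transverse $\mu_i$-measure of $J_C$ (equivalently, of $\pi^{-1}(J_C)$).

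Next I would produce, for each such $J_C$, a $1$-cycle $\alpha_C \in H_1(S,V,\Z)$ for which $\mu_i(J_C)=\pm \Psi_\btheta(\mu_i)(\hom{\alpha_C})$. Writing the two endpoints of $J_C$ as $p$ and $q$, one flows $p$ along $F_\btheta$ (in whichever direction is shorter) to a singularity $v_p \in V$, obtaining a flow arc $\gamma_p$; similarly define $v_q$ and $\gamma_q$. Flow-invariance of the transverse measure gives $\mu_i(J_C)=\mu_i(J_C')$, where $J_C'$ is the sub-arc of an edge $e' \in \Edge_\ast$ (possibly after a finite chain of such translations across polygon boundaries) that joins $v_p$ to $v_q$ along the flow-translated image of $J_C$. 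Since $J_C'$ together with the flow arcs forms a closed curve with vertices in $V$, its homology class $\hom{\alpha_C}\in H_1(S,V,\Z)$ is well-defined, transverse to $\hat \F_\btheta$ with a consistent algebraic sign of crossing, and by the defining property of $\Psi_\btheta$ we have $\mu_i(J_C)=\pm \Psi_\btheta(\mu_i)(\hom{\alpha_C})$, where the sign is fixed by the orientation of $J_C$ and is independent of $i$.

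Combining these two steps, $\nu_i(C)$ is determined by $\Psi_\btheta(\mu_i)$, so $\Psi_\btheta(\mu_1)=\Psi_\btheta(\mu_2)$ forces $\nu_1(C)=\nu_2(C)$ on every cylinder set, and hence $\mu_1=\mu_2$. The main obstacle is the second step: making the construction of $\alpha_C$ canonical and verifying that the induced sign is truly independent of the choice of flow direction and of the intermediate edges crossed. The degenerate cases where $p$ or $q$ already lies in $V$, and the case where $J_C$ extends across a vertex of $V$ in the interior of $e_0$ (splitting $J_C$ into two pieces whose $\alpha$'s must be added), require a small amount of bookkeeping but pose no essential difficulty once one works in the split leaf space $\hat \F_\btheta$.
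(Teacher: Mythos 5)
Your overall strategy matches the paper exactly: pass to the shift space via Proposition \ref{prop:measure_spaces}, reduce to showing that $\Psi_\btheta(\mu)$ determines the measure of every cylinder set, and invoke Carath\'eodory. The difference is in how you express the cylinder set measure in terms of $\Psi_\btheta(\mu)$, and there your construction has a concrete flaw.

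You flow the endpoints $p,q$ of $J_C$ to singularities $v_p, v_q$ at flow times $t_p, t_q$ (possibly with different signs, and in general $t_p \neq t_q$), and then claim that ``the flow-translated image of $J_C$'' is a sub-arc of an edge joining $v_p$ to $v_q$. This is not true as stated: the image $F_\btheta^{t_p}(J_C)$ has one endpoint at $v_p$ but its other endpoint is $F_\btheta^{t_p}(q)$, which is $v_q$ only if $t_p = t_q$. To close up a $1$-cycle through $V$ you would need to concatenate transversal pieces with flow arcs in a nontrivial way, which is exactly the bookkeeping you flagged as ``the main obstacle'' but then did not resolve. The sign-consistency claim also needs the crossing direction to be constant along the assembled chain, which is not automatic for the piecewise curve you describe.

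The paper sidesteps all of this with one geometric observation: the preimage of the cylinder set is a parallel strip of trajectories bounded on each side by a trajectory hitting $V$, so one can choose a vertex $P$ on the left boundary and a vertex $Q$ on the right boundary, and the straight segment $\sigma = \overline{PQ}$ is a saddle connection contained entirely in the strip. Because $\sigma$ spans the width of the strip, every leaf in the strip crosses $\sigma$ exactly once and with the same algebraic sign, so $\nu(C) = \mu(\sigma) = |\Psi_\btheta(\mu)(\hom{\sigma})|$. No flow-translation, no concatenation, no sign bookkeeping. If you replace your ad hoc $1$-cycle $\alpha_C$ with this single saddle connection, your argument closes cleanly and becomes the paper's proof.
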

\begin{proof}
We will utilize Proposition \ref{prop:measure_spaces} to translate the question to the shift space $\Omega$. 
By Carath\'eodory's extension theorem,
a Borel measure on a shift space is determined by the measures of the cylinder sets.
Let $A$ be a cylinder set. Then, $\pi \circ \textit{code}^{-1}(A)$ is a strip of trajectories which hit a sequence
of specified edges of $E$. Such a strip of trajectories is precisely the collection of trajectories which cross
some specific saddle connection $\sigma$ on $S$. (Namely, the left and right sides of the strip must hit at least one vertex of an edge. Let $P$ be a vertex on the left side and $Q$ be a vertex on the right. Then $\sigma=\overline{PQ}$ is a saddle connection contained in the strip.) See Figure \ref{fig:cylinder_set}. Thus, if $\nu$ is a shift invariant measure on $\Omega$, and
$\mu$ is the corresponding transverse measure to $\hat \F_\btheta$, then 
$\nu(A)= \mu (\sigma)$.

\begin{figure}[ht]
\begin{center}
\includegraphics[width=3in]{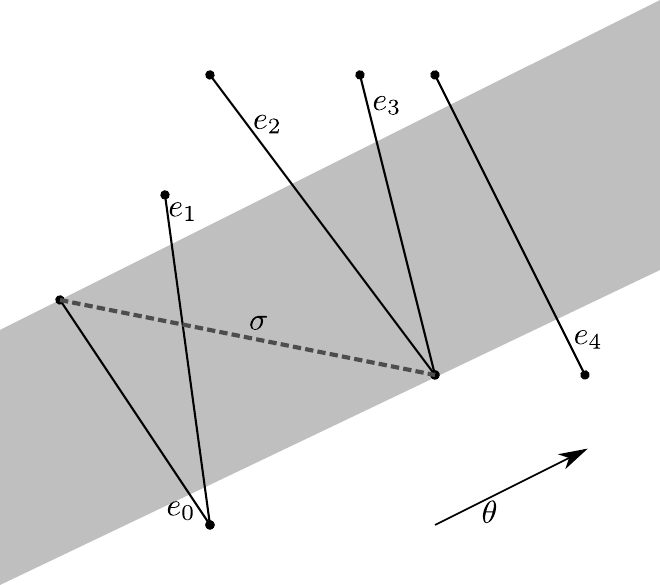}
\caption{This figure indicates how the measure of a cylinder set of $\Omega$ is determined by the measure of a saddle connection. Here the cylinder set is the set of trajectories
which cross the sequence of edges $e_0, \ldots, e_4$. The saddle connection $\sigma$ is chosen so that it spans the width of the strip of trajectories which cross this sequence of edges.}
\label{fig:cylinder_set}
\end{center}
\end{figure}

Observe that for all saddle connections $\sigma$ and all $\mu \in \M_{\bm \theta}$, we have that 
$$\mu(\sigma)=|\Psi_{\bm \theta}(\mu)(\hom \sigma)|.$$
Therefore, we can recover the transverse measures of saddle connections from the image under $\Psi_{\bm \theta}$.
The conclusion follows from Carath\'eodory's extension theorem.
\end{proof}

Recall that Lemma \ref{lem:injective} stated that for a general translation surface, $S$, conservativity and aperiodicity of the straight line flow in direction $\btheta$ implies the injectivity of $\Psi_\btheta$. 

\begin{proof}[Proof of Lemma \ref{lem:injective}]
If the straight line flow has no periodic trajectories and is conservative, then the coding map is injective by Proposition \ref{prop:coding}. 
By Lemma \ref{lem:Psi}, we know $\Psi_{\bm \theta}$ is also injective. 
\end{proof}

Recall that Lemma \ref{lem:sign} stated that for the surface $S=S(\G,\w)$ and for $m \in H^1$, $m \in \Psi_\btheta(\M_\btheta)$ if and only if for every saddle connection $\sigma$ on $S$, 
$$
\label{eq:m temprorary}
m(\hom{\sigma}) \neq 0
\quad \text{implies} \quad
\sgn \big( m(\hom{\sigma})\big)=\sgn \big(\hol(\sigma) \wedge \bm \theta\big).$$

\begin{proof}[Proof of Lemma \ref{lem:sign}]
The ``only if'' part of the lemma follows directly from Definition 
of $\Psi_\btheta$. See page \pagelink{not:Psi}. 
The ``if'' part follows from Carath\'eodory's extension theorem again. Assume $m \in \Coh$ satisfies
equation \ref{eq:m temprorary} for every saddle connection $\sigma$. By Carath\'eodory's extension theorem, we can define a unique measure $\mu$ on $\Omega$ by determining its value on cylinder sets.
Define $f$ on the semi-ring of transversals generated by the saddle connections. We define 
$f(\sigma)=|m(\hom \sigma)|$. This function is finitely additive because of the sign condition.
Thus it extends uniquely to a measure on $\Omega$. The measure is shift invariant by the uniqueness provided by Carath\'eodory's extension theorem. We can use
Proposition \ref{prop:measure_spaces} to pull this measure back to a unique transverse invariant measure $\mu \in \M_\btheta$ so that $\Psi_\btheta(\mu)=m$.
\end{proof}
 
\section{Generalized Farkas' theorem}
\label{sect:farkas}

We will use a generalization of Farkas' theorem given by Craven and Koliha \cite[Theorem 2]{CK77}. We will introduce some of their terminology and then give their result.
Then we will apply this to prove Lemma \ref{lem:farkas}.

If $X$ is a real vector space, its algebraic dual $X^\sharp$ is the collection of linear functionals $X \to \R$. $X^\sharp$ is also a real vector space and we have a bilinear 
pairing $\langle, \rangle:X \times X^\sharp \to \R$ given by $\langle \x, \f \rangle=\f(\x)$. 
A subset $X^+ \subset X^\sharp$ is said to {\em separate points in $X$} if for any distinct $\x_1, \x_2 \in X$ there is a $\f \in X^+$ such that $\langle \x_1, \f \rangle \neq \langle \x_2, \f \rangle$. The pair $\langle X, X^+ \rangle$ is a {\em dual pair} if $X^+$ is a subspace of $X^\sharp$ which separates points in $X$. 
The {\em weak topology} on $X$ with respect to the dual pair $\langle X, X^+\rangle$ is the coarsest topology which makes each functional in $X^+$ continuous.

A {convex cone} in $X$ is a subset $S \subset X$ such that $S+S \subset S$ and $\alpha S \subset S$ for all $\alpha \geq 0$. 
If $\langle X, X^+\rangle$ is a dual pair, then the {\em anticone} of $S$ is 
$$S^+=\{\f \in X^+~:~ \textrm{$\langle\x, \f\rangle \geq 0$ for all $x \in S$}\}.$$

The {\em algebraic adjoint} of a linear map $M:X \to Y$ is the map $M^\sharp:Y^\sharp \to X^\sharp$ defined by
$$\langle \x, M^\sharp (\g) \rangle=\langle M(\x), \g\rangle.$$
If $\langle X, X^+\rangle$ and $\langle Y, Y^+\rangle$ are dual pairs then the linear map $M:X \to Y$ is weakly-continuous if and only if
$M^\sharp(Y^+) \subset X^+$. In this case we define the adjoint $M^+:Y^+ \to X^+$ to be the restriction of $M^\sharp$ to $Y^+$. 

\begin{theorem}[Generalized Farkas' theorem {\cite[Theorem 2]{CK77}}]
Let $\langle X, X^+\rangle$ and $\langle Y, Y^+\rangle$ be dual pairs, let $S \subset X$ be a convex cone, and let $M:X \to Y$ be a weakly-continuous linear map.
If $M(S)$ is weakly-closed then the following are equivalent conditions on $\b \in Y$:
\begin{enumerate}
\item The equation $M \x=\b$ has a solution $\x \in S$.
\item If $\y^+ \in Y^+$ satisfies $M^+(\y^+) \in S^+$ then $\langle \b, \y^+\rangle \geq 0$. 
\end{enumerate}
\end{theorem}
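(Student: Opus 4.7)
The plan is to prove the equivalence by handling the easy direction directly from definitions, then tackling the converse via a Hahn--Banach style separation argument in the weak topology.

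First I would dispatch the implication (1) $\Rightarrow$ (2). Assume there is some $\x \in S$ with $M\x = \b$, and suppose $\y^+ \in Y^+$ satisfies $M^+(\y^+) \in S^+$. Then by the defining property of the algebraic adjoint,
\begin{equation*}
\langle \b, \y^+\rangle = \langle M\x, \y^+\rangle = \langle \x, M^+(\y^+)\rangle,
\end{equation*}
and since $\x \in S$ and $M^+(\y^+) \in S^+$, the right-hand side is non-negative by the definition of anticone. This direction requires no topological hypothesis on $M(S)$.

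For the harder direction (2) $\Rightarrow$ (1), I would argue the contrapositive. Suppose $\b \notin M(S)$. The set $M(S)$ is a convex cone in $Y$ (being the image of a convex cone under a linear map), and by hypothesis it is weakly-closed with respect to the topology induced by $\langle Y, Y^+\rangle$. The point $\b \in Y \setminus M(S)$ is a compact convex set disjoint from the closed convex set $M(S)$, so by the geometric form of the Hahn--Banach theorem applied in the locally convex space $(Y, \text{weak topology})$, I can find a weakly-continuous linear functional on $Y$ -- which, by definition of the weak topology induced by $Y^+$, must be an element $\y^+ \in Y^+$ -- and a real number $\alpha$ such that
\begin{equation*}
\langle \b, \y^+\rangle < \alpha \leq \langle \y, \y^+\rangle \quad \text{for all } \y \in M(S).
\end{equation*}
Since $M(S)$ is a cone containing $\0$, we must have $\alpha \leq 0$ (take $\y = \0$) and also $\langle \y, \y^+\rangle \geq 0$ for all $\y \in M(S)$ (otherwise scaling a negative value by large positive scalars contradicts the bound). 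Hence $\langle \b, \y^+\rangle < 0$ while $\langle M\x, \y^+\rangle \geq 0$ for every $\x \in S$. Rewriting the latter via the adjoint, $\langle \x, M^+(\y^+)\rangle \geq 0$ for all $\x \in S$, which is exactly the statement $M^+(\y^+) \in S^+$. This pair $(\b, \y^+)$ then violates condition (2).

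The main obstacle is invoking the separation theorem correctly in this abstract dual-pair setting: one must verify that weak continuity of functionals on $Y$ with respect to $\langle Y, Y^+\rangle$ is precisely membership in $Y^+$, and that $(Y, \text{weak topology})$ is a locally convex Hausdorff space (Hausdorff because $Y^+$ separates points in $Y$), so that Hahn--Banach separation of a point from a closed convex set is available. The weak-closedness hypothesis on $M(S)$ is used exactly here -- without it the separation step breaks down and (2) need not imply (1), which explains why it appears as a standing assumption. Once separation is secured, the cone property of $M(S)$ upgrades the linear inequality into a non-negativity statement, and the adjoint identity $\langle M\x, \y^+\rangle = \langle \x, M^+(\y^+)\rangle$ (which follows from weak continuity of $M$) converts the separating functional into a certificate of the form required by (2).
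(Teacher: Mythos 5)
Your argument is sound, but note that the paper does not prove this statement at all: it is quoted verbatim from Craven and Koliha \cite[Theorem 2]{CK77} and used as a black box to deduce Lemma \ref{lem:farkas}, so there is no in-paper proof to compare against. What you have written is the standard (and essentially the original) proof: the implication (1) $\Rightarrow$ (2) is the adjoint identity $\langle M\x,\y^+\rangle=\langle \x,M^+(\y^+)\rangle$ together with the definition of the anticone, and (2) $\Rightarrow$ (1) is strict separation of the point $\b$ from the weakly-closed convex cone $M(S)$ in the locally convex Hausdorff space $(Y,\sigma(Y,Y^+))$, followed by the cone trick that upgrades the affine separating inequality to $\langle \y,\y^+\rangle\geq 0$ on $M(S)$ and $\langle \b,\y^+\rangle<0$. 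The two standard facts you flag are exactly the ones that need to be in place and both hold: $\sigma(Y,Y^+)$ is Hausdorff because $Y^+$ separates points (part of the dual-pair definition), and the topological dual of $Y$ under $\sigma(Y,Y^+)$ is precisely $Y^+$ acting via the pairing, so the separating functional produced by Hahn--Banach really is some $\y^+\in Y^+$. Two small housekeeping points: your step ``take $\y=\0$'' uses $\0\in M(S)$, which follows from the paper's definition of convex cone (take $\alpha=0$) provided $S\neq\emptyset$, so the degenerate empty-cone case should be excluded or handled separately; and weak continuity of $M$ is what guarantees $M^\sharp(Y^+)\subset X^+$, i.e.\ that $M^+(\y^+)$ is a legitimate element of $X^+$ to test against $S^+$ --- you use this implicitly and it is worth stating.
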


Lemma \ref{lem:farkas} is a special case of this theorem. Consider the dual pair $\langle \R^\V, \R^\V_c\rangle$. Here the weak topologies are simply the topologies of pointwise convergence. We observe that the adjoint of the adjacency operator $\A:\R^\V \to \R^\V$ is just the restriction of $\A$ to $\R^\V_c$. (As in the rest of the paper, we abuse notation by using $\A$ to refer to either of these maps.)
In particular, $\A$ is weakly-continuous. We will prove that the convex cone $\A(\hat \Q_{++})$ is weakly-closed below. The anticone of $\hat \Q_{++} \subset \R^\V$ 
is precisely $\hat \Q_{++} \cap \R^\V_c$. 
By the theorem above, 
given any $\f \in \R^\V$, the following statements are equivalent.
\begin{enumerate}
\item The equation $\A (\g)=\f$ has a solution $\g \in \hat \Q_{++}$.
\item If $\x \in \R^\V_c$ satisfies $\A(\x) \in \hat \Q_{++}$, then $\langle \f, \x \rangle \geq 0$. 
\end{enumerate}
This is precisely the conclusion of Lemma \ref{lem:farkas},
which therefore follows from:

\begin{proposition}
\label{prop:weakly-closed}
The convex cone $\A(\hat \Q_{++}) \subset \R^\V$ is weakly-closed.
\end{proposition}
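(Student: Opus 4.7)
The weak topology on $\R^\V$ induced by the dual pair $\langle \R^\V, \R^\V_c\rangle$ agrees with the topology of pointwise convergence, since $\R^\V_c$ is spanned by the evaluation functionals $\f \mapsto \f(\vv)$ for $\vv \in \V$. Because $\V$ is countable, this topology is metrizable, so it suffices to prove that $\A(\hat\Q_{++})$ is sequentially closed.

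The plan is as follows. Take any sequence $\f_n = \A(\g_n)$ with $\g_n \in \hat\Q_{++}$ converging pointwise to some $\f \in \R^\V$; I must produce $\g \in \hat\Q_{++}$ with $\A(\g)=\f$. First I will obtain a uniform (in $n$) pointwise bound on $\g_n$. For any $\vw \in \V$, since $\G$ is connected and infinite, $\vw$ has at least one neighbor $\vv$; because all terms in $\A(\g_n)(\vv) = \sum_{\vu \sim \vv} \g_n(\vu)$ are non-negative, we have
\[ 0 \leq \g_n(\vw) \leq \A(\g_n)(\vv) = \f_n(\vv). \]
Since $\f_n(\vv) \to \f(\vv)$, the sequence $\big(\g_n(\vw)\big)_n$ is bounded for each $\vw \in \V$.

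Next I will extract a pointwise-convergent subsequence. Enumerating $\V$ as $\{\vw_1,\vw_2,\ldots\}$ and applying a Cantor diagonal argument to the bounded scalar sequences $\big(\g_n(\vw_k)\big)_n$, I obtain a subsequence $\g_{n_j}$ converging pointwise to some $\g:\V\to\R$. Since each $\g_{n_j}$ is non-negative, the limit $\g$ lies in $\hat\Q_{++}$. Finally, I pass to the limit in $\A(\g_{n_j}) = \f_{n_j}$: for every fixed $\vv \in \V$, the sum $\A(\g_{n_j})(\vv) = \sum_{\vu \sim \vv} \g_{n_j}(\vu)$ involves only finitely many terms (by the bounded-valance hypothesis on $\G$), so
\[ \A(\g)(\vv) = \sum_{\vu \sim \vv} \g(\vu) = \lim_{j \to \infty} \sum_{\vu \sim \vv} \g_{n_j}(\vu) = \lim_{j\to\infty} \f_{n_j}(\vv) = \f(\vv). \]
Thus $\A(\g) = \f$, proving $\f \in \A(\hat\Q_{++})$.

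There is no serious obstacle: the only delicate points are ensuring every vertex has at least one neighbor (which is where connectedness and infiniteness of $\G$ enter, ruling out isolated vertices) and making the diagonal extraction legitimate (which uses the countability of $\V$ together with bounded valance to commute $\A$ with the pointwise limit).
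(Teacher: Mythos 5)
Your proof is correct and takes essentially the same approach as the paper's: bound each $\g_n(\vw)$ by $\f_n(\vv)$ for a neighbor $\vv$, extract a pointwise-convergent subsequence by Cantor diagonalization, and pass to the limit in the (finite) sums defining $\A$. Your explicit remark that countability of $\V$ makes the weak topology metrizable (so that sequential closedness suffices) is a useful clarification that the paper takes for granted.
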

\begin{proof}
Suppose $\langle \g_{i} \in \A(\hat \Q_{++})\rangle$ is a sequence weakly-converging to $\g_\infty \in \R^\V$.
We must show that $\g_\infty \in \A(\hat \Q_{++})$. We may choose $\f_i \in \hat \Q_{++}$ such that $\A(\f_i)=\g_i$ for all $i$. The idea of the proof is to use a Cantor diagonalization argument to produce a subsequence
of $\langle \f_{i} \rangle$ which converges to some $\f_\infty$, which necessarily lies in $\hat \Q_{++}$. Then, we have
$\A(\f_\infty)=\g_\infty$ by the weak-continuity of $\A$. And therefore $\g_\infty \in \A(\hat \Q_{++})$.

We enumerate $\V=\{\vv_1, \vv_2, \ldots \}$. We will first find a subsequence $\langle \f_{i(1,j)} \rangle_j$ of $\langle \f_{i} \rangle$ such that $\lim_{j \to \infty} \f_{i(1,j)}$ exists. Let $\vw$ be a vertex adjacent to $\vv_1$. 
Then by the formula for $\A(\f_i)$, we know $0 \leq \f_{i}(\vv_1) \leq \A(\f_i)(\vw)=\g_i(\vw)$. Moreover,
$\lim_{i \to \infty} \g_i(\vw) \to \g_\infty(\vw)$. Thus, for all but finitely many $i$ we have 
$$0 \leq \f_{i}(\vv_1) \leq \g_\infty(\vw)+1.$$
Thus by compactness of the interval $[0,g_\infty(\vw)+1]$, we can find a subsequence $\langle \f_{i(1,j)} \rangle_j$ of $\langle \f_{i} \rangle$ such that $\lim_{j \to \infty} \f_{i(1,j)}$ exists. We repeat this argument inductively. For each $n \geq 1$, we can find a subsequence $\langle \f_{i(n+1,j)} \rangle_j$ of $\langle \f_{i(n,j)} \rangle_j$ such that $\lim_{j\to \infty} \f_{i(n+1,j)}(\vv_{n+1})$ exists. 
Then the diagonal sequence $\langle \f_{i(n,n)} \rangle_{n}$ satisfies $\lim_{n \to \infty} \f_{i(n,n)}(\v_j)$ exists for all $j$. We set $\f_\infty=\lim_{n \to \infty} \f_{i(n,n)}$ and proceed as in the previous paragraph.
\end{proof}

\section{The Martin boundaries of a graph}
\label{sect:boundary}
In this section, we will briefly review some relevant facts about the Martin boundaries of the adjacency operator of an infinite connected graph $\G$ with bounded valance. The main goal of this section is to state facts we will use in later appendices.
We will follow the surveys \cite{MW89} and \cite{W00}. The reader is especially encouraged to refer to \cite{MW89}, because it specifically discusses the adjacency operator.
The book \cite{W00} deals exclusively with stochastic matrices. However, our discussion of the adjacency operator can be reduced to the discussion of stochastic matrices; see Remark
\ref{rem:reduction} below.

We may view the adjacency operator as an infinite matrix. For $\vv, \vw \in \V$ and for $n$ a non-negative integer, we define the non-negative number
$$\A^{(n)}_{\vv, \vw}=\langle \A^n(\e_{\vw}), \e_{\vv} \rangle \in \R$$
where $\e_{\vv}$ denote the function $\V \to \R$ which is one at $\vv$ and zero elsewhere.
This means that we can write
$$\A^n(\f)=\sum_{\vv \in  \V} \sum_{\vw \in \V} \A^{(n)}_{\vv, \vw} \f(\vw) \e_{\vv}.$$

Given a $\z>0$, we define the matrix
$\RM_\z=\frac{1}{\z} \sum_{n=0}^{\infty} (\frac{1}{\z} \A)^n$. For $\z$ small this sum will diverge. But there is a real constant $r>0$ for which 
$\RM_\z$ has all finite entries for all $\z>r$, and
$\RM_\z$ has all infinite entries for $\z<r$. 
By definition, this constant $r$ coincides with the spectral radius of the action of $\A$ on $\ell^2(\V)$. 

Note that whenever it exists, the matrix $\RM_\z$ satisfies the equation 
\begin{equation}
\label{eq:rm}
\z\RM_\z=\A \RM_\z+{\mathbf I}.
\end{equation}
From this point of view, the columns $\RM_\z(\e_\vv)$ for $\vv \in \V$ are nearly positive eigenfunctions with eigenvalue $\z$.

We have not explained what happens for $\z=r$. $\A$ is called {\em $r$-transient} if $\RM_r$ has all finite entries, and $\A$ is called {\em $r$-recurrent} if $\RM_r$ has all infinite entries.
No other possibilities can occur. In the $r$-recurrent case, we might try to compute the matrix
$${\mathbf L}=\lim_{n \to \infty} \frac{1}{r^n} \A^n.$$ 
This limit always exists. We say that $\A$ is {\em $r$-null} if all entries of ${\mathbf L}$ are zero, and {\em $r$-positive} if all entries are non-zero. 
Again, no other possibilities can occur. In the $r$-positive case, the columns of ${\mathbf L}$ are positive eigenfunctions. Moreover, since ${\mathbf L}^2={\mathbf L}$,
these eigenfunctions lie in $\ell^2(\V)$. It turns out that all columns are multiples of one another.

\begin{theorem}[{\cite[Theorem 6.2]{MW89}}]
\name{thm:recurrent case}
If $\A$ is $r$-recurrent, then there is a positive solution to the equation $\A(\f)=r \f$. 
This solution is unique up to scaling.
\end{theorem}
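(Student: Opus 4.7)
The plan is to produce the eigenfunction by a resolvent limit and then deduce uniqueness from a Doob-transform argument that reduces the question to triviality of bounded harmonic functions for a recurrent Markov chain.

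\textbf{Existence.} Fix a root $o\in \V$. For each $\z>r$, the operator $\RM_\z$ exists and has all finite entries, so we may normalize the $o$-th column to define
\[
\k_\z \;=\; \frac{\RM_\z(\e_o)}{\RM_\z(\e_o)(o)} \;\in\; \R^\V,
\qquad \k_\z\geq 0,\quad \k_\z(o)=1.
\]
Equation \ref{eq:rm} rearranged reads $\A\RM_\z(\e_o) = \z\,\RM_\z(\e_o)-\e_o$, and dividing through by $\RM_\z(\e_o)(o)$ gives
\[
\A\k_\z \;=\; \z\,\k_\z \;-\; \frac{\e_o}{\RM_\z(\e_o)(o)}.
\]
Because $\A$ is assumed $r$-recurrent, $\RM_\z(\e_o)(o)\to\infty$ as $\z\downarrow r$, so the error term vanishes in the limit. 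Using the bounded-valance hypothesis together with the displayed identity, I would show by induction on graph distance that $\k_\z(\vw)\leq C(\vw)$ for some constant $C(\vw)$ independent of $\z\in(r,r+1]$: at the root this is the normalization, and each propagation step from $\vv$ to $\vw\sim\vv$ only multiplies by at most $(\z + \text{error})$ divided by a positive coefficient that stays bounded below. By a Cantor diagonal extraction over the countable vertex set $\V$, there is a sequence $\z_n\downarrow r$ so that $\k_{\z_n}(\vv)$ converges for every $\vv$ to some $\f(\vv)\geq 0$ with $\f(o)=1$. Passing to the limit in the displayed eigenfunction-with-remainder equation yields $\A\f=r\f$. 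Positivity of $\f$ everywhere follows from connectedness of $\G$: if $\f(\vv_0)=0$, the eigenvalue equation applied at $\vv_0$ forces $\f(\vw)=0$ for all $\vw\sim\vv_0$, contradicting $\f(o)=1$.

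\textbf{Uniqueness.} Suppose $\f_1,\f_2$ are two positive eigenfunctions with eigenvalue $r$, normalized so that $\f_1(o)=\f_2(o)$. I would perform the Doob transform based on $\f_1$: define a stochastic matrix
\[
P_{\vv,\vw} \;=\; \frac{1}{r}\,\frac{\f_1(\vw)}{\f_1(\vv)}\,\A_{\vv,\vw},
\]
which is a bona fide stochastic matrix because $\A\f_1=r\f_1$. Then $h=\f_2/\f_1$ is bounded on finite sets and satisfies $Ph=h$, i.e.\ $h$ is $P$-harmonic. The point of the transform is that $r$-recurrence of $\A$ transfers to recurrence of the Markov chain with transition matrix $P$: indeed the Green function of $P$ evaluated at $(o,o)$ is $\sum_n P^{(n)}_{o,o} = \sum_n r^{-n}\A^{(n)}_{o,o}$, which diverges precisely by the definition of $r$-recurrence. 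For a recurrent irreducible Markov chain, the only bounded harmonic functions are constants, hence $h\equiv h(o)=1$, giving $\f_1\equiv \f_2$.

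\textbf{Main obstacle.} The existence argument is standard once one has the uniform a priori bounds on the normalized resolvent columns, and checking that $\RM_\z(\e_o)(o)\to\infty$ from $r$-recurrence is essentially the definition. The real content is the uniqueness step: the subtle point is that the harmonic function $h=\f_2/\f_1$ is a priori only nonnegative, not bounded, so one cannot directly invoke the Liouville-type theorem for recurrent chains. The way around this that I would pursue is to consider, instead of $h$ itself, the function $h_c=\min(h,c)$ for each $c>0$, or equivalently to argue with $c=\inf_{\vv} h(\vv)$: a standard maximum principle for the recurrent chain shows $h-c$ cannot attain a strictly positive infimum, forcing $h$ to be constant. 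Making this reduction rigorous, and in particular showing that positive $P$-superharmonic functions on a recurrent irreducible chain are constant, is the core difficulty; I would cite the appropriate statement in \cite{W00} rather than reprove it.
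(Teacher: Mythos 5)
The paper does not prove this statement; it cites it from Mohar--Woess \cite[Theorem 6.2]{MW89}. Your proof is correct and is essentially the standard argument in the literature: build the eigenfunction as a resolvent limit as $\z\downarrow r$, then prove uniqueness by a Doob $h$-transform that converts the question into the Liouville property for a recurrent Markov chain.

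Two small remarks. For the uniform a priori bound on $\k_\z$, the cleanest version is: the resolvent identity $\z\,\RM_\z(\e_o)(\vv)=\sum_{\vu\sim\vv}\RM_\z(\e_o)(\vu)+\delta_{\vv,o}$ has nonnegative terms, so dropping all but one summand gives $\RM_\z(\e_o)(\vw)\leq \z\,\RM_\z(\e_o)(\vv)$ for every $\vw\sim\vv$, and hence $\k_\z(\vw)\leq (r+1)^{d(o,\vw)}$ uniformly for $\z\in(r,r+1]$; your phrase about ``dividing by a positive coefficient that stays bounded below'' is a bit opaque, but the conclusion is right, and bounded valance is only needed for local finiteness (so that $\A$ commutes with pointwise limits). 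In the uniqueness step, the truncation device $h_c=\min(h,c)$ is an unnecessary detour: the standard theorem on recurrent irreducible chains asserts that \emph{every nonnegative superharmonic function is constant}, not merely every bounded one (see \cite{W00}), so it applies directly to $h=\f_2/\f_1$. Once you make that citation, the ``core difficulty'' you flag at the end disappears.
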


The following treats the even more special $r$-positive case.

\begin{theorem}[{\cite[p.~215]{MW89}}]
\name{thm:l2}
$\A$ is $r$-positive if and only if $\A$ has a positive eigenfunction in $\ell^2(\V)$.
If there is such an eigenfunction, its eigenvalue coincides with the spectral radius $r$. 
\end{theorem}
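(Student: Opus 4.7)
My plan is to prove the two implications separately, exploiting the matrix identities ${\mathbf L}^2 = {\mathbf L}$ and $\A {\mathbf L} = r {\mathbf L}$ together with spectral theory for the bounded self-adjoint operator $\A$ on $\ell^2(\V)$.

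For the forward direction, I would assume $\A$ is $r$-positive, so that ${\mathbf L} = \lim_{n \to \infty} r^{-n} \A^n$ has all positive entries. Passing to the limit in the identity $\A \cdot r^{-n} \A^n = r \cdot r^{-(n+1)} \A^{n+1}$ yields $\A {\mathbf L} = r {\mathbf L}$, so every column ${\mathbf L}(\e_\vv)$ is a strictly positive function satisfying the eigenvalue equation with eigenvalue $r$. To see that this column lies in $\ell^2(\V)$, I would use that $\A$ is symmetric (the adjacency operator of an undirected graph), hence so is each $\A^n$ and ${\mathbf L}$. Reading off the $(\vv,\vv)$-entry of the idempotent identity ${\mathbf L}^2 = {\mathbf L}$ gives
\[ \sum_{\vw \in \V} {\mathbf L}_{\vv,\vw}^2 \;=\; \sum_{\vw \in \V} {\mathbf L}_{\vv,\vw}{\mathbf L}_{\vw,\vv} \;=\; {\mathbf L}_{\vv,\vv} \;<\; \infty, \]
which furnishes the required positive eigenfunction in $\ell^2(\V)$.

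For the converse, let $\f \in \ell^2(\V)$ be positive with $\A\f = \lambda\f$. I would first show $\lambda = r$ via two inequalities. Since $\A$ acts as a bounded self-adjoint operator on $\ell^2(\V)$ whose operator norm equals the spectral radius $r$, the existence of an $\ell^2$-eigenvalue $\lambda$ forces $|\lambda|\leq r$. For the reverse inequality, I would use the pointwise bound $\e_\vv \leq \f/\f(\vv)$ together with the non-negativity of the entries of $\A^n$ to obtain $\A^{(n)}_{\vw,\vv} = (\A^n\e_\vv)(\vw) \leq \lambda^n \f(\vw)/\f(\vv)$, and then invoke the standard formula $r = \limsup_n (\A^{(n)}_{\vv,\vv})^{1/n}$ to conclude $r \leq \lambda$.

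The remaining and hardest step is to exclude the $r$-transient and $r$-null cases. Here I expect the main obstacle, because in the bipartite setting (which is the one of this paper) $\pm r$ are both in the spectrum, so $r^{-n}\A^n$ does not converge in the operator sense, only along one parity. My approach would be to pass to $\A^2$ (which is what is ultimately used in the applications to the surfaces $S(\G,\w)$), whose spectral radius is $r^2$ and for which $r^{-2n}\A^{2n}$ converges strongly to the spectral projection $P$ onto $\ker(\A^2 - r^2 I)$. Since $\f \in \ell^2$ and $\A^2 \f = r^2 \f$, the projection $P$ is nontrivial and the quadratic form inequality $\langle \e_\vv, r^{-2n}\A^{2n}\e_\vw\rangle \geq \langle \e_\vv, P\e_\vw\rangle \geq \f(\vv)\f(\vw)/\|\f\|_2^2 > 0$ yields a uniform positive lower bound that survives in the limit, contradicting both $r$-transience (where $r^{-2n}\A^{(2n)}_{\vv,\vw}$ would be summable, hence tend to zero) and $r$-nullity. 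This rules out all cases but $r$-positivity, and simultaneously identifies $r$ with the eigenvalue $\lambda$ already determined above.
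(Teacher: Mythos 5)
The paper itself does not prove this theorem; it is cited directly from Mohar--Woess, so there is no in-text argument to compare against beyond the paper's surrounding informal remarks. Those remarks do sketch your forward direction almost verbatim: the columns of ${\mathbf L}$ are positive eigenfunctions, and ${\mathbf L}^2={\mathbf L}$ forces them into $\ell^2(\V)$. Your observation that symmetry of $\A$ makes the $(\vv,\vv)$-entry of ${\mathbf L}^2={\mathbf L}$ read as $\sum_\vw {\mathbf L}_{\vv,\vw}^2 = {\mathbf L}_{\vv,\vv}<\infty$ is exactly the mechanism the paper is alluding to. You are right to flag the periodicity problem: since $\G$ is bipartite, $r^{-n}\A^n$ has vanishing entries on every other $n$, so the naive limit ${\mathbf L}=\lim_n r^{-n}\A^n$ would be identically zero and the paper's phrasing must be read with the $\limsup$ (or even-$n$) convention. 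You fix this in the converse by passing to $\A^2$, but the same repair is silently needed in the forward direction as well: one should work with ${\mathbf L}^{(2)}=\lim_n r^{-2n}\A^{2n}$, obtain an $\ell^2$ function $\g$ with $\A^2\g=r^2\g$, and then pass to $\h=\A\g+r\g$ to recover an honest positive $\ell^2$-eigenfunction of $\A$ for the eigenvalue $r$.

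There is one small but genuine gap in your converse. You write the chain $\langle \e_\vv, r^{-2n}\A^{2n}\e_\vw\rangle \geq \langle \e_\vv, P\e_\vw\rangle \geq \f(\vv)\f(\vw)/\|\f\|_2^2$ and call it a quadratic form inequality, but for $\vv\neq\vw$ this is not a quadratic form evaluation, and positivity of the operators $r^{-2n}\A^{2n}-P\geq 0$ and $P-\f\f^*/\|\f\|^2\geq 0$ does not give sign control on their off-diagonal matrix entries. (Indeed, for $\vv,\vw$ on opposite sides of the bipartition the right-hand side is strictly positive while the left-hand side is identically zero, so the stated inequality is actually false there.) Fortunately the argument only needs the diagonal case $\vv=\vw$, where $\langle \e_\vv, r^{-2n}\A^{2n}\e_\vv\rangle \geq \langle \e_\vv, P\e_\vv\rangle \geq \f(\vv)^2/\|\f\|_2^2 > 0$ is a bona fide operator-inequality evaluation, and irreducibility (connectedness of $\G$) promotes positivity of the diagonal to $r$-positivity. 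Restricting your final step to $\vv=\vw$ closes the gap and makes the converse complete.
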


\begin{definition}[Martin kernel]
Suppose that $\z>r$ or that $\z=r$ and $\A$ is $r$-transient. 
Choose a {\em root} vertex $\vo \in \V$. 
The {\em $\z$-Martin kernel} is the matrix satisfying
$$\K_{\z}:\V^2 \to \R; \quad (\vv, \vw) \mapsto \frac{\RM_\z(\e_\vw)(\vv)}{\RM_\z(\e_\vw)(\vo)}.$$
\end{definition}
We may view $\K_{\z}$ as a modification of the matrix $\RM_\z$ where all columns have been rescaled so that the row associated to the root consists of all ones.
Any non-trivial pointwise limit of the columns $\K_\z(\e_\vw)$ of $\K_\z$ produces a positive eigenfunction by equation \ref{eq:rm}.

\begin{definition}[Martin boundary]
\label{def:Martin_boundary}
The {\em $\z$-Martin compactification} $\V_\z \label{not:hat V lambda 2}$ of the vertex set $\V$ is the smallest compactification of  $\V$ to which the function
$$\V \to \R^\V; \quad \vw \mapsto \K_\z(\e_\vw)$$
extends continuously. The {\em $\z$-Martin boundary} is ${\mathcal M}_\z= \V_\z \smallsetminus \V \label{not:M lambda 2}$, and if $\zeta \in {\mathcal M}_\z \label{not:zeta2}$
we use $\k_\zeta \label{not:k zeta2}$ to denote the image of $\zeta$ under this extension.
\end{definition}
It follows from the above discussion that if $\zeta \in {\mathcal M}_\z$, we have $\A \k_\zeta=\z \k_\zeta$.


\begin{definition}[Minimal Martin boundary]
A point $\zeta \in {\mathcal M}_\z$ is called {\em minimal} if whenever $\eta_1, \eta_2 \in {\mathcal M}_\z$ and $0 < t < 1$ satisfy 
$$t \k_{\eta_1}+(1-t)\k_{\eta_2} = \k_\zeta,$$
we have $\zeta=\eta_1=\eta_2$. We use ${\mathcal M}^{\textit{min}}_\z \subset  {\mathcal M}_\z \label{not:M lambda min 2}$ to denote the set of all minimal $\zeta \in {\mathcal M}_\z$.
The subset ${\mathcal M}^{\textit{min}}_\z \subset  {\mathcal M}_\z$ is a Borel subset. 
\end{definition}

From the Poisson-Martin Representation Theorem (Theorem \ref{thm:poisson-martin}), we have the following.

\begin{corollary}
The function ${\mathcal M}_\z \to \R^\V; \zeta \mapsto \k_\zeta$
restricts to a bijection between the minimal Martin boundary and the extremal positive eigenfunctions of $\A$ with eigenvalue $\z$ which take the value $1$ at the root, $\vo$.
\end{corollary}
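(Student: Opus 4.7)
The plan is to deduce this corollary from the Poisson-Martin Representation Theorem (Theorem \ref{thm:poisson-martin}) by verifying well-definedness, injectivity, surjectivity, and that the image consists precisely of extremal eigenfunctions in the sense of \S \ref{ss:eigenfunction}. Throughout, I will write $\nu_\f$ for the unique Borel measure on $\mathcal{M}_\lambda$ supported on $\mathcal{M}^{\textit{min}}_\lambda$ that represents a non-negative $\lambda$-eigenfunction $\f$ via $\f(\vv) = \int \k_\zeta(\vv)\,d\nu_\f(\zeta)$.

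First I would verify that the map $\zeta \mapsto \k_\zeta$ restricted to $\mathcal{M}^{\textit{min}}_\lambda$ takes values in the set described. By construction of the Martin kernel, $\k_\zeta(\vo) = 1$, and the equation $\A \k_\zeta = \lambda \k_\zeta$ was noted immediately after Definition \ref{def:Martin_boundary}. Next I would show that $\k_\zeta$ is extremal whenever $\zeta \in \mathcal{M}^{\textit{min}}_\lambda$. Suppose $\k_\zeta = \f_1 + \f_2$ with $\f_1, \f_2 \in E_\lambda$. Applying Poisson-Martin to each $\f_i$, set $\mu = \nu_{\f_1} + \nu_{\f_2}$, which is a Borel measure on $\mathcal{M}_\lambda$ supported on $\mathcal{M}^{\textit{min}}_\lambda$ satisfying $\k_\zeta(\vv) = \int \k_\eta(\vv)\,d\mu(\eta)$ for all $\vv \in \V$. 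Since the Dirac measure $\delta_\zeta$ also represents $\k_\zeta$, the uniqueness clause of Theorem \ref{thm:poisson-martin} forces $\mu = \delta_\zeta$, so both $\nu_{\f_i}$ are non-negative multiples of $\delta_\zeta$, and hence each $\f_i$ is a scalar multiple of $\k_\zeta$; evaluating $\f_1 + \f_2 = \k_\zeta$ at $\vo$ bounds the scalar in $[0,1]$, confirming extremality.

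For injectivity, if $\k_{\zeta_1} = \k_{\zeta_2}$ for $\zeta_1, \zeta_2 \in \mathcal{M}^{\textit{min}}_\lambda$, then both $\delta_{\zeta_1}$ and $\delta_{\zeta_2}$ are Borel measures supported on $\mathcal{M}^{\textit{min}}_\lambda$ that represent this common eigenfunction, so uniqueness in Theorem \ref{thm:poisson-martin} forces $\delta_{\zeta_1} = \delta_{\zeta_2}$, whence $\zeta_1 = \zeta_2$. For surjectivity, given any extremal positive eigenfunction $\f$ with $\f(\vo) = 1$, the second clause of Theorem \ref{thm:poisson-martin} provides $c > 0$ and $\zeta \in \mathcal{M}^{\textit{min}}_\lambda$ with $\f = c\k_\zeta$; evaluation at $\vo$ gives $1 = c \cdot 1$, so $\f = \k_\zeta$ lies in the image.

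The main subtlety — really the only nontrivial point — is matching the paper's intrinsic definition of \emph{minimal} (expressed via convex combinations within the Martin boundary) with the definition of \emph{extremal} for eigenfunctions in \S \ref{ss:eigenfunction}; the bridge in both directions is the uniqueness of the Poisson-Martin representing measure, applied to the candidate decompositions. Everything else is bookkeeping with the normalization $\k_\zeta(\vo) = 1$.
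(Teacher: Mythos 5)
Your proof is correct, and it is the natural elaboration of the one-line derivation the paper intends by prefacing the corollary with ``From the Poisson-Martin Representation Theorem, we have the following'': all four bookkeeping steps (well-definedness, extremality of $\k_\zeta$, injectivity, surjectivity) reduce, exactly as you argue, to the uniqueness of the representing measure together with the second clause of Theorem \ref{thm:poisson-martin}. You are also right to flag that the real content lies in reconciling the convex-combination definition of minimality in Appendix \ref{sect:boundary} with the extremality-based description given in \S\ref{sect:martin boundary intro}; your use of uniqueness to force $\nu_{\f_1}+\nu_{\f_2}=\delta_\zeta$ is precisely the bridge.
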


Since an infinite set with a discrete topology is not compact, we have the following which yields an alternate definition of $r$ in terms of positive eigenfunctions.
\begin{corollary}
There exists a positive function satisfying $\A(\f)=\z \f$ for all $\z > r$
and for $\z=r$ when $\A$ is $r$-transient. 
\end{corollary}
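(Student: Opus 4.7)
The plan is to extract a positive eigenfunction directly from the Martin boundary construction introduced in Definition \ref{def:Martin_boundary}. By hypothesis, either $\z > r$ or $\z = r$ with $\A$ being $r$-transient, so all entries of $\RM_\z$ are finite. The Martin kernel $\K_\z$ is then well defined, and the Martin compactification $\V_\z$ exists as a compact space.

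First I would establish that the Martin boundary $\mathcal{M}_\z = \V_\z \setminus \V$ is non-empty. This reduces to showing that the vertex set $\V$ is not itself compact under the topology it inherits from $\V_\z$. Since $\V$ is countably infinite and the embedding $\vv \mapsto \K_\z(\e_\vv)$ has discrete image in $\R^\V$ (each column of $\K_\z$ takes the value $1$ at the root, but for two distinct vertices $\vw_1, \vw_2$ the columns differ at, say, $\vw_1$ because $\K_\z$ has strictly positive diagonal-like behavior captured by equation \ref{eq:rm}: $\z \RM_\z(\e_{\vw})(\vw) = \A \RM_\z(\e_{\vw})(\vw) + 1 \geq 1$, so the column is bounded away from zero at its own index), an infinite sequence of distinct vertices cannot have a subsequential limit in $\V$. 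By compactness of $\V_\z$, any such sequence must accumulate somewhere, and that accumulation point lies in $\mathcal{M}_\z$.

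Next I would pick any $\zeta \in \mathcal{M}_\z$ and consider the associated function $\k_\zeta \in \R^\V$. From the discussion following Definition \ref{def:Martin_boundary}, $\k_\zeta$ is a non-negative function satisfying $\A \k_\zeta = \z \k_\zeta$, and by construction $\k_\zeta(\vo) = 1$, so $\k_\zeta \not\equiv 0$. To upgrade non-negativity to positivity, I would use that $\G$ is connected: if $\k_\zeta(\vv) = 0$ for some $\vv \in \V$, then the eigenvalue equation gives
\[
0 = \z \k_\zeta(\vv) = \sum_{\vw \sim \vv} \k_\zeta(\vw),
\]
and since each term is non-negative, $\k_\zeta(\vw) = 0$ for every neighbor $\vw$ of $\vv$. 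Iterating along paths in $\G$ and invoking connectivity, $\k_\zeta$ would vanish identically, contradicting $\k_\zeta(\vo) = 1$. Hence $\k_\zeta$ is everywhere positive, which is the desired eigenfunction.

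The main obstacle I expect is the verification that $\mathcal{M}_\z$ is genuinely non-empty, i.e.\ that the Martin compactification actually adds points. The positivity upgrade and the eigenfunction equation are essentially immediate once a boundary point is in hand. If the discreteness argument sketched above required more care, one could instead quote standard facts about Martin compactifications from \cite{W00} or \cite{MW89} that guarantee a non-trivial boundary whenever the underlying space is infinite and the Green kernel is finite; invoking such a result makes the argument short and sidesteps any subtle point-set topology.
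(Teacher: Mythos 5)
Your proposal takes essentially the same route as the paper: non-emptiness of the Martin boundary is the crux, and once a boundary point $\zeta$ is in hand the eigenvalue equation plus connectivity of $\G$ give a strictly positive $\k_\zeta$. The one place your write-up wobbles is the hands-on argument for why $\mathcal{M}_\z$ is non-empty. The inequality you cite, $\z\RM_\z(\e_\vw)(\vw) \geq 1$, concerns $\RM_\z$, not the normalized kernel $\K_\z$, and after dividing by $\RM_\z(\e_\vw)(\vo)$ the diagonal entry of $\K_\z$ is not bounded below by $1$ — it may in fact blow up as $\vw \to \infty$. More fundamentally, what you are attempting to re-derive is already built into the definition: a compactification of $\V$ is, by convention, a compact space in which $\V$ embeds with its original (discrete) topology as a dense subspace. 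An infinite discrete space is never compact, so $\V_\z \supsetneq \V$ and $\mathcal{M}_\z \neq \emptyset$ with no further work. This is exactly the one-line observation the paper makes immediately before the corollary. Your fallback of citing \cite{W00} or \cite{MW89} is also fine and in the spirit of the paper's treatment. So the approach is right; the only suggestion is to drop the discreteness-of-the-image argument (which is both slightly off and unneeded) in favor of the topological fact about compactifications of infinite discrete sets.
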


\begin{remark}[Reduction to the stochastic case]
\label{rem:reduction}
Much of the literature on this subject is concerned with {\em stochastic matrices} $\PM$, which are defined to have the property that $\PM (\one)=\one$.
In this case, functions $\w$ satisfying $\PM(\w)=\w$ are called {\em harmonic}.
Suppose we have a positive function $\f \in \R^\V$ satisfying $\A(\f)=\z \f$. Define $\DM$ to be diagonal matrix with $\f(\vv)$ in the diagonal entry associated to $\vv \in \V$. 
Consider the matrix $\PM=\frac{1}{\z} \DM^{-1} \A \DM$. The matrix $\PM$ is easily seen stochastic. Moreover, if $\g$ is another function satisfying $\A(\g)=\z' \g$, then
$\PM(\DM^{-1} \g)=\frac{\z'}{\z} \DM^{-1} \g$. And conversely, if $\h$ satisfies $\frac{\z'}{\z} \PM(\w)=\w$ then $\A(\DM \w)=\lambda' \DM \w$.  Therefore, $\DM$ induces a linear bijection between
the eigenfunctions of $\PM$ with eigenvalue $\frac{\z'}{\z}$ and the eigenfunctions of $\A$ with eigenvalue $\lambda'$. This map also respects the definitions above.
\end{remark}

\section{Cylinder decompositions of translation surfaces}
\name{sect:cylinder decompositions}

Up to the affine group, a surface arises from Thurston's construction if and only if the surface has a pair of cylinder decompositions, each of which supports an affine multi-twist. We use this in later appendices to give a more geometrically natural view of our main results.

Let $S$ be a translation surface as defined in \S \ref{sect:trans}. An {\em (open) cylinder} in $S$ is a subset of $S$ isometric to $\R/k\Z \times (0,h)$. The constants $k$ and $h$ are called the {\em circumference} and {\em height} of the cylinder, respectively. Recall from \S \ref{sect:graphs}, the {\em modulus} of a cylinder is the ratio $\frac{h}{k}$. 
A {\em cylinder decomposition of $S$ in direction $\u \in \Circ \subset \R^2$} is a collection $\sC=\{C_i:u \in {\mathcal I}\}$ of disjoint open cylinders in $S$ whose circumferences are parallel to $\u$ and whose closures cover
$S$. We will call $\sC$ {\em infinite} if $\sC$ is an infinite set.
We call $\sC$ {\em twistable} if there is a positive constant $\kappa$ so that $\kappa m_i \in \Z$ where $m_i$ is the modulus of the cylinder $C_i$ with $i \in {\mathcal I}$. We call the minimal such $\kappa$ the {\em twisting constant}. A surface with a twistable cylinder decomposition supports an affine multi-twist which preserves the partition into cylinders given by the decomposition. See \cite[\S 9]{V}.

\begin{proposition}[Cylinder decompositions and Thurston's construction]
\name{prop:cylinder decomp}
Suppose translation surface $S_0$ has two twistable cylinder directions: a cylinder decomposition $\sC$ in direction $\u \in \Circ$ with twisting constant $\kappa$, and a cylinder decomposition $\sC'$ in direction $\u' \in \Circ$ not parallel to $\u$ with twisting constant $\kappa'$. Then, there is a connected, bipartite ribbon graph $\G$ with bounded valance and a 
positive eigenfunction $\w$ with eigenvalue $\lambda=\sqrt{\kappa \kappa'}~|\u \wedge \u'|$ so
that $S_0$ is affinely equivalent to $S(\G,\w)$. 
Concretely, we have that $S_0$ is translation equivalent
to the image of $S(\G,\w)$ under a linear map $A:\R^2 \to \R^2$ so that
$A(\lambda,0)=\sqrt{\kappa}\u$ and $A(0,\lambda)=\sqrt{\kappa'}\u'$.
\end{proposition}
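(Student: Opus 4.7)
The plan is to build the graph $\G$ and eigenfunction $\w$ by first normalizing $S_0$ with the linear map $A$, then subdividing cylinders so that all moduli become $1/\lambda$. Take $A$ to be the linear map satisfying $A(\lambda,0) = \sqrt{\kappa}\,\u$ and $A(0,\lambda)=\sqrt{\kappa'}\,\u'$; the relation $\lambda=\sqrt{\kappa\kappa'}|\u\wedge\u'|$ gives $|\det A|=1/\lambda$, so $|\det A^{-1}|=\lambda$. Passing to $S'=A^{-1}(S_0)$, a direct computation using the transformation of area and circumference under $A^{-1}$ shows that the image $\widetilde\sC$ of $\sC$ is a horizontal cylinder decomposition whose cylinders $\widetilde C_i$ have moduli $n_i/\lambda$, and the image $\widetilde\sC'$ of $\sC'$ is a vertical cylinder decomposition whose cylinders $\widetilde C'_j$ have moduli $n'_j/\lambda$, where $n_i=\kappa m_i$ and $n'_j=\kappa' m'_j$ are positive integers by twistability.

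Next I would subdivide each horizontal cylinder $\widetilde C_i$ into $n_i$ congruent horizontal sub-cylinders of modulus $1/\lambda$ via equally spaced horizontal circles, and likewise each vertical $\widetilde C'_j$ into $n'_j$ vertical sub-cylinders. Let $\Alpha$ be the set of horizontal sub-cylinders and $\Beta$ the set of vertical ones. Each connected component of $\widetilde C_i\cap\widetilde C'_j$ is a rectangle spanning the full height of $\widetilde C_i$ and the full width of $\widetilde C'_j$; the sub-cylinder cuts decompose it into an $n_i\times n'_j$ grid of small rectangles, and each small rectangle contributes an edge of $\G$ joining the appropriate $\va\in\Alpha$ to the appropriate $\vb\in\Beta$. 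Define $\w(\va)$ to be the height of the horizontal sub-cylinder indexed by $\va$ and $\w(\vb)$ to be the width of the vertical sub-cylinder indexed by $\vb$; the small rectangles then have precisely the dimensions prescribed by Definition \ref{def:S}, and identifying them yields a translation isomorphism $S'\cong S(\G,\w)$ (treating the subdivision circles as removable singularities of $S'$).

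To verify that $\w$ is an eigenfunction with eigenvalue $\lambda$, observe that for $\va\in\Alpha$ the sum $\sum_{\vb\sim\va}\w(\vb)$, counted with edge multiplicity, is the total horizontal length traversed once around the corresponding horizontal sub-cylinder, namely the circumference $c_i$ of $\widetilde C_i$; combined with $\w(\va)=h_i/n_i$ and $h_i/c_i=n_i/\lambda$ this gives $\A\w(\va)=\lambda\w(\va)$, and the analogous computation handles $\vb\in\Beta$. The ribbon structure is induced by the cyclic orders of rectangles around each sub-cylinder, the bipartite structure is automatic, and connectedness of $\G$ follows from connectedness of $S_0$. The uniqueness statement reduces to observing that the remaining choices — which decomposition plays the role of horizontal and the choice of orientations — are exactly the generators of the dihedral group of order eight in Remark \ref{rem:dihedral}.

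The main obstacle will be verifying bounded valence of $\G$. The valence of a vertex in $\Alpha$ is $\sum_j r_{ij}\,n'_j$, where $r_{ij}$ counts the rectangular components of $\widetilde C_i\cap\widetilde C'_j$, and nothing in the stated hypotheses bounds this quantity uniformly across $i$. I would expect this step to require supplementary hypotheses on $S_0$ (such as uniformly bounded moduli $m_i,m'_j$ together with a uniform bound on the number of times any cylinder of one decomposition crosses any cylinder of the other), or an alternative finer construction of $\G$; these conditions are automatic in the surfaces the paper ultimately studies, so they are most likely an implicit standing assumption here.
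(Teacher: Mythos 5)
Your construction matches the paper's sketch in substance: the paper subdivides the cylinders first (so every cylinder of $\sC$ has modulus $1/\kappa$ and every cylinder of $\sC'$ has modulus $1/\kappa'$) and then applies $A^{-1}$, whereas you apply $A^{-1}$ first and subdivide in the normalized picture; these are the same argument in a different order, and your determinant and modulus computations are correct.

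Your final paragraph raises a legitimate point, and it is one the paper's own sketch also leaves unaddressed: nothing in the hypotheses as stated bounds $\sum_j r_{ij} n'_j$ uniformly in $i$ (nor the analogous sum for vertical sub-cylinders), so the bounded valance assertion in the conclusion is not actually established by the subdivide-and-intersect construction alone. The $\ell^1$ identity $\sum_{\vb\sim\va}\w(\vb)=\lambda\w(\va)$ guarantees only that each vertex has finite valance, not a uniform bound. In the contexts where the paper invokes this proposition (covers of compact translation surfaces in Appendices \ref{app:nilpotent} and \ref{sect:unique ergodicity}, surfaces with an automorphism group acting with finitely many orbits of rectangles) the bound is automatic, which is likely why the issue goes unremarked, but as a freestanding statement the proposition either needs an additional geometric hypothesis (e.g.\ a uniform bound on the $r_{ij}$ and the twisting multiples $\kappa m_i$, $\kappa' m'_j$) or the reader should understand bounded valance as part of the running assumptions on $\G$ that are being carried through rather than re-derived. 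You were right to flag it. One minor note: the stated proposition has no uniqueness clause, so the paragraph invoking the dihedral group action is not needed here (that sentence belongs to a stronger, commented-out variant of the statement).
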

\begin{proof}[Sketch of proof]
By possibly subdividing the cylinders in the two decompositions, we can assume that all cylinders
in decomposition $\sC$ have modulus $\frac{1}{\kappa}$,
and all cylinders in $\sC'$ have modulus $\frac{1}{\kappa'}$. A calculation reveals that the image of a cylinder in direction $\u$ of modulus $\frac{1}{\kappa}$
under the linear map $A^{-1}$ is a horizontal cylinder
with modulus $\frac{1}{\lambda}$. Similarly, the image
of a cylinder in direction $\u'$ under $A^{-1}$ is a
vertical cylinder with modulus $\frac{1}{\lambda}$.
Then the combinatorics of how the cylinders intersect
on $A^{-1}(S_0)$ determines the ribbon graph $\G$, and the widths of the cylinders determine the eigenfunction $\w$. 
\end{proof}

\begin{definition}
Suppose $S$ has two twistable cylinder decompositions, $\sC$ and $\sC'$, and let $\G$, $\w$, $\lambda$, and $A$ be as above. We will say that a direction $\btheta \in \Circ$ is $(\sC,\sC')$-renormalizable if the vector $A^{-1}(\btheta)$ points in a $\lambda$-renormalizable direction.
\end{definition}

The point of the above is that statements that
hold for $\lambda$-renormalizable directions also
hold for $(\sC,\sC')$-renormalizable directions. The only difference is an affine change of coordinates.
For applying our \hyperref[thm:ergodic2]{Ergodic Measure Characterization Theorem}, we need to consider is when the associated graph has a vertex of valance one. This is dealt with as below:

\begin{proposition}[Valance one condition]
\name{prop:valance one criterion}
Suppose $S$ has two twistable cylinder decompositions, $\sC$ and $\sC'$, with twisting constants
$\kappa$ and $\kappa'$, respectively. Let $\G$ be the graph constructed using Proposition
\ref{prop:cylinder decomp}. Then, $\G$ has a vertex of valance one if and only if one of the following holds:
\begin{itemize}
\item There is a cylinder of $\sC$ which intersects only one cylinder of $\sC'$
counting multiplicity, and the cylinder intersected has modulus $\frac{1}{\kappa'}$.
\item There is a cylinder of $\sC'$ which intersects only one cylinder of $\sC$
counting multiplicity, and the cylinder intersected has modulus $\frac{1}{\kappa}$.
\end{itemize}
\end{proposition}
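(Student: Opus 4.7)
The plan is to unwind the construction from Proposition \ref{prop:cylinder decomp} and track what ``valance one'' means after subdivision, then translate that combinatorial condition back to a statement about the original decompositions $\sC$ and $\sC'$.

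First I would apply the linear map $A^{-1}$ to reduce to the setting of $S(\G,\w)$, so that $\sC$ is carried to the horizontal cylinder decomposition and $\sC'$ to the vertical cylinder decomposition. Under this identification, every cylinder of $\sC$ of modulus $n/\kappa$ is subdivided into exactly $n$ horizontal sub-cylinders each of modulus $1/\lambda$, and similarly for $\sC'$ with $\kappa'$ in place of $\kappa$; these sub-cylinders are precisely the horizontal and vertical cylinders $\cyl_\va,\cyl_\vb$ indexed by the vertices of $\G$. By Definition~\ref{def:S}, an edge of $\G$ at $\va\in\Alpha$ corresponds to a rectangle of the form $\cyl_\va\cap\cyl_\vb$, so the valance of $\va$ equals the number of such rectangles partitioning $\cyl_\va$.

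Next I would compute this rectangle count in terms of the original decomposition. Let $\hat C\in\sC$ be the unsubdivided cylinder containing $\cyl_\va$, and enumerate the cylinders of $\sC'$ which meet $\hat C$ with multiplicity as $\hat C'_1,\ldots,\hat C'_m$, where multiplicity means the number of connected rectangles in $\hat C\cap\hat C'_i$. Each intersection rectangle $R_i=\hat C\cap\hat C'_i$ refines, after subdividing both $\hat C$ and $\hat C'_i$, into a grid of $n\times n'_i$ sub-rectangles, where $n$ and $n'_i$ are the respective subdivision counts (so $n'_i=\kappa'\cdot\mathrm{modulus}(\hat C'_i)$). Each horizontal sub-cylinder $\cyl_\va$ inside $\hat C$ inherits exactly one row from each $R_i$, giving a total of $\sum_{i=1}^m n'_i$ rectangles. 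Thus $\val(\va)=\sum_{i=1}^m n'_i$, and this equals $1$ if and only if $m=1$ and $n'_1=1$, i.e.\ exactly one cylinder $\hat C'\in\sC'$ meets $\hat C$ counting multiplicity, and that cylinder has subdivision count one, which is to say modulus $1/\kappa'$. This is the first bullet of the proposition.

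Finally, the symmetric statement for $\vb\in\Beta$ follows by the same argument after applying the element of the dihedral group from Remark~\ref{rem:dihedral} that swaps horizontal and vertical, interchanging the roles of $(\sC,\kappa)$ and $(\sC',\kappa')$; this yields the second bullet. Conversely, if either bullet holds then running the same count backwards shows that the corresponding sub-cylinder has exactly one rectangle in its partition, hence the associated vertex has valance one. The only real bookkeeping step is the identification $\val(\va)=\sum n'_i$, which I expect to be the main (and essentially only) obstacle; the rest is a direct unpacking of Definition~\ref{def:S} and the subdivision procedure.
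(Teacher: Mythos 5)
Your proposal is correct and follows essentially the same route as the paper: reduce to the $S(\G,\w)$ picture via $A^{-1}$, subdivide the cylinders of $\sC$ and $\sC'$ to common modulus, and read the valance off the intersection pattern of subdivided cylinders. The paper's ``discussion of proof'' is a two-sentence sketch; your version supplies the useful extra detail (the explicit count $\val(\va)=\sum_i n'_i$, the observation that all sub-cylinders of a given $\hat C\in\sC$ have the same valance, and the translation back from subdivided to original cylinders), which is exactly what the paper leaves implicit.
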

\begin{proof}[Discussion of proof]
As mentioned in the proof of Proposition \ref{prop:cylinder decomp}, we first subdivide cylinders so that
they have moduli $\frac{1}{\kappa}$ or $\frac{1}{\kappa'}$. The graph is given by the intersection pattern of such cylinders, so the condition that a vertex is $1$-valent is equivalent to the associated cylinder intersecting only one other cylinder counting multiplicity.
\end{proof}

We note that the valance one condition is not stable under subdivision.
A cylinder $\R/k\Z \times (0,h)$ can be cut into two half cylinders, namely $\R/k\Z \times (0,\frac{h}{2})$ and
$\R/k\Z \times (\frac{h}{2},h)$. Given a cylinder decomposition $\sC$, we can get a new cylinder decomposition
in the same direction by cutting each cylinder of $\sC$ in half.
\begin{corollary}
Suppose that $S$ has two twistable cylinder decompositions, $\sC$ and $\sC'$. Suppose the graph obtained
from $\sC$ and $\sC'$ as in Proposition
\ref{prop:cylinder decomp} has a vertex of valance one. Then, the graph associated to the two decompositions
formed by cutting each cylinder in $\sC$ and $\sC'$ in half has no vertices of valance one.
\end{corollary}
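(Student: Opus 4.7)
The approach is to compute, via a direct geometric argument, how multiplicities of rectangular intersections transform under halving, and then invoke Proposition \ref{prop:valance one criterion}.

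First I would verify the setup: halving each cylinder of $\sC$ (respectively $\sC'$) halves each modulus, so the new twisting constants are $2\kappa$ (respectively $2\kappa'$), and $(\sC_{1/2}, \sC'_{1/2})$ is a valid pair of twistable cylinder decompositions in non-parallel directions to which Proposition \ref{prop:valance one criterion} applies.

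The main step is the following combinatorial count. Fix any $C \in \sC$ with a half $C_h \in \sC_{1/2}$, and any $C' \in \sC'$ meeting $C$ in $r$ rectangles. Halving $C$ produces a cut parallel to the core of $C$ at mid-height, while halving $C'$ produces a cut parallel to the core of $C'$ at mid-height; these two cuts slice each rectangle of $C \cap C'$ into a $2 \times 2$ grid of sub-rectangles. Exactly $2r$ of these sub-rectangles lie in $C_h$---namely the $r$ top-left pieces, sitting in one half of $C'$, together with the $r$ top-right pieces, sitting in the other half of $C'$. Summing over all $C' \in \sC'$ that $C$ meets, the total multiplicity of $C_h$ with $\sC'_{1/2}$ is exactly $2D$, where $D \geq 1$ is the total multiplicity of $C$ with $\sC'$.

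To finish, I apply Proposition \ref{prop:valance one criterion} to $(\sC_{1/2}, \sC'_{1/2})$: a valance-one vertex in the new graph would require some cylinder of $\sC_{1/2}$ (or, by symmetry, of $\sC'_{1/2}$) to meet its transverse halved decomposition in total multiplicity exactly $1$. But the total multiplicity is always $2D$ with $D \geq 1$, which is even and positive, so this is impossible. I do not anticipate a real obstacle---the argument is essentially combinatorial bookkeeping of how the two halving cuts interact on each intersection rectangle---and in fact the conclusion does not use the hypothesis that $\G$ itself had a valance-one vertex.
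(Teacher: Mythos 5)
Your argument is correct and coincides with the paper's one-sentence sketch, which only observes that neither criterion of Proposition~\ref{prop:valance one criterion} can hold after subdivision; you have filled in the multiplicity-doubling computation that makes this observation precise. Your parenthetical remark is also right: the hypothesis that the original graph $\G$ had a valance-one vertex is never used, so halving always produces a graph with no valance-one vertices.
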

\begin{proof}[Sketch of proof]
Observe that the neither criterion of Proposition \ref{prop:cylinder decomp} can hold for the subdivided cylinder decomposition.
\end{proof}

In particular, our \hyperref[thm:ergodic2]{Ergodic Measure Characterization Theorem} can still be used even if a graph $\G$ has a vertices of valance one. We must subdivide cylinders, which has the effect of changing $\G$ and decreasing the set of renormalizable directions. This set still has Hausdorff dimension larger than $\frac{1}{2}$. (See Remark \ref{rem:renormalizable directions}.) 
\section{Infinite Interval Exchange Transformations}
\name{sect:skew rotations}
In this appendix, we describe some natural infinite interval exchange maps to which our results apply.
Given any surface produced from Thurston's construction, the return map of the straight-line flow $F^t_\btheta$ to the horizontal boundaries of rectangles is an infinite interval exchange map. So, our results hold generally for such infinite IETs which arise from flows in renormalizable directions. But, in this appendix we will describe some examples which seem more natural from the point of view of infinite IETs which have been studied by other authors.

\subsection{Skew products}
\name{sect:skew products}
A well studied class of infinite IETs comes from the construction of skew product transformations.
Let $\tau:I \to I$ be a finite IET, let $\Grp$ be a countably infinite discrete group, and let $\psi:I \to \Grp$ be a function which is locally constant away from finitely many discontinuities. Then we define the {\em skew product} of $\tau$ and $\psi$ to be
\begin{equation}
\label{eq:skew_def}
T:I \times \Grp \to I \times \Grp \quad \textrm{defined by} \quad T(x,g)=\big(\tau(x), \psi(x)g\big).
\end{equation}
We consider $T$ an infinite IET, because $T$ is an orientation preserving piecewise isometry of a space piecewise isometric to an interval in $\R$. 

\subsection{Maharam measures}
\name{ss:Maharam}
Let $T$ be a skew product of $\tau$ and $\psi$ as above. Let $h:I \to \R_+$ be a Borel measurable map to the positive real numbers. We call a probability measure $\mu$ an {\em $(h, \tau)$-conformal measure} if 
$\mu \circ \tau$ is absolutely continuous with respect to $\mu$ and the Radon-Nikodym derivative satisfies
$\frac{d \mu \circ \tau}{d \mu}(x)=h(x)$ for $\mu$-a.e. $x \in I$.

Let $\chi:\Grp \to \R$ be a group homomorphism, and suppose that $\mu$
is a $(e^{\chi \circ \psi}, \tau)$-conformal Borel probability measure. 
For each $g \in \Grp$, consider the maps $\pi_g:I \to I \times \Grp$ given by $\pi_g(x)=(x,g)$. 
The {\em $\chi$-Maharam measure} associated to $\mu$ and $\chi$ is the measure $\widetilde \mu_\chi$ on $I \times \Grp$ defined so that
\begin{equation}
\label{eq:maharam intro}
\widetilde \mu_\chi \circ \pi_g = \frac{1}{e^{\chi(g)}} \mu.
\end{equation}
Such a measure is always invariant under the skew product transformation $T$. The scaling factor of $\chi$ from $(e^{\chi \circ \psi}, \tau)$-conformality cancels with the $\frac{1}{e^{\chi}}$ factor that arises from the change in the $\Grp$-coordinate under $T$.
Note that some scalar multiple of Lebesgue measure is $\chi$-Maharam measure where $\chi$ is the trivial group homomorphism.
Note also that Maharam measures are normalized in the sense that $\widetilde \mu(I \times \{e\})=1$, where $e \in \Grp$ denotes the identity element.

\subsection{Skew rotations}
\name{ss:skew_rotations}
A {\em skew rotation} is the skew product of a rotation $\tau:[0,1) \to [0,1)$ given by $\tau(x)=x+\alpha \pmod{1}$ and some $\psi:I \to \Grp$ as described above. Let $n \geq 2$ be an integer and choose generators $\gamma_1, \ldots, \gamma_n \in \Grp$. 
We will consider the special case when 
$\psi$ is defined so that $\psi(x)=\gamma_i$ if and only if $x \in [\frac{i-1}{n}, \frac{i}{n})$. In general, such a skew rotation is not even recurrent. (Consider the case when $\gamma_1, \ldots, \gamma_n$ freely generate.) It is therefore natural to impose a no-drift condition. We choose to highlight
the no-drift condition 
\begin{equation}
\name{eq:relation}
\gamma_n \gamma_{n-1} \ldots \gamma_1=e,
\end{equation}
in this paper, but other choices could be made to produce similar results. 

The best studied system of this form is the case when $\Grp=\Z$, $n=2$, $\gamma_1=1$ and $\gamma_2=-1$.
Ergodicity of this skew rotation was proved for some irrational $\alpha$ by Schmidt 
\cite[Theorem 2.6]{S78}, and later shown to hold for all irrational $\alpha$ by Conze and Keane 
\cite{CK76}. In \cite[Theorem 1.4]{ANSS02}, the following was proved:

\begin{theorem}[Aaronson-Nakada-Sarig-Solomyak]
\label{thm:ANSS}
Let $T_\alpha$ be the skew rotation where $\alpha$ is irrational, $\Grp=\Z$, $n=2$, $\gamma_1=1$ and $\gamma_2=-1$.
Then,
\begin{enumerate}
\item For every group homomorphism $\chi$, there is a unique $\chi$-Maharam measure which is invariant under $T_\alpha$.
\item Each Maharam measure for $T_\alpha$ is ergodic.
\item All locally finite ergodic $T_\alpha$-invariant measures are scalar multiples of Maharam measures.
\end{enumerate}
\end{theorem}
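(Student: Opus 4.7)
The plan is to recognize $T_\alpha$ as an infinite IET coming from Thurston's construction and then apply the Ergodic Measure Characterization Theorem (Theorem \ref{thm:ergodic2}) together with a classical computation of the Martin boundary of the bi-infinite line graph. First, I would interpret $T_\alpha$ as the return map of a straight-line flow: let $\G_\Z$ be the ribbon graph with vertex set $\Z$ and edges $\overline{n(n{+}1)}$, with bipartite decomposition $\Alpha = 2\Z$, $\Beta = 2\Z + 1$. The constant function $\w_0 \equiv 1$ satisfies $\A(\w_0) = 2 \w_0$, so $S(\G_\Z, \w_0)$ is built from unit squares $R_e$ indexed by edges of $\G_\Z$; this surface is a $\Z$-cover of a flat torus, and a careful computation identifies $T_\alpha$ (up to measurable conjugacy) with the return map of $\flow^t$ on $S(\G_\Z, \w_0)$ in the direction $\btheta_\alpha$ whose slope is determined by $\alpha$. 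Because $\lambda = 2$, Remark \ref{rem:renormalizable directions} gives $\Rn_2 = \Circ \setminus \{\text{rational slopes}\}$, so $\btheta_\alpha$ is $2$-renormalizable precisely when $\alpha$ is irrational.

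Next, since every vertex of $\G_\Z$ has valance exactly $2$, Theorem \ref{thm:ergodic2} applies without any extra hypothesis: the locally finite ergodic $\hat\F_{\btheta_\alpha}$-transverse measures on $S(\G_\Z, \w_0)$ are in bijection with extremal positive eigenfunctions $\w_2$ of $\A$ via the orbit equivalences of Theorem \ref{thm:topological conjugacy}. So the crux reduces to computing $\bigcup_\lambda {\mathcal M}^{\textit{min}}_\lambda$ for the graph $\G_\Z$. The positive eigenfunctions satisfy $\w(n+1) + \w(n-1) = \lambda \w(n)$, whose positive solutions for $\lambda \geq 2$ form a two-parameter family (one-parameter when $\lambda = 2$) spanned by $n \mapsto r^n$ with $r > 0$ satisfying $r + r^{-1} = \lambda$. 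The minimal Martin boundary thus consists of exactly the pure exponentials $\w_c(n) = e^{cn}$ with $c \in \R$ (two points of the boundary per $\lambda > 2$, merging to one point at $\lambda = 2$), and $\lambda(c) = 2\cosh c$. This is a classical computation that I would carry out directly via the recursion above, avoiding the general Martin-boundary machinery.

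Finally, I would match the measures produced by Theorem \ref{thm:ergodic2} with Maharam measures. Given $\chi : \Z \to \R$, write $\chi(n) = cn$ and consider $\w_c(n) = e^{cn}$. The Orbit Equivalence Theorem provides a homeomorphism $\phi : S(\G_\Z, \w_0) \to S(\G_\Z, \w_c)$ sending $R_e^0$ to the corresponding rectangle $R_e^c$ (whose widths scale by $e^{cn/2}$ on each side), and Corollary \ref{cor:pullback_measure} identifies the pullback of Lebesgue measure on $S(\G_\Z, \w_c)$ with the transverse measure associated to $\w_c$. A direct calculation on the rectangles $R_e^0$ translates this transverse measure, under the identification of step 1, into a measure on $[0,1) \times \Z$ whose restriction to $[0,1) \times \{n\}$ is $e^{-\chi(n)}$ times its restriction to $[0,1) \times \{0\}$; this is precisely the $\chi$-Maharam normalization of equation \eqref{eq:maharam intro}, and $(e^{\chi \circ \psi}, \tau)$-conformality of the base factor follows from $T$-invariance. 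Uniqueness of the $\chi$-Maharam measure (part 1) follows because $\w_c$ is unique up to scaling once we fix its value at $0$; ergodicity (part 2) follows from Lemma \ref{lem:ergodicity_implication}; and exhaustion by Maharam measures (part 3) follows because every extremal positive eigenfunction is some $\w_c$. The main obstacle I expect is the bookkeeping in step 1 and the final step: making precise the measurable conjugacy between $T_\alpha$ acting on $[0,1) \times \Z$ and the return map on $S(\G_\Z, \w_0)$, and then matching the pullback measure with the Maharam normalization. Once these identifications are correctly set up, every other ingredient is already provided by the general theory.
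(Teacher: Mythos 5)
Your proposal follows the same overall architecture as the paper: interpret $T_\alpha$ as a return map of a straight-line flow on $S(\G_\Z, \w_0)$ via Proposition \ref{prop:graph}, apply the Ergodic Measure Characterization Theorem (Theorem \ref{thm:ergodic2}), and translate the resulting classification of transverse measures back to Maharam measures on $[0,1)\times\Z$. The one place where you take a genuinely different route is in classifying the extremal positive eigenfunctions of $\A$ on $\G_\Z$. The paper obtains Theorem \ref{thm:ANSS} as the $\Grp=\Z$, $n=2$ special case of Theorem \ref{thm:nilpotent}, which rests on Theorem \ref{thm:nilpotent cover}; that proof invokes Margulis' theorem (Theorem \ref{thm:margulis2}) to put extremal positive eigenfunctions for cocompact nilpotent group actions in bijection with homomorphisms $\Grp \to \R$. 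You replace this with the direct observation that positive solutions of $\w(n{+}1) + \w(n{-}1) = \lambda\w(n)$ are spanned by $r^{\pm n}$ with $r + r^{-1} = \lambda$, and that the extremals are the pure exponentials $\w_c(n) = e^{cn}$ with $\lambda = 2\cosh c$. This is entirely valid and more elementary; the trade-off is that the direct recursion computation is tied to $\G_\Z$ (or, with minor work, abelian $\Grp$), while the Margulis argument buys the paper Theorem \ref{thm:nilpotent} for all nilpotent $\Grp$.

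One step in your plan is too quick and hides a real argument. For part (1) you assert that uniqueness of the $\chi$-Maharam measure follows because $\w_c$ is unique up to scaling. That gives uniqueness of the \emph{ergodic} $\chi$-Maharam measure, but part (1) claims there is a unique $\chi$-Maharam measure among \emph{all} $T_\alpha$-invariant ones, including non-ergodic ones. A priori a $\chi$-Maharam measure could be a convex combination (ergodic decomposition) of several $\nu_{h}$ with differing $h$. The paper rules this out in the second half of the proof of Theorem \ref{thm:nilpotent cover}: it uses conservativity, the ergodic decomposition, and a growth estimate — iterating the action of a group element $g$ with $h_s(g) > \chi(g)$ on a fundamental domain and comparing $\mu\circ g^{-n}(D)$ to the required Maharam scaling $e^{n\chi(g)}$ — to conclude that the decomposing measure must be concentrated on $\{\chi\}$. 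You will need a version of this argument; without it, your sketch only proves part (3) plus a weak form of part (1).
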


In Appendix \ref{app:nilpotent}, we will prove the following generalization.

\begin{theorem}[Nilpotent case]
\name{thm:nilpotent}
Let $\Grp$ be a nilpotent group generated by $\gamma_1, \ldots, \gamma_n$ and satisfying equation \ref{eq:relation}. 
Then statements (1)-(3) of the above theorem hold for the corresponding skew rotation $T_\alpha$ whenever the unit vector in direction $(\alpha-\frac{1}{n}, \frac{1}{n})$ is $n$-renormalizable.
\end{theorem}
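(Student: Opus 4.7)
The plan is to realize the skew rotation $T_\alpha$ as the first-return map of a straight-line flow on a translation surface produced by Thurston's construction, and then to invoke the \hyperref[thm:ergodic2]{Ergodic Measure Characterization Theorem} together with the classical Margulis--Raugi classification of positive eigenfunctions on Cayley graphs of nilpotent groups.

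First I would construct a translation surface $S$ as a regular $\Grp$-cover of a finite-area base translation surface $S_0$, where $S_0$ is a flat torus equipped with a horizontal cylinder decomposition $\sC$ into a single cylinder and a vertical cylinder decomposition $\sC'$ into $n$ cylinders of equal width, each of inverse modulus $n$. The covering is specified by the representation of $\pi_1(S_0)$ sending the core curve of the $i$-th vertical cylinder to $\gamma_i$; the hypothesis $\gamma_n\cdots\gamma_1=e$ is precisely what guarantees this representation is well defined. The lifted decompositions are twistable, so by Proposition \ref{prop:cylinder decomp} the cover $S$ is affinely equivalent to $S(\G,\w)$ for a bipartite ribbon graph $\G$ that encodes the Cayley graph of $\Grp$ with generating set $\{\gamma_i\}$ (with $\Alpha$ and $\Beta$ each in bijection with $\Grp$), and $\w$ a constant positive eigenfunction with eigenvalue $\lambda=n$. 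A short computation identifies the return map of the straight-line flow in the direction pulled back from $(\alpha-\tfrac{1}{n},\tfrac{1}{n})$ to the union of horizontal edges with $T_\alpha$ itself. Since $n\geq2$, the graph $\G$ has no vertices of valence one, and the $n$-renormalizability hypothesis is exactly what is needed to invoke Theorem \ref{thm:ergodic2}.

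Next I would apply Theorem \ref{thm:ergodic2} to obtain a bijective correspondence between locally finite ergodic $\hat\F_{\bm\theta}$-transverse measures on $S$ and (scalar classes of) extremal positive eigenfunctions $\w_2$ of the adjacency operator $\A$ on $\G$. The remaining algebraic step is to classify these extremal eigenfunctions. Via Remark \ref{rem:reduction} (conjugation by the diagonal matrix built from the known positive eigenfunction $\w\equiv 1$), this reduces to the classification of minimal positive $\lambda_2/\lambda$-harmonic functions for a symmetric, finitely supported Markov chain on $\Grp$. For nilpotent $\Grp$ this classification is Margulis's theorem (in the form sharpened by Raugi, and by Cartwright--Sawyer): every extremal positive eigenfunction has the form $\w_2(g)=e^{\chi(g)}$ for a group homomorphism $\chi\colon\Grp\to\R$, and the eigenvalue is then $\lambda_2=\sum_i e^{\chi(\gamma_i)}$.

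The final step is to identify the pulled-back transverse measure corresponding to each such $\w_2$ with the $\chi$-Maharam measure. By Corollary \ref{cor:pullback_measure}, this pull-back assigns mass to the transversal $[0,1)\times\{g\}$ proportional to $\w_2(g)=e^{\chi(g)}$, which is exactly the defining rescaling in equation \ref{eq:maharam intro} (up to the sign convention on $\chi$). Uniqueness of the $\chi$-Maharam measure then follows from uniqueness of the normalized extremal eigenfunction associated with a prescribed character, ergodicity follows from extremality via Theorem \ref{thm:ergodic2}, and exhaustion of the list of ergodic measures follows from the same theorem combined with the Margulis--Raugi classification. The main obstacle will be the explicit bookkeeping in the first step: identifying the $\Grp$-cover of the base surface as $S(\G,\w)$ for the correct bipartite form of the Cayley graph requires carefully tracking how horizontal and vertical cylinders on the cover intersect and orienting the ribbon structure compatibly. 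A secondary technical point is to verify that the Margulis--Raugi theorem applies in our setting (via Remark \ref{rem:reduction}) when the generating set $\{\gamma_i\}$ is not a priori symmetric and the induced walk is not reversible, which should follow by symmetrizing to $\{\gamma_i^{\pm 1}\}$ and absorbing the asymmetry into the $h$-transform built from $\w\equiv 1$.
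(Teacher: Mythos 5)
Your proposal follows essentially the same path the paper takes: realize $T_\alpha$ as a return map of the straight-line flow on a surface $S(\G,\w)$ built from Thurston's construction, apply the \hyperref[thm:ergodic2]{Ergodic Measure Characterization Theorem}, and classify the extremal positive eigenfunctions via Margulis's theorem for nilpotent groups. The paper packages the argument slightly differently---it states and proves a more general result first (Theorem \ref{thm:nilpotent cover}, which applies to all $\Grp$-covers of closed translation surfaces with cocompact nilpotent $\Grp$-actions admitting twistable decompositions) and then derives Theorem \ref{thm:nilpotent} as a special case, whereas you argue directly for the skew-rotation construction; and the paper identifies the relevant surface via the explicit construction of Appendix \ref{sect:skew rotations} and Proposition \ref{prop:graph} rather than via Proposition \ref{prop:cylinder decomp} applied to a torus cover, but these are two descriptions of the same object. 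Your concern about symmetrizing the walk is moot: the adjacency operator of the undirected bipartite graph $\G$ is already symmetric, so the $h$-transform of Remark \ref{rem:reduction} produces a reversible chain regardless of the asymmetry of $\{\gamma_i\}$.

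There is, however, one genuine gap. You write that ``uniqueness of the $\chi$-Maharam measure then follows from uniqueness of the normalized extremal eigenfunction associated with a prescribed character.'' What this gives you is uniqueness of the \emph{ergodic} $\chi$-Maharam measure. It does not directly rule out the existence of a non-ergodic $\chi$-Maharam measure: a priori, an invariant measure satisfying the $\chi$-scaling relation could be a nontrivial mixture $\int \nu_h\,dm(h)$ of ergodic $h$-Maharam measures with $h$ varying. Statement (1) of the theorem asserts uniqueness among \emph{all} $\chi$-Maharam measures, not just ergodic ones, so this step requires its own argument. The paper handles it by showing that if the support of the mixing measure $m$ contains some $h_s\neq\chi$, then there is a group element $g$ with $h_s(g)>\chi(g)$, and applying $g^{-n}$ to the fundamental domain and comparing the resulting exponential growth rates forces a contradiction with the $\chi$-scaling of $\mu$. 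Without some version of this estimate, your proposal establishes statements (2) and (3) but leaves the ``uniqueness'' half of statement (1) unjustified.
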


This theorem implies Theorem \ref{thm:ANSS}, because the set of $2$-renormalizable directions
is the directions of irrational slope. For $n>2$, the above theorem
gives all but countably many $\alpha$ in a Cantor set of Hausdorff dimension bigger than $\frac{1}{2}$. 
(See Remark \ref{rem:renormalizable directions} for a description of the sizes of the set of $n$-renormalizable directions.)

\subsection{Skew rotations from translation surfaces}
The goal of this section is to explain that the skew rotations $T_\alpha$ defined in the prior subsection
appear as return maps to a section of the straight line flow on an infinite translation surface. The definition of $T_\alpha$ requires the choice of an infinite discrete group $\Grp$ and a choice of generators $\gamma_1, \ldots, \gamma_n$ so that $\gamma_n \ldots \gamma_1=e$. We then define
$\tau:[0,1) \to [0,1)$ to be a rotation and $\psi:[0,1) \to \Grp$ to be so that 
\begin{equation}
\name{eq:psi}
\psi(x)=\gamma_i \quad \textrm{if $x \in [\frac{i-1}{n}, \frac{i}{n})$.}
\end{equation}
This determines a skew product $T:[0,1) \times \Grp \to [0,1) \times \Grp$ as in equation \ref{eq:skew_def}.

We will use the same data to define a translation surface. 
Let $C$ denote the cylinder $\R/\Z \times [0,1/n]$ with a decomposition into $n$ squares and $n$ top and bottom edges labeled $t_1, \ldots, t_n$ and $b_1, \ldots, b_n$.
See Figure \ref{fig:b3}.
We apply the label $t_i$ to the segment $[\frac{i-1}{n},\frac{i}{n}] \times \{\frac{1}{n}\} \subset C$ and 
$b_i$ to the segment $[\frac{i-1}{n},\frac{i}{n}] \times \{0\} \subset C$. 
We let 
$S$ be the translation surface $\Grp \times C/\sim$ where $\sim$ is a gluing of edges. For each $i\in\{1, \ldots, n\}$ and $g \in \Grp$, we glue edge $t_i$ of cylinder $\{g\} \times C$ to edge $b_{i+1 \pmod{n}}$ of cylinder $\{\gamma_i g\} \times C$ by parallel translation. Our surface has a decomposition into vertical cylinders as well, because a flow in the vertical direction results in passing through edges in the order $t_i, t_{i+1}, \ldots, t_{i+n-1}$ with subscripts written modulo $n$. This results in visiting a list of horizontal cylinders of the form 
\begin{equation}
\label{eq:vertical_crossing}
\{g\} \times C, ~\{\gamma_i g\} \times C, ~\{\gamma_{i+1} \gamma_i g\} \times C, ~\ldots, ~\{\gamma_{i+n-1}\ldots \gamma_i g\} \times C.
\end{equation}
By our assumed group relation, $\gamma_{i+n-1} \ldots \gamma_{i+1} \gamma_i g=g$
and the vertical trajectory returns to its starting point after crossing $n$ horizontal edges. That is, we have a vertical decomposition into cylinders of inverse modulus $n$ as well. In particular, $S$ is of the form 
$S(\G, \w_{\frac{1}{n}})$, where $\G$ is the valance $n$ cylinder intersection graph and $\w_{\frac{1}{n}} \in \R^\V$ is the constant function with value $\frac{1}{n}$. This is an eigenfunction of eigenvalue $\lambda=n$.  
Note that, if we choose the trajectory to start on the edge labeled $t_1$ of ${g} \times C$, then it passes through the cylinders
\begin{equation}
\label{eq:vertical_crossing2}
\{\eta_1 g\} \times C, ~\{\eta_2 g\} \times C, ~\ldots, ~\{\eta_n g\} \times C,
\end{equation}
where the group elements $\eta_i$ are defined to be
\begin{equation}
\name{eq:eta}
\eta_1=e, \quad \eta_2=\gamma_1, \quad \textrm{and} \quad 
\eta_i=\gamma_{i-1} \ldots \gamma_2 \gamma_1\textrm{ for $2 \leq i \leq n$.}
\end{equation}

\begin{figure}[ht]
\begin{center}
\includegraphics[width=2in]{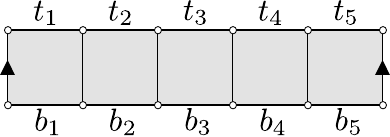}
\caption{The horizontal cylinder $C$ when $n=5$.}
\label{fig:b3}
\end{center}
\end{figure}

We will describe the graph $\G$ obtained by this construction. 
The horizontal cylinders and hence the nodes of the graph in $\Alpha$ are identified with the group $\Grp$. We use $\va_g \in \Alpha$ to denote the node associated to $g \in \G$. If $x$ is a point of $t_1$ the vertical straight line flow crosses the cylinders as described by equation \ref{eq:vertical_crossing2}. Every vertical cylinder passes through some $t_1$. Thus the vertical cylinders can by identified by an ordered $n$-tuple of the form
\begin{equation}
\name{eq:g}
[g]=(\eta_1 g, \eta_2 g, \ldots, \eta_n g) \in \Grp^n.
\end{equation} 
We denote the vertex associated to this cylinder by $\vb_g \in \Beta$. Finally the collection of edges is given by the condition 
\begin{equation}
\label{eq:skew_edge_condition}
\va_g \sim \vb_{g'} \textrm{ if and only if $g=\eta_i g'$ for some $1 \leq i \leq n$.}
\end{equation}
Hence edges correspond to the choice of a $n$-tuple of $[g]=(\eta_1 g, \eta_2 g, \ldots, \eta_n g)$ and an element $\eta_i g$ of the $n$-tuple. This choice refers to the edge
$\overline{\va_{\eta_i g}~\vb_g}$. 
The ribbon graph structure is given by 
$$\North\big(\overline{\va_{\eta_i g}~\vb_g}\big)=\overline{\va_{\eta_{i+1} g}~\vb_g} \quad \textrm{and} \quad
\East\big(\overline{\va_{\eta_i g}~\vb_g} \big)=\overline{\va_{\eta_i g}~\vb_{\eta_{i+1}^{-1} \eta_i g}},$$
where subscript addition is taken modulo $n$. 

\begin{proposition}
\name{prop:graph}
Let $\Grp$ be a discrete group with generators $\gamma_1, \ldots, \gamma_n \in \Grp$ satisfying the relation
given in equation \ref{eq:relation}. Define $\G$ from this data as in the above paragraphs.
Consider embedding $\phi:\R/\Z \times \{g\} \to S(\G, \w_{\frac{1}{n}})$ which sends the interval 
$[\frac{i-1}{n}, \frac{i}{n}] \times \{g\}$ to the top edge $t_i$ of the cylinder $\{g\} \times C$. 
Let $T$ be the skew rotation determined by a rotation $\tau(x)=x+\alpha \pmod{1}$ and the map $\psi:[0,1) \to \Grp$ as in equation \ref{eq:psi}.
Then the embedding $\phi$ conjugates the skew rotation $T$ to the return map of the straight-line flow $\flow^t$ in direction $\bm \theta=(\alpha-\frac{1}{n}, \frac{1}{n})/ \|(\alpha-\frac{1}{n}, \frac{1}{n})\|.$ 
\end{proposition}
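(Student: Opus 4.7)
The plan is to trace the trajectory of the straight line flow $\flow^t$ in direction $\bm\theta$ starting from a point in $\phi([0,1)\times\Grp)$ and verify that the first-return map, conjugated through $\phi$, coincides with the skew rotation $T$. First I would observe that $\phi([0,1)\times\Grp)$ is exactly the union of all top edges $t_i$ of the cylinders $\{g\}\times C$, which, together with the (identified) bottom edges, forms the entire horizontal boundary of $S$. Since $\bm\theta=(\alpha-\frac{1}{n},\frac{1}{n})/\|\cdot\|$ has strictly positive vertical component, every forward flow trajectory must cross a horizontal boundary in finite time, so $\phi([0,1)\times\Grp)$ is a section for the flow.

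Next I would unpack the edge identifications carefully. The top edge $t_i$ of $\{g\}\times C$ occupies $[\tfrac{i-1}{n},\tfrac{i}{n}]\times\{\tfrac{1}{n}\}$ and is glued by parallel translation to the bottom edge $b_{i+1\pmod n}$ of $\{\gamma_i g\}\times C$, which occupies $[\tfrac{i\pmod n}{n},\tfrac{(i+1)\pmod n}{n}]\times\{0\}$. A direct check in each of the two cases $i<n$ and $i=n$ shows the unique translation realizing this identification is, in both cases, the horizontal shift by $\tfrac{1}{n}$ modulo $1$.

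With the gluing in hand, I would compute the return map explicitly. Fix $\phi(x,g)=(x,\tfrac{1}{n})$ lying on $t_i$, so $x\in[\tfrac{i-1}{n},\tfrac{i}{n})$. Parameterize the flow by a scalar $s\geq 0$ in the (non-unit) direction $(\alpha-\tfrac{1}{n},\tfrac{1}{n})$. The starting point is at the top boundary of $\{g\}\times C$, so via the identification above it is equal to the point $(x+\tfrac{1}{n}\bmod 1,\,0)$ in $\{\gamma_i g\}\times C$. Flowing from this basepoint, the trajectory inside $\{\gamma_i g\}\times C$ traces
$$\big(x+\tfrac{1}{n}+s(\alpha-\tfrac{1}{n})\bmod 1,\ \tfrac{s}{n}\big),$$
whose vertical coordinate strictly increases from $0$ to $\tfrac{1}{n}$ as $s$ ranges over $[0,1]$. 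Hence the trajectory does not meet the section again until $s=1$, where it lands at $(x+\alpha\bmod 1,\tfrac{1}{n})$ in $\{\gamma_i g\}\times C$. Pulling back through $\phi^{-1}$ gives $(x+\alpha\bmod 1,\gamma_i g)=(\tau(x),\psi(x)g)=T(x,g)$, as required.

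There is no serious obstacle here; the argument is essentially a bookkeeping computation verifying that the gluing scheme defining $S(\G,\w_{1/n})$ reproduces the skew-product dynamics. The two points requiring care are the sign and magnitude of the horizontal translation in the gluing (handled by treating the case $i=n$ separately) and the possibility of the trajectory hitting a vertex of the rectangle decomposition when $x+\alpha\in\tfrac{1}{n}\Z$; the latter forms a countable set of initial conditions and is discarded in the standard way when defining the return map of an infinite IET.
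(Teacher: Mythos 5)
Your proof is correct and follows essentially the same approach as the paper's: identify $\phi(x,g)$ with $(x+\tfrac{1}{n}\bmod 1,0)$ in $\{\gamma_ig\}\times C$ via the gluing of $t_i$ to $b_{i+1}$, then flow in direction $\bm\theta$ until the $y$-coordinate returns to $\tfrac{1}{n}$, landing at $(x+\alpha\bmod 1,\tfrac{1}{n})=\phi\circ T(x,g)$. You spell out the gluing translation and the parameterization by $s$ a bit more explicitly than the paper does, but the argument is the same.
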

\begin{proof}
Consider the point $(x,g) \in [0,1) \times \Grp$. We must show that $\phi \circ T(x,g)$ is the image of $\phi(x,g)$ under the return map of the straight-line flow. Assume $x \in [\frac{i-1}{n}, \frac{i}{n})$. Observe that $\bm \theta$ has positive $y$-coordinate. So flowing $\phi(x,g)$ in the direction of $\bm \theta$ immediately moves into the cylinder $\{\gamma_i g\} \times C$ through the bottom edge $b_{i+1}$, with subscript addition taken modulo $n$. 
The point $\phi(x,g)$ is identified with the point with coordinates $\big(x+\frac{1}{n},0\big)$ of $\{\gamma_i g\} \times C$, where the $x$-coordinate is taken modulo $1$. 
Continuing flowing in direction $\bm \theta$ we reach the point $(x+\alpha,\frac{1}{n})$ on the top of the cylinder $\{\gamma_i g\} \times C$. This point coincides with 
$\phi \circ T(x,g)$ as desired. 
\end{proof}

As a consequence, we see that whenever the direction $(\alpha-\frac{1}{n}, \frac{1}{n})/ \|(\alpha-\frac{1}{n}, \frac{1}{n})\|$ is $n$-renormalizable,
we can say something about the measures of the associated infinite IET. For later use, we give this set of $\alpha$ a name:
\begin{equation}
\name{eq:Omega}
\Omega_n=\{\alpha~:~\textrm{$(\alpha-\frac{1}{n}, \frac{1}{n})/ \|(\alpha-\frac{1}{n}, \frac{1}{n})\|$ is $n$-renormalizable}\}.
\end{equation}
Recall that $\lambda$-renormalizable directions were defined in \S \ref{ss:renormalizable directions}.

To conclude this section, we work out a special case of our skew rotation. We consider the case when our no-drift relation is the only relation in our group. This will lead to a primary example considered in Appendix \ref{sect:hyperbolic}. See Theorem \ref{thm:free}.

\begin{proposition}
\name{prop:tree}
Suppose $\Grp=\langle \gamma_1, \ldots, \gamma_n~|~\gamma_n \ldots \gamma_1=e\rangle$.
Then the associated bipartite graph $\G$ constructed above is the valance $n$ tree. 
\end{proposition}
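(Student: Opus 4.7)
The plan is to show that $\G$ is simply connected by realizing it as the universal cover of an explicit $2$-vertex quotient graph, then to conclude it is the $n$-valent tree because each of its vertices has exactly $n$ neighbors. First, I would recall that $F := \langle \gamma_1, \ldots, \gamma_n \mid \gamma_n \cdots \gamma_1 = e \rangle$ is free of rank $n-1$ (eliminating $\gamma_n$ via the relation), and observe that the elements $\eta_2, \eta_3, \ldots, \eta_n$ of equation \ref{eq:eta} also form a free generating set of $F$: they generate because $\gamma_k = \eta_{k+1} \eta_k^{-1}$ for each $1 \leq k \leq n-1$, and a generating set of $n-1$ elements in a free group of rank $n-1$ is automatically a free basis.

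Next I would exhibit a free action of $F$ on $\G$ by right translation, $R_h : \va_g \mapsto \va_{gh}$ and $\vb_g \mapsto \vb_{gh}$. The edge condition $g = \eta_i g'$ from equation \ref{eq:skew_edge_condition} is manifestly preserved under simultaneous right translation. (One must use right, not left, translation, since a left translation would transform an edge with $g(g')^{-1} = \eta_i$ into one with $(hg)(hg')^{-1} = h\eta_i h^{-1}$, which is generally not one of the $\eta_j$.) The quotient $\G/F$ has exactly one $\Alpha$-vertex, one $\Beta$-vertex, and $n$ edges indexed by $i \in \{1,\ldots,n\}$, so it is homotopy equivalent to a wedge of $n-1$ circles; taking the edge $i=1$ as spanning tree, the remaining edges yield free generators $\ell_2, \ldots, \ell_n$ of $\pi_1(\G/F) \cong F_{n-1}$.

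The core step, and main obstacle, is to verify that the monodromy homomorphism $\mu : \pi_1(\G/F) \to F$ associated to the regular covering $\G \to \G/F$ is an isomorphism. Lifting $\ell_i$ starting at $\va_e$, the forward edge $i$ lifts to $\vb_{\eta_i^{-1}}$ (using $\va_g \sim \vb_{\eta_j^{-1} g}$), and the reversed edge $1$ returns to $\va_{\eta_1 \eta_i^{-1}} = \va_{\eta_i^{-1}}$, whence $\mu(\ell_i) = \eta_i^{-1}$. By the first paragraph, $\{\eta_i^{-1} : 2 \leq i \leq n\}$ is a free basis for $F$, so $\mu$ is the isomorphism of rank-$(n-1)$ free groups identifying these bases; without this upgrade from mere surjectivity to bijectivity, $\G$ could in principle be a proper intermediate cover. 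Covering space theory then gives that $\G$ is simply connected, and since each $\va_g$ is adjacent to the $n$ distinct vertices $\vb_{\eta_i^{-1} g}$ for $i = 1, \ldots, n$ (and symmetrically each $\vb_g$ has valence $n$), $\G$ is the $n$-valent tree.
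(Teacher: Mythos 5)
Your proof is correct, and it takes a genuinely different route from the paper. The paper proves there are no cycles by a direct combinatorial induction: along any non-backtracking path $\va_{g_0}\sim \vb_{h_0}\sim \va_{g_1}\sim\cdots$ starting from $\va_e$, the paper writes $g_{m+1}=\eta_{j(m)}\eta_{i(m)}^{-1}\cdots\eta_{j(0)}\eta_{i(0)}^{-1}$ in the free basis $\eta_2,\ldots,\eta_n$ (treating $\eta_1=e$ as vacuous) and checks by hand that the non-backtracking conditions $i(m)\ne j(m)$ and $i(m+1)\ne j(m)$ rule out all cancellations, including the ``canceling sandwiches'' around an $\eta_1^{\pm 1}$, so the word length strictly increases and the path can never close up. You instead recognize the right $\Grp$-translation as a free covering-space action, compute that $\G/\Grp$ is the $2$-vertex, $n$-edge graph, and show the monodromy $\pi_1(\G/\Grp)\to\Grp$ carries a free basis to the free basis $\{\eta_i^{-1}\}_{i=2}^n$, hence is an isomorphism, hence $\G$ is the universal cover and therefore a tree. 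Both arguments rest on the same algebraic fact --- that $\eta_2,\ldots,\eta_n$ freely generate $\Grp$, which the paper states in the same sentence you prove it --- but your argument packages the combinatorics into a single well-known piece of covering theory, gets connectivity of $\G$ for free from surjectivity of the monodromy (the paper leaves connectivity implicit), and is explicit about why surjectivity alone would not suffice. The trade-off is that the paper's bare-hands induction is more elementary and self-contained, while yours assumes the correspondence between regular covers, deck groups, and monodromy; otherwise the two are of comparable length and yours is the cleaner read.
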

\begin{proof}
From the above remarks we know $\G$ is $n$-valent. We must show that the graph contains no homotopically non-trivial loops. 
Observe that the elements $\eta_2, \ldots, \eta_{n}$ defined in equation \ref{eq:eta} freely generate the group $\Grp$, while $\eta_1=e$. 
Note that there is a unique way to write each element $g \in \Grp$
as a product of the generators $\eta_2, \dots, \eta_{n}$ and their inverses which minimizes the word length. 
Moreover, if we have a word written as a product of $\eta_2, \ldots, \eta_{n}$ and their inverses, then this product is the minimal one unless there
are is a pair of adjacent terms of the form $\eta_i \eta_i^{-1}$ or  $\eta_i^{-1} \eta_i$ with $i \in \{2, \ldots, n\}$. 

Recall that $\Alpha$ is identified with $\Grp$ and points in $\Beta$ correspond to $n$-tuples in $\Beta$ of the form $[g]=(\eta_1 g, \ldots, \eta_{n-1} g, g)$. See equation \ref{eq:g}. 
The elements of an $n$-tuple in $\Beta$ correspond to adjacent vertices in $\Alpha$.

Consider a non-backtracking path in $\G$ starting at $\va_e$. Denote this sequence
$$\va_0 \sim \vb_0 \sim \va_1 \sim \vb_1 \sim \va_2 \sim \ldots.$$ 
By non-backtracking we mean that $\va_i \neq \va_{i+1}$ and $\vb_i \neq \vb_{i+1}$ for all $i$. 
For each $i \geq 0$, there is a group element $g_i$ so that $\va_i=\va_{g_i}$. Similarly, let $h_i \in \Grp$ be so that $\vb_i=\vb_{h_i}$. 
We claim that the word length (measured with respect to the generators
$\eta_2, \ldots, \eta_n$) of $g_i$ is strictly increasing in $i$. 
If this is true, then a non-backtracking path can not close up. So, there are no homotopically non-trivial loops. 
We prove this by induction. Since $g_0=e$, we see that $h_0=\eta_{i(0)}^{-1}$ for some choice of $i(0) \in \{1, \ldots, n\}$. Then 
we have that $g_1$ lies in the list $[h_0]$. Therefore, we have $g_1=\eta_{j(0)} \eta_{i(0)}^{-1}$ for some $j(0)$. Since $g_1 \neq g_0$, we must have that
$i(0) \neq j(0)$. Since only one of $i(0)$ or $j(0)$ can equal $1$, we see $g_1 \neq e$ so the word length has gone up. More generally, we see that for each $m$, there are distinct $i(m+1)$ and $j(m+1)$ so that 
$$h_m=\eta_{i(m)}^{-1} g_{m-1} \quad \textrm{and} \quad g_{m+1}=\eta_{j(m)} \eta_{i(m)}^{-1} g_{m-1}.$$
Also, $h_{m}=\eta_{i(m)}^{-1} \eta_{j(m-1)} h_{m-1}$ and $h_{m} \neq h_{m-1}$ implies that $i(m) \neq j(m-1)$ for all $m$. 
Therefore, for each $m$ we can write
$$g_{m+1}=\eta_{j(m)} \eta_{i(m)}^{-1} \ldots \eta_{j(0)} \eta_{i(0)}^{-1},$$
and we have $j(k) \neq i(k)$ and $i(k+1) \neq j(k)$ for all $k$. 
We claim that aside from removing terms of the form $\eta_1=e$ and $\eta_1^{-1}=e$ there can be no cancellation to reduce the word length. No adjacent terms can be canceled because $j(m) \neq i(m)$ and $i(m+1) \neq j(m)$ for all $m$. Moreover,
we can have no ``canceling sandwiches'' of the form $\eta_i \eta_1^{\pm 1} \eta_i^{-1}$ or $\eta_i^{-1} \eta_1^{\pm 1} \eta_i$, because the sign of the exponent in the terms in the product is alternating. Therefore, all simplification is simply the removal of terms of the form $\eta_1^{\pm 1}$ as claimed.
Now we see that the word length of $g_{m}$ is at least one larger than $g_{m-1}$, since $g_{m}=\eta_{j(m)} \eta_{i(m)}^{-1} g_{m-1}$
and we have either $j(m) \neq 1$ or $i(m) \neq 1$. This proves the increasing word length claim.
\end{proof}

\section{Translation surfaces and hyperbolic graphs}
\name{sect:hyperbolic}

A graph is called {\em hyperbolic} if it is $\delta$-hyperbolic as a metric space equipped with the edge metric for some $\delta>0$. We refer the reader to \cite[\S 8.4]{BBI} for the definition of and background for $\delta$-hyperbolicity. If $\G$ is hyperbolic, we can compactify $\G$ with the $\delta$-hyperbolic boundary $\partial_{\textit{hyp}} \G$.

\begin{theorem}[{\cite[Theorem IV.27.1]{W00}}]
\label{thm:hyperbolic}
Suppose the graph $\G$ is hyperbolic, and let $\z > r$. Then,
every point in ${\mathcal M}_\z$ is minimal and ${\mathcal M}_\z$ is homeomorphic to $\partial_{\textit{hyp}} \G$.
\end{theorem}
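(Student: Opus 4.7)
The plan is to follow the general strategy of Ancona's theorem on Martin boundaries of Cayley graphs of hyperbolic groups, as developed in Woess's treatment, and adapt it from the stochastic setting to the adjacency operator via the reduction described in Remark~\ref{rem:reduction}. Fix $\z > r$ and a root $\vo \in \V$; choose a positive eigenfunction $\f$ of $\A$ with eigenvalue $\z$, which exists because $\z > r$. Set $\DM$ to be the diagonal matrix with entries $\f(\vv)$, and let $\PM = \frac{1}{\z}\DM^{-1}\A\DM$. Then $\PM$ is a stochastic matrix (a nearest-neighbor random walk on $\G$) whose Martin boundary at eigenvalue $1$ is in natural bijection (via $\h \mapsto \DM\h$) with ${\mathcal M}_\z$, and whose minimal Martin boundary corresponds to ${\mathcal M}^{\textit{min}}_\z$. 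Because $\PM$ is supported on the edges of the hyperbolic graph $\G$, has bounded range, and is uniformly irreducible with bounded conductances (coming from the bounded valance of $\G$ and the uniform lower bound on $\f(\vw)/\f(\vv)$ for $\vw \sim \vv$, a consequence of $\z > r$ and Harnack-type estimates on positive eigenfunctions), the theorem then reduces to Ancona's theorem for such walks on hyperbolic graphs.

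The heart of the argument is Ancona's inequality: there is a constant $C = C(\z, \G)$ so that whenever $\vv$ lies within bounded distance of a geodesic from $\vu$ to $\vw$ in $\G$, one has
\begin{equation*}
\RM_\z(\e_\vw)(\vu) \;\leq\; C\, \RM_\z(\e_\vv)(\vu)\, \RM_\z(\e_\vw)(\vv).
\end{equation*}
The proof of this inequality is where the strict inequality $\z > r$ is essential: it gives uniform exponential decay of $\RM_\z(\e_\vw)(\vu)$ in the graph distance $d(\vu,\vw)$, which when combined with the thin-triangles property lets one show that any path from $\vu$ to $\vw$ contributing nontrivially to $\RM_\z(\e_\vw)(\vu)$ must, up to a geometric-series correction, pass near $\vv$. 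I would follow the inductive argument of Ancona that uses the resolvent identity and the $\delta$-thin triangle condition to iterate a weak near-geodesic separation estimate into the full multiplicative bound. This is the step I expect to be the main obstacle, since it requires carefully tracking the constants in the first-visit decomposition of the Green's function along a fellow-traveling geodesic.

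Given Ancona's inequality, the remaining structure of the proof is essentially formal. Dividing the inequality by $\RM_\z(\e_\vw)(\vo)$ and swapping the roles of source/root yields the two-sided Harnack-type estimate
\begin{equation*}
C^{-1}\K_\z(\vu,\vv)\K_\z(\vv,\vw) \;\leq\; \K_\z(\vu,\vw) \;\leq\; C\, \K_\z(\vu,\vv)\K_\z(\vv,\vw)
\end{equation*}
whenever $\vv$ is on (or near) a geodesic from $\vo$ to $\vw$. From this one deduces that if $\langle \vw_n\rangle$ is a sequence tending to a point $\zeta \in \partial_{\textit{hyp}}\G$ in the hyperbolic compactification, then $\K_\z(\vu,\vw_n)$ converges pointwise in $\vu$ to a function $\k_\zeta$, that the limit depends only on $\zeta$ and not on the chosen sequence, and that distinct $\zeta$ give distinct $\k_\zeta$ (the last using that two sequences accumulating at different boundary points eventually have uniformly divergent geodesics, so the multiplicative factorization along their near-geodesics separates the limits). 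This gives a continuous injection $\partial_{\textit{hyp}}\G \to {\mathcal M}_\z$; compactness of $\partial_{\textit{hyp}}\G$ and density of $\V$ in $\V_\z$ show it is a homeomorphism onto ${\mathcal M}_\z$.

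Finally, to establish minimality of every $\zeta \in {\mathcal M}_\z$, I would use the standard criterion that $\k_\zeta$ is minimal iff the bounded $\A$-eigenfunctions $\h/\k_\zeta$ (after the $\DM$-conjugation to a stochastic operator) satisfy a $0$-$1$ law at $\zeta$. For the random walk $\PM$ on a hyperbolic graph, the Ancona inequalities imply that $\PM$-trajectories converge almost surely to points in $\partial_{\textit{hyp}}\G$, and the Ancona factorization gives the analogue of Doob's relative-entropy identification of the Poisson boundary with the geometric boundary, whence every $\zeta$ is minimal. Feeding this back through the $\DM$-conjugation establishes ${\mathcal M}_\z = {\mathcal M}^{\textit{min}}_\z \cong \partial_{\textit{hyp}}\G$, as claimed.
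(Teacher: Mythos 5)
The paper does not give its own proof of this theorem: it is cited verbatim as a known result, namely \cite[Theorem IV.27.1]{W00}. Your sketch is a correct reconstruction of Woess's argument (the reduction to a nearest-neighbour stochastic matrix via the $\DM$-conjugation of Remark~\ref{rem:reduction}, Ancona's inequality as the key technical input when $\z>r$, the multiplicative Harnack factorization of the Martin kernel along near-geodesics, and the resulting identification of ${\mathcal M}_\z$ with $\partial_{\textit{hyp}}\G$ together with minimality), so you are taking essentially the same route as the cited source. One small slip worth noting: the two-sided Harnack bound $\z^{-1}\leq \f(\vw)/\f(\vv)\leq \z$ for adjacent vertices follows from $\f$ being a positive $\z$-eigenfunction alone and does not need $\z>r$; the strict inequality $\z>r$ is needed only for the uniform exponential decay of $\RM_\z$ that drives Ancona's inequality, as you say in the next clause.
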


\begin{example}
\name{ex:tripod}
Consider the graph $\G$ given in Figure \ref{fig:triple staircase}. The eigenfunction $\w$ given in the figure lies in $\ell^p(\V)$ for $p \in [1,\infty]$. We conclude that $\A$ is $r$-positive and that the spectral radius is the associated eigenvalue $r=\frac{3 \sqrt{2}}{2}$. By Theorem \ref{thm:recurrent case}, we conclude that the only positive 
functions satisfying $\A(\f)=r \f$ are the multiples of the function $\w$. For $\z>r$,
we may apply Theorem \ref{thm:hyperbolic}. In the case of a tree, the hyperbolic boundary of $\G$ is homeomorphic to the space of ends of $\G$. In this case, we have exactly $3$ ends. We conclude the space of positive functions satisfying $\A(\f)=\z \f$ is linearly isomorphic to the cone on a triangle; we have $3$ such extremal eigenfunctions up to scaling. One such function is given in Figure \ref{fig:eig3}; the others are the same up to the automorphism group of the graph. The space of all extremal positive eigenfunctions which take the value one at the root is homeomorphic to the graph itself. Using our measure characterization, this information determines the ergodic invariant measures
for the straight-line flow in a $r$-renormalizable direction on the surface $S(\G,\w)$ of Figure \ref{fig:triple staircase}.
\end{example}

\begin{figure}[ht]
\begin{center}
\includegraphics[width=6in]{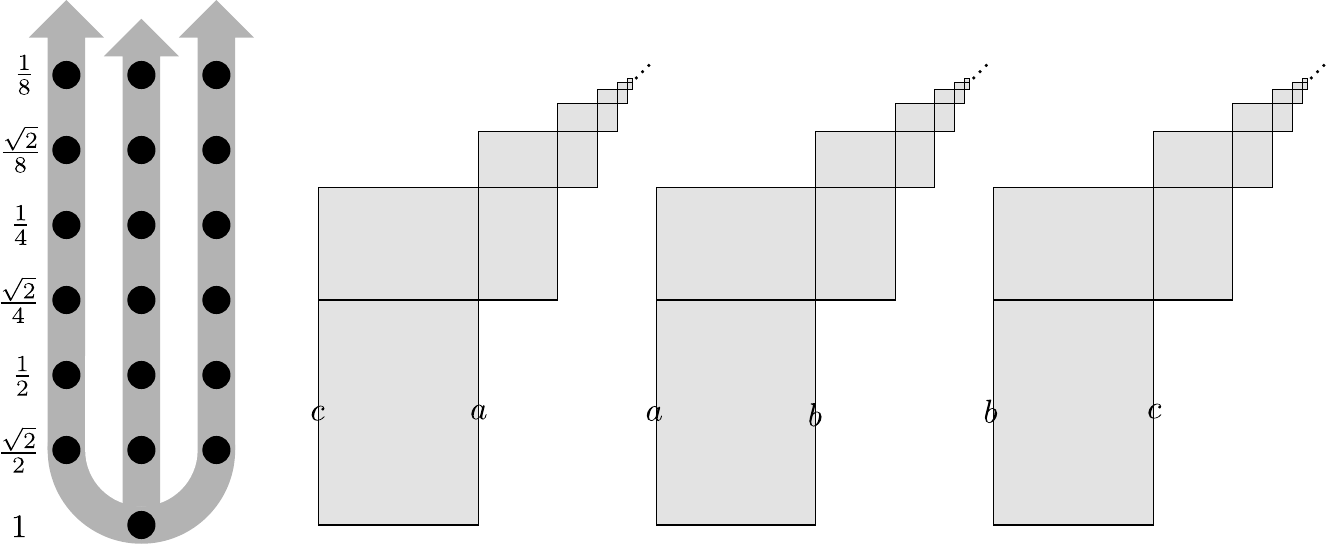}
\caption{A surface $S(\G, \w)$ with finite area. Labeled and opposite unlabeled edges are glued by horizontal or vertical translations.
At left, the graph $\G$ is shown with the positive eigenfunction $\w$
which lies in $\ell^p(\V)$ for all $p \in [1,\infty]$.}
\name{fig:triple staircase}
\end{center}
\end{figure}

\begin{figure}[ht]
\begin{center}
\includegraphics[width=4in]{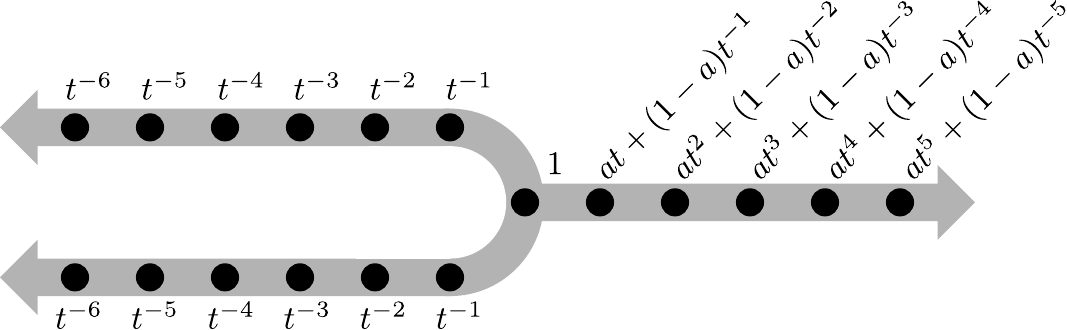}
\caption{An extremal positive eigenfunction is given above with eigenvalue $\lambda=t+t^{-1}$ when we set $a=\frac{t-2t^{-1}}{t-t^{-1}}$.}
\label{fig:eig3}
\end{center}
\end{figure}

The theorem above leaves open the question of what the Martin boundary is in the case that 
$\z=r$ and $\A$ is $r$-transient. In the special case of trees, we have an answer.

\begin{theorem}[{\cite[p.~459]{PW88}}]
\name{thm:tree}
If $\G$ is a tree and $\A$ is $r$-transient with $\z \geq r$ or if $\A$ is $r$-recurrent and $\z > r$, then every point in ${\mathcal M}_\z$ is minimal and ${\mathcal M}_\z$ is homeomorphic to the space of ends of $\G$ (which is also homeomorphic to $\partial_{\textit{hyp}} \G$).
\end{theorem}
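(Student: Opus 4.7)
The plan is to reduce to the classical theory of harmonic functions for transient random walks via the stochastic normalization of Remark \ref{rem:reduction}, and then exploit the unique geodesic structure of a tree to factor Green's functions along paths. First I would fix a positive $\z$-eigenfunction $\w$ of $\A$ (which exists in both hypothesized cases) and pass to the stochastic matrix $\PM = \frac{1}{\z}\DM^{-1}\A\DM$ with $\DM = \mathrm{diag}\bigl(\w(\vv)\bigr)_{\vv \in \V}$. Since $\A$ is $r$-transient at $\z = r$ and automatically transient at $\z > r$ (equation \ref{eq:rm} plus the strict inequality controls the series), the walk driven by $\PM$ is transient. Positive $\z$-eigenfunctions of $\A$ correspond bijectively to positive $\PM$-harmonic functions, and the Martin compactifications agree.

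Next I would establish the core tree factorization. For $\vv, \vw \in \V$, let $\vu$ lie on the unique geodesic from $\vv$ to $\vw$; every trajectory of the walk from $\vv$ that ever reaches $\vw$ must pass through $\vu$, so the Green function factors as $G(\vv,\vw) = F(\vv,\vu)\,G(\vu,\vw)$, where $F(\vv,\vu)$ is the first-passage generating function. Applying this along the geodesic $\vo = \vu_0, \vu_1, \ldots, \vu_n = \vv$ and recording the confluence vertex $\vu_k$ of the geodesics from $\vo$ and from $\vw$ to $\vv$, one obtains
$$\K_\z(\vw,\vv) = \frac{G(\vw,\vv)}{G(\vo,\vv)} = \frac{F(\vw,\vu_k)}{F(\vo,\vu_k)},$$
an expression that depends on $\vv$ only through the initial segment of its geodesic, and therefore stabilizes once $\vv$ travels far enough along a fixed ray.

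Using this stabilization I would identify ${\mathcal M}_\z$ with the space of ends. If $\vv_n \to \xi$ (an end, viewed as a geodesic ray from $\vo$), then for every fixed $\vw$ the sequence $\K_\z(\vw,\vv_n)$ is eventually constant, yielding a pointwise limit $\k_\xi(\vw)$ that depends only on $\xi$; distinct ends produce different confluence vertices for an appropriate choice of $\vw$ and hence different kernels. A routine topological check upgrades this bijection to a homeomorphism, which together with the fact that for trees $\partial_{\textit{hyp}}\G$ coincides with the space of ends completes the structural description. For minimality I would invoke the probabilistic criterion that $\zeta \in {\mathcal M}_\z$ is minimal iff the $\k_\zeta$-Doob transform of $\PM$ converges almost surely to $\zeta$ (see \cite[Theorem 24.10]{W00}); the factorization above forces the Doob-transformed walk at each vertex of the ray defining $\xi$ to drift outward along that ray with probability one, so every $\xi$ is minimal.

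The main obstacle will be the borderline case $\z = r$ in the $r$-transient regime. The factorization formulas remain algebraically valid because $G$ is still finite, but one must verify that the ratios $F(\vw,\vu_k)/F(\vo,\vu_k)$ stay bounded and bounded away from zero along the ray, so that the limit $\k_\xi$ exists, is nonzero, and is a genuine positive $\z$-eigenfunction rather than a degenerate limit. This is precisely the delicate point controlled by $r$-transience: the first-passage series at the spectral radius converge but only barely, and the uniform estimates needed to push the Doob-transform argument through at $\z = r$ are the technical heart of the Picardello--Woess proof.
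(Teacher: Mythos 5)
The paper does not supply its own proof of this statement; it cites Picardello--Woess~\cite{PW88} and takes the result as given, so there is no in-paper argument to compare against. That said, your sketch is a faithful reconstruction of the standard approach: the stochastic normalization via Remark~\ref{rem:reduction} is the right first move, the Green-function factorization $G(\vv,\vw) = F(\vv,\vu)\,G(\vu,\vw)$ along the unique geodesic is the heart of the tree argument, and the stabilization of $\K_\z(\,\cdot\,,\vv_n)$ once $\vv_n$ has passed the confluence point is precisely how one identifies ${\mathcal M}_\z$ with the end space. Two remarks on precision rather than substance: first, your minimality step via the Doob-transform convergence criterion (\cite[Theorem 24.10]{W00}) is a legitimate route, but in the tree setting it is cleaner to observe directly that the kernel $\k_\xi$ is \emph{multiplicative} along the geodesic toward $\xi$ --- it factors as a product of one-step first-passage ratios --- and that a multiplicative positive harmonic function cannot be a nontrivial convex combination of other harmonic functions, which gives extremality without invoking the a.s.\ convergence theorem; second, you have quietly swapped the arguments of $\K_\z$ relative to Definition~\ref{def:Martin_boundary} (the kernel there is $\RM_\z(\e_\vw)(\vv)/\RM_\z(\e_\vw)(\vo) = G(\vv,\vw)/G(\vo,\vw)$, source in the first slot), which does not affect the factorization but is worth fixing for consistency with the notation used elsewhere in the paper. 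Your identification of the $\z = r$, $r$-transient borderline as the technical bottleneck is also correct; the point is exactly that the first-passage generating functions remain finite and the ratios stay bounded away from $0$ and $\infty$ along each ray so that the pointwise limit $\k_\xi$ is a genuine positive eigenfunction.
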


\begin{remark} As far as the author knows, there is no general description of the Martin boundary of a hyperbolic graph $\G$ in the case that $\z=r$,
even in the particular case when $\G$ is the Cayley graph of a Gromov hyperbolic group.
\end{remark}

\begin{theorem}[Non-abelian free case]
\name{thm:free}
Let $n \geq 3$ and let $\Grp$ be the non-abelian free group of rank $n-1$
generated by $\gamma_1, \ldots, \gamma_{n-1}$ and define $\gamma_n=(\gamma_{n-1} \gamma_{n-2} \ldots \gamma_1)^{-1}$ 
so that the no-drift condition of equation \ref{eq:relation} is satisfied.
Let $\Omega_n \subset [0,1)$ be as in equation \ref{eq:Omega}.
Then, for every  $\alpha \in \Omega_n$, the following holds for the associated skew product $T_\alpha$:
\begin{enumerate}
\item $T_\alpha$ is conservative but no Maharam measure is ergodic (including Lebesgue measure). 
\item Points in the projectivization of the space of locally finite ergodic $T_\alpha$-invariant measures are 
in bijective correspondence with the Cartesian product of a ray $[0, \infty)$ and the Gromov boundary of $\Grp$. 
\end{enumerate}
\end{theorem}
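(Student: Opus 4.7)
By Proposition \ref{prop:graph}, $T_\alpha$ is conjugate to the return map of the straight-line flow $F^t_{\btheta_\alpha}$ on $S(\G,\w_{1/n})$, where $\btheta_\alpha$ is the unit vector along $(\alpha-\tfrac{1}{n},\tfrac{1}{n})$ and $\w_{1/n}\equiv\tfrac{1}{n}$ is a positive eigenfunction of $\A$ with eigenvalue $n$. Under the assumed presentation of $\Grp$, Proposition \ref{prop:tree} identifies $\G$ as the $n$-valent bipartite tree, so every vertex has valance $n\geq 3$ and in particular $\G$ has no vertices of valance one. Since $\alpha\in\Omega_n$ the direction $\btheta_\alpha$ is $n$-renormalizable, and Theorem \ref{thm:pr} immediately yields conservativity of the straight-line flow, hence of $T_\alpha$.

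\textbf{Parametrization of ergodic measures.} Theorem \ref{thm:ergodic2} applies, and together with Theorem \ref{thm:general_ergodic_measure_classification} and Corollary \ref{cor:bijection2} it puts the projectivized locally finite ergodic $T_\alpha$-invariant measures in bijection with $\bigsqcup_{\lambda_2}{\mathcal M}^{\mathrm{min}}_{\lambda_2}$, the disjoint union of minimal Martin boundaries of $\A$ over the eigenvalues $\lambda_2$ admitting a positive eigenfunction (each extremal $\w_2$ of eigenvalue $\lambda_2$ being paired with the direction $\btheta(\langle g_n\rangle,\lambda_2)$ and the measure pulled back from $S(\G,\w_2)$). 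For the $n$-regular tree with $n\geq 3$ the spectral radius of $\A$ equals $r=2\sqrt{n-1}$ and $\A$ is $r$-transient (the stochastic normalization of Remark \ref{rem:reduction} is the nearest-neighbor random walk on the $(n-1)$-ary tree, which is transient for $n\geq 3$), so positive eigenfunctions exist precisely for $\lambda_2\in[r,\infty)$. Theorem \ref{thm:tree} then yields that each ${\mathcal M}_{\lambda_2}$ is entirely minimal and homeomorphic to $\mathrm{Ends}(\G)$. Finally, $\Grp$ acts freely on $\G$ with finite quotient (a graph on two vertices joined by $n$ edges, whose fundamental group is $F_{n-1}\cong\Grp$), so $\G$ is $\Grp$-equivariantly quasi-isometric to the Cayley graph of $\Grp$; consequently $\mathrm{Ends}(\G)\cong\partial\Grp$, and any homeomorphism $[r,\infty)\cong[0,\infty)$ completes statement (2).

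\textbf{Non-ergodicity of Maharam measures.} Each character $\chi\in\mathrm{Hom}(\Grp,\R)$ for which the $\chi$-Maharam measure exists produces, by tracing through Proposition \ref{prop:graph} and the definitions of \S\ref{sect:skew products}--\S\ref{ss:skew_rotations}, a $\Grp$-equivariant positive eigenfunction $\w_\chi$ on $\G$ satisfying $\w_\chi(h\cdot v)=e^{\chi(h)}\w_\chi(v)$ for all $h\in\Grp$; the Lebesgue case is $\chi\equiv 0$ and $\w_\chi=\w_{1/n}$. If $\w_\chi$ were extremal, equal to a positive multiple of some $\k_\zeta$ with $\zeta\in\partial\Grp$, then equivariance of $\w_\chi$ and uniqueness of the normalized extremal eigenfunction representing a boundary point would force $\zeta$ to be fixed by every element of $\Grp$, contradicting the fact that the free group $F_{n-1}$ acts on its Gromov boundary without global fixed points. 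Hence $\w_\chi$ is non-extremal, the Poisson--Martin measure $\nu_{\w_\chi}$ on $\partial\Grp$ is not a Dirac mass, and the associated Maharam measure admits a non-trivial ergodic decomposition into measures from the family classified in (2).

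\textbf{Main obstacle.} The principal technical point is the bookkeeping in the last paragraph: carefully matching the $\chi$-Maharam measure on $[0,1)\times\Grp$ with the equivariant eigenfunction $\w_\chi$ on $\G$ under the bijection of Theorem \ref{thm:general_ergodic_measure_classification}, so that the equivariance of $\w_\chi$ under $\Grp$ is available to rule out extremality. The remaining inputs---the spectral radius and transience of the $n$-regular tree, the Martin boundary identification of Theorem \ref{thm:tree}, the quasi-isometric identification $\mathrm{Ends}(\G)\cong\partial\Grp$, and the fixed-point-free action of $F_{n-1}$ on $\partial\Grp$---are all standard once the setup is in place.
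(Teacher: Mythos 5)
Your proof is correct and follows essentially the same route as the paper for the conservativity claim and the parametrization of ergodic measures: identify $T_\alpha$ with a return map on $S(\G,\w_{1/n})$ via Propositions \ref{prop:graph} and \ref{prop:tree}, invoke Theorem \ref{thm:pr}, and then feed the Ergodic Measure Characterization Theorem through the Martin boundary description of Theorem \ref{thm:tree} using the spectral radius $r=2\sqrt{n-1}$ and $r$-transience of the $n$-regular tree. Your parenthetical justification of $r$-transience is slightly imprecise — $r$-transience (the Green's function at the bottom of the spectrum being finite) is a strictly stronger statement than ordinary transience of the simple random walk, and the paper simply cites it from \cite{W00} — but the conclusion you reach is correct.

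Where you genuinely go beyond the paper is in the Maharam part of statement (1). The paper's printed proof is silent on why no Maharam measure is ergodic; it is left to the reader to combine the classification with the observation that the $\Grp$-equivariance of a Maharam eigenfunction is incompatible with extremality. You supply exactly that argument: a $\chi$-Maharam measure corresponds to a survivor $\w_\chi$ with $\w_\chi(h\cdot v)=e^{\chi(h)}\w_\chi(v)$, and if $\w_\chi$ were extremal, say a positive multiple of $\k_\zeta$ with $\zeta$ minimal, then uniqueness of the normalized extremal eigenfunction representing a boundary point would force $h\cdot\zeta=\zeta$ for every $h\in\Grp$ — impossible, since a non-abelian free group has no global fixed point on its Gromov boundary. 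This fills a real gap in the paper's exposition, and is worth keeping.
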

\begin{proof}[Proof of Theorem \ref{thm:free}]
By Propositions \ref{prop:graph} and \ref{prop:tree}, the skew product $T_\alpha$ with $\alpha \in \Omega_n$ described in the theorem is the return map of the straight-line flow $\flow^t$ in a direction $\bm \theta \in \Rn_\lambda$ on the surface $S(\G, \w_{\frac{1}{n}})$, where $\G$ is the valence $n$-tree. By Theorem \ref{thm:pr},
this flow is conservative. Theorem \ref{thm:ergodic2} characterizes the locally finite ergodic transverse measures to the foliations $\F_{\bm \theta}$.
Such measures are in bijective correspondence with positive eigenfunctions of the adjacency operator. The spectral radius of $\G$ is $r=2 \sqrt{n-1}$ (see \cite[p. 225]{MW89})
and the graph is known to be $r$-transient when $n>2$ (see \cite[p. 10]{W00}). Therefore, Theorem \ref{thm:tree} characterizes the 
positive eigenfunctions (normalized to take the value $1$ at the root node) in terms of the choice of a $\lambda \geq r$ and the choice of a point
in the Gromov boundary of the graph. This bijection then extends to projective equivalence classes of locally finite ergodic invariant transverse measures by Theorem \ref{thm:ergodic2} and to the skew rotation by restricting these measures to the section of horizontal edges of rectangles.
\end{proof}

\section{Surfaces with cocompact nilpotent actions}
\label{app:nilpotent}

\subsection{Eigenfunctions of graphs with nilpotent actions}
For the following is a variant of a theorem of Margulis \cite{M66}. 
\begin{theorem}[Margulis]
\name{thm:margulis2}
Let $\Grp$ be a nilpotent group acting cocompactly by graph automorphisms on the graph $\G$. Let 
$\f \in \R^\V$ be an extremal positive eigenfunction for the adjacency operator. Then for all $g \in \Grp$, the
quantity $\f \circ g(\vv)/\f(\vv)$ is independent of the choice of $\vv \in \V$. Moreover,
the function $h:\Grp \to \R$ given by $h(g)=\log \big(\f \circ g(\vv)/\f(\vv)\big)$ is a group homomorphism to $\R$. 
The map $\f \mapsto h$ is a bijection from extremal positive eigenfunctions which take the value $1$ at an arbitrary chosen root node of $\G$ to 
the collection of homeomorphisms $\Grp \to \R$. 
\end{theorem}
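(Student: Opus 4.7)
The plan is to prove both the cocycle-is-constant claim and the bijection by reducing the eigenvalue problem to the finite quotient $\G/\Grp$ and exploiting the nilpotency of $\Grp$ via an inductive central-extension argument. Since $\Grp$ acts by graph automorphisms, the operator $U_g:\f \mapsto \f\circ g$ commutes with $\A$, preserves the convex cone $E_\lambda$ of non-negative eigenfunctions with eigenvalue $\lambda$, and hence permutes its extremal rays. If I can show that when $\f$ is extremal the translate $\f\circ g$ is always a positive scalar multiple of $\f$, then the associativity $(\f\circ g_1)\circ g_2=\f\circ(g_1 g_2)$ immediately yields that $c(g):=(\f\circ g)/\f$ is a well-defined positive multiplicative character, so $h:=\log c$ is a homomorphism $\Grp\to\R$ independent of the base vertex.

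The heart of the argument is thus showing $\f\circ g\in\R_{>0}\f$ for extremal $\f$. Via Theorem \ref{thm:poisson-martin} an extremal $\f$ with $\f(\vo)=1$ corresponds to a Dirac mass $\delta_\zeta$ on $\M^{\textit{min}}_\lambda$, and $\f\circ g$ corresponds to a Dirac mass at the point $g\cdot\zeta$ obtained from the homeomorphism of $\M^{\textit{min}}_\lambda$ induced by $U_g$; the goal becomes showing that $g$ fixes $\zeta$ for every $g\in\Grp$. I would argue by induction on the nilpotency class. For abelian $\Grp$, the pairwise-commuting $U_g$ simultaneously preserve the joint eigenspaces of $E_\lambda$, and extremality forces $\f$ into a one-dimensional joint eigenspace, producing the character directly. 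For the inductive step, central elements $z\in Z(\Grp)$ act on $E_\lambda$ and their joint eigenspace decomposition is preserved by all of $\Grp$, which reduces the problem to $\Grp/Z(\Grp)$ of lower class. To treat the central elements themselves, I would use amenability of $\Grp$: any orbit of $z\in Z(\Grp)$ on $\M^{\textit{min}}_\lambda$, together with Markov--Kakutani applied to probability measures on that orbit, yields a $z$-invariant non-negative eigenfunction whose Choquet decomposition is supported on the orbit, contradicting extremality of its constituents unless the orbit is a single point.

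For the converse direction, given a homomorphism $h:\Grp\to\R$, I would construct $\f$ via the finite quotient. A function $\f$ on $\V$ satisfying $\f\circ g = e^{h(g)}\f$ is equivalent to an equivariant section of a trivial line bundle over $\V$ twisted by the character $e^h$, and cocompactness reduces the eigenvalue equation $\A\f=\lambda\f$ to a finite-dimensional spectral problem: define a twisted adjacency operator $\A_h$ on $\R^{\V/\Grp}$ whose entries are character-weighted sums over lifts of edges in $\G$. Connectedness of $\G$ makes $\A_h$ an irreducible non-negative matrix, so Perron--Frobenius produces a unique positive eigenvector up to scaling, with a Perron eigenvalue $\lambda(h)>0$. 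Lifting this eigenvector to $\V$ yields the desired $\f$; extremality follows because any decomposition $\f=\f_1+\f_2$ into non-negative eigenfunctions with the same eigenvalue would, after applying the forward direction of the theorem to $\f_1$ and $\f_2$ separately, produce non-negative vectors in $\R^{\V/\Grp}$ violating Perron--Frobenius uniqueness.

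The main obstacle will be the amenability/Choquet step used to control central elements. Amenable group actions on compact spaces need not fix individual points, so one must exploit specifically the convex-cone structure on $E_\lambda$ together with extremality, rather than rely on abstract amenability. Executing this Margulis-style argument cleanly --- or alternatively establishing an a priori uniform pointwise bound $\f\circ g\leq c_g\,\f$ across a fundamental domain and then invoking the elementary extremality lemma that $\f'\leq c\,\f$ with both extremal forces $\f'$ to be a scalar multiple of $\f$ --- is the delicate heart of the proof; the abelian base case and the finite-dimensional construction of the inverse map are comparatively routine by contrast.
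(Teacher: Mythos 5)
Your converse direction (constructing an extremal eigenfunction from a homomorphism $h$ via a twisted adjacency operator $\A_h$ on the finite quotient and applying Perron--Frobenius) is essentially identical to the paper's argument. The divergence is in the forward direction: the paper simply cites Woess's Theorem~25.8 (a formulation of Margulis's theorem) to get that each $h_\vv$ is a homomorphism, then uses the elementary two-sided bound $\lambda^{-1}\f(\vb) \le \f(\va) \le \lambda\f(\vb)$ for adjacent vertices, applied to $g^n$ and divided by $n$, to show independence of the base point. You instead try to reprove the Margulis-type input from scratch by an induction on nilpotency class, and that is where the gaps are.

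The central-element step as you state it does not close. You propose: take the orbit of $\zeta$ under a central $z$, apply Markov--Kakutani to get a $z$-invariant probability measure $\mu$ on the orbit closure, and then claim the Choquet decomposition of $\g=\int\k_\eta\,d\mu(\eta)$ "contradicts extremality of its constituents unless the orbit is a single point." There is no contradiction here: a non-extremal $\g$ whose Poisson--Martin decomposition is spread over several minimal boundary points is perfectly consistent, and nothing forces a relation back to the extremality of the original $\f$ (which is a different eigenfunction). You also have an unaddressed integrability issue, since the family $\{\k_\eta\}$ is not uniformly bounded as $\eta$ ranges over an infinite orbit, so $\g$ need not be a locally finite eigenfunction at all. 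Your proposed fallback --- a uniform pointwise bound $\f\circ g \le c_g\,\f$ across $\V$ followed by the extremality lemma --- would work if it held, but the displacement $d(\vv,g\vv)$ is \emph{not} uniformly bounded for cocompact nilpotent actions in general (already for the discrete Heisenberg group, conjugating a generator by $b^n$ gives displacement growing like $\sqrt{n}$), so the constant $c_g$ you would extract from $\f(g\vv)\le\lambda^{d(\vv,g\vv)}\f(\vv)$ blows up and the lemma cannot be invoked. This is precisely the point where the actual Margulis argument (via Følner averaging and polynomial growth) is needed, and where the paper delegates to Theorem~\ref{thm:margulis1} rather than reproving it. Your abelian base case and the Perron--Frobenius converse are fine; the inductive/central step is the genuine gap.
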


Our formulation of the result does not appear in the literature. However, it follows quickly from the following version:

\begin{theorem}[{\cite[Theorem 25.8]{W00}}]
\name{thm:margulis1}
Let $\G$ be a locally finite, connected graph, and let $P$ be a stochastic matrix determining a random walk on $\G$. Suppose the automorphism group of the pair $(\G,P)$ contains a discrete nilpotent group $\Grp$ which acts co-compactly on $\G$. Then, for any extremal positive eigenfunction $\f:\R^\V \to \R$ and every $\vv \in \V$, the function $h_\vv:\Grp \to \R$ given by $h_\vv(g)=\log \big(\f \circ g(\vv)/\f(\vv)\big)$ is a group homomorphism.
\end{theorem}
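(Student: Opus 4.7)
The plan is to reduce Theorem \ref{thm:margulis1} to a stronger multiplicative statement and then establish that statement by induction on the nilpotent class of $\Grp$. Specifically, I would first show that for each extremal positive $P$-eigenfunction $\f$, the ratio $r_g(\vv) = \f(g\vv)/\f(\vv)$ is in fact independent of $\vv$. Granting this, the resulting function $c(g) := r_g(\vv)$ must be multiplicative, because $c(g_1 g_2) \f(\vv) = \f(g_1 g_2 \vv) = c(g_1) \f(g_2 \vv) = c(g_1) c(g_2) \f(\vv)$, so $h_\vv = \log c$ is both independent of $\vv$ and a group homomorphism, which is even stronger than what is claimed.

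Next I would set up the framework in which the induction runs. The group $\Grp$ acts linearly on $\R^\V$ by precomposition, and since each $g \in \Grp$ commutes with $P$, it preserves the cone $C_\lambda$ of non-negative $P$-eigenfunctions with a fixed eigenvalue $\lambda$ and sends extremal rays to extremal rays. By the Poisson-Martin representation (Theorem \ref{thm:poisson-martin}) applied to the stochastic matrix $P$, the extremals of $C_\lambda$ correspond bijectively to points of the minimal Martin boundary, and $\Grp$ acts on that boundary. Cocompactness of the $\Grp$-action ensures that the induced boundary action preserves the eigenvalue structure uniformly.

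The base case is when $\Grp$ is abelian: this is a direct application of a Choquet-Deny type argument. For each $g \in \Grp$, the eigenfunction $\f \circ g$ is again extremal in $C_\lambda$, and commuting abelian automorphisms on the Choquet simplex of extremals force $\f \circ g$ to lie on the same ray as $\f$, giving $\f \circ g = c(g) \f$. For the inductive step, let $Z = Z(\Grp)$ denote the center, which is abelian. The base case applied to the $Z$-action yields a character $\chi: Z \to \R_+$ with $\f \circ z = \chi(z) \f$ for all $z \in Z$. This means $\f$ descends to an eigenfunction on a suitable quotient object associated with $\Grp/Z$, which is nilpotent of strictly smaller class, and I would apply the inductive hypothesis there. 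Combining the homomorphism coming from $\Grp/Z$ with the character $\chi$ on $Z$ then produces the required homomorphism on all of $\Grp$.

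The main obstacle is executing the Choquet-Deny step cleanly in this setting: one must verify that the cone $C_\lambda$ is a base-compact convex cone (so that Choquet's theorem applies), that the $\Grp$-action is continuous in the relevant weak topology, and that extremal rays are genuinely permuted, not merely mapped into the cone. Each of these rests on the cocompactness hypothesis and on having $\f$ normalized at a root vertex so that the affine slice of $C_\lambda$ at the root is compact. Handling the inductive descent to $\Grp/Z$ also requires a careful choice of quotient eigenfunction (e.g.\ fiberwise normalization using the character $\chi$) so that the resulting object remains extremal with respect to the induced action, which is where most of the technical bookkeeping will lie.
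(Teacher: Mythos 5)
First, a framing point: the paper does not prove this statement at all — it is quoted verbatim from \cite[Theorem 25.8]{W00} (Margulis's theorem in Woess's formulation), and the paper only adds, in the discussion of Theorem \ref{thm:margulis2}, the separate argument that the ratio $\f\circ g(\vv)/\f(\vv)$ is independent of $\vv$ (which you propose to prove first; that part of your reduction is fine and indeed stronger than the quoted statement). So your attempt has to be measured against the standard Margulis--Woess proof, whose skeleton — nontrivial center, a character of the center, induction on nilpotency class — you have correctly identified.

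The genuine gap is the engine driving your abelian base case (and, implicitly, the central-element step): the assertion that ``commuting abelian automorphisms on the Choquet simplex of extremals force $\f\circ g$ to lie on the same ray as $\f$'' is false as a general principle. An abelian group of affine automorphisms of a compact convex set is only guaranteed a common fixed point (Markov--Kakutani); it can permute the extreme points nontrivially — a rotation of a triangle already does, and nothing in a Choquet--Deny-style argument by itself rules this out. What actually forces $\f\circ g$ onto the ray of $\f$ is a comparability estimate combined with the minorant characterization of extremality: when $g$ is central (or when $\Grp$ is abelian) and the action is cocompact, the displacement $\sup_{\vv} d(\vv,g\vv)$ is finite, since $d(\gamma o, g\gamma o)=d(\gamma o, \gamma g o)=d(o,go)$ for a finite set of orbit representatives $o$; a Harnack-type chain then gives $\f\circ g\le C_g\,\f$ pointwise, so $C_g^{-1}\,\f\circ g$ is an eigenfunction minorant of the extremal $\f$ and hence a scalar multiple of it. This is precisely where nilpotency enters (for a general $g$ the displacement involves a conjugate and is unbounded, which is why the conclusion fails for, say, free groups acting on their Cayley graphs), and your proposal never records it. A secondary gap is the inductive descent: when the character $\chi$ on $Z(\Grp)$ is nontrivial, $\f$ does not descend to a function on the quotient graph; one must pass to a $\chi$-twisted (induced) operator on the quotient, check that it is again a nonnegative $\Grp/Z$-invariant operator on a locally finite graph with cocompact action, and verify that extremality is preserved under the correspondence. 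You flag this as bookkeeping, but together with the missing Harnack/minimality argument it is the actual substance of the cited proof, not a refinement of yours.
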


\begin{proof}[Discussion of proof of Theorem \ref{thm:margulis2}]
This theorem is sufficiently different from Theorem \ref{thm:margulis1} that it warrants some discussion. First of all, Woess' statement involves
stochastic matrices, but this can be resolved by Remark \ref{rem:reduction}. Second, Woess only states that for each $\vv$, the function 
$$h_\vv(\vv)=\log \big(\f \circ g(\vv)/\f(\vv)\big)$$
is a group homomorphism. To see that this is independent of the choice of $\vv$, take an adjacent vertex $\vw$. We will use the fact that since
$\A \f=\lambda \f$, whenever $\va \sim \vb$ we have $\lambda^{-1} \f(b) \leq \f(a) \leq \lambda \f(b)$. Since automorphisms preserve adjacency, for all $g \in \Grp$ we have
$$\frac{\lambda^{-1} \f \circ g(\vw)}{\lambda \f(\vw)} \leq \frac{\f \circ g(\vv)}{\f(\vv)} \leq
\frac{\lambda \f \circ g(\vw)}{\lambda \f(\vw)}$$
Therefore we have $h_\vw(g)-2\log \lambda \leq h_\vv(g) \leq h_\vw(g)+2\log \lambda.$ This equation is independent of the choice of $g$,
so we may apply it to $g^n$ for all $n$. Then we may use the fact that $h_\vv(g^n)=n h_\vv(g)$ and $h_\vv(g^n)=n h_\vv(g)$ to say that
$$n h_\vw(g)-2\log \lambda \leq n h_\vv(g) \leq n h_\vw(g)+2\log \lambda \quad \textrm{for all $n$.}$$
Therefore $h_\vv(g)=h_\vw(g)$. 

Finally, we need to say something about existence and uniqueness of the positive eigenfunction associated to a group homomorphism. 
Fix a group homomorphism $h:\Grp \to \R$. Then consider the subspace of $\R^\V$ given by 
$$L=\{\f \in \R^\V~:~\textrm{$\f \circ g(\vv)=e^{h(g)} \f(\vv)$ for all $g \in \Grp$}\}.$$
Since $\Grp$ is acting by graph automorphisms, this subspace is $\A$-invariant. Moreover, $L$ has dimension equal to the number of elements 
of $\Grp \backslash \G$. Choose representatives $\vv_1, \ldots, \vv_k$ for the orbit equivalence classes
in $\Grp \backslash \G$. The functions $\f_1, \ldots, \f_k$ defined so that $\f_i(\vv_j)=1$ if $i=j$ and $\f_i(\vv_j)=0$ otherwise
form a basis for $L$. Observe that a function in $L$ is positive if and only if it can be written as a positive linear combination of $\f_1, \ldots, \f_k$. 
Finally, observe that $\A$ acts as a Perron-Frobenius matrix in this basis. Therefore, there is a unique positive eigenvector up to scaling. 
\end{proof}

\subsection{Nilpotent covers of translation surfaces}
\name{sect:covers}
Suppose that $S$ is an infinite translation surface, and let $\Grp$ be a discrete group which acts faithfully on $S$ by homeomorphisms which are translations in local coordinates. Then, $S/\Grp$ is also a translation surface.
If $S/\Grp$ is a closed surface, we call $S$ a {\em $\Grp$-cover of the translation surface $S/\Grp$}. We will let $D$ be a measurable fundamental domain for the $\Grp$-action. 

Let $\chi:\Grp \to \R$ be a group homomorphism. We call an $F_{\bm \theta}^t$-invariant measure, $\nu$, 
{\em $\chi$-Maharam} if $\nu(D)=1$ and for each $g \in \Grp$, we have $\nu \circ g=\frac{1}{e^{\chi(g)}} \nu$.
We call $\nu$ Maharam because the induced invariant measure for the return map to a periodic section is Maharam in the sense of \S \ref{ss:Maharam}.
Note that some scalar multiple Lebesgue measure is $\chi$-Maharam when $\chi$ is the trivial homomorphism.

\begin{theorem}[Nilpotent covers]
\name{thm:nilpotent cover}
Suppose that the connected translation surface $S$ is a $\Grp$-cover of a closed translation surface, where $\Grp$ is a discrete nilpotent group. Further suppose that $\sC$ and $\sD$ have two twistable cylinder decompositions and that the associated intersection graph has no vertices of valance one (see Proposition \ref{prop:valance one criterion}). Then, in any 
$(\sC, \sD)$-renormalizable direction $\bm \theta$, the following statements are satisfied. 
\begin{enumerate}
\item For every homomorphism $\chi:\Grp \to \R$, there is a unique $\chi$-Maharam measure of $F_{\bm \theta}^t$.
\item The collection of locally finite ergodic $F_{\bm \theta}^t$-invariant measures is the collection of scalar multiples of the Maharam measures.
\end{enumerate}
\end{theorem}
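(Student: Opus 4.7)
The plan is to transfer the statement to a surface of the form $S(\G,\w_1)$, invoke the \hyperref[thm:ergodic2]{Ergodic Measure Characterization Theorem} to match ergodic invariant measures with extremal positive eigenfunctions, then use the Margulis-type Theorem \ref{thm:margulis2} to match those eigenfunctions with homomorphisms $\Grp\to\R$. First, by Proposition \ref{prop:cylinder decomp}, there is an $A\in\GL(2,\R)$ and a positive eigenfunction $\w_1$ of $\A$ (with eigenvalue $\lambda_1$) on a connected bipartite ribbon graph $\G$ such that $A^{-1}(S)$ is translation-equivalent to $S(\G,\w_1)$ and sends $\sC$, $\sD$ to the canonical horizontal and vertical cylinder decompositions. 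Because $\Grp$ acts on $S$ by translations and $S/\Grp$ is closed, the induced action on $\G$ is by ribbon-graph automorphisms and cocompact, and $\w_1\circ g=\w_1$ for every $g\in\Grp$. Set $\btheta_1=A^{-1}(\btheta)/\|A^{-1}(\btheta)\|$, which is $\lambda_1$-renormalizable with some shrinking sequence $\langle g_n\rangle$; the valence-one hypothesis together with Proposition \ref{prop:valance one criterion} means we may apply Theorem \ref{thm:ergodic2}.

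By Theorem \ref{thm:ergodic2}, the locally finite ergodic $\hat\F_{\btheta_1}$-transverse measures on $S(\G,\w_1)$ are in bijection with extremal positive eigenfunctions $\w_2$ of $\A$ (with eigenvalues $\lambda_2$), via the pullback construction $\mu_{\w_2}=\Psi_{\btheta_1}^{-1}\circ\Xi\bigl(P_{\w_2}(\btheta_2)\bigr)$ where $\btheta_2=\btheta(\langle g_n\rangle,\lambda_2)$. Normalizing by $\w_2(\vo)=1$ at a chosen root, Theorem \ref{thm:margulis2} parametrizes these eigenfunctions by homomorphisms $h\colon\Grp\to\R$, where $\w_2$ and $h$ are related by $\w_2\circ g=e^{h(g)}\w_2$. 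Composing the two bijections produces a one-to-one correspondence between ergodic transverse measures (up to positive scaling) and homomorphisms $h\colon\Grp\to\R$.

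The key calculation is to show that the measure $\mu_{\w_2}$ corresponding to $h$ is $(-h)$-Maharam, after which both assertions follow. For a vertical saddle connection $\sigma_\va$ crossing $\cyl_\va$, the definitions give $\Psi_{\btheta_1}(\mu_{\w_2})(\hom{\sigma_\va})=-x_2\w_2(\va)$, and under a deck transformation $g\in\Grp$ the saddle connection $\sigma_\va$ is carried to $\sigma_{g(\va)}$ (and the horizontal case is analogous). Using that pushforward satisfies $\Psi_{\btheta_1}(g_*\mu_{\w_2})(\hom{\sigma_{g(\va)}})=\Psi_{\btheta_1}(\mu_{\w_2})(\hom{\sigma_\va})=-x_2\w_2(\va)$ while $\Psi_{\btheta_1}(\mu_{\w_2})(\hom{\sigma_{g(\va)}})=-x_2\w_2(g(\va))=-x_2 e^{h(g)}\w_2(\va)$, injectivity of $\Psi_{\btheta_1}$ (Lemma \ref{lem:injective}, applicable via Theorem \ref{thm:pr}) yields $g_*\mu_{\w_2}=e^{-h(g)}\mu_{\w_2}$, which is exactly the $(-h)$-Maharam scaling law $\mu_{\w_2}\circ g=e^{h(g)}\mu_{\w_2}$ after rewriting in terms of $\chi=-h$. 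Rescaling so that $\mu_{\w_2}(D)=1$ for a fundamental domain $D$ delivers a genuine $\chi$-Maharam measure. Conversely, since every locally finite ergodic invariant measure arises from a unique extremal eigenfunction, hence a unique homomorphism $h$, the characterization of ergodic measures as scalar multiples of Maharam measures (statement (2)) and the existence and uniqueness of a $\chi$-Maharam measure for each $\chi$ (statement (1)) follow at once.

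The principal obstacle I anticipate is the bookkeeping of the equivariance computation in the last paragraph: one must track carefully how the $\Grp$-action on $S(\G,\w_1)$ (by translations) interacts with the two sign conventions, namely the sign in $(\nu\circ g)(A)=\nu(g(A))$ versus $g_*\nu$, and the sign in $\w_2\circ g=e^{h(g)}\w_2$, to be sure the resulting $\chi$ is a homomorphism rather than its negative, and that the normalization $\nu(D)=1$ agrees with the $\w_2(\vo)=1$ normalization of Margulis's theorem up to a harmless rescaling. Beyond that, verifying that the bijection $\{\chi\colon\Grp\to\R\}\leftrightarrow\{\text{ergodic }\flow^t\text{-invariant measures}\}/\mathbb{R}_{>0}$ respects passage between transverse measures and flow-invariant measures is routine, since the straight-line flow commutes with the $\Grp$-action and the two notions are related by a $\Grp$-invariant local product with Lebesgue measure along leaves.
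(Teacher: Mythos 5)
Your proposal reproduces the paper's route for the bulk of the argument: reduce to $S(\G,\w_1)$ via Proposition \ref{prop:cylinder decomp}, let $\Grp$ act cocompactly on $\G$, invoke Theorem \ref{thm:ergodic2} to parametrize ergodic transverse measures by extremal positive eigenfunctions, then invoke Theorem \ref{thm:margulis2} to parametrize those eigenfunctions by homomorphisms $h\colon\Grp\to\R$. Your equivariance computation via $\Psi_{\btheta_1}$ and injectivity is a mildly different way of seeing the Maharam scaling law than the paper's (the paper argues geometrically that the $\Grp$-action on $S(\G,\f)$ is conjugate to the one on $S(\G,\w_1)$ but scales area by $e^{h(g)}$, so the pullback of Lebesgue transverse measure inherits the $e^{-h(g)}$ scaling), but they accomplish the same thing and yours is valid.

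The genuine gap is at the very end, where you assert that "the existence and uniqueness of a $\chi$-Maharam measure for each $\chi$ (statement (1)) follow at once." They do not. What you have established is that for each $\chi$ there is exactly one \emph{ergodic} $\chi$-Maharam measure. But the definition of a $\chi$-Maharam measure does not impose ergodicity: a priori there could be a non-ergodic locally finite invariant measure $\mu$ satisfying $\mu(D)=1$ and $\mu\circ g = e^{-\chi(g)}\mu$ for all $g$, obtained as a nontrivial mixture $\mu = \int_{\mathrm{Hom}(\Grp,\R)} \nu_h\,dm(h)$ of the ergodic Maharam measures $\nu_h$. Your argument gives no reason why such a mixture cannot be $\chi$-Maharam. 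The paper closes this with a separate growth estimate: if the decomposing measure $m$ gives positive mass to the set $\{h : h(g) \ge k\}$ for some $g$ and some $k > \chi(g)$, then $\mu\circ g^{-n}(D) \ge e^{nk}\epsilon$ grows strictly faster than the $e^{n\chi(g)}$ that the $\chi$-Maharam condition demands, a contradiction. You need to supply this (or an equivalent argument showing any $\chi$-Maharam measure is automatically ergodic) to finish statement (1).
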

\begin{proof}[Discussion of proof]
We begin by showing that all locally finite ergodic $F^t_\btheta$-invariant measures are $\chi$-Maharam,
and there is one for each $\chi$.
Since our surface has two twistable cylinder decompositions, we may assume that (up to an affine change of coordinates) $S=S(\G,\w)$ for some graph $\G$ and positive eigenfunction $\w$. Since $\Grp$ acts on $S$
by translation symmetries, it induces a $\Grp$ action on $\G$. The quotient surface $S/\Grp$ inherits a pair of quotient
cylinder decompositions whose intersection data is given by the quotient graph $\G/\Grp$. Since $S/\Grp$ is a compact translation surface, we know that $\G/\Grp$ is a finite graph. Then since $\Grp$ is nilpotent,
Theorem \ref{thm:margulis2} gives a description of the extremal positive eigenfunctions of the adjacency operator. Namely, they are in bijection with the collection of group homomorphisms $h:\Grp \to \R$. 
Fix such a $\chi$. Then, by Theorem \ref{thm:margulis2}, there is an extremal positive eigenfunction so that 
$h(g)=\log \big(\f \circ g(\vv)/\f(\vv)\big)$ for all $g \in \Grp$ and all vertices $\vv$ of $\G$. 
The \hyperref[thm:ergodic2]{Ergodic Measure Characterization Theorem} guarantees that the locally finite ergodic invariant measures arise from a pullback construction from surfaces built from such extremal positive eigenfunctions. (That is, we pullback the transverse measures. The associated $F_{\bm \theta}^t$-invariant measures is locally
a product of this transverse measure and Lebesgue measure in the orbit direction.)
Observe that the surface $S(\G,\f)$ has a $\Grp$-action which is conjugate to the action
on $S=S(\G,\w)$, but which acts by dilation. Namely, the $g$-action scales area by a dilation with expansion constant $e^{h(g)}$. It follows that the pullback measure is a $\chi$-Maharam measure where $\chi:\Grp \to \R$ is given by $\chi(g)=-h(g)$.

It remains to show that there are no other $\chi$-Maharam measures. Fix $\chi$, and let $\mu$ be a $\chi$-Maharam measure.
We already know the classification of ergodic invariant measures. Namely, for each group homomorphism $h:\Grp \to \R$, there is a unique ergodic invariant $h$-Maharam measure, which we will denote my $\nu_h$. It then follows that there is a measure $m$ on the collection of such group homomorphisms, $\textrm{Hom}(\Grp,\R)$, so that
$$\mu(A)=\int_{\textrm{Hom}(\Grp,\R)} \nu_h(A)~dm \quad \text{for all measurable $A \subset S$}.$$
We claim that $\mu=\nu_\chi$. For this, it suffices to show that the support of the measure $m$ only includes
$\chi$. Suppose the support included some $h_s \neq \chi$. Since $h_s \neq \chi$, there is a $g$ so that
$h_s(g)>\chi(g)$. Define the constant $k=\frac{h_s(g)+\chi(g)}{2}$ and the set 
$$H=\{h \in \textrm{Hom}(\Grp,\R)~:~h(g) \geq k\}.$$
Then, $m(H)=\epsilon$ for some $\epsilon>0$. Let $D$ be the fundamental domain as in the definition of $\chi$-Maharam. Then $\mu(D)=1$ and $\nu_h(D)=1$ for all $h$. For each integer $n>0$, we have
$$
\begin{array}{rcl}
  \mu \circ g^{-n}(D) & = & \int_{\textrm{Hom}(\Grp,\R)} \nu_h \circ g^{-n}(D)~dm 
  	\geq \int_{H} \nu_h \circ g^{-n}(D)~dm\\
  & = &  \int_{H} e^{n h(g)} \nu_h(D)~dm=\int_{H} e^{n h(g)}~dm \geq 
                 \int_{H} e^{n k}~dm = e^{n k} \epsilon. 
\end{array}
$$
As $k>\chi(g)$, by taking $n$ sufficiently large we can guarantee that
$\mu\circ g^{-n}(D)> e^{n \chi(g)}$. But, we must have equality here for $\mu$ to be $\chi$-Maharam.
\end{proof}

\begin{proof}[Discussion of proof of Theorem \ref{thm:nilpotent}]
Let $T_\alpha$ be a skew rotation defined using a nilpotent group 
$\Grp$ with generators $\gamma_1, \ldots, \gamma_n$ satisfying equation \ref{eq:relation}. 
Assume that the unit vector in direction $(\alpha-\frac{1}{n}, \frac{1}{n})$ is $n$-renormalizable.
Then as described by Proposition \ref{prop:graph}, $T_\alpha$ arises from a return map of straight-line flow $F_{\btheta}^t$ to a section on a surface $S=S(\G,\w_{\frac{1}{n}})$ 
 in an $n$-renormalizable direction $\btheta$. The description of the locally finite ergodic invariant measures of $F^t_\btheta$ given in Theorem \ref{thm:nilpotent cover} gives rise to a similar characterization for $T_\alpha$. This characterization is given in the statement of Theorem \ref{thm:nilpotent}.
\end{proof}

We establish one more corollary to cover the case where we have a lot of different twistable cylinder decompositions. We remark that the papers \cite{HS09} and \cite{HW10} give many examples of $\Z$-covers of closed translation surfaces which admit a twistable cylinder decompositions in a dense set of directions.

\begin{corollary}
\name{cor:density}
Suppose that $S$ is a $\Grp$-cover of a closed translation surface, where $\Grp$ is nilpotent. Suppose also that $S$ has twistable cylinder decompositions in a dense set of directions. Then, there is a dense set of directions $\Theta$ 
of Hausdorff dimension larger than half
so that statements (1) and (2) of Theorem \ref{thm:nilpotent cover} are satisfied for all $\bm \theta \in \Theta$. 
\end{corollary}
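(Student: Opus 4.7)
The plan is to build $\Theta$ as a countable union of sets of renormalizable directions, each coming from a different pair of twistable cylinder decompositions on $S$. Concretely, let $\{\sC_i\}_{i \in \N}$ be a countable collection of twistable cylinder decompositions on $S$ whose directions are dense in $\Circ$; such a collection exists by hypothesis. For each pair $(i,j)$ of indices whose associated directions are not parallel, apply Proposition \ref{prop:cylinder decomp} to view $S$ (up to an affine change of coordinates) as $S(\G_{ij}, \w_{ij})$. If the resulting graph has a vertex of valance one, subdivide each cylinder of $\sC_i$ and $\sC_j$ in half, as explained in the paragraphs after Proposition \ref{prop:valance one criterion}; the resulting graph $\G'_{ij}$ has no vertex of valance one and still encodes $S$ as a $\Grp$-cover of a closed translation surface. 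Let $\Theta_{ij} \subset \Circ$ denote the set of $(\sC_i, \sC_j)$-renormalizable directions (using the possibly subdivided decompositions), and set $\Theta = \bigcup_{i,j} \Theta_{ij}$.

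The substantive content to check is then threefold. First, for each admissible pair, $\Theta_{ij}$ has Hausdorff dimension strictly greater than $\tfrac12$: this is immediate from Remark \ref{rem:renormalizable directions}, since $\Theta_{ij}$ is the image under an affine map of $\Rn_{\lambda_{ij}}$, which has dimension $>\tfrac12$ for all $\lambda_{ij} \geq 2$. In particular, $\Theta$ itself has Hausdorff dimension $>\tfrac12$. Second, every $\bm \theta \in \Theta_{ij}$ satisfies conclusions (1) and (2) of Theorem \ref{thm:nilpotent cover}; this is just Theorem \ref{thm:nilpotent cover} applied to the pair $(\sC_i, \sC_j)$ (after subdivision, which does not affect the $\Grp$-cover structure, the nilpotency of $\Grp$, or the notion of $\chi$-Maharam measure). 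Third, one must verify density of $\Theta$ in $\Circ$, and this is the step I expect to be the main obstacle.

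For density, fix an open arc $J \subset \Circ$. By the density hypothesis, choose $i$ with the direction $\u_i$ of $\sC_i$ in $J$, and choose any $j$ with $\sC_j$ in a direction $\u_j$ not parallel to $\u_i$. The group $\rho_{\lambda_{ij}}^G$ is a non-elementary Fuchsian group for which $\u_i$ and $\u_j$ (after the affine transformation $A^{-1}$ from Proposition \ref{prop:cylinder decomp}) are parabolic fixed points, hence belong to the limit set $\Lambda(\rho_{\lambda_{ij}}^G) \subset \R\P^1$. By the standard fact that every orbit of a non-elementary Fuchsian group on its limit set is dense in the limit set, points of $\Lambda(\rho_{\lambda_{ij}}^G)$ accumulate on the image of $\u_i$ in $\R\P^1$. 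Since only countably many directions are removed from the limit set to form $\Rn_{\lambda_{ij}}$ (the orbits of the parabolic fixed points and of the attracting eigenvectors of $\rho_{\lambda_{ij}}^{vh^{-1}}$-conjugates), these accumulating points can be taken inside $\Rn_{\lambda_{ij}}$. Pulling back under $A$, we obtain $(\sC_i,\sC_j)$-renormalizable directions accumulating on $\u_i$, and hence lying in $J$ once one is sufficiently close. Thus $J \cap \Theta_{ij} \neq \emptyset$, proving density of $\Theta$.

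The main obstacle, as indicated above, is this density argument: one needs to know that the accumulation points of the limit set in $\Circ$ include the directions of every twistable cylinder decomposition, and that countably many renormalizable directions suffice to intersect every open arc in $\Circ$. Once this is in place, the corollary follows by directly invoking Theorem \ref{thm:nilpotent cover} on each pair.
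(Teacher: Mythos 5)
Your proposal is correct and follows essentially the same route as the paper's sketch: fix a countable dense family of twistable cylinder decompositions, pass to pairs (subdividing to avoid valance-one vertices when needed), take the union of renormalizable directions, invoke Theorem~\ref{thm:nilpotent cover} for each pair, and get the Hausdorff dimension bound from Remark~\ref{rem:renormalizable directions}. The only step the paper leaves as an assertion — that the renormalizable directions of a pair $(\sC_i,\sC_j)$ accumulate on $\u_i$ and $\u_j$ — you justify by observing that $A^{-1}\u_i$ and $A^{-1}\u_j$ are parabolic fixed points lying in the perfect set $\Lambda(\rho_{\lambda_{ij}}^G)$, from which only countably many points are removed to form $\Rn_{\lambda_{ij}}$; this is exactly the right reason, and it completes the density argument the paper leaves implicit.
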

\begin{proof}[Sketch of proof]
Fixing any pair of decompositions $\sC$ and $\sD$, we obtain a collection of directions for which Theorem \ref{thm:nilpotent cover}. This uses the discussion at the end of Appendix \ref{sect:cylinder decompositions}. It is either the $(\sC,\sD)$-renormalizable directions, or renormalizable directions of the pair of decompositions obtained by subdividing each cylinder. In either case, the set of renormalizable directions accumulates on the directions of the two cylinder decompositions $\sC$ and $\sD$. We take $\Theta$ to be the union of all such directions over all pairs of decompositions. Because of this accumulation, $\Theta$ is dense. The statement about Hausdorff dimension follows from Remark \ref{rem:renormalizable directions}.
\end{proof}

\subsection{Example: The Ehrenfest Wind-tree model}
\name{sect:Ehrenfest}
In \cite{HW80}, Hardy and Weber began the study billiards in the plane with a periodic family of rectangular barriers. We will follow the treatment of this dynamical system given by Hubert,  Leli\`evre and Troubetzkoy \cite{HLT11}. Further works on these systems include \cite{Troubetzkoy10} and \cite{DHL11}.
These systems are parameterized by a pair of real numbers $a$ and $b$ taken from the interval $(0,1)$. For each $m,n \in \Z$, define rectangle $R_{m,n}=(m,m+a) \times (n,n+b)$. We consider billiards in the table $T_{a,b}=\R^2 \sm \bigcup_{m,n \in \Z} R_{m,n}$. 

The billiard flow on $T_{a,b}$ decomposes into invariant sets corresponding to the fact that a single billiard trajectory can travel in only four directions. These invariant sets are given the structure of a translation surface by the Zemljakov-Katok unfolding construction \cite{ZK}. The billiard flow restricted to any of these invariant sets is conjugate to a straight-line flow on a translation surface $X_{a,b}$. The surface $X_{a,b}$ has a co-compact $\Z^2$ action.
See \cite{HLT11} for more details. For the following, see \cite[Theorem 1]{HLT11}:

\begin{theorem}[Periodic directions in the Wind-tree model]
\name{thm:hlt}
Suppose $a$ and $b$ are rational numbers in $(0,1)$ which can written as the ratio of two integers with odd numerator and even denominator. 
Then, there are twistable cylinder decompositions in a dense set directions on $X_{a,b}$.
\end{theorem}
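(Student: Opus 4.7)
The plan is to exploit the fact that $X_{a,b}$ is a $\Z^2$-cover of a finite-area translation surface $Y_{a,b}=X_{a,b}/\Z^2$ and to reduce the problem to a statement about the affine automorphism group of the compact quotient. Concretely, I would first analyze the geometry of $Y_{a,b}$: it is a translation surface of finite genus obtained by gluing four copies of a torus with a rectangular slit, equipped with a $(\Z/2)^2$ symmetry inherited from the $180^\circ$ rotation about the center of each obstacle. Under the arithmetic hypothesis that $a$ and $b$ have reduced representations $p/q$ and $p'/q'$ with $p,p'$ odd and $q,q'$ even, this quotient is commensurable (via an explicit affine map) with an origami, i.e., a square-tiled surface, and therefore has a Veech group commensurable with a finite-index subgroup of $\SL(2,\Z)$.

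Second, I would invoke the standard fact that for any origami the set of fixed points of parabolic elements of the Veech group is dense in $\R\P^1$, and that each such parabolic fixed direction $\theta$ on $Y_{a,b}$ is periodic, giving a cylinder decomposition $\sC_Y^\theta$ whose moduli are all rational multiples of a common constant and whose twisting constant is therefore finite. This is precisely the twistability condition.

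Third, I would lift the decomposition $\sC_Y^\theta$ to $X_{a,b}$. Because $\theta$ is a parabolic direction of a \emph{finite-area} translation surface, the straight-line flow in direction $\theta$ is periodic on $Y_{a,b}$, and so every leaf is closed on $Y_{a,b}$. On the cover $X_{a,b}$, a leaf is either closed or an infinite strip (isometric to a horizontal line); which of these occurs is governed by whether the holonomy of the core curve of the base cylinder in $\Z^2$ (given by integration of the cocycle defining the cover) is trivial or not. The directions for which any cylinder of $\sC_Y^\theta$ lifts to a strip form a countable set (one condition per cylinder, cutting out a proper subvariety in the space of parabolic directions obtained by Veech-group translates of horizontal and vertical), so after removing this countable set we still have a dense set of directions where the lift $\sC_X^\theta$ is a genuine cylinder decomposition. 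Since each lifted cylinder has the same modulus as the cylinder of $\sC_Y^\theta$ it covers, the twisting constant is preserved and $\sC_X^\theta$ is twistable.

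The main obstacle will be the second and third steps: identifying $Y_{a,b}$ concretely as an origami under the stated arithmetic hypothesis, and verifying that the lift of cylinders of $\sC_Y^\theta$ to $X_{a,b}$ remains a cylinder decomposition for a dense subset of the parabolic directions. The first piece is a cut-and-paste geometric argument that reveals why the odd-over-even condition is exactly what makes the quotient origami-like (the parity ensures that horizontal and vertical cylinders on $Y_{a,b}$ have a common rational height-to-circumference ratio). The second piece amounts to checking that the natural $\Z^2$-valued cohomology class on $Y_{a,b}$ pairs non-trivially with the cylinder cores in all but countably many parabolic directions, which can be verified by an explicit computation of the holonomy cocycle in the horizontal and vertical directions together with the density of Veech-group translates in $\R\P^1$.
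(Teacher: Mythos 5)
The paper does not prove this theorem; it attributes it to Hubert--Leli\`evre--Troubetzkoy \cite[Theorem 1]{HLT11}, so there is no internal proof to compare against. Judged on its own terms, however, your proposal has a fatal gap in the third step, and the gap is diagnostic: the argument as written would apply uniformly to all rational $a,b$, yet the theorem is known to fail outside the stated parity class.

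The density argument is the problem. You argue that the ``bad'' directions (those in which some cylinder of $\sC_Y^\theta$ lifts to an infinite strip on $X_{a,b}$) form a countable set, and you conclude density of the ``good'' directions after discarding them. But the parabolic directions of $Y_{a,b}$ are themselves countable --- they are the cusps of a Fuchsian group --- so ``removing a countable set'' has no force here: a countable subset of a countable dense set can be the whole set. Nor is there any ``variety'' being cut out; the set of parabolic directions carries no algebraic parametrization along which a proper subset would automatically fail to be dense. The failure is not hypothetical: the paper records (citing \cite[Theorem 2]{HLT11}) that when $a$ and $b$ both have even numerator and odd denominator, $X_{a,b}$ admits \emph{no} cylinder decomposition in \emph{any} direction, even though $Y_{a,b}$ is still square-tiled with a dense set of parabolic directions. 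So in that case your ``bad'' set is everything. Your sketch never deploys the parity hypothesis in a way that could distinguish the two regimes; the remark in your first step cannot be doing that work, since the compact quotient $Y_{a,b}$ is affinely square-tiled for \emph{every} rational pair $(a,b)$, regardless of parity.

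What the odd-over-even hypothesis actually governs is how the $\Z^2$-valued cohomology class $f\in H^1(Y_{a,b};\Z^2)$ defining the cover interacts with the affine group and with the period data of periodic directions. The substantive content one must supply is a computation showing that, under the stated parity, the pairing of $f$ against the core curves of cylinders in parabolic directions vanishes for a dense family of directions --- typically by exhibiting symmetries of $Y_{a,b}$ compatible with the covering class and showing that $f$ lies in the part of cohomology that transforms tautologically under the affine action, so that the relevant holonomies can be expressed in terms of periods that are forced to vanish. Absent that mechanism, nothing in your outline distinguishes the odd-over-even case from the even-over-odd case, and the argument cannot be closed.
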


It should be noted that Theorem \ref{thm:hlt} is quite delicate. It is also shown in \cite[Theorem 2]{HLT11} that when $a$ and $b$ can both be written as rationals with even numerator and odd denominator, then $X_{a,b}$ never admits a decomposition into cylinders.

Because of the above theorem, Corollary \ref{cor:density} can then be specialized to the following.

\begin{corollary}[Ergodic directions in the wind-tree model]
\name{cor:Ehrenfest}
Let $a$ and $b$ be as in the above theorem. Then, there is a dense set of directions of Hausdorff dimension larger than half for which the billiard flow on $T_{a,b}$ is ergodic. For each of these directions, statements (1) and (2) of Theorem \ref{thm:nilpotent cover} hold for the surface $X_{a,b}$ as well.
\end{corollary}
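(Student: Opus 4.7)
The plan is to apply Corollary \ref{cor:density} directly to the wind-tree cover $X_{a,b}$. The Zemljakov-Katok unfolding of $T_{a,b}$ is equivariant under the $\Z^2$-translation symmetry of the table, so $X_{a,b}$ is naturally a $\Z^2$-cover of the compact translation surface $X_{a,b}/\Z^2$, and $\Z^2$ is abelian and hence nilpotent. Under the numerator/denominator parity hypothesis on $a$ and $b$, Theorem \ref{thm:hlt} of Hubert--Leli\`evre--Troubetzkoy supplies twistable cylinder decompositions of $X_{a,b}$ in a dense set of directions. Thus both hypotheses of Corollary \ref{cor:density} are met, and the corollary produces a dense subset $\Theta \subset \Circ$ of Hausdorff dimension strictly greater than $\frac{1}{2}$ for which statements (1) and (2) of Theorem \ref{thm:nilpotent cover} hold on $X_{a,b}$ in every direction $\btheta \in \Theta$. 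This is exactly the second assertion of the corollary.

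For the first assertion, I identify Lebesgue measure with the Maharam measure for the trivial homomorphism $\chi_0 \equiv 0 : \Z^2 \to \R$. Indeed, the defining conditions from \S\ref{sect:covers} -- that $\nu(D) = 1$ on a fundamental domain $D$ and $\nu \circ g = e^{-\chi_0(g)} \nu = \nu$ for every $g \in \Z^2$ -- force the unique $\chi_0$-Maharam measure to coincide with suitably normalized Lebesgue measure on $X_{a,b}$. Statement (2) of Theorem \ref{thm:nilpotent cover} then asserts that the locally finite ergodic invariant measures are precisely the scalar multiples of Maharam measures, so Lebesgue itself is ergodic for $F^t_{\btheta}$ in every direction $\btheta \in \Theta$. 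Unfolding translates this back into ergodicity of the billiard flow on $T_{a,b}$ in the direction $\btheta$, restricted to the invariant set of trajectories travelling in the four related directions $(\pm \theta_1, \pm \theta_2)$.

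There is no real obstacle: the argument is essentially a direct application of Corollary \ref{cor:density} combined with the standard unfolding dictionary. The only mild verification needed is that the Zemljakov-Katok unfolding genuinely realises $X_{a,b}$ as a $\Z^2$-cover in the sense required by the corollary, and that ergodicity of the straight-line flow on $X_{a,b}$ transfers cleanly to billiard ergodicity on $T_{a,b}$; both are standard and are spelled out in \cite{HLT11} and \cite{ZK}.
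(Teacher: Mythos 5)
Your proposal is correct and follows exactly the route the paper intends: verify the hypotheses of Corollary~\ref{cor:density} for $X_{a,b}$ (the $\Z^2$-cover structure from unfolding and the dense set of twistable cylinder decompositions supplied by Theorem~\ref{thm:hlt}), then observe that Lebesgue measure is the Maharam measure for the trivial homomorphism so that ergodicity in the selected directions falls out of statement~(2) of Theorem~\ref{thm:nilpotent cover}. The paper states this only as a one-line specialization of Corollary~\ref{cor:density}; your write-up fills in the same details the reader is expected to supply.
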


The conclusion here should be contrasted with the work of Fr{\c{a}}czek and Ulcigrai, where it is shown
that when $a$ and $b$ are rational numbers, the billiard flow on $T_{a,b}$ is not ergodic in almost every direction \cite[Theorem 1.2]{FU14}. (This result is also shown to hold for almost every $a$ and $b$.)
\section{Unique ergodicity}
\name{sect:unique ergodicity}

Recall that a dynamical system is said to be uniquely ergodic if there is only one invariant probability measure. 
We have been considering two dynamical systems associated to our surfaces $S(\G,\w)$: the infinite IETs arising from the return maps to the horizontal edges of the rectangles making up the surface, and the straight-line flows on these surfaces. In both cases we will show that when an
invariant probability measure is unique whenever it exists.

The surface in Figure \ref{fig:triple staircase} and the surfaces $X_\alpha$ defined for a rational parameter $0<\alpha<1$ defined in \cite[\S 3]{Chamanara04} (see \cite[Proposition 11]{Chamanara04} for a description of the multi-twists of $X_\alpha$) and orientation covers of the surfaces $X$ and $Y$ of \cite{CGL} represent surfaces our unique ergodicity theorems apply to. We remark that Trevi{\~n}o has a criterion for ergodicity of the straight-line flow which likely also applies here \cite[Theorem 2]{Trev14}.

\subsection{Unique ergodicity of infinite IETs}
Let $\G$ be an infinite, connected, bipartite, ribbon graph with bounded valance and no vertices of valance one,
and let $\w$ be a positive eigenfunction with eigenvalue $\lambda$ for the adjacency operator on $\G$. Let $X$ denote the union of the horizontal edges of the rectangles making up the surface $S(\G,\w)$. (See Definition \ref{def:S} of the surface $S(\G,\w)$ for a description of these rectangles.) Choose a $\lambda$-renormalizable direction $\btheta$, and let $T:X \to X$ be the infinite IET given by the return map of the straight-line flow $F^t_\btheta:S(\G,\w) \to S(\G,\w)$ to the section $X$. 
Then by the Measure Characterization Theorem (Theorem \ref{thm:ergodic2}), the locally finite ergodic transverse measures to the foliation $\F_\btheta$ are given by a pullback of a Lebesgue transverse measure of surfaces $S(\G,\f)$ where $\f:\R^\V \to \R$ iterates over the extremal positive eigenfunctions of the adjacency operator. By restricting these measures to $X$, we obtain the locally-finite ergodic invariant measures $\mu_\f$ for $T:X \to X$. 

The total measure $\mu_\f(X)$ is related to the $\ell^1$-norm of $\f$, which we define to be
$$\|\f\|_1=\sum_{\vv \in \V} |\f(\v)|.$$
\begin{proposition}
We have $\mu_\f(X)<\infty$ if and only if $\|\f\|_1<\infty$. 
\end{proposition}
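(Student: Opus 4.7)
The plan is to realize $\mu_\f$ explicitly as a pullback under the orbit equivalence of the Orbit Equivalence Theorem (Theorem \ref{thm:topological conjugacy}), then compute. By Corollary \ref{cor:pullback_measure}, the ergodic measure associated to the extremal eigenfunction $\f$ is $\Psi_{\btheta}^{-1} \circ \Xi\big(P_\f(\btheta_2)\big)$, where $\btheta_2 = \btheta(\langle g_n\rangle,\lambda_2) = (x_2,y_2)$ is the matched $\lambda_2$-renormalizable direction. This measure coincides with the pullback, via the homeomorphism $\phi:S(\G,\w)\to S(\G,\f)$, of the Lebesgue $\hat\F_{\btheta_2}$-transverse measure on $S(\G,\f)$. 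The Orbit Equivalence Theorem states that $\phi$ preserves the labeled rectangle decomposition and sends bottom/top/left/right edges to bottom/top/left/right edges, so $\phi(X)$ is precisely the union of horizontal edges of rectangles in $S(\G,\f)$. Hence $\mu_\f(X)$ equals the Lebesgue $\hat\F_{\btheta_2}$-transverse measure of that union.

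Next I would carry out an elementary length computation. A horizontal segment of Euclidean length $\ell$ has Lebesgue $\hat\F_{\btheta_2}$-transverse measure $|(1,0)\wedge\btheta_2|\cdot\ell = |y_2|\cdot\ell$, and $y_2\neq 0$ because $\btheta_2\in\Rn_{\lambda_2}$ is not horizontal (Theorem \ref{thm:no saddles}). Each horizontal edge in $S(\G,\f)$ appears exactly once as the top of a unique rectangle $R_e$, which has horizontal side length $\f\circ\beta(e)$; the map $\North$ of Definition \ref{def:S} being a bijection guarantees no double-counting. Therefore
\begin{equation*}
\mu_\f(X) = |y_2|\sum_{e\in\E}\f\circ\beta(e) = |y_2|\sum_{\vb\in\Beta}\val(\vb)\,\f(\vb).
\end{equation*}
Alternatively, grouping by horizontal cylinder and using $\A\f = \lambda_2\f$, this equals $|y_2|\lambda_2\sum_{\va\in\Alpha}\f(\va)$, and the two expressions agreeing is precisely the eigenvalue equation summed over $\Alpha$.

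Finally I would invoke the bounded valence hypothesis on $\G$: let $M$ be an upper bound for $\val$. The displayed formula gives
\begin{equation*}
\sum_{\vb\in\Beta}\f(\vb)\;\leq\;\frac{\mu_\f(X)}{|y_2|}\;\leq\;M\sum_{\vb\in\Beta}\f(\vb),
\end{equation*}
and the eigenvalue identity $\lambda_2\sum_\va\f(\va) = \sum_\vb\val(\vb)\f(\vb)$ together with its mirror image yields $\sum_\va\f(\va)\asymp\sum_\vb\f(\vb)$. Combining these comparisons shows $\mu_\f(X)<\infty$ if and only if $\sum_\vv\f(\vv) = \|\f\|_1 < \infty$, as claimed. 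There is no real obstacle here beyond correctly unpacking Corollary \ref{cor:pullback_measure} and the edge-labeling statement of Theorem \ref{thm:topological conjugacy}; once the pullback description is in hand, every remaining step is a direct sum over $\E$ combined with the eigenvalue equation and the bounded valence bound.
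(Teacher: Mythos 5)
Your proof is correct and follows essentially the same approach as the paper's: express $\mu_\f(X)$ as a sum of transverse measures of horizontal saddle connections, identify each such measure with a value of $\f$ (times a constant), and then use bounded valence together with the eigenvalue equation to compare this sum with $\|\f\|_1$. You are actually a bit more careful than the published argument in one spot: the sum over horizontal edges is a sum over $e\in\E$, so it carries valence multiplicities $\val(\vb)$, and you track these explicitly; the paper writes the sum directly over $\vb\in\Beta$ without the multiplicity factor, which is a small bookkeeping slip (harmless for the finiteness dichotomy, since $\val$ is bounded above and below). Both proofs then close the argument identically via bounded valence and the summed eigenvalue relation.
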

\begin{proof}
Let $e=\overline{\va \vb}$ be an edge of $\G$. Then, $R_e$ is a rectangle of $S(\G,\w)$. (See Definition \ref{def:S}.) Let $L_e$ denote the lower edge of $R_e$. Because $\mu_\f$ is a pullback of Lebesgue transverse measure on $S(\G,\f)$ in some non-horizontal direction, there is a constant $c>0$ depending on $\f$ so that $\mu_\f(L_e)=c \f(\vb)$. Since $X$ is the union of all lower edges of the rectangles $R_e$, we have the inequality 
$$\mu_\f(X)=\sum_{\vb \in \Beta} c \f(\vb) < c \|\f\|_1.$$
Let $n$ be the largest valance of a vertex in $\G$, and let $\lambda$ be the eigenvalue of $\f$. Then, if the vertex $\va \in \Alpha$ is adjacent to vertex $\vb \in \Beta$, we have 
$\f(\va)< \lambda \f(\vb)$. Since every vertex $\vb \in \Beta$ is adjacent to at most $n$ vertices in $\Alpha$, we have 
$$\|\f\|_1=\sum_{\va \in \Alpha} \f(\va)+\sum_{\vb \in \Beta} \f(\vb)\leq (1+n \lambda)\sum_{\vb \in \Beta} \f(\vb)=\frac{1+n \lambda}{c} \mu_\f(X).$$
Together these inequalities imply that $\mu_\f(X)<\infty$ if and only if $\|\f\|_1<\infty$. 
\end{proof}

\begin{corollary}[Unique ergodicity of $T$]
Let $\G$ be an infinite, connected, bipartite, ribbon graph with bounded valance and no vertices of valance one,
and let $\w$ be a positive eigenfunction with eigenvalue $\lambda$ for the adjacency operator on $\G$.
Let $\btheta$ be a $\lambda$-renormalizable direction.
Then, There is at most one invariant probability measure for the first return map $T:X \to X$ of
the the straight-line flow $F^t_\btheta:S(\G,\w) \to S(\G,\w)$ 
to the union $X$ of horizontal edges of rectangles making up $S(\G,\w)$. 
\end{corollary}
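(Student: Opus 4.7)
The plan is to reduce the uniqueness question to a uniqueness statement about extremal positive eigenfunctions of $\A$ that lie in $\ell^1(\V)$, and then to dispatch that statement using the Martin boundary results collected in Appendix \ref{sect:boundary}. First I would observe that, because Theorem \ref{thm:pr} gives conservativity of the straight-line flow on $S(\G,\w)$, the return map $T$ is conservative on $X$, so every $T$-invariant probability measure admits an ergodic decomposition. The Ergodic Measure Characterization Theorem \ref{thm:ergodic2} then identifies each ergodic component of such a decomposition, after restriction to $X$, with the restriction to $X$ of some measure $\mu_\f$ attached to an extremal positive eigenfunction $\f$ of $\A$. The proposition immediately preceding the corollary says $\mu_\f(X)<\infty$ if and only if $\|\f\|_1<\infty$. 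Hence uniqueness of a $T$-invariant probability measure follows from the claim that, up to scaling, there is at most one extremal positive eigenfunction $\f$ of $\A$ with $\f\in\ell^1(\V)$.

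To establish that claim I would first promote $\ell^1$ to $\ell^2$. Any positive summable function on the countable set $\V$ is automatically bounded (its values must tend to zero along any enumeration of $\V$), and so
\[
\|\f\|_2^2 \;\leq\; \|\f\|_\infty\,\|\f\|_1 \;<\; \infty.
\]
Thus any positive $\ell^1$ eigenfunction of $\A$ lies in $\ell^2(\V)$. Now I invoke Theorem \ref{thm:l2}: the existence of a positive eigenfunction in $\ell^2(\V)$ forces $\A$ to be $r$-positive, where $r$ is the spectral radius of $\A$ on $\ell^2(\V)$, and the eigenvalue of any such eigenfunction must equal $r$. Since $r$-positivity is a strengthening of $r$-recurrence, Theorem \ref{thm:recurrent case} applies and gives that the positive solution of $\A\f=r\f$ is unique up to scaling. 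Consequently, every extremal positive $\ell^1$ eigenfunction is a scalar multiple of this single eigenfunction.

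Putting the two steps together: every ergodic component of every $T$-invariant probability measure $\mu$ comes from the \emph{same} eigenfunction $\f$ up to scaling, and after normalizing to have total mass $1$ on $X$ there is only one candidate probability measure. Therefore the ergodic decomposition of $\mu$ is supported on a single ergodic measure, and $\mu$ itself must equal that measure, proving uniqueness.

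The main subtlety, rather than obstacle, is the translation between the adjacency-operator formulation used here and the stochastic formulation in which Theorems \ref{thm:l2} and \ref{thm:recurrent case} are classically stated; this translation is exactly the one described in Remark \ref{rem:reduction}, so it introduces no new difficulty. Everything else is routine: the ergodic decomposition of a conservative system, the Cauchy--Schwarz-type inequality for $\ell^1\cap\ell^\infty$, and direct citation of the measure characterization and Martin boundary results.
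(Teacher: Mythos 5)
Your proof is correct and follows essentially the same route as the paper's: restrict to an ergodic probability measure, apply the Ergodic Measure Characterization Theorem together with the preceding proposition to pass to a positive $\ell^1$ eigenfunction, promote $\ell^1$ to $\ell^2$, and then cite Theorems \ref{thm:l2} and \ref{thm:recurrent case} to pin down the eigenvalue as the spectral radius and obtain uniqueness of the eigenfunction up to scaling. Your added details — the elementary $\|\f\|_2^2\le\|\f\|_\infty\|\f\|_1$ estimate and the explicit appeal to ergodic decomposition — fill in steps the paper leaves implicit; the only very small slip is attributing the existence of an ergodic decomposition for an invariant \emph{probability} measure to conservativity (it holds unconditionally), but this does not affect the argument.
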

\begin{proof}
Suppose $T:X \to X$ has an invariant probability measure. Then it has an ergodic one, $\mu_\f$. By the prior proposition, $\f$ has finite $\ell^1$-norm. Therefore, it has finite $\ell^2$ norm. In this case, Theorem \ref{thm:l2} guarantees that the eigenvalue of $\f$ is the spectral radius, $r$, of the adjacency operator, $\A$. We also get that $\A$ is $r$-positive. Then by Theorem \ref{thm:recurrent case}, we see that the equation $A\f=r \f$ has a unique solution up to scaling. It follows that $\mu_\f$ is the only ergodic invariant probability measure for $T$, i.e., $T$ is uniquely ergodic.
\end{proof}

\subsection{Unique ergodicity for the straight-line flow}
Let $\G$ be a graph as in the prior section, and let $\w$ be a positive eigenfunction of the adjacency operator with eigenvalue $\lambda$. We will consider the straight-line flow $F_\btheta^t$ on $S(\G,\w)$ in a $\lambda$-renormalizable direction $\btheta$. Let $\f$ be an extremal positive eigenfunction, and let $\nu_\f$ be the $F_\btheta^t$-invariant measure on $S(\G,\w)$ obtained as in the Measure Characterization Theorem by pulling back the Lebesgue transverse invariant measure from $S(\G,\f)$ and then integrating over the leaves. 

\begin{lemma}
\name{lem:l2}
The $\nu_\f$ measure of the surface $S(\G,\w)$ is finite if and only if the $\ell^2$-inner product, 
$$\w \cdot \f=\sum_{\vv \in \V} \w(\vv) \f(\vv)$$
is finite.
\end{lemma}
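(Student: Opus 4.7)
The plan is to compute $\nu_\f(S(\G,\w))$ explicitly using the decomposition into horizontal cylinders $S(\G,\w) = \bigcup_{\va \in \Alpha} \cyl_\va$, which is disjoint up to boundary. Writing $\btheta=(x,y)$ and letting $\lambda_2$ be the eigenvalue of $\f$ and $\btheta_2=(x_2,y_2)=\btheta(\langle g_n\rangle,\lambda_2)$ the associated direction supplied by the Measure Characterization Theorem, the goal is the per-cylinder formula
\[ \nu_\f(\cyl_\va) \;=\; \frac{|y_2|\lambda_2}{|y|}\,\w(\va)\f(\va) \qquad\text{for each }\va\in\Alpha, \]
after which I would match the total $\sum_{\va}$ with $\w\cdot\f$ via an eigenvalue double count.

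To establish the cylinder formula, I would use that $\cyl_\va$ is a flat cylinder of height $\w(\va)$ with no interior singularities on which $\flow^t$ carries every trajectory from the bottom edge to the top edge in elapsed time $\w(\va)/|y|$. Any horizontal core curve $c_\va$ at a height $0<h<\w(\va)$ is therefore a cross-section crossed exactly once by every leaf, and the product structure of $\nu_\f$ (transverse measure times Lebesgue arc length) gives $\nu_\f(\cyl_\va) = \mu^{\mathrm{trans}}(c_\va)\cdot\w(\va)/|y|$. Flow-holonomy invariance inside the nonsingular cylinder identifies $\mu^{\mathrm{trans}}(c_\va)$ with the $\nu_\f$-transverse mass of the bottom edge of $\cyl_\va$, which is the disjoint union of the bottom edges of the rectangles $R_e^\w$ for $e=\overline{\va\vb}$; by Corollary~\ref{cor:pullback_measure} each such edge has transverse mass $|y_2|\f(\vb)$, so
\[ \mu^{\mathrm{trans}}(c_\va) \;=\; |y_2|\sum_{\vb\sim\va}\f(\vb) \;=\; |y_2|\lambda_2\f(\va), \]
yielding the per-cylinder formula.

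Summing over $\va\in\Alpha$ gives $\nu_\f(S(\G,\w)) = (|y_2|\lambda_2/|y|)\sum_{\va}\w(\va)\f(\va)$, which is finite if and only if $\sum_{\va}\w(\va)\f(\va)$ is. The standard double count using $\A\w=\lambda\w$ and $\A\f=\lambda_2\f$ gives
\[ \lambda\sum_{\va\in\Alpha}\w(\va)\f(\va) \;=\; \sum_{\va}\f(\va)\!\!\sum_{\vb\sim\va}\!\!\w(\vb) \;=\; \sum_{\vb}\w(\vb)\!\!\sum_{\va\sim\vb}\!\!\f(\va) \;=\; \lambda_2\sum_{\vb\in\Beta}\w(\vb)\f(\vb), \]
so the two halves of $\w\cdot\f = \sum_{\va}\w(\va)\f(\va)+\sum_{\vb}\w(\vb)\f(\vb)$ are simultaneously finite or infinite, and either is finite iff $\w\cdot\f$ is finite. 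The main technical point to justify is the flow-holonomy identification $\mu^{\mathrm{trans}}(c_\va) = \mu^{\mathrm{trans}}(\text{bottom of }\cyl_\va)$: one must handle the finitely many singular leaves through the rectangle corners along $\partial\cyl_\va$, but this is precisely what the split-leaf bookkeeping of $\hat\F_\btheta$ is designed to accommodate, and the conservativity of the straight-line flow in renormalizable directions (Theorem~\ref{thm:pr}) rules out any transverse mass escaping along isolated leaves.
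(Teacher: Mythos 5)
Your proposal is correct and follows essentially the same route as the paper: decompose into horizontal cylinders $\cyl_\va$, compute the transverse mass of a core curve as $\lambda_2\f(\va)$ times a direction-dependent constant (your $|y_2|$ is the paper's $\sin\theta'$), multiply by the leaf-length $\w(\va)/|y|$ inside the cylinder, and sum. The only cosmetic difference is that the paper obtains the $\Beta$-sum by ``swapping horizontal and vertical'' and computing $\nu(S)$ a second way, whereas you derive $\lambda\sum_{\va}\w(\va)\f(\va)=\lambda_2\sum_{\vb}\w(\vb)\f(\vb)$ by a direct Tonelli double count over edges; both are valid and lead to the same conclusion.
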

\begin{proof}
We assume that $\A\w=\lambda \w$ and $\A \f=\lambda'\w$. The measure $\nu_\f$ is locally the product of the pullback of Lebesgue measure on the leaves of the foliation $\F_\btheta$ and the pullback of the Lebesgue transverse measure, $\mu$, on $S(\G,\f)$ to the foliation in some direction $\btheta'$.

We need to compute $\nu\big(S(\G,\w)\big)$ using this local product structure. To do this, let $\cyl_\va$ be a horizontal cylinder
in the surface $S(\G,\w)$, and let $\gamma_\va$ be a horizontal circle winding around the cylinder. By considering that the transverse measure
$\mu(\gamma_\va)$ should be the same as the sum of the $\mu$-measures of edges on the bottom of $\cyl_\va$, we see that this can be computed by looking
at the surface $S(\G, \f)$. Let $\theta'$ be the angle made with the horizontal by $\btheta'$. We have 
$$\mu(\gamma_\va)=\sin(\theta') \sum_{\vb \sim \va} \f(\vb)=\A(\f)(\va) \sin (\theta')=\lambda' \f(\va) \sin (\theta').$$ 
The Lebesgue measures of the (connected components of) intersections of a 
leaf of the foliation in direction $\bm \theta$ on $S(\G,\w)$ and the cylinder $\cyl_\va$ are
given by $\w(\va)/\sin(\theta)$, where $\theta$ is the angle made with the horizontal by the vector $\bm \theta$. Since the cylinder $\cyl_\va$ can be written as the product of a circle $\gamma_\va$ with a intersection of a leaf with $\cyl_\va$ we have
$$\nu\big(S(\G,\w)\big)=\sum_{\va \in \Alpha} \nu(\cyl_\va)=
\sum_{\va \in \Alpha} \frac{\lambda' \sin (\theta') \f(\va) \w(\va)}{\sin(\theta)}=\frac{\lambda' \sin (\theta')}{\sin(\theta)} \sum_{\va \in \Alpha} \f(\va)\w(\va).$$
By swapping the roles of horizontal and vertical, we also obtain an expression for $\nu\big(S(\G,\w)\big)$ in terms of $\sum_{\vb \in \Beta} \f(\vb)\w(\vb)$. The conclusion follows. \end{proof}

Now, we will consider when the $\ell^2$-inner product of two eigenfunctions is finite.
\begin{lemma}
Suppose $\A(\f)=\lambda \f$ and $\A(\g)=\lambda' \g$. Then if $\lambda' \not \neq \lambda$, whenever
$\ell^2$-inner product of $\f$ and $\g$ exists it equals zero.
\end{lemma}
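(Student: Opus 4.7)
The plan is to exploit the symmetry of the adjacency operator $\A$ under the bilinear pairing $\langle \f, \g\rangle = \sum_{\vv} \f(\vv)\g(\vv)$ via Fubini's theorem. In the application of interest (the preceding Lemma \ref{lem:l2} and its use in unique ergodicity), both $\f$ and $\g$ are positive eigenfunctions, so I will first carry out the argument in that case. Computing one way,
$$
\lambda'\,\langle \f,\g\rangle \;=\; \sum_{\vv} \f(\vv)\,\A\g(\vv) \;=\; \sum_{\vv}\sum_{\vw\sim\vv} \f(\vv)\,\g(\vw).
$$
This is a double sum of non-negative terms, and the equality shows its total equals $\lambda'\langle\f,\g\rangle$, which is finite by hypothesis. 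Hence Fubini applies, and reindexing the edges the other way gives
$$
\sum_{\vw} \g(\vw)\sum_{\vv\sim\vw} \f(\vv) \;=\; \sum_{\vw} \g(\vw)\,\A\f(\vw) \;=\; \lambda\,\langle\f,\g\rangle.
$$
Comparing the two expressions yields $(\lambda-\lambda')\langle\f,\g\rangle = 0$, and the hypothesis $\lambda\neq\lambda'$ forces $\langle\f,\g\rangle=0$.

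For general (possibly signed) eigenfunctions, the same string of equalities works once absolute convergence of the double sum is established. The main (and only) obstacle is this justification of Fubini: I would note that
$$
\sum_{\vv}\sum_{\vw\sim\vv} |\f(\vv)\g(\vw)| \;\leq\; \sum_{\vv} |\f(\vv)|\,\A(|\g|)(\vv),
$$
and then use either the $\ell^2$-hypothesis via Cauchy--Schwarz together with bounded valance of $\G$ (which ensures $\A$ acts boundedly on $\ell^2(\V)$), or else reduce to the positive case by splitting $\f$ and $\g$ into positive and negative parts. Since the only application of the lemma in the paper is to a pair of positive eigenfunctions, I would state the result in that generality and leave the signed case as a brief remark.
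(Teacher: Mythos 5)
Your proof takes essentially the same approach as the paper: pair $\langle\f,\g\rangle$ against $\A$ and use symmetry of the adjacency operator to get $\lambda\langle\f,\g\rangle=\lambda'\langle\f,\g\rangle$. You are in fact more careful than the paper, which performs the sum interchange after only assuming conditional convergence of $\langle\f,\g\rangle$; your observation that non-negativity (or bounded valance plus Cauchy--Schwarz) is what actually licenses the reindexing is a genuine improvement in rigor, and your remark that the application only needs positive eigenfunctions is the right way to patch it.
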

\begin{proof}
Suppose $\A(\f)=\lambda \f$ and $\A(\g)=\lambda' \g$. Let $s$ denote the $\ell^2$-inner product $\sum_{\vv \in \V} \f(\vv) \g(\vv)$, and assume the sum converges. Since $\A(\f)=\lambda \f$,
$$\lambda s= \sum_{\vv \in \V} [\A(\f)(\vv)] \g(\vv)=\sum_{\vv \in \V} \sum_{\vw \sim \vv} \f(\vw) \g(\vv)=
\sum_{\vw \in \V} \sum_{\vv \sim \vw} \f(\vw) \g(\vv).$$
Now observe that fixing any $\vw \in \V$, we have $\sum_{\vv \sim \vw} \f(\vw) \g(\vv)=\f(\vw)[\A(\g)(\vw)]=\lambda' \f(\vw) \g(\vw)$. 
Therefore, we have 
$$\lambda s=\lambda' \sum_{\vw \in \V}  \f(\vw) \g(\vw)=\lambda' s.$$
Since $\lambda'\neq \lambda$, we know that $s=0$.
\end{proof}

\begin{theorem}
$\G$ be an infinite, connected, bipartite, ribbon graph with bounded valance and no vertices of valance one,
and let $\w$ be a positive eigenfunction with eigenvalue $\lambda$ for the adjacency operator on $\G$.
Let $\btheta$ be a $\lambda$-renormalizable direction.
Then, if $S(\G,\w)$ has finite area, then $F_\btheta^t$ is uniquely ergodic. 
\end{theorem}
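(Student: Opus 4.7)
The plan is to combine the measure characterization theorem with the $\ell^2$ theory of the adjacency operator summarized in Appendix \ref{sect:boundary} and in Lemma \ref{lem:l2}. First I would observe that the area of $S(\G,\w)$ can be computed by summing areas of rectangles cylinder-by-cylinder: since the horizontal cylinder $\cyl_\va$ has height $\w(\va)$ and circumference $\lambda \w(\va)$, the finite area hypothesis is equivalent to $\sum_{\va \in \Alpha} \w(\va)^2 < \infty$ (and symmetrically for $\Beta$), i.e., to $\w \in \ell^2(\V)$. Applying Theorem \ref{thm:l2} then tells us that $\A$ is $r$-positive and that the spectral radius $r$ equals $\lambda$.

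Next I would unwind what an invariant probability measure gives us. Lebesgue measure itself provides one invariant probability measure after normalization, so existence is immediate; the work is uniqueness. By the ergodic decomposition theorem it suffices to show that every ergodic $F_\btheta^t$-invariant probability measure is (a scalar multiple of) $\nu_\w$. By the Ergodic Measure Characterization Theorem (Theorem \ref{thm:ergodic2}) together with the integration of transverse measures along orbits, every locally finite ergodic $F_\btheta^t$-invariant measure has the form $\nu_\f$ for some extremal positive eigenfunction $\f$ of $\A$, with eigenvalue some $\lambda' > 0$. By Lemma \ref{lem:l2}, $\nu_\f$ has finite total mass on $S(\G,\w)$ if and only if the $\ell^2$-inner product $\w \cdot \f$ is finite.

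Now I would apply the orthogonality lemma stated just before the theorem: if $\A(\w)=\lambda \w$, $\A(\f)=\lambda' \f$, and $\lambda' \neq \lambda$, then whenever $\sum_\vv \w(\vv)\f(\vv)$ converges it must equal zero. But $\w$ and $\f$ are both strictly positive, so convergence would force all terms (and hence $\w$ or $\f$) to vanish, a contradiction. Hence $\w \cdot \f$ diverges whenever $\lambda' \neq \lambda$, so such a $\nu_\f$ has infinite mass and cannot be rescaled to a probability measure. Therefore any ergodic $F_\btheta^t$-invariant probability measure comes from an extremal positive eigenfunction with eigenvalue exactly $\lambda=r$.

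To conclude, since $\A$ is $r$-positive it is in particular $r$-recurrent, so Theorem \ref{thm:recurrent case} guarantees that the positive solution to $\A(\f)=\lambda \f$ is unique up to scaling. Thus $\f$ is a scalar multiple of $\w$, and $\nu_\f$ is a scalar multiple of $\nu_\w$, i.e., of Lebesgue measure on $S(\G,\w)$. This produces a single ergodic invariant probability measure, establishing unique ergodicity. The only delicate step is verifying that the reduction from arbitrary locally finite ergodic invariant measures of $F_\btheta^t$ to extremal positive eigenfunctions via Theorem \ref{thm:ergodic2} interacts cleanly with the probability normalization; this is the one point where I would need to check that ``finite total mass'' on the flow side matches ``finite $\ell^2$-pairing with $\w$'' on the eigenfunction side precisely as in Lemma \ref{lem:l2}, and I expect this to go through without further difficulty given the explicit local product structure of $\nu_\f$.
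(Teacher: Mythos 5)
Your proposal is correct and follows essentially the same route as the paper's proof: use Lemma \ref{lem:l2} to translate finite area to $\w\in\ell^2$, invoke Theorems \ref{thm:l2} and \ref{thm:recurrent case} to pin the spectral radius and get uniqueness of the positive eigenfunction at eigenvalue $\lambda$, then use the orthogonality lemma and positivity to rule out finite mass for any other eigenvalue. The minor differences (spelling out the cylinder-area computation explicitly, and noting $r$-positive implies $r$-recurrent before invoking Theorem \ref{thm:recurrent case}) are just expository.
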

\begin{proof}
Suppose $S(\G,\w)$ has finite area. Then by Lemma \ref{lem:l2}, we have $\w \cdot \w<\infty$. So, $\w$ is a 
positive eigenfunction in $\ell^2$. So by 
Theorem \ref{thm:l2}, $\lambda$ is the spectral radius and $\A$ is $r$-positive. By Theorem \ref{thm:recurrent case}, we see that the equation $A\f=\lambda \f$ has a unique solution up to scaling. So, if $\f$ is a positive eigenfunction of $\f$ which is not a scalar multiple of $\w$, it must have an eigenvalue other than $\lambda$. 
Since $\f$ and $\w$ are both positive, we can not have $\f \cdot \w=0$. We conclude that $\f \cdot \w=+\infty$. By Lemma \ref{lem:l2}, we conclude that the $F_\btheta^t$-invariant measure $\nu_\f$ assigns infinite measure to the surface. Thus, scalar multiples of Lebesgue measure, $\nu_\w$, are the only finite $F_\btheta^t$-invariant measures.
\end{proof}

\section*{Acknowledgments}
The author would like to thank Corinna Ulcigrai and Barak Weiss for pointing out a number of minor errors and typos in the prior version of this paper. This version also benefited from helpful comments from an anonymous referee which have helped to make the paper more readable.

\section*{List of Notations}
\vspace{1em}
\label{sect:notations}

\begin{center}
\begin{longtable}{@{}llr@{}} 
\toprule
Notation & Brief description & Page\\ \midrule
\endfirsthead
\midrule
Notation & Brief description & Page\\ \midrule
\endhead
\endfoot
\bottomrule
\endlastfoot
$S$ & a translation surface defined as a union of polygons & \pagelink{not:S}\\
$\Circ$ & the unit circle in $\R^2$ & \pagelink{not:Circ}\\
$F_{\bm \theta}^t$ & the straight-line flow & \pagelink{not:F} \\
$\bm \theta$ & an element of $\Circ$ (called a {\em direction}) & \pagelink{not:btheta} \\
$D(\phi)$ & the derivative of the affine automorphism $\phi$ & \pagelink{not:D} \\
$\Aff(S)$ & affine automorphism group of $S$ & \pagelink{not:Aff} \\
$\G$ & an infinite, connected, bipartite, ribbon graph with bounded valance & \pagelink{not:G} \\
$\V$ & the set of vertices of $\G$ & \pagelink{not:V} \\
$\E$ & the set of edges of $\G$ & \pagelink{not:E} \\
$\vv, \vw$ & elements of $\V$ (i.e., vertices of $\G$) & \pagelink{not:v}\\
$\Alpha,\Beta$ & subsets of $\V$ which make $\G$ bipartite & \pagelink{not:Alpha}\\
$\va, \vb$ & elements of $\Alpha$ and $\Beta$ & \pagelink{not:a}\\
$\alpha$, $\beta$ & the projections $\alpha:\E \to \Alpha$
and $\beta:\E \to \Beta$ & \pagelink{not:alpha}\\
$p_\vv$ & permutation of edges containing $\vv \in \V$ which make $\G$ a ribbon graph & \pagelink{not:Pv}\\
$\R^\V$ & the set of all functions from $\V$ to $\R$ & \pagelink{not:RV} \\
$\A$ & the adjacency operator, $\A:\R^\V \to \R^\V$ & \pagelink{def:adj} \\
$\f$, $\g$ & elements of $\R^\V$ & \pagelink{not:f} \\
$\w$ & a element of $\R^\V$ which is a positive eigenfunction of $\A$  & \pagelink{not:w} \\
$\lambda$ & eigenvalue of $\w$ & \pagelink{not:lambda}\\
$\East$, $\North$ & east and north permutations of $\E$ & \pagelink{not:East} \\
$S(\G,\w)$ & surface built from rectangles using $\G$ and $\w$ & \pagelink{not:S2}\\
$R_e$ & rectangle of $S(\G,\w)$ associated to $e \in \E$ & \pagelink{not:R}\\
$\cyl_\vv$ & horizontal or vertical cylinder of $S(\G,\w)$
associated to $\vv \in \V$ & \pagelink{eq:cyl}\\
$G$ & non-abelian free group with two generators & \pagelink{not:Gfree} \\
$h,v$ & generators of $G$ & \pagelink{not:Gfree} \\
$\rho_\lambda$ & group representation $\rho_\lambda:G \to \SL(2,\R)$ depending on $\lambda$ & \pagelink{not:rho}\\
$g$ & arbitrary element of the free group $G$ & \pagelink{not:g} \\
$\Phi$ & group endomorphism of $G$ into $\Aff\big(S(\G,\w)\big)$ & \pagelink{not:Phi} \\
$\R \P^1$ & the real projective line, $\R \P^1=\R^2 \smallsetminus \{\0\} /\R$ & \pagelink{not:RP1} \\
$\Rn_\lambda$ & the $\lambda$-renormalizable directions in $\Circ$ & \pagelink{not:Rn} \\
$\langle g_n \rangle$ & a sequence of elements of $G$ which form a geodesic ray & \pagelink{not:ray}\\
$\btheta(\langle g_n \rangle, \lambda)$ & a $\lambda$-renormalizable direction with $\lambda$-shinking sequence
$\langle g_n \rangle$ & \pagelink{not:theta2}\\
$E_\lambda$ & non-negative solutions to $A(\f)=\lambda \f$ & \pagelink{not:E lambda 1}, \pagelink{not:E lambda} \\
$\F_{\bm \theta}$ & foliation by orbits of the straight line flow in direction $\bm \theta$ & \pagelink{not:foliation}\\
$\hat \F_\btheta$ & leaf space formed by splitting singular leaves of $\F_{\bm \theta}$ & \pagelink{not:split foliation}, \pagelink{not:split foliation2}\\
$\wedge$ & standard wedge product in $\R^2$ & \pagelink{eq:wedge} \\
$V$ & collection of singularities of $S$ & \pagelink{not:singularities}\\
$\M_\btheta$ & space of transverse measures to $\hat \F_\btheta$ & \pagelink{not:measures} \\
$H_1(S, V, \R)$ & space of homology classes of curves in $S/V$ & \pagelink{not:homology} \\
$\Coh$ & space of linear maps $H_1(S, V, \R) \to \R$ & \pagelink{not:cohomology} \\
$\Psi_{\bm \theta}$ & a linear map $\M_\btheta \to \Coh$ & \pagelink{not:Psi}\\
$\sigma$ & a saddle connection on $S$ & \pagelink{not:sigma} \\
$\hol(\sigma)$ & holonomy of a saddle connection & \pagelink{not:holonomy}\\
$\sgn$ & the signum function $\R \to \{-1,0,1\}$ & \pagelink{not:signum}\\
$E$ & edges of rectangles making up $S$ & \pagelink{not:Edges} \\
$\phi_\ast$ & action of an affine automorphism on $\M_\btheta$ or $H^1$ & \pagelink{not:phi ast} \\
$\hom{\cdot}$ & homology class in $H_1(S,V,\Z)$ & \pagelink{not:hom} \\
$\Phi^{g}_\ast$ & action of the affine automorphism $\Phi^g$ on $H^1$ & \pagelink{not:Phi ast} \\
$\Ho$, $\Vo$ & operators on $\R^\V$ & \pagelink{eq:Ho}\\
$\Upsilon^G$ & $G$-action on $\R^\V$ generated by $\Ho$ and $\Vo$ & \pagelink{not:Upsilon}\\
$\Xi$ & linear embedding of $\R^\V$ into $H^1$ & \pagelink{not:Xi} \\
$i$ & the intersection pairing $H_1(S,V,\Z) \times H_1(S \smallsetminus V,\Z) \to \Z$ & \pagelink{not:intersection} \\
$\Surv_{\bm \theta}$ & subset of $\btheta$-survivors in $\R^\V$ & \pagelink{not:survivors} \\
$\SP$ & the set of sign pairs: $\SP=\{(1,1),(1,-1),(-1,1),(-1,-1)\}$ & \pagelink{not:sign pairs} \\
$++$,$+-$, \ldots & abbreviation for sign pairs & \pagelink{not:sign pairs 2} \\
$\Q_s$ & the four quadrants in $\R^2$, parameterized by $s \in \SP$ & \pagelink{not:quadrants 1} \\
$\cl(X)$ & the (topological) closure of $X$ & \pagelink{not:closure} \\
$\hat \Q_s$ & the four quadrants in $\R^\V$, parameterized by $s \in \SP$ & \pagelink{not:quadrants 2} \\
$P_\f$ & parameterization of a $2$-plane inside of $\R^\V$  & \pagelink{eq:P} \\
$\bar \cdot$ & the involution of $\R^2$ or $\SP$ given by $(x,y) \mapsto (y,x)$; & \pagelink {not:bar1}\\
 & also, the involution of $G$ which interchanges $v$ with $h$ & \pagelink {not:bar2}\\
$\pi_{U}(\f)$ & projection of $\f \in \R^V$ to functions supported on $U \subset \V$ & \pagelink{eq:pi_U} \\
$\R^\V_c$ & the set of finitely supported functions in $\R^\V$ & \pagelink{not:RVc} \\
$\langle , \rangle$ & a bilinear pairing $\R^\V \times \R^\V_c \to \R$ analogous to the dot product & \pagelink{eq:pairing} \\
$\gamma$ & an automorphism of $G$ satisfying $\langle \Act^g \f, \Act^{\gamma(g)} \x \rangle= \langle \f, \x \rangle$ & \pagelink{eq:gamma} \\
${\mathcal V}_\lambda$ & Martin compactification of the vertex set $\V$ of $\G$ & \pagelink{not:V lambda 1}, \pagelink{not:hat V lambda 2} \\
${\mathcal M}_\lambda$ & The Martin boundary $\V_\lambda \smallsetminus \V$ & \pagelink{not:M lambda 1}, \pagelink{not:M lambda 2} \\
$\zeta$ & a point in the Martin boundary ${\mathcal M}_\lambda$  & \pagelink{not:zeta}, \pagelink{not:zeta2} \\
$\k_\zeta$ & the positive eigenfunction in $\R^\V$ associated to $\zeta \in {\mathcal M}_\lambda$  & \pagelink{not:k zeta}, \pagelink{not:k zeta2} \\
${\mathcal M}^{\textit{min}}_\lambda$ & the minimal Martin boundary in ${\mathcal M}_\lambda$ & \pagelink{not:M lambda min}, \pagelink{not:M lambda min 2} \\
$\nu_\f$ & measure on ${\mathcal M}^{\textit{min}}_\lambda$ associated to the positive eigenfunction $\f$ & \pagelink{not:nu f} \\
$\widehat E_s$ & solutions to $A^2(\f)=\lambda^2 \f$ with $\f \in \hat \Q_s$ where $s \in \SP$ & \pagelink{not:hat E s} \\
$\f_\Alpha$, $\f_\Beta$ & elements of $\R^\V$ derived from $\f|_\Alpha$ and $f|_\Beta$, where $\f \in \R^\V$ & \pagelink{eq:fsub} \\
$M$ & a space of signed Borel measures on ${\mathcal M}_\lambda$ & \pagelink{not: M} \\
$M^+$, $M^-$ & collections of positive and negative measures in $M$, respectively & \pagelink{not: M +}\\
${\mathcal N}$ & linear map $\bigcup_{s \in \SP} \widehat E_s \to M^2$ & \pagelink{not:Nu} \\
$\Shrink_\lambda(g)$ & set of directions shrunk when moving from $e$ to $g \in G$ & \pagelink{not:shrink} \\
$\Exp_\lambda(g)$ & set of directions expanded when moving from $e$ to $g \in G$ & \pagelink{not:expand} \\
$\pi_\Circ$ & projection $\R^2 \smallsetminus \{\0\} \to \Circ$ recovering a vectors direction & \pagelink{not:pi circ}\\
$\H^2$ & the hyperbolic plane, $\SO(2) \setminus \SL(2, \R)$ &  \pagelink{not:hyperolic plane} \\
$\sa^g(s)$ & expanding sign action $G \times \SP \to \SP$ & \pagelink{not:sa} \\
$R$ & rotation of $\R^2$ by angle $\frac{\pi}{2}$ & \pagelink{eq:R} \\
$r$ & action on $\SP$ induced by the action of $R$ on quadrants & \pagelink{eq:r} \\
$\Zem$ & linear embedding of $H_1(S,V,\R)$ into $\R^\V_c$ & \pagelink{not:Zem} \\
$\textit{val}(\vv)$ & the valance of the vertex $\vv \in \V$ & \pagelink{not:val} \\
$\Ho_\y, \Vo_\y$ & affine perturbations of $\H$ and $\V$ operators & \pagelink{not:perturb generators} \\
$\Chi_\y$ & affine perturbation of the action $\Upsilon^G$ & \pagelink{not:perturb action} \\
$\Omega_n$ & renormalizable parameters for certain special skew products  & \pagelink{eq:Omega} \\
\end{longtable}
\end{center}
\label{sect:notations end}


\bibliographystyle{alpha}
\bibliography{/home/pat/active/my_papers/bibliography}
\end{document}